\crefname{figure}{Figure}{Figures}
\Crefname{figure}{Figure}{Figures}
\newif\ifStat
\newif\ifNote
\newif\ifComments
\newif\ifThmitalic
\renewcommand{\hat}{\widehat}
\renewcommand{\tilde}{\widetilde}
\renewcommand{\bar}{\overline}
\renewcommand{\top}{{\sf T}}
\newcommand{\iidsim}{\overset{\text{i.i.d.}}{\sim}}
\newcommand*{\rom}[1]{\expandafter\@slowromancap\romannumeral #1@}
\newcommand{\<}{\langle}
\renewcommand{\>}{\rangle}
\newcommand{\parallelsum}{\mathbin{\!/\mkern-5mu/\!}}
\newcommand{\spann}{\mathrm{span}}
\newcommand{\sign}{\mathrm{sign}}
\newcommand{\var}{\mathrm{Var}}
\DeclareMathOperator*{\argmin}{arg\,min}
\DeclareMathOperator*{\argmax}{arg\,max}
\newcommand{\iid}{\mathrm{i.i.d.}}
\newcommand{\beq}{\begin{equation}}
\newcommand{\eeq}{\end{equation}}
\newcommand{\Law}{\mathtt{Law}}
\newcommand{\indep}{\perp\!\!\!\perp}
\DeclareMathOperator*{\maximize}{\mathrm{maximize}}
\DeclareMathOperator*{\minimize}{\mathrm{minimize}}
\newcommand{\ind}{\mathbbm{1}}
\newcommand{\conp}{\xrightarrow{\mathrm{p}}}
\newcommand{\cond}{\xrightarrow{\mathrm{d}}}
\newcommand{\conL}[1]{\xrightarrow{\mathcal{L}^{#1}}}
\newcommand{\conw}{\xrightarrow{w}}
\newcommand{\prox}{\mathsf{prox}}
\newcommand{\envelope}{\mathsf{e}}
\newcommand{\abs}[1]{\left|{#1}\right|}
\newcommand{\norm}[1]{\left\|{#1}\right\|}
\newcommand{\R}{\mathbb{R}}
\newcommand{\E}{\mathbb{E}}
\newcommand{\veps}{\varepsilon}
\newcommand{\Q}{\mathbb{Q}}
\renewcommand{\P}{\mathbb{P}}
\renewcommand{\S}{\mathbb{S}}
\renewcommand{\d}{\textup{d}}
\newcommand{\ba}{\mathrm{\bf a}}
\newcommand{\be}{\mathrm{\bf e}}
\newcommand{\bs}{\mathrm{\bf s}}
\newcommand{\bu}{\mathrm{\bf u}}
\newcommand{\bv}{\mathrm{\bf v}}
\newcommand{\bw}{\mathrm{\bf w}}
\newcommand{\bx}{\mathrm{\bf x}}
\newcommand{\by}{\mathrm{\bf y}}
\newcommand{\bA}{\mathrm{\bf A}}
\newcommand{\bB}{\mathrm{\bf B}}
\newcommand{\bW}{\mathrm{\bf W}}
\newcommand{\bI}{\mathrm{\bf I}}
\newcommand{\bP}{\mathrm{\bf P}}
\newcommand{\bQ}{\mathrm{\bf Q}}
\def\vmu{{\boldsymbol{\mu}}}
\def\vtheta{{\boldsymbol{\theta}}}
\def\vbeta{{\boldsymbol{\beta}}}
\def\va{{\boldsymbol{a}}}
\def\vf{{\boldsymbol{f}}}
\def\vg{{\boldsymbol{g}}}
\def\vh{{\boldsymbol{h}}}
\def\vu{{\boldsymbol{u}}}
\def\vw{{\boldsymbol{w}}}
\def\vx{{\boldsymbol{x}}}
\newcommand{\xx}{\text{\boldmath $x$}}
\newcommand{\GG}{\text{\boldmath $G$}}
\newcommand{\XX}{\text{\boldmath $X$}}
\newcommand{\yy}{\text{\boldmath $y$}}
\newcommand{\zz}{\text{\boldmath $z$}}
\newcommand{\ZZ}{\text{\boldmath $Z$}}
\newcommand{\uu}{\text{\boldmath $u$}}
\newcommand{\hh}{\text{\boldmath $h$}}
\newcommand{\bm}{\text{\boldmath $m$}}
\newcommand{\bbeta}{\text{\boldmath $\beta$}}
\newcommand{\btheta}{\text{\boldmath $\theta$}}
\newcommand{\bone}{\mathrm{\bf 1}}
\newcommand{\bzero}{\mathrm{\bf 0}}
\newcommand{\bmu}{\text{\boldmath $\mu$}}
\newcommand{\blambda}{\text{\boldmath $\lambda$}}
\newcommand{\bTheta}{\text{\boldmath $\Theta$}}
\newcommand{\bXi}{\text{\boldmath $\Xi$}}
\newcommand{\cD}{\mathcal{D}}
\newcommand{\cL}{\mathcal{L}}
\newcommand{\cM}{\mathcal{M}}
\def\normal{{\sf N}}
\def\id{{\boldsymbol I}}
\def\subGind{{\sf{subG}_{\indep}}}
\DeclareMathOperator*{\pliminf}{p-lim\, inf}
\DeclareMathOperator*{\plim}{p-lim}
\newcommand{\wt}{\widetilde}
\newcommand{\TODO}[1]{{\color{red}{[#1]}}}
\newcommand{\ljyedit}[2]{%
  \ifmmode
    \text{\sout{\ensuremath{#1}}} \color{blue!60}{\text{#2}}%
  \else
    \sout{#1} {\color{blue!60}#2}%
  \fi
}
\theoremstyle{plain}
\newtheorem{thm}{Theorem}
\newtheorem{claim}{Claim}
\newtheorem{defn}{Definition}
\newtheorem{lem}[thm]{Lemma}
\newtheorem{cor}[thm]{Corollary}
\newtheorem{prop}[thm]{Proposition}
\newtheorem{ass}{Assumption}
\newtheorem{fact}{Fact}
\theoremstyle{definition}
\newtheorem{exm}{Example}
\newtheorem{rem}{Remark}
\newtheorem{conj}{Conjecture}
\newtheorem{thm}{Theorem}[section]
\newtheorem{claim}[thm]{Claim}
\newtheorem{lem}[thm]{Lemma}
\newtheorem{cor}[thm]{Corollary}
\newtheorem{prop}[thm]{Proposition}
\theoremstyle{definition}
\newtheorem{defn}{Definition}[section]
\newtheorem{rem}{Remark}[section]
\newtheorem{conj}{Conjecture}[section]
\DeclareSymbolFont{largesymbols}{OMX}{cmex}{m}{n}
\newcommand{\Err}{\mathrm{Err}}
\providecommand{\keywords}[1]
{
  {
  \small	
  \textbf{\textit{Keywords---}} #1
  }
}
\begin{document}

\title{A statistical theory of overfitting for imbalanced classification}
 










\author{Jingyang Lyu, Kangjie Zhou, and Yiqiao Zhong}
\author{Jingyang Lyu\thanks{Department of Statistics, University of Wisconsin--Madison, Madison, WI 53706, USA. Emails:
\texttt{jlyu55@wisc.edu}, \texttt{yiqiao.zhong@wisc.edu}} \and Kangjie Zhou\thanks{Department of Statistics, Columbia University, New York, NY 10027, USA. Email: \texttt{kz2326@columbia.edu}}
\and Yiqiao Zhong\footnotemark[1]}
\date{\today}

\pagenumbering{arabic}
\maketitle

\begin{abstract}
Classification with imbalanced data is a common challenge in data analysis, where certain classes (minority classes) account for a small fraction of the training data compared with other classes (majority classes). Classical statistical theory based on large-sample asymptotics and finite-sample corrections is often ineffective for high-dimensional data, leaving many overfitting phenomena in empirical machine learning unexplained.

In this paper, we develop a statistical theory for high-dimensional imbalanced classification by investigating support vector machines and logistic regression. We find that dimensionality induces truncation or skewing effects on the logit distribution, which we characterize via a variational problem under high-dimensional asymptotics. In particular, for linearly separable data generated from a two-component Gaussian mixture model, the logits from each class follow a normal distribution $\normal(0,1)$ on the testing set, but asymptotically follow a rectified normal distribution $\max\{\kappa, \normal(0,1)\}$ on the training set---which is a pervasive phenomenon we verified on tabular data, image data, and text data. This phenomenon explains why the minority class is more severely affected by overfitting. Further, we show that margin rebalancing, which incorporates class sizes into the loss function, is crucial for mitigating the accuracy drop for the minority class. Our theory also provides insights into the effects of overfitting on calibration and other uncertain quantification measures.






\end{abstract}

\keywords{
    Imbalanced classification, overfitting, margin, logistic regression, support vector machine, overparametrization, calibration
}

\etocdepthtag.toc{mtchapter}
\etocsettagdepth{mtchapter}{subsection}
\etocsettagdepth{mtappendix}{section}
\tableofcontents

\newpage
\section{Introduction}\label{sec:intro}

Classification tasks are ubiquitous in statistics and machine learning. In many practical applications, training data are often imbalanced, meaning that some classes (minority classes) contain substantially fewer samples than others. In binary classification, particularly, we observe training data $\{(\vx_i, y_i)\}_{i = 1}^n \stackrel{\mathrm{i.i.d.}}{\sim} P_{\xx,y}$ with features $\xx_i \in \R^d$ and binary labels $y_i \in \{\pm 1\}$. Denote by $P_{\vx}$ (resp. $P_y$) the marginal distribution of $\vx$ (resp. $y$), and the expected fractions of the two classes by
\begin{equation*}
\pi_+ := \P(y_i = +1), \qquad \pi_- :=  \P(y_i = -1).
\end{equation*}
We say that the data set is imbalanced if $\pi_+ < \pi_-$\footnote{Without loss of generality, we assume that the minority class is assigned the label $+1$.}. 
Classification problems with class imbalance are common in applications where the minority class represents rare diseases, rare events, anomalies, or underrepresented groups \cite{litjens2017survey, tschandl2018ham10000, king2001logistic, kubat1998machine, ngai2011application, chandola2009anomaly, weiss2003learning, buolamwini2018gender}.

In this paper, we focus on linear classification, where the classifier takes the form of 
$\vx \mapsto 2 \mathbbm{1}\{ f(\vx) > 0\} - 1$ with $f(\vx) = \langle \vx, \vbeta \rangle + \beta_0$. This simple form is widely used in statistics and machine learning: (i) For tabular data, linear classification algorithms such as logistic regression and the support vector machine (SVM) are commonly applied directly to the training data; (ii) For image and text data, the last classification layer of a deep neural network (DNN) usually takes this form (also known as softmax regression), where $\vx_i$ represents the feature vector extracted by the previous layers of the network. In downstream analysis, the base network is often frozen, and only the linear classification layer is retrained.

\paragraph{Challenges of high dimensions.}  For low-dimensional problems where the dimension $d$ is fixed or satisfies $d \ll n$, prediction performance and estimation accuracy are well understood for standard classification methods. Generally, we expect estimation consistency, where the estimated parameter vector $\hat \vbeta$ is close to the target $\vbeta$, and a small generalization gap where the training error is close to the test error. However, for high-dimensional problems where $d$ is typically comparable to $n$, the classical theory depicts an inaccurate picture, motivating recent efforts to refine the asymptotic characterization of learning behavior under high-dimensional regimes.

\begin{table}[h!]
\centering
\begin{tabular}{c|c|c}
   & \textbf{Low dimensions}        & \textbf{High dimensions}   \\ 
   \hline
   \rule[-1ex]{0pt}{4ex}
Parameter estimation                                           & $\left\langle \frac{\hat \vbeta}{\| \hat \vbeta\|}, \frac{\vbeta}{\| \vbeta \|} \right\rangle \approx 1$ & $\left\langle \frac{\hat \vbeta}{\| \hat \vbeta\|}, \frac{\vbeta}{\| \vbeta \|} \right\rangle < 1$ \\
Generalization       & Train error $\approx$ Test error                       & Train error $<$ Test error   \\
Distribution of logits & 1D projection of $P_{\boldsymbol{x}}$   & Skewed/distorted 1D projection of $P_{\boldsymbol{x}}$
\end{tabular}
\caption{Qualitative comparison between low/high dimensions for binary classification, where a linear classifier $\hat{y} (\xx) = 2 \mathbbm{1}\{ \hat f(\vx) > 0\} - 1$ with $\hat f(\vx) = \langle \vx, \hat \vbeta \rangle + \hat \beta_0$ is trained on $\{(\vx_i, y_i)\}_{i=1}^n \stackrel{\iid}{\sim} P_{\vx, y}$. Here, the logits $\{\hat f(\boldsymbol{x}_i)\}_{i = 1}^n $ are obtained by evaluating $\hat{f}$ on the training set.}\label{tab:1}
\end{table}

Regarding parameter estimation, a recent line of work \cite{dobriban2018high, Pragya_highdim_logistic, sur2019logistic, candes2020logistic, montanari2023generalizationerrormaxmarginlinear} has studied the asymptotic properties of logistic regression under the proportional regime $n / d \to \delta$, as $n, d \to \infty$ for some constant $\delta$. Qualitatively speaking, as the dimension $d$ increases (or $\delta$ decreases), the estimation error of the maximum likelihood estimator (MLE) $\hat \vbeta$ continues to grow until $\delta$ reaches a critical threshold, below which the MLE no longer exists \cite{Pragya_highdim_logistic, sur2019logistic, candes2020logistic}. In addition, the classical likelihood test requires modifications to remain valid in high dimensions \cite{Pragya_highdim_logistic, sur2019logistic, candes2020logistic}. 

Regarding generalization, high dimensionality usually leads to a gap between the training and test errors. To further investigate this phenomenon, recent research has focused on understanding the interplay between memorization and generalization, inspired by the \textit{double descent} phenomenon \cite{belkin2019reconciling}. In particular, in overparametrized models where the number of parameters far exceeds the sample size, it has been shown that algorithms such as gradient descent have an implicit regularization effect, leading to benign overfitting \cite{bartlett2020benign}.

Finally, the distribution of logits is highly valuable for feature visualization and interpretation, yet relatively little theory has been developed in this area. One practical example is \textit{linear probing}, a common approach for interpreting the hidden states (or activations) in empirical deep learning. This method involves training a simple linear classifier on top of the hidden states and visualizing their projections \cite{kornblith2019better, he2020momentum, kumar2022fine}. Another example is \textit{projection pursuit} (PP), where data are visualized via low-dimensional projections for exploratory data analysis. A few recent papers have investigated theoretical properties of PP in high dimensions \cite{Bickel_high_dim_PP, montanari2022overparametrizedlineardimensionalityreductions}.

\paragraph{Challenges of data imbalance.} In classification problems, data imbalance poses significant challenges, particularly for the minority class. First, classical asymptotic theory can become practically unreliable under severe data imbalance, compromising the accuracy of maximum likelihood estimation. Second, imbalanced classification is particularly susceptible to label shifts, where the proportion of minority labels (denoted as $\pi_+$ in binary classification) differs in a new test dataset. Third, misclassifying data points from the minority class typically incurs a much higher cost, a factor that is not accounted for in standard (unweighted) logistic regression. For a comprehensive overview, see \cite{he2009learning}. To address these issues, various techniques have been proposed and employed in empirical studies, such as adjusting decision boundaries, reweighting loss functions, subsampling the majority classes, and oversampling the minority classes \cite{king2001logistic, chawla2002smote}, among others.

Overfitting in high dimensions further compounds the challenges of imbalanced classification. Empirical studies have shown that deep learning models, particularly those with large capacity, often suffer from a disproportionate accuracy drop due to overfitting. This is because such models tend to memorize data points from minority classes rather than generalize to them, as these points constitute only a small fraction of the training data \cite{sagawa2020investigation}. While several remedies have been proposed in the literature \cite{huang2016learning, khan2019striking, liu2019large, cao2019learning}, they appear to be ad hoc fixes and fail to provide guidance on hyperparameter selection or feature interpretation.

We identify two major limitations in the existing literature. First, it remains unclear why overfitting is generally more severe in minority classes, despite being consistently observed in empirical studies. Second, there is a lack of comprehensive analysis of the impacts of various key factors---such as dimensionality, imbalance ratios, and signal strength---on the performance metrics such as test accuracy and uncertainty quantification.

\paragraph{Our goal.} The aim of this paper is to develop a statistical theory to address the aforementioned limitations. In particular, we seek to answer the following key questions:

\begin{enumerate}
    \item[\texttt{Q1}.] Can we mathematically characterize overfitting in high-dimensional imbalanced classification?
    \item[\texttt{Q2}.] What are the adverse effects of overfitting, particularly on the minority class?
    \item[\texttt{Q3}.]
    What are the consequences of overfitting for uncertainty quantification, such as calibration?
\end{enumerate}
Building on theoretical tools from high-dimensional statistics, our  analysis focuses on a stylized model. Suppose the i.i.d.~training data $\{(\vx_i, y_i)\}_{i = 1}^n$ are generated from a two-component Gaussian mixture model (2-GMM):
\begin{equation}\label{model}
    \P(y_i = +1) = \pi, \quad \P(y_i = -1) = 1 - \pi, \quad \xx_i \,|\, y_i \sim \normal(y_i \bmu, \bI_d),
\end{equation}
where $\bmu \in \R^d$ is an unknown signal vector. Under this model, the Bayes-optimal classifier has the form $y^* (\xx) = 2\ind\{\langle \vx, \vbeta \rangle + \beta_0 > 0\} - 1$,
where $\vbeta \parallelsum \vmu$. We analyze the behavior of two standard approaches for binary classification: (a slightly generalized version of) logistic regression and support vector machines (SVMs). Denoting by $\ell: \R \to \R$ a strictly convex decreasing function, including the logistic function $\log(1 + e^{-x})$ as a special case, we solve
\begin{subequations}
\begin{align}
\begin{array}{lcl}
    \text{(logistic regression)} 
    & 
    \mathmakebox[\widthof{$ \displaystyle\maximize \limits_{\bbeta \in \R^d, \, \beta_0, \kappa \in \R} $}][c]{
    \minimize \limits_{\vbeta \in \R^d, \beta_0 \in \R}
    } 
    & 
    \mathmakebox[\widthof{$ \displaystyle y_i ( \< \xx_i, \bbeta \> + \beta_0 ) \ge \kappa,
	\quad \forall\, i \in [n], $}][l]{
    \displaystyle \frac{1}{n} \sum_{i=1}^n \ell \bigl( y_i(\langle \vx_i, \vbeta \rangle + \beta_0) \bigr),
    }
\end{array}
\label{eq:logistic}
\\
\begin{array}{lcl}
    \text{(SVM)}  & 
    \maximize \limits_{\bbeta \in \R^d, \, \beta_0, \kappa \in \R} & \kappa,  \\
    & \text{subject to}\vphantom{\displaystyle\max_\kappa} & y_i ( \< \xx_i, \bbeta \> + \beta_0 ) \ge \kappa,
	\quad \forall\, i \in [n],    
    \\
    & & \norm{\bbeta}_2  \le  1.
\end{array}
\label{eq:SVM-0}
\end{align}
\end{subequations}
Both optimization problems are convex and yield solutions $\hat \vbeta, \hat \beta_0$, which are used to predict class labels for a test data point $\vx$ based on $\hat f(\vx) = \langle \vx, \hat \vbeta \rangle + \hat \beta_0$. Namely, the predicted binary label of a test data point $\vx$ is $\hat{y} (\xx) = 2\mathbbm{1}\{\langle \vx, \hat{\vbeta} \rangle + \hat{\beta}_0 > 0\} - 1$.

We will analyze both classifiers with a focus on the SVM for the following reason. In modern machine learning, it is common for the labeled data $\{(\vx_i, y_i)\}_{i = 1}^n$ to be linearly separable due to high dimensionality.
When data are linearly separable, the hard-margin SVM coincides with the max-margin classifier. It is known that the gradient descent iterates of logistic regression converge in direction to the max-margin solution \cite{Soudry_implicit_bias, ji2019riskparameterconvergencelogistic}, which is known as a form of inductive bias \cite{neyshabur2015searchrealinductivebias}. See Section~\ref{sec:background} for the background. In this sense, the two classifiers are closely related.

The code for our experiments can be found in the GitHub repository: 
\begin{center}
\url{https://github.com/jlyu55/Imbalanced_Classification}
\end{center}

\subsection{Characterizing overfitting via empirical logit distribution}
\label{subsec:ELD}

Understanding why test accuracy drops more for the minority class requires a more refined characterization of overfitting. To this end, we study the empirical distribution of the logits $\hat f(\vx_i) = \langle \vx_i, \hat\vbeta \rangle + \hat \beta_0$, $i \in [n]$ on the training set. 

Let $(\xx_1, y_1), \ldots, (\xx_n, y_n) \iidsim P_{\xx, y}$ be the training data, and $(\vx_\mathrm{test}, y_\mathrm{test})$ be an independent test data point. Denote $\mathcal{I}_+ := \{ i \in [n]: y_i = +1 \}$ and $\mathcal{I}_- := \{ i \in [n]: y_i = -1 \}$ as the index sets of training data points from the minority class and the majority class, respectively. Let $n_+ := \abs{\mathcal{I}_+}$ and $n_- := \abs{\mathcal{I}_-}$ be the sample sizes of the two classes. Consider a binary classifier $\hat{y}: \R^d \to \{ \pm 1 \}$ based on $\hat f: \R^d \to \R$ such that we predict $\hat y = + 1$ if $\hat f(\vx) > 0$ and predict $\hat y=-1$ otherwise. Throughout this paper, we ignore the one-class degenerate case and implicitly assume both $n_+, n_- \ge 1$, which occurs with high probability.

\begin{defn}[Logit and margin]
Let $(\xx, y) \in \R^d \times \{ \pm 1 \}$ be a data point. For a binary classifier of the form $\hat y(\xx) = 2 \mathbbm{1}\{ \hat f(\vx) > 0\} - 1$, we define:
\begin{itemize}
    \item The \emph{logit} of $\xx$ is $\hat f(\xx)$.
    \item The \emph{logit margin} of $\xx$ is $y \hat f(\xx)$.
    \item The \emph{margin} of the classifier $\hat f$ (on the training data) is $\hat\kappa_n = \min_{i \in [n]} y_i \hat f(\vx_i)$.
\end{itemize}
\end{defn}
The following definitions highlight the logit distribution on both training and test data. 
\begin{defn}[ELD and TLD]
\label{def:ELD_TLD}
\begin{figure}[t]
    \centering
    \includegraphics[width=0.75\textwidth]{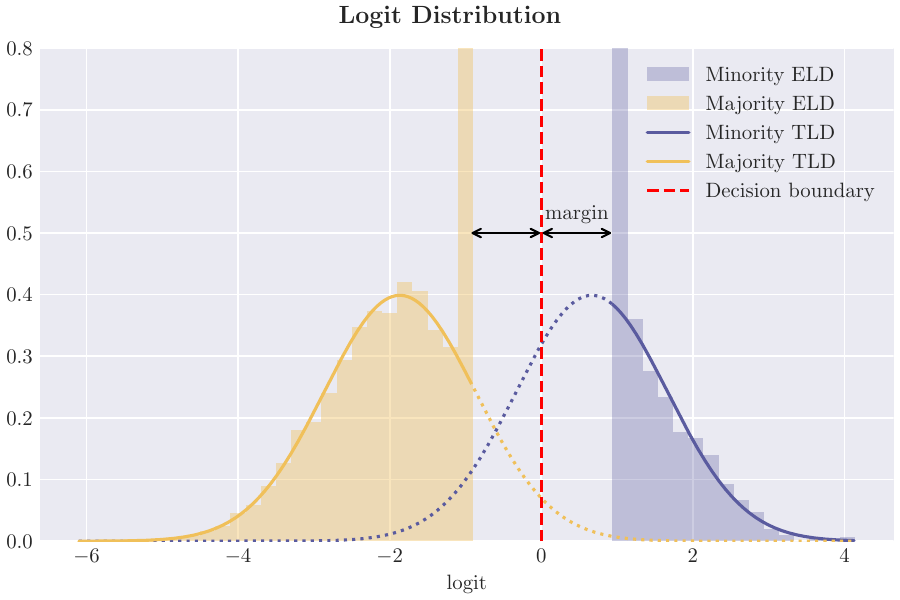}
    \caption{
    \textbf{Empirical logit distribution (ELD) and testing logit distribution (TLD)}. We train a max-margin classifier (namely SVM) $\hat f$ on synthetic data from a 2-component Gaussian mixture model. Colors indicate labels $y_i$ and $x$-axis indicates logits $\hat f(\vx_i)$. \textbf{ELD for both classes:} rectified Gaussian distribution (histogram). \textbf{TLD for both classes:} Gaussian distribution (curve).  \textbf{Overfitting effect:} The density areas below the dotted curves are overlapping in TLD, thus leading to positive test error; however they are ``pushed'' to respective margin boundaries in ELD, thus leading to linear separability and zero training errors.
    }
    \label{fig:GMM_main}
\end{figure}

\begin{enumerate}
\item \textbf{\emph{Empirical logit distribution (ELD)}}, or \emph{training logit distribution}, is defined as the empirical distribution of label-logit pairs based on training data, 
that is,
\begin{equation}\label{eq:ELD}
\hat\nu_n = \frac{1}{n} \sum_{i=1}^n \delta_{(y_i, \hat f(\vx_i))},
\end{equation}
where $\delta_{\va}$ denotes the delta measure supported at point $\va$.

\emph{Minority ELD} and \emph{majority ELD} are defined respectively as the empirical distribution of logits based on training data from minority class and majority class, i.e.,
\begin{equation*}
    \frac{1}{n_+} \sum_{i \in \mathcal{I}_+} \delta_{\hat f(\vx_i)}
    \qquad \text{and}
    \qquad
    \frac{1}{n_-} \sum_{i \in \mathcal{I}_-} \delta_{\hat f(\vx_i)}.
\end{equation*}
Note that these ELDs are all random probability measures.

\item \textbf{\emph{Testing logit distribution (TLD)}}, is defined as the distribution of the label-logit pair for a test data point ($\Law$ means the distribution of random variables/vectors), that is, 
\begin{equation*}
    \hat\nu^\mathrm{test}_n = \Law \, \bigl (y_\mathrm{test},  \hat f(\vx_\mathrm{test}) \bigr).
\end{equation*}

\emph{Minority TLD} and \emph{majority TLD} are defined respectively as the distribution of the logit for a test data point from minority class and majority class, i.e.,
\begin{equation*}
    \Law \, \bigl ( \hat f(\vx_\mathrm{test}) \,|\, y_\mathrm{test} = +1 \bigr)
    \qquad \text{and}
    \qquad
    \Law \, \bigl ( \hat f(\vx_\mathrm{test}) \,|\, y_\mathrm{test} = -1 \bigr).
\end{equation*}
Note that the randomness in TLDs is taken over both the classifier $\hat f$ and the test point $(\vx_\mathrm{test}, y_\mathrm{test})$, so they are deterministic probability measures.
\end{enumerate}
\end{defn}
When $\hat\kappa_n > 0$, the training set is linearly separable, and the training accuracy of $\hat f$ is $100\%$. While linear separability is common in high dimensions, the test accuracy is usually not perfect, which is based on the predicted label $\hat y(\vx_\mathrm{test}) = 2\mathbbm{1} \{\hat f(\vx_\mathrm{test}) > 0\} - 1$ for a test point $\vx_\mathrm{test}$. The discrepancy between train/test accuracies is known as overfitting.

For a classifier $\hat f$, the ELD and TLD are more informative compared with  train/test accuracies. For this reason, we analyze overfitting via a study of ELD and TLD.

\paragraph{Empirical phenomenon.}

\begin{figure}[t]
    \centering
    \includegraphics[width=0.32\textwidth]{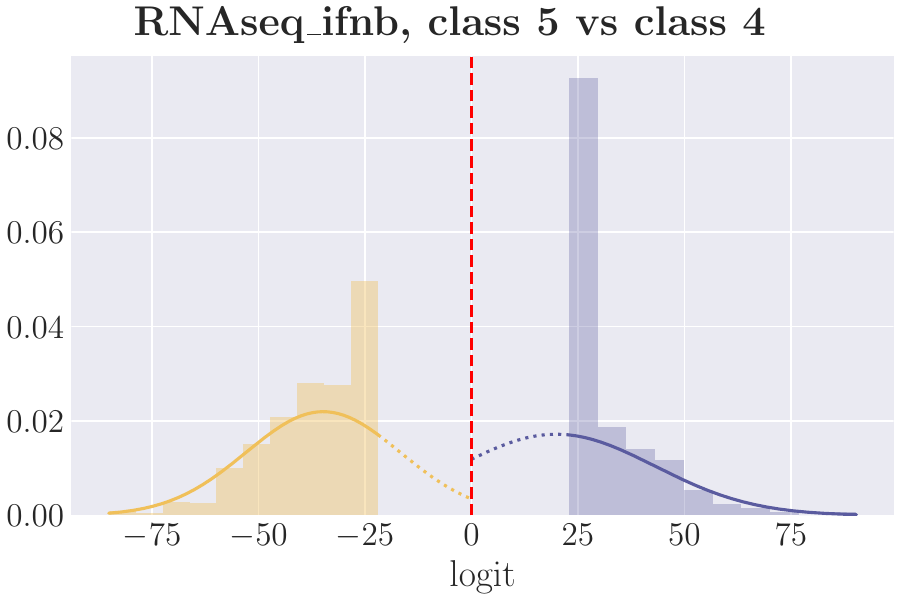}
    \includegraphics[width=0.32\textwidth]{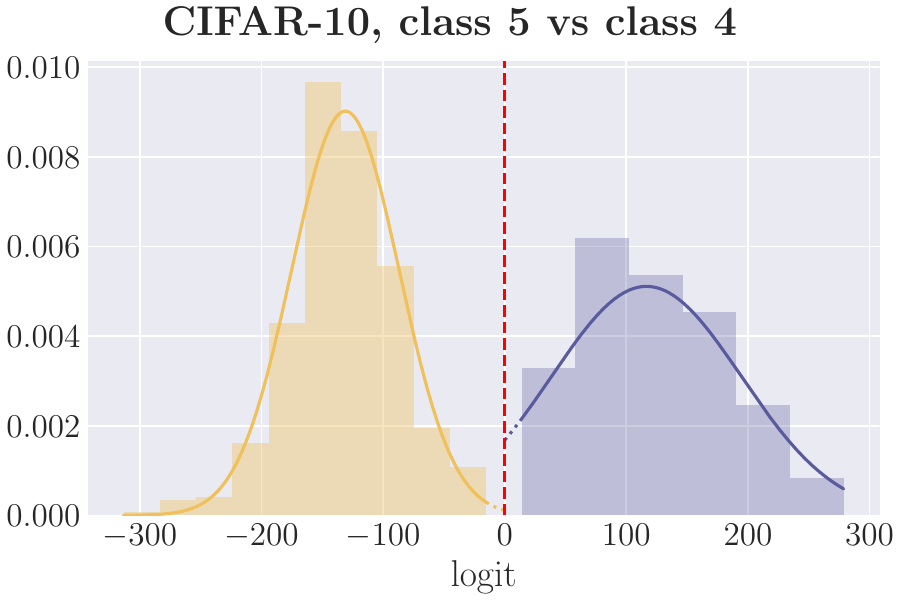}
    \includegraphics[width=0.32\textwidth]{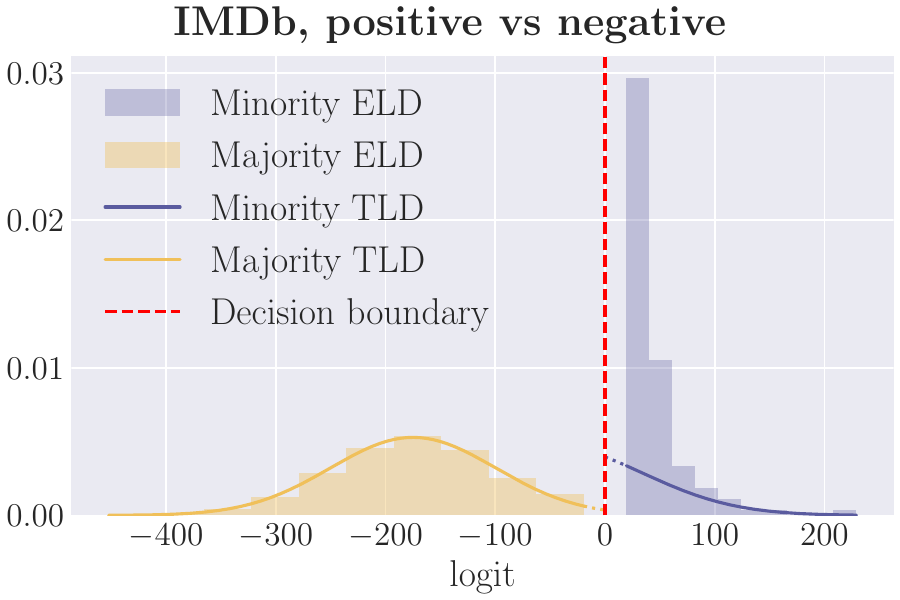}
    \caption{
        \textbf{ELD and TLD of logistic regression classifier (the last fully-connected layer) for
        real data}. \textbf{Left:} IFNB single-cell RNA-seq dataset (tabular data). \textbf{Middle:} CIFAR-10 dataset preprocessed by pretrained ResNet-18 model for feature extraction (image data). \textbf{Right:} IMDb movie review dataset preprocessed by BERT base model (110M) for feature extraction (text data).
        }
    \label{fig:GMM_real}
\end{figure}

First, 
we show a simple yet representative simulated example to illustrate the phenomenon; see \cref{fig:GMM_main}.
We generate training data according to 2-GMM in \cref{model}, with training sample size $n = 10,000$, feature dimension $d = 4,000$, signal strength $\norm{\bmu}_2 = 1.75$, and imbalance ratio $\pi = 0.15$. Such regime guarantees linear separability with high probability. We train an SVM classifier \cref{eq:SVM-0} on this dataset and visualize the ELD and TLD for the minority class ($y_i = +1$) and the majority class ($y_i = -1$) respectively. The TLD for each class follows a Gaussian distribution, while the ELD for each class is the same Gaussian curve but \textit{truncated} at the margin boundary---such discrepancy characterizes the effect of overfitting in imbalanced classification. As an adverse implication, we also observe that more than half of the density is truncated in minority ELD, thus causing severe degrading accuracy compared with the majority ELD. 
Formally, such distribution of ELD is called \textit{rectified Gaussian distribution} \cite{rectifiedGaussian}.

\begin{defn}[Rectified Gaussian distribution]
    For a Gaussian random variable $Z \sim \normal(\mu, \sigma^2)$ and a given threshold $\kappa \in \R$, the distribution of $\max\{ Z, \kappa \}$ or $\min\{ Z, \kappa \}$ is called the \emph{rectified Gaussian distribution}.
\end{defn}

For real-data examples, we consider fine-tuning experiments based on pretrained DNNs, as shown in Figure~\ref{fig:GMM_real}. Given a pretrained neural network, we freeze all parameters except for the last classification layer and fine-tune it on imbalanced labeled data. This fine-tuning approach essentially uses the pretrained neural network as a feature extractor, which is a standard practice in downstream analysis \cite{zhou2016learning, sharif2014cnn, radford2021learning, howard2018universal}.  We consider representative datasets for three data modalities. We train an SVM for each dataset.
\begin{enumerate}
    \item \textit{Tabular data}. We use a single-cell RNA-seq dataset of peripheral blood mononuclear cells treated with interferon-$\beta$ (IFNB) \cite{ifnb}, which has dimension $d = 2,000$. We randomly choose two classes, for example, class 5 (CD4 Naive T cells) and class 4 (CD4 Memory T cells), and subsample an imbalanced training set. Class 4 is the minority class, with imbalance ratio $\pi = 0.2$ and total sample size $n = 953$. 
    \item \textit{Image data}. We use CIFAR-10 image dataset \cite{KrizhevskyCIFAR102009}. The pretrained ResNet-18 model \cite{resnet, ResNet18_CIFAR10} is applied to the test set to extract the features of dimension $d = 512$. We randomly choose two classes, for example, class 5 (dog) and class 4 (deer), and subsample a imbalanced training set. Class 4 is the minority class, with imbalance ratio $\pi = 0.1$ and total sample size $n = 555$. 
    \item \textit{Text data}. We use IMDb movie review dataset \cite{IMDB} to perform binary sentiment classification. The BERT base model (110M) \cite{BERT} is applied to extract the features of dimension $d = 768$. We subsample an imbalanced training set. Negative reviews belong to the minority class, with imbalance ratio $\pi = 0.02$ and total sample size $n = 6,377$. 
\end{enumerate}

The empirical experiments reveal a pervasive structure in the ELD: for linearly separable data, the ELD can be fitted by two rectified Gaussian distributions, and such distributional truncation solely explains overfitting in high dimensions. Moreover, the minority class suffers more from this truncation effect as its test accuracy is worse. As our theoretical insights below reveal, this is because both classes share a common ``overfitting budget'' so that the minority margin boundary is disproportionally shifted.

\paragraph{Theoretical foundation.} Now we present a variational characterization of the ELD. We highlight a summary of our result for the separable case here and defers the non-separable case to Section~\ref{sec:logit}.
Consider the asymptotic regime $n/d \to \delta$ where $\delta \in (0,\infty)$ is called the limiting aspect ratio. Recall that $(\hat\vbeta, \hat\beta_0, \hat\kappa)$ are the trained parameters in \cref{eq:SVM-0}, where $\hat\kappa$ is the margin of classifier $\hat{y} (\xx) = 2 \mathbbm{1}\{ \hat f(\vx) > 0\} - 1$ with $\hat f(\vx) = \langle \vx, \hat \vbeta \rangle + \hat \beta_0$. Denote 
\begin{equation}\label{eq:rho_hat}
\hat \rho := 
\biggl\< \frac{\hat \vbeta}{\| \hat \vbeta \|}, \frac{\vmu}{\| \vmu \|} \biggr\>\,.
\end{equation}
One may expect $(\hat\rho, \hat\beta_0, \hat\kappa)$ to have some limits as $n,d \to \infty$. We define their asymptotics as follows.

\begin{defn}
    Let $(\rho^*, \beta_0^*, \kappa^*)$ be an optimal solution to the following variational problem:
			\begin{equation}
                \label{eq:asymp}
				\begin{array}{cl}
					\underset{ \rho \in [-1, 1], \beta_0 \in \R, \kappa > 0 , \xi \in \cL^2  }{ \mathrm{maximize} } & \kappa, \\
					\underset{ \phantom{\smash{\bm\beta \in \R^d, \beta_0 \in \R, \kappa \in \R} } }{\text{subject to}} &  \rho \| \bmu\| + G + Y \beta_0 + \sqrt{1 - \rho^2} \xi \ge \kappa,  
					\qquad \E[\xi^2]  \le  1/\delta,
					\end{array}
			\end{equation}
    where $\cL^2$ is the space of all square integrable random variables in the probability space $(\Omega, \mathcal{F}, \P)$, and $(Y, G) \sim P_y \times \normal(0, 1)$. Here $\xi$ is an unknown random variable (function) to be optimized.
\end{defn}
On a test point $(\vx_\mathrm{test}, y_\mathrm{test}) \sim P_{\vx,y}$, we consider the minority error and majority error
\begin{equation}\label{eq:Err_n}
    \Err_+ := \P\left( \hat f(\xx_\mathrm{test}) \le 0 \,\big|\, y_\mathrm{test} = +1 \right),
    \qquad
    \Err_- := \P\left( \hat f(\xx_\mathrm{test}) > 0 \,\big|\, y_\mathrm{test} = -1 \right).
\end{equation}
Note that the probabilities are taken over the training data, so these errors are nonrandom. The precise asymptotics of SVM and its ELD/TLD are summarized in the following theorem.

\begin{thm}[Separable data, informal version of \cref{thm:SVM_main}] \label{thm:SVM}
		Consider 2-GMM with asymptotics $n/d \to \delta \in (0,\infty)$ as $n, d \to \infty$. There is a critical threshold $\delta_c = \delta_c(\pi, \|\bmu\|)$, such that when $\delta < \delta_c$, the following holds as $n, d \to \infty$:
		\begin{enumerate}[label=(\alph*)]
			\item \textbf{Phase transition.} 
                \begin{equation*}
                    \P\left\{ \text{training set is linearly separable} \right\} \to 1.
                \end{equation*}
                
			\item \textbf{Parameter convergence.} 
                \begin{equation*}
                    (\hat \rho, \hat\beta_0, \hat\kappa) \xrightarrow{\mathrm{p}} (\rho^*, \beta_0^*, \kappa^*),
                \end{equation*}
                where $(\rho^*, \beta_0^*, \kappa^*)$ is the unique solution to \cref{eq:asymp}.

                \item 
                \textbf{Asymptotic errors.} The limits of minority an majority errors are
                \begin{equation*}
                    \Err_{+}  \to  \Phi \left(- \rho^* \norm{\bmu}_2 - \beta_0^* \right),
                    \qquad
                    \Err_{-}  \to  \Phi \left(- \rho^* \norm{\bmu}_2  + \beta_0^* \right),
                \end{equation*}
                where $\Phi$ denotes the cumulative distribution function of standard Gaussian.
			\item \label{thm:SVM_c}
            \textbf{ELD convergence.} The empirical (training) logit distribution $\hat{\nu}_n$ has limit $\nu_*$ in the sense that
                \begin{equation*}
                    W_2(\hat{\nu}_n , \nu_* ) \conp 0, \qquad \text{where} ~ 
                    \nu_* := \Law \,\bigl( Y,  Y \max\{\kappa^*, \rho^* \| \bmu \| + G + Y \beta_0^* \} \bigr).
                \end{equation*}
                \textbf{TLD convergence.} The testing logit distribution $\hat{\nu}_n^\mathrm{test}$ has limit $\nu^\mathrm{test}_*$ in the sense that
                \begin{equation*}
                    \hat{\nu}_n^\mathrm{test} \conw \nu^\mathrm{test}_*, \qquad \text{where} ~ 
                    \nu^\mathrm{test}_* := \Law \,\bigl( Y, Y (\rho^* \| \bmu \| + G + Y \beta_0^*) \bigr).
                \end{equation*}
		\end{enumerate}
\end{thm}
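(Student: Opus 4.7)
The natural tool is the Convex Gaussian Min--Max Theorem (CGMT), which reduces a bilinear Gaussian saddle-point problem to a scalar auxiliary problem with matching asymptotics. I would first exploit the model structure: writing $\vx_i = y_i\vmu + \vg_i$ with $\vg_i \iidsim \normal(\bzero, \bI_d)$ independent of $y_i$, and decomposing the unit vector $\vbeta$ as $\rho\,\vmu/\|\vmu\|_2 + \sqrt{1-\rho^2}\,\vbeta^\perp$ (with $\vbeta^\perp \perp \vmu$ and $\|\vbeta^\perp\|_2 = 1$), the SVM margin constraint reads $\rho\|\vmu\|_2 + y_i\langle\vg_i, \vbeta\rangle + y_i\beta_0 \ge \kappa$. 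Only the term $\sqrt{1-\rho^2}\,y_i\langle\vg_i, \vbeta^\perp\rangle$ couples the random design to a high-dimensional optimization variable, while the $\vmu$-aligned noise $\langle\vg_i, \vmu/\|\vmu\|_2\rangle$ is an independent $\normal(0,1)$ per sample. Introducing Lagrange multipliers $\blambda \in \R^n_+$ for the margin constraints (equivalently, dualizing the max-margin problem) puts the SVM into the CGMT template, a saddle point of a bilinear Gaussian form $\blambda^\top \mG \vbeta^\perp$ plus separable deterministic terms.

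\textbf{Reduction to the variational problem \eqref{eq:asymp}.} By CGMT, the primary value agrees asymptotically with the auxiliary value obtained by replacing $\blambda^\top \mG \vbeta^\perp$ by $\|\blambda\|_2\, \vh^\top \vbeta^\perp + \|\vbeta^\perp\|_2\, \vq^\top \blambda$ for independent $\vh \sim \normal(\bzero, \bI_{d-1})$ and $\vq \sim \normal(\bzero, \bI_n)$. Profiling the direction of $\vbeta^\perp$ collapses the first piece into a single scalar norm; the second piece decouples across samples. As $n/d \to \delta$, the empirical distribution of the sample-level quantities converges to $(Y, G, \xi)$ with $(Y, G) \sim P_y \times \normal(0,1)$, and the norm constraint $\|\vbeta\|_2 \le 1$ combined with the $d$-dimensional degree-of-freedom count forces $\E[\xi^2] \le 1/\delta$ on the limiting ``response'' $\xi \in \cL^2$. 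Collecting the surviving terms produces precisely the variational problem \eqref{eq:asymp}; denote its maximizer by $(\rho^*, \beta_0^*, \kappa^*)$ and the associated optimal response by $\xi^*$.

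\textbf{Parts (a)--(c) and the TLD of (d).} The phase transition in (a) is equivalent to the feasibility of \eqref{eq:asymp} with $\kappa > 0$, and $\delta_c$ is the supremum of $\delta$ for which this holds strictly. For (b), strict convex-concavity of the profiled CGMT auxiliary objective at $(\rho^*, \beta_0^*, \kappa^*)$ gives uniqueness, and the standard CGMT localization argument --- any parameter value bounded away from $(\rho^*, \beta_0^*, \kappa^*)$ is asymptotically strictly suboptimal --- upgrades uniqueness to convergence in probability of $(\hat\rho, \hat\beta_0, \hat\kappa)$. Parts (c) and the TLD in (d) are then immediate: since $\vx_\mathrm{test}$ is independent of $\hat\vbeta$ and $\|\hat\vbeta\|_2 = 1$ at the optimum, the test logit $\hat f(\vx_\mathrm{test}) = y_\mathrm{test}\hat\rho\|\vmu\|_2 + \langle\vg_\mathrm{test}, \hat\vbeta\rangle + \hat\beta_0$ has noise $\langle\vg_\mathrm{test}, \hat\vbeta\rangle \sim \normal(0, 1)$, yielding both the stated Gaussian test errors and the TLD limit $\nu^\mathrm{test}_*$.

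\textbf{The main obstacle: rectified-Gaussian ELD.} The delicate part is the ELD in (d), since training logits are strongly correlated with $\hat\vbeta$ and KKT complementary slackness forces $y_i \hat f(\vx_i) = \hat\kappa$ exactly on support vectors. The guiding identity is that the KKT conditions of \eqref{eq:asymp} pin down $\xi^* = [(\kappa^* - \rho^*\|\vmu\|_2 - G - Y\beta_0^*)/\sqrt{1 - (\rho^*)^2}]_+$ almost surely, so the limiting training logit margin equals exactly $\max\{\kappa^*,\, \rho^*\|\vmu\|_2 + G + Y\beta_0^*\}$. Transporting this from the variational problem back to the actual ELD is the crux, and I would do so via a leave-one-out (LOO) analysis: let $(\hat\vbeta^{(-i)}, \hat\beta_0^{(-i)})$ denote the SVM trained without sample $i$, and show via CGMT-based stability --- e.g.\ a second-order perturbation of the dual plus a union bound --- that $\|\hat\vbeta - \hat\vbeta^{(-i)}\|_2 = o_p(1)$ uniformly in $i$. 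Since $\vx_i$ is independent of $(\hat\vbeta^{(-i)}, \hat\beta_0^{(-i)})$, the oracle logit $\langle\vx_i, \hat\vbeta^{(-i)}\rangle + \hat\beta_0^{(-i)}$ is conditionally Gaussian of the advertised form, and activating the margin constraint at $\hat\kappa$ rectifies it into $\max\{\hat\kappa, \cdot\}$ in the limit. Wasserstein-$2$ convergence of $\hat\nu_n$ to $\nu_*$ then follows from uniform integrability of the squared training logits, which is immediate from $\|\hat\vbeta\|_2 \le 1$ and the boundedness of $(\hat\beta_0, \hat\kappa)$.
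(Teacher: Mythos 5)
Your overall architecture is the right one, and parts (a)--(c) plus the TLD in (d) track the paper's proof of \cref{thm:SVM_main} closely: the decomposition $\vx_i = y_i\vmu + \vg_i$, the reparametrization of $\vbeta$ into $(\rho, \vbeta^\perp)$, Lagrangian dualization into the CGMT bilinear template, profiling out the direction of $\vbeta^\perp$, passing to the scalar variational problem \eqref{eq:asymp}, strict convexity of $H_\kappa$ (\cref{lem:H_kappa_1}) for uniqueness, and independence of $\vx_{\mathrm{test}}$ from $\hat\vbeta$ for the TLD --- all of that is exactly the paper's route (Lemmas \ref{lem:over_beta0}--\ref{lem:over_mar_conp} and \ref{lem:gordon_eq}). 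You also correctly identify the KKT characterization $\sqrt{1-\rho^{*2}}\,\xi^* = (\kappa^* - \rho^*\|\vmu\|_2 - G - Y\beta_0^*)_+$ as the source of the rectification, which matches \eqref{eq:SVM_main_xi_star}.

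Where you diverge from the paper, and where there is a real gap, is the ELD convergence. You propose a leave-one-out argument hinging on ``$\|\hat\vbeta - \hat\vbeta^{(-i)}\|_2 = o_p(1)$ uniformly in $i$.'' That bound, even if true, does not deliver what you need. The training logit is $\langle\vx_i, \hat\vbeta\rangle + \hat\beta_0$ with $\|\vx_i\|_2 \asymp \sqrt{d}$, so Cauchy--Schwarz only gives $|\langle\vx_i, \hat\vbeta - \hat\vbeta^{(-i)}\rangle| \lesssim \sqrt{d}\cdot o_p(1)$, which is uninformative. The self-influence $\langle\vx_i, \hat\vbeta - \hat\vbeta^{(-i)}\rangle$ is in fact $\Theta(1)$ for support vectors --- it \emph{must} be, since that is precisely the mechanism by which the training logit gets pushed up from its Gaussian oracle value to the margin $\hat\kappa$. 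So the rectification identity $y_i\hat f(\vx_i) \approx \max\{\hat\kappa,\, y_i\hat f^{(-i)}(\vx_i)\}$ is exactly what needs to be proved, and your sketch treats it as a consequence of an $\ell_2$-stability estimate that cannot see it. For smooth, strongly regularized losses a careful self-influence computation is standard (one linearizes the first-order conditions and solves for the diagonal of an influence matrix); for the hard-margin SVM the optimizer is determined by a nonsmooth constraint active set, removing a support vector can move the solution non-perturbatively, and ``second-order perturbation of the dual plus a union bound'' does not obviously close. You would need, at minimum, a leave-one-out analysis tailored to the max-margin KKT system with control on how the active set changes.

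The paper avoids this entirely. \Cref{lem:over_logit_conv} adapts the projection-pursuit / constrained-$W_2$ machinery of \cite{montanari2022overparametrizedlineardimensionalityreductions}: it bounds the $W_2^{(\eta)}$ distance between the empirical distribution of $(y_i, \langle\zz_i,\hat\vbeta\rangle)$ and $\Law(Y,Z)$ by the ``overfitting budget'' $\sqrt{1-\hat\rho^2}/\sqrt{\delta}$, then uses the almost-sure margin constraint $y_i\hat f(\vx_i)\ge \hat\kappa$ together with the variational optimality $\delta = H_{\kappa^*}(\rho^*,\beta_0^*)$ to show, via the pointwise inequality
\begin{equation*}
  \E_{\cdot|n}\!\left[\bigl(V - \max\{\kappa^*-\varepsilon, \hat V\}\bigr)^2\right]
  \ \le\ \E_{\cdot|n}\bigl[(V-\hat V)^2\bigr] - \E_{\cdot|n}\bigl[(\kappa^*-\varepsilon-\hat V)_+^2\bigr],
\end{equation*}
that the empirical logit margins are close in $W_2$ to the rectified Gaussian. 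This both sidesteps LOO stability for the nonsmooth problem and simultaneously yields the parameter convergence $(\hat\rho,\hat\beta_0)\conp(\rho^*,\beta_0^*)$ as a byproduct. If you want to keep your LOO route you would need to supply a genuine self-influence lemma for max-margin SVM in the proportional regime --- a nontrivial additional result --- and a mechanism to handle the dependence of the active set on sample $i$; as written the step is asserted rather than argued.
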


We make a few comments.
\begin{itemize}
\item \textit{Overfitting effect.} The random variable $\xi$ represents the distortion of ELD due to high dimensions. With a smaller aspect ratio $\delta$, there is more flexibility in finding $\xi$ that satisfies the constraint $\E[\xi^2] \le 1/\delta$ to maximize the margin, 
thereby distorts the TLD to produce the truncation effect.
In fact, in order to maximize $\kappa$, the first inequality constraint must be tight, which yields the explicit formula for any given $\rho, \beta_0, \kappa$:
\begin{align}
\sqrt{1 - \rho^2} \, \xi  & = \max \{ \kappa, \rho \| \vmu\| + G + Y \beta_0 \} - ( \rho \| \vmu\| + G + Y \beta_0 ) \notag \\
&= (\kappa - \rho \| \vmu\| - G - Y \beta_0)_+, \qquad \text{where}~a_+ := \max\{0,a\}. \label{def:opt-map}
\end{align}
Thus, we can view $\xi$ as a map that pushes the overlapping probability masses in the TLD to the margin boundaries in the ELD. It is the cause for the discrepancy between the ELD and the TLD.

\item \textit{More truncation for minority class.} Due to imbalance, transporting the probability mass in the minority ELD as \cref{def:opt-map} incurs less ``cost'' to the overall ``budget'' $\E[\xi^2] \le 1/\delta$.
Formally, according to \cref{thm:SVM}, the limiting TLD for each class is
\begin{equation*}
    \text{minority:}~ \normal \left( \rho^* \|\vmu\| + \beta_0^*, 1\right),
    \qquad
    \text{majority:}~ \normal \left( -\rho^* \|\vmu\| + \beta_0^*, 1\right).
\end{equation*}
It can be shown that $\rho^* \ge 0$ and $\beta_0^* < 0$. So the minority TLD is closer to the decision boundary.
As the result, the minority class suffers more from the truncation effect (overfitting) than the majority class.

\item \textit{Optimal transport perspective.} As a simple consequence, we show in \cref{sec:logit_SVM} that $\mathtt{T}^*(x) = \max\{\kappa^*, x \}$ gives the optimal transport map that maps the limit distribution $\nu_*$ to $\nu_*^\mathrm{test}$ and minimizes the $W_2$ distance between them.

\item \textit{Non-separable case.} When $\delta > \delta_c$, the training dataset is not separable with high probability, and SVM can no longer be viewed as the limit of the logistic regression. We analyze the logistic regression classifier and characterize its corresponding ELD. Instead of truncation, overfitting emerges as nonlinear shrinkage governed by the proximal operator (gradient of the Moreau envelope). As $\delta \in (\delta_c, \infty)$ decreases, the nonlinear shrinkage transitions from an identity map (no overfitting) to the truncation map $\mathtt{T}^*(x) = \max\{\kappa^*, x \}$ (severe overfitting). See \cref{thm:logistic_main} for its explicit expression and \cref{append_subsec_prox} for its function plot.
\end{itemize}

Here, we provide some intuitions about the ELD and TLD of the max-margin classifier.
On a test point $(\vx_\mathrm{test}, y_\mathrm{test}) \sim P_{\vx,y}$, it is easy to see that $y_\mathrm{test}\hat f(\vx_\mathrm{test})$ is approximately distributed as a mixture of two Gaussians:
\begin{align*}
    y_\mathrm{test} (\langle \vx_\mathrm{test} , \hat \vbeta \rangle + \hat \beta_0)  
    &= y_\mathrm{test} \left\langle  y_\mathrm{test}\vmu + \normal(\bzero, \bI_d),  \hat\vbeta \right\rangle + y_\mathrm{test} \hat \beta_0 \\
    &= \hat\rho \,  \| \vmu\| +  \left\langle  \normal(\bzero, \bI_d),  \hat\vbeta \right\rangle  + y_\mathrm{test} \hat \beta_0 \\
    &\approx \rho^* \| \vmu\| + G + Y \beta_0^*, \qquad \qquad  \text{where}~(Y, G) \sim P_y \times \normal(0,1).
\end{align*}
Essentially, $\hat f (\vx)$ is a linear projection of the 2-GMM input points to one dimension with a smaller separation between the two classes. However, on a training point $(\vx_i, y_i)$, there is a distortion effect on the distribution due to the margin constraints---the following holds for a ``typical'' training point: 
\begin{equation*}
    y_i (\langle \vx_i, \hat \vbeta \rangle + \hat \beta_0) \approx \max\{ \kappa^*, \rho^* \| \vmu\| + G + Y \beta_0^* \} , \qquad   \text{where}~(Y, G) \sim P_y \times \normal(0,1).
\end{equation*}
This is because the max-margin classifier achieves overfitting in high dimensions, where a substantial portion of points (a.k.a. support vectors) fall on the margins $\hat\kappa \approx \kappa^*$.

\subsection{Rebalancing margin is crucial}
\label{subsec:rebal}

Rebalancing the margin is a common practice for remedying severe overfitting for the minority class. In binary classification, we choose a hyperparameter $\tau > 0$ and consider the margin-rebalanced SVM \cref{eq:SVM-m-reb} which shifts the decision boundary as shown in Figure~\ref{fig:SVM_cartoon}. For the logistic loss in \cref{eq:logistic}, we can similarly incorporate $\tau$ into the objective function. Margin rebalancing is widely used in machine learning \cite{li2002perceptron, li2005using, cao2019learning, karakoulas1998optimizing, wu2003class}, but the impact of $\tau$ on test accuracy is not fully explored. 

\begin{figure}[h]
\noindent
\begin{minipage}[h]{0.6\textwidth}
    \includegraphics[width=1\textwidth]{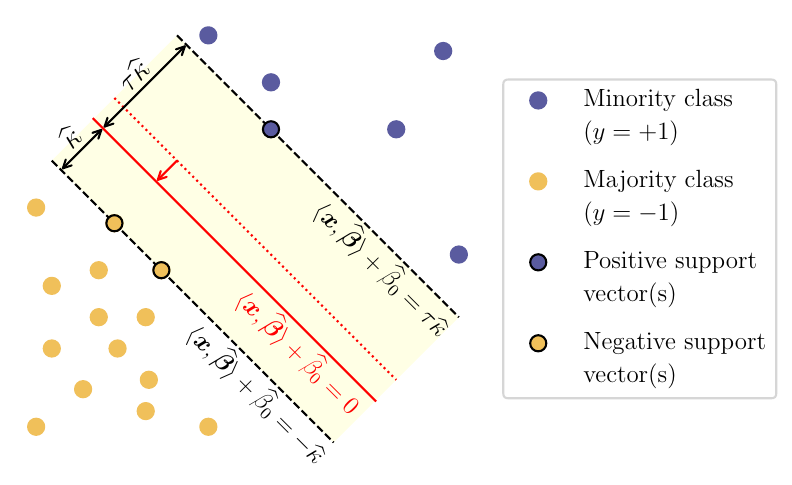}
\end{minipage}
\hfill
\begin{minipage}[h]{0.4\textwidth}
    \begin{equation}\label{eq:SVM-m-reb}
    \begin{array}{l}
	\underset{ \vbeta \in \R^d, \beta_0 \in \R, \kappa \in \R }{ \mathrm{maximize} }  \quad \kappa, \\
	\underset{ \phantom{\smash{\vbeta \in \R^d, \beta_0 \in \R, \kappa \in \R} } }{\text{subject to}} 
        \vphantom{\dfrac12} \\
			\ \ y_i ( \langle \xx_i, \vbeta \rangle + \beta_0 ) \ge \tau \kappa,
			\quad \forall\, i \in \mathcal{I}_+, \\ 
			\ \ y_i ( \langle \xx_i, \vbeta \rangle + \beta_0 ) \ge \phantom{\tau} \kappa,
                \vphantom{\dfrac12}
			\quad \forall\, i \in \mathcal{I}_-, \\
			\ \ \|\vbeta\|_2  \le  1.  
			\vphantom{\dfrac12}
    \end{array}
    \end{equation}
\end{minipage}
    \caption{
    \textbf{Schematic illustration of margin-rebalanced SVM}. The dotted line is the decision boundary for the original SVM, and the solid line is the decision boundary for margin-rebalanced SVM.
    }
    \label{fig:SVM_cartoon}
\end{figure}

We will conduct analysis under two regimes: \textbf{(i) proportional regime} where $n,d \to \infty$ and $n/d \to \delta$ with $\delta \in (0,\infty)$, and \textbf{(ii) high imbalance regime} in the following sense:
\begin{equation}\label{setup-high-imbalance}
\pi \propto d^{-a}, \qquad \norm{\vmu}^2 \propto d^b, \qquad n \propto d^{c+1}.
\end{equation}

For problems with data imbalance, often correctly classifying the minority data points is equally important as correctly classifying the majority data points. 
For this purpose, we introduce \textit{the balanced error}: 
\begin{equation}\label{eq:balanced_err}
\Err_\mathrm{b} = \frac{1}{2} \Err_+ + \frac{1}{2} \Err_-\, .
\end{equation}

\paragraph{Empirical phenomenon.} For the  proportional regime, we generate imbalanced 2-GMM based on \cref{setup-high-imbalance} with sample size $n = 100$ and dimension $d = 200$, under different settings of $\norm{\bmu}_2$ and $\pi \in (0, \frac12]$. We train an SVM \cref{eq:SVM-m-reb} with margin rebalancing (set $\tau$ to certain optimal value) and without (set $\tau = 1$) respectively, for each configuration. We calculate the minority error $\Err_+$, majority error $\Err_-$, and balanced error $\Err_\mathrm{b}$ on an independent test set, averaged over 100 replications, and plot these errors against different values of $\pi$ in Figure~\ref{fig:Err_pi}. The smooth curves represent the asymptotic test errors, i.e., the limits of $\Err_+$, $\Err_-$, $\Err_\mathrm{b}$ as $n, d \to \infty$ according to \cref{thm:SVM}. Experimental  details are deferred to \cref{append_sec:exp}. 




For the naive SVM where $\tau = 1$, as $\pi$ decreases to $0$, the minority error $\Err_+$ increases to $1$, the majority error $\Err_-$ decreases to $0$, and the balanced error tends to the trivial $\frac12$. The opposite trends of minority and majority errors show that overfitting hurts minority class more than majority class. In contrast, under optimal $\tau$, we are able to even out the minority and majority errors at the same level. As a result, margin rebalancing is advantageous for reducing the balanced error.

\begin{figure}[t]
    \centering
    \includegraphics[width=0.32\textwidth]{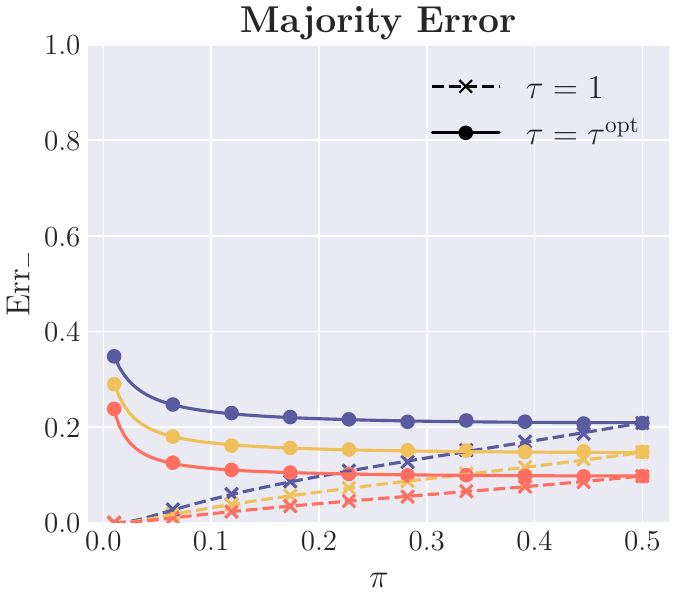}
    \includegraphics[width=0.32\textwidth]{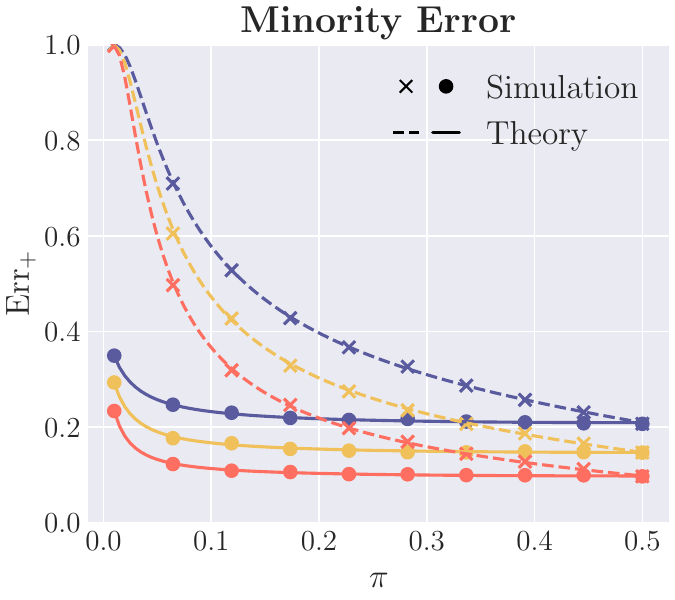}
    \includegraphics[width=0.32\textwidth]{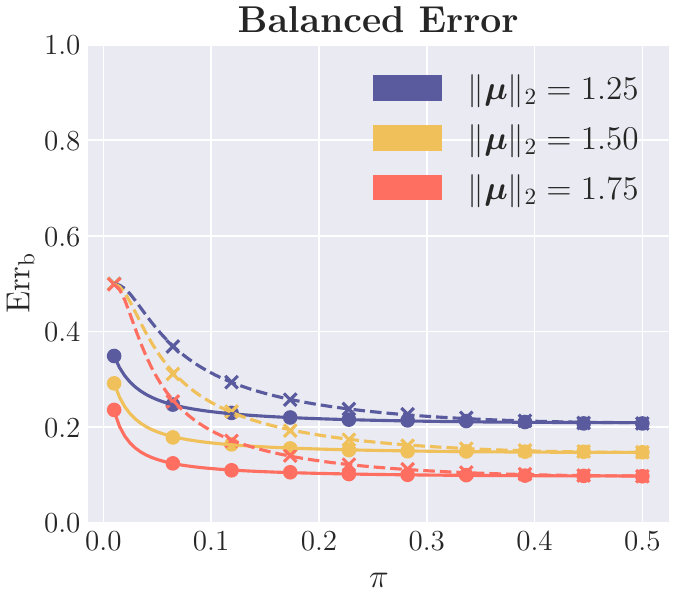}
    \caption{
    \textbf{Impact of imbalance on test errors}. We show test errors from 2-GMM simulations with margin rebalancing (solid curves) and without (dashed curves) at three levels of signal strength $\| \vmu\|_2$ under varying imbalanced ratios $\pi$.
    }
    \label{fig:Err_pi}
\end{figure}

\begin{figure}[t]
    \centering
    \includegraphics[width=0.32\textwidth]{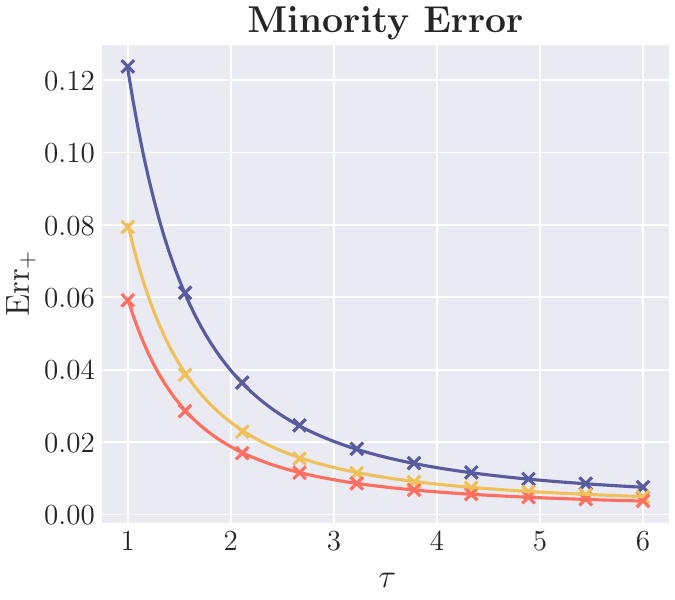}
    \includegraphics[width=0.32\textwidth]{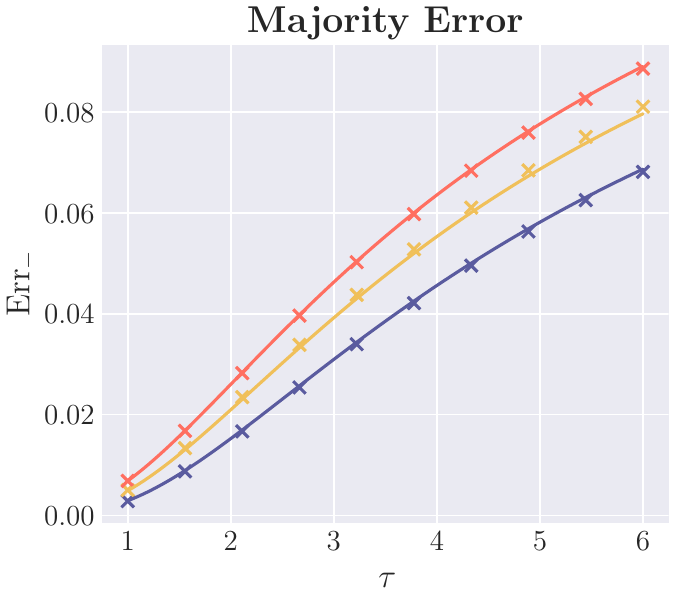}
    \includegraphics[width=0.32\textwidth]{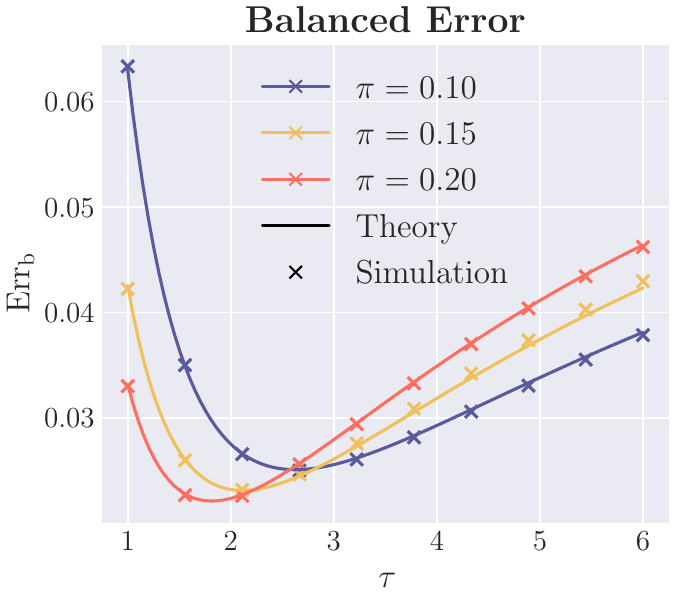}
    \caption{
    \textbf{Effects of margin rebalancing on test errors}. We show test errors from 2-GMM simulations at three different imbalance ratios under varying $\tau$.
    }
    \label{fig:Err_tau}
\end{figure}

We also plot test errors against different values of $\tau$ in \cref{fig:Err_tau} under the same simulation setting. The minority and majority errors have monotone but opposite trends in $\tau$, since increasing $\tau$ essentially moves the decision boundary from the side of minority class to the majority class. Such trade-off between the two classes results in a U-shaped curve for the balanced error. This indicates that we can find a unique optimal $\tau=\tau^{\mathrm{opt}}$ which minimizes $\Err_\mathrm{b}$, and $\tau^{\mathrm{opt}}$ is larger as $\pi$ becomes smaller.

For the high imbalance regime, we generate imbalanced 2-GMM based on \cref{setup-high-imbalance} with a sufficiently large dimension $d=2000$. We choose $\tau = \tau_d = d^r$ for different values of $r \ge 0$. We fix $b = 0.3$, $c = 0.1$ and vary $a, r$, and then we train a margin-rebalanced SVM \cref{eq:SVM-m-reb} for each configuration. Figure~\ref{fig:High_imb_heat} shows that there are three phases in terms of the majority/minority errors. In particular, the margin rebalancing is crucial for one phase with moderate signal strength.

\begin{figure}[t]
    \centering
    \includegraphics[height=0.47\textwidth]{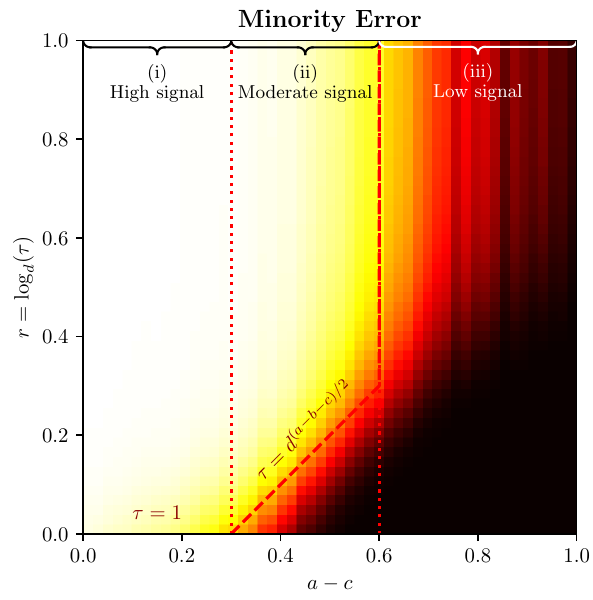}
    \includegraphics[height=0.47\textwidth]{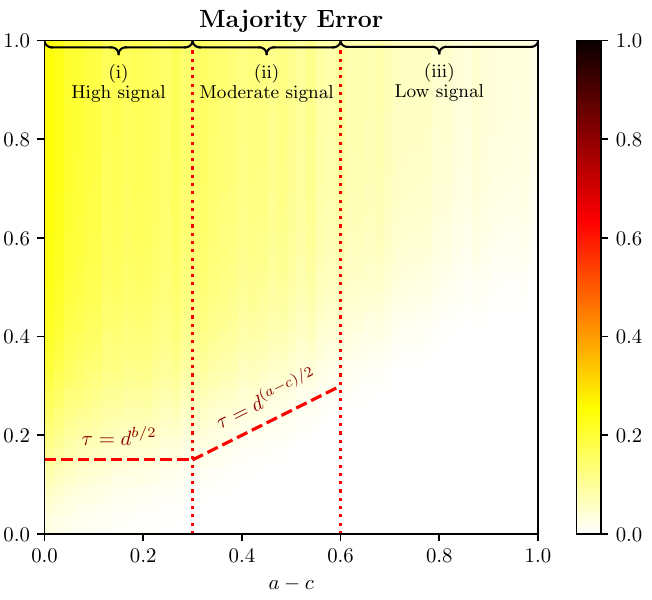}
    \caption{
        \textbf{Phase transition in high imbalance regime}. Minority/majority errors  under different settings of parameters $(a, b, c)$ and $\tau = d^r$. \textbf{Left:} minority accuracy is (i) high for any $\tau$ under high signal, (ii) high for $\tau \gg d^{(a - b - c)/2}$ under moderate signal, but (iii) low for any $\tau$ under low signal. 
        \textbf{Right:} majority accuracy is close to 1 under high and moderate signal as long as $\tau$ is not too large.   
    }
    \label{fig:High_imb_heat}
\end{figure}

\paragraph{Theoretical foundation.} For the proportional regime, denote $\Err_+^*$, $\Err_-^*$, $\Err_\mathrm{b}^*$ as the limits of $\Err_+$, $\Err_-$, $\Err_\mathrm{b}$ as $n \to \infty$, respectively, then we have the following result.
\begin{prop}[Optimal $\tau$ in proportional regime, informal version of \cref{prop:tau_optimal}]\label{prop:tau_opt}
    Consider 2-GMM with asymptotics $n/d \to \delta \in (0,\infty)$ as $n, d \to \infty$.
    Define $\tau^\mathrm{opt}$
    as the optimal margin ratio which minimizes the asymptotic balanced error
\begin{equation*}
    \tau^\mathrm{opt} :=  \argmin_{\tau} \Err_\mathrm{b}^* = \argmin_{\tau} \big\{ \Phi(- \rho^*\norm{\bmu} - \beta_0^* )
    + \Phi(- \rho^*\norm{\bmu} + \beta_0^* ) \big\}.
\end{equation*}
When $\tau = \tau^\mathrm{opt}>0$, we have $\beta_0^* = 0$, $\Err_+^* = \Err_-^* = \Err_\mathrm{b}^*$. 
\end{prop}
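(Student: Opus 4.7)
The plan is to prove this claim via a three-step argument combining a variational characterization of the rebalanced SVM, a convexity bound, and a first-order analysis. First, I would establish the natural analog of \cref{thm:SVM} for the margin-rebalanced problem \eqref{eq:SVM-m-reb}: by the same Gaussian comparison arguments, the asymptotic parameters $(\rho^*(\tau), \beta_0^*(\tau), \kappa^*(\tau))$ are characterized as the unique solution of
\begin{equation*}
    \max_{\rho, \beta_0, \kappa > 0}\ \kappa \quad \text{subject to} \quad (1-\pi)\, M(\kappa - \rho\|\vmu\| + \beta_0) + \pi\, M(\tau\kappa - \rho\|\vmu\| - \beta_0) \le (1-\rho^2)/\delta,
\end{equation*}
where $M(x) := \E[(x - G)_+^2]$ for $G \sim \normal(0, 1)$. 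This scalar form comes from optimizing out $\xi$ in the rebalanced analog of \eqref{eq:asymp} pointwise in $(Y, G)$, saturating each margin constraint. Writing $a(\tau) := \rho^*(\tau)\|\vmu\|$ and $b(\tau) := \beta_0^*(\tau)$, the asymptotic balanced error becomes $\ell(\tau) := \tfrac{1}{2}[\Phi(-a(\tau) - b(\tau)) + \Phi(-a(\tau) + b(\tau))]$.

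The second step uses convexity of $\Phi$ on $(-\infty, 0]$. When $a(\tau) \ge |b(\tau)|$, both arguments $-a \pm b$ lie in the convex region, and Jensen's inequality gives $\ell(\tau) \ge \Phi(-a(\tau))$ with equality if and only if $b(\tau) = 0$. Hence any minimizer $\tau^{\mathrm{opt}}$ of $\ell$ must either satisfy $b(\tau^{\mathrm{opt}}) = 0$ or else $|b(\tau^{\mathrm{opt}})| > a(\tau^{\mathrm{opt}})$; the latter regime yields $\ell(\tau^{\mathrm{opt}}) \ge 1/2$ and is strictly dominated by the naive SVM at $\tau = 1$, so it can be ruled out. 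Once $b(\tau^{\mathrm{opt}}) = 0$ is established, the identities $\Err_+^* = \Err_-^* = \Phi(-a(\tau^{\mathrm{opt}})) = \Err_\mathrm{b}^*$ are immediate.

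The third and pivotal step is to exhibit such a $\tau^{\mathrm{opt}}$ and verify global optimality. Using the implicit function theorem on the KKT system of the variational problem, I would show that $\tau \mapsto b(\tau)$ is smooth and strictly monotone, with $b(\tau) \to -\infty$ as $\tau \to 0^+$ and $b(\tau) \to +\infty$ as $\tau \to \infty$; this yields a unique $\tau^* > 0$ with $b(\tau^*) = 0$. To identify $\tau^* = \tau^{\mathrm{opt}}$ and close the Jensen bound, the envelope theorem applied to $\tau \mapsto \rho^*(\tau)$ at $\tau^*$ (combined with the KKT condition $(1-\pi) M'(\kappa^* - a^* + b^*) = \pi M'(\tau\kappa^* - a^* - b^*)$, which holds pointwise in $\tau$) yields $(\rho^*)'(\tau^*) = 0$, and strict convexity of $M$ upgrades this critical point to a global maximum of $\tau \mapsto a(\tau)$; combining with the Jensen bound gives $\tau^{\mathrm{opt}} = \tau^*$.

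The main obstacle will be the rigorous identification of $\tau^*$ with $\argmax_\tau a(\tau)$. The natural reflection symmetry $\beta_0 \mapsto -\beta_0, \tau \mapsto 1/\tau$ does not preserve the problem because $\pi \neq 1 - \pi$, so one cannot simply invoke symmetry; the identification instead requires a careful envelope-theorem computation together with quasi-concavity of $\tau \mapsto a(\tau)$ derived from strict convexity of $M$. A minor technicality is verifying $a(\tau) \ge |b(\tau)|$ in a neighborhood of $\tau^*$, which should follow from the separability assumption $\delta < \delta_c(\pi, \|\vmu\|)$ ensuring a non-degenerate solution to the variational problem.
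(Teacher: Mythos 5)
Your overall roadmap (reduce to $\beta_0^*=0$, then argue monotonicity of $\tau\mapsto\beta_0^*(\tau)$) is structurally right, but there are two substantive problems.

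The first and most important gap is that you treat $a(\tau):=\rho^*(\tau)\|\vmu\|$ as a nontrivial function of $\tau$ and devote your entire third step (envelope theorem, quasi-concavity of $a$, identification of $\tau^*$ with $\argmax_\tau a(\tau)$) to controlling it. But the paper establishes in \cref{thm:SVM_main}\ref{thm:SVM_main_var} (and explicitly in \cref{lem:gordon_eq}, via the observation that the first equation in the system \eqref{eq:SVM_sys_eq_rho}--\eqref{eq:SVM_sys_eq_bk2} is $\tau$-free) that $\rho^*$ does \emph{not} depend on $\tau$. The intuition is the non-asymptotic fact \cref{prop:SVM_tau_relation}: margin rebalancing only shifts $\hat\beta_0$, never changes $\hat\vbeta$. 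Once you know $a$ is a constant, the minimization of $\Err_{\mathrm{b}}^*$ over $\tau$ reduces to minimizing $f(x):=\Phi(-a+x)+\Phi(-a-x)$ over $x=\beta_0^*$, and your step~3 is solving a nonexistent problem.

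The second issue is in your Jensen argument. You write that the regime $|b(\tau)|>a(\tau)$ yields $\ell(\tau)\ge 1/2$. This is false: since $f$ is strictly increasing on $(0,\infty)$ with $f(0)=2\Phi(-a)$ and $\lim_{x\to\infty}f(x)=1$, one has $\ell(\tau)=f(b)/2<1/2$ for \emph{every} finite $b$; in particular $\ell<1/2$ at the boundary $|b|=a$ and beyond, so you cannot rule out that regime this way. The clean fix, which is what the paper does, is to differentiate directly: $f'(x)=\phi(-a+x)-\phi(-a-x)$, and since $a>0$ one has $|a-x|<a+x$ for $x>0$, hence $f'(x)>0$ for $x>0$ and by the same token $f'(x)<0$ for $x<0$, so $x=0$ is the unique global minimizer --- no restriction to $|x|\le a$ is needed and the case split disappears. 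With $\rho^*$ constant in $\tau$ and $\beta_0^*(\tau)$ strictly increasing in $\tau$ (which you correctly identify, and which the paper proves in \cref{lem:tau_mono}), this gives the unique $\tau^{\mathrm{opt}}$ with $\beta_0^*(\tau^{\mathrm{opt}})=0$, and $\Err_+^*=\Err_-^*=\Phi(-\rho^*\|\vmu\|_2)$ follows immediately.
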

A critical observation is that, changing $\tau$ only has an effect on $\hat \beta_0$ but not $\hat \vbeta$. Thus, changing $\tau$ is effectively shifting the decision boundaries between the two classes. Our analysis reveals that the optimal $\tau$ has a complicated dependence on $\pi, \norm{\vmu}, \delta$. When the problem is not close to degenerate, roughly speaking $\tau^\mathrm{opt} \asymp \sqrt{1/\pi}$; see Section~\ref{sec:rebalacing} for details. 


Under $\tau = \tau^\mathrm{opt}$, our theoretical result (see \cref{prop:Err_monotone} for a formal statement) shows monotone trends of the errors: the limiting minority/majority/balanced error is a decreasing function of $\pi \in (0, \frac12)$ (imbalance ratio), $\norm{\bmu}$ (signal strength), and $\delta$ (aspect ratio); see summary in \cref{tab:monotone}.

For the high imbalance regime, margin rebalancing is necessary to achieve a small balanced error when the ``signal strength'' is moderate, which matches our empirical observations in \cref{fig:High_imb_heat}.

\begin{thm}[High imbalance] \label{thm:high-imbalance}
Consider 2-GMM with asymptotics \cref{setup-high-imbalance} as $d \to \infty$. Suppose that $a - c < 1$.
\begin{enumerate}
\item \label{thm:high-imb_high}
\textbf{High signal} (no need for margin rebalancing): $a - c< b$. If we choose $1 \le \tau_d \ll d^{b/2}$, then 
\begin{equation*}
\Err_+ = o(1), \qquad \Err_- = o(1).
\end{equation*}
\item \label{thm:high-imb_moderate}
\textbf{Moderate signal} (margin rebalancing is crucial): $b < a - c < 2b$. If we choose $d^{a-b-c} \ll \tau_d \ll d^{(a-c)/2}$, then
\begin{equation*}
\Err_+ = o(1), \qquad \Err_- = o(1).
\end{equation*}
However, if we naively choose $\tau_d \asymp 1$, then 
\begin{equation*}
\Err_+ = 1 - o(1), \qquad \Err_- = o(1).
\end{equation*}
\item \label{thm:high-imb_low}
\textbf{Low signal} (no better than random guess): $a - c > 2b$. For any $\tau_d$, we have
\begin{equation*}
\Err_\mathrm{b} \ge \frac12 -  o(1).
\end{equation*}
\end{enumerate}
\end{thm}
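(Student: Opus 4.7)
\textbf{Plan for Part~(\ref{thm:high-imb_high}) (high signal).} I would reduce the margin-rebalanced SVM~\cref{eq:SVM-m-reb} to an \emph{oracle candidate} $\vbeta^{\mathrm{or}}=\bmu/\norm{\bmu}$ with a tuned intercept. Each training logit along this direction is $y_i\norm{\bmu}+g_i^{(y_i)}$ with $g_i^{(\pm)}\sim\normal(0,1)$, so sub-Gaussian maxima give $\max_i|g_i^{(\pm)}|=O(\sqrt{\log d})$, and solving the two active constraints yields an oracle margin $\hat\kappa^{\mathrm{or}}\asymp\norm{\bmu}/(\tau+1)$, which is positive since $\norm{\bmu}^2=d^b$ dominates $\log d$. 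By optimality, $\hat\kappa\ge\hat\kappa^{\mathrm{or}}$. Averaging the SVM constraints over $\cI_+$ and $\cI_-$ separately and invoking $\bigl\|\tfrac{1}{n_\pm}\sum_{i\in\cI_\pm}\vg_i\bigr\|=O(\sqrt{d/n_\pm})$, which equals $o(\norm{\bmu})$ here because $d/n_+=d^{a-c}\ll d^b$, I obtain $2\hat\rho\norm{\bmu}\ge(\tau+1)\hat\kappa-o(\norm{\bmu})$, forcing $\hat\rho=1-o(1)$ and $|\hat\beta_0|=o(\norm{\bmu})$. The error formulas $\Err_\pm=\Phi(-\hat\rho\norm{\bmu}\mp\hat\beta_0)$ then give $\Err_\pm=o(1)$.

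\textbf{Plan for Part~(\ref{thm:high-imb_moderate}) (moderate signal).} The oracle direction is defeated by minority noise since $\sqrt{d/n_+}\gg\norm{\bmu}$. Instead I would use the data-dependent candidate $\vbeta^{\mathrm{mean}}=\bar{\vx}^{(+)}/\norm{\bar{\vx}^{(+)}}$ where $\bar{\vx}^{(+)}=\tfrac{1}{n_+}\sum_{i\in\cI_+}\vx_i$. Using $\norm{\bar{\vx}^{(+)}}\asymp\sqrt{d/n_+}$ and the self-attention term $\norm{\vg_i}^2/n_+\asymp d/n_+$, a minority training logit is of order $\sqrt{d/n_+}=d^{(a-c)/2}$, whereas a majority training logit receives only the signal contribution $\norm{\bmu}^2\sqrt{n_+/d}=d^{b-(a-c)/2}$. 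Balancing these asymmetric margins dictates $\tau\gg d^{a-b-c}$, while the upper bound $\tau\ll d^{(a-c)/2}$ prevents the rebalanced intercept from erasing the moderate majority margin. A repeat of the averaging argument of Part~(\ref{thm:high-imb_high}) then converts the candidate's margin into $\hat\rho\norm{\bmu}\pm\hat\beta_0\to\infty$, giving $\Err_\pm=o(1)$. For the sub-case $\tau_d\asymp 1$, a direct comparison with the same candidate shows that the optimal intercept is shifted to $\hat\beta_0\approx-\tfrac12\sqrt{d/n_+}$, which drives $\hat\rho\norm{\bmu}+\hat\beta_0\to-\infty$ and hence $\Err_+=1-o(1)$ while keeping $\Err_-=o(1)$.

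\textbf{Plan for Part~(\ref{thm:high-imb_low}) (low signal).} A direct minimization over the intercept shows $\Err_\mathrm{b}\ge\Phi(-(\hat\rho)_+\norm{\bmu})$, so it suffices to prove $(\hat\rho)_+\norm{\bmu}=o(1)$ uniformly in $\tau_d$. Writing the KKT representation $\hat\vbeta=\lambda^{-1}\bigl[(\sum_i\alpha_i)\bmu+\sum_i\alpha_iy_i\vg_i\bigr]$ with $\lambda=\norm{\sum_i\alpha_iy_i\vx_i}$, the first-order conditions $\sum_i\alpha_iy_i=0$ and $\tau S_++S_-=1$ give $S_+=S_-=1/(\tau+1)$ with $S_\pm=\sum_{i\in\cI_\pm}\alpha_i$. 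Cauchy--Schwarz on $\cI_+$ forces $\sum_i\alpha_i^2\ge 1/((\tau+1)^2 n_+)$, while near-orthogonality of the $\vg_i$'s yields $\norm{\sum_i\alpha_iy_i\vg_i}^2\asymp d\sum_i\alpha_i^2$. Combining,
\[
\hat\rho\norm{\bmu}\approx\frac{(\sum_i\alpha_i)\,\norm{\bmu}^2}{\lambda}\lesssim\norm{\bmu}^2\sqrt{n_+/d}=d^{\,b-(a-c)/2}=o(1),
\]
where the last equality uses $a-c>2b$.

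\textbf{Main obstacle.} The hardest step is the sharp matching of upper and lower bounds in Part~(\ref{thm:high-imb_low}): the heuristic $\norm{\sum_i\alpha_iy_i\vg_i}^2\asymp d\sum_i\alpha_i^2$ must be promoted to a uniform concentration statement over the data-dependent dual weights $\{\alpha_i\}$, and the signal contribution $(\sum_i\alpha_i)\bmu$ inside $\lambda$ must be shown to be negligible in the full range $a-c>2b$ without auxiliary restrictions on $\tau_d$. Part~(\ref{thm:high-imb_moderate}) presents a parallel obstacle when passing from the explicit candidate margin to the actual SVM solution: one may need either a fine coupling to the variational problem of~\cref{thm:SVM} or a direct dual analysis exploiting that the bulk of the support-vector mass concentrates on $\cI_+$.
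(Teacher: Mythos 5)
Your Part~(1) plan is close in spirit to the paper's argument (class-averaged constraints give the upper bound, an explicit candidate gives a matching lower bound on the margin), and it can be made to work in the high-signal regime, with two caveats: you need the sharp oracle margin $\hat\kappa^{\mathrm{or}} = \tfrac{2\norm{\bmu}(1-o(1))}{\tau+1}$ (the factor $2$, not just $\asymp$) to force $\hat\rho = 1-o(1)$ from $2\hat\rho\norm{\bmu}\ge(\tau+1)\hat\kappa - o(\norm{\bmu})$, and the averaging gives only the one-sided bounds $\hat\rho\norm{\bmu}+\hat\beta_0\ge\tau\hat\kappa-o(\norm{\bmu})$ and $\hat\rho\norm{\bmu}-\hat\beta_0\ge\hat\kappa-o(1)$ (not ``$|\hat\beta_0|=o(\norm{\bmu})$''), which here happens to suffice because $\tau\hat\kappa\gtrsim\norm{\bmu}$ and $\hat\kappa\gtrsim\norm{\bmu}/\tau_d\to\infty$ when $\tau_d\ll d^{b/2}$.

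The genuine gaps are in Parts~(2) and~(3). In Part~(2) the transfer from the candidate $\bar\vx^{(+)}$ to the actual SVM solution is exactly the missing step, and the averaging trick from Part~(1) cannot supply it: in the moderate regime the optimizer's direction is noise-dominated ($\sqrt{1-\hat\rho^2}\approx 1$ and $\hat\vtheta$ aligns with $\bP_{\vmu}^\perp\bar\zz_+$), so the class-averaged noise term $\sqrt{1-\hat\rho^2}\<\bar\zz_+,\hat\vtheta\>\asymp d^{(a-c)/2}$ is of the \emph{same} order as $\tau\hat\kappa$ and dominates $\hat\rho\norm{\bmu}\asymp d^{b-(a-c)/2}$; the averaged constraint therefore says nothing about the sign or divergence of $\hat\rho\norm{\bmu}\pm\hat\beta_0$. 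Both the positive result (for $d^{a-b-c}\ll\tau_d\ll d^{(a-c)/2}$) and the negative result $\Err_+=1-o(1)$ (for $\tau_d\asymp1$) require a sharp two-sided asymptotic for $\hat\beta_0$ together with the precise rate $\hat\rho\approx 2d^{(b-a+c)/2}$; the paper obtains these by sandwiching $\hat\kappa$ between $\bar\kappa-\wt O_\P(\log d)$ and $\bar\kappa=\sqrt{(\norm{\bmu}+\wt g)^2+\|\bP_{\vmu}^\perp\wt\zz\|_2^2}$, converting the polylog gap into $\hat\rho$-asymptotics via the strong concavity of $\rho\mapsto\rho A+\sqrt{1-\rho^2}B$, and then pinning down $\hat\beta_0$ through the margin-balancing identity at the support vectors after proving $\sqrt{1-\hat\rho^2}\<\zz_{\mathsf{sv}_+},\hat\vtheta\>=\sqrt{d/(\pi n)}(1+o_\P(1))$ while the majority support-vector projection is $\wt O_\P(1)$. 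Evaluating a candidate direction can never yield these properties of the optimum. In Part~(3) your dual route is genuinely different from the paper's, but its key lemma is false as stated: since $n=d^{c+1}\gg d$, the Gram matrix $\ZZ\ZZ^\top$ is singular, so no uniform lower bound of the form $\norm{\sum_i\alpha_iy_i\zz_i}_2^2\gtrsim d\sum_i\alpha_i^2$ can hold over data-dependent dual weights; restricting to support sets of size at most $d+1$ does not rescue it, because the smallest singular value of near-square $d\times(d+1)$ submatrices, uniformly over $\binom{n}{d+1}$ choices, is far below the $\sqrt d$ scale you need. The identity that does control the denominator is $\|\sum_i\alpha_iy_i\xx_i\|_2=\hat\kappa$ at the optimum, but then you need the asymptotics of $\hat\kappa$, i.e., precisely the paper's two-sided margin bound $\bar\kappa-\wt O_\P(\log d)\le\hat\kappa\le\bar\kappa$ — so the dual plan collapses back onto the primal analysis it was meant to avoid.
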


\subsection{Consequences for confidence estimation and calibration}
\label{subsec:conf_calib}

In the deep learning literature, the \emph{confidence} of a classifier often refers as the probability of the correctness of a prediction, i.e., the probability that the predicted label matches the true label. Formally, we define the confidence of the max-margin classifier as
    \begin{equation*}
        \hat p(\xx) := \sigma\bigl( \hat f(\xx) \bigr) = 
        \sigma\bigl( \< \xx, \hat\vbeta \> + \hat\beta_0 \bigr), \qquad \text{where}~\sigma(t) = \frac{1}{1 + e^{-t}}.
    \end{equation*}
We would like the confidence $\hat p$ to be a good approximation of the true conditional probability (namely Bayes-optimal probability), that is $\hat p(\xx) \approx p^*(\xx) = \P( y = 1 \,|\, \xx )$.
However, finding the Bayes-optimal probability is usually intractable in deep learning models, and estimating this conditional probability $p^*(\vx)$ requires nonparametric estimation in high dimensions, which is difficult. 
The notion of \emph{calibration} is therefore widely used in literature, which measures the faithfulness of prediction probabilities \cite{murphy1967verification, dawid1982well, gupta2020distribution, guo2017calibration}.
Formally, the confidence $\hat p$ is (approximately) \emph{calibrated} if
\begin{equation}\label{eq:calibrated}
    \hat p(\xx) \approx \hat p_0(\xx) := \P \bigl( y = 1 \,|\,  \hat p(\xx) \bigr).
\end{equation}
This notion requires that the predicted probability by $\hat p$ for any $\vx$ matches the actual probability.
We list several popular miscalibration metrics below \cite{kumar2019verified, kuleshov2015calibrated, vaicenavicius2019evaluating}:
\begin{itemize}
    \item \textbf{Calibration error.}
    \begin{equation}\label{eq:CalErr}
    \begin{aligned}
        \mathrm{CalErr}(\hat p) & :=    \E\left[ \Bigl(  
        \hat p(\xx) - \P \bigl( y = 1 \,|\,  \hat p(\xx) \bigr)
        \Bigr)^2 \right].
    \end{aligned}
    \end{equation}

    \item \textbf{Mean squared error (MSE).}
    \begin{equation}\label{eq:MSE}
        \mathrm{MSE}(\hat p)  := \E\left[ \bigl( \mathbbm{1}\{ y = 1 \} - \hat p(\xx) \bigr)^2 \right]\,.
    \end{equation}
    \item \textbf{Confidence estimation error.}

    \begin{equation}\label{eq:ConfErr}
        \mathrm{ConfErr}(\hat p) :=  \E\left[ \bigl( \hat p(\xx) - p^*(\xx) \bigr)^2 \right]\,.
    \end{equation}
\end{itemize}

\paragraph{Empirical phenomenon.}

We consider the same 2-GMM simulation experiment as in Figure~\ref{fig:Err_pi} (the proportional regime).  After margin rebalancing, we calculate the above three miscalibration metrics Eqs.~\eqref{eq:CalErr}---\eqref{eq:ConfErr} on an independent test set, average over 100 replications, and plot these errors against different values of $\pi$ in \cref{fig:Calibration}. The smooth curves represent the asymptotic errors, i.e., the limits of $\mathrm{CalErr}(\hat p)$, $\mathrm{MSE}(\hat p)$, $\mathrm{ConfErr}(\hat p)$ as $n, d \to \infty$ according to \cref{thm:SVM}. Experimental details are deferred to \cref{append_sec:exp}. Notably, all these errors increase as imbalance becomes more severe (namely $\pi$ being smaller).

\begin{figure}[t]
    \centering
    \includegraphics[width=0.32\textwidth]{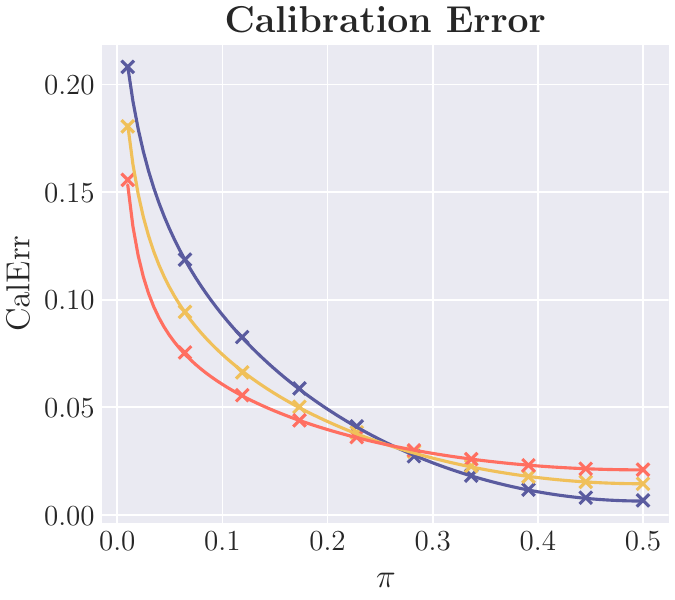}
    \includegraphics[width=0.32\textwidth]{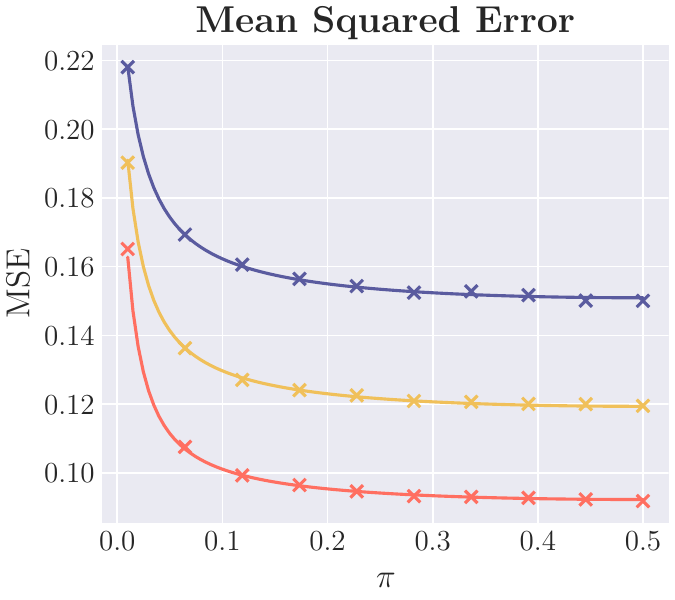}
    \includegraphics[width=0.32\textwidth]{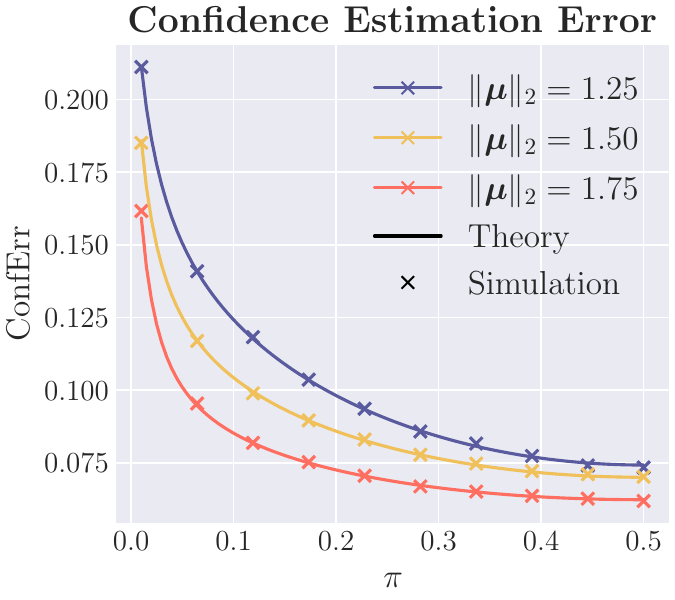}
    \caption{
    \textbf{Impact of imbalance on uncertainty quantification}. We plot miscalibration metrics \cref{eq:CalErr}--\eqref{eq:ConfErr} for 2-GMM simulations with optimal margin rebalancing $(\tau = \tau^\mathrm{opt}).$ We find that high imbalance (namely small $\pi$) exacerbates miscalibration.
    }
    \label{fig:Calibration}
\end{figure}

We also plot confidence reliability diagrams for the 2-GMM simulations. Widely used for diagnosis of classifiers \cite{guo2017calibration}, the reliability diagrams plot the value of $\mathbb{P}(y=1 \,|\, \hat p(\vx) = p)$ as a function of $p$. In \cref{fig:reliability_GMM}, we calculate $\mathbb{P}(y=1 \,|\, \hat p(\vx) = p)$ for varying $p \in \{0.05, 0.15, \ldots, 0.95\}$ using an empirical estimate based on an independent test set. The histograms show these empirical conditional probabilities after binning. The dashed diagonal line represents perfect calibration (i.e., when \cref{eq:calibrated} strictly equals), and deviation from this line means miscalibration of the classifier. 
    In our simulations, we fix $\norm{\bmu}=1$, $n=1000$, $d=500$ and choose a range for different $\pi$, under $\tau = \tau^\mathrm{opt}$. Again, we observe miscalibration getting worse when data becomes increasingly imbalanced (i.e., as $\pi$ decreases).

\begin{figure}[h]
    \centering
    \includegraphics[width=1\textwidth]{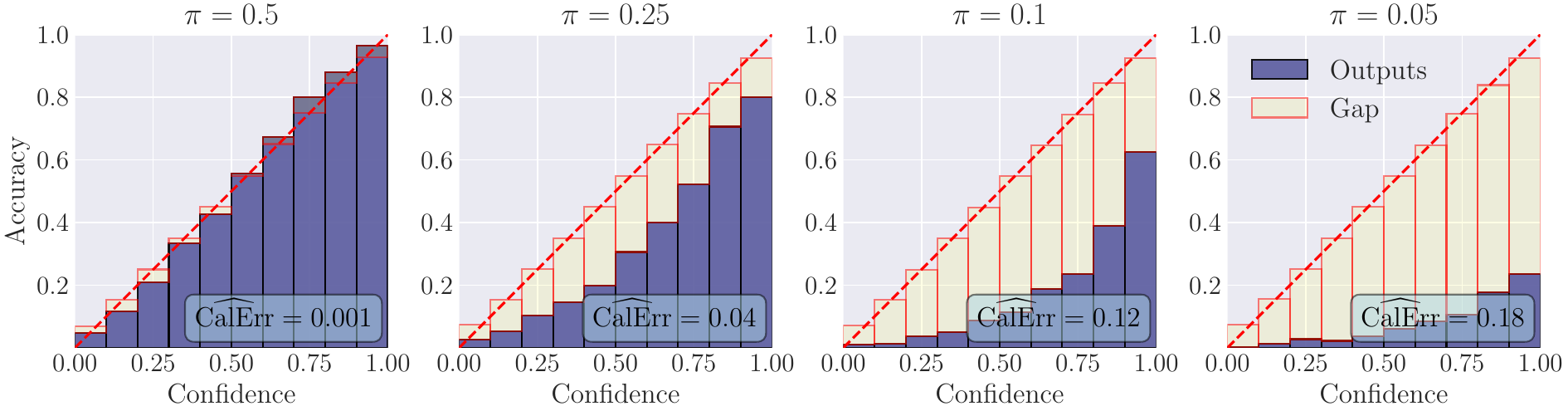}
    \caption{
    \textbf{Reliability diagrams: imbalance worsens calibration}. In our 2-GMM simulations, we train SVMs and obtain confidence $\hat p(\vx)$. For each $p$ ($x$-axis), we calculate $\mathbb{P}(y=1 \,|\, \hat p(\vx) = p)$ ($y$-axis) based on an independent test set. We find that as imbalance increases (smaller $\pi$), the classifier becomes more miscalibrated as the predicted probabilities are more inflated.
    }
    \label{fig:reliability_GMM}
\end{figure}

\paragraph{Theoretical foundation.} 

We provide theoretical results to partially explain the monotone trends. All three miscalibration metrics have limits as $n, d \to \infty$, $n/d \to \delta$, which we denote by $\mathrm{CalErr}^*$, $\mathrm{MSE}^*$, $\mathrm{ConfErr}^*$. For example, we prove that $\mathrm{MSE}^*$ is a monotone decreasing function of model parameters $\pi$, $\norm{\bmu}_2$, $\delta$, and $\mathrm{CalErr}^*$ is monotone decreasing in $\delta$. \cref{tab:monotone} summarizes the monotone behavior of test errors and miscalibration metrics that we 
establish
in this paper.

\begin{table}[h!]
\begin{equation*}
\renewcommand{\arraystretch}{1.2}
    \begin{array}{r|c|ccc}
    \hline
           & \mathrm{Err}_{+}^*, \mathrm{Err}_{-}^*, \mathrm{Err}_{\mathrm{b}}^* & \mathrm{CalErr}^*  & \mathrm{MSE}^*  &  \mathrm{ConfErr}^* \\
    \hline
      \text{imbalance ratio}~\pi \uparrow
          &  \downarrow ~ \text{(Prop.~\ref{prop:Err_monotone})}    
          &  
          &  \downarrow ~ \text{(Prop.~\ref{prop:conf})}    
          &  \downarrow ~ \text{(Claim~\ref{claim:conf})}   \\
      \text{signal strength}~\norm{\bmu}_2 \uparrow
          &  \downarrow ~ \text{(Prop.~\ref{prop:Err_monotone})}    
          &  \downarrow ~ \text{(Claim~\ref{claim:conf})}
          &  \downarrow  ~ \text{(Prop.~\ref{prop:conf})}   
          &       \\
      \text{aspect ratio}~n/d \to \delta \uparrow
          &  \downarrow ~ \text{(Prop.~\ref{prop:Err_monotone})}    
          &  \downarrow ~ \text{(Claim~\ref{claim:conf})}
          &  \downarrow ~ \text{(Prop.~\ref{prop:conf})}    
          &  \downarrow  ~ \text{(Prop.~\ref{prop:conf})}  \\
    \hline
    \end{array}
\end{equation*}
\vspace{-5mm}
\caption{Monotonicity of test errors and miscalibration metrics on model parameters. }
\label{tab:monotone}
\end{table}

\subsection{Extensions}

Our theory focuses on two-class classification problems with an isotropic covariance matrix. For the cases of multiple classes and non-isotropic covariance matrices, we believe that similar characterization of ELDs exist. In Section~\ref{sec:discuss}, we provide theoretical conjectures and empirical support for the general cases.

\section{Related work}

\paragraph{Imbalanced classification.} In the classical literature on logistic regression with class imbalance, it is known that large-sample asymptotics is inaccurate with small sample sizes and thus bias correction formulas are derived \cite{schaefer1983bias, anderson1979logistic, mccullagh1983generalized}. Under label shift (also known as choice-based sampling), intercept correction and upweighting are developed to address imbalanced classes \cite{xie1989logit, king2001logistic}. For kernel methods and tree-based methods, additional techniques such as oversampling, undersampling, and synthetic data generation are used to mitigate limited minority samples \cite{he2009learning, chawla2002smote, he2008adasyn}. However, these methods are typically ineffective for separable data \cite{cao2019learning}, nor do they aim to address overfitting in high dimensions.


\paragraph{Margin-based methods.} Margin plays an important role in classification methods such as SVM. For imbalanced classification, promoting unequal margins is proposed for the perceptron algorithm \cite{li2002perceptron} and SVM \cite{li2005using}, and more recently for training deep neural networks \cite{huang2016learning, khan2019striking, liu2019large, cao2019learning}. For theoretical analysis, 
many earlier works \cite{bartlett1996valid, bartlett1998boosting, koltchinskii2002empirical, bartlett2002rademacher, bartlett2017spectrally} obtained margin-dependent generalization bounds for a variety of classifiers and algorithms,
showing a large margin is beneficial to good generalization.
In particular, the theoretical result \cite{koltchinskii2002empirical}
is used by \cite{cao2019learning} to motivate the margin-rebalancing loss function.
However, the margin-dependent bounds are agnostic to data distributions and may be excessively conservative. 

\paragraph{High-dimensional asymptotics.} Classical asymptotic analysis for fixed dimensions or low dimensions are known to be inaccurate \cite{el2013robust, donoho2016high}. A line of recent work studies estimation and inference for high-dimensional classification problems, e.g., \cite{dobriban2018high, Pragya_highdim_logistic, sur2019logistic, candes2020logistic, montanari2023generalizationerrormaxmarginlinear, kini2021label, deng2022model, mignacco2020role, salehi2019impact, montanari2024tractability}, which refine Table~\ref{tab:1} in various ways. One core technique is Gordon's theorem, which reduces a random-matrix-based minimax problem to a simpler form. Our proofs are mainly based on this technique. Most relevant to this paper is \cite{montanari2022overparametrizedlineardimensionalityreductions}, where the authors studied projection pursuit of high-dimensions data and characterized attainable asymptotic low-dimensional distributions. However, these papers do not offer a clear characterization of the impact of class imbalance on overfitting, nor do they study the high imbalance regime or calibration.

\section{Preliminaries}
\label{sec:preliminary}

\subsection{SVM and linear separability} \label{sec:background}

Consider our 2-GMM in \cref{model}. Denote $\XX = (\vx_1,\ldots,\vx_n)^\top \in \R^{n\times d}$ and $\yy = (y_1,\ldots,y_n)^\top \in \R^n$. Recall the general margin-rebalanced SVM in \cref{eq:SVM-m-reb}. For $\tau > 0$, it is convenient to write this SVM formulation into
\begin{equation}
	\label{eq:SVM}
    \begin{array}{rl}
    \maximize\limits_{\bbeta \in \R^d, \, \beta_0 \in \R} & \min\limits_{i \in [n]} \tilde{y}_i ( \langle \xx_i, \bbeta \rangle + \beta_0 ), \\
    \text{subject to} & \norm{\bbeta}_2 \le 1
    \end{array}
\end{equation}
by introducing the transformed labels
\begin{equation}\label{eq:trans-labels}
    \tilde{y}_i = \begin{cases} \tau^{-1} , & \ \text{if} \ y_i = + 1, \\
    -1, & \ \text{if} \ y_i = -1. \end{cases}
\end{equation}

According to the following deterministic result, the solution to margin-rebalanced SVM \cref{eq:SVM-m-reb} is a simple post-hoc adjustment of the solution to the original SVM \cref{eq:SVM-0}.

\begin{prop}\label{prop:SVM_tau_relation}
	\begin{enumerate}[label=(\alph*)]
		\item 
            When data is linearly separable, \cref{eq:SVM-m-reb} has a unique solution.
            \item 
            Let $(\hat\vbeta(\tau), \hat\beta_0(\tau), \hat\kappa(\tau))$ be an optimal solution to \cref{eq:SVM-m-reb} under hyperparameter $\tau$. Then
        \begin{equation}\label{eq:margin-balance}
		    \hat\vbeta(\tau) = \hat\vbeta(1),
        \qquad
        \hat\beta_{0}(\tau) = \hat\beta_{0}(1) + \frac{\tau - 1}{\tau + 1} \hat\kappa(1),
        \qquad
        \hat\kappa(\tau) = \frac{2}{\tau + 1} \hat\kappa(1).
		\end{equation}
	\end{enumerate}
\end{prop}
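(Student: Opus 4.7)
The plan is to exploit the structural correspondence between the $\tau$-problem \eqref{eq:SVM-m-reb} and the symmetric ($\tau = 1$) problem. The objective is linear in $\kappa$ and the margin constraints are linear in $(\vbeta, \beta_0, \kappa)$, so the only nonlinear ingredient is the unit ball $\|\vbeta\|_2 \le 1$. Strict convexity of this ball will drive the uniqueness in part (a), while a simple affine bijection between the feasible sets will deliver the explicit relations in part (b).

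For part (a), I would first observe that any optimizer must satisfy $\|\hat\vbeta\|_2 = 1$: if $\|\hat\vbeta\|_2 < 1$, the rescaling $(\hat\vbeta, \hat\beta_0, \hat\kappa) \mapsto (c\hat\vbeta, c\hat\beta_0, c\hat\kappa)$ with $c = \|\hat\vbeta\|_2^{-1} > 1$ remains feasible yet strictly increases the margin, contradicting optimality. If two distinct optima $(\vbeta_j, \beta_{0,j}, \kappa^\star)$, $j=1,2$, existed with $\vbeta_1 \ne \vbeta_2$, their midpoint would be feasible with the same $\kappa^\star$, yet strict convexity of the $\ell_2$ ball gives $\|(\vbeta_1+\vbeta_2)/2\|_2 < 1$, again admitting a strict rescaling---contradiction. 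With $\hat\vbeta$ fixed, the best margin as a function of $\beta_0$ is the pointwise minimum of the two affine maps $\tau^{-1}\min_{i \in \mathcal{I}_+}(\langle \xx_i, \hat\vbeta\rangle + \beta_0)$ and $-\max_{i \in \mathcal{I}_-}(\langle \xx_i, \hat\vbeta\rangle + \beta_0)$, which have slopes of opposite sign in $\beta_0$; the maximum is therefore attained at the unique point where the two coincide, pinning down $\hat\beta_0$.

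For part (b), I would establish a bijection between the feasible sets of the two problems via the affine map
\begin{equation*}
\Phi_\tau: (\vbeta, \beta_0, \kappa) \longmapsto \Bigl(\vbeta,\ \beta_0 + \tfrac{\tau-1}{\tau+1}\kappa,\ \tfrac{2}{\tau+1}\kappa\Bigr).
\end{equation*}
A direct substitution shows $\Phi_\tau$ sends feasible points of the $\tau=1$ problem to feasible points of the $\tau$-problem: starting from $y_i(\langle \xx_i, \vbeta\rangle + \beta_0) \ge \kappa$, for $i \in \mathcal{I}_+$ the positive-class constraint of \eqref{eq:SVM-m-reb} saturates at $\tau \cdot \frac{2}{\tau+1}\kappa$, and for $i \in \mathcal{I}_-$ the negative-class constraint holds at $\frac{2}{\tau+1}\kappa$. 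The inverse $(\vbeta,\beta_0',\kappa') \mapsto (\vbeta,\ \beta_0' - \tfrac{\tau-1}{2}\kappa',\ \tfrac{\tau+1}{2}\kappa')$ preserves feasibility by the symmetric calculation. Because $\Phi_\tau$ leaves $\vbeta$ (and hence the ball constraint) untouched and rescales $\kappa$ by the positive factor $2/(\tau+1)$, it carries maximizers to maximizers; combined with the uniqueness from (a), this bijection forces exactly the three identities in \eqref{eq:margin-balance}.

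I do not anticipate any substantive obstacle; the only care needed is the bookkeeping of shift and scale factors when verifying that $\Phi_\tau$ preserves feasibility, together with the minor observation that linear separability ensures $\hat\kappa(1) > 0$ so the formulas remain in the positive-margin regime and no degenerate cases arise.
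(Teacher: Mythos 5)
Your proof is correct. For part (a), you argue uniqueness directly from strict convexity of the $\ell_2$ ball (forcing $\|\hat\vbeta\|_2 = 1$ and ruling out two distinct optima via the midpoint) and from the concave piecewise-linear dependence of the margin on $\beta_0$; the paper instead defers to the equivalence with the strictly convex quadratic program \eqref{eq:SVM-1}. These are equivalent, and yours is the more self-contained route.

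For part (b), you genuinely diverge from the paper. The paper's proof (via \cref{lem:indep_tau}) first establishes a margin-balancing condition $\kappa_+ = \kappa_-$ at the optimal intercept, then expresses $\hat\vbeta$, $\hat\beta_0$, and $\hat\kappa$ explicitly in terms of support vectors $\mathsf{sv}_\pm(\hat\vbeta)$, showing that $\hat\vbeta$ is the maximizer of $\langle \xx_{\mathsf{sv}_+(\vbeta)} - \xx_{\mathsf{sv}_-(\vbeta)}, \vbeta\rangle$ (which is $\tau$-free) and then solving a $2\times 2$ linear system for $\hat\beta_0(\tau)$ and $\hat\kappa(\tau)$. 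Your affine bijection $\Phi_\tau$ between the feasible sets of the $\tau=1$ and $\tau$ problems sidesteps all of this support-vector bookkeeping: you verify feasibility-preservation in both directions, note $\Phi_\tau$ leaves $\vbeta$ (hence the ball constraint) untouched while rescaling $\kappa$ by the positive constant $2/(\tau+1)$, and then uniqueness from (a) pins down the identities in \eqref{eq:margin-balance} in one stroke. The trade-off is that your route is cleaner for this proposition in isolation, while the paper's support-vector formulas \eqref{eq:kappa_hat}--\eqref{eq:beta0_hat} are reused later (notably in the high-imbalance analysis in \cref{append_sec:high_imb}), so the longer detour buys the paper reusable machinery.
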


\begin{rem}
	As shown in \cref{fig:SVM_cartoon}, there is a clear geometric interpretation of $\hat\vbeta, \hat\beta_0, \hat\kappa$ and $\tau$ in the max-margin classifier.
	\begin{itemize}
		\item $\hat\vbeta(\tau)$ determines the support vectors and the ``direction'' of decision boundary, which does not depend on $\tau$. Notably, margin rebalancing does not change $\hat\vbeta$.
		\item $\hat\beta_0(\tau)$ balances the positive/negative margins via \cref{eq:margin-balance}, where $\tau$ determines the amount of the shift.
		\item $\hat\kappa(\tau) \propto (\tau + 1)^{-1}$ in a fixed dataset.
	\end{itemize}
\end{rem}

\subsection{Connections between logistic regression and SVM}
\label{subsec:LR_vs_SVM}

The two classifiers in Eqs.~\eqref{eq:logistic} and \eqref{eq:SVM-0} are strongly connected in high dimensions: the SVM can be viewed as the limit of logistic regression when the data are linearly separable. 


We first introduce the background of inductive bias from \cite{rosset2003margin, rosset2004boosting} and \cite{Soudry_implicit_bias, ji2019riskparameterconvergencelogistic}. 
In logistic regression, we minimize the empirical loss \cref{eq:logistic}
where $\ell(t) = \log (1+e^{-t})$ is the logistic loss. Since the loss is strictly convex, if it admits a finite minimizer, then the minimizer must be unique. However, when the data are linearly separable, there is no finite minimizer and the objective value goes to $0$ for certain $\vbeta$ with $\|\vbeta\|_2 \to \infty$. To obtain a unique solution, we may add a regularizer:
\begin{equation}\label{eq:logistic-l2}
        \bigl( \hat\vbeta_\lambda, \hat\beta_{0,\lambda} \bigr) := \argmin_{\vbeta \in \R^d, \beta_0 \in \R}
        \left\{
        \frac{1}{n} \sum_{i=1}^n \ell \bigl( y_i(\langle \vx_i, \vbeta \rangle + \beta_0) \bigr) +  \lambda \| \vbeta\|_2^2 \right\}.
    \end{equation}
Let $(\hat\vbeta, \hat\beta_{0})$
be the max-margin solution to SVM \cref{eq:SVM-0}. Then it has been shown by \cite{rosset2003margin, rosset2004boosting} that without the presence of intercept
\begin{equation}\label{eq:explicit_bias}
    \lim_{\lambda \to 0^+} \frac{\hat \vbeta_\lambda}{\| \hat \vbeta_\lambda \|_2} = \hat \vbeta.
\end{equation}
From this view, logistic regression with a vanishing ridge regularizer is equivalent to max-margin classifier in the separable regime. By modifying the proof in \cite{rosset2003margin}, we can generalize their conclusion with $\beta_0$ included.
\begin{prop}\label{prop:explicit_bias}
    Let $(\hat\vbeta_\lambda, \hat\beta_{0,\lambda})$ be the minimizer of the regularized objective function in \cref{eq:logistic-l2}, where $\ell: \R \to \R_{\ge 0}$ is any convex, non-decreasing, rapidly varying loss function in the sense that
    \begin{equation*}
        \lim_{t \to \infty} \frac{\ell(\varepsilon t)}{\ell(t)} = \infty, \qquad \forall\, \varepsilon \in (0, 1).
    \end{equation*}
    Assume the data is linearly separable. 
    Then the convergence in \cref{eq:explicit_bias} holds. Moreover, we have $\lim_{\lambda \to 0^+} \hat\beta_{0,\lambda}/\| \hat \vbeta_\lambda \|_2 = \hat\beta_{0}$.
\end{prop}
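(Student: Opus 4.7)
I would adapt the Rosset--Zhu--Hastie argument from \cite{rosset2003margin, rosset2004boosting} to accommodate the unregularized intercept. The strategy is to compare the regularized objective at the minimizer $(\hat\vbeta_\lambda, \hat\beta_{0,\lambda})$ with its value at the scaled max-margin solution $(c_\lambda \hat\vbeta, c_\lambda \hat\beta_0)$, where $c_\lambda := \|\hat\vbeta_\lambda\|_2$, and use the rapidly-varying property of $\ell$ to pin the normalized solution onto the (unique) max-margin direction. I would first show $c_\lambda \to \infty$ as $\lambda \to 0^+$. Plugging $(R\hat\vbeta, R\hat\beta_0)$ into the regularized objective and using $y_i(\langle \vx_i, \hat\vbeta\rangle + \hat\beta_0) \ge \hat\kappa > 0$ gives the upper bound $\ell(R\hat\kappa) + \lambda R^2$. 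The rapidly-varying property forces $\ell(t) \to 0$ as $t \to \infty$, so choosing $R = R(\lambda) \to \infty$ appropriately makes this bound $o(1)$. Hence the full objective at $(\hat\vbeta_\lambda, \hat\beta_{0,\lambda})$ is $o(1)$; if $c_\lambda$ stayed bounded on a subsequence, the loss term would be bounded below by a positive constant (the empirical loss on linearly separable data cannot vanish on a bounded set of $(\vbeta, \beta_0)$), a contradiction.

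\textbf{Step 2 (normalized compactness).} Let $\tilde\vbeta_\lambda := \hat\vbeta_\lambda/c_\lambda$ and $\tilde\beta_{0,\lambda} := \hat\beta_{0,\lambda}/c_\lambda$, so $\tilde\vbeta_\lambda$ is unit-norm. I would next establish $|\tilde\beta_{0,\lambda}| = O(1)$. Suppose along a subsequence $|\tilde\beta_{0,\lambda}| \to \infty$, and pick $i^\ast \in [n]$ with $y_{i^\ast}$ opposite in sign to $\tilde\beta_{0,\lambda}$ (such an index exists since both labels occur). Because $|\langle \vx_{i^\ast}, \tilde\vbeta_\lambda\rangle| \le \|\vx_{i^\ast}\|_2$, one has $y_{i^\ast}(\langle \vx_{i^\ast}, \tilde\vbeta_\lambda\rangle + \tilde\beta_{0,\lambda}) \to -\infty$, hence $y_{i^\ast}(\langle \vx_{i^\ast}, \hat\vbeta_\lambda\rangle + \hat\beta_{0,\lambda}) \to -\infty$. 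Since $\ell \ge 0$ is convex, non-constant, and vanishes at $+\infty$, one must have $\ell(-\infty) > 0$, so this single term keeps the empirical loss bounded below, contradicting Step 1. Passing to a further subsequence, $(\tilde\vbeta_\lambda, \tilde\beta_{0,\lambda})$ admits a limit $(\tilde\vbeta_\ast, \tilde\beta_{0,\ast})$.

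\textbf{Step 3 (margin optimality).} Write $\kappa_\lambda := \min_{i \in [n]} y_i(\langle \vx_i, \tilde\vbeta_\lambda\rangle + \tilde\beta_{0,\lambda})$ and let $\kappa_\ast$ be its limit along the subsequence. Cancelling the common $\lambda c_\lambda^2$ in
\begin{equation*}
    \tfrac{1}{n}\,\ell(c_\lambda \kappa_\lambda) + \lambda c_\lambda^2 \;\le\; \tfrac{1}{n}\sum_{i=1}^n \ell\!\left(c_\lambda y_i(\langle \vx_i, \tilde\vbeta_\lambda\rangle + \tilde\beta_{0,\lambda})\right) + \lambda c_\lambda^2 \;\le\; \ell(c_\lambda \hat\kappa) + \lambda c_\lambda^2
\end{equation*}
yields $\ell(c_\lambda \kappa_\lambda) \le n\,\ell(c_\lambda \hat\kappa)$, where both bounds use monotonicity of $\ell$. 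If $\kappa_\ast < \hat\kappa$, pick $\varepsilon \in (0,1)$ with $\kappa_\ast < (1-\varepsilon)\hat\kappa$; eventually $\kappa_\lambda < (1-\varepsilon)\hat\kappa$, so by monotonicity $\ell(c_\lambda \kappa_\lambda) \ge \ell((1-\varepsilon) c_\lambda \hat\kappa)$. The rapidly-varying property applied at $t = c_\lambda \hat\kappa \to \infty$ forces $\ell((1-\varepsilon)t)/\ell(t) \to \infty$, contradicting the bound by $n$. Hence $\kappa_\ast \ge \hat\kappa$. Since $\|\tilde\vbeta_\ast\|_2 \le 1$, the pair $(\tilde\vbeta_\ast, \tilde\beta_{0,\ast})$ is feasible for \cref{eq:SVM-0}, so $\kappa_\ast = \hat\kappa$ and by the uniqueness in \cref{prop:SVM_tau_relation}(a), $(\tilde\vbeta_\ast, \tilde\beta_{0,\ast}) = (\hat\vbeta, \hat\beta_0)$. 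As every subsequential limit coincides, the full limit is established, giving both $\hat\vbeta_\lambda/\|\hat\vbeta_\lambda\|_2 \to \hat\vbeta$ and $\hat\beta_{0,\lambda}/\|\hat\vbeta_\lambda\|_2 \to \hat\beta_0$.

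\textbf{Main obstacle.} The key subtlety beyond \cite{rosset2003margin, rosset2004boosting} is Step 2: since the ridge penalty does not touch $\beta_0$, boundedness of $|\tilde\beta_{0,\lambda}|$ cannot be read off the regularizer and must instead be extracted from the objective value. The resolution relies on the loss being uniformly bounded away from $0$ on the negative half-line, a consequence of non-negativity, convexity, non-triviality, and the rapidly-varying decay at $+\infty$. Once the normalized intercept is controlled, the remaining steps parallel the original margin-maximization argument applied jointly in $(\vbeta, \beta_0)$.
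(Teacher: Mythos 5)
Your proof is correct and takes a genuinely different route from the paper's, which adapts the argument of \cite[Theorem 2.1]{Soudry_implicit_bias}: the paper assumes a limit point $(\vbeta^*,\beta_0^*)$ of the normalized iterates with a strictly smaller margin, uses continuity to find a neighborhood where the margin stays bounded away from $\hat\kappa$, and invokes \cite[Lemma~2.3]{Soudry_implicit_bias} (a ready-made consequence of rapid variation) to derive a contradiction with the optimality of $(\hat\vbeta_\lambda,\hat\beta_{0,\lambda})$. You instead carry out a direct, self-contained comparison of objective values: bound the objective above by $\ell(R\hat\kappa)+\lambda R^2$, infer $\|\hat\vbeta_\lambda\|_2\to\infty$, normalize, extract a convergent subsequence, and then cancel the common ridge penalty to deduce the inequality $\ell(c_\lambda\kappa_\lambda)\le n\,\ell(c_\lambda\hat\kappa)$, which by the rapidly-varying ratio forces $\kappa_*\ge\hat\kappa$. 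The essential ingredient (rapid variation blowing up the ratio of losses at distinct asymptotic margins) is the same, so the two arguments are close in spirit, but your route replaces the external lemma by a one-line comparison and is arguably more transparent.

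Your Step~2 is also a genuine addition worth noting. The paper writes $\beta_0^*:=\limsup_{\lambda\to0^+}\hat\beta_{0,\lambda}/\|\hat\vbeta_\lambda\|_2$ and then talks about a neighborhood of $(\vbeta^*,\beta_0^*)$, implicitly taking $\beta_0^*$ to be finite; no argument is offered. Because the intercept is unregularized, that finiteness is not automatic and must be extracted from the loss term, which is exactly what your Step~2 does by noting that an unbounded normalized intercept would send at least one (label-opposite) margin to $-\infty$ and hence keep the empirical loss away from zero. Two small points to tidy up if this were to be merged in: (i) the statement says ``non-decreasing,'' but your argument (correctly) treats $\ell$ as non-increasing --- worth flagging the typo; (ii) in Step~1 the parenthetical claim that the empirical loss cannot vanish on a ``bounded set of $(\vbeta,\beta_0)$'' is invoked before you have shown $\beta_0$ is bounded, so the case analysis should be reorganized: either the normalized intercept diverges (handled as in Step~2) or it stays bounded, and in the latter case compactness of margins yields the positive lower bound. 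Neither issue affects the validity of the overall argument.
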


Another approach of establishing the connection does not require adding an explicit regularizer. 
For convenience, let 
$\cL(\vbeta) = \frac1n\sum_{i=1}^n \ell( y_i \langle \vx_i, \vbeta \rangle )$ and
consider the gradient descent iterates $\vbeta^{(t+1)} = \vbeta^{(t)} - \eta \nabla \cL(\vbeta^{(t)})$ where $t=1,2,\ldots$ and 
$\vbeta^{(t)}$ is the parameter vector at iteration $t$. It it shown by \cite{Soudry_implicit_bias} that under a sufficiently small step size $\eta$, 
\begin{equation*}
    \lim\limits_{t\to\infty}\frac{\vbeta^{(t)}}{\| \vbeta^{(t)} \|_2} = \hat \vbeta,
\end{equation*}
where $\hat\vbeta := \min_{\norm{\vbeta}_2 \le 1} \min_{i \in [n]} y_i\< \xx_i, \vbeta \>$. This is often referred to as the implicit bias.


\subsection{Notations}

We typically use italic letters to denote scalars and random variables (e.g., $a, b, c, G, Y, \ldots \in \R$), boldface (italic) lowercase letters to denote (random) vectors (e.g., $\ba, \bs, \xx, \yy,  \ldots \in \R^d$), and boldface (italic) uppercase letters to denote (random) matrices (e.g. $\bA, \bP, \XX, \GG,  \ldots \in \R^{d_1 \times d_2}$). For any positive integer $n$, let $[n] = \{1, 2, \ldots, n\}$. For a scalar $a$, let $a_+ = \max\{ a, 0 \}$ and $a_+ = \max\{ -a, 0 \}$. For vectors $\bu$, $\bv$ of the same length, let $\< \bu, \bv \> = \bu^\top \bv$ denote their standard inner product, and write $\bu \perp \bv$ if they are orthogonal ($\< \bu, \bv \> = 0$). The corresponding Euclidean norm is $\norm{\bu} = \norm{\bu}_2 = \< \bu, \bu \>^{1/2}$. For a matrix $\bA$, let $\norm{\bA}_{\mathrm{op}}$ denote its operator norm and $\norm{\bA}_\mathrm{F}$ its Frobenius norm. We use $\phi$ and $\Phi$ to denote the cumulative distribution function (CDF) and probability density function (PDF) of standard normal distribution. Let $\Law(X)$ denote the distribution of random variable (or vector) $X$. We write $X \indep Y$ if $X$ and $Y$ are independent random variables.

We use $O(\cdot)$ and $o(\cdot)$ for the standard big-$O$ and small-$o$ notations. For real sequences $(a_n)_{n \ge 1}$, $(b_n)_{n \ge 1}$, we write $a_n \lesssim b_n$ or  $b_n \gtrsim a_n$ if $a_n = O(b_n)$, and $a_n \asymp b_n$ if $a_n \lesssim b_n$ and $a_n \gtrsim b_n$. We also write $a_n \ll b_n$ or $b_n \gg a_n$ if $a_n = o(b_n)$. We write $a_n \propto b_n$ if $a_n = c b_n, \forall\, n \ge 1$ for some constant $c > 0$. Let $\cond$, $\conp$, $\conL{p}$ denote stochastic convergence in distribution, in probability, in $\cL^p$, respectively, and let $\conw$ denote weak convergence of measures. We also use $O_{\P}(\cdot)$ and $o_{\P}(\cdot)$ for the standard big-$O$ and small-$o$ in probability notations. Denote $\wt O_{\P}(\cdot)$ as a variant of $O_{\P}(\cdot)$ which hides polylogarithmic factors. 

Given two probability measures $P$, $Q$ on $\R^d$, their second Wasserstein ($W_2$) distance is defined as
\begin{equation*}
    W_2(P, Q) := \left( \inf_{\gamma \in \Gamma(P, Q) }
    \int \norm{\bx - \by}_2^2 \gamma(\d\bx \times \d\by)
    \right)^{1/2},
\end{equation*}
where the infimum is taken over the set of couplings $\Gamma(P, Q)$ of distributions $P$ and $Q$. For any $x \in \R$ and $\lambda > 0$, the Moreau envelope of a continuous convex function $\ell: \R \to \R_{\ge 0}$ is defined as
\begin{equation*}
    \envelope_\ell(x; \lambda) = \envelope_{\lambda\ell}(x)
    := \min_{t \in \R} \left\{  \ell(t) +  \frac1{2\lambda} (t - x)^2 \right\},
\end{equation*}
and the proximal operator of $\ell$ is defined as
\begin{equation*}
    \prox_{\ell}(x; \lambda) =
    \prox_{\lambda \ell}(x) := \argmin_{t \in \R} \left\{ \ell(t) +  \frac1{2\lambda} (t - x)^2 \right\}.
\end{equation*}





\section{Precise asymptotics of empirical logit distribution} \label{sec:logit}

In this section, we present our main results on the asymptotics of empirical logit distribution introduced in \cref{subsec:ELD}. Recall that data $\{(\xx_i, y_i)\}_{i = 1}^n$ are i.i.d.~generated from a 2-GMM \cref{model}, i.e., $\xx_i \,|\, y_i \sim \normal(y_i \bmu, \bI_d)$, with label distribution $P_y: \P(y_i = +1) = 1 - \P(y_i = -1) = \pi \in (0, \frac12]$. We consider proportional asymptotics where $n,d \to \infty$ and $n/d \to \delta$ with $\delta \in (0,\infty)$. Based on relations between $\bmu, \pi, \delta$, we will consider linearly separable data (fitted by SVM) and non-separable data (fitted by logistic regression) separately.

We define the following functions $\delta^*: \R \to \R_{\ge 0}$ and $H_\kappa: [-1, 1] \times \R \to \R_{\ge 0}$ that are related to the critical threshold of data separability:
\begin{equation}\label{eq:sep_functions}
     \delta^*(\kappa) := \max_{ \rho \in [-1, 1] , \beta_0 \in \R }  H_\kappa(\rho, \beta_0),
     \qquad
     H_\kappa(\rho, \beta_0) := \frac{1 - \rho^2}{\E\left[ \bigl(  s(Y) \kappa - \rho \norm{\bmu}_2 + G - \beta_0 Y \bigr)_+^2 \right]},
\end{equation}
where $(Y, G) \sim P_y \times \normal(0,1)$ and 
\begin{equation}\label{eq:s_fun}
     s(y) := \begin{cases} \ \tau , & \ \text{if} \ y = + 1, \\
        \ 1, & \ \text{if} \ y = -1. \end{cases}
\end{equation}
We will show in \cref{thm:SVM_main} that the relationship between $\delta$ and $\delta^*(0)$ determines separability, where $\delta^*(0)$ does not depend on $\tau$ by definition.

We summarize the asymptotics of logit distribution for both separable and non-separable case in \cref{tab:ELD}, which is the main contribution of our theoretical results (\cref{thm:SVM_main} and \ref{thm:logistic_main}).
\begin{table}[h!]
\begin{equation*}
\renewcommand{\arraystretch}{1.2}
    \begin{array}{rll}
    \hline
           &  \textbf{limiting ELD} ~ (\hat\nu_*)  &  \textbf{cause for overfitting} ~ (\xi^*)   \\
    \hline
      \text{separable data}
          &  \Law\left( Y, \,  Y \max \{ \kappa^*, \mathtt{LOGITS} \} \right)
          &  R^* \sqrt{1 - \rho^{*2}} \xi^* = \left( \kappa^* - \mathtt{LOGITS} \right)_+  \\
      \text{non-separable data}
          &  \Law\left( Y, \,  Y \, \prox_{ \lambda^* \ell}( \mathtt{LOGITS} ) \right)
          &   R^* \sqrt{1 - \rho^{*2}} \xi^*  =  
            - \lambda^* \nabla \envelope_{ \lambda^* \ell}( \mathtt{LOGITS} ) \\
    \hline
          \textbf{limiting TLD} ~ (\hat\nu_*^\mathrm{test})
          &  \Law\left( Y, \,  Y \cdot \mathtt{LOGITS} \right)     &   \\
    \hline
    \multicolumn{3}{c}{\mathtt{LOGITS} := \rho^*\norm{\vmu}_2 R^* + R^*G + \beta_0^* Y 
    \quad \text{($R^* := 1$ in separable case)}
    }
    \end{array}
\end{equation*}
\vspace{-5mm}
\caption{Comparison of logit distributions on separable and non-separable data ($\tau = 1$).}
\label{tab:ELD}
\end{table}

\subsection{Separable data} \label{sec:logit_SVM}

For linearly separable data, recall the margin-rebalanced SVM in \cref{eq:SVM-m-reb} and \eqref{eq:SVM}. The following theorem summarizes the precise asymptotics of SVM under arbitrary $\tau$, including the limits of parameters, margin, and logit distribution. The proofs are deferred to the appendices.

Recall that data $\{(\xx_i, y_i)\}_{i = 1}^n$ are generated from 2-GMM with fixed parameters $\bmu \in \R^d$, $\pi \in (0, \frac12)$. Let $(\hat \vbeta_n, \hat \beta_{0, n})$ be an optimal solution to the margin-rebalanced SVM \cref{eq:SVM}, and let $\hat\kappa_n$ be the maximum margin as per \cref{def:max-margin}. Recall the cosine angle $\hat \rho_n:= \hat \rho$ between $\vmu$ and $\hat\vbeta_n$ defined in \cref{eq:rho_hat}. Let $\delta^*(\kappa)$ be defined as per \cref{eq:sep_functions}, and $\rho^*, \beta_0^*, \kappa^*, \xi^*$ be a solution to the variational problem
\begin{equation}\label{eq:SVM_variation}
    \begin{aligned}
        \begin{array}{cl}
            \underset{ \rho \in [-1, 1], \beta_0 \in \R, \kappa \in \R, \xi \in \cL^2  }{ \mathrm{maximize} } & \kappa, \\
            \underset{ \phantom{\smash{\bm\beta \in \R^d, \beta_0 \in \R, \kappa \in \R} } }{\text{subject to}} &  
            \rho \norm{\bmu}_2 + G + Y \beta_0 + \sqrt{1 - \rho^2} \xi \ge s(Y) \kappa,  
            \qquad \E[\xi^2]  \le  1/\delta .
        \end{array}
    \end{aligned}
    \end{equation}
    where $\cL^2$ is the space of all square integrable random variables in $(\Omega, \mathcal{F}, \P)$, and $(Y, G) \sim P_y \times \normal(0,1)$. 
    We define
    \begin{equation*}
    \begin{aligned}
        \nu_* & := \Law \,\bigl( Y,  Y \max\{ s(Y)\kappa^*, \rho^* \| \bmu \| + G + Y \beta_0^* \} \bigr),  \\
        \nu^\mathrm{test}_* & := \Law \,\bigl( Y, Y (\rho^* \| \bmu \| + G + Y \beta_0^*) \bigr).
    \end{aligned}
    \end{equation*}
which we will prove to be the limiting ELD and TLD respectively.
\begin{thm}[Separable data] \label{thm:SVM_main}
    Assume $n, d \to \infty$ with $n/d \to \delta \in (0, \infty)$. Fix $\tau \in (0, \infty)$. 
    \begin{enumerate}[label=(\alph*)]
        \item \label{thm:SVM_main_trans}
        \textbf{(Phase transition)} With probability tending to one, the data is linearly separable if $\delta < \delta^*(0)$ and is not linearly separable if $\delta > \delta^*(0)$.

        \item \label{thm:SVM_main_var} 
        \textbf{(Variational problem)} In the separable regime $\delta < \delta^*(0)$, $(\rho^*, \beta_0^*, \kappa^*, \xi^*)$ is the unique solution to \cref{eq:SVM_variation} with $\rho^* \in (0, 1)$ (not depend on $\tau$), $\kappa^* > 0$, and the random variable $\xi^*$ satisfies (a.s.)
        \begin{equation}\label{eq:SVM_main_xi_star}
            \sqrt{1 - \rho^{*2}} \xi^* = \bigl( s(Y) \kappa^* - \rho^*\norm{\vmu}_2 - G - Y \beta_0^*) \bigr)_+.
        \end{equation}
        Moreover, $(\rho^*, \beta_0^*, \kappa^*)$ is also the unique solution to
        \begin{equation}\label{eq:SVM_asymp_simple}
        \begin{array}{rl}
        \maximize\limits_{\rho \in [-1, 1], \beta_0 \in \R, \kappa \in \R} & \kappa, \\
        \text{\emph{subject to}} & H_\kappa(\rho, \beta_0) \ge \delta
        \end{array}
        \end{equation}
        and $\kappa^* = \sup\left\{ \kappa \in \R: \delta^*(\kappa) \ge \delta \right\}$.
        
        \item \label{thm:SVM_main_mar} 
        \textbf{(Margin convergence)} In the separable regime $\delta < \delta^*(0)$,
        \begin{equation*}
            \hat\kappa_n \conL{2} \kappa^*.
        \end{equation*}
        In the non-separable regime $\delta > \delta^*(0)$ we have negative margin, i.e., with probability tending to one, for some $\overline{\kappa} > 0$,
        \begin{equation*}
            \max_{ \substack{ \norm{\bbeta}_2 = 1 \\ \beta_0 \in \R } } \min_{i \in [n]} \, \wt y_i ( \< \xx_i, \bbeta \> + \beta_0 )  \le  -\overline{\kappa}.
        \end{equation*}

        \item \label{thm:SVM_main_param} 
        \textbf{(Parameter convergence)} In the separable regime $\delta < \delta^*(0)$,
        \begin{equation*}
            \hat\rho_n \conp \rho^*,
            \qquad
            \hat\beta_{0,n} \conp \beta_0^*.
        \end{equation*}

        \item \label{thm:SVM_main_err}
        \textbf{(Asymptotic errors)} Recall the minority and majority test prediction errors , $\Err_{+,n}$ and $\Err_{-,n}$ respectively, of the max-margin classifier defined in \cref{eq:Err_n} (writing subscript $n$ for clarity). 
        Then in the separable regime $\delta < \delta^*(0)$,
        \begin{equation*}
            \Err_{+,n}  \to  \Phi \left(- \rho^* \norm{\bmu}_2 - \beta_0^* \right),
            \qquad
            \Err_{-,n}  \to  \Phi \left(- \rho^* \norm{\bmu}_2  + \beta_0^* \right).
        \end{equation*}
        \item \label{thm:SVM_main_logit}
        \textbf{(ELD/TLD convergence)} 
        Recall the ELD $\hat\nu_n$ and TLD $\hat\nu_n^\mathrm{test}$ defined as per \cref{def:ELD_TLD}, where $\hat f(\xx) = \< \xx, \hat\vbeta_n \> + \hat\beta_{0, n}$.
        Then in the separable regime $\delta < \delta^*(0)$ we have logit convergence for both training and test data, i.e.,
        \begin{equation*}
            W_2\bigl(\hat{\nu}_n , \nu_* \bigr) \conp 0,
            \qquad
            \hat{\nu}_n^\mathrm{test} \conw \nu^\mathrm{test}_*.
        \end{equation*}
    \end{enumerate}
\end{thm}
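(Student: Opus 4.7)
The plan is to apply Gordon's Convex Gaussian Min-Max Theorem (CGMT), the standard tool for precise asymptotics of convex problems with Gaussian design under proportional scaling. Writing $\vx_i = y_i\vmu + \vg_i$ with $\vg_i\sim\normal(\bzero,\bI_d)$ i.i.d., and decomposing $\vbeta = \alpha\,\vmu/\norm{\vmu} + \vw$ with $\vw\perp\vmu$ and $\alpha^2+\norm{\vw}^2\le 1$, I introduce $g_{i,\parallel}:=\langle\vg_i,\vmu\rangle/\norm{\vmu}$ (i.i.d.\ $\normal(0,1)$) and let $\mG\in\R^{n\times(d-1)}$ collect the components of $\vg_i$ orthogonal to $\vmu$. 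Then \eqref{eq:SVM} rewrites as
\[
\hat\kappa_n \;=\; \max_{\alpha,\vw,\beta_0:\,\alpha^2+\norm{\vw}^2\le 1}\; \min_{i\in[n]} \frac{\alpha\norm{\vmu} + \alpha y_i g_{i,\parallel} + y_i(\mG\vw)_i + y_i\beta_0}{s(y_i)}.
\]
Dualizing the inner minimum with $\vp$ on the simplex $\Delta_n$, the only coupling between $\vw$ and $\mG$ is the bilinear form $(\mD\vp)^\top\mG\vw$ with $\mD=\diag(\vy)$, which CGMT replaces by $\norm{\vw}\,\vg^\top\mD\vp + \norm{\mD\vp}\,\vh^\top\vw$ for independent $\vh\sim\normal(\bzero,\bI_{d-1})$ and $\vg\sim\normal(\bzero,\bI_n)$, producing the auxiliary optimization (AO).

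\paragraph{Scalar variational reduction.} In the AO, optimizing $\vw$ over its ball aligns it with $\vh$, contributing $\norm{\vw}\,\norm{\vh}\sim\sqrt{1-\alpha^2}\,\sqrt{d}$. The residual AO depends on the data only through the joint empirical distribution of $(y_i, g_{i,\parallel}, (\mD\vg)_i)$ and on the Lagrange weights $\vp$; by the law of large numbers, the data triples converge to $P_y\times\normal(0,1)\times\normal(0,1)$, where the two independent Gaussians combine into a single $G\sim\normal(0,1)$ in the orthogonal direction, and the limit of $\vp$ is encoded by a square-integrable random variable $\xi$ on the resulting probability space. After normalization by $n$ and using $n/d\to\delta$, the AO limits to precisely \eqref{eq:SVM_variation}. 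The pointwise optimum in $\xi$ is explicit: maximizing $\kappa$ forces the inequality to be tight wherever it would otherwise be violated, yielding \eqref{eq:SVM_main_xi_star}; substituting this $\xi^*$ into $\E[\xi^2]\le 1/\delta$ gives the scalar reformulation \eqref{eq:SVM_asymp_simple}. Strict concavity of $H_\kappa$ in $\beta_0$ (and in $\rho$ on $(0,1)$), obtained by differentiation of Gaussian integrals, together with strict monotonicity of $\delta^*(\kappa)$ in $\kappa$, give uniqueness of $(\rho^*,\beta_0^*,\kappa^*)$ and the formula $\kappa^*=\sup\{\kappa:\delta^*(\kappa)\ge\delta\}$.

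\paragraph{Transfer back and main obstacle.} Parts \ref{thm:SVM_main_trans}, \ref{thm:SVM_main_mar}, \ref{thm:SVM_main_param} and the negative-margin upper bound in the non-separable regime follow from the standard CGMT deviation argument, which transfers strict optimality in the AO to concentration of the primal variables $(\hat\rho_n,\hat\beta_{0,n},\hat\kappa_n)$; the phase transition in \ref{thm:SVM_main_trans} is then immediate from $\hat\kappa_n\to\kappa^*$ and the fact that $\kappa^*>0$ iff $\delta<\delta^*(0)$. Part \ref{thm:SVM_main_err} is immediate from \ref{thm:SVM_main_param} together with $y_\mathrm{test}\hat f(\vx_\mathrm{test}) \dequal \hat\rho_n\norm{\vmu} + G + y_\mathrm{test}\hat\beta_{0,n}$ for fresh $G\sim\normal(0,1)$ independent of the training data (using $\norm{\hat\vbeta_n}=1$); this identity also delivers the TLD convergence in \ref{thm:SVM_main_logit}. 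The main obstacle is the $W_2$ convergence of the empirical logit distribution $\hat\nu_n$ to $\nu_*$. Weak convergence comes from KKT complementary slackness combined with the AO: support vectors ($\hat\lambda_i>0$) satisfy $y_i\hat f(\vx_i)=s(y_i)\hat\kappa_n$, and the AO identifies them asymptotically as the training points where $\xi^*(Y_i,G_i)>0$, i.e.\ where $\rho^*\norm{\vmu}+G_i+Y_i\beta_0^*<s(Y_i)\kappa^*$; for non-support vectors the limiting logit equals $\rho^*\norm{\vmu}+G_i+Y_i\beta_0^*$. Upgrading to $W_2$ requires uniform integrability of $\{y_i\hat f(\vx_i)^2\}$, equivalently $\tfrac{1}{n}\sum_i\hat f(\vx_i)^2\conp \E_{\nu_*}[Z^2]$. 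I would establish this by applying CGMT to the quadratic functional $\vbeta\mapsto\tfrac{1}{n}\norm{\bX\vbeta+\beta_0\vy}^2$ on $\{\norm{\vbeta}\le 1\}$, after first localizing $\hat\beta_{0,n}$ to a deterministic bounded interval (using $\norm{\hat\vbeta_n}\le 1$ plus concentration of $\max_i\norm{\vx_i}$) to satisfy CGMT's compactness hypotheses. This uniform-integrability step, together with the localization of $\hat\beta_{0,n}$, is the principal technical hurdle.
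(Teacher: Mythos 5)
Your CGMT skeleton for parts \ref{thm:SVM_main_trans}, \ref{thm:SVM_main_var}, \ref{thm:SVM_main_param}, \ref{thm:SVM_main_err} is essentially the paper's route (the paper fixes a margin level $\kappa$ and studies the sign of a feasibility quantity $\xi_{n,\kappa}$ rather than applying CGMT to the margin value itself, and it devotes a separate lemma to localizing the unbounded intercept before Gordon's inequality, but these are organizational differences). Two smaller issues: the $\xi$ in \eqref{eq:SVM_variation} is the limiting \emph{orthogonal logit component} $\langle \zz_i, \bP_{\vmu}^\perp\hat\vbeta\rangle/\sqrt{1-\rho^2}$, not "the limit of the Lagrange weights $\vp$" as you state; and part \ref{thm:SVM_main_mar} asserts $\hat\kappa_n\conL{2}\kappa^*$, which the "standard CGMT deviation argument" does not give — the paper needs a dedicated uniform-integrability step (bounding $\hat\kappa_n$ by class-averaged margins and controlling $\E[n/n_+\,\ind_{1\le n_+\le n-1}]$), which your sketch omits. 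Uniqueness in \ref{thm:SVM_main_var} also does not follow from concavity of $H_\kappa$ in each variable separately; the paper proves it via joint strict convexity of $G_\kappa=(\E[(\cdot)_+^2])^{1/2}$ together with concavity of $\rho\mapsto\sqrt{1-\rho^2}$ (\cref{lem:H_kappa_1}).

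The genuine gap is part \ref{thm:SVM_main_logit}, which you correctly flag as the main obstacle but do not actually resolve. Your mechanism — complementary slackness plus the claim that "the AO identifies" which training points are support vectors, with non-support logits equal to $\rho^*\norm{\vmu}+G_i+Y_i\beta_0^*$ per sample — is not something CGMT delivers: CGMT controls optimal values and, through deviation-set arguments, empirical distributions of functionals of the optimizer; it gives no per-sample identification of active constraints, and the weak convergence of the ELD is precisely the statement that needs proof, not an input. In the paper this is the heaviest step (\cref{lem:over_logit_conv}): it invokes the $\eta$-constrained $W_2$ projection-pursuit bound of Montanari--Sohn, which says the empirical law of $(y_i,\langle\zz_i,\hat\vbeta\rangle)$ is within $\sqrt{1-\hat\rho^2}/\sqrt{\delta}+\varepsilon$ of $\Law(Y,Z)$ in constrained $W_2$, and combines it with the almost-sure margin constraint $V\ge\kappa^*-\varepsilon$, the pointwise inequality $\E[(V-\max\{\kappa^*-\varepsilon,\hat V\})^2]\le\E[(V-\hat V)^2]-\E[(\kappa^*-\varepsilon-\hat V)_+^2]$, and strict optimality of $(\rho^*,\beta_0^*)$ for $H_{\kappa^*}$, to obtain parameter convergence and $W_2$ convergence to the rectified Gaussian simultaneously; the general-$\tau$ case is then transferred from $\tau=1$ via the exact algebraic relation of \cref{prop:SVM_tau_relation}. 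To repair your route you would need a distribution-level deviation argument (every unit-norm, near-max-margin classifier whose ELD is $\varepsilon$-far from $\nu_*$ forces the Gordon auxiliary value strictly down), analogous to what the paper does in the non-separable regime; your proposed CGMT analysis of the quadratic functional $\tfrac1n\norm{\XX\vbeta+\beta_0\yy}_2^2$ evaluated at $\hat\vbeta_n$ faces the same difficulty, since that functional is not the value of the optimization CGMT is applied to and would itself require a perturbation or deviation-set argument.
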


\begin{rem}
    By taking $\tau = 1$, the ELD convergence $W_2(\hat{\nu}_n , \nu_* ) \conp 0$ in \cref{thm:SVM}\ref{thm:SVM_c} is a consequence of \cref{thm:SVM_main}\ref{thm:SVM_main_logit}, and the TLD convergence $\hat{\nu}_n^\mathrm{test} \conw \nu^\mathrm{test}_*$ is a corollary of \cref{thm:SVM_main}\ref{thm:SVM_main_param}.
\end{rem}

As discussed in \cref{subsec:ELD}, random variable $\xi^*$ and the nonlinear transformation $\mathtt{T}^*(x) = \max\{x, \kappa^*\}$ therein characterize the effect of overfitting on logits. The following result provides an optimal transport perspective of this overfitting effect. For ease of description, we reformulate $\nu_*$ and $\nu^\mathrm{test}_*$ in terms of the following one-dimensional measures
\begin{equation*}
    \cL_*  := \Law \,\bigl(  \max\{ \kappa^*, \rho^* \| \bmu \| + G + Y \beta_0^* \} \bigr),  
        \qquad 
    \cL^\mathrm{test}_* := \Law \,\bigl(  \rho^* \| \bmu \| + G + Y \beta_0^* \bigr).
\end{equation*}

\begin{prop}[Optimal transport map]\label{prop:opt_transport}
    $\mathtt{T}^*(x) = \max\{\kappa^*, x\}$ is the unique optimal transport map from $\cL_*^\mathrm{test}$ to $\cL_*$ under the cost function $c(x, y) = h(x - y)$ for any strictly convex $h: \R^2 \to \R_{\ge 0}$. That is, 
    \[ 
    \mathtt{T}^* = \argmin_{\mathtt{T}: \R \to \R} \left\{ \int_{\R} c \bigl( x, \mathtt{T}(x) \bigr)  \d \cL_*^\mathrm{test}(x)
    \,\middle|\,
    \mathtt{T}_\sharp \cL_*^\mathrm{test} = \cL_*
    \right\},
    \]
    where $\mathtt{T}_\sharp$ is the pushforward operator.
\end{prop}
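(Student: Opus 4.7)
The plan is to reduce the claim to classical one-dimensional optimal transport theory. The source $\cL_*^\mathrm{test} = \Law(\rho^*\| \vmu \| + G + Y\beta_0^*)$ is a two-component Gaussian mixture, hence absolutely continuous on $\R$; the target $\cL_*$ has a point mass at $\kappa^*$ and agrees with $\cL_*^\mathrm{test}$ above $\kappa^*$. Crucially, the proposed map $\mathtt{T}^*(x) = \max\{\kappa^*, x\}$ is non-decreasing. These three facts suggest that $\mathtt{T}^*$ is precisely the \emph{monotone rearrangement} from $\cL_*^\mathrm{test}$ to $\cL_*$, and the classical theorem that this map is uniquely optimal for any strictly convex cost $h(x-y)$ will close the proof.

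\textbf{Key steps.} First, I would verify admissibility: if $Z \sim \cL_*^\mathrm{test}$, the identity $\mathtt{T}^*(Z) = \max\{\kappa^*, Z\}$ together with the definition of $\cL_*$ gives $\mathtt{T}^*_\sharp \cL_*^\mathrm{test} = \cL_*$. Second, writing $F_1$ and $F_2$ for the CDFs of $\cL_*^\mathrm{test}$ and $\cL_*$ respectively, a short computation yields $F_2(x) = 0$ for $x < \kappa^*$, $F_2(\kappa^*) = F_1(\kappa^*)$, and $F_2(x) = F_1(x)$ for $x > \kappa^*$; consequently the generalized inverse satisfies $F_2^{-1}(F_1(x)) = \max\{\kappa^*, x\} = \mathtt{T}^*(x)$, identifying $\mathtt{T}^*$ with the quantile-coupling (i.e., monotone) rearrangement. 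Third, I would invoke the standard one-dimensional OT result (e.g.\ Theorem~2.9 in Santambrogio's \emph{Optimal Transport for Applied Mathematicians}): whenever the source is atomless on $\R$ and the cost is of the form $c(x,y) = h(x-y)$ with $h$ strictly convex, the monotone rearrangement is the unique minimizer of $\int c(x, \mathtt{T}(x)) \, \d\cL_*^\mathrm{test}(x)$ among all transport maps pushing the source to the target.

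\textbf{Main obstacle.} The subtle point is the atom of $\cL_*$ at $\kappa^*$: several textbook statements of the 1D theorem assume that both marginals are absolutely continuous. I would therefore justify that uniqueness survives in our setting by arguing directly via cyclical monotonicity. Strict convexity of $h$ yields the strict rearrangement inequality $h(x_1 - y_1) + h(x_2 - y_2) < h(x_1 - y_2) + h(x_2 - y_1)$ whenever $x_1 < x_2$ and $y_1 > y_2$, which forces the support of any optimal coupling to be non-decreasing; absolute continuity of $\cL_*^\mathrm{test}$ then forces this coupling to be concentrated on the graph of a single non-decreasing measurable map, which must coincide with $\mathtt{T}^*$ outside a $\cL_*^\mathrm{test}$-null set. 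Beyond checking this one careful point, no other step of the argument presents difficulty.
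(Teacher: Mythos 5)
Your proposal takes essentially the same route as the paper: identify $\mathtt{T}^*$ as the monotone rearrangement (quantile coupling) from $\cL_*^\mathrm{test}$ to $\cL_*$ via $F_2^{[-1]}\circ F_1$, then invoke the classical one-dimensional result that this map is the unique minimizer for a strictly convex translation-invariant cost whenever the source measure is atomless. The paper generalizes to any atomless source $\mu$ with $\nu := \Law(\max\{\kappa^*, X\})$ and cites Theorem~2.5 of Santambrogio; you cite Theorem~2.9 of the same book. Two small remarks. First, your CDF computation is the careful and correct one: if $Z \sim \cL_*^\mathrm{test}$ then $F_{\cL_*}(t)=0$ for $t<\kappa^*$ and $F_{\cL_*}(t)=F_{\cL_*^\mathrm{test}}(t)$ for $t\ge\kappa^*$, which indeed gives $F_{\cL_*}^{[-1]}\circ F_{\cL_*^\mathrm{test}} = \max\{\kappa^*,\cdot\}$. (The paper's stated CDF $F_\nu(t)=F_\mu(t)$ for $t<\kappa^*$ and $=1$ for $t\ge\kappa^*$ is actually the law of $\min\{\kappa^*,X\}$ rather than $\max\{\kappa^*,X\}$, and plugging it into the paper's own quantile formula would produce $\min\{\kappa^*,x\}$; so your version corrects a slip in the source.) Second, your concern about the atom at $\kappa^*$ is prudent but likely unnecessary: the standard 1D statement requires only that the \emph{source} be atomless — the target may have atoms — so the theorem applies directly. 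That said, your cyclical-monotonicity fallback is a valid and self-contained way to establish the same conclusion, and it is a reasonable defensive move given that some textbook versions of the theorem do state the hypotheses more restrictively.
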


\subsection{Non-separable data}
\label{sec:logit_logistic}

For non-separable data, SVM yields a trivial solution $\vbeta=\boldsymbol{0}, \beta_0 =0$. 
A typical approach to fitting a classifier is to solve regression problem \cref{eq:logistic}. Similar to the margin-rebalanced SVM \cref{eq:SVM}, we can also incorporate $\tau$ into the objective function by substituting $y_i$ for $\wt y_i = y_i/s(y_i)$, that is,
\begin{equation}\label{eq:logistic_reg}
    \min \limits_{\vbeta \in \R^d, \beta_0 \in \R} \qquad 
    \frac1n \sum_{i=1}^n \ell \bigl( \wt y_i(\langle \vx_i, \vbeta \rangle + \beta_0) \bigr),
\end{equation}
where $\ell: \R \to \R_{\ge 0}$ is the loss function. We consider a more general form than logistic regression. We say that $\ell$ is \emph{pseudo-Lipschitz} if there exists a constant $L > 0$ such that, for all $x, y \in \R$,
\begin{equation*}
    \abs{ \ell(x) - \ell(y) } \le L \left( 1 + \abs{ x } + \abs{ y } \right) \abs{ x - y }.
\end{equation*}
This condition is satisfied, for instance, by the widely used logistic loss $\ell(t) = \log(1 + e^{-t})$. As the counterpart of \cref{thm:SVM_main} in the non-separable regime, the following theorem summarizes the precise asymptotics of regression \cref{eq:logistic_reg}, including the limits of parameters and logit distribution.

We consider the same 2-GMM setting as \cref{sec:logit_SVM}. For any non-increasing, strictly convex, pseudo-Lipschitz, twice differentiable function $\ell: \R \to \R_{\ge 0}$, let $(\hat \vbeta_n, \hat \beta_{0, n})$ be the optimal solution to regression \cref{eq:logistic_reg}. Recall $\hat \rho_n:= \hat \rho$ defined in \cref{eq:rho_hat} and $\delta^*(\kappa)$ defined in \cref{eq:sep_functions}. Let $\rho^*, R^*, \beta_0^*, \xi^*$ be a solution to the variational problem
    \begin{equation}\label{eq:logistic_variation}
    \begin{aligned}
        \begin{array}{cl}
            \underset{ \rho \in [-1, 1], R \ge 0, \beta_0 \in \R,\xi \in \cL^2 }{ \mathrm{minimize} }
            &
        \E \left[ \ell \biggl( \dfrac{ \rho\norm{\vmu}_2 R + RG + \beta_0 Y + R\sqrt{1 - \rho^2} \xi }{s(Y)} \biggr) \right], \\
            \text{subject to} & \vphantom{\dfrac11} \E \left[ \xi^2 \right]  \le  1/\delta .
        \end{array}
    \end{aligned}
    \end{equation}
    where $(Y, G) \sim P_y \times \normal(0,1)$. 
    We define
    \begin{equation*}
    \begin{aligned}
        \nu_* & := 
        \Law \!\left( 
        Y, Ys(Y) \, \prox_{\frac{\lambda^* \ell}{s(Y)}}\biggl( \frac{\rho^* \norm{\vmu}_2 R^* + R^* G + \beta_0^* Y}{s(Y)} \biggr)
        \right).
        \\
        \nu^\mathrm{test}_* & := \Law \,\bigl( Y, Y ( R^*\rho^* \| \bmu \| + R^*G + Y \beta_0^*) \bigr),
    \end{aligned}
    \end{equation*}
    aiming to show they are the limiting ELD and TLD respectively.

\begin{thm}[Non-separable data] \label{thm:logistic_main}
    Consider the same 2-GMM and proportional settings $n/d \to \delta$ as in \cref{thm:SVM_main}. 
    \begin{enumerate}[label=(\alph*)]
        \item \label{thm:logistic_main(a)}
        \textbf{(Variational problem)} In the non-separable regime $\delta > \delta^*(0)$, $(\rho^*, R^*, \beta_0^*, \xi^*)$ is the unique solution to \cref{eq:logistic_variation} with $\rho^* \in (0, 1)$, $R^* \in (0, \infty)$, and the random variable $\xi^*$ satisfies (a.s.)
        \begin{equation}\label{eq:logistic_xi_star}
            R^* \sqrt{1 - \rho^{*2}} \xi^*  =  -\lambda^* 
            \ell' \biggl( \prox_{\frac{\lambda^* \ell}{s(Y)}}\Bigl( \frac{\rho^* \norm{\vmu}_2 R^* + R^* G + \beta_0^* Y}{s(Y)} \Bigr) \biggr) ,
        \end{equation}
        where $\lambda^* \in (0, \infty)$ is the unique constant such that $\E[ \xi^2] = 1/\delta$.
        Moreover, $(\rho^*, R^*, \beta_0^*, \lambda^*)$ is also the unique solution to the following system of equations
        \begin{align*}
            - \frac{\tau R \rho}{2\pi \lambda \delta \norm{\vmu}_2}
            & = 
            \E\left[ \ell'\biggl( \prox_{\frac{\lambda\ell}{\tau}}\Bigl( \frac{\rho\norm{\vmu}_2 R + RG + \beta_0}{\tau} \Bigr) \biggr) \right] ,
            \\
            - \frac{R \rho}{2(1 - \pi) \lambda \delta \norm{\vmu}_2}
            & = 
            \E \left[ \ell'\bigl( \prox_{ \lambda \ell}( \rho\norm{\vmu}_2 R + RG - \beta_0 ) \bigr) \right] ,
            \\
            \frac{1}{\lambda \delta }
            & = 
            \E \left[ \dfrac{1}{s(Y)} \cdot \frac{
            \ell'' \biggl( \prox_{\frac{\lambda \ell}{s(Y)}}\Bigl( \dfrac{\rho\norm{\vmu}_2 R + RG + \beta_0 Y}{s(Y)} \Bigr) \biggr)
            }{s(Y) + 
            \lambda \ell'' \biggl( \prox_{\frac{\lambda \ell}{s(Y)}}\Bigl( \dfrac{\rho\norm{\vmu}_2 R + RG + \beta_0 Y}{s(Y)} \Bigr) \biggr)
            } \right] ,
            \\
            \frac{R^2 (1 - \rho^2)}{\lambda^2 \delta}
            & = 
            \E \left[ \Biggl(  
            \frac{1}{s(Y)} \cdot \ell' \biggl( \prox_{\frac{\lambda \ell}{s(Y)}}\Bigl( \frac{\rho\norm{\vmu}_2 R + RG + \beta_0 Y}{s(Y)} \Bigr) \biggr)
            \Biggr)^2 \right].
        \end{align*}

        \item \label{thm:logistic_main(b)}
        \textbf{(Parameter convergence)} In the non-separable regime $\delta > \delta^*(0)$, as $n \to \infty$,
        \begin{equation*}
            \| \hat\vbeta_n \|_2 \conp R^*,
            \qquad
            \hat\rho_n \conp \rho^*,
            \qquad
            \hat\beta_{0,n} \conp \beta_0^*.
        \end{equation*}

        \item \label{thm:logistic_main(c)}
        \textbf{(Asymptotic errors)} Recall the prediction errors defined as per \cref{eq:Err_n}. Then in the non-separable regime $\delta > \delta^*(0)$, as $n \to \infty$,
        \begin{equation*}
            \Err_{+,n}  \to  \Phi \left(- \rho^* \norm{\bmu}_2 - \frac{\beta_0^*}{R^*} \right),
            \qquad
            \Err_{-,n}  \to  \Phi \left(- \rho^* \norm{\bmu}_2  + \frac{\beta_0^*}{R^*} \right).
        \end{equation*}

        \item \label{thm:logistic_main(d)}
        \textbf{(ELD/TLD convergence)} 
        Recall the ELD $\hat\nu_n$ and TLD $\hat\nu_n^\mathrm{test}$ defined as per \cref{def:ELD_TLD}, where $\hat f(\xx) = \< \xx, \hat\vbeta_n \> + \hat\beta_{0, n}$.
        Then in the non-separable regime $\delta > \delta^*(0)$ we have logit convergence for both training and test data, i.e., as $n \to \infty$,
        \begin{equation*}
            W_2\bigl(\hat{\nu}_n , \nu_* \bigr) \conp 0,
            \qquad
            \hat{\nu}_n^\mathrm{test} \conw \nu^\mathrm{test}_*.
        \end{equation*}
    \end{enumerate}
\end{thm}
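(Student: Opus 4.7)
The plan is to execute a Convex Gaussian Min--Max Theorem (CGMT) reduction tailored to the rebalanced logistic-type loss, extending the arguments of \cite{candes2020logistic, salehi2019impact} and paralleling the SVM analysis behind \cref{thm:SVM_main}. First, I substitute $\xx_i = y_i \vmu + \bg_i$ with $\bg_i \iidsim \normal(\bzero, \bI_d)$ independent of $y_i$, and decompose $\vbeta = \alpha \vmu/\|\vmu\|_2 + \vbeta_\perp$ with $\vbeta_\perp \perp \vmu$, so that $R = \|\vbeta\|_2$ and $\rho = \alpha/R$ are the length and cosine-angle parameters. The primary optimization (PO) then reads
\begin{equation*}
\min_{R, \rho, \beta_0, \vbeta_\perp}\ \frac{1}{n}\sum_{i=1}^n \ell\!\left( \frac{1}{s(y_i)} \Bigl[ R\rho \|\vmu\|_2 + y_i R\rho\, g_i^\parallel + y_i \langle \bg_i^\perp, \vbeta_\perp\rangle + \beta_0 \Bigr] \right),
\end{equation*}
subject to $\|\vbeta_\perp\|_2 = R\sqrt{1-\rho^2}$, where $g_i^\parallel \iidsim \normal(0,1)$ and $\bg_i^\perp$ is the projection of $\bg_i$ onto $\vmu^\perp$. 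Only $\langle \bg_i^\perp, \vbeta_\perp\rangle$ involves the high-dimensional Gaussian matrix, which is the hook for CGMT.

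Next, I introduce slack variables $u_i$ together with dual variables $v_i$ via the Fenchel representation $\ell(u_i) = \sup_{v_i}(u_i v_i - \ell^*(v_i))$ to linearize the dependence on $\vbeta_\perp$. The resulting min--max carries a bilinear Gaussian coupling $\tfrac{1}{n}\sum_i v_i y_i \langle \bg_i^\perp, \vbeta_\perp\rangle$, to which CGMT applies, yielding the auxiliary optimization (AO). Minimizing the AO over the slacks produces Moreau envelopes $\envelope_{\lambda \ell/s(y_i)}$, and after Gaussian concentration the AO reduces to a deterministic four-parameter scalar saddle problem in $(\rho, R, \beta_0, \lambda)$, where $\lambda$ is the scalar dual of the norm constraint on $\vbeta_\perp$. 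Its stationarity conditions are precisely the four fixed-point equations in part~(a); strict convexity of $\ell$, combined with the non-separability assumption $\delta > \delta^*(0)$, forces a unique solution $(\rho^*, R^*, \beta_0^*, \lambda^*)$ with $\rho^* \in (0,1)$ and $R^* \in (0,\infty)$. The KKT relation $v_i = s(y_i)^{-1}\ell'(\prox_{\lambda^* \ell/s(y_i)}(\cdot))$ then reproduces formula~\eqref{eq:logistic_xi_star} for $\xi^*$.

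The CGMT transfer principle delivers part~(b): for any closed $\cS \subset \R^3$ not containing $(\rho^*, R^*, \beta_0^*)$, $\P\{(\hat\rho_n, \|\hat\vbeta_n\|_2, \hat\beta_{0,n}) \in \cS\} \to 0$. Part~(c) then follows immediately, because for an independent test point $\xx_\mathrm{test} = y_\mathrm{test}\vmu + \bg_\mathrm{test}$, the conditional law of $\hat f(\xx_\mathrm{test})$ given the training data is $\normal(y_\mathrm{test} \hat\rho_n \|\vmu\|_2 \|\hat\vbeta_n\|_2 + \hat\beta_{0,n},\, \|\hat\vbeta_n\|_2^2)$, and the error formulas and the weak TLD convergence in~(d) follow from (b) plus the continuous mapping theorem.

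The main obstacle is the ELD convergence in the Wasserstein metric, which requires more than marginal convergence of the scalars $(\hat\rho_n, \|\hat\vbeta_n\|_2, \hat\beta_{0,n})$. The AO analysis shows that at optimality
\begin{equation*}
y_i \hat f(\xx_i) \;=\; s(y_i)\,\prox_{\lambda^* \ell/s(y_i)}\!\left( \frac{\rho^* \|\vmu\|_2 R^* + R^* G_i + \beta_0^* y_i}{s(y_i)} \right) + o_\P(1),
\end{equation*}
jointly across $i$ in the empirical sense, with $G_i \iidsim \normal(0,1)$. Upgrading this to $W_2(\hat\nu_n, \nu_*) \conp 0$ requires (i) a quantitative CGMT-level concentration of the per-sample residual, obtained by perturbing the fixed-point equations and exploiting the strong convexity of the AO at $(\rho^*, R^*, \beta_0^*, \lambda^*)$, and (ii) uniform integrability of the second moment of $y_i \hat f(\xx_i)$ across $i$, furnished by the pseudo-Lipschitz bound on $\ell$ and the $1$-Lipschitz continuity of $\prox_{\lambda \ell/s}$. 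Handling the class-dependent rebalancing factor $s(y_i)$ \emph{inside} the prox is the principal source of bookkeeping that distinguishes the non-separable rebalanced argument from the standard balanced analysis.
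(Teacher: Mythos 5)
Your overall route is the same as the paper's: reduce the rebalanced ERM to a Gaussian bilinear min--max, apply CGMT, scalarize to a four-parameter saddle problem whose stationarity conditions are the fixed-point equations, transfer back for parameter convergence, and localize in the space of logit empirical distributions for the $W_2$ statement. Two steps, however, are genuinely under-addressed. First, compactness: CGMT requires compact constraint sets, and more importantly you never establish that the empirical minimizer $(\hat\vbeta_n,\hat\beta_{0,n})$ is bounded with high probability, nor that the limiting scalar problem has a finite minimizer $(R^*,\beta_0^*)$. Both facts are where non-separability actually enters: the paper proves the empirical bound (\cref{lem:ERM_bound_beta}) from the \emph{negative margin} guaranteed by \cref{thm:SVM_main}\ref{thm:SVM_main_mar} when $\delta>\delta^*(0)$ (so the loss blows up as $\norm{\vbeta}_2\to\infty$), and the population bound (\cref{lem:boundedness_parameter}) by a weak-compactness contradiction with the separability threshold; ``strict convexity of $\ell$ plus non-separability'' at the level of the scalar saddle does not by itself preclude the infimum escaping to $R\to\infty$, which is precisely what happens when $\delta<\delta^*(0)$. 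Relatedly, your Fenchel linearization only yields a bounded dual when $\ell$ is Lipschitz (e.g.\ logistic); for the general pseudo-Lipschitz class in the theorem the dual domain can be unbounded, so you still need an unbounded-variable variant of CGMT (the paper's \cref{lem:ERM_CGMT}, proved by truncation).

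Second, in the ELD step you invoke ``strong convexity of the AO at $(\rho^*,R^*,\beta_0^*,\lambda^*)$'' to control the per-sample residual, but $\ell$ itself is not strongly convex (for logistic loss $\ell''\to 0$ in the tails), so the AO objective in the infinite-dimensional $U$-variable is only strictly convex. The effective quadratic term comes solely from the strictly positive dual $\nu^*$ of the norm constraint (equivalently the $\tfrac{1}{2\lambda}(\cdot)^2$ term in the Moreau envelope, cf.\ \cref{lem:var_fixed}), and one must verify $\nu^*>0$ uniformly over the relevant compact parameter set and then transfer the resulting localization through CGMT. The paper instead excludes a $W_2$-ball complement by a constraint-shrinking argument ($\delta\mapsto\delta'_\varepsilon$ in \cref{lem:ERM_logit_conv}), which sidesteps quantifying a curvature constant; your route can work, but the displayed claim that $y_i\hat f(\vx_i)$ equals the prox of the Gaussian surrogate ``$+\,o_\P(1)$ jointly across $i$'' is exactly the conclusion to be proved, not an output of the fixed-point perturbation you sketch, so as written this step is circular until the localization (by strong convexity of the dualized AO or by the paper's device) is carried out.
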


\begin{rem}
    Compared to the separable regime, the random variable $\xi$ in the non-separable regime \cref{eq:logistic_variation} can also be interpreted as the cause for overfitting, but its distortion effect on ELD is not truncation. When $\tau = 1$, by \cref{eq:logistic_variation}, \eqref{eq:logistic_xi_star}, the following holds for a ``typical'' training point:
\begin{equation*}
\begin{aligned}
    y_i (\langle \vx_i, \hat \vbeta_n \rangle + \hat \beta_{0,n}) 
    & \approx 
    \rho^*\norm{\vmu}_2 R^* + R^*G + \beta_0^* Y + R^*\sqrt{1 - \rho^{*2}} \xi^* \\
    & = \rho^*\norm{\vmu}_2 R^* + R^*G + \beta_0^* Y 
    - \lambda^* 
            \ell' \bigl( \prox_{ \lambda^* \ell}( \rho^*\norm{\vmu}_2 R^* + R^*G + \beta_0^* Y ) \bigr) \\
    & = \prox_{ \lambda^* \ell}( \rho^*\norm{\vmu}_2 R^* + R^*G + \beta_0^* Y ),
\end{aligned}
\end{equation*}
where the equalities come from \cref{lem:prox}. Hence, the ELD in the non-separable regime is the TLD under nonlinear shrinkage due to the proximal operator of loss function $\ell$.
\end{rem}

\section{Analysis of margin rebalancing for separable data} \label{sec:rebalacing}

In this section, we show how margin rebalancing improves the test accuracies on imbalanced dataset by choosing the hyperparameter $\tau$ in \cref{fig:SVM_cartoon} appropriately.

\subsection{Proportional regime}
\label{subsec:rebal_prop}
Consider the same 2-GMM and proportional settings in \cref{sec:logit_SVM} on linearly separable dataset ($\delta < \delta^*(0)$). According to \cref{thm:SVM_main}\ref{thm:SVM_main_err}, the asymptotic minority and majority test errors are
\begin{equation}\label{eq:asymp_Err}
    \Err_{+}^*  :=  \Phi \left(- \rho^* \norm{\bmu}_2 - \beta_0^* \right),
            \qquad
    \Err_{-}^*  :=  \Phi \left(- \rho^* \norm{\bmu}_2  + \beta_0^* \right).
\end{equation}
For the purpose of imbalanced classification, we define the \textit{asymptotic balanced error} as
\begin{equation*}
\Err_\mathrm{b}^* := \frac{1}{2} \Err_{+}^* + \frac{1}{2} \Err_{-}^*.
\end{equation*}

\paragraph{Monotonicity analysis.} 
We first provide some monotone results for test errors, which support our empirical observations in \cref{subsec:rebal}. 







\begin{prop}\label{prop:Err-_mono}
    $\Err_{+}^*$ is a decreasing function of $\pi \in (0, \frac12)$, $\norm{\bmu}_2$, and $\delta$ when $\tau = 1$.
\end{prop}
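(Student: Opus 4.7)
The plan is to reduce the variational problem \cref{eq:SVM_variation} (with $\tau = 1$, so $s(Y) \equiv 1$) to a finite-dimensional KKT system in $(M_+, M_-, \kappa)$, where $M_\pm := \rho\norm{\bmu}_2 \pm \beta_0$, and then analyze the sensitivity via the implicit function theorem. Since $\Err_+^* = \Phi(-M_+^*)$ with $M_+^* = \rho^*\norm{\bmu}_2 + \beta_0^*$ by \cref{thm:SVM_main}\ref{thm:SVM_main_err} and $\Phi$ is strictly increasing, it suffices to show that $M_+^*$ is strictly increasing in each of $\pi$, $\norm{\bmu}_2$, $\delta$.

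First I would substitute the explicit form of $\xi^*$ from \cref{eq:SVM_main_xi_star} into the constraint $\E[\xi^{*2}] = 1/\delta$, which must be tight at the optimum (else $\kappa$ could be enlarged). With $\tau = 1$, this reduces \cref{eq:SVM_variation} to the finite-dimensional problem of maximizing $\kappa$ over $(M_+, M_-, \kappa) \in \R^3$ subject to
\[
\pi\, I(\kappa - M_+) \, + \, (1-\pi)\, I(\kappa - M_-) \ = \ \frac{1 - \rho^2}{\delta}, \qquad \rho = \frac{M_+ + M_-}{2\norm{\bmu}_2},
\]
where $I(a) := \E[(a-G)_+^2]$. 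Setting the Lagrangian partials in $M_\pm$ to zero and using $I'(a) = 2J(a)$ with $J(a) := \E[(a-G)_+] = a\Phi(a) + \phi(a)$, the KKT conditions become
\[
\pi J(A^*) \ = \ (1-\pi) J(B^*) \ = \ \frac{\rho^*}{2\delta\norm{\bmu}_2}, \qquad A^* := \kappa^* - M_+^*,\ \ B^* := \kappa^* - M_-^*.
\]
Together with the tight constraint, this is a system of three equations in three unknowns $(M_+^*, M_-^*, \kappa^*)$, parametrized by $(\pi, \norm{\bmu}_2, \delta)$.

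Next I would apply the implicit function theorem. Uniqueness of the maximizer from \cref{thm:SVM_main}\ref{thm:SVM_main_var} ensures that the Jacobian of this KKT system at $(M_+^*, M_-^*, \kappa^*)$ is invertible, and its sign is fixed by the second-order sufficient conditions at the strict local maximum. Differentiating the three equations with respect to each $\theta \in \{\pi, \norm{\bmu}_2, \delta\}$ and solving via Cramer's rule expresses $\partial_\theta M_+^*$ as a ratio of two $3\times 3$ determinants. The proof concludes by verifying that the numerator has the correct sign in each case, using: (i) $J, \Phi > 0$ with $J$ strictly increasing; (ii) $\pi \le 1/2$ together with $\pi J(A^*) = (1-\pi) J(B^*)$ forces $A^* \ge B^*$, hence $M_+^* \le M_-^*$ and $I(A^*) \ge I(B^*)$; and (iii) $\rho^* > 0$ from the second KKT equation.

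The main obstacle is the sign-checking step. The Cramer's rule expansions are not manifestly sign-definite, so the proof must combine the KKT equalities (which give exact relations between $J(A^*)$, $J(B^*)$, and $\rho^*$) with the strict monotonicity of $I$ and $J$. The case $\theta = \pi$ is the most delicate, since $\pi$ appears as a weight in every KKT equation and reweighting toward $\pi = 1/2$ moves $M_+^*$ and $M_-^*$ simultaneously; by contrast, $\norm{\bmu}_2$ and $\delta$ enter only through the single algebraic factors $(M_+ + M_-)/(2\norm{\bmu}_2)$ and $1/\delta$ and should yield cleaner sign calculations---a direct feasibility-coupling argument (comparing the optimal tuples at two parameter values and constructing a feasible perturbation in the easier problem) may even bypass implicit differentiation for these two parameters.
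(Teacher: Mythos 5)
Your setup is sound and matches the paper's: the KKT system you write down (with $J=g_1$, $I=g_2$) is exactly the paper's first-order system \cref{eq:SVM_foc1_ref}--\eqref{eq:SVM_foc2_ref}, and reducing $\Err_+^*=\Phi(-M_+^*)$ to monotonicity of $M_+^*=\rho^*\norm{\bmu}_2+\beta_0^*$ is the right target. But the proposal stops precisely where the proof actually lives. The sign-verification that you defer ("verify the numerator has the correct sign in each case") cannot be carried out from facts (i)--(iii) alone: strict monotonicity of $I$ and $J$, $A^*\ge B^*$, and $\rho^*>0$ are first-order facts, whereas the monotonicity of the solution in $(\pi,\norm{\bmu}_2,\delta)$ hinges on genuinely finer, Gaussian-specific inequalities. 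In the paper's treatment these are: (a) $g_2/g_1$ is increasing, equivalently $x\mapsto x\,g(1/x)$ with $g=g_2\circ g_1^{-1}$ is decreasing (\cref{lem:g_monotone}, proved via a Mills-ratio argument through the auxiliary function $r(x)=\Phi(x)^2/\phi(x)-g_1(x)>0$); (b) the related function $h(x)=(g_2'g_1-g_2g_1')/g_1'$ is increasing, which is what drives the delicate $\pi$-case; and (c) $x\mapsto x\,(g_1^{-1})'(x)$ is increasing (\cref{lem:g_prime_monotone}), which is what makes $\beta_0^*$ increase in $\norm{\bmu}_2$ and $\delta$. None of these is identified or replaced in your plan, so the Cramer's-rule determinants you would need to sign are exactly as intractable as you suspect; the claim that the $\norm{\bmu}_2$ and $\delta$ cases "should yield cleaner sign calculations" is not substantiated, and for $\beta_0^*$ they still require (c). A secondary, fixable issue: uniqueness of the maximizer (\cref{thm:SVM_main}\ref{thm:SVM_main_var}) does not by itself imply invertibility of the KKT Jacobian; you would need a strict-convexity/bordered-Hessian argument (the paper gets this from strict convexity of $G_\kappa$).

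For comparison, the paper avoids the $3\times 3$ sensitivity analysis entirely by decoupling the system: combining the two stationarity equations with the tight constraint yields a single scalar equation \eqref{eq:SVM_sys_eq_rho} for $\rho^*$ alone (no $\beta_0,\kappa$), and then $\beta_0^*,\kappa^*$ are given in closed form by \cref{eq:beta0_tau,eq:kappa_tau}. Monotonicity of $\rho^*$ (\cref{lem:rho_mono}) and of $\beta_0^*$ at $\tau=1$ (\cref{lem:beta0_mono}) are then proved separately using (a)--(c) above, and the proposition follows since $-\rho^*\norm{\bmu}_2-\beta_0^*$ is decreasing in each parameter. If you want to salvage your route, the honest version is to prove the analogues of (a)--(c) and feed them into the implicit differentiation; at that point you will have reproduced the paper's lemmas, and the decoupled one-variable argument is strictly simpler than signing $3\times 3$ determinants.
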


However, the majority error $\Err_{-}^*$ and balanced error $\Err_\mathrm{b}^*$ are not necessarily monotone under arbitrary $\tau$. Thus, we will focus on the monotonicity of these test errors when $\tau$ is chosen to be optimal.


According to \cref{fig:SVM_cartoon} and \ref{fig:Err_tau}, by taking $\tau > 1$, we can improve the minority accuracy at the cost of harming majority accuracy. The opposite effects of $\tau$ on $\Err^*_+$ and $\Err^*_-$ are summarized in the following result.

\begin{prop}\label{prop:tau_mono}
    $\Err^*_+$ is decreasing in $\tau \in (0, \infty)$, and $\Err^*_-$ is increasing in $\tau \in (0, \infty)$.
\end{prop}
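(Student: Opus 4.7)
The plan is to reduce the $\tau$-dependence of $(\rho^*, \beta_0^*)$ to the deterministic reshuffling identity of \cref{prop:SVM_tau_relation}, and then invoke the monotonicity of the standard Gaussian CDF. Throughout I write $\rho^*(\tau)$, $\beta_0^*(\tau)$, $\kappa^*(\tau)$ for the limits corresponding to hyperparameter $\tau$ produced by \cref{thm:SVM_main}\ref{thm:SVM_main_param} and \ref{thm:SVM_main_mar}.

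The key observation is that \cref{prop:SVM_tau_relation} is a purely deterministic statement on any fixed linearly separable dataset: the direction $\hat\vbeta_n(\tau) = \hat\vbeta_n(1)$ is independent of $\tau$, and
\begin{equation*}
\hat\beta_{0,n}(\tau) \;=\; \hat\beta_{0,n}(1) + \frac{\tau-1}{\tau+1}\,\hat\kappa_n(1),
\qquad
\hat\kappa_n(\tau) \;=\; \frac{2}{\tau+1}\,\hat\kappa_n(1).
\end{equation*}
Since the data is separable with probability tending to one in the regime $\delta < \delta^*(0)$ (\cref{thm:SVM_main}\ref{thm:SVM_main_trans}), these identities hold with high probability. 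In particular $\hat\rho_n(\tau) = \hat\rho_n(1)$ for every $\tau$. Passing to the probability limit via \cref{thm:SVM_main}\ref{thm:SVM_main_param} and \ref{thm:SVM_main_mar} then yields
\begin{equation*}
\rho^*(\tau) = \rho^*(1),
\qquad
\beta_0^*(\tau) = \beta_0^*(1) + \frac{\tau-1}{\tau+1}\,\kappa^*(1).
\end{equation*}

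Because $\kappa^*(1) > 0$ in the separable regime (\cref{thm:SVM_main}\ref{thm:SVM_main_var}) and $\tau \mapsto (\tau-1)/(\tau+1)$ is strictly increasing on $(0,\infty)$, the map $\tau \mapsto \beta_0^*(\tau)$ is strictly increasing while $\rho^*(\tau)\|\bmu\|$ stays constant. Plugging into the error formulas of \cref{thm:SVM_main}\ref{thm:SVM_main_err},
\begin{equation*}
\Err_+^*(\tau) = \Phi\bigl(-\rho^*\|\bmu\| - \beta_0^*(\tau)\bigr),
\qquad
\Err_-^*(\tau) = \Phi\bigl(-\rho^*\|\bmu\| + \beta_0^*(\tau)\bigr),
\end{equation*}
so the strict monotonicity of $\Phi$ gives $\Err_+^*$ strictly decreasing in $\tau$ and $\Err_-^*$ strictly increasing, as claimed.

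The proof contains essentially no analytic difficulty once \cref{prop:SVM_tau_relation} and \cref{thm:SVM_main} are in hand; the only point requiring care is to justify that the deterministic algebraic identities for $\hat\beta_{0,n}(\tau)$ and $\hat\kappa_n(\tau)$ genuinely transfer to their probabilistic limits. This is where I expect the only (minor) subtlety to lie: one needs the joint convergence in probability of $(\hat\beta_{0,n}(1), \hat\kappa_n(1))$ so that the continuous mapping theorem applies to the finite-$\tau$ combination, which is immediate from \cref{thm:SVM_main}\ref{thm:SVM_main_param}--\ref{thm:SVM_main_mar}.
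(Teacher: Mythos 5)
Your proof is correct, and it reaches the same key identity as the paper but by a different route. The paper argues entirely at the level of the limiting variational problem: from the asymptotic first-order equations (\cref{lem:gordon_eq}) it reads off that $\beta_0^* + \kappa^*$ is a $\tau$-independent constant and $\kappa^* \propto (\tau+1)^{-1}$ (this is \cref{cor:asymp_tau_relation} and \cref{lem:tau_mono}), so $\beta_0^*$ increases in $\tau$ while $\rho^*$ is fixed, and monotonicity of $\Phi$ finishes the argument. You instead transport the finite-sample algebraic identity of \cref{prop:SVM_tau_relation} (valid with probability tending to one on the separability event) to the limit, using the parameter and margin convergence of \cref{thm:SVM_main}\ref{thm:SVM_main_mar},\ref{thm:SVM_main_param} and uniqueness of limits in probability to conclude $\rho^*(\tau)=\rho^*(1)$ and $\beta_0^*(\tau)=\beta_0^*(1)+\frac{\tau-1}{\tau+1}\kappa^*(1)$; together with $\kappa^*(1)>0$ from \cref{thm:SVM_main}\ref{thm:SVM_main_var}, this gives the same strict monotonicity. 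Your route has the advantage of not touching the asymptotic fixed-point equations at all (it only needs the stated convergences as black boxes), while the paper's route is self-contained within the limiting problem and does not require invoking the finite-$n$ separability event; note also that your limit-passage argument is essentially the same device the paper itself uses to extend \cref{thm:SVM_main}\ref{thm:SVM_main_param},\ref{thm:SVM_main_logit} from $\tau=1$ to general $\tau$, so it is fully consistent with the paper's machinery.
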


\paragraph{Choosing the optimal $\tau$.} A natural idea for margin rebalancing is to choose $\tau$ such that the balanced error $\Err_\mathrm{b}^*$ is minimized.

\begin{prop}[Optimal $\tau$] \label{prop:tau_optimal}
    Let $\tau^\mathrm{opt}$ be the optimal margin ratio $\tau$ defined in \cref{prop:tau_opt}. Denote $g_1 (x) := \E \left[ (G + x)_+ \right]$ where $G \sim \normal(0, 1)$. Then $\tau^\mathrm{opt}$ has the 
 explicit expression 
\begin{equation}\label{eq:tau_opt}
        \tau^\mathrm{opt} =  \dfrac{g_1^{-1} \left( \dfrac{\rho^*}{2 \pi \norm{\bmu}_2 \delta} \right) + \rho^* \norm{\bmu}_2}{g_1^{-1} \left( \dfrac{\rho^*}{2 (1 - \pi) \norm{\bmu}_2 \delta} \right) + \rho^* \norm{\bmu}_2}.
\end{equation}
\end{prop}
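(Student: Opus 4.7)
The starting point is the structural observation from \cref{prop:SVM_tau_relation} that $\hat\vbeta(\tau) = \hat\vbeta(1)$, so in the limit $n, d \to \infty$ the cosine $\rho^*$ is independent of $\tau$ and only the intercept $\beta_0^*$ varies with $\tau$. Minimizing
$$\Err_{\mathrm{b}}^* = \tfrac12 \Phi(-\rho^*\norm{\bmu}_2 - \beta_0^*) + \tfrac12 \Phi(-\rho^*\norm{\bmu}_2 + \beta_0^*)$$
over $\tau$ therefore reduces to a one-dimensional optimization over the attainable range of $\beta_0^*$. Differentiating in $\beta_0$ and using the evenness of $\phi$ identifies $\beta_0^* = 0$ as the unique stationary point, and it is easily seen to be the global minimum on the symmetric range; this reproduces the content of \cref{prop:tau_opt} that $\Err_+^* = \Err_-^* = \Err_{\mathrm{b}}^*$ at $\tau = \tau^{\mathrm{opt}}$.

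To obtain the closed form for $\tau^{\mathrm{opt}}$, I would pass to the reformulated variational problem \cref{eq:SVM_asymp_simple} and write its KKT conditions. The active constraint $H_{\kappa^*}(\rho^*, \beta_0^*) = \delta$ at the optimum reads $\E[(s(Y)\kappa^* - \rho^*\norm{\bmu}_2 - G - \beta_0^* Y)_+^2] = (1-\rho^{*2})/\delta$ (after replacing $+G$ by $-G$ via Gaussian symmetry). Stationarity with respect to $\rho$ and $\beta_0$ produces, after using the tight constraint to eliminate $D$,
\begin{align*}
    \E\bigl[(s(Y)\kappa^* - \rho^*\norm{\bmu}_2 - G - \beta_0^* Y)_+\bigr] &= \frac{\rho^*}{\norm{\bmu}_2\,\delta}, \\
    \E\bigl[Y \, (s(Y)\kappa^* - \rho^*\norm{\bmu}_2 - G - \beta_0^* Y)_+\bigr] &= 0.
\end{align*}

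Specializing to $\beta_0^* = 0$ and splitting expectations by $Y \in \{\pm1\}$ (with weights $\pi$, $1-\pi$ and $s(Y) \in \{\tau,1\}$), the one-dimensional integrals become $g_1(\tau\kappa^* - \rho^*\norm{\bmu}_2)$ and $g_1(\kappa^* - \rho^*\norm{\bmu}_2)$. Adding and subtracting the two equations decouples them, yielding
$$g_1(\tau\kappa^* - \rho^*\norm{\bmu}_2) = \frac{\rho^*}{2\pi\norm{\bmu}_2\delta}, \qquad g_1(\kappa^* - \rho^*\norm{\bmu}_2) = \frac{\rho^*}{2(1-\pi)\norm{\bmu}_2\delta}.$$
Since $g_1$ is continuous and strictly increasing on $\R$ with range $(0,\infty)$, and $\rho^* > 0$ by \cref{thm:SVM_main}\ref{thm:SVM_main_var}, inversion is well-defined; reading off $\tau\kappa^*$ and $\kappa^*$ and taking their ratio yields \cref{eq:tau_opt}.

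The main obstacle I anticipate is not the algebra but verifying the first step---that $\beta_0^* = 0$ is actually \emph{attainable} by a finite positive $\tau$, not merely an infimum. Passing the post-hoc shift identity \cref{eq:margin-balance} to the $n,d \to \infty$ limit gives $\beta_0^*(\tau) = \beta_0^*(1) + \tfrac{\tau-1}{\tau+1}\kappa^*(1)$, so $\beta_0^*(\tau)$ sweeps the open interval $(\beta_0^*(1) - \kappa^*(1),\, \beta_0^*(1) + \kappa^*(1))$ as $\tau$ ranges over $(0,\infty)$; attainability then reduces to checking $|\beta_0^*(1)| < \kappa^*(1)$, which holds in the imbalanced regime ($\pi < \tfrac12$) since $\beta_0^*(1) \le 0$ while separability forces $\kappa^*(1) > 0$. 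Alternatively, one can bypass the surjectivity question entirely by taking the closed-form expression as the \emph{definition} of $\tau^{\mathrm{opt}}$ and verifying \emph{a posteriori} that the KKT system of \cref{eq:SVM_asymp_simple} is satisfied at $(\rho^*, 0, \kappa^*)$ with this choice of $\tau$, which together with uniqueness in \cref{thm:SVM_main}\ref{thm:SVM_main_var} closes the argument.
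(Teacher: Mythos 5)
Your proof is correct in its essentials and matches the paper's approach: first establish that $\beta_0^* = 0$ is the unique minimizer of $\Err_\mathrm{b}^*$ via the strict unimodality of $x \mapsto \Phi(-a+x) + \Phi(-a-x)$ (which works because $\rho^*$ does not depend on $\tau$, only $\beta_0^*$ does), and then impose $\beta_0^* = 0$ in the first-order system of \cref{lem:gordon_eq}. Your re-derivation of the stationarity conditions from the KKT system of \cref{eq:SVM_asymp_simple} is algebraically equivalent to the paper's citation of \cref{eq:beta0_tau}, and your final inversion and ratio are correct.

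The one error is in your attainability discussion. You claim that $|\beta_0^*(1)| < \kappa^*(1)$ always holds for $\pi < \frac12$ ``since $\beta_0^*(1) \le 0$ while separability forces $\kappa^*(1) > 0$'' --- but this pair of inequalities only gives the one-sided bound $\beta_0^*(1) < \kappa^*(1)$, not the lower bound $\beta_0^*(1) > -\kappa^*(1)$ that you actually need. Indeed, as the paper's \cref{rem:tau_pi} explicitly states, the denominator of \cref{eq:tau_opt} can be negative (equivalently, $\beta_0^*(1) \le -\kappa^*(1)$), so $\tau^\mathrm{opt} < 0$ is possible in near-degenerate regimes (small $\pi$, large $\norm{\bmu}_2$ or $\delta$). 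Your alternative route --- taking the closed form as the definition of $\tau^\mathrm{opt}$ and verifying the KKT system a posteriori, with uniqueness from \cref{thm:SVM_main}\ref{thm:SVM_main_var} --- is the safe way to phrase things, and is in spirit what the paper does by allowing negative $\tau$ (with the appropriate sign modification to the max-margin objective) rather than asserting attainability over $(0,\infty)$.
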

\begin{rem}\label{rem:tau_pi}
    The optimal choice of $\tau$ has a complicated dependence on $\pi$. However, we note that the numerator scales as $\tau^\mathrm{opt} \asymp \sqrt{1/\pi}$ for small $\pi$ and fixed $\norm{\bmu}_2$ and $\delta$, which is consistent with the choice of $\tau$ in importance tempering \cite{lu2022importance}. In \cite{cao2019learning}, $\tau$ is suggested to scale with $\pi^{-1/4}$, however it was proved in \cite{kini2021label} that their algorithm won't converge to the solution with the desired $\tau$.

    It is worth noticing that in the near-degenerate cases where $\pi$ is very small or $\norm{\bmu}_2$, $\delta$ are very large, then $\rho^*$ is close to $0$ and the denominator can be negative,
    leading to $\tau^\mathrm{opt} < 0$. While our theory (\cref{prop:Err_monotone}, \cref{prop:conf}) is still valid when we allow potential negative $\tau$, it is rarely used in practice. See \cref{subsec:tau_optimal} for a further discussion. 
    The near-degenerate cases (small $\pi$, large $\norm{\bmu}_2$ or $\delta$) are better addressed under the high imbalance regime, as we analyze in the next subsection. 
\end{rem}
The minority/majority/balanced errors all equal $\Phi(- \rho^* \norm{\bmu}_2 )$ when $\tau = \tau^\mathrm{opt}$. 
We can also obtain the monotonicity of test errors after margin rebalancing.
\begin{prop}\label{prop:Err_monotone}
    When $\tau = \tau^{\rm opt} > 0$, all the test errors $\Err^*_+$, $\Err^*_-$, $\Err^*_\mathrm{b}$ are decreasing functions of $\pi \in (0, 1/2)$ (imbalance ratio), $\delta$ (aspect ratio), and $\norm{\bmu}_2$ (signal strength).
\end{prop}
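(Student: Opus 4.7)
The strategy is to first reduce the monotonicity of all three errors to a single scalar monotonicity via \cref{prop:tau_optimal} and \cref{prop:SVM_tau_relation}, and then to analyze the variational characterization through its KKT system and the implicit function theorem.

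By \cref{prop:tau_optimal}, at $\tau = \tau^{\rm opt}$ we have $\beta_0^* = 0$, so $\Err_+^* = \Err_-^* = \Err_\mathrm{b}^* = \Phi(-\rho^*\norm{\vmu}_2)$. Strict monotonicity of $\Phi$ reduces the claim to showing that $u := \rho^*\norm{\vmu}_2$ is strictly increasing in each of $\pi \in (0, 1/2)$, $\norm{\vmu}_2$, and $\delta$. Moreover, \cref{prop:SVM_tau_relation} gives $\hat\vbeta(\tau) = \hat\vbeta(1)$, so $\rho^*$ is $\tau$-invariant and can be computed from the unweighted variational problem \eqref{eq:SVM_asymp_simple} at $\tau = 1$.

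Next, I would derive a single implicit equation for $u$. Writing the KKT stationarity conditions of \eqref{eq:SVM_asymp_simple} at $\tau = 1$ in the unknowns $(\rho, \beta_0, \kappa)$, and introducing the effective margins $a^* := \kappa^* - u - \beta_0^*$ and $b^* := \kappa^* - u + \beta_0^*$ together with $g_k(x) := \E[(G + x)_+^k]$, the conditions reduce to
\[
g_1(a^*) = \frac{\rho^*}{2\pi\norm{\vmu}_2\delta}, \quad g_1(b^*) = \frac{\rho^*}{2(1-\pi)\norm{\vmu}_2\delta}, \quad \pi g_2(a^*) + (1-\pi)g_2(b^*) = \frac{1 - \rho^{*2}}{\delta}.
\]
Using $g_1(x) = x\Phi(x) + \phi(x)$ (so $g_1' = \Phi > 0$, hence $g_1^{-1}$ is well defined on $(0,\infty)$) and $g_2'(x) = 2g_1(x)$, the first two equations invert to give $a^*, b^*$ as functions of $(u, \pi, \norm{\vmu}_2, \delta)$; substituting into the third produces a single scalar equation $\Psi(u; \pi, \norm{\vmu}_2, \delta) = 0$ characterizing $u$.

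Finally, I would apply the implicit function theorem. A direct computation gives $\partial_u \Psi > 0$. For $\delta$ fixed, $\norm{\vmu}_2$ enters $\Psi$ only through the effective signal $s := \norm{\vmu}_2^2 \delta$, and strict monotonicity of $g_1, g_2$ yields $\partial_s \Psi < 0$; by implicit differentiation, $u$ is strictly increasing in $\norm{\vmu}_2$. For $\delta$-monotonicity, a similar differentiation combined with the KKT identities reduces the required sign to the pointwise inequality
\[
2\, g_1(x)^2 / \Phi(x) \,\ge\, g_2(x) \qquad \text{for all } x \in \R,
\]
equivalently $\mathrm{Var}(W) \le (\E W)^2$ for the positively truncated Gaussian $W := (G + x)\mid G + x > 0$. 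For $\pi$-monotonicity, the label-swap symmetry $y \mapsto -y$, $\vmu \mapsto -\vmu$, $\pi \mapsto 1 - \pi$ preserves $\rho^*$, so $u(\pi) = u(1 - \pi)$; combining this symmetry with a direct sign check of $\partial_\pi \Psi$ on $(0, 1/2)$ (via the implicit equation) rules out interior critical points and pins down monotonicity on that interval.

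The main obstacle is the truncated-Gaussian inequality $\mathrm{Var}(W) \le (\E W)^2$, or equivalently coefficient of variation at most one: it is strict for every finite $x$ but becomes tight as $x \to -\infty$, a feature ultimately rooted in log-concavity of the Gaussian density. This is the only non-routine analytic step; once it is in hand, all three monotonicities follow by straightforward implicit differentiation in the reduced equation $\Psi = 0$.
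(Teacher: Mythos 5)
Your overall strategy is correct and, modulo bookkeeping, matches the paper's: after using Proposition~\ref{prop:tau_optimal} to reduce all three errors to $\Phi(-\rho^*\norm{\bmu}_2)$, everything hinges on showing $u := \rho^*\norm{\bmu}_2$ is increasing in each parameter, and the paper does exactly this in Lemma~\ref{lem:rho_mono} by working with the equation \eqref{eq:SVM_sys_eq_rho}. Your implicit-function-theorem framing via a single reduced scalar equation $\Psi(u) = 0$ is a clean repackaging of the paper's argument (where monotonicity is read off by fixing $\rho$ and moving a parameter). Most notably, you independently identified the same key analytic fact: the inequality $2g_1(x)^2/\Phi(x) \ge g_2(x)$, equivalently that the positively truncated Gaussian has coefficient of variation at most one. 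In the paper this appears as the quantity $h(x)$ in the proof of Lemma~\ref{lem:g_monotone}, proved via Mill's ratio; and indeed the $\norm{\bmu}_2$- and $\delta$-monotonicities reduce, via $g(w)/w$ increasing, to exactly this inequality.

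There is, however, a real gap in the $\pi$-case. First, the label-swap symmetry $u(\pi) = u(1-\pi)$ does not rule out interior critical points --- it only forces $u'(1/2) = 0$; ``combining with a direct sign check'' is simply the direct sign check, so the symmetry contributes nothing. Second, and more substantively, that sign check does not reduce to $h \ge 0$. When one differentiates in $\pi$ subject to the KKT constraint $1/g_1(a^*) + 1/g_1(b^*) = \text{const}$, the sign of $\partial_\pi u$ is governed by the difference $h(a^*) - h(b^*)$ where $a^* > b^*$ for $\pi < 1/2$, so what is needed is that $h$ is \emph{increasing}, not merely nonnegative (see the proof of Lemma~\ref{lem:rho_mono}, point (c), and the auxiliary Lemma~\ref{lem:g_prime_monotone}). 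Fortunately the paper's proof of $h > 0$ proceeds by showing $h(-\infty) = 0$ and $h'(x) > 0$, so the stronger monotonicity fact is obtained for free; but it is a distinct property of $h$ that your write-up does not identify, and ``the truncated-Gaussian inequality $\mathrm{Var}(W) \le (\E W)^2$'' alone would not close the $\pi$-case.
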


\subsection{High imbalance regime}\label{subsec:high-imb}

Different from the proportional regime considered in \cref{sec:logit} and \ref{subsec:rebal_prop}, here we focus on a high-imbalanced scenario where $\pi$ is small, $\norm{\bmu}_2$ is large, and $n$ grows much faster than $d$. In this regime, we can even extend the feature distribution beyond Gaussian, and generalize the 2-GMM settings.

\begin{defn}[High imbalance]
We say a dataset $\{(\xx_i, y_i)\}_{i = 1}^n$ is i.i.d.~generated from a \emph{two-component sub-gaussian mixture model (2-subGMM)} if for any $i \in [n]$,
\begin{enumerate}
    \item[i.] Label distribution: $\P(y_i = +1) = 1 - \P(y_i = -1) = \pi$, 
    \item[ii.] Feature distribution: $\xx_i = y_i \bmu + \zz_i$, where $\zz_i$ has independent coordinates with uniformly bounded sub-gaussian norms. Namely, each coordinate $z_{ij}$ of $\zz_i$ satisfies $\E[z_{ij}] = 0$, $\var(z_{ij}) = 1$, and $\norm{z_{ij}}_{\psi_2} := \inf \{ K > 0: \E[\exp(X^2/K^2)] \le 2 \} \le C$ for all $j \in [d]$, where $C$ is an absolute constant.
\end{enumerate}
For any constants $a,b,c >0$, we say a 2-subGMM is \emph{$(a,b,c)$-imbalanced} if \cref{setup-high-imbalance} holds.
\end{defn}
\begin{rem}
    Parameters $a$, $\frac{b}{2}$, and $c$ each specifies the degenerate rate of imbalance ratio $\pi$, and the growth rate of signal strength $\| \bmu \|_2$, aspect ratio $n/d$. We usually require $a < c + 1$ to make sure the minority class sample size $n_+ := \pi n = d^{c - a + 1} \to \infty$ does not degenerate.
\end{rem}

Our goal is to study the performance of margin-rebalanced SVM \cref{eq:SVM-m-reb} in this high imbalance regime, asymptotically as $d \to \infty$. Therefore, we allow $\tau = \tau_d$ depends on dimension $d$ and care about what order of $\tau_d$ would make the test errors vanish. We summarize our findings in the following theorem, which is consistent with the empirical observations in \cref{fig:High_imb_heat} and extends \cref{thm:high-imbalance} to the case of imbalanced 2-subGMM.

\begin{thm}[Phase transition in high imbalance regime]\label{thm:main_high-imbal}
Consider the high imbalance regime where the training data is i.i.d.~generated from an $(a,b,c)$-imbalanced 2-subGMM. Suppose that $a - c < 1$. A margin-rebalanced SVM is trained, with test errors calculated according to \cref{eq:Err_n}. Then as $d \to \infty$, 
the conclusions of the three phases in \cref{thm:high-imbalance} still hold.
\end{thm}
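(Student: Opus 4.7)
The plan is to extend the proof of \cref{thm:high-imbalance} (the 2-GMM case) to the 2-subGMM setting by systematically replacing each use of Gaussian concentration with its sub-gaussian analog. Inspecting the 2-GMM argument, the only distributional facts invoked are (i) uniform sub-gaussian tail bounds on linear functionals $\langle \vz_i, \vv\rangle$ for deterministic $\vv \in \bbS^{d-1}$, (ii) operator-norm concentration of the empirical covariance $\frac{1}{n}\sum_i \vz_i \vz_i^\top$ around the identity, and (iii) Hanson--Wright-type control of quadratic forms $\vz_i^\top \mA \vz_i$. All three hold for coordinate-wise independent random vectors with uniformly bounded $\psi_2$-norms, with constants depending only on the absolute constant $C$ in the definition. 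I would first fix a high-probability event $\cE_d$ on which these bounds hold uniformly, and carry out the remaining analysis deterministically on $\cE_d$.

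For the upper bounds in the high-signal and moderate-signal phases, I would construct the oracle candidate direction $\bbeta^{\mathrm{cand}} = \bmu/\|\bmu\|_2$ together with an intercept $\beta_0^{\mathrm{cand}}$ tuned to balance the rebalanced margin constraints. On $\cE_d$ the worst-case noise contributions $\max_{i \in \cI_\pm} |\langle \vz_i, \bmu/\|\bmu\|_2\rangle| = O(\sqrt{\log n_\pm})$ are $o(\|\bmu\|_2)$ under the phase assumptions, so the candidate is feasible and achieves a positive margin of order $\|\bmu\|_2$ up to logarithmic factors provided $\tau_d$ lies in the prescribed range. Since the true SVM margin dominates the candidate's, and in the regime $n/d = d^c \to \infty$ any orthogonal deviation of $\hat\bbeta$ from $\bmu/\|\bmu\|_2$ costs margin of order $\sqrt{d/n} = d^{-c/2}$ (via a sub-gaussian net bound on $\sup_{\vv \perp \bmu,\,\|\vv\|=1}\sum_i \alpha_i \langle \vz_i, \vv\rangle$), one deduces $\hat\rho \to 1$ and $\hat\beta_0 \to \beta_0^{\mathrm{cand}}$, yielding $\Err_\pm \to 0$.

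For the lower bounds, the key tool is \cref{prop:SVM_tau_relation}: varying $\tau$ only shifts the intercept as $\hat\beta_0(\tau) = \hat\beta_0(1) + \tfrac{\tau-1}{\tau+1}\hat\kappa(1)$ while leaving $\hat\bbeta(1)$ fixed. This decouples the analysis into (a) controlling the fixed direction $\hat\bbeta(1)$ and the unrebalanced margin $\hat\kappa(1)$, and (b) optimizing a one-parameter intercept. For the failure of $\tau_d = \Theta(1)$ in the moderate-signal phase, the KKT conditions of the unrebalanced SVM, together with the $n_+/n_- = d^{-a}$ imbalance, force $\hat\beta_0(1)$ to lie so close to $-\hat\kappa(1)$ that the typical logit $\hat\rho\|\bmu\|_2 + \hat\beta_0(1)$ is driven to $-\infty$, hence $\Err_+ \to 1$. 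For the low-signal case, the condition $a - c > 2b$ implies $\|\bmu\|_2 = d^{b/2}$ is dominated by the noise-driven extremal majority statistic $\max_{i \in \cI_-} \langle \vz_i, \hat\bbeta(1)\rangle \asymp \sqrt{\log n_-}$; consequently the admissible intercept range $\pm \hat\kappa(1) = O(\|\bmu\|_2)$ is too narrow to cancel the noise-induced bias $|\hat\beta_0(1)| \gtrsim \sqrt{\log n_-}$, and for every admissible $\tau_d$ either $\Err_+$ or $\Err_-$ is bounded away from $0$, giving $\Err_\mathrm{b} \ge 1/2 - o(1)$.

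The hardest step will be the uniform-in-$\tau_d$ lower bound in the low-signal phase. Unlike an argument for a specific $\tau$, one must rule out every possible rebalancing, and this is where \cref{prop:SVM_tau_relation} is indispensable, as it reduces the entire family of rebalanced SVMs to a one-parameter intercept shift atop a fixed direction $\hat\bbeta(1)$, sidestepping a combinatorial sweep over support-vector configurations. The remaining technical challenge is quantifying $\hat\kappa(1)/\|\hat\bbeta(1)\|_2$ from above and $|\hat\beta_0(1)|$ from below under $a - c > 2b$, which requires a refined Gordon-type comparison carefully tracking the scaling in \cref{setup-high-imbalance}.
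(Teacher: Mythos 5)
Your proposal breaks down at its central step, the claim that in both the high- and moderate-signal phases the SVM direction aligns with the signal, i.e.\ $\hat\rho \to 1$, because ``any orthogonal deviation of $\hat\vbeta$ from $\bmu/\norm{\bmu}_2$ costs margin of order $\sqrt{d/n}=d^{-c/2}$.'' Under the imbalance \cref{setup-high-imbalance} the relevant scale is not $\sqrt{d/n}$ but $\sqrt{d/n_+}=\sqrt{d/(\pi n)}=d^{(a-c)/2}$: tilting $\hat\vbeta$ toward the projected difference of class noise means $\bP_{\bmu}^\perp\wt\zz$ \emph{gains} margin of that order, since the few minority points can be pushed out along their own noise average. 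This is exactly what the paper's \cref{lem:upper_bound} and \cref{lem:rho_hat} quantify: the attainable margin is $\bar\kappa \asymp \sqrt{d^{b}+\tfrac14 d^{a-c}}$, achieved by a candidate that mixes $\bmu$ with $\bP_{\bmu}^\perp\wt\zz$, and consequently $\hat\rho \to 0$ (with $\hat\rho\asymp d^{(b-a+c)/2}$) whenever $a-c>b$, i.e.\ in the moderate and low phases. So the candidate $\vbeta^{\mathrm{cand}}=\bmu/\norm{\bmu}_2$ is far from optimal there, and your deduction $\hat\rho\to1$, $\Err_\pm\to0$ fails. Worse, it is inconsistent with your own lower-bound argument: since by \cref{prop:SVM_tau_relation} the direction does not depend on $\tau$, if $\hat\rho\to1$ were true then the unrebalanced intercept would be $O(\sqrt{\log n})$ relative to $\norm{\bmu}_2$ and $\tau_d\asymp 1$ would already give $\Err_+\to0$, contradicting part 2 of \cref{thm:high-imbalance}. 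The minority collapse at $\tau\asymp1$ occurs precisely because the direction is noise-dominated, making $\hat\kappa(1)$ and $|\hat\beta_0(1)|$ of order $d^{(a-c)/2}\gg \hat\rho\norm{\bmu}_2$ (paper's \cref{lem:beta0_asymp}).

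The low-signal phase is also argued from wrong scalings and the wrong mechanism. You claim $\hat\kappa(1)=O(\norm{\bmu}_2)$ and $|\hat\beta_0(1)|\gtrsim\sqrt{\log n_-}$ dominating $\norm{\bmu}_2$; in fact both $\hat\kappa(1)$ and $|\hat\beta_0(1)|$ are of order $d^{(a-c)/2}$, and $\norm{\bmu}_2=d^{b/2}$ is never dominated by $\sqrt{\log n}$. The correct reason no rebalancing can help is that when $a-c>2b$ one has $\hat\rho\norm{\bmu}_2=o_\P(1)$, so the two class-conditional test-logit distributions differ only by a vanishing shift; hence $\Err_+ + \Err_- = 1 - \P\bigl(-\hat\rho\norm{\bmu}_2 \le \hat\beta_0 + G_d < \hat\rho\norm{\bmu}_2\bigr) = 1-o(1)$ for \emph{every} intercept, and thus for every $\tau_d$ — no ``intercept range too narrow'' argument is needed or valid. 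Note also that your stated conclusion ``either $\Err_+$ or $\Err_-$ is bounded away from $0$'' does not imply $\Err_\mathrm{b}\ge \tfrac12-o(1)$; one needs $\Err_++\Err_-\ge 1-o(1)$. The sub-gaussian substitutions in your first paragraph are fine and match the paper (sub-gaussian Hoeffding, Bernstein, Hanson--Wright, maximal inequalities), and the reduction of all $\tau$'s to an intercept shift via \cref{prop:SVM_tau_relation} is a sound idea also exploited by the paper, but the quantitative core — where the margin comes from in the imbalanced regime and how $\hat\rho$, $\hat\kappa$, $\hat\beta_0$ scale in each phase — is missing or incorrect, so the proof as proposed would not go through.
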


\section{Consequences for confidence estimation and calibration} \label{sec:calibration}


Recall the definition of confidence of the max-margin classifier $\hat p(\xx) := \sigma ( \hat f(\xx) ) = 
        \sigma( \< \xx, \hat\vbeta \> + \hat\beta_0 )$ in \cref{subsec:conf_calib}.
Note that $\hat p(\xx)$ and $1 - \hat p(\xx)$ are the predicted probabilities of $\xx$ for the minority class ($y = +1$) and the majority class ($y = -1$) respectively. 

It is worth noticing that the confidence is sensitive to scales, i.e., $\sigma(t) \not= \sigma(ct)$ if $c \not= 1$, despite the fact that rescaling yields the same label prediction and thus does not affect accuracy. While small models tend to be calibrated, especially when parameter estimation is consistent, larger models such as DNNs are known to suffer from poor calibration \cite{guo2017calibration}. A simple theoretical explanation is that in a DNN, the last layer (usually a logistic regression) $\xx \mapsto \sigma\bigl( \< \xx, \hat\bbeta \> + \hat\beta_0 \bigr)$ trained by gradient descent on separable features often results in a very large $\| \hat\vbeta\|_2$ (as mentioned in \cref{subsec:LR_vs_SVM}), thereby inflating the predicted probabilities. Here we focus on the common form of SVM \eqref{eq:SVM-0} where normalization $\| \hat\vbeta \|_2 = 1$ is applied.

Some probabilities regarding the confidence are as follows.

\begin{enumerate}
    \item \textbf{Max-margin confidence.}
    The confidence of the max-margin classifier is
    \begin{equation*}
        \hat p(\xx) := \sigma\bigl( \hat f(\xx) \bigr) = 
        \sigma\bigl( \< \xx, \hat\vbeta \> + \hat\beta_0 \bigr).
    \end{equation*}
    
    \item \textbf{Bayes optimal probability.}
    The true conditional probability is
    \begin{equation*}
        \begin{aligned}
            p^*(\xx) := \P( y = 1 \,|\, \xx ).
        \end{aligned}
    \end{equation*}
    
    \item \textbf{True posterior probability.} The probability conditioning on max-margin confidence is
    \begin{equation*}
        \begin{aligned}
            \hat p_0(\xx) := \P \bigl( y = 1 \,|\, \hat p(\xx) \bigr).
        \end{aligned}
    \end{equation*}
\end{enumerate}
Note that $p^*(\xx)$ is the confidence of the Bayes classifier $y^*(\xx) := 2\ind\{  \< \xx, 2\bmu \> + \log\frac{\pi}{1 - \pi} > 0 \} - 1$.

Recall the definition of calibration \cref{eq:calibrated} and some miscalibration metrics \cref{eq:CalErr}---\eqref{eq:ConfErr} introduced in \cref{subsec:conf_calib}. We offer some further explanations for them.
\begin{itemize}
    \item \textbf{Calibration error:} The $\cL^2$ distance between confidence and posteriori, which is the most commonly used metric.
    \begin{equation*}
    \mathrm{CalErr}(\hat p) :=  \E\left[ \bigl( \hat p(\xx) - \hat p_0(\xx) \bigr)^2 \right]
    \end{equation*}

    \item \textbf{Mean squared error (MSE):} Also known as the Brier score, subject to a calibration budget \cite{brier1950verification, gneiting2007probabilistic}.
    \begin{equation*}
        \mathrm{MSE}(\hat p)  := \E\left[ \bigl( \mathbbm{1}\{ y = 1 \} - \hat p(\xx) \bigr)^2 \right]
    \end{equation*}
    It can be shown that MSE has the following decomposition
    \begin{equation*}
        \mathrm{MSE}(\hat p) 
        \ = \ \underbrace{ \var\, \bigl[\mathbbm{1}\{ y = 1 \} \bigr] }_{\text{irreducible}}  
        + 
        \underbrace{ \vphantom{\bigl[} \mathrm{CalErr}(\hat p) }_{\text{lack of calibration}}
        - 
        \underbrace{\var\, \bigl[ \hat p_0(\xx) \bigr]}_{\text{sharpness/resolution}}
        .
    \end{equation*}
    Calibration error itself does not guarantee a useful predictor. Sharpness, also known as resolution \cite{murphy1973new, kuleshov2015calibrated}, is another desired property which measures the variance in the response $y$ explained by the probabilistic prediction $\hat p(\xx)$. Hence, a small MSE suggests a classifier to be calibrated with high sharpness. 
    
    Note that $\var[\mathbbm{1}\{ y = 1 \}] = \pi(1 - \pi)$ is an intrinsic quantity unrelated to $\hat f$. When study the effect of $\pi$ on model calibration, we may discard the irreducible variance term and define a modified MSE as
    \begin{equation*}
        \mathrm{mMSE}(\hat p) := \mathrm{CalErr}(\hat p) - \var \bigl[ \hat p_0(\xx) \bigr].
    \end{equation*}
    \item \textbf{Confidence estimation error:} The $\cL^2$ distance between confidence and Bayes optimum.
    \begin{equation*}
        \mathrm{ConfErr}(\hat p) :=  \E\left[ \bigl( \hat p(\xx) - p^*(\xx) \bigr)^2 \right]
        .
    \end{equation*}
    It has the following relation with MSE:
    \begin{equation}
    \label{eq:MSE_vs_ConfErr}
        \mathrm{MSE}(\hat p) 
        =
        \E \left[ p^*(\xx) \bigl( 1 - p^*(\xx) \bigr) \right]
        + \mathrm{ConfErr}(\hat p),
    \end{equation}
    where the first term is intrinsic, which only depends on $\pi$ and $\norm{\bmu}_2$. 
    
\end{itemize}

The asymptotics of these metrics, and some monotone effect of model parameters $\pi \in (0, 1/2)$, $\norm{\bmu}_2$, $\delta$ on them, are summarized in the following proposition.

\begin{prop}[Confidence estimation and calibration]\label{prop:conf}
    Consider 2-GMM and the proportional settings in \cref{sec:logit_SVM} on linearly separable dataset ($\delta < \delta^*(0)$).
    \begin{enumerate}[label=(\alph*)]
        \item \label{prop:conf_asymp}
        Let $(\rho^*, \beta_0^*)$ be defined as per \cref{thm:SVM_main}, and $(Y, G) \sim P_y \times \normal(0,1)$. Denote
        \begin{equation*}
            \begin{aligned}
                \mathrm{MSE}^*  
                & :=  \E \left[ \sigma \bigl( -\rho^* \norm{\bmu}_2 - \beta_0^* Y + G \bigr)^2 \right],
                \qquad
                \mathrm{mMSE}^* = \mathrm{MSE}^* - \pi(1 - \pi),
                \\
                \mathrm{CalErr}^*
                & :=  \E\left[ \left( \sigma\Bigl( 2\rho^* \norm{\bmu}_2 (\rho^* \norm{\bmu}_2 Y + G) + \log\frac{\pi}{1-\pi} \Bigr) - \sigma\bigl( \rho^* \norm{\bmu}_2 Y + G + \beta_0^* \bigr) \right)^2 \right], 
                \\
                V_{y|\xx}^*
                & :=  \E\left[ \sigma\Bigl( -2\norm{\bmu}_2( \norm{\bmu}_2 + G ) - \log\frac{\pi}{1-\pi} Y \Bigr)^2 \right],
                \qquad
                \mathrm{ConfErr}^* = \mathrm{MSE}^* - V_{y|\xx}^*.
            \end{aligned}
        \end{equation*}
        then
        \begin{equation*}
        \begin{aligned}
            \lim_{n \to \infty} \mathrm{MSE}(\hat p) = \mathrm{MSE}^*,
            \qquad
            & \lim_{n \to \infty} \mathrm{CalErr}(\hat p) = \mathrm{CalErr}^*, 
            \\
            \lim_{n \to \infty} \mathrm{mMSE}(\hat p) = \mathrm{mMSE}^*,
            \qquad
            & \lim_{n \to \infty} \mathrm{ConfErr}(\hat p) = \mathrm{ConfErr}^*.
        \end{aligned}
        \end{equation*}
        
        \item \label{prop:conf_mono}
        When $\tau = \tau^\mathrm{opt} > 0$, 
        \begin{itemize}
            \item $\mathrm{MSE}^*$ and $\mathrm{mMSE}^*$ are decreasing functions of $\pi \in (0, \frac12)$, $\norm{\bmu}_2$, $\delta$.
            \item $\mathrm{ConfErr}^*$ is decreasing in $\delta$.
            
        \end{itemize}
    \end{enumerate}
\end{prop}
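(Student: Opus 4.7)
The plan is to prove part (a) by writing each metric as an expectation of a bounded continuous functional of the pair $(y_{\mathrm{test}}, \hat f(\vx_{\mathrm{test}}))$, invoking the parameter convergence of Theorem 4.1, and then handing part (b) entirely over to the simplification $\beta_0^* = 0$ from Proposition 5.3 combined with the monotonicity of $\rho^*\|\vmu\|$ from Proposition 5.5.

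For part (a), the starting point is that for a fresh test point $(\vx_{\mathrm{test}}, y_{\mathrm{test}}) \sim P_{\vx,y}$, the SVM constraint $\|\hat\vbeta\|_2 = 1$ yields the exact (finite $n$) decomposition
\[
\hat f(\vx_{\mathrm{test}}) \;=\; \hat\rho_n \|\vmu\| y_{\mathrm{test}} + G + \hat\beta_{0,n}, \qquad G \sim \normal(0,1) \indep y_{\mathrm{test}}.
\]
By Theorem 4.1(b),(f), $(\hat\rho_n,\hat\beta_{0,n}) \conp (\rho^*,\beta_0^*)$, so the pair $(y_{\mathrm{test}},\hat f(\vx_{\mathrm{test}}))$ converges in distribution to $(Y,\rho^*\|\vmu\|Y+G+\beta_0^*)$. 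Since $\sigma$ and $\ind\{y=1\}$ are bounded, bounded convergence immediately produces $\mathrm{MSE}^*$. For $\mathrm{CalErr}^*$, I will derive the exact posterior in closed form: because the class-conditional logits are $\normal(\pm\hat\rho_n\|\vmu\|+\hat\beta_{0,n},1)$ with equal variances, Bayes' rule gives
\[
\hat p_0(\vx) \;=\; \sigma\!\Bigl(2\hat\rho_n\|\vmu\|\bigl(\hat f(\vx)-\hat\beta_{0,n}\bigr) + \log\tfrac{\pi}{1-\pi}\Bigr),
\]
whose limit matches the claimed formula. For $\mathrm{ConfErr}^*$, I will use $p^*(\vx) = \sigma(2\langle\vx,\vmu\rangle+\log\frac{\pi}{1-\pi})$ together with the decomposition $\mathrm{MSE}(\hat p) = \E[p^*(\vx)(1-p^*(\vx))] + \mathrm{ConfErr}(\hat p)$ (valid because $\ind\{y=1\}-\hat p = (\ind\{y=1\}-p^*) + (p^*-\hat p)$, and the cross term vanishes conditionally on $\vx$); writing $\langle\vx,\vmu\rangle = \|\vmu\|^2 Y + \|\vmu\| G$ identifies the irreducible piece as $V_{y|\vx}^*$.

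For part (b), Proposition 5.3 gives $\beta_0^* = 0$ at $\tau=\tau^{\mathrm{opt}}$, collapsing the formula to $\mathrm{MSE}^* = \E[\sigma(-\rho^*\|\vmu\|+G)^2]$, which depends on $(\pi,\|\vmu\|,\delta)$ only through the scalar $\rho^*\|\vmu\|$. Because $\sigma$ is positive and strictly increasing, so is $\sigma^2$, hence $\E[\sigma(-r+G)^2]$ is strictly decreasing in $r$. Proposition 5.5 (equivalently, its statement that $\Phi(-\rho^*\|\vmu\|)$ decreases in each of $\pi\in(0,\tfrac12)$, $\|\vmu\|$, $\delta$) shows $\rho^*\|\vmu\|$ is increasing in each parameter; monotonicity of $\mathrm{MSE}^*$ follows. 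For $\mathrm{mMSE}^* = \mathrm{MSE}^* - \pi(1-\pi)$, the subtracted term is constant in $\|\vmu\|$ and $\delta$, and in $\pi$ one has $-\partial_\pi[\pi(1-\pi)] = -(1-2\pi) < 0$ on $(0,\tfrac12)$, so both contributions push the derivative strictly negative. Finally, $V_{y|\vx}^*$ does not depend on $\delta$, so $\partial_\delta\mathrm{ConfErr}^* = \partial_\delta\mathrm{MSE}^* < 0$.

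The main obstacle is the CalErr step in part (a): one must show that the class-conditional logit distributions are genuinely Gaussian with equal variances at every finite $n$ (not only asymptotically) to get the clean closed form of $\hat p_0$, and then argue that the training-dependent parameters inside $\hat p_0$ may be replaced by their limits uniformly over $\vx$. This relies critically on the isotropy of the 2-GMM and on the normalization $\|\hat\vbeta\|_2 = 1$; together they guarantee that the map $(\hat\rho_n,\hat\beta_{0,n}) \mapsto (\hat p(\vx_{\mathrm{test}}),\hat p_0(\vx_{\mathrm{test}}))$ is jointly continuous and bounded, after which $W_2$/TLD convergence from Theorem 4.1(f) and bounded convergence close the argument. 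Beyond this, the rest is algebra and invoking Propositions 5.3 and 5.5.
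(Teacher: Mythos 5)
Your proposal is correct and follows essentially the paper's own route: the paper likewise derives the closed-form Bayes posteriors $\hat p_0$ and $p^*$ from the equal-variance Gaussian class-conditional logits (using $\|\hat\vbeta\|_2=1$ and isotropy), passes to the limit via $(\hat\rho_n,\hat\beta_{0,n})\conp(\rho^*,\beta_0^*)$, Slutsky, and bounded convergence, and obtains $\mathrm{ConfErr}$ through the same orthogonal decomposition $\mathrm{MSE}=\E[p^*(1-p^*)]+\mathrm{ConfErr}$; part (b) is handled identically via $\beta_0^*=0$ at $\tau^{\mathrm{opt}}$ and the monotonicity of $\rho^*\norm{\bmu}_2$. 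The uniformity-in-$\vx$ worry you raise is not needed — joint continuity and boundedness of the functionals in $(\hat\rho_n,\hat\beta_{0,n},Y,G)$ plus convergence in probability of the parameters already suffice, exactly as in the paper's Lemma on the joint limit of $(y,\hat p,p^*,\hat p_0)$.
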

In addition, there are some monotone relationships that can be verified numerically. We summarized them in the following claim.
\begin{claim}\label{claim:conf}
    Consider the same settings as \cref{prop:conf}. When $\tau = \tau^\mathrm{opt} > 0$, we have
    \begin{itemize}
            \item $\mathrm{CalErr}^*$ is decreasing in $\norm{\bmu}_2$ and $\delta$, for any $\pi \le \overline{\pi}$ fixed, where $\overline{\pi} \approx 0.25$ is some constant.
            \item $\mathrm{ConfErr}^*$ is decreasing in $\pi \in (0, \frac12)$.
        \end{itemize}
\end{claim}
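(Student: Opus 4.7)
The plan is to combine analytic reduction with numerical verification, since the claim is stated as numerically checkable. Under $\tau = \tau^\mathrm{opt}$, \Cref{prop:tau_optimal} gives $\beta_0^* = 0$, so writing $s := \rho^* \|\vmu\|_2$ and $L := \log(\pi/(1-\pi))$, the quantities in \Cref{prop:conf}\ref{prop:conf_asymp} simplify to
\begin{align*}
\mathrm{CalErr}^* &= \Psi(s, L) := \E\bigl[ \bigl( \sigma(L + 2s(sY + G)) - \sigma(sY + G) \bigr)^2 \bigr], \\
\mathrm{MSE}^* &= \E[\sigma(G - s)^2], \qquad \mathrm{ConfErr}^* = \mathrm{MSE}^* - V_{y|\vx}^*,
\end{align*}
so both $\mathrm{CalErr}^*$ and $\mathrm{MSE}^*$ depend on $(\pi, \|\vmu\|_2, \delta)$ only through $s$ (and $L$ for $\mathrm{CalErr}^*$), with $s$ pinned down by the variational problem \eqref{eq:SVM_asymp_simple}. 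By \Cref{prop:Err_monotone} applied to $\Err^*_\mathrm{b} = \Phi(-s)$, $s$ is strictly increasing in each of $\pi \in (0, 1/2)$, $\|\vmu\|_2$, and $\delta$.

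For $\mathrm{ConfErr}^*$ decreasing in $\pi$, I would use the decomposition $\mathrm{ConfErr}^* = \mathrm{MSE}^* - V_{y|\vx}^*$. By \Cref{prop:conf}\ref{prop:conf_mono}, $\mathrm{MSE}^*$ already decreases in $\pi$, so it suffices to show that $V_{y|\vx}^* = \E[p^*(1-p^*)]$ is increasing in $\pi$ on $(0, 1/2)$. Since $p^* = \sigma(\langle \vx, 2\vmu \rangle + L)$ depends on $\pi$ only through $L$, conditioning on $Y$ gives $V_{y|\vx}^* = \pi h(L) + (1-\pi) h(-L)$ with $h(L) := \E_G[\sigma(-2\|\vmu\|_2^2 - 2\|\vmu\|_2 G - L) \sigma(2\|\vmu\|_2^2 + 2\|\vmu\|_2 G + L)]$. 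The change of variable $\pi \mapsto 1-\pi$ shows $V_{y|\vx}^*$ is symmetric about $\pi = 1/2$, so the task reduces to showing that $\pi = 1/2$ is the unique maximum: differentiating in $\pi$ (using $dL/d\pi = 1/(\pi(1-\pi))$ and $\sigma'(x) = \sigma(x)\sigma(-x)$) leaves a scalar inequality in $L$ that captures the intuition that more balanced classes push the Bayes posterior toward $1/2$ and inflate the variance $p^*(1-p^*)$.

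For $\mathrm{CalErr}^*$ decreasing in $\|\vmu\|_2$ and $\delta$, since $\Psi = \Psi(s, L)$ does not involve $\|\vmu\|_2$ or $\delta$ explicitly once $\beta_0^* = 0$, the chain rule gives $\tfrac{d}{d\|\vmu\|_2}\Psi = \partial_s \Psi \cdot \partial_{\|\vmu\|_2} s$ and the analogous identity for $\delta$. Combined with $\partial_{\|\vmu\|_2} s, \partial_\delta s > 0$, the claim reduces to the single inequality $\partial_s \Psi(s, L) \le 0$ for all $s > 0$ and all $L$ with $\pi \le \overline{\pi}$. Differentiating under the expectation,
\[
\partial_s \Psi = 2 \E\bigl[ (\sigma(u) - \sigma(v)) \cdot ((4sY + 2G)\sigma'(u) - Y \sigma'(v)) \bigr],
\]
where $u = L + 2s(sY+G)$ and $v = sY+G$. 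I would split this expectation by $Y \in \{\pm 1\}$ and compare the stronger coupling $4s^2 + 2s$ in the Bayes-posterior factor against the weaker coupling in the estimated-posterior factor; for $L \ll 0$ (equivalently small $\pi$) the $Y = -1$ piece dominates and the sign is clear.

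The main obstacle is precisely the threshold $\overline{\pi} \approx 0.25$: the analytic sign control for $\partial_s \Psi$ fails as $\pi \uparrow 1/2$ because the two sigmoid arguments $u$ and $v$ become comparable, the squared difference $(\sigma(u) - \sigma(v))^2$ degenerates, and second-order terms flip the sign of the integrand. Since an explicit characterization of $\overline{\pi}$ appears intractable, I would close the argument by solving \eqref{eq:SVM_asymp_simple} for $\rho^*$ on a fine three-dimensional grid in $(\pi, \|\vmu\|_2, \delta)$, evaluating $\partial_s \Psi$ and $\partial_\pi V_{y|\vx}^*$ numerically, and reporting $\overline{\pi}$ as the supremum of $\pi$ for which the signs are uniformly as required---exactly the numerical verification anticipated by the claim's phrasing.
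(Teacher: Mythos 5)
Your proposal is correct and takes essentially the same route as the paper: using $\beta_0^*=0$ at $\tau=\tau^\mathrm{opt}$ you reduce $\mathrm{CalErr}^*$ to a scalar function of $s=\rho^*\norm{\bmu}_2$ at fixed $\pi$, invoke the monotonicity of $\rho^*$ (hence of $s$) in $\norm{\bmu}_2$ and $\delta$, and close with a numerical check that this function is decreasing in $s$ for $\pi\le\overline{\pi}$, while for $\mathrm{ConfErr}^*=\mathrm{MSE}^*-V_{y|\xx}^*$ you combine the proven monotonicity of $\mathrm{MSE}^*$ from \cref{prop:conf} with a numerical check that $V_{y|\xx}^*$ increases in $\pi$ --- exactly the paper's verification. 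One minor caution: symmetry of $V_{y|\xx}^*$ about $\pi=\tfrac12$ together with a unique maximum there does not by itself imply monotonicity on $(0,\tfrac12)$, but since your final step (like the paper's) is a direct numerical verification of the derivative sign, this does not affect the conclusion.
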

\section{Discussions}\label{sec:discuss}


Below we present two possible extensions of our main results stated in previous sections.

\paragraph{Multiclass classification.} Our theory does not cover classification with $K > 2$ classes. However, we believe that for multiclass classification, the limiting empirical distribution of the logits is a similar ``rectified Gaussian" in $\R^K$. In what follows, we present some empirical evidences and an informal conjecture on the logits distribution.

In the $K$-class case, we observe features $\xx_i \in \R^d$ and labels $y_i \in [K] \sim P_y$, where the expected fractions of each class is $\pi_k := \P(y_i = k)$, $k \in [K]$. Given $y_i = k$, the conditional distribution of $\xx_i$ is $\normal (\vmu_k, \bI_d)$, where $\{ \vmu_k \}_{k \in [K]}$ are the class means. Let $\hat\vf(\xx) = \hat\bW \xx + \hat \vw_0$ be the logits of multinomial logistic regression for $\{ (\xx_i, y_i) \}_{i=1}^{n}$, where $\hat\bW \in \R^{K \times d}$, $\hat \bw_0 \in \R^K$ are weights. The prediction is given by $\hat y(\xx) := \argmax_{k \in [K]} \hat f_k(\xx)$, where $\hat f_k(\xx)$ is the logit of $\xx$ for label $k$, i.e., the $k$-th component of $\hat\vf(\xx)$. Similar to the binary case discussed in \cref{subsec:LR_vs_SVM}, we expect similar connections between multiclass SVM and multinomial logistic regression exist. 


We first conduct some numerical experiments to demonstrate that the truncation effect is likely generalizable to $K > 2$. In \cref{fig:multiclass}, we present the density heatmaps of joint logits $\bigl(\hat f_{1}(\xx_i), \hat f_{k}(\xx_i)\bigr)$ in both simulations and real-data analysis, where all input features $\xx_i$ are from class $1$. More specifically, in our simulation, we consider 3-component GMM with $\pi_1 = 0.5$, $\pi_2 = 0.3$, $\pi_3 = 0.2$, $n=50,000$, $d=6,000$, and class centers are randomly generated in $\R^d$ from the $\cL^2$-sphere (at the origin with radius $4$). 
For real data, we consider CIFAR-10 image dataset preprocessed by the pretrained ResNet-18. We undersample to obtain an imbalanced dataset with sample size 500, 223, 100 for each class 1, 2, 3. 
Then, we train a multinomial logistic regression (with ridge regularization parameter $\lambda = 10^{-8}$) after prewhitening the features. Similar to the linear separable regime, We find that training accuracy is $100\%$ with high probability. For both experiments, we plot the joint logits with $k = 2, 3$ for class 1. 

Notably, we observe similar truncation phenomena for $3$-class classification on both synthetic and real data, where the Gaussian density is visibly truncated by two hyperplanes.
For general $K \ge 3$, we conjecture that the empirical joint distribution of the logits $\{ \hat\vf(\xx_i) \}_{i=1}^n$ is asymptotically a multivariate Gaussian projected to a convex polytope in $\R^K$, where specific parameters of this limiting distribution depends on certain variational problem analogous to \Cref{eq:asymp}.

\begin{figure}[h]
    \centering
    \includegraphics[width=0.49\textwidth]{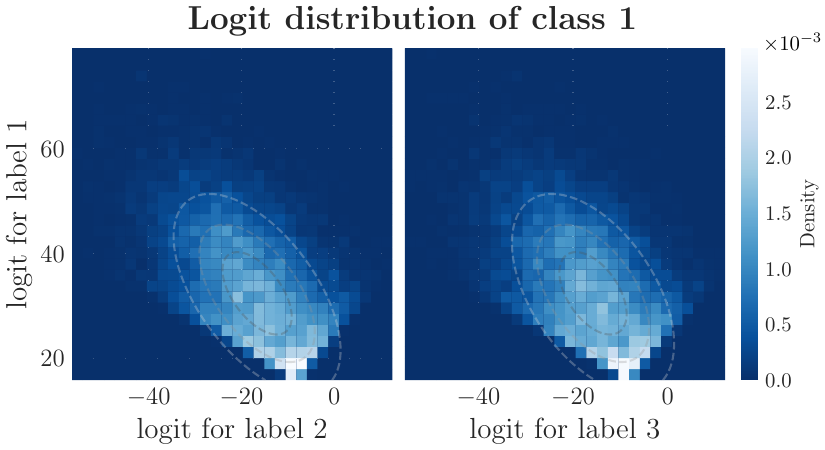}
    \includegraphics[width=0.49\textwidth]{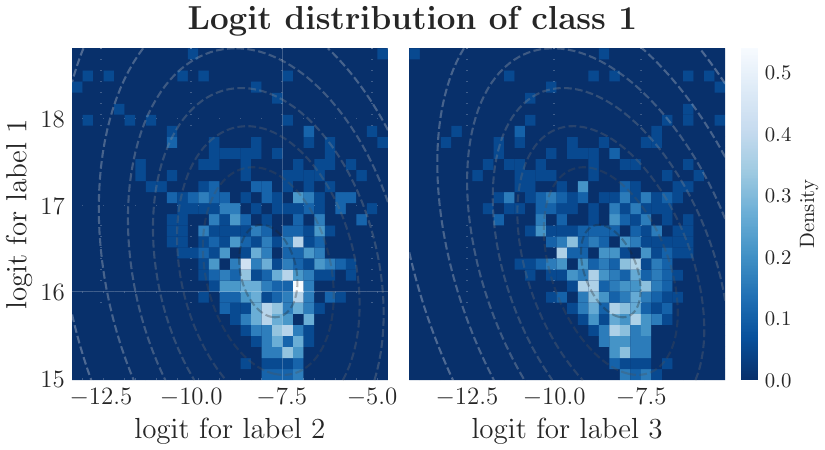}
    \caption{
    \textbf{Joint empirical logit distributions of multinomial logistic regression.} The heatmaps display empirical joint logits $\bigl(\hat f_{1}(\xx_i), \hat f_{k}(\xx_i)\bigr)$ for features $\xx_i$ from class 1, where $k=2,3$. Overlaid Gaussian density contours (dashed curves) depict testing logit distributions. 
    \textbf{Left:} 3-GMM simulation. \textbf{Right:} CIFAR-10 image features  preprocessed by pretrained ResNet-18.
    }
    \label{fig:multiclass}
\end{figure}

\paragraph{Non-isotropic covariance.} We also provide a characterization of the empirical logits distribution under a more general covariance assumption than \eqref{model}. In particular, we assume that the data $\{ (\xx_i, y_i) \}_{i = 1}^{n}$ are i.i.d. satisfying
\begin{equation}\label{spiked_model}
    \P(y_i = +1) = \pi, \quad \P(y_i = -1) = 1 - \pi, \quad \xx_i \,|\, y_i \sim \normal(y_i \bmu, \boldsymbol{\Sigma}),
\end{equation}
where $\boldsymbol{\Sigma} = q^2 \boldsymbol{V} \boldsymbol{V}^\top + \id_d$, and the spike $\boldsymbol{V}$ is a $d \times J$ orthogonal matrix. For this model, we have the following analogous result on the empirical logits distribution of SVM:
\begin{conj}
    Assume that $n/d \to \delta$,  $J/d \to \psi_1$ as $n, d, J \to \infty$. Denote $\psi_2 = 1 - \psi_1$, and further assume that
    \begin{equation*}
        \lim_{n \to \infty} \norm{\boldsymbol{V}^\top \bmu} = c_1, \quad \lim_{n \to \infty} \sqrt{ \norm{\boldsymbol{\mu}}^2 - \norm{\boldsymbol{V}^\top \bmu}^2 } = c_2.
    \end{equation*}
    Let $(\hat{\boldsymbol{\beta}}, \hat{\beta}_0)$ be the max-margin solution to \cref{eq:SVM-0}. Then, as $n \to \infty$, 
    \begin{equation*}
        W_2 \left( \frac{1}{n} \sum_{i=1}^{n} \delta_{( y_i, \<\xx_i, \hat{\boldsymbol{\beta}}\> + \hat{\beta}_0 )}, \, \Law \left( Y, Y \rho_1^* c_1 + Y \rho_2^* c_2 + \sqrt{1 + q^2} r_1^* G_1 + r_2^* G_2 + \beta_0^* \right) \right) \conp \, 0.
    \end{equation*}
    In the above display, $Y \sim P_y$ is independent of $(G_1, G_2) \sim \normal(0, 1)^{\otimes 2}$, and $(\rho_1^*, \rho_2^*, r_1^*, r_2^*, \beta_0^*)$ solves the following convex optimization problem:
    \begin{equation}\label{spike_covariance_opt}
    \begin{split}
        \maximize_{\rho_1, \rho_2, r_1, r_2, \beta_0, \kappa} \quad & \kappa, \\
        \mathrm{subject \, to} \quad & \E \left[ \left( \kappa - \rho_1 c_1 - \rho_2 c_2 - \sqrt{1 + q^2} r_1 G_1 - r_2 G_2 - \beta_0 Y \right)_+^2 \right] \\
        & \le \frac{1}{\delta} \left( \sqrt{1 + q^2} \sqrt{r_1^2 - \rho_1^2} \sqrt{\psi_1} + \sqrt{r_2^2 - \rho_2^2} \sqrt{\psi_2} \right)^2, \\
        & r_1^2 + r_2^2 = 1, \, \rho_1^2 \le r_1^2, \, \rho_2^2 \le r_2^2.
    \end{split}
    \end{equation}
\end{conj}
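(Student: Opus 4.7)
The strategy parallels the Convex Gaussian Min-max Theorem (CGMT) analysis behind \cref{thm:SVM_main}, with extra bookkeeping to handle the direct-sum structure $\bSigma = q^2\bV\bV^\top + \id_d$. Write $\xx_i = y_i\bmu + \bSigma^{1/2}\bz_i$ with $\bz_i \iidsim \normal(0, \id_d)$, and let $\bP = \bV\bV^\top$, $\bP^\perp = \id_d - \bP$. Decompose $\bmu = \bmu_1 + \bmu_2$ and any candidate $\bbeta = \bbeta_1 + \bbeta_2$ along $\col(\bV)$ and $\col(\bV)^\perp$, so that the key identity $\bSigma^{1/2}\bbeta = \sqrt{1+q^2}\,\bbeta_1 + \bbeta_2$ exposes the two noise scales. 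I reparametrize by the six scalars $r_j = \norm{\bbeta_j}$, $\rho_j = \<\bbeta_j, \bmu_j\>/\norm{\bmu_j}$ for $j \in \{1,2\}$, together with $\beta_0$ and $\kappa$; Cauchy--Schwarz gives $\rho_j^2 \le r_j^2$ and the unit-ball constraint $\norm{\bbeta} \le 1$ becomes $r_1^2 + r_2^2 \le 1$, matching the feasible region of \cref{spike_covariance_opt}.

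\textbf{Two-scale CGMT.} Writing the max-margin primal as a saddle problem with dual $\blambda \in \R_{\ge 0}^n$, the Gaussian interaction term splits as
\[ \sum_{i=1}^n \lambda_i y_i \<\bz_i, \sqrt{1+q^2}\bbeta_1 + \bbeta_2\> \;=\; \sqrt{1+q^2}\,\<\bbeta_1, \bP\bZ^\top(\blambda\odot\yy)\> + \<\bbeta_2, \bP^\perp\bZ^\top(\blambda\odot\yy)\>, \]
in which $\bP\bZ^\top$ and $\bP^\perp\bZ^\top$ are independent Gaussian matrices with $J$ and $d-J$ nontrivial rows (after rotating so that $\bV$ aligns with the first $J$ coordinate axes). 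Splitting $\bbeta_j = \rho_j \bmu_j/\norm{\bmu_j} + \bbeta_j^{\perp}$ with $\bbeta_j^\perp$ orthogonal to $\bmu_j$ within its subspace, the plan is to apply Gordon's inequality to the two independent matrices in parallel. Maximizing each $\bbeta_j^\perp$ by Cauchy--Schwarz produces the concentrated lengths $\sqrt{J/n}\,\norm{\blambda}$ and $\sqrt{(d-J)/n}\,\norm{\blambda}$; under $J/d \to \psi_1$ and $(d-J)/d \to \psi_2$, the two budgets add inside the same Cauchy--Schwarz envelope to yield $\sqrt{1+q^2}\sqrt{r_1^2 - \rho_1^2}\sqrt{\psi_1} + \sqrt{r_2^2 - \rho_2^2}\sqrt{\psi_2}$. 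Eliminating the dual slack by a Lagrangian argument, as in the passage from \cref{eq:SVM_variation} to \cref{eq:SVM_asymp_simple}, then converts the pointwise margin constraint into the squared-positive-part expectation form stated in \cref{spike_covariance_opt}.

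\textbf{From the variational problem to ELD convergence.} Strong convexity of the auxiliary minimax yields a unique $(\rho_1^*, \rho_2^*, r_1^*, r_2^*, \beta_0^*, \kappa^*)$, and CGMT's transfer principle then gives convergence in probability of the empirical estimates. Substituting back, the logit $\<\xx_i, \hat\bbeta\> + \hat\beta_0$ on a typical training point decomposes as $y_i(\rho_1^* c_1 + \rho_2^* c_2) + \sqrt{1+q^2}\,r_1^* G_{1,i} + r_2^* G_{2,i} + \beta_0^* + o_\P(1)$, where $G_{1,i}, G_{2,i}$ are the scalar projections of $\bz_i$ onto $\bmu_1/\norm{\bmu_1}$ and $\bmu_2/\norm{\bmu_2}$ respectively, asymptotically independent of $y_i$ and of the perpendicular components. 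A standard empirical-process argument then upgrades this pointwise convergence to $W_2$ convergence of the joint empirical measure to the claimed limit, paralleling \cref{thm:SVM_main}\ref{thm:SVM_main_logit}.

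\textbf{Main obstacle.} The chief technical difficulty is the coupled two-scale Gordon step: the dual $\blambda$ interacts simultaneously with two independent Gaussian matrices whose column counts $J \asymp \psi_1 d$ and $d-J \asymp \psi_2 d$ are both proportional to $n$. Carrying this out requires uniform joint concentration of $\norm{\bP\bZ^\top\blambda}$ and $\norm{\bP^\perp\bZ^\top\blambda}$ over $\blambda$ on the $n$-sphere, together with the convex-concavity of the resulting six-variable auxiliary minimax that Gordon's transfer principle demands. A secondary difficulty is uniqueness of \cref{spike_covariance_opt}: unlike the isotropic case where the feasible set collapses to a one-parameter curve, here it defines a multi-dimensional surface, so strict joint convexity of the squared-positive-part budget in $(r_1, r_2, \rho_1, \rho_2, \beta_0)$ at the optimum must be verified separately.
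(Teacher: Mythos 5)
There is no proof in the paper for you to match: this statement is presented explicitly as a conjecture, supported only by analogy with \cref{thm:SVM_main} and by numerics, so the most one can do is assess whether your outline would plausibly close it. Your roadmap is the natural one and is consistent with the machinery the paper actually uses for the isotropic case (CGMT, boundedness of $\beta_0$, a ULLN reduction, elimination of the dual slack as in the passage from \cref{eq:SVM_variation} to \cref{eq:SVM_asymp_simple}); the reparametrization by $(\rho_1,\rho_2,r_1,r_2)$ and the way the two budgets $\sqrt{\psi_1}$, $\sqrt{\psi_2}$ enter is exactly how \cref{spike_covariance_opt} is meant to arise. One remark on your "main obstacle": the two-scale Gordon step can be avoided entirely. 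Since the interaction is $\blambda^\top \bZ\,\bw$ with $\bw := \bSigma^{1/2}\bbeta = \sqrt{1+q^2}\,\bbeta_1 + \bbeta_2$ and $\bZ$ has i.i.d.\ standard Gaussian entries, a single application of \cref{lem:CGMT} in the $\bw$-coordinates suffices; the split into the two subspace budgets then comes from minimizing the auxiliary term $\vg^\top\bw$ separately over the components of $\bw$ orthogonal to $\bmu$ inside $\col(\bV)$ and $\col(\bV)^\perp$, with $\norm{\vg_{\bV}}\approx\sqrt{\psi_1 d}$ and $\norm{\vg_{\bV^\perp}}\approx\sqrt{\psi_2 d}$. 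So the step you flag as the chief difficulty is not where the real work lies.

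The genuine gap is in your last step, and it also touches the conjecture as stated. You claim that on a typical training point the logit equals $y_i(\rho_1^*c_1+\rho_2^*c_2)+\sqrt{1+q^2}\,r_1^*G_{1,i}+r_2^*G_{2,i}+\beta_0^*+o_\P(1)$ and that an empirical-process argument upgrades this to $W_2$ convergence. For a max-margin classifier on separable data this cannot be right as written: a non-vanishing fraction of training points are support vectors whose logit margins sit exactly at $\hat\kappa_n\to\kappa^*>0$, which is precisely the truncation phenomenon the paper is about. In the isotropic specialization the proved ELD limit in \cref{thm:SVM_main}\ref{thm:SVM_main_logit} is the rectified law $\Law\bigl(Y, Y\max\{\kappa^*,\rho^*\norm{\bmu}+G+Y\beta_0^*\}\bigr)$, not the unrectified Gaussian mixture, so your limit (and, taken literally, the conjectured one) would contradict the proven case; the correct target must carry the $\max\{\kappa^*,\cdot\}$ rectification, and establishing $W_2$ convergence to it requires the projection-pursuit-type argument of \cref{lem:over_logit_conv} (uniform control of the constrained $W_2$ distance, uniqueness and strict optimality of $(\rho_1^*,\rho_2^*,r_1^*,r_2^*,\beta_0^*)$ in the spiked analogue of $H_\kappa$), none of which your sketch supplies. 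Your own "uniqueness must be verified separately" and "a standard empirical-process argument" are exactly the substantive content that keeps this a conjecture rather than a theorem.
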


\paragraph{Future work.} In deep learning, the features are learned by optimizing the loss over all weights in a neural network, and data imbalance impacts on feature learning in a complex way as observed in \cite{cao2019learning}. Also, models tend to erroneously find spurious features if data imbalance is severe \cite{sagawa2020investigation}. It would be interesting to analyze overfitting and propose remedies for these scenarios.

\section*{Acknowledgments}

Y.Z.~is supported by NSF-DMS grant 2412052 and by the Office of the Vice Chancellor for Research and Graduate Education at the UW Madison with funding from the Wisconsin Alumni Research Foundation. K.Z.~is supported by the Founder's Postdoctoral Fellowship in Statistics at Columbia University. J.L.~is grateful for the feedback from Zhexuan Liu, Zexuan Sun, Zhuoyan Xu, Congwei Yang, Zhihao Zhao, and audience from the Institute for Foundations of Data Science (IFDS) Ideas Forum.

\newpage

\appendix

\etocdepthtag.toc{mtappendix}
\etocsettagdepth{mtchapter}{none}
\etocsettagdepth{mtappendix}{subsubsection}
\tableofcontents


\section{Experiment details}
\label{append_sec:exp}

\subsection{Experiment setup and details}

We present the details of our experiments, including the computational configurations, information about the datasets, and the pretrained neural networks used in our study.

\paragraph{Optimization.} We used the functions \texttt{linear\_model.LogisticRegression} and \texttt{svm.SVC} from Python module \texttt{sklearn} to solve logistic regression \cref{eq:logistic} and SVM \cref{eq:SVM-0} (more precisely, \cref{eq:SVM-1} parametrization with $\tau = 1$). For logistic regression, we used the limited-memory BFGS (L-BFGS) solver, with maximum number of iterations $10^6$.\footnote{If logistic regression is far from converging after the maximum number of iterations is reached, we would add a small explicit regularizer as in \cref{eq:logistic-l2}. In practice, the parameter $\mathtt{C} = \lambda^{-1}$ in \texttt{linear\_model.LogisticRegression} can be chosen as $10^6 \sim 10^8$.} 
For SVM, we set the default value of cost parameter $\mathtt{C} = 1$.\footnote{Note that there is no hard-margin solver available in \texttt{sklearn} and \texttt{svm.SVC} is a soft-margin version. One may set $\mathtt{C}$ large enough, but usually a larger $\mathtt{C}$ will lead to longer running time. To handle this issue, (for separable data) we run \texttt{svm.SVC} with $\mathtt{C}$ increases from 1, until the training error attains zero.} Tolerance for both are set to be $10^{-8}$.

As discussed in \cref{subsec:LR_vs_SVM}, logistic regression and SVM are ``equivalent'' on separable dataset. Indeed, theoretically and empirically, there are advantages and disadvantages to both algorithms, summarized in \cref{tab:LR_vs_SVM}. In particular, SVM is preferred for theoretical analysis and precise 2-GMM simulation, while logistic regression is preferred for large scale real data analysis.
\begin{table}[h!]
\centering
\begin{tabular}{rll}
    \hline
                 & \textbf{Pros} & \textbf{Cons} \\
    \hline
    \multirow{2}{*}{Logistic regression \eqref{eq:logistic}} 
    & robust to near-separability   & infinite-norm solution \\
    & computationally efficient       & slow convergence \\
    \hline
    \multirow{2}{*}{SVM \eqref{eq:SVM-0}}    
    & well-defined solution     & sensitive to outliers \\
    & support vectors available & quadratic programming \\
    \hline
\end{tabular}
\caption{Comparison of empirical behaviors of logistic regression and SVM on separable data.}
\label{tab:LR_vs_SVM}
\end{table}


\paragraph{Datasets.} We provide the details of real data used in our study, including the source, size, and the preprocessing applied.
\begin{itemize}
    \item \textbf{IFNB} \cite{ifnb}: 
    single-cell RNA-seq dataset of peripheral blood mononuclear cells treated with interferon-$\beta$, which has $n=7,451$ cells, $d=2,000$ genes, and $K=13$ categories for cells. The original dataset is available from R package \texttt{SeuratData} (\url{https://github.com/satijalab/seurat-data}, version 0.2.2.9001) under the name \texttt{ifnb}. The data were preprocessed, normalized, and scaled by following the standard procedures by R package \texttt{Seurat} using functions \texttt{CreateSeuratObject}, \texttt{NormalizeData} and \texttt{ScaleData}.
    
    \item \textbf{CIFAR-10} \cite{KrizhevskyCIFAR102009}: the original dataset consists of 60,000 color images of size $32 \times 32$ in $K=10$ classes, with 6,000 images per class. There are 50,000 training images and 10,000 test images. It is available at \url{https://www.cs.toronto.edu/~kriz/cifar.html}. We followed the simple data augmentation in \cite{resnet, cao2019learning} for the training images: 4 pixels are padded on each side, and a $32 \times 32$ crop is randomly sampled from the padded image or its horizontal flip. Normalization is applied for both training and test images.
    
    \item \textbf{IMDb} \cite{IMDB}: the dataset consists of 50,000 movie reviews for binary sentiment classification ($K=2$), with the positive and negative reviews evenly distributed. There are 25,000 training texts and 25,000 test texts. The data can be found at \url{https://huggingface.co/datasets/stanfordnlp/imdb}. The maximum length in number of tokens for inputs was set as 512.
\end{itemize}

\paragraph{Pretrained models.} We downloaded and used pretrained models from Huggingface.
\begin{itemize}
    \item \textbf{ResNet-18} \cite{resnet}: 18-layer, 512-dim, 11.2M parameters, convolutional neural network (CNN), pretrained on CIFAR-10 training set (50,000 images). The pretrained model is downloaded from \url{https://huggingface.co/edadaltocg/resnet18_cifar10}. Notice that for extracting features, we manually removed the last fully-connected layer.
    \item \textbf{BERT} \cite{BERT}: 12-layer, 12-head, 768-dim, 110M parameters, encoder-only transformer, masked prediction, with absolute positional encoding at the input layer, pretrained on BooksCorpus (800M words) and English Wikipedia (2,500M words). The pretrained model is downloaded from \url{https://huggingface.co/google-bert/bert-base-uncased}.

    We also used a fine-tuned version of BERT (same structure as above) on IMDb dataset, which can be found at \url{https://huggingface.co/fabriceyhc/bert-base-uncased-imdb}.
\end{itemize}

\paragraph{Data splitting} For GMM simulated data and IFNB single-cell data, we split the whole dataset into training and test sets in equal proportions. For CIFAR-10 image data and IMDb movie review data, notice that we used the ResNet-18 and BERT model which are pretrained/fine-tuned on the training set of CIFAR-10 and IMDb, respectively. To avoid reusing the data when training the last fully-connected layer (i.e., logistic regression), we split the test set of CIFAR-10 and IMDb into a ``training subset'' and a ``test subset'' in equal proportions. We used this ``training subset'' for logistic regression training and ``test subset'' for evaluation.



\subsection{GMM simulation}

Figures~\ref{fig:GMM_main} and \ref{fig:Err_pi}---\ref{fig:reliability_GMM} are all generated from 2-GMM simulations. By rotational invariance, we may take $\bmu = (\mu, 0, \ldots, 0)^\top \in \R^d$ for some $\mu > 0$. Both minority and majority test errors are calculated on an independent balanced test set, to ensure the accuracy of estimating $\Err_+$.

\subsubsection{Rebalancing margin}

For \textbf{(ii) high imbalance regime}, we provide a simulation study by generating data from a 2-GMM model. More precisely, given $a, b, c > 0$, let
\begin{equation}\label{eq:abc_mod}
    \pi = C_\pi d^{-a},  \quad  \|\bmu\|_2^2 = C_{\mu} d^b,  \quad  n = C_n d^{c+1},
\end{equation}
for some fixed constant $C_\pi = 1, C_\mu = 0.75, C_n = 1$, where $\bmu = (\mu, 0, \ldots, 0)^\top \in \R^d$ and $\mu = \sqrt{C_{\mu} d^b}$. In the experiment, we fix $b = 0.3$, $c = 0.1$, and $d =2 000$ large enough to ensure data separability, while we change the value of $a$. For each tuple $(a, b, c)$, we compute the parameters $\pi, \bmu, n$ as per \cref{eq:abc_mod}, and generate training sets and test sets according to 2-GMM \cref{model}. 

\subsubsection{Calibration}

The confidence reliability diagram \cref{fig:reliability_GMM} is created by partitioning $(0, 1]$ into $M$ interval bins $I_m := (\frac{m-1}{M}, \frac{m}{M}]$, $m \in [M]$, and calculating the average accuracy of each bin. Let $\hat p(\xx_i)$ be the confidence of the $i$-th test point ($i \in [n]$), and denote $\mathcal{B}_m := \{ i \in [n]: \hat p(\xx_i) \in I_m \}$ be the set of indices whose confidence falls into each bin. Then by our definition of confidence and the symmetry of binary classification, the accuracy and confidence of $\mathcal{B}_m$ can be estimated by
\begin{equation*}
    \hat{\mathrm{acc}}(\mathcal{B}_m) = \frac{1}{\abs{\mathcal{B}_m}} \sum_{i \in \mathcal{B}_m} \ind\{ y_i = 1 \},
    \qquad
    \hat{\mathrm{conf}}(\mathcal{B}_m) = \frac{1}{\abs{\mathcal{B}_m}} \sum_{i \in \mathcal{B}_m} \hat p(\xx_i).
\end{equation*}
We can also obtain a binning-based estimator of calibration error \cref{eq:CalErr} by using above quantities:
\begin{equation*}
    \hat{\mathrm{CalErr}} := \sum_{m=1}^M \frac{\abs{\mathcal{B}_m}}{n} \left( \hat{\mathrm{acc}}(\mathcal{B}_m) - \hat{\mathrm{conf}}(\mathcal{B}_m) \right)^2.
\end{equation*}
This is a variant of the prominent estimator called expected calibration error (ECE) \cite{guo2017calibration}.

The confidence reliability diagrams for additional 2-GMM simulations and IMDb movie review dataset are shown in Figures~\ref{fig:reliab_diag_mu=2}---\ref{fig:reliab_imdb}. These plots confirm a similar trend: miscalibration is getting worse when data becomes increasingly imbalanced (i.e., as $\pi$ decreases).

\begin{figure}[t]
    \centering
    \includegraphics[width=1\textwidth]{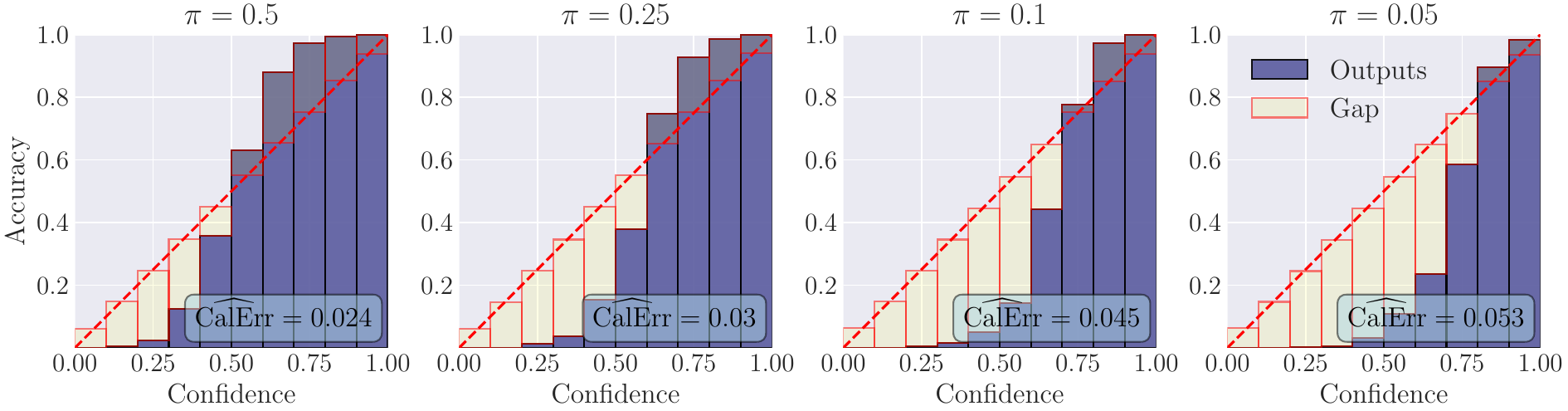}
    \caption{
    Reliability diagram for 2-GMM simulation ($\norm{\bmu}_2=2$, $n=1,\! 000$, $d=100$)
    }\label{fig:reliab_diag_mu=2}
\end{figure}
\begin{figure}[t]
    \centering
    \includegraphics[width=1\textwidth]{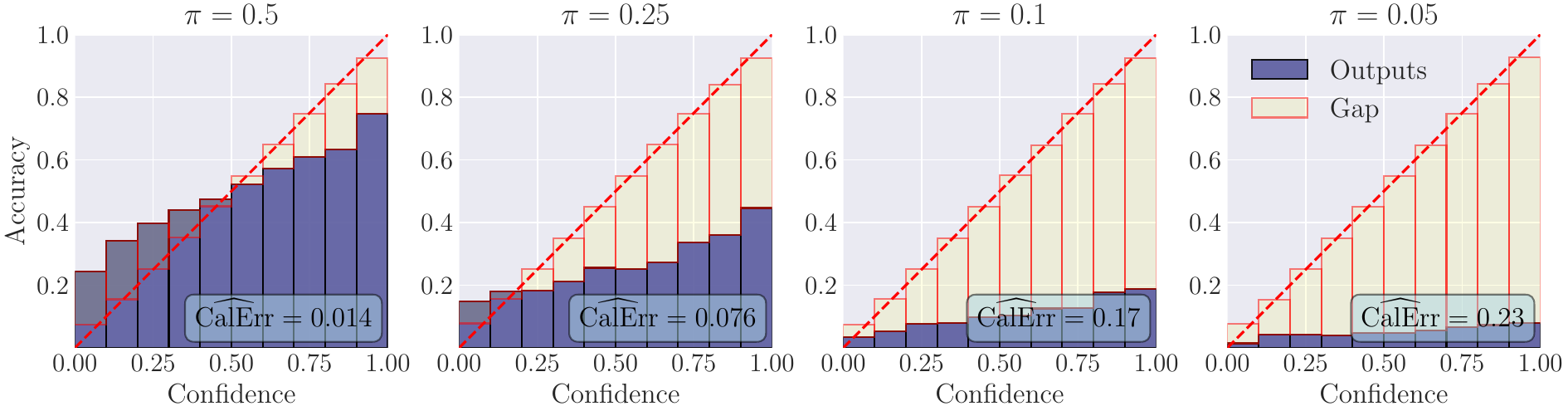}
    \caption{
    Reliability diagram for 2-GMM simulation ($\norm{\bmu}_2=0.5$, $n=1,\! 000$, $d=500$)
    }\label{fig:reliab_diag_mu=0.5}
\end{figure}
\begin{figure}[t]
    \centering
\includegraphics[width=1\textwidth]{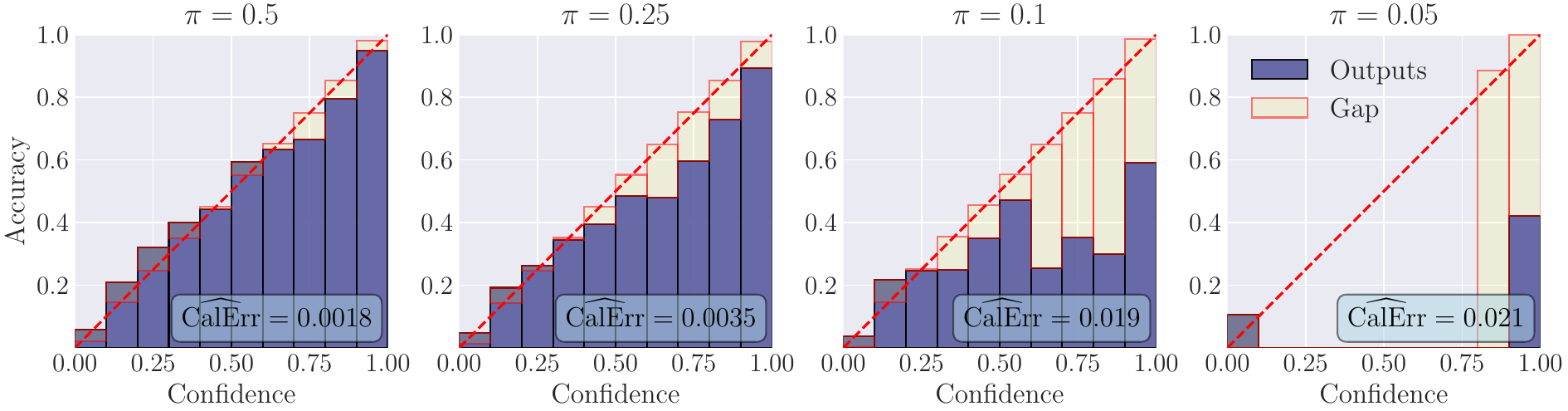}
    \caption{
    Reliability diagram for IMDb dataset preprocessed by BERT base model (110M)
    }\label{fig:reliab_imdb}
\end{figure}


\subsection{Function plot for the proximal operator $\prox_{\lambda \ell}(x)$}
\label{append_subsec_prox}
Recall that 
\begin{equation*}
    \prox_{\lambda \ell}(x) = \argmin_{t \in \R} \left\{ \ell(t) +  \frac1{2\lambda} (t - x)^2 \right\}.
\end{equation*}
We provide the plot for function $x \mapsto \prox_{\lambda \ell}(x)$, which is the specific form of overfitting effect in logistic regression \cref{eq:logistic} on non-separable data (i.e., $\delta > \delta^*(0)$). The plot is shown in \cref{fig:prox}, where $\ell(t) = \log(1 + e^{-t})$ is the logistic loss, and we choose $\lambda = 1, 5, 100$, and 10,000 for visualization. When $\lambda$ is close to zero, the function $x \mapsto \prox_{\lambda \ell}(x)$ is close to the identity map, which is because $\lim_{\lambda \to 0^+}\prox_{\lambda\ell}(x) = x$ by \cref{lem:prox}\ref{lem:prox(b)}. When $\lambda$ is large, the proximal operator (up to scaling) looks like a smooth approximation of the truncation map $x \mapsto \max\{ \kappa, x\}$ for some $\kappa > 0$. Intuitively, $\prox_{\lambda\ell}(x)$ behaves like minimizing $\ell$ when $\lambda$ is large. Therefore, a large $x$ yields $\prox_{\lambda\ell}(x) \approx x$ since $\ell(x) \approx 0$, and a small $x$ would be ``pushed'' to some $\kappa > 0$, since the logistic loss $\ell(x)$ locally is a smoothing of the hinge-type loss $x \mapsto (a - b x)_+$ for some $a, b > 0$.

According to our proof in \cref{append_sec:nonsep}, the limiting value of $\lambda$ as $n, d \to \infty$, $n/d \to \delta$ (denoted by $\lambda^* (\delta)$) is a decreasing function of the asymptotic aspect ratio $\delta$. Then \cref{fig:prox} graphically illustrates the effect of high-dimensionality on overfitting. When $n/d \to \delta$ is large, then $\lambda^*(\delta)$ is small and ELD $\approx$ TLD, and overfitting is negligible. In particular, this is the case for the classical setting where $d$ is fixed and $\delta = \infty$. When $\delta$ is moderate, the ELD is somewhat shrunken compared to TLD. When $\delta \downarrow \delta^*(0)$, approaching the interpolation threshold, then $\lambda^*(\delta)$ is very large, and the ELD is almost a rectified Gaussian and far away from the TLD.

\begin{figure}[h]
    \centering
    \includegraphics[width=1\textwidth]{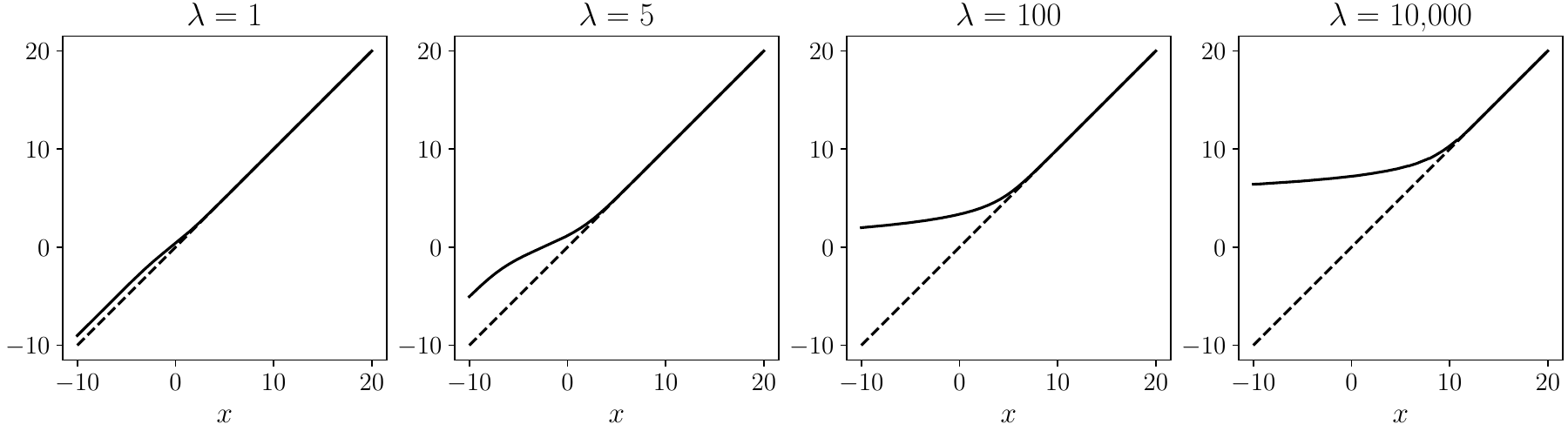}
    \caption{
    \textbf{Function plot for $x \mapsto \prox_{\lambda \ell}(x)$ under different $\lambda$.} The solid curve represents the function $y = \prox_{\lambda \ell}(x)$ and the dashed line represents the identity map $y = x$.
    }
    \label{fig:prox}
\end{figure}

\section{Preliminaries: Proofs for \Cref{sec:preliminary}}
\label{append_sec:prelim}
    


We first introduce some technical adjustments and terms that are used in our proofs.
\paragraph{Well-definedness of margin.}
We define the \emph{margin} of linear classifier $x \mapsto 2\ind\{ f(\xx) > 0 \} -1$ with $f(\xx) = \< \xx, \vbeta \> + \beta_0$ as 
\begin{equation}\label{eq:margin}
	\kappa = \kappa(\vbeta, \beta_0) := \min_{i \in [n]} \tilde{y}_i ( \< \xx_i, \bbeta \> + \beta_0 ),
\end{equation}
which is the objective of margin-rebalanced SVM \cref{eq:SVM-m-reb} and \eqref{eq:SVM}. Note there is a minor caveat about the one-class degenerate case, which is ignored in the main text for simplicity. When $n_+ = 0$ or $n$ (this happens with nonzero probability for any fixed $n$), we have $\kappa(\hat\vbeta, \hat\beta_0) = \infty$. It motivates us to redefine the maximum margin properly.
\begin{defn}\label{def:max-margin}
	The \emph{well-defined maximum margin} is
	\begin{equation}\label{eq:kappa_hat_def}
		\hat\kappa  := 
		\begin{cases} 
			\ \kappa(\hat\vbeta, \hat\beta_0) = \displaystyle\min_{i \in [n]} \tilde{y}_i ( \< \xx_i, \hat\bbeta \> + \hat\beta_0 ) , 
			& \ \text{if} \ 1 \le n_+ \le n - 1, \\
			\ 0    , 
			& \ \text{if} \ n_+ = 0 \text{ or } n. 
		\end{cases}
	\end{equation}
\end{defn}
Therefore, $\hat\kappa$ above is a proper random variable and $\hat\kappa \ge 0$ always holds\footnote{For degenerate case ($n_+ = 0$ or $n$), the dataset is considered as linearly separable.}. Further, $\hat{\kappa} = \kappa(\hat\vbeta, \hat\beta_0)$ with high probability as $n \to \infty$. We will apply similar adjustments to the definition of ELD in \cref{append_sec:sep} and elsewhere, whenever required for the proof.

\paragraph{Support vectors.} Consider the non-degenerate case ($1 \le n_+ \le n - 1$). To study the properties of optimal solution $(\hat\vbeta, \hat\beta_0, \hat\kappa)$ from a non-asymptotic perspective, we inherit the concept of \emph{support vectors} from SVM. Define the \emph{support vector of a linear classifier} $2\ind\{ \< \xx, \vbeta \> + \beta_0 > 0 \} -1$ as the vector(s) $\xx_i$ which attain(s) the smallest (rebalanced) logit margin $\wt y_i(\< \xx_i, \vbeta \> + \beta_0)$ from each class. Namely,
\begin{equation}\label{eq:SV_def}
	\begin{aligned}
		\mathcal{SV}_+ = \mathcal{SV}_+(\vbeta) & :=  \argmin_{i: y_i = +1} \wt y_i(\< \xx_i, \vbeta \> + \beta_0)
	= \argmin_{i: y_i = +1} + \< \xx_i, \vbeta \>, \\
		\mathcal{SV}_- = \mathcal{SV}_-(\vbeta) & :=  \argmin_{i: y_i = -1} \wt y_i(\< \xx_i, \vbeta \> + \beta_0)
	= \argmin_{i: y_i = -1} - \< \xx_i, \vbeta \>, \\
	\end{aligned}
\end{equation}
where $\mathcal{SV}_+, \mathcal{SV}_-$ are sets of (the indices of) \emph{positive} and \emph{negative support vectors}. A key observation from \cref{eq:SV_def} is that support vectors only depend on the data and parameter $\vbeta$, not $\beta_0$ or $\tau$.\footnote{Hence, we can view $\mathcal{SV}_\pm(\vbeta)$ as a mapping from $\R^d$ to the power set of $\{i: y_i = \pm 1\}$.} Let $\mathsf{sv}_+(\vbeta)$ and $\mathsf{sv}_-(\vbeta)$ be any element in $\mathcal{SV}_+(\vbeta)$ and $\mathcal{SV}_-(\vbeta)$, i.e.,
\begin{equation*}
	\mathsf{sv}_+(\vbeta) \in \mathcal{SV}_+(\vbeta),
	\qquad
	\mathsf{sv}_-(\vbeta) \in \mathcal{SV}_-(\vbeta),
\end{equation*} 
which are (the indices of) arbitrary positive and negative support vectors (only depends on $(\XX, \yy)$ and $\vbeta$). In particular, $\mathsf{sv}_+(\hat\vbeta) \in \mathcal{SV}_+(\hat\vbeta)$, $\mathsf{sv}_-(\hat\vbeta) \in \mathcal{SV}_-(\hat\vbeta)$ are support vectors of the max-margin classifier $2 \ind\{\< \xx, \hat\vbeta \> + \hat\beta_0 > 0\} - 1$, which aligns with the definition of support vectors in SVM.

\subsection{Proof of \cref{prop:SVM_tau_relation}}

The lemma below summarizes some important properties of the max-margin solution \cref{eq:SVM-m-reb} characterized by support vectors, which is a stronger statement than \cref{prop:SVM_tau_relation}.

\begin{lem}\label{lem:indep_tau}
	For non-degenerate case, let $(\hat\vbeta, \hat\beta_0, \hat\kappa)$ be an optimal solution to \cref{eq:SVM-m-reb}. Then
	\begin{enumerate}[label=(\alph*)]
		\item \label{lem:indep_tau(a)}
            $\hat\vbeta$ does NOT depend on $\tau$, and
		\begin{equation}\label{eq:kappa_hat}
			(\tau + 1) \hat\kappa = 
			\max_{\vbeta \in \S^{d-1}} \< \xx_{\mathsf{sv}_+(\vbeta)} - \xx_{\mathsf{sv}_-(\vbeta)}, \vbeta \>
			= 
			\< \xx_{\mathsf{sv}_+(\hat\vbeta)} - \xx_{\mathsf{sv}_-(\hat\vbeta)}, \hat\vbeta \>.
		\end{equation}
		\item \label{lem:indep_tau(b)}
            $\hat\beta_0$ depends on $\tau$ by
		\begin{equation}\label{eq:beta0_hat}
			\hat\beta_0 = -\frac{\tau \< \xx_{\mathsf{sv}_-(\hat\vbeta)}, \hat\vbeta \> + \< \xx_{\mathsf{sv}_+(\hat\vbeta)}, \hat\vbeta \>}{\tau + 1}.
		\end{equation}
		\item \label{lem:indep_tau(c)}
            If the data are linearly separable, then $(\hat\vbeta, \hat\beta_0)$ must be unique.
	\end{enumerate}
\end{lem}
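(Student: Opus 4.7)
The plan is to reduce the margin-rebalanced SVM \cref{eq:SVM-m-reb} to its equivalent form \cref{eq:SVM} with transformed labels $\tilde y_i$ ($\tilde y_i = \tau^{-1}$ on $\mathcal{I}_+$, $\tilde y_i = -1$ on $\mathcal{I}_-$), and then solve for $\beta_0$ in closed form given $\vbeta$. For any fixed $\vbeta$, define $A(\vbeta) := \min_{i \in \mathcal{I}_+}\langle \xx_i, \vbeta\rangle = \langle \xx_{\mathsf{sv}_+(\vbeta)}, \vbeta\rangle$ and $B(\vbeta) := \max_{j \in \mathcal{I}_-}\langle \xx_j, \vbeta\rangle = \langle \xx_{\mathsf{sv}_-(\vbeta)}, \vbeta\rangle$. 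Then
\begin{align*}
\min_{i \in [n]} \tilde y_i(\langle \xx_i, \vbeta\rangle + \beta_0) \;=\; \min\bigl\{\tau^{-1}(A(\vbeta)+\beta_0),\; -(B(\vbeta)+\beta_0)\bigr\}.
\end{align*}
Since the first argument is strictly increasing and the second strictly decreasing in $\beta_0$, the maximum over $\beta_0$ is attained where the two are equal. Solving this linear equation immediately yields the formula in \cref{eq:beta0_hat}, and the common optimal value equals $g(\vbeta) := (A(\vbeta) - B(\vbeta))/(\tau+1)$.

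Substituting back, the outer problem collapses to
\begin{align*}
(\tau+1)\,\hat\kappa \;=\; \max_{\|\vbeta\|_2 \le 1}\; \min_{i \in \mathcal{I}_+,\, j \in \mathcal{I}_-}\langle \xx_i - \xx_j, \vbeta\rangle,
\end{align*}
whose right-hand side is completely independent of $\tau$. The argmax $\hat\vbeta$ therefore does not depend on $\tau$; under linear separability the maximum is positive and hence attained on the unit sphere, yielding \cref{eq:kappa_hat}. Plugging this $\hat\vbeta$ back into the closed form above gives \cref{eq:beta0_hat}, establishing (a) and (b). The scaling relations in \cref{prop:SVM_tau_relation} then follow by direct substitution of $\tau$ versus $\tau = 1$ into the two closed forms.

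For part (c), I would invoke the standard reparametrization used for the hard-margin SVM. Under linear separability $\hat\kappa > 0$, so $\|\hat\vbeta\|_2 = 1$ is forced (otherwise rescaling $\vbeta$ upward strictly increases the margin). Setting $\vw = \vbeta/\kappa$ and $w_0 = \beta_0/\kappa$, the problem is equivalent to minimizing the strictly convex quadratic $\|\vw\|_2^2$ over the nonempty polyhedron $\{(\vw, w_0) : \tilde y_i(\langle \xx_i, \vw\rangle + w_0) \ge 1,\; i \in [n]\}$. A strictly convex function has at most one minimizer on any convex set, so $(\vw, w_0)$ is unique; consequently so are $(\hat\vbeta, \hat\beta_0)$.

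The only subtlety worth flagging is tie-breaking in the support-vector notation: when $\mathcal{SV}_\pm(\vbeta)$ contains more than one index, $\mathsf{sv}_\pm(\vbeta)$ denotes an arbitrary choice, but $A(\vbeta)$ and $B(\vbeta)$ themselves depend only on $\vbeta$, so every formula above is well-defined regardless of the tie-breaking rule. Once the inner $\beta_0$-optimization has been carried out, the remaining verifications are elementary linear algebra.
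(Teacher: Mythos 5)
Your proposal follows essentially the same route as the paper: for fixed $\vbeta$ you maximize over $\beta_0$ by noting the rebalanced margin is the minimum of a strictly increasing and a strictly decreasing affine function of $\beta_0$, so the optimum occurs at the margin-balancing equality, which yields the closed form \cref{eq:beta0_hat} and collapses the problem to $(\tau+1)\hat\kappa=\max_{\|\vbeta\|_2\le 1}\bigl(\langle \xx_{\mathsf{sv}_+(\vbeta)},\vbeta\rangle-\langle \xx_{\mathsf{sv}_-(\vbeta)},\vbeta\rangle\bigr)$, a $\tau$-free problem; this is exactly the paper's argument for (a) and (b), and your algebra (including the value $(A-B)/(\tau+1)$ at the balanced point and the attainment on the sphere in the separable case) is correct.

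The one flaw is in (c): you write that $\|\vw\|_2^2$ is strictly convex and therefore the minimizer $(\vw,w_0)$ of the hard-margin reformulation is unique. The objective is strictly convex in $\vw$ but constant in $w_0$, so strict convexity alone only gives uniqueness of $\hat\vw$ (hence of $\hat\vbeta=\hat\vw/\|\hat\vw\|_2$), not of $w_0$; a priori the feasible $w_0$'s at the optimal $\vw$ form an interval. The repair is immediate and is what the paper does: once $\hat\vbeta$ is unique, the inner optimization over $\beta_0$ at $\hat\vbeta$ has a unique maximizer (the min of a strictly increasing and a strictly decreasing function of $\beta_0$), namely the closed form \cref{eq:beta0_hat} you already derived, so $\hat\beta_0$ is unique as well. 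With that one-line fix your proof is complete and matches the paper's.
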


\begin{proof}
	For any feasible solution $(\vbeta, \beta_0)$ of \cref{eq:SVM}, we denote the \emph{positive} and \emph{negative margin} of the classifier $\xx \mapsto 2\ind\{\< \xx, \vbeta \> + \beta_0 > 0 \}  -1$ as
	\begin{equation}\label{eq:margin_pm}
		\begin{aligned}
			\kappa_{+}(\vbeta, \beta_0)
			& := \min_{i: y_i = +1}\wt y_i(\< \xx_i, \vbeta \> + \beta_0)
			= \tau^{-1}(\< \xx_{\mathsf{sv}_+(\vbeta)}, \vbeta \> + \beta_0),  
			\\
			\kappa_{-}(\vbeta, \beta_0)
			& := \min_{i: y_i = -1}\wt y_i(\< \xx_i, \vbeta \> + \beta_0)
			= \mathmakebox[\widthof{$\tau^{-1}$}][r]{-\,} (\< \xx_{\mathsf{sv}_-(\vbeta)}, \vbeta \> + \beta_0).
		\end{aligned}
	\end{equation}
    According to \cref{eq:SVM}, we have
	\begin{equation*}
		\hat\vbeta = \argmax_{\vbeta \in \S^{d-1}} \kappa(\vbeta, \check\beta_0(\vbeta)) = \argmax_{\vbeta \in \S^{d-1}} \min_{i \in [n]} \tilde{y}_i ( \langle \xx_i, \bbeta \rangle + \check\beta_0(\vbeta) ),
	\end{equation*}
	where
	\begin{equation}
		\label{eq:beta0_optim}
		\begin{aligned}
			\check\beta_0(\vbeta) 
		: \! & = \argmax_{\beta_0 \in \R} \kappa(\vbeta, \beta_0)
		= \argmax_{\beta_0 \in \R} \min\limits_{i \in [n]} \tilde{y}_i ( \langle \xx_i, \bbeta \rangle + \beta_0 ) \\
		& = \argmax_{\beta_0 \in \R} \Bigl\{ \min_{i: y_i = +1}\wt y_i(\< \xx_i, \vbeta \> + \beta_0), \min_{i: y_i = -1}\wt y_i(\< \xx_i, \vbeta \> + \beta_0) \Bigr\}
		\\
		& = \argmax_{\beta_0 \in \R} \, \min\left\{ 
			\kappa_{+}(\vbeta, \beta_0),
			\kappa_{-}(\vbeta, \beta_0)
		 \right\}.
		\end{aligned}
	\end{equation}
	Here, $\check\beta_0(\vbeta)$ can be viewed as the optimal intercept for a linear classifier with slope given by $\vbeta$. 
	
	\vspace{0.5\baselineskip}
	\noindent
	\textbf{\ref{lem:indep_tau(b)}:}
	As defined in \cref{eq:margin_pm}, note $\min\{ 
		\kappa_{+}(\vbeta, \beta_0),
		\kappa_{-}(\vbeta, \beta_0) \}$ 
	is a piecewise linear concave function of $\beta_0$. Therefore, $\check\beta_0(\vbeta)$ must satisfy the \emph{margin-balancing} condition\footnote{
		As we have seen, the margin-balancing condition holds regardless of the sign of margin. It holds even if the data is not linearly separable.
	}, i.e., 
	 \begin{equation}\label{eq:margin-bal}
		\kappa_{+}(\vbeta, \check\beta_0(\vbeta)) = \kappa_{-}(\vbeta, \check\beta_0(\vbeta))
		= \kappa(\vbeta, \check\beta_0(\vbeta)).
	 \end{equation}
	In particular, recall that $\check\beta_0(\hat\vbeta) = \hat\beta_0$, then $\kappa_{+}(\hat\vbeta, \hat\beta_0) = \kappa_{-}(\hat\vbeta, \hat\beta_0)$. Substitute this back to \cref{eq:margin_pm} deduce
	\begin{equation*}
		\tau^{-1}(\< \xx_{\mathsf{sv}_+(\hat\vbeta)}, \hat\vbeta \> + \hat\beta_0)  
			= -(\< \xx_{\mathsf{sv}_-(\hat\vbeta)}, \hat\vbeta \> + \hat\beta_0),
	\end{equation*}
	which uniquely solves the expression for $\hat\beta_0$ in \cref{eq:beta0_hat}. This concludes the proof of part \ref{lem:indep_tau(b)}.

	\vspace{0.5\baselineskip}
	\noindent
	\textbf{\ref{lem:indep_tau(a)}:}
	Next, we show that $\hat\vbeta$ does not depend on $\tau$. According to \cref{eq:margin-bal} and \cref{eq:margin_pm},
		\begin{align*}
			\hat\vbeta & = \argmax_{\vbeta \in \S^{d-1}} \kappa(\vbeta, \check\beta_0(\vbeta))  \\
		& =
		\argmax_{\vbeta \in \S^{d-1}} \frac{\tau \kappa_{+}(\vbeta, \check\beta_0(\vbeta)) +
		\kappa_{-}(\vbeta, \check\beta_0(\vbeta))}{\tau + 1} \\
		& = 
		\argmax_{\vbeta \in \S^{d-1}} \frac{\< \xx_{\mathsf{sv}_+(\vbeta)}, 
		\vbeta \> - \< \xx_{\mathsf{sv}_-(\vbeta)}, \vbeta \>}{\tau+1}
		= \argmax_{\vbeta \in \S^{d-1}} \< \xx_{\mathsf{sv}_+(\vbeta)} - \xx_{\mathsf{sv}_+(\vbeta)}, \vbeta \>,
		\end{align*}
	where $\< \xx_{\mathsf{sv}_+(\vbeta)} - \xx_{\mathsf{sv}_+(\vbeta)}, \vbeta \>$ only depends on $\vbeta$ and $(\XX, \yy)$ by definition. Hence, it deduces
	\begin{equation*}
		\begin{aligned}
			\hat\kappa & = \kappa(\hat\vbeta, \check\beta_0(\hat\vbeta)) 
		=
		\frac{\tau \kappa_{+}(\hat\vbeta, \check\beta_0(\hat\vbeta)) +
		\kappa_{-}(\hat\vbeta, \check\beta_0(\hat\vbeta))}{\tau + 1}
		=
		\frac{\< \xx_{\mathsf{sv}_+(\hat\vbeta)}, 
		\hat\vbeta \> - \< \xx_{\mathsf{sv}_-(\hat\vbeta)}, \hat\vbeta \>}{\tau+1}.
		\end{aligned}
	\end{equation*}
        This concludes the proof of part \ref{lem:indep_tau(a)}.

	\vspace{0.5\baselineskip}
	\noindent
	\textbf{\ref{lem:indep_tau(c)}:}
	Since \cref{eq:SVM-1} is a convex optimization problem with objective function $\norm{\bw}_2^2$, which is strictly convex in $\bw$, by equivalence between \cref{eq:SVM-m-reb}, \eqref{eq:SVM} and \eqref{eq:SVM-1}, we know that $\hat\bw$ and $\hat\vbeta = \hat\bw/\|\hat\bw\|_2$ must be unique. And by \ref{lem:indep_tau(a)}, $\hat\beta_0$ is also unique. This concludes the proof of part \ref{lem:indep_tau(c)}.
\end{proof}

Notice that \cref{lem:indep_tau} will also be used in the proof of \cref{lem:theta_hat_z} for the high imbalance regime. Below, we show that \cref{prop:SVM_tau_relation} is a direct consequence of \cref{lem:indep_tau}.

\begin{proof}[\textbf{Proof of \cref{prop:SVM_tau_relation}}]
We only show the relation on $\hat\kappa(\tau)$ and $\hat\beta_0(\tau)$, while the other results are simply restatements of \cref{lem:indep_tau}. According to \cref{eq:kappa_hat}, for any $\tau$, $(\tau + 1) \hat\kappa(\tau)$ equals a quantity which does not depend on $\tau$. Plugging in $\tau = 1$, we get $(\tau + 1) \hat\kappa(\tau) = 2 \hat\kappa(1)$.

Combining \cref{eq:kappa_hat} and \eqref{eq:beta0_hat}, we can solve
\begin{equation*}
    \< \xx_{\mathsf{sv}_+(\hat\vbeta)}, \hat\vbeta \> = \tau \hat\kappa(\tau) - \hat\beta_0(\tau), 
    \qquad
    \< \xx_{\mathsf{sv}_-(\hat\vbeta)}, \hat\vbeta \> = - \hat\kappa(\tau) - \hat\beta_0(\tau).
\end{equation*}
Notice the above holds for any $\tau > 0$. Taking $\tau = 1$ and substituting it into \cref{eq:beta0_hat}, we get
\begin{equation*}
\hat\beta_0(\tau) = -\frac{\tau \bigl(- \hat\kappa(1) - \hat\beta_0(1)\bigr) + \bigl(\hat\kappa(1) - \hat\beta_0(1) \bigr)}{\tau + 1} = \hat\beta_{0}(1) + \frac{\tau - 1}{\tau + 1} \hat\kappa(1).    
\end{equation*}
This completes the proof.
\end{proof}

\subsection{Proof of \cref{prop:explicit_bias}}
\begin{proof}[\textbf{Proof of \cref{prop:explicit_bias}}]
    Our argument follows the proof of \cite[Theorem 2.1]{Soudry_implicit_bias}. Assume that $\vbeta^*$ is a limit point of $\hat \vbeta_\lambda/\| \hat \vbeta_\lambda \|_2$ as $\lambda \to 0^+$, with $\| \vbeta^* \|_2 = 1$. The existence of $\vbeta^*$ is guaranteed by boundedness. Let $\beta_0^* := \limsup_{\lambda \to 0^+} \hat\beta_{0, \lambda}/\| \hat \vbeta_\lambda \|_2$. Now, suppose the max-margin classifier given by  $(\hat\vbeta, \hat\beta_0)$ (with $\| \hat\vbeta \|_2 = 1$) has a larger margin than $(\vbeta^*, \beta_0^*)$, that is,
    \begin{equation*}
        \kappa(\vbeta^*, \beta_0^*) = \min_{i \in [n]} y_i ( \< \xx_i, \vbeta^* \> + \beta_0^* )
        <
        \kappa(\hat\vbeta, \hat\beta_0) = \min_{i \in [n]} y_i ( \< \xx_i, \hat\vbeta \> + \hat\beta_0 ).
    \end{equation*}
    By continuity of $\kappa(\vbeta, \beta_0)$, there exists some open neighborhood of $(\vbeta^*, \beta_0^*)$:
    \begin{equation*}
        \mathcal{N}_{\vbeta^*, \beta_0^*} := \left\{
        \vbeta \in \R^d, \beta_0 \in R: \|\vbeta\|_2 = 1, \| \vbeta - \vbeta^* \|_2^2 + |\beta_0 - \beta_0^*|^2 < \delta^2 
        \right\}
    \end{equation*}
    and an $\varepsilon > 0$, such that
    \begin{equation*}
        \kappa(\vbeta, \beta_0) = \min_{i \in [n]} y_i ( \< \xx_i, \vbeta \> + \beta_0 ) < \kappa(\hat\vbeta, \hat\beta_0) - \varepsilon, \qquad \forall\, (\vbeta, \beta_0) \in \mathcal{N}_{\vbeta^*, \beta_0^*}.
    \end{equation*}
    Since $\ell$ is rapidly varying, now by \cite[Lemma 2.3]{Soudry_implicit_bias} we know that there exists some constant $T > 0$ (depends on $\kappa(\hat\vbeta, \hat\beta_0)$ and $\varepsilon$), such that
    \begin{equation*}
        \sum_{i=1}^n \ell\bigl(y_i ( \< \xx_i, t \hat\vbeta \> + t \hat\beta_0 ) \bigr)
        <
        \sum_{i=1}^n \ell\bigl(y_i ( \< \xx_i, t \vbeta \> + t \beta_0 ) \bigr)
        , \qquad \forall\, t > T , \  (\vbeta, \beta_0) \in \mathcal{N}_{\vbeta^*, \beta_0^*},
    \end{equation*}
    which implies $(t \hat\vbeta, t \hat\beta_0)$ has a smaller loss \cref{eq:logistic-l2} than $(t \vbeta, t \beta_0)$. This indicates that $\vbeta^*$ cannot be a limit point of $\hat \vbeta_\lambda/\| \hat \vbeta_\lambda \|_2$, which is a contradiction. Hence we must have $\kappa(\vbeta^*, \beta_0^*) = \kappa(\hat\vbeta, \hat\beta_0)$. Replacing $\limsup$ by $\liminf$ in the definition of $\beta_0^*$ gives the same conclusion. Then we complete the proof by noticing the max-margin solution is unique on separable data by \cref{lem:indep_tau}\ref{lem:indep_tau(c)}.
\end{proof}

\section{Logit distribution for separable data: Proofs for \cref{sec:logit_SVM}}
\label{append_sec:sep}

\subsection{Proof of \cref{thm:SVM_main}}
\label{append_subsec:sep}

Recall that the margin-rebalanced SVM can be rewritten as
\begin{equation}
    \label{eq:over_max-margin}
    \begin{array}{rl}
    \maximize\limits_{\bbeta \in \R^d, \, \beta_0, \kappa \in \R} & \kappa, \\
    \text{subject to} & \tilde{y}_i ( \< \xx_i, \bbeta \> + \beta_0 ) \ge \kappa,
	\quad \forall\, i \in [n], \\
	&  \norm{\bbeta}_2 \le 1.
    \end{array}
\end{equation}
Let $(\hat \vbeta_n, \hat \beta_{0, n})$ be an optimal solution and $\hat\kappa_n = \ind_{1 \le n_+ \le n - 1} \kappa(\hat \vbeta_n, \hat \beta_{0, n})$ be the well-defined maximum margin as per \cref{def:max-margin}. Our goal is to derive exact asymptotics for $(\hat \vbeta_n, \hat \beta_{0, n}, \hat\kappa_n)$. Similar to the development in \cite{montanari2023generalizationerrormaxmarginlinear}, for any positive margin $\kappa > 0$, we define the event
\begin{align*}
        \mathcal{E}_{n, \kappa} & = \bigl\{  \kappa(\hat \vbeta_n, \hat \beta_{0, n})  \ge \kappa \bigr\}  \\
        & =  \left\{ \text{$\exists\, \vbeta \in \R^d$, $\norm{\bbeta}_2 \le 1$, $\beta_0 \in \R$, such that $\tilde{y}_i \big( \langle \xx_i, \bbeta \rangle + \beta_0 \big) \ge \kappa$ for all $i \in [n]$} \right\} \\
        & = \left\{ \text{$\exists\, \vbeta \in \R^d$, $\norm{\bbeta}_2 \le 1$, $\beta_0 \in \R$, such that $\norm{ \left( \kappa \bs_\yy - \yy \odot \XX \vbeta  - \beta_0 \yy \right)_+ }_2 = 0$} \right\},
\end{align*}
where $\bs_\yy = (s(y_1), \dots, s(y_n))^\top$ and $s$ is the function defined in \cref{eq:s_fun}. Therefore, the data $(\XX, \yy)$ is linearly separable if and only if $\mathcal{E}_{n, \kappa}$ holds for some $\kappa > 0$. We would like to determine for which sets of parameters $(\pi, \vmu, \delta, \tau)$ we have
$\P(\mathcal{E}_{n, \kappa}) \to 1$ and for which instead $\P(\mathcal{E}_{n, \kappa}) \to 0$ as $n,d \to \infty$. To this end, we also define
\begin{equation}
    \label{eq:xi_n_kappa}
    \begin{aligned}
        \xi_{n, \kappa} & := \min_{ \substack{ \norm{\vbeta}_2 \le 1 \\ \beta_0 \in \R} } \frac{1}{\sqrt{d}} \norm{ \left( \kappa \bs_\yy - \yy \odot \XX \vbeta  - \beta_0 \yy \right)_+ }_2 \\
        & \phantom{:}\overset{\mathmakebox[0pt][c]{\text{(i)}}}{=} \min_{ \substack{ \norm{\vbeta}_2 \le 1 \\ \beta_0 \in \R} } \max_{ \substack{ \norm{\blambda}_2 \le 1 \\ \blambda \odot \yy \ge \bzero} } \frac{1}{\sqrt{d}} \blambda^\top \left( \kappa \bs_\yy \odot \yy - \XX \vbeta  - \beta_0 \bone \right),
    \end{aligned}
\end{equation}
where (i) is a consequence of Lagrange duality (dual norm) $\norm{(\ba)_+}_2 = \max_{\norm{\blambda}_2 \le 1, \blambda \ge \bzero} \blambda^\top \ba$. Then we established the following equivalence
\begin{equation*}
    \left\{ \xi_{n, \kappa} = 0 \right\} \Longleftrightarrow \mathcal{E}_{n ,\kappa}
    \qquad
    \left\{ \xi_{n, \kappa} > 0 \right\} \Longleftrightarrow \mathcal{E}^c_{n ,\kappa}.
\end{equation*}
Keep in mind that we are only concerned with the sign (positivity) of $\xi_{n, \kappa}$, not its magnitude. As a consequence, we have
\begin{equation*}
    \hat\kappa_n = \ind_{1 \le n_+ \le n - 1}\cdot \sup\{ \kappa \in \R:  \xi_{n,\kappa} = 0 \}.
\end{equation*}
Let $\mathcal{D}_n := \{ n_+ = 0 \text{ or } n \}$ be the event of degeneration for any datasets of size $n$. Clearly $\P(\mathcal{D}_n) = \pi^n + (1 - \pi)^n \to 0$ as $n \to \infty$. Technically, the empirical logit distribution (ELD) in \cref{eq:ELD} is not well-defined on $\mathcal{D}_n$. Similar as \cref{def:max-margin}, we can also redefine it as follows:
\begin{equation}\label{eq:over_ELD_well}
    \hat \nu_{n} := \frac1n \sum_{i=1}^n \delta_{(y_i, \< \xx_i, \hat\vbeta \> + \hat\beta_{0} ) \cdot \ind\{1 \le n_+ \le n - 1\} }.
\end{equation}

We provide an outline for the main parts of the proofs of \cref{thm:SVM_main}\ref{thm:SVM_main_trans}---\ref{thm:SVM_main_mar}, which involves several steps of transforming and simplifying the random variable $\xi_{n, \kappa}$.
\begin{equation*}
\begin{aligned}
    \xi_{n, \kappa}
    \, \xRightarrow[\text{\cref{lem:over_beta0}}]{\textbf{Step 1}} \, 
    \xi'_{n, \kappa, B}
    \, \xRightarrow[\text{\cref{lem:over_CGMT}}]{\textbf{Step 2}} \, 
    \xi'^{(1)}_{n, \kappa, B}
    \, \xRightarrow[\text{\cref{lem:over_ULLN}}]{\textbf{Step 3}} \, 
    \bar\xi'^{(2)}_{\kappa, B}
    \, \Rightarrow \,
    \bar\xi_{\kappa}^{(2)}
    \\
    \xRightarrow[\text{\cref{lem:over_sign}}]{\textbf{Step 4}} \, 
    \bar\xi_{\kappa}^{(3)}
    \, \Rightarrow \,
    \wt\xi_{\kappa}^{(3)}, F_\kappa(\rho, \beta_0)
    \, \xRightarrow[\text{\cref{lem:over_phase_trans}}]{\textbf{Step 5}} \, 
    \delta^*(\kappa), H_\kappa(\rho, \beta_0).
\end{aligned}
\left.
\vphantom{\begin{matrix} \dfrac12 \\ \dfrac12 \end{matrix}}
\right\} \text{\scriptsize{\cref{lem:over_mar_conp}}}
\end{equation*}

\paragraph{Step 1: Boundedness of the intercept (from $\xi_{n, \kappa}$ to $\xi'_{n, \kappa, B}$)} 
According to the definition of $\xi_{n, \kappa}$, parameters $\vbeta$ and $\blambda$ are optimized in compact sets, but $\beta_0$ is not. Such non-compactness might cause technical difficulties in the following steps, for example, when applying Gordon's Gaussian comparison inequality and establishing uniform convergence. However, it turns out that $\beta_0$ is asymptotically bounded on the event $\mathcal{E}_{n ,\kappa}$. More precisely, we define
\begin{equation}
    \label{eq:xi'_n_kappa_B}
    \xi'_{n, \kappa, B} := \min_{ \substack{ \norm{\vbeta}_2 \le 1 \\ \abs{\beta_0} \le B } } \max_{ \substack{ \norm{\blambda}_2 \le 1 \\ \blambda \odot \yy \ge 0} } \frac{1}{\sqrt{d}} \blambda^\top \left( \kappa \bs_\yy \odot \yy - \XX \vbeta  - \beta_0 \bone \right),
\end{equation}
where $B = B(\tau, \kappa, \pi, \norm{\bmu}_2, \delta)$ is a sufficiently large constant. Then we can show that $\xi_{n, \kappa}$ and $\xi'_{n, \kappa, B}$ have the same sign with high probability, which enables us to work with $\xi'_{n, \kappa, B}$ instead of $\xi_{n, \kappa}$.

\begin{lem}[Boundedness of $\beta_0$] 
\label{lem:over_beta0}    
There exists some constant $B \in (0, \infty)$ (depends on $\tau, \kappa, \pi, \norm{\bmu}_2, \delta$) such that
    \begin{equation*}
        \lim_{n \to \infty} \abs{ \P\bigl( \xi_{n, \kappa} = 0 \bigr) - \P\bigl(\xi'_{n, \kappa, B} = 0 \bigr) } = 0.
    \end{equation*}
\end{lem}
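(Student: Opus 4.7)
The inclusion $\{\xi'_{n,\kappa,B}=0\}\subseteq\{\xi_{n,\kappa}=0\}$ holds for every $B$, since imposing $|\beta_0|\le B$ shrinks the primal feasible set and so $\xi_{n,\kappa}\le \xi'_{n,\kappa,B}$ pointwise. The lemma therefore reduces to exhibiting a constant $B=B(\tau,\kappa,\pi,\|\vmu\|_2,\delta)$ for which $\P(\xi_{n,\kappa}=0,\,\xi'_{n,\kappa,B}>0)\to 0$ as $n\to\infty$.

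My plan is to pass to the Lagrange dual. Applying Sion's minimax theorem to the compact-in-$(\vbeta,\beta_0)$ problem $\xi'_{n,\kappa,B}$,
\[
\xi'_{n,\kappa,B}=\max_{\blambda\in\Lambda}\frac{1}{\sqrt d}\Bigl[\,\kappa\,\blambda^\top(\bs_\yy\odot\yy)-\|\XX^\top\blambda\|_2-B\,|\blambda^\top\bone|\,\Bigr],\qquad \Lambda:=\{\blambda:\|\blambda\|_2\le 1,\;\blambda\odot\yy\ge\bzero\},
\]
and letting $B\to\infty$ forces the optimal dual to satisfy $\blambda^\top\bone=0$, yielding the analogous representation of $\xi_{n,\kappa}$ over $\Lambda_0:=\Lambda\cap\{\blambda^\top\bone=0\}$ with no penalty. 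Writing $F(\blambda):=\kappa\,\blambda^\top(\bs_\yy\odot\yy)-\|\XX^\top\blambda\|_2$, the two events of interest become $\{\xi_{n,\kappa}=0\}=\{\sup_{\Lambda_0}F\le 0\}$ and $\{\xi'_{n,\kappa,B}=0\}=\{\sup_{\Lambda}[F(\blambda)-B|\blambda^\top\bone|]\le 0\}$. It therefore suffices to find a deterministic constant $B_0$ such that, on the event $\{\sup_{\Lambda_0}F\le 0\}$,
\[
F(\blambda)\le B_0\,|\blambda^\top\bone|\quad\text{for every }\blambda\in\Lambda\setminus\Lambda_0,\qquad\text{with probability }1-o(1),
\]
and then take any $B>B_0$.

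To obtain this uniform bound I would exploit the decomposition $\blambda=\blambda_+-\blambda_-$ with $\blambda_\pm\ge\bzero$ supported on $\{i:y_i=\pm 1\}$ (which automatically respects $\blambda\odot\yy\ge\bzero$), together with the Gaussian factorization $\XX=\yy\vmu^\top+\bZ$ where $\bZ$ has i.i.d.\ $\normal(0,1)$ entries. Letting $s_\pm:=\bone^\top\blambda_\pm$, the terms reduce to $\blambda^\top(\bs_\yy\odot\yy)=\tau s_++s_-$, $\blambda^\top\bone=s_+-s_-$, and $\XX^\top\blambda=(s_++s_-)\vmu+\bZ^\top\blambda$. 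Gaussian concentration of $\|\bZ^\top\blambda\|_2^2$ around $d\|\blambda\|_2^2$, upgraded from pointwise to uniform over $\Lambda$ via a standard $\epsilon$-net argument (or Gordon's Gaussian comparison inequality), reduces the control of $R(\blambda):=F(\blambda)/|\blambda^\top\bone|$ to a finite-dimensional variational problem in the scalars $(s_+,s_-,\|\blambda_+\|_2,\|\blambda_-\|_2)$ constrained by $\|\blambda_+\|_2^2+\|\blambda_-\|_2^2\le 1$ and $0\le s_\pm\le \sqrt{n_\pm}\|\blambda_\pm\|_2$. Solving this finite-dimensional bound yields $B_0$ as an explicit function of $(\tau,\kappa,\pi,\|\vmu\|_2,\delta)$.

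The main technical obstacle is the boundary region $|\blambda^\top\bone|\to 0$, where the denominator of $R$ vanishes and naive Lipschitz bounds on $F$ blow up (indeed $F$ has Lipschitz constant of order $\|\XX\|_{\op}=O(\sqrt n)$); here one has to combine first-order stationarity at the maximizer over $\Lambda_0$ with explicit control on the directional derivative of $F$ along the $\bone$-direction, which encodes the Lagrange multiplier associated with the linear constraint $\blambda^\top\bone=0$. Once $B_0$ is identified in this way, any $B>B_0$ completes the proof.
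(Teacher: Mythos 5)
Your dual reformulation is correct, and it is a genuinely different route from the paper, but the proof has a real gap at precisely the step you flag as the ``main technical obstacle.''

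The reformulation itself is fine: Sion's minimax theorem gives
\[
\xi'_{n,\kappa,B}=\frac{1}{\sqrt d}\max_{\blambda\in\Lambda}\Bigl[\kappa\,\blambda^\top(\bs_\yy\odot\yy)-\|\XX^\top\blambda\|_2-B\,|\blambda^\top\bone|\Bigr],
\qquad
\xi_{n,\kappa}=\frac{1}{\sqrt d}\max_{\blambda\in\Lambda_0}F(\blambda),
\]
so $\{\xi'_{n,\kappa,B}=0\}$ is the event $F(\blambda)\le B|\blambda^\top\bone|$ for all $\blambda\in\Lambda$, and your reduction to finding $B_0$ with $F(\blambda)\le B_0|\blambda^\top\bone|$ uniformly, on the event $\{\xi_{n,\kappa}=0\}$, with high probability, is exactly the dual restatement of the lemma. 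However, the proposed machinery to prove that bound does not hold up. First, replacing $\|\bZ^\top\blambda\|_2$ by $\sqrt d\,\|\blambda\|_2$ uniformly over $\Lambda$ via an $\epsilon$-net incurs an error of order $\|\bZ\|_{\op}\asymp\sqrt n+\sqrt d$, which is the same order as the quantity you are trying to track when $n/d\to\delta$; the claimed reduction to a four-scalar variational problem therefore does not go through at the needed precision. Second, the boundary region $|\blambda^\top\bone|\to 0$ is not an afterthought: the bound you want is, by strong duality, \emph{equivalent} to the statement that the optimal primal $\beta_0$ on the event of $\kappa$-separability satisfies $|\beta_0|\le B_0$. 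Your sketch invokes ``first-order stationarity'' and ``Lagrange multipliers'' to handle this, but no argument is given, and filling it in essentially amounts to proving the primal boundedness result anyway.

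The paper takes the primal route, which avoids all of this. On the event $\{\xi_{n,\kappa}=0\}\cap\mathcal D_n^c$, the minimizer $(\wt\vbeta_n,\wt\beta_{0,n})$ satisfies all $n$ separation constraints, which immediately sandwich $\wt\beta_{0,n}$ between an expression involving the positive-class points and one involving the negative-class points; combining and applying Cauchy--Schwarz yields $|\wt\beta_{0,n}|^2\le 3\{((\tau+1)\kappa+2\|\bmu\|_2)^2+n_+^{-1}\|\ZZ_+\wt\vbeta_n\|_2^2+n_-^{-1}\|\ZZ_-\wt\vbeta_n\|_2^2\}$, which is controlled by $\|\ZZ_\pm\|_{\op}$ and the law of large numbers for $n_\pm/n$. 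No uniformity over the dual set and no concentration of $\|\bZ^\top\blambda\|_2$ are required; the only probabilistic input is the operator-norm tail bound for Gaussian matrices. If you want to salvage the dual approach, the cleanest repair is to observe that the dual claim is equivalent to the primal $|\beta_0|\le B_0$ claim via strong duality, prove the primal claim by inspecting feasibility as the paper does, and then conclude; but at that point the dual detour adds nothing.
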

\noindent
See \cref{subsubsec:over_beta0} for the proof.

\paragraph{Step 2: Reduction via Gaussian comparison (from $\xi'_{n, \kappa, B}$ to $\xi'^{(1)}_{n, \kappa, B}$)} 
According to the expression of $\xi'_{n, \kappa, B}$, it is not hard to see the objective function (of $(\vbeta, \blambda)$) is a bilinear form of the Gaussian random matrix $\XX$. To simplify the bilinear term and make the calculation easier, we will use the convex Gaussian minimax theorem (CGMT, see \Cref{lem:CGMT}), i.e., Gordon's comparison inequality \cite{gordon1985some, thrampoulidis2015regularized}. To do so, we introduce another quantity:
\begin{equation}\label{eq:xi1_n_kappa_B}
    \xi_{n, \kappa, B}'^{(1)} := \min_{ \substack{ \rho^2 + \norm{\vtheta}_2^2 \le 1 \\ \abs{\beta_0} \le B } } \max_{ \substack{ \norm{\blambda}_2 \le 1 \\ \blambda \odot \yy \ge 0} } \frac{1}{\sqrt{d}}  \left(
    \norm{\blambda}_2 \vg^\top \btheta + \norm{\btheta}_2 \vh^\top \blambda + \blambda^\top \bigl( 
        \kappa \bs_\yy \odot \yy - \rho\norm{\bmu}_2 \yy + \rho \vu - \beta_0 \bone
     \bigr)
     \right),
\end{equation}
where $\rho \in \R$, $\vtheta \in \R^{d-1}$ are parameters, $\vg \sim \normal(\bzero, \bI_{d-1})$, $\hh \sim \normal(\bzero, \bI_{n})$, $\uu \sim \normal(\bzero, \bI_{n})$ are independent Gaussian vectors. The following lemma connects $\xi'_{n, \kappa, B}$ with $\xi_{n, \kappa, B}'^{(1)}$.

\begin{lem}[Reduction via CGMT] 
    \label{lem:over_CGMT}    
For any $v \in \R$ and $t \ge 0$,
    \begin{equation*}
        \P\Big( \big|\xi'_{n, \kappa, B} - v  \big| \ge t \Big) 
        \le 
        2 \P\Big( \big| \xi'^{(1)}_{n, \kappa, B} - v \big| \ge t \Big).
    \end{equation*}
\end{lem}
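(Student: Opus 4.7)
The plan is to apply the two-sided Convex Gaussian Min-Max Theorem (CGMT) of \cite{thrampoulidis2015regularized} (invoked later as Lemma \ref{lem:CGMT}), after first rewriting $\xi'_{n,\kappa,B}$ so that the only residual randomness is a clean bilinear Gaussian form in the min-max variables. Under the 2-GMM \eqref{model} I would write $\XX = \yy \vmu^\top + \ZZ$, where $\ZZ \in \R^{n \times d}$ has i.i.d.\ $\normal(0,1)$ entries and is independent of $\yy$. Choose $\mU \in \R^{d \times (d-1)}$ with orthonormal columns spanning $\vmu^\perp$, and reparametrize $\vbeta = \rho\, \vmu/\norm{\vmu}_2 + \mU \vtheta$, so that $\norm{\vbeta}_2^2 = \rho^2 + \norm{\vtheta}_2^2$. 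Setting $\vu := \ZZ\, \vmu/\norm{\vmu}_2 \sim \normal(\bzero, \bI_n)$ and $\tilde\ZZ := \ZZ \mU$ (i.i.d.\ $\normal(0,1)$ entries, independent of $\vu$ by Gaussian orthogonality), we obtain
\begin{equation*}
    \blambda^\top \XX \vbeta \;=\; \rho \norm{\vmu}_2 (\blambda^\top \yy) + \rho\, \blambda^\top \vu + \blambda^\top \tilde\ZZ \vtheta.
\end{equation*}
Substituting into \eqref{eq:xi'_n_kappa_B} expresses $\xi'_{n,\kappa,B}$ as a min-max over $((\rho, \vtheta, \beta_0), \blambda)$ whose only random term, after conditioning on $(\yy, \vu)$, is the bilinear Gaussian form $-\blambda^\top \tilde\ZZ \vtheta/\sqrt d$.

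Next I would verify the CGMT hypotheses, conditionally on $(\yy, \vu)$. The primal feasible set $\cS_{\vbeta} := \{(\rho, \vtheta, \beta_0) : \rho^2 + \norm{\vtheta}_2^2 \le 1,\ |\beta_0| \le B\}$ is compact and convex, and the dual feasible set $\cS_{\blambda} := \{\blambda \in \R^n : \norm{\blambda}_2 \le 1,\ \blambda \odot \yy \ge \bzero\}$ is a compact convex set (intersection of a convex cone with the unit ball). The deterministic part of the objective, $\frac{1}{\sqrt d}\blambda^\top(\kappa \bs_\yy \odot \yy - \rho \norm{\vmu}_2 \yy - \rho \vu - \beta_0 \bone)$, is linear in each of $(\rho, \vtheta, \beta_0)$ and $\blambda$ separately, so the full objective is convex-concave. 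These are exactly the assumptions of the two-sided CGMT, which yields, for every $v \in \R$ and $t \ge 0$,
\begin{equation*}
    \P\bigl(\,\big|\xi'_{n,\kappa,B} - v\big| \ge t \,\big|\, \yy, \vu\bigr) \;\le\; 2\, \P\bigl(\,\big|\tilde\xi - v\big| \ge t \,\big|\, \yy, \vu\bigr),
\end{equation*}
where $\tilde\xi$ is obtained from $\xi'_{n,\kappa,B}$ by replacing $-\blambda^\top \tilde\ZZ \vtheta$ with the Gordon surrogate $\norm{\blambda}_2\, \vg^\top \vtheta + \norm{\vtheta}_2\, \vh^\top \blambda$ for fresh independent $\vg \sim \normal(\bzero, \bI_{d-1})$ and $\vh \sim \normal(\bzero, \bI_n)$.

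Finally, I would average over $(\yy, \vu)$ and identify $\tilde\xi$ with $\xi'^{(1)}_{n,\kappa,B}$ in distribution. Because $\vu$ is a symmetric Gaussian independent of $(\yy, \vg, \vh)$, we can replace $-\vu$ by $+\vu$ (and similarly flip signs of $\vg, \vh$) without changing the joint law; this makes the drift term match the $+\rho \vu$ appearing in \eqref{eq:xi1_n_kappa_B} exactly, giving $\tilde\xi \stackrel{d}{=} \xi'^{(1)}_{n,\kappa,B}$. Taking expectations then yields the claimed inequality. The main obstacle I expect is not the analysis but the bookkeeping: one must check that the independence of $\tilde\ZZ$ from $(\yy, \vu)$ survives the reparametrization (it does, since $\vmu$ and the columns of $\mU$ are orthogonal), verify that conditioning on $(\yy, \vu)$ preserves the convex-concave structure required by CGMT (it does, since $(\yy, \vu)$ enter only as an additive linear drift), and line up the sign conventions so that the Gordon surrogate matches \eqref{eq:xi1_n_kappa_B} verbatim.
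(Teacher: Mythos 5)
Your proposal is correct and follows essentially the same route as the paper: write $\XX = \yy\vmu^\top + \ZZ$, rotate $\vbeta$ into $(\rho, \vtheta)$-coordinates aligned with $\vmu$, condition on the label/projection noise $(\yy, \vu)$ so that the only remaining randomness is the bilinear Gaussian form in $(\vtheta, \blambda)$, invoke both one-sided inequalities of Lemma~\ref{lem:CGMT}, and average over $(\yy, \vu)$. The paper handles the sign bookkeeping by flipping $(\uu, \GG) \mapsto (-\uu, -\GG)$ in the reparametrization step rather than at the end, but that is a cosmetic difference.
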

\noindent
See \cref{subsubsec:over_CGMT} for the proof.

\paragraph{Step 3: Dimension reduction (from $\xi'^{(1)}_{n, \kappa, B}$ to $\bar\xi'^{(2)}_{\kappa, B}$)} It turns out that $\xi'^{(1)}_{n, \kappa, B}$ can be further simplified for analytical purposes. We define a new (deterministic) quantity
\begin{equation*}
    \bar\xi'^{(2)}_{\kappa, B} :=  \min_{ \substack{ \rho^2 + r^2 \le 1, r \ge 0 \\  \abs{\beta_0} \le B } }
    -r + \sqrt{\delta} \left( \E\left[ \bigl(  s(Y) \kappa - \rho \norm{\bmu}_2 + \rho G_1 + rG_2 - \beta_0 Y \bigr)_+^2 \right] \right)^{1/2},
\end{equation*}
which is a constrained minimization over only three variables $\rho$, $r$, and $\beta_0$, with random variables $(Y, G_1, G_2) \sim P_y \times \normal(0, 1) \times \normal(0, 1)$. The two quantities of interest can be related via the uniform law of large numbers (ULLN) as shown in the following lemma.

\begin{lem}[ULLN]
\label{lem:over_ULLN}    
As $n,d \to \infty$, we have
    \begin{equation*}
        \xi'^{(1)}_{n, \kappa, B}  \conp \left( \bar\xi'^{(2)}_{\kappa, B} \right)_+.
    \end{equation*}
\end{lem}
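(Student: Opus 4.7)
} The plan is to reduce $\xi'^{(1)}_{n, \kappa, B}$ to a simpler minimization over only three scalar parameters, and then apply a uniform law of large numbers on a compact set. First I would solve the inner maximization over $\blambda$ in closed form. Writing $\blambda = t \vu$ with $t \in [0,1]$, $\|\vu\|_2 = 1$, and $\vu \odot \yy \ge \bzero$, and letting $\va := \|\vtheta\|_2 \vh + \kappa \bs_\yy \odot \yy - \rho\|\bmu\|_2 \yy + \rho \vu - \beta_0 \bone$, the inner max becomes
\begin{equation*}
\max_{t \in [0,1]} \frac{t}{\sqrt{d}}\bigl(\vg^\top \btheta + \max_{\|\vu\|_2=1,\,\vu\odot\yy\ge\bzero}\vu^\top \va\bigr) = \frac{1}{\sqrt{d}}\bigl(\vg^\top\btheta + \|(\yy\odot \va)_+\|_2\bigr)_+,
\end{equation*}
where the identity $\max_{\|\vu\|_2=1,\,\vu\odot\yy\ge\bzero}\vu^\top\va = \|(\yy\odot\va)_+\|_2$ follows by componentwise inspection (each $u_i$ must take the sign of $y_i$, so contributions reduce to $(y_i a_i)_+$).

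Next, note that $\vtheta$ enters only through $r := \|\vtheta\|_2$ in $\|(\yy\odot\va)_+\|_2$ and as $\vg^\top\btheta$ in the other term. Minimizing the latter over directions gives $\min_{\|\btheta\|_2=r}\vg^\top\btheta = -r\|\vg\|_2$. Using the identity $\min_\alpha(f(\alpha))_+ = (\min_\alpha f(\alpha))_+$, I get
\begin{equation*}
\xi'^{(1)}_{n,\kappa,B} = \Bigl(\min_{\substack{\rho^2+r^2\le 1,\,r\ge 0 \\ |\beta_0|\le B}} F_n(\rho,r,\beta_0)\Bigr)_+, \quad F_n(\rho,r,\beta_0) := \frac{1}{\sqrt{d}}\bigl(-r\|\vg\|_2 + \|(\yy\odot\va)_+\|_2\bigr).
\end{equation*}
Componentwise, $(\yy\odot\va)_i = r(y_i h_i) + \kappa s(y_i) - \rho\|\bmu\|_2 + \rho(y_i u_i) - \beta_0 y_i$; since $\vh,\vu$ are independent of $\yy$, the joint vector $(y_i,\,y_i h_i,\,y_i u_i)$ is i.i.d.\ with distribution $P_y\times\normal(0,1)\times\normal(0,1)$.

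From here I would apply the law of large numbers: $\|\vg\|_2/\sqrt{d}\to 1$ a.s., and for each fixed $(\rho,r,\beta_0)$,
\begin{equation*}
\frac{1}{d}\|(\yy\odot\va)_+\|_2^2 \;\conp\; \delta\cdot\E\bigl[(\kappa s(Y) - \rho\|\bmu\|_2 + \rho G_1 + r G_2 - \beta_0 Y)_+^2\bigr],
\end{equation*}
so $F_n(\rho,r,\beta_0)$ converges pointwise to $F(\rho,r,\beta_0) := -r + \sqrt{\delta}(\E[\cdot])^{1/2}$. To upgrade this to uniform convergence over the compact set $K := \{(\rho,r,\beta_0):\rho^2+r^2\le 1,\,r\ge 0,\,|\beta_0|\le B\}$, I would use the fact that the integrand $(\rho,r,\beta_0)\mapsto(\kappa s(y) - \rho\|\bmu\|_2 + \rho u + r h - \beta_0 y)_+^2$ is Lipschitz on $K$ with constant bounded by $C(1+|u|+|h|+|y|)^2$, which has finite expectation. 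A standard stochastic-equicontinuity argument (covering $K$ by a finite $\varepsilon$-net, applying pointwise LLN at each net point, and controlling the oscillation between net points via the Lipschitz bound) then yields $\sup_{K}|F_n - F|\conp 0$. Combined with continuity of $F$ on the compact set $K$, this gives $\min_K F_n \conp \min_K F = \bar\xi'^{(2)}_{\kappa,B}$, and taking positive parts via the continuous map $(\cdot)_+$ concludes the proof.

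The main obstacle will be the uniform-convergence step: the positive-part operation and the square root create non-smooth composition, and the parameter $r$ multiplies the Gaussian $\vh$, so care is needed to bound the modulus of continuity uniformly in $n$. I expect the Lipschitz envelope (with $\cL^1$-integrable envelope function) plus a covering argument to be the cleanest route, though one could alternatively invoke a direct Glivenko--Cantelli-type result for the parametric class $\{(\kappa s(y) - \rho\|\bmu\|_2 + \rho u + r h - \beta_0 y)_+^2:(\rho,r,\beta_0)\in K\}$.
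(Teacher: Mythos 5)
Your proposal is correct and follows essentially the same route as the paper: reduce the inner maximization over $\blambda$ to a rectified norm, fold the direction of $\vtheta$ into the scalar $r = \|\vtheta\|_2$ using $\min_{\|\btheta\|_2 = r}\vg^\top\btheta = -r\|\vg\|_2$, pull the positive part outside the min, and then establish uniform convergence of the resulting low-dimensional empirical process over the compact parameter set. The only difference is cosmetic — the paper invokes a ready-made ULLN (Newey--McFadden, Lemma 2.4, which requires continuity plus an integrable envelope, using the bound $|f|\le 3((\kappa\tau+\|\bmu\|_2+B)^2+G_1^2+G_2^2)$) and then converts to the square root via the $1/2$-H\"older estimate $|\sqrt{a}-\sqrt{b}|\le\sqrt{|a-b|}$, whereas you propose to re-derive the uniform LLN by hand via a Lipschitz envelope and $\varepsilon$-net; both justifications are equivalent in substance.
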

\noindent
See \cref{subsubsec:over_ULLN} for the proof.

\paragraph{Step 4: Investigation of the positivity (from $\bar\xi'^{(2)}_{\kappa, B}$ to $\bar\xi^{(3)}_{\kappa}$)}
To further simplify the problem, we define the following quantities that are closely related to $\bar\xi'^{(2)}_{\kappa, B}$:
\begin{align}
        \bar\xi_{\kappa}^{(2)} & :=  \min_{ \substack{ \rho^2 + r^2 \le 1, r \ge 0 \\  \beta_0 \in  \R } }
    -r + \sqrt{\delta} \left( \E\left[ \bigl(  s(Y) \kappa - \rho \norm{\bmu}_2 + \rho G_1 + rG_2 - \beta_0 Y \bigr)_+^2 \right] \right)^{1/2} 
    \notag
    \\
        \bar\xi_{\kappa}^{(3)} & :=  \min_{ \substack{ \rho \in [-1, 1] \\  \beta_0 \in  \R } }
        -\sqrt{1 - \rho^2} + \sqrt{\delta} \left( \E\left[ \bigl(  s(Y) \kappa - \rho \norm{\bmu}_2 + G - \beta_0 Y \bigr)_+^2 \right] \right)^{1/2}.
        \label{eq:xi3_kappa}
\end{align}
Firstly, we argue that $\bar\xi'^{(2)}_{\kappa, B} = \bar\xi_{\kappa}^{(2)}$ for constant $B$ large enough, by noticing the optimal (unique) $\beta_0$ in $\bar\xi_{\kappa}^{(2)}$ is always bounded by some constant (depends on $\tau, \kappa, \pi, \norm{\bmu}_2, \delta$). Secondly, notice $\bar\xi_{\kappa}^{(3)}$ can be viewed as fixing $r = \sqrt{1 - \rho^2}$ in the optimization of $\bar\xi_{\kappa}^{(2)}$, and $G := \rho G_1 + \sqrt{1 - \rho^2} G_2 \sim \normal(0, 1)$. The following lemma shows that the sign won't change from $\bar\xi_{\kappa}^{(2)}$ to $\bar\xi_{\kappa}^{(3)}$.

\begin{lem}[Sign invariance] 
\label{lem:over_sign}    
For any $\kappa > 0$, the following result holds:
    \begin{enumerate}[label=(\alph*)]
        \item $\sign(\bar\xi_{\kappa}^{(2)}) = \sign(\bar\xi_{\kappa}^{(3)})$.
        \item If $\bar\xi_{\kappa}^{(2)} \le 0$, then $\bar\xi_{\kappa}^{(2)} = \bar\xi_{\kappa}^{(3)}$.
    \end{enumerate}
\end{lem}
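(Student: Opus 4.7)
The plan is to prove part (b) first; part (a) then follows readily. For (a), the inequality $\bar\xi_\kappa^{(3)} \ge \bar\xi_\kappa^{(2)}$ is immediate, because setting $r = \sqrt{1-\rho^2}$ in the $\bar\xi_\kappa^{(2)}$ program is feasible and, using $\rho G_1 + \sqrt{1-\rho^2}\, G_2 \dequal G$, the objective there reduces to that of $\bar\xi_\kappa^{(3)}$. Hence if $\bar\xi_\kappa^{(2)} > 0$ then both quantities are positive; in the complementary case $\bar\xi_\kappa^{(2)} \le 0$, part (b) gives $\bar\xi_\kappa^{(3)} = \bar\xi_\kappa^{(2)}$, so the signs agree in every case.

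For part (b), in view of $\bar\xi_\kappa^{(3)} \ge \bar\xi_\kappa^{(2)}$ it suffices to exhibit a single feasible point of $\bar\xi_\kappa^{(3)}$ whose objective value is $\le \bar\xi_\kappa^{(2)}$. Let $(\rho^*, r^*, \beta_0^*)$ be a minimizer of $\bar\xi_\kappa^{(2)}$; existence follows from compactness of the domain in $(\rho, r)$ and coercivity of the objective in $\beta_0$ (both labels $Y = \pm 1$ occur with positive probability). Set $\sigma^* := \sqrt{\rho^{*2} + r^{*2}}$. If $\sigma^* = 1$ the triple is already feasible for $\bar\xi_\kappa^{(3)}$ and we are done, and $\sigma^* = 0$ is excluded because $\sqrt{\delta}\,\E[(s(Y)\kappa - \beta_0 Y)_+^2]^{1/2} > 0$ for every $\beta_0$ whenever $\kappa > 0$ and $\tau > 0$ (the conditions $\beta_0 \ge \tau\kappa$ and $\beta_0 \le -\kappa$ cannot simultaneously hold). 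Assume therefore $\sigma^* \in (0, 1)$.

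The main step is a multiplicative rescaling: set $(\rho', r', \beta_0') := (\rho^*, r^*, \beta_0^*)/\sigma^*$, so that $\rho'^2 + r'^2 = 1$ and this point is feasible for $\bar\xi_\kappa^{(3)}$. Letting $W$ denote the argument of $(\cdot)_+$ in the respective programs, a direct substitution gives
\[
W' \;=\; \frac{1}{\sigma^*}\bigl[W^* - s(Y)\kappa\,(1 - \sigma^*)\bigr] \;\le\; \frac{W^*}{\sigma^*},
\]
where the inequality uses $s(Y)\kappa > 0$ a.s.\ and $\sigma^* \in (0, 1)$. Hence $(W')_+ \le (W^*)_+/\sigma^*$ pointwise, so $\E[(W')_+^2]^{1/2} \le \E[(W^*)_+^2]^{1/2}/\sigma^*$. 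Combined with $r' = r^*/\sigma^*$,
\[
-r' + \sqrt{\delta}\,\E[(W')_+^2]^{1/2} \;\le\; \frac{1}{\sigma^*}\Bigl(-r^* + \sqrt{\delta}\,\E[(W^*)_+^2]^{1/2}\Bigr) \;=\; \frac{\bar\xi_\kappa^{(2)}}{\sigma^*} \;\le\; \bar\xi_\kappa^{(2)},
\]
where the final inequality uses $\bar\xi_\kappa^{(2)} \le 0$ together with $1/\sigma^* \ge 1$. This yields $\bar\xi_\kappa^{(3)} \le \bar\xi_\kappa^{(2)}$, completing the proof. The only nontrivial step is spotting this rescaling: its validity hinges on the strict positivity of the constant $s(Y)\kappa$ (for $\kappa > 0$, $\tau > 0$), which makes the subtracted term nonnegative, and on the hypothesis $\bar\xi_\kappa^{(2)} \le 0$, which turns the factor $1/\sigma^* \ge 1$ from a potential degradation into an improvement.
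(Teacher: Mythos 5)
Your proof is correct, and its engine is the same proportional-rescaling trick that drives the paper's argument: pull the minimizer $(\rho^*,r^*,\beta_0^*)$ out to the sphere $\rho^2+r^2=1$ by dividing by $\sigma^*=\sqrt{\rho^{*2}+r^{*2}}$, observe that this only increases the effective margin term (your pointwise bound $W'\le W^*/\sigma^*$, which uses $s(Y)\kappa>0$, is exactly the paper's ``the objective is strictly increasing in $\kappa$'' step applied with $\kappa'=\kappa/\sigma^*>\kappa$), and use $\bar\xi_{\kappa}^{(2)}\le 0$ to absorb the factor $1/\sigma^*\ge 1$. Where you differ is the packaging, and it is a genuine simplification: the paper proves part (a) separately via an auxiliary normalized program $\wt\xi_{\kappa}^{(2)}$ with an extra radius variable $c=\sqrt{\rho^2+r^2+\beta_0^2}$ and a sign-equivalence chain, and proves part (b) by contradiction on the location of the minimizer; you instead prove (b) directly by exhibiting the rescaled point as a feasible competitor in the $\bar\xi_{\kappa}^{(3)}$ problem, and then (a) falls out of (b) together with the trivial restriction inequality $\bar\xi_{\kappa}^{(3)}\ge\bar\xi_{\kappa}^{(2)}$ (which also cleanly covers the boundary case $\bar\xi_{\kappa}^{(2)}=0$ that the paper's (b) argument, written for strict negativity, leaves to the sign statement). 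The only point worth spelling out a bit more is the existence of a minimizer of $\bar\xi_{\kappa}^{(2)}$: your appeal to compactness in $(\rho,r)$ plus coercivity in $\beta_0$ is fine, but one should note the coercivity is uniform over the compact $(\rho,r)$-ball (both classes have positive probability, so the $(\cdot)_+^2$ term blows up as $|\beta_0|\to\infty$ uniformly), which is the role played in the paper by the reduction to $|\beta_0|\le B$ at the start of its Step 4.
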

\noindent
See \cref{subsubsec:over_positive} for the proof.

\paragraph{Step 5: Phase transition and margin convergence}
Note the function $\delta^*: \R \to \R_{\ge 0}$ defined in \cref{eq:sep_functions} is closely related to $\bar\xi_{\kappa}^{(3)}$. Let $\kappa^* := \sup\left\{ \kappa \in \R: \delta^*(\kappa) \ge \delta \right\}$. By combining the results from previous steps, we have the following relation.
\begin{lem}[Phase transition] 
\label{lem:over_phase_trans}
For any $\kappa > 0$, we have
\begin{equation*}
    \begin{aligned}
        \lim_{n \rightarrow \infty} \P\left( \xi_{n, \kappa} = 0 \right) = 1, \qquad & \text{if $\delta \le \delta^*(\kappa)$ (i.e., $\kappa \le \kappa^*$)}, \\
        \lim_{n \rightarrow \infty} \P\left( \xi_{n, \kappa} > 0 \right) = 1, \qquad & \text{if $\delta > \delta^*(\kappa)$ (i.e., $\kappa > \kappa^*$)}.
    \end{aligned}
\end{equation*}
In particular,
\begin{equation*}
    \begin{aligned}
        \lim_{n \rightarrow \infty} \P\left\{ \text{$(\XX, \yy)$ is linearly separable} \right\} = 1, \qquad & \text{if $\delta < \delta^*(0)$}, \\
        \lim_{n \rightarrow \infty} \P\left\{ \text{$(\XX, \yy)$ is not linearly separable} \right\} = 0, \qquad & \text{if $\delta > \delta^*(0)$}.
    \end{aligned}
\end{equation*}
\end{lem}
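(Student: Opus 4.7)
The plan is to chain Lemmas~\ref{lem:over_beta0}--\ref{lem:over_sign} so that the positivity of the random quantity $\xi_{n,\kappa}$ is controlled by the sign of the deterministic quantity $\bar\xi_\kappa^{(3)}$, and then to identify that sign with the side of $\delta^*(\kappa)$ on which $\delta$ sits. Concretely, Lemma~\ref{lem:over_beta0} matches the limits of $\P(\xi_{n,\kappa}=0)$ and $\P(\xi'_{n,\kappa,B}=0)$ once $B$ is taken sufficiently large; Lemmas~\ref{lem:over_CGMT} and \ref{lem:over_ULLN} combined give $\xi'_{n,\kappa,B}\conp(\bar\xi'^{(2)}_{\kappa,B})_+$; enlarging $B$ beyond the (bounded) unconstrained optimizer $|\beta_0^*|$ for $\bar\xi_\kappa^{(2)}$ identifies $\bar\xi'^{(2)}_{\kappa,B}=\bar\xi_\kappa^{(2)}$; and Lemma~\ref{lem:over_sign} equates $\sign(\bar\xi_\kappa^{(2)})$ with $\sign(\bar\xi_\kappa^{(3)})$. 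The connection to $\delta^*(\kappa)$ is now purely algebraic: unpacking \eqref{eq:xi3_kappa} and \eqref{eq:sep_functions}, $\bar\xi_\kappa^{(3)}\le 0$ is equivalent to $\max_{\rho,\beta_0}H_\kappa(\rho,\beta_0)\ge\delta$, i.e., $\delta^*(\kappa)\ge\delta$, with strict inequalities propagating; monotonicity of $\delta^*(\cdot)$ converts this into the equivalent threshold $\kappa\le\kappa^*$.

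In the regime $\delta>\delta^*(\kappa)$ we thus have $\bar\xi_\kappa^{(2)}>0$, so by the concentration just established $\P(\xi'_{n,\kappa,B}\ge\tfrac12\bar\xi_\kappa^{(2)})\to 1$ and hence $\P(\xi_{n,\kappa}>0)\to 1$ via Lemma~\ref{lem:over_beta0}; specializing to $\kappa=0$ gives non-separability with high probability.

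The main obstacle lies in the opposite regime $\delta<\delta^*(\kappa)$, where $\bar\xi_\kappa^{(2)}<0$ forces $(\bar\xi_\kappa^{(2)})_+=0$, so the concentration only yields $\xi'_{n,\kappa,B}\conp 0$, whereas the statement demands the strictly stronger conclusion $\xi'_{n,\kappa,B}=0$ \emph{exactly} with probability tending to one. To close this gap, observe that $\xi'_{n,\kappa,B}=(\tilde\psi_n)_+$, where
\[
\tilde\psi_n := \min_{\|\vbeta\|\le 1,\,|\beta_0|\le B}\ \max_{\|\vlambda\|=1,\,\vlambda\odot\yy\ge 0}\ \frac{1}{\sqrt{d}}\,\vlambda^\top\bigl(\kappa\,\bs_\yy\odot\yy-\XX\vbeta-\beta_0\bone\bigr)
\]
is the signed min-max underlying $\xi'_{n,\kappa,B}$ (the $(\cdot)_+$ in $\xi'_{n,\kappa,B}$ comes solely from allowing $\|\vlambda\|\le 1$ rather than $=1$). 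The same Gordon comparison that proves Lemma~\ref{lem:over_CGMT}, applied before the positive-part truncation, yields $\tilde\psi_n\conp\bar\xi_\kappa^{(2)}<0$, so $\P(\tilde\psi_n<0)\to 1$ and on that event $\xi'_{n,\kappa,B}=0$ exactly. Lemma~\ref{lem:over_beta0} then transfers the conclusion to $\xi_{n,\kappa}$, and the case $\kappa=0$ recovers the linear-separability statement. The boundary case $\delta=\delta^*(\kappa)$ is handled by an $\epsilon$-perturbation argument using the continuity and monotonicity of $\bar\xi_\kappa^{(3)}$ in $\kappa$, together with the set inclusion $\{\xi_{n,\kappa'}=0\}\subseteq\{\xi_{n,\kappa}=0\}$ for $\kappa'\ge\kappa$.
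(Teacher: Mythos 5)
Your reduction to the deterministic quantity $\bar\xi_\kappa^{(3)}$, the identification of its sign with the location of $\delta$ relative to $\delta^*(\kappa)$ via \eqref{eq:sep_functions}, and the handling of the regime $\delta>\delta^*(\kappa)$ all track the paper's proof (the chain \eqref{eq:phase_equiv}). You also correctly flag the genuine subtlety in the separable regime: $\xi'_{n,\kappa,B}\conp 0$ does \emph{not} by itself give $\P(\xi'_{n,\kappa,B}=0)\to 1$, so the passage from Lemmas~\ref{lem:over_CGMT}--\ref{lem:over_ULLN} to the conclusion needs an extra step. Your idea of stripping the positive part and working with the signed quantity $\tilde\psi_n$, for which $\xi'_{n,\kappa,B}=(\tilde\psi_n)_+$, is the right instinct.

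However, the step where you invoke ``the same Gordon comparison'' to conclude $\tilde\psi_n\conp\bar\xi_\kappa^{(2)}$ has a gap. To deduce $\P(\tilde\psi_n\ge -\varepsilon)\to 0$ you need the direction $\P(\tilde\psi_n\ge t)\le 2\,\P(\tilde\psi_n^{(1)}\ge t)$, which is Lemma~\ref{lem:CGMT}\ref{lem:CGMT(b)} and requires \emph{both} constraint sets to be convex. The inner constraint in your $\tilde\psi_n$, namely $\{\vlambda:\|\vlambda\|_2=1,\ \vlambda\odot\yy\ge\bzero\}$, is a piece of a sphere and is not convex, so only the one-sided bound from Lemma~\ref{lem:CGMT}\ref{lem:CGMT(a)} is available; that bound controls the lower tail $\P(\tilde\psi_n\le t)$ and is useless for showing $\tilde\psi_n<0$ with high probability. (The paper explicitly acknowledges exactly this one-sidedness when it introduces the analogous sphere-constrained quantity $\Xi_{n,\kappa}$ in the proof of Theorem~\ref{thm:SVM_main}\ref{thm:SVM_main_mar} for the non-separable regime.) The reason the paper does not need your $\tilde\psi_n$ at all is that it keeps the inner constraint $\|\vlambda\|_2\le 1$ convex throughout, so both directions of CGMT apply to $\xi'_{n,\kappa,B}$ itself, and the transfer to exact feasibility is handled via the structure of the auxiliary problem: the proof of Lemma~\ref{lem:over_ULLN} exhibits $\xi'^{(1)}_{n,\kappa,B}=(\bar\xi'^{(1)}_{n,\kappa,B})_+$ with the auxiliary quantity $\bar\xi'^{(1)}_{n,\kappa,B}$ converging to the strictly negative limit $\bar\xi'^{(2)}_{\kappa,B}$, so $\P(\xi'^{(1)}_{n,\kappa,B}=0)\to1$, and CGMT on the convex formulation is then used to carry this over. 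Your argument would need either a convexification of the $\vlambda$-constraint, or a different route (e.g.\ exhibiting a feasible primal point directly), to close the hole.
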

As a consequence, we can also derive the convergence of margin in probability. Notice that the following result is weaker than $\cL^2$ convergence \cref{thm:SVM_main}\ref{thm:SVM_main_mar}. However, we need this preliminary result for the subsequent proof of ELD convergence in \cref{lem:over_logit_conv}.
\begin{lem}[Margin convergence, in probability]
\label{lem:over_mar_conp}
If $\delta < \delta^*(0)$, we have $\hat\kappa_n \conp \kappa^*$.
\end{lem}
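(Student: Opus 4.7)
The plan is to combine the phase transition statement of \cref{lem:over_phase_trans} with analytic properties of $\delta^*(\kappa)$ via a two-sided sandwich argument around $\kappa^*=\sup\{\kappa\in\R:\delta^*(\kappa)\ge\delta\}$. The assumption $\delta<\delta^*(0)$ together with the lemma ensures $\kappa^*>0$, which keeps us in the regime where \cref{lem:over_phase_trans} applies.

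First I would establish that $\kappa\mapsto\delta^*(\kappa)$ is continuous and strictly decreasing in a neighborhood of $\kappa^*$. For strict monotonicity: for each fixed $(\rho,\beta_0)$ with $\rho\in(-1,1)$, the integrand $(s(Y)\kappa-\rho\|\bmu\|_2+G-\beta_0 Y)_+^2$ is nondecreasing in $\kappa$ and strictly increasing on the positive-probability event $\{s(Y)\kappa-\rho\|\bmu\|_2+G-\beta_0 Y>0\}$, so the denominator of $H_\kappa$ in \cref{eq:sep_functions} strictly grows with $\kappa$. For continuity: $H_\kappa(\pm 1,\beta_0)=0$ and $H_\kappa(\rho,\beta_0)\to 0$ as $|\beta_0|\to\infty$ (since then the class with $Y=\sgn(\beta_0)$ contributes a diverging denominator), so the maximization can be restricted to a compact set uniformly in $\kappa$ on any bounded interval; combined with dominated convergence applied to the inner expectation, this gives continuity of $\delta^*$. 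In particular $\delta^*(\kappa^*)=\delta$, and for every small $\epsilon>0$ one has $\delta^*(\kappa^*-\epsilon)>\delta>\delta^*(\kappa^*+\epsilon)$.

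Next I would apply \cref{lem:over_phase_trans} at the two perturbed margins. Fix $\epsilon\in(0,\kappa^*)$. Since $\delta\le\delta^*(\kappa^*-\epsilon)$, the lemma gives $\P(\xi_{n,\kappa^*-\epsilon}=0)\to 1$, and on this event together with the non-degenerate event $\mathcal{D}_n^c$ the training data is linearly separable with margin at least $\kappa^*-\epsilon$, hence $\hat\kappa_n\ge\kappa^*-\epsilon$. Symmetrically, $\delta>\delta^*(\kappa^*+\epsilon)$ yields $\P(\xi_{n,\kappa^*+\epsilon}>0)\to 1$, which on $\mathcal{D}_n^c$ forces $\hat\kappa_n<\kappa^*+\epsilon$. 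Intersecting the two high-probability events and using $\P(\mathcal{D}_n)=\pi^n+(1-\pi)^n\to 0$ gives $\P(|\hat\kappa_n-\kappa^*|\le\epsilon)\to 1$ for every $\epsilon>0$, which is the claimed convergence in probability.

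The main technical obstacle is the analytic step on $\delta^*$: rigorously verifying continuity at $\kappa^*$ requires controlling the maximizer $(\rho,\beta_0)$ on the unbounded $\beta_0$-axis, and arguing that $|\beta_0|\to\infty$ drives $H_\kappa$ to $0$ uniformly over $\kappa$ in a neighborhood of $\kappa^*$. A standard approach is to exhibit an explicit compact bound on the maximizer in terms of $\pi$, $\|\bmu\|_2$, and $\kappa$. Once the maximization is confined to a compact set, everything else is routine, and the convergence $\hat\kappa_n\conp\kappa^*$ reduces to this analytic preliminary together with \cref{lem:over_phase_trans}.
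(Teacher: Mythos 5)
Your proposal is correct and follows essentially the same route as the paper: the paper's proof also sandwiches $\hat\kappa_n$ between $\kappa^*-\varepsilon$ and $\kappa^*+\varepsilon$ with high probability by applying the phase-transition equivalences at the perturbed margins, relying on the continuity and strict monotonicity of $\kappa\mapsto\bar\xi_\kappa^{(3)}$ (equivalently of $\delta^*$), which is established there exactly as you suggest, by restricting $\beta_0$ to a compact set. The analytic preliminary you flag as the main obstacle is thus already handled inside the proof of \cref{lem:over_phase_trans}, so your argument reduces to the paper's.
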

\noindent
See \cref{subsubsec:over_phase} for the proof.

\subsubsection{Step 1 --- Boundedness of the intercept: Proof of \cref{lem:over_beta0}}
\label{subsubsec:over_beta0}
\begin{proof}[\textbf{Proof of \cref{lem:over_beta0}}]
Recall that
\begin{equation*}
    \xi_{n, \kappa} = \min_{ \substack{ \norm{\vbeta}_2 \le 1 \\ \beta_0 \in \R} } \frac{1}{\sqrt{d}} \norm{ \left( \kappa \bs_\yy - \yy \odot \XX \vbeta  - \beta_0 \yy \right)_+ }_2.
\end{equation*}
Let $(\wt\vbeta_n, \wt\beta_{0, n})$ be a minimizer of the function above\footnote{
    In general $(\wt\vbeta_n, \wt\beta_{0, n})$ may not be unique and may not be equal to $(\hat\vbeta_n, \hat\beta_{0, n})$.
}. On the event $\mathcal{D}_n^c \cap \mathcal{E}_{n ,\kappa}$ ($\xi_{n, \kappa} = 0$), we have 
\begin{equation*}
        \bigl\| \bigl( \kappa \bs_\yy - \yy \odot \XX \wt\vbeta_n  -  \wt\beta_{0, n} \yy \bigr)_+  \bigr\|_2 = 0, 
        \quad
        \Longrightarrow
        \quad
        \begin{cases} 
            \ \tau \kappa - \< \xx_i, \wt\vbeta_n \> - \wt\beta_{0, n} \le 0, & \ \text{if} \ y_i = + 1, \\
            \ \mathmakebox[\widthof{$\tau\kappa$}][r]{\kappa} + \< \xx_i, \wt\vbeta_n \> + \wt\beta_{0, n} \le 0,      & \ \text{if} \ y_i = -1.
        \end{cases}
\end{equation*}
Write $\xx_i = y_i \bmu + \zz_i$, where $\zz_i \iidsim \normal(\bzero, \bI_d)$ and $y_i \indep \zz_i$. Then we obtain
\begin{equation*}
    \begin{cases} 
        \ \wt\beta_{0, n} \ge \mathmakebox[\widthof{$-$}][r]{\tau}
        \kappa - \< \bmu, \wt\vbeta_n \> - \< \zz_i, \wt\vbeta_n \> , & \ \text{if} \ y_i = + 1, \\
        \ \wt\beta_{0, n} \le    - \kappa + \< \bmu, \wt\vbeta_n \> - \< \zz_i, \wt\vbeta_n \> ,      & \ \text{if} \ y_i = -1,
    \end{cases}
\end{equation*}
which implies for all $i, j$ such that $y_i = +1, y_j = -1$,
\begin{equation*}
    \begin{aligned}
        | \wt\beta_{0, n} | & \le 
        \bigl| \tau \kappa - \< \bmu, \wt\vbeta_n \> - \< \zz_i, \wt\vbeta_n \> \bigr|
        + 
        \bigl| \kappa - \< \bmu, \wt\vbeta_n \> + \< \zz_j, \wt\vbeta_n \>  \bigr| \\
        & \le (\tau + 1)\kappa + 2\bigl| \< \bmu, \wt\vbeta_n \> \bigr| + 
        \bigl| \< \zz_i, \wt\vbeta_n \> \bigr| + \bigl| \< \zz_j, \wt\vbeta_n \> \bigr|.
    \end{aligned}
\end{equation*}
Using the inequality $(a+b+c)^2 \le 3(a^2 + b^2 + c^2)$, we have
\begin{equation*}
    \begin{aligned}
        | \wt\beta_{0, n} |^2 
        & \le
        3 \left\{  \bigl( (\tau + 1)\kappa + 2\bigl| \< \bmu, \wt\vbeta_n \> \bigr| \bigr)^2
        + 
        \min_{i: y_i = +1} \bigl| \< \zz_i, \wt\vbeta_n \> \bigr|^2 + 
        \min_{j: y_j = -1} \bigl| \< \zz_j, \wt\vbeta_n \> \bigr|^2  \right\} \\
        & \le 
        3 \, \biggl\{  \bigl( (\tau + 1)\kappa + 2\bigl| \< \bmu, \wt\vbeta_n \> \bigr| \bigr)^2
        + 
        \frac{1}{n_+}\sum_{i: y_i = +1} \bigl| \< \zz_i, \wt\vbeta_n \> \bigr|^2 + 
        \frac{1}{n_-}\sum_{j: y_j = -1} \bigl| \< \zz_j, \wt\vbeta_n \> \bigr|^2  \biggr\} \\
        & \overset{\mathmakebox[0pt][c]{\text{(i)}}}{=} 
        3 \, \biggl\{  \bigl( (\tau + 1)\kappa + 2\bigl| \< \bmu, \wt\vbeta_n \> \bigr| \bigr)^2
        + 
        \frac{1}{n_+} \bigl\| \ZZ_+ \wt\vbeta_n \bigr\|_2^2 + 
        \frac{1}{n_-} \bigl\| \ZZ_- \wt\vbeta_n \bigr\|_2^2  \biggr\} \\
        & \overset{\mathmakebox[0pt][c]{\text{(ii)}}}{\le}
        3 \, \biggl\{  \bigl( (\tau + 1)\kappa + 2 \norm{\bmu}_2 \bigr)^2
        + 
        \frac{1}{n_+} \norm{ \ZZ_+ }_{\mathrm{op}}^2 + 
        \frac{1}{n_-} \norm{ \ZZ_- }_{\mathrm{op}}^2  \biggr\} =: \wt B_{0, n},
    \end{aligned}
\end{equation*}
where in (i) we denote $\ZZ_+ \in \R^{n_+ \times d}$ as a Gaussian random matrix with rows $\zz_i$ such that $y_i = +1$, $\ZZ_- \in \R^{n_- \times d}$ with rows $\zz_j$ such that $y_j = +1$, while in (ii) we use Cauchy--Schwarz inequality, the definition of operator norm, and $\| \wt\vbeta_n \|_2 \le 1$. 

~\\
\noindent
Next, we show that $\wt B_{0, n}$ is asymptotically bounded. Notice $\ZZ_+, \ZZ_-$ have i.i.d. standard Gaussian entries. According to the tail bound of Gaussian matrices \cite[Corollary 7.3.3]{vershynin2018high}, for any $t_n \ge 0$ such that $t_n = o(\sqrt{n})$ and some absolute constants $c, C \in (0, \infty)$, we have
\begin{equation*}
    \begin{aligned}
        \wt B_{0, n} 
        & \overset{\mathmakebox[0pt][c]{\text{(i)}}}{\le} 
        3 \, \biggl\{  \bigl( (\tau + 1)\kappa + 2 \norm{\bmu}_2 \bigr)^2
        + 
        \frac{1}{n_+} \bigl(\sqrt{\smash[b]{n_+}} + \sqrt{d} + t_n \bigr)^2 + 
        \frac{1}{n_-} \bigl(\sqrt{\smash[b]{n_-}} + \sqrt{d} + t_n \bigr)^2  \biggr\} \\
        & \overset{\mathmakebox[0pt][c]{\text{(ii)}}}{\le}
        3 \, \biggl\{  \bigl( (\tau + 1)\kappa + 2 \norm{\bmu}_2 \bigr)^2
        + 
        \biggl(C + \frac{1}{\sqrt{\pi \delta}}       \biggr)^2 + 
        \biggl(C + \frac{1}{\sqrt{\smash[b]{(1 - \pi) \delta}}} \biggr)^2  \biggr\}
        =: B_{0} ,
    \end{aligned}
\end{equation*}
where (i) holds with probability as least $1 - 4 \exp(-c t_n^2)$, and (ii) holds with probability one based on the fact that $n_+/n \to \pi$, $n_-/n \to 1 - \pi$ a.s. (by strong law of large numbers), and $n/d \to \delta$ as $n \to \infty$. Notice the upper bound $B_0$ is a constant which depends on $(\tau, \kappa, \pi, \norm{\bmu}_2, \delta)$.
Let $t_n \to \infty$, then we conclude $\wt B_{0, n} \le  B_{0}$ with high probability.

~\\
\noindent
Combining these results, for any $B > \sqrt{B_0}$,
\begin{equation*}
    \left( \{ \xi_{n,\kappa} = 0 \} \cap \mathcal{D}_n^c \cap \{ \wt B_{0, n} \le  B_{0} \} \right) 
    \subseteq
    \left( \{ \xi_{n,\kappa} = 0 \} \cap \mathcal{D}_n^c \cap \{ | \wt\beta_{0, n} | \le B \} \right) 
    \subseteq
    \{ \xi'_{n,\kappa,B} = 0 \}.
\end{equation*}
Therefore, by union bound we have
\begin{equation*}
    \begin{aligned}
        \P\bigl(\xi_{n,\kappa} = 0\bigr) 
        & = \P\Bigl( \{ \xi_{n,\kappa} = 0 \} \cap 
        \bigl( \mathcal{D}_n^c \cap \{ \wt B_{0, n} \le  B_{0} \} \bigr)
         \Bigr)
         +
         \P\Bigl( \{ \xi_{n,\kappa} = 0 \} \cap 
        \bigl( \mathcal{D}_n \cup \{ \wt B_{0, n} >  B_{0} \} \bigr)
         \Bigr)
         \\
        & \le \P\bigl(\xi'_{n,\kappa,B} = 0\bigr) + \P(\mathcal{D}_n) + \P\bigl(\wt B_{0, n} > B_0\bigr).
    \end{aligned}
\end{equation*}
Finally, by noticing $\xi_{n,\kappa} \le \xi'_{n,\kappa,B}$, we conclude
\begin{equation*}
    0 \le \P\bigl(\xi_{n,\kappa} = 0\bigr) - \P\bigl(\xi'_{n,\kappa,B} = 0\bigr) \le 
    \P(\mathcal{D}_n) + \P\bigl(\wt B_{0, n} > B_0\bigr) \to 0,
    \qquad \text{as $n \to \infty$}.
\end{equation*}
This completes the proof.
\end{proof}

\subsubsection{Step 2 --- Reduction via Gaussian comparison: Proof of \cref{lem:over_CGMT}}
\label{subsubsec:over_CGMT}
\begin{proof}[\textbf{Proof of \cref{lem:over_CGMT}}]
Rewrite $\xx_i = y_i \bmu + \zz_i$, where $\zz_i \iidsim \normal(\bzero, \bI_d)$. Note that $y_i \indep \zz_i$. Denote the projection matrices
\begin{equation*}
    \bP_{\vmu} := \frac{1}{\norm{\vmu}_2^2} \vmu \vmu^\top,
    \qquad 
    \bP_{\vmu}^\perp := \bI_d - \frac{1}{\norm{\vmu}_2^2} \vmu \vmu^\top,
\end{equation*}
where $\bP_{\vmu}$ is the orthogonal projection onto $\spann\{ \vmu \}$ and $\bP_{\vmu}^\perp$ is the orthogonal projection onto the orthogonal complement of $\spann\{ \vmu \}$. Then we have the following decomposition:
\begin{equation*}
    \begin{aligned}
        \< \xx_i, \bbeta \> 
    & = y_i \< \bmu, \bbeta \> + \< \zz_i, \bbeta \> 
    = y_i \< \bmu, \bbeta \> + \< \zz_i, \bP_{\vmu} \bbeta \> + 
    \< \zz_i, \bP_{\vmu}^\perp \bbeta \>  \\
    & = y_i \left\< \bbeta, \frac{\bmu}{\norm{\bmu}_2} \right\> \norm{\bmu}_2 
    +  \left\< \bbeta, \frac{\bmu}{\norm{\bmu}_2} \right\> \left\< \zz_i, \frac{\bmu}{\norm{\bmu}_2} \right\>
    + \< \zz_i , \bP_{\bmu}^{\perp} \bbeta \> \\
    & = y_i \rho \norm{\vmu}_2 + \rho u_i + \< \zz_i , \bP_{\bmu}^{\perp} \bbeta \>,
    \end{aligned}
\end{equation*}
where
\begin{equation*}
    \rho := \left\< \bbeta, \frac{\bmu}{\norm{\bmu}_2} \right\>,
    \qquad
    u_i := \left\< \zz_i, \frac{\bmu}{\norm{\bmu}_2} \right\> \sim \normal(0, 1).
\end{equation*}
Let $\bQ \in \R^{n \times (n - 1)}$ be an orthonormal basis for the subspace $\spann\{ \vmu \}^\perp$ ($\bQ^\top \bQ = \bI_{n-1}$). Note that
\begin{equation*}
    \< \zz_i , \bP_{\bmu}^{\perp} \bbeta \>
    = \< \zz_i, \bQ \bQ^\top \bbeta \> 
    = \< \bQ^\top \zz_i,  \bQ^\top \bbeta \> 
    = \< \vg_i , \vtheta \>,
\end{equation*}
where
\begin{equation*}
    \begin{gathered}
        \vg_i := \bQ^\top \zz_i \sim \normal(\bzero, \bI_{d-1}),
    \qquad
    \vg_i \indep u_i,
    \\
    \vtheta := \bQ^\top \bbeta \in \R^{n-1},
    \qquad
    \norm{\vtheta}_2 
    = \sqrt{\norm{\vbeta}^2_2 - \norm{\bP_{\vmu}\vbeta}^2_2}
    \le \sqrt{1 - \rho^2}.
    \end{gathered}
\end{equation*}
We obtain a one-to-one map $\vbeta \leftrightarrow (\rho, \vtheta)$ in the unit ball. Therefore, we can reparametrize
\begin{equation*}
    \< \xx_i, \bbeta \> + \beta_0 \overset{\mathrm{d}}{=} y_i \rho \norm{\vmu}_2 - \rho u_i - \< \vg_i , \vtheta \> + \beta_0,
\end{equation*}
where $\rho^2 + \norm{\vtheta}_2^2 \le 1$, and $\{(y_i, u_i, \vg_i)\}_{i = 1}^n$ are i.i.d., each has joint distribution:
\begin{equation*}
    y_i \indep u_i \indep \vg_i,
    \qquad
    \P(y_i=+1) = 1 - \P(y_i=-1) = \pi,
    \quad
    u_i \sim \normal(0, 1),
    \quad
    \vg_i \sim \normal(\bzero, \bI_{d-1}).
\end{equation*}
Now denote
\begin{equation*}
    \uu = (u_1, \dots, u_n)^\top \in \R^{n},
    \qquad
    \GG = (\vg_1, \dots, \vg_n)^\top \in \R^{n \times (d-1)}.
\end{equation*}
Therefore, $\xi'^{(0)}_{n,\kappa, B} := \xi'_{n,\kappa, B}$ defined in \cref{eq:xi'_n_kappa_B} can be written as
\begin{align*}
        \xi'^{(0)}_{n,\kappa,B}
        & = 
        \min_{ \substack{ \norm{\vbeta}_2 \le 1 \\ \abs{\beta_0} \le B} } \max_{ \substack{ \norm{\blambda}_2 \le 1 \\ \blambda \odot \yy \ge \bzero} } \frac{1}{\sqrt{d}} \blambda^\top \left( \kappa \bs_\yy \odot \yy - \XX \vbeta  - \beta_0 \bone \right) \\
        & \overset{\mathmakebox[0pt][c]{\mathrm{d}}}{=} \min_{ \substack{ \rho^2 + \norm{\vtheta}_2^2 \le 1 \\  \abs{\beta_0} \le B } } \max_{ \substack{ \norm{\blambda}_2 \le 1 \\ \blambda \odot \yy \ge \bzero} } \frac{1}{\sqrt{d}} \blambda^\top \left( \kappa \bs_\yy \odot \yy - \rho \norm{\bmu}_2 \yy + \rho \uu + \GG \btheta  - \beta_0 \bone \right) \\
        & = \min_{ \substack{ \rho^2 + \norm{\vtheta}_2^2 \le 1 \\  \abs{\beta_0} \le B } } \max_{ \substack{ \norm{\blambda}_2 \le 1 \\ \blambda \odot \yy \ge \bzero} } \frac{1}{\sqrt{d}} \left( \blambda^\top \GG \btheta +
        \blambda^\top ( \kappa \bs_\yy \odot \yy - \rho \norm{\bmu}_2 \yy + \rho \uu   - \beta_0 \bone )
         \right).
\end{align*}
On the other hand, recall $\xi_{n,\kappa}^{(1)}$ defined in \cref{eq:xi1_n_kappa_B}:
\begin{equation*}
    \xi'^{(1)}_{n, \kappa, B}
        = \min_{ \substack{ \rho^2 + \norm{\vtheta}_2^2 \le 1 \\  \abs{\beta_0} \le B } } \max_{ \substack{ \norm{\blambda}_2 \le 1 \\ \blambda \odot \yy \ge \bzero} } \frac{1}{\sqrt{d}}  \left(
    \norm{\blambda}_2 \vg^\top \btheta + \norm{\btheta}_2 \vh^\top \blambda + \blambda^\top \bigl( 
        \kappa \bs_\yy \odot \yy - \rho\norm{\bmu}_2 \yy + \rho \vu - \beta_0 \bone
     \bigr)
     \right).
\end{equation*}
Note that both minimization and maximization above are defined over compact and convex constraint sets, and the objective function in $\xi'^{(0)}_{n, \kappa, B}$ is a bilinear in $(\vtheta, \blambda)$ (not $\beta_0$). In addition, $(\yy, \uu)$ is independent of $\GG, (\vg, \hh)$, so we can apply a variant of CGMT (\cref{lem:CGMT}) by conditioning on $(\yy, \uu)$, which yields for any $v \in \R$ and $t \ge 0$:
\begin{align*}
    & \P\left( \xi'^{(0)}_{n, \kappa, B} \le v+t \,|\, \yy, \uu \right) \le 2\, \P\left( \xi'^{(1)}_{n, \kappa, B} \le v+t \,|\, \yy, \uu \right),
    \\
    & \P\left( \xi'^{(0)}_{n, \kappa, B} \ge v-t \,|\, \yy, \uu \right) \le 2\, \P\left( \xi'^{(1)}_{n, \kappa, B} \ge v-t \,|\, \yy, \uu \right).
\end{align*}
Taking expectation over $(\yy, \uu)$ on both sides of the equation gives for any $v \in \R$ and $t \ge 0$:
\begin{equation*}
    \P\left( \xi'^{(0)}_{n, \kappa, B} \le v+t \right) \le 2\, \P\left( \xi'^{(1)}_{n, \kappa, B} \le v+t \right),
    \quad
    \P\left( \xi'^{(0)}_{n, \kappa, B} \ge v-t \right) \le 2\, \P\left( \xi'^{(1)}_{n, \kappa, B} \ge v-t \right),
\end{equation*}
which proves \Cref{lem:over_CGMT}.
\end{proof}

\subsubsection{Step 3 --- Dimension reduction: Proof of \cref{lem:over_ULLN}}
\label{subsubsec:over_ULLN}
\begin{proof}[\textbf{Proof of \cref{lem:over_ULLN}}]
The expression of $\xi'^{(1)}_{n, \kappa, B}$ can be further simplified to
\begin{align*}
        \xi'^{(1)}_{n, \kappa, B}
        & = \min_{ \substack{ \rho^2 + \norm{\vtheta}_2^2 \le 1 \\ \abs{\beta_0} \le B } } \max_{ \substack{ \norm{\blambda}_2 \le 1 \\ \blambda \odot \yy \ge \bzero} } \frac{1}{\sqrt{d}} \left( \norm{\blambda}_2 \vg^\top \btheta +
        \blambda^\top ( \kappa \bs_\yy \odot \yy - \rho \norm{\bmu}_2 \yy + \rho \uu 
        + \norm{\btheta}_2 \vh - \beta_0 \bone )  \right)  \\
        & \overset{\mathmakebox[0pt][c]{\text{(i)}}}{=} 
        \min_{ \substack{ \rho^2 + \norm{\vtheta}_2^2 \le 1 \\ \abs{\beta_0} \le B } } \frac{1}{\sqrt{d}} \left( \vg^\top \btheta + \bigl\| \left( 
            \kappa \bs_\yy - \rho \norm{\bmu}_2 + \rho \uu \odot \yy
        + \norm{\vtheta}_2 \vh \odot \yy - \beta_0 \yy
         \right)_+ \bigr\|_2  \right)_+ \\
        & \overset{\mathmakebox[0pt][c]{\text{(ii)}}}{=}
         \min_{ \substack{ \rho^2 + r^2 \le 1 \\ r \ge 0, \abs{\beta_0} \le B } } \frac{1}{\sqrt{d}} \left( 
            - r \norm{\vg}_2 + \bigl\| \left( 
            \kappa \bs_\yy - \rho \norm{\bmu}_2 + \rho \uu \odot \yy
        + r \vh \odot \yy - \beta_0 \yy
         \right)_+ \bigr\|_2  \right)_+,
\end{align*}
where in (i) we use the fact
\begin{equation*}
    \begin{aligned}
        \max_{\norm{\blambda}_2 \le 1, \blambda \ge \bzero} 
    \left( a \norm{\blambda}_2 + \blambda^\top \mathrm{\bf b} \right)
    & = \max_{r \in [0, 1]} \max_{\norm{\bv}_2 = 1, \bv \ge \bzero} 
    r \bigl( a + \bv^\top \mathrm{\bf b} \bigr)
    = \left( \max_{\norm{\bv}_2 = 1, \bv \ge \bzero} \bigl( a + \bv^\top \mathrm{\bf b} \bigr) \right)_+
    \\
    & = \Bigl( a + \norm{(\mathrm{\bf b})_+}_2 \Bigr)_+,
    \end{aligned}
\end{equation*}
in (ii) we use Cauchy--Schwarz inequality $\vg^\top \btheta \ge - \norm{\vtheta}_2 \norm{\vg}_2$ and denote $r = \norm{\vtheta}_2$. For convenience, we write the parameter space as $\bar\Theta_{B} := \{ (\rho, r, \beta_0): \rho^2 + r^2 \le 1, r \ge 0, \abs{\beta_0} \le B \}$. Now, define
\begin{align*}
        \bar \xi'^{(1)}_{n, \kappa, B} & := \min_{ (\rho, r, \beta_0) \in \bar\Theta_{B} } \frac{1}{\sqrt{d}} \left( 
            - r \norm{\vg}_2 + \bigl\| \left( 
            \kappa \bs_\yy - \rho \norm{\bmu}_2 + \rho \uu \odot \yy
        + r \vh \odot \yy - \beta_0 \yy
         \right)_+ \bigr\|_2  \right) \\
         & \phantom{:}= 
         \min_{ (\rho, r, \beta_0) \in \bar\Theta_{B} }
         \left\{ 
            -r\frac{\norm{\vg}_2}{\sqrt{d}}
            + \sqrt{\frac{n}{d}} \sqrt{\frac{1}{n} \sum_{i=1}^n \bigl( s(y_i) \kappa - \rho \norm{\bmu}_2 + \rho u_i y_i
            + r h_i y_i - \beta_0 y_i \bigr)_+^2 }
         \right\}
         \\
         & \phantom{:} =\mathmakebox[0pt][c]{:} \min_{ (\rho, r, \beta_0) \in \bar\Theta_{B} } f^{(1)}_{n,\kappa}(\rho,r,\beta_0)  ,
\end{align*}
then $\xi'^{(1)}_{n, \kappa, B} = \bigl( \bar \xi'^{(1)}_{n, \kappa, B} \bigr)_+$. Recall that
\begin{align*}
        \bar\xi'^{(2)}_{\kappa, B} & = \min_{ (\rho, r, \beta_0) \in \bar\Theta_{B} }
    -r + \sqrt{\delta} \left( \E\left[ \bigl(  s(Y) \kappa - \rho \norm{\bmu}_2 + \rho Y G_1 + r Y G_2 - \beta_0 Y \bigr)_+^2 \right] \right)^{1/2} \\
    & = \min_{ (\rho, r, \beta_0) \in \bar\Theta_{B} }
    -r + \sqrt{\delta} \left( \E\left[ \bigl(  s(Y) \kappa - \rho \norm{\bmu}_2 + \rho G_1 + r G_2 - \beta_0 Y \bigr)_+^2 \right] \right)^{1/2} \\
    & =\mathmakebox[0pt][c]{:} \min_{ (\rho, r, \beta_0) \in \bar\Theta_{B} } f^{(2)}_{\kappa}(\rho,r,\beta_0),
\end{align*}
where $Y \indep G_1 \indep G_2$, $\P(Y = +1) = 1 - \P(Y = -1) = \pi$, and $G_1, G_2 \sim \normal(0, 1)$.
We also define
\begin{equation*}
    \begin{aligned}
        \phi^{(1)}_{n, \kappa}(\rho, r, \beta_0) & := \frac1n \sum_{i=1}^n \bigl( s(y_i) \kappa - \rho \norm{\bmu}_2 + \rho u_i y_i
        + r h_i y_i - \beta_0 y_i \bigr)_+^2
        = : \E_n\left[ f(Y, G_1, G_2; \rho, r, \beta_0) \right] , \\
        \phi^{(2)}_{\kappa}(\rho, r, \beta_0) & := \E\left[  \bigl(  s(Y) \kappa - \rho \norm{\bmu}_2 + \rho G_1 Y
        + r G_2 Y - \beta_0 Y \bigr)_+^2 \right]
        = : \mathmakebox[\widthof{$\E_n$}][l]{\E}\left[ f(Y, G_1, G_2 ; \rho, r, \beta_0) \right] ,
    \end{aligned}
\end{equation*}
where $\E_n[\cdot]$ denotes the expectation over the empirical distribution of $\{ (y_i, u_i, h_i) \}_{i = 1}^n$. In order to apply the uniform law of large numbers (ULLN), note that 
\begin{itemize}
    \item $\bar\Theta_{B}$ is compact. $(\rho, r, \beta_0) \mapsto f$ is continuous in $\bar\Theta_{B}$ for each $(Y, G_1, G_2)$, and $(Y, G_1, G_2) \mapsto f$ is measurable for each $(\rho, r, \beta_0)$
    \item $\abs{f(Y, G_1, G_2 ; \rho, r, \beta_0)} \le 3 \left( (\kappa\tau + \norm{\vmu}_2 + B)^2 + G_1^2 + G_2^2 \right)$ for all $(\rho, r, \beta_0) \in \bar\Theta_{B}$ and $\E[G_1^2] = \E[G_2^2] = 1 < \infty$.
\end{itemize}
Therefore, by ULLN \cite[Lemma 2.4]{newey1994large}, we have
\begin{align*}
    & \sup_{ (\rho, r, \beta_0) \in \bar\Theta_{B} }
    \abs{ \bigl( \phi^{(1)}_{n, \kappa}(\rho, r, \beta_0) \bigr)^{1/2} - \bigl( \phi^{(2)}_{\kappa}(\rho, r, \beta_0) \bigr)^{1/2} } \\
    \le {} & \sup_{ (\rho, r, \beta_0) \in \bar\Theta_{B} }
    \abs{ \phi^{(1)}_{n, \kappa}(\rho, r, \beta_0) -  \phi^{(2)}_{\kappa}(\rho, r, \beta_0) }^{1/2} = o_{\P}(1),
\end{align*}
where the inequality comes from the fact that $x \mapsto \sqrt{x}$ is $1/2$-Hölder continuous on $[0, \infty)$. Then
\begin{align*}
        & \sup_{ (\rho, r, \beta_0) \in \bar\Theta_{B} } \abs{ f^{(1)}_{n,\kappa}(\rho,r,\beta_0) - f^{(2)}_{\kappa}(\rho,r,\beta_0)} \\
        \le {} &  \sup_{r \in [-1, 1]} \abs{r - r\frac{\norm{\vg}_2}{\sqrt{d}}} + 
        \sup_{ (\rho, r, \beta_0) \in \bar\Theta_{B} } \abs{ \sqrt{\frac{n}{d}} \bigl( \phi^{(1)}_{n, \kappa}(\rho, r, \beta_0) \bigr)^{1/2}
        -  \sqrt{\delta} \bigl( \phi^{(2)}_{\kappa}(\rho, r, \beta_0) \bigr)^{1/2}  }  \\
        \le {} & \abs{1 - \frac{\norm{\vg}_2}{\sqrt{d}}} +  
        \sqrt{\frac{n}{d}}
        \sup_{ (\rho, r, \beta_0) \in \bar\Theta_{B} }
        \abs{ \bigl( \phi^{(1)}_{n, \kappa}(\rho, r, \beta_0) \bigr)^{1/2} - \bigl( \phi^{(2)}_{\kappa}(\rho, r, \beta_0) \bigr)^{1/2} } \\
        {} &  
        + \abs{ \sqrt{\frac{n}{d}} - \sqrt{\delta} } \sup_{ (\rho, r, \beta_0) \in \bar\Theta_{B} }\bigl( \phi^{(2)}_{\kappa}(\rho, r, \beta_0) \bigr)^{1/2} \\
        = {} & o_{\P}(1),
\end{align*}
by using $n/d \to \delta$ and law of large numbers $\norm{\vg}_2^2/(d - 1) \conp 1$. Finally, since the function $x \mapsto (x)_+$ is $1$-Lipschitz, we conclude
\begin{equation*}
    \Bigl| \xi'^{(1)}_{n, \kappa, B} - \bigl( \bar\xi'^{(2)}_{\kappa, B} \bigr)_+  \Bigr|
    \le
    \abs{ \bar\xi'^{(1)}_{n, \kappa, B} - \bar\xi'^{(2)}_{\kappa, B} }
    \le \sup_{ (\rho, r, \beta_0) \in \bar\Theta_{B} } \abs{ f^{(1)}_{n,\kappa}(\rho,r,\beta_0) - f^{(2)}_{\kappa}(\rho,r,\beta_0)} = o_{\P}(1).
\end{equation*}
This completes the proof.
\end{proof}

\subsubsection{Step 4 --- Investigation of the positivity: Proof of \cref{lem:over_sign}}
\label{subsubsec:over_positive}
\begin{proof}[\textbf{Proof of \cref{lem:over_sign}}]
We claim $\bar\xi'^{(2)}_{\kappa, B} = \bar\xi_{\kappa}^{(2)}$ when $B$ is large enough. Recall that
\begin{equation*}
        \bar\xi_{\kappa}^{(2)} =  \min_{ \substack{ \rho^2 + r^2 \le 1, r \ge 0 \\  \beta_0 \in  \R } }
    -r + \sqrt{\delta} \left( \E\left[ \bigl(  s(Y) \kappa - \rho \norm{\bmu}_2 + \rho G_1 + rG_2 - \beta_0 Y \bigr)_+^2 \right] \right)^{1/2}.
\end{equation*}
Let $(\wt\rho, \wt r, \wt\beta_{0})$ be a minimizer above and notice $\wt\rho G_1 + \wt r G_2 \overset{\mathrm{d}}{=} \wt R G$, where $\wt R = \sqrt{\wt\rho^2 + \wt r^2}$ and $G \sim \normal(0, 1)$. Then
\begin{align*}
        \wt\beta_{0} & \in \phantom{:} \argmin_{\beta_0 \in \R} \E\left[ \bigl(  s(Y) \kappa - \wt\rho \norm{\bmu}_2 + \wt R G - \beta_0 Y \bigr)_+^2 \right] \\
        & = \phantom{:} \argmin_{\beta_0 \in \R} \left\{ 
            \pi \E\left[ \bigl( \tau \kappa - \wt\rho \norm{\bmu}_2 + \wt R G - \beta_0 \bigr)_+^2 \right]
            + (1 - \pi) \E\left[ \bigl( \kappa - \wt\rho \norm{\bmu}_2 + \wt R G + \beta_0 \bigr)_+^2 \right] \right\} \\
        & =: \argmin_{\beta_0 \in \R} g_{\wt\rho, \wt r}(\beta_0).
\end{align*}
Notice that $g_{\wt\rho, \wt r}(\beta_0)$ is convex and continuously differentiable, since
\begin{equation*}
    g'_{\wt\rho, \wt r}(\beta_0)
    = -2\pi \E\left[ \bigl( \tau \kappa - \wt\rho \norm{\bmu}_2 + \wt R G - \beta_0 \bigr)_+ \right]
    + 2(1 - \pi) \E\left[ \bigl( \kappa - \wt\rho \norm{\bmu}_2 + \wt R G + \beta_0 \bigr)_+ \right] 
\end{equation*}
is non-decreasing, which is based on the fact that $x \mapsto \E[(G + x)_+]$ is increasing. Then $\wt\beta_{0}$ must satisfy $g'_{\wt\rho, \wt r}(\wt\beta_{0}) = 0$. Since $g'_{\wt\rho, \wt r}(+\infty) = +\infty$, $g'_{\wt\rho, \wt r}(-\infty) = -\infty$, by our construction in the proof of \cref{lem:over_beta0}, we can choose $B$ large enough such that $\bar\xi'^{(2)}_{\kappa, B} = \bar\xi_{\kappa}^{(2)}$.

~\\
\noindent
We can rewrite $\bar\xi_{\kappa}^{(2)}$ as follows by introducing an auxiliary parameter $c$:
\begin{equation*}
    \bar\xi_{\kappa}^{(2)} =  \min_{ \substack{ \rho^2 + r^2 \le 1, r \ge 0, \beta_0 \in  \R, \\
    \rho^2 + r^2 + \beta_0^2 = c^2, c \ge 0
} }
-r + \sqrt{\delta} \left( \E\left[ \bigl(  s(Y) \kappa - \rho \norm{\bmu}_2 + \rho G_1 + rG_2 - \beta_0 Y \bigr)_+^2 \right] \right)^{1/2},
\end{equation*}
and we also define the following quantity
\begin{equation*}
    \begin{aligned}
        \wt\xi_{\kappa}^{(2)} & :=  \min_{ \substack{ \rho^2 + r^2 \le 1, r \ge 0, \beta_0 \in  \R, \\
        \rho^2 + r^2 + \beta_0^2 = c^2, c \ge 0
    } }
        \frac1c \left\{  -r + \sqrt{\delta} \left( \E\left[ \bigl(  s(Y) \kappa - \rho \norm{\bmu}_2 + \rho G_1 + rG_2 - \beta_0 Y \bigr)_+^2 \right] \right)^{1/2} 
        \right\} \\
        & \phantom{:}=  \min_{ \substack{ \rho^2 + r^2 \le 1, r \ge 0, \beta_0 \in  \R, \\
        \rho^2 + r^2 + \beta_0^2 = c^2, c \ge 0
    } }
        -\frac{r}{c} + \sqrt{\delta} \left( \E\left[ \Bigl( s(Y) \frac{\kappa}{c} - \frac{\rho}{c} \norm{\bmu}_2 + \frac{\rho}{c} G_1 + \frac{r}{c} G_2 - \frac{\beta_0}{c} Y \Bigr)_+^2 \right] \right)^{1/2}.
    \end{aligned}
\end{equation*}
Then for any $\kappa > 0$, we have the following observations:
\begin{itemize}
    \item $\sign(\bar\xi_{\kappa}^{(2)}) = \sign(\wt\xi_{\kappa}^{(2)})$. (Their objective functions differ only by a multiplier $c \ge 0$.\footnote{We allow $c = 0$. If $c = 0$, then $\rho = r = \beta_0 = 0$ and the objective value in $\bar\xi_{\kappa}^{(2)}$ is $\sqrt{ \delta(\pi\tau^2\kappa^2 + (1 - \pi)\kappa^2) } > 0$, and the objective value in $\wt\xi_{\kappa}^{(2)}$ is defined as $+\infty$. Both of them are positive.
    })
    \item The minimizer in $\wt\xi_{\kappa}^{(2)}$ must satisfy $\rho^2 + r^2 = 1$. 
    
    Suppose $(\wt\rho, \wt r, \wt\beta_0, \wt c)$ is a minimizer in $\wt\xi_{\kappa}^{(2)}$ such that $\wt\rho^2 + \wt r^2 < 1$. We can increase $(\wt\rho, \wt r, \wt\beta_0, \wt c)$ proportionally, which results in a better solution. That is, define
    \begin{equation*}
        \check\rho := \frac{1}{\sqrt{\wt\rho^2 + \wt r^2}} \wt\rho, \qquad
        \check r := \frac{1}{\sqrt{\wt\rho^2 + \wt r^2}} \wt r,  \qquad
        \check\beta_0 := \frac{1}{\sqrt{\wt\rho^2 + \wt r^2}} \wt\beta_0,  \qquad
        \check c := \frac{1}{\sqrt{\wt\rho^2 + \wt r^2}} \wt c,
    \end{equation*}
    then $(\wt\rho, \wt r, \wt\beta_0, \wt c)$ has a smaller objective value (because $r/c$, $\rho/c$, $\beta_0/c$ all remain unchanged, but $\kappa/c$ decreases since $\check{c} > \wt c$), which contradicts the optimiality of $(\wt\rho, \wt r, \wt\beta_0, \wt c)$.
\end{itemize}
As a consequence, we can simplify
\begin{align*}
        \wt\xi_{\kappa}^{(2)} & =  \min_{ \substack{ \rho \in [-1, 1], \beta_0 \in  \R, \\
        \beta_0^2 = c^2 - 1, c \ge 1
    } }
        \frac1c \left\{  -\sqrt{1 - \rho^2} + \sqrt{\delta} \left( \E\left[ \bigl(  s(Y) \kappa - \rho \norm{\bmu}_2 + \rho G_1 + \sqrt{1 - \rho^2} G_2 - \beta_0 Y \bigr)_+^2 \right] \right)^{1/2} 
        \right\} \\
        & = \min_{ \substack{ \rho \in [-1, 1], \beta_0 \in  \R, \\
        \beta_0^2 = c^2 - 1, c \ge 1
    } }
        \frac1c \left\{  -\sqrt{1 - \rho^2} + \sqrt{\delta} \left( \E\left[ \bigl(  s(Y) \kappa - \rho \norm{\bmu}_2 + G - \beta_0 Y \bigr)_+^2 \right] \right)^{1/2} 
        \right\},
\end{align*}
where $G := \rho G_1 + \sqrt{1 - \rho^2} G_2 \sim \normal(0, 1)$. By the same argument, $\sign(\bar\xi_{\kappa}^{(3)}) = \sign(\wt\xi_{\kappa}^{(2)})$, where
\begin{equation*}
        \bar\xi_{\kappa}^{(3)}  =  \min_{ \substack{ \rho \in [-1, 1] \\  \beta_0 \in  \R } }
        -\sqrt{1 - \rho^2} + \sqrt{\delta} \left( \E\left[ \bigl(  s(Y) \kappa - \rho \norm{\bmu}_2 + G - \beta_0 Y \bigr)_+^2 \right] \right)^{1/2}.
\end{equation*}
Therefore, $\sign(\bar\xi_{\kappa}^{(2)}) = \sign(\bar\xi_{\kappa}^{(3)})$.

~\\
\noindent
In order to show $\bar\xi_{\kappa}^{(2)} = \bar\xi_{\kappa}^{(3)}$ when $\bar\xi_{\kappa}^{(2)} < 0$, we define the objective function of $\bar\xi_{\kappa}^{(2)}$ as
\begin{equation*}
    T_{\kappa}(\rho, r, \beta_0) := - r + \sqrt{\delta} \left( \E\left[ \bigl(  s(Y) \kappa - \rho \norm{\bmu}_2 + \rho G_1 + rG_2 - \beta_0 Y \bigr)_+^2 \right] \right)^{1/2}.
\end{equation*}
Then it suffices to show the minimizer of $T_{\kappa}$ must satisfy $\rho^2 + r^2 = 1$. Again, suppose $(\wt\rho, \wt r, \wt\beta_0)$ is a minimizer of $T_{\kappa}$ such that $\wt\rho^2 + \wt r^2 < 1$. We can increase $(\wt\rho, \wt r, \wt\beta_0)$ proportionally by defining
\begin{equation*}
        \check\rho := \frac{1}{\sqrt{\wt\rho^2 + \wt r^2}} \wt\rho, \qquad
        \check r := \frac{1}{\sqrt{\wt\rho^2 + \wt r^2}} \wt r,  \qquad
        \check\beta_0 := \frac{1}{\sqrt{\wt\rho^2 + \wt r^2}} \wt\beta_0,  \qquad
        \kappa' := \frac{1}{\sqrt{\wt\rho^2 + \wt r^2}} \kappa,
\end{equation*}
then
\begin{equation*}
    0 > \bar\xi_{\kappa}^{(2)} = T_{\kappa}(\wt\rho, \wt r, \wt\beta_0) 
    > 
    \frac{T_{\kappa}(\wt\rho, \wt r, \wt\beta_0) }{\sqrt{\wt\rho^2 + \wt r^2}} 
    =
    T_{\kappa'}(\check\rho, \check r, \check\beta_0) 
    >
    T_{\kappa}(\check\rho, \check r, \check\beta_0),
\end{equation*}
where the last inequality is because $x \mapsto \E[(G + c_1 x + c_2)_+^2]$ strictly increasing for any $c_1 > 0$ and $c_2 \in \R$, and the fact that $\kappa' > \kappa$. Therefore, a contradiction occurs and we complete the proof.
\end{proof}

\subsubsection{Step 5 --- Phase transition and margin convergence: Proofs of \cref{lem:over_phase_trans}, \ref{lem:over_mar_conp}}
\label{subsubsec:over_phase}
\begin{proof}[\textbf{Proof of \cref{lem:over_phase_trans}}]
We define the following two functions:
\begin{equation}
    \label{eq:T_F_}
    \begin{aligned}
        T_\kappa(\rho, \beta_0) & := - \sqrt{1 - \rho^2} + \sqrt{\delta} \left( \E\left[ \bigl(  s(Y) \kappa - \rho \norm{\bmu}_2 + G - \beta_0 Y \bigr)_+^2 \right] \right)^{1/2}, \\
        F_\kappa(\rho, \beta_0) & := -(1 - \rho^2) + \delta  \E\left[ \bigl(  s(Y) \kappa - \rho \norm{\bmu}_2 + G - \beta_0 Y \bigr)_+^2 \right]
        \\
        & \phantom{:} = \pi \delta \E \left[ \bigl( G - \rho \norm{\bmu}_2 - \beta_0 + \kappa \tau \bigr)_+^2 \right]  + (1-\pi) \delta \E \left[ \bigl( G - \rho \norm{\bmu}_2 + \beta_0 + \kappa \bigr)_+^2 \right] + \rho^2 - 1,
    \end{aligned}
\end{equation}
and then
\begin{equation*}
    \bar\xi_{\kappa}^{(3)}  =  \min_{\rho \in [-1, 1] , \beta_0 \in  \R } T_\kappa(\rho, \beta_0),
    \qquad
    \wt\xi_{\kappa}^{(3)}  :=  \min_{\rho \in [-1, 1] , \beta_0 \in  \R } F_\kappa(\rho, \beta_0).
\end{equation*}
Clearly, $\sign(T_\kappa(\rho, \beta_0)) = \sign(F_\kappa(\rho, \beta_0))$ for any $\rho, \beta_0$ and $\sign(\bar\xi_{\kappa}^{(3)}) = \sign(\wt\xi_{\kappa}^{(3)})$. Also recall that
\begin{equation*}
    \delta^*(\kappa) = \max_{ \substack{\rho \in [-1, 1] \\ \beta_0 \in \R } }  H_\kappa(\rho, \beta_0),
    \qquad 
    H_\kappa(\rho, \beta_0) = \frac{1 - \rho^2}{\E\left[ \bigl(  s(Y) \kappa - \rho \norm{\bmu}_2 + G - \beta_0 Y \bigr)_+^2 \right]}.
\end{equation*}
We can see that $H_\kappa(\rho, \beta_0)$ is well-defined since $\E[ (  s(Y) \kappa - \rho \norm{\bmu}_2 + G - \beta_0 Y )_+^2 ]$ is bounded away from zero for any $\rho \in [-1, 1]$ and $\beta_0 \in \R \cup\{ \pm \infty \}$.

\vspace{0.5\baselineskip}
\noindent
Since $x \mapsto \E[(G + c_1 x + c_2)_+^2]$ is continuous and strictly increasing for any $c_1 > 0, c_2 \in \R$, it can be shown that both $\kappa \mapsto \bar\xi_{\kappa}^{(3)}$, $\kappa \mapsto \wt\xi_{\kappa}^{(3)}$ are continuous strictly increasing, and $\delta^*(\kappa)$ is continuous strictly decreasing (by restricting $\beta_0: \abs{\beta_0} \le B$ for some constant $B$ large enough, similar as Step 4, and then use compactness). Therefore, we have the following equivalent definitions of $\kappa^*$:
\begin{equation}\label{eq:kappa_star}
    \begin{aligned}
        \kappa^*  & := \sup\left\{ \kappa \in \R: \delta^*(\kappa) \ge \delta \right\} \\
        & \phantom{:} = \left\{ \kappa \in \R: \delta^*(\kappa) = \delta \right\} 
        = \left\{ \kappa \in \R: \bar\xi_{\kappa}^{(3)} = 0 \right\}
        = \left\{ \kappa \in \R:  \wt\xi_{\kappa}^{(3)} = 0 \right\}.
    \end{aligned}
\end{equation}
Now we can consider the following two regimes, each with a chain of equivalence:
\begin{equation}
    \label{eq:phase_equiv}
    \begin{aligned}
        \delta \le \delta^*(\kappa) 
        \quad   \overset{\mathmakebox[0pt][c]{\text{(i)}}}{\Longleftrightarrow} \quad
        \kappa \le \kappa^*
        \quad & \overset{\mathmakebox[0pt][c]{\text{(i)}}}{\Longleftrightarrow} \quad
        \bar\xi_{\kappa}^{(3)}, \wt\xi_{\kappa}^{(3)} \le 0
        \quad   \overset{\mathmakebox[0pt][c]{\text{(ii)}}}{\Longleftrightarrow} \quad
        \bar\xi_{\kappa}^{(2)} \le 0 
        \\
        & \overset{\mathmakebox[0pt][c]{\text{(iii)}}}{\Longleftrightarrow} \quad
        \xi'_{n, \kappa, B}  \conp \bigl( \bar\xi^{(2)}_{\kappa} \bigr)_+ = 0
        \quad \overset{\mathmakebox[0pt][c]{\text{(iv)}}}{\Longleftrightarrow} \quad
        \P(\xi_{n, \kappa} = 0) \to 1,
        \\
        \delta > \delta^*(\kappa) 
        \quad   \overset{\mathmakebox[0pt][c]{\text{(i)}}}{\Longleftrightarrow} \quad
        \kappa > \kappa^*
        \quad & \overset{\mathmakebox[0pt][c]{\text{(i)}}}{\Longleftrightarrow} \quad
        \bar\xi_{\kappa}^{(3)}, \wt\xi_{\kappa}^{(3)} > 0
        \quad   \overset{\mathmakebox[0pt][c]{\text{(ii)}}}{\Longleftrightarrow} \quad
        \bar\xi_{\kappa}^{(2)} > 0 
        \\
        & \overset{\mathmakebox[0pt][c]{\text{(iii)}}}{\Longleftrightarrow} \quad
        \xi'_{n, \kappa, B}  \conp \bigl( \bar\xi^{(2)}_{\kappa} \bigr)_+ > 0
        \quad \overset{\mathmakebox[0pt][c]{\text{(iv)}}}{\Longleftrightarrow} \quad
        \P(\xi_{n, \kappa} > 0) \to 1,
    \end{aligned}
\end{equation}
where (i) is from \cref{eq:kappa_star}, (ii) is from \cref{lem:over_sign}, (iii) is from \cref{lem:over_CGMT}, \ref{lem:over_ULLN}, and (iv) is from \cref{lem:over_beta0}. Linear separability considers the special case $\kappa = 0$. From definition \cref{eq:xi_n_kappa}, for any $\kappa \le 0$ we have $\xi_{n, \kappa} = 0$ (by taking $\vbeta = \bzero$, $\beta_0 = 0$). Therefore, 
\begin{itemize}
    \item If $\delta < \delta^*(0)$, by \cref{eq:phase_equiv} $\kappa^* > 0$ and $\P(\mathcal{E}_{n, \kappa^*}) = \P(\xi_{n, \kappa^*} = 0)
    \to 1$, which deduces the data is linearly separable with high probability.
    \item If $\delta > \delta^*(0)$, by \cref{eq:phase_equiv} $\kappa^* < 0$ and $\P(\mathcal{E}_{n, \kappa}) = \P(\xi_{n, \kappa} = 0)
    \to 0$ for any $\kappa > 0$ (as $\kappa \mapsto \xi_{n, \kappa}$ is non-decreasing), which implies the data is not linearly separable with high probability.
\end{itemize}
\end{proof}

\begin{proof}[\textbf{Proof of \cref{lem:over_mar_conp}}]
If $\delta < \delta^*(0)$, then $\kappa^* > 0$ and $\bar\xi_{\kappa^*}^{(3)} = 0$. According to \cref{eq:phase_equiv}, for any $\varepsilon > 0$ small enough, we have
\begin{equation*}
    \begin{aligned}
        \bar\xi_{\kappa^* - \varepsilon}^{(3)} < 0
    \quad & \Longrightarrow \quad
    \P(\mathcal{E}_{n, \kappa^* - \varepsilon}) = \P(\xi_{n, \kappa^* - \varepsilon} = 0) \to 1, \\
        \bar\xi_{\kappa^* + \varepsilon}^{(3)} > 0
    \quad & \Longrightarrow \quad
    \P(\mathcal{E}_{n, \kappa^* + \varepsilon}) = \P(\xi_{n, \kappa^* + \varepsilon} = 0) \to 0.
    \end{aligned}
\end{equation*}
Recall that $\hat\kappa_n = \ind_{1 \le n_+ \le n - 1} \sup\{ \kappa \in \R:  \xi_{n,\kappa} = 0 \}$.
By combining these arguments, we can see that $\kappa^* - \varepsilon \le \hat\kappa_n \le \kappa^* + \varepsilon$ holds on the event $\mathcal{D}_n^c$, with high probability. This proves $\hat\kappa_n \conp \kappa^*$.
\end{proof}

\subsubsection{Convergence of ELD and parameters for $\tau = 1$: 
Proofs of \cref{lem:over_logit_conv}, \ref{lem:H_kappa_1}}
\label{subsubsec:over_logit_conv}

In this section, we provide a proof of parameter convergence in \cref{thm:SVM_main}\ref{thm:SVM_main_param} and ELD convergence in \ref{thm:SVM_main_logit} for the special case $\tau = 1$. For convenience of notation, we drop the subscripts and simply write $\hat\rho := \hat\rho_n$, $\hat\beta_{0} := \hat\beta_{0,n}$. 
Recall the ELD (well-defined version, i.e., \cref{eq:over_ELD_well}) and its asymptotics are respectively defined as
\begin{equation*}
    \hat \nu_{n} = \frac1n \sum_{i=1}^n \delta_{(y_i, \< \xx_i, \hat\vbeta \> + \hat\beta_{0} ) \cdot \ind\{\mathcal{D}_n^c\} },
    \qquad 
    \nu_* = \Law\left( Y,
    Y \max\bigl\{ \kappa^*, \rho^*\norm{\vmu}_2 + G + \beta_0^* Y \bigr\}
    \right).
\end{equation*}
Here $(\rho^*, \beta_0^*, \kappa^*)$ is defined as the maximizer of \cref{eq:SVM_asymp_simple}, and obviously $\kappa^*$ also satisfies \cref{eq:kappa_star}. The uniqueness of $(\rho^*, \beta_0^*)$ will be given by \cref{lem:H_kappa_1}. Analogous to the proof of \cite[Theorem 4.6]{montanari2022overparametrizedlineardimensionalityreductions}, by using the theory of projection pursuit therein, we have the following results.
\begin{lem}[ELD and parameter convergence] 
\label{lem:over_logit_conv}
Consider $\tau = 1$. As $n, d \to \infty$, we have
\begin{equation*}
    W_2 \bigl( \hat \nu_{n}, \nu_* \bigr)
    \conp 0.
\end{equation*}
The convergence of $\hat\rho \conp \rho^*$ and $\hat\beta_{0} \conp \beta_0^*$ are followed by continuity and convexity of $H_\kappa$ in \cref{eq:sep_functions}.
\end{lem}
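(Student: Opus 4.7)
The plan is to adapt the projection-pursuit machinery of \cite{montanari2022overparametrizedlineardimensionalityreductions} to our two-parameter setting $(\vbeta,\beta_0)$. The $W_2$-convergence is equivalent to the conjunction of (i) $\int \psi \, d\hat\nu_n \conp \int \psi \, d\nu_*$ for every bounded Lipschitz $\psi:\R^2\to\R$, and (ii) second-moment tightness of $\hat\nu_n$. Item (ii) follows from $\|\hat\vbeta_n\|_2\le 1$, boundedness of $\hat\beta_{0,n}$ (Lemma \ref{lem:over_beta0}), and a standard operator-norm bound on $\XX$. So the crux is to establish item (i).

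I will establish (i) via a $\psi$-perturbed CGMT argument that runs parallel to Steps~1--5. Fix a bounded Lipschitz $\psi$ and $\epsilon>0$, and let $\overline\psi := \int \psi \, d\nu_*$. Introduce the $\psi$-perturbed primary quantity
\begin{equation*}
\xi_{n,\kappa,B}^{\psi,\epsilon} := \min_{\substack{\|\vbeta\|_2\le 1,\, |\beta_0|\le B \\ \lvert\frac{1}{n}\sum_i \psi(y_i,\, \langle\vx_i,\vbeta\rangle+\beta_0) - \overline\psi\rvert\, \ge\, \epsilon}}\ \frac{1}{\sqrt d}\bigl\| \bigl(\kappa\bs_\yy - \yy\odot\XX\vbeta - \beta_0\yy\bigr)_+ \bigr\|_2,
\end{equation*}
and re-run the Gordon comparison (Lemma \ref{lem:over_CGMT}) and ULLN (Lemma \ref{lem:over_ULLN}) reductions through this augmented feasible set to conclude that $\xi_{n,\kappa,B}^{\psi,\epsilon}$ concentrates around $(\bar\xi_\kappa^{(2),\psi,\epsilon})_+$, where the latter is defined by the variational problem \eqref{eq:SVM_variation} with the additional constraint that the candidate distribution $\Law(Y, Y\max\{s(Y)\kappa,\rho\|\vmu\|_2+G+Y\beta_0\})$ deviates from $\nu_*$ by at least $\epsilon$ in the $\psi$-expectation. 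Uniqueness of the solution $(\rho^*,\beta_0^*,\kappa^*,\xi^*)$ to \eqref{eq:SVM_variation} (Lemma \ref{lem:H_kappa_1}) combined with the pointwise formula \eqref{eq:SVM_main_xi_star} implies $\bar\xi_{\kappa^*}^{(2),\psi,\epsilon}>0$, and by continuity this positivity persists on some $[\kappa^*-\eta,\kappa^*]$. The phase-transition equivalence \eqref{eq:phase_equiv} then forces, with high probability, the $\psi$-perturbed feasible set at $\kappa=\hat\kappa_n$ to be empty, so that $(\hat\vbeta_n,\hat\beta_{0,n})$ lies in the complementary region. This gives the desired empirical convergence for $\psi$, and the same scheme with the pseudo-Lipschitz function $\psi(y,\ell)=\ell^2$ (after a truncation) yields the second-moment convergence needed for (ii).

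Parameter convergence $(\hat\rho_n,\hat\beta_{0,n}) \conp (\rho^*,\beta_0^*)$ is then immediate: the sequence is tight by Lemma \ref{lem:over_beta0} and $\hat\rho_n\in[-1,1]$, and any subsequential weak limit must, by passing to the limit in the CGMT-derived optimality identity, satisfy $H_{\kappa^*}(\rho,\beta_0)\ge\delta$ and attain $\kappa^*$; by the uniqueness of the maximizer of $H_{\kappa^*}$ furnished by Lemma \ref{lem:H_kappa_1}, this limit must equal $(\rho^*,\beta_0^*)$.

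The main technical obstacle is the non-convexity of the $\psi$-constraint in $\xi_{n,\kappa,B}^{\psi,\epsilon}$: the set $\{|\tfrac{1}{n}\sum_i\psi(\cdots)-\overline\psi|\ge\epsilon\}$ is not convex, so Gordon's minimax theorem does not apply directly. This is resolved by splitting into the two closed half-space sub-problems $\{\pm(\tfrac{1}{n}\sum_i\psi(\cdots)-\overline\psi)\ge\epsilon\}$ and introducing a Lagrange multiplier for each, as in the regularized CGMT variant of \cite{thrampoulidis2015regularized}, after which Steps~2--3 proceed essentially verbatim. A secondary subtlety is translating a distributional deviation into a strictly positive gap $\bar\xi_{\kappa^*}^{(2),\psi,\epsilon}>0$ at the variational level; this uses the strict convexity of $x\mapsto\E[(G+c_1x+c_2)_+^2]$ already exploited in Step~4 together with the pointwise characterization \eqref{eq:SVM_main_xi_star} of $\xi^*$.
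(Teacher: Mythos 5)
Your proposal diverges from the paper's proof: the paper does not run a perturbed CGMT for the separable case at all, but instead invokes the constrained projection-pursuit theorem of \cite{montanari2022overparametrizedlineardimensionalityreductions} to couple $\frac1n\sum_i\delta_{(y_i,\langle\zz_i,\hat\vbeta\rangle)}$ with $\Law(Y,Z)$ at distance $\sqrt{1-\hat\rho^2}/\sqrt{\delta}+\varepsilon$, and then combines the margin constraint $V\ge\kappa^*-\varepsilon$ with the identity $\delta=H_{\kappa^*}(\rho^*,\beta_0^*)$ and the inequality $\E[(V-\max\{\kappa^*-\varepsilon,\hat V\})^2]\le\E[(V-\hat V)^2]-\E[(\kappa^*-\varepsilon-\hat V)_+^2]$ to extract simultaneously $h^*_{\kappa^*}(\hat\rho,\hat\beta_0)\conp 0$ (hence parameter convergence via \cref{lem:H_kappa_1}) and the $W_2$ bound on the logit-margin distribution. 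Your restricted-feasible-set strategy is instead the one the paper uses for the \emph{non-separable} case (\cref{lem:ERM_logit_conv}), so it is a legitimate alternative in spirit, but as written it has a concrete gap.

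The gap is in the Gordon comparison step. Your perturbed quantity $\xi^{\psi,\epsilon}_{n,\kappa,B}$ restricts the minimization to the set $\{(\vbeta,\beta_0): |\frac1n\sum_i\psi(y_i,\langle\vx_i,\vbeta\rangle+\beta_0)-\overline\psi|\ge\epsilon\}$, and this set depends on $\XX$ itself: after the reparametrization of \cref{lem:over_CGMT}, $\langle\vx_i,\vbeta\rangle=y_i\rho\|\bmu\|_2+\rho u_i+\langle\vg_i,\vtheta\rangle$, so the constraint involves the very Gaussian matrix $\GG$ that enters the bilinear term being compared. The CGMT (\cref{lem:CGMT}) requires the index sets $D_{\bu},D_{\bv}$ to be fixed (independent of $\GG$), so \cref{lem:over_CGMT} cannot be rerun over your augmented feasible set. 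Notice also that the obstacle you identify, non-convexity of the deviation set, is not the real problem: the one-sided comparison $\P(C^*\le t)\le 2\P(L^*\le t)$, which is all you need to show the restricted value is bounded away from zero, holds for arbitrary compact (non-convex) sets. The standard repair is to lift the problem by introducing the logit vector $\bu$ with $u_i=y_i(\langle\vx_i,\vbeta\rangle+\beta_0)$ enforced by Lagrange multipliers, so that the deviation constraint becomes a deterministic subset of $\R^n$ (exactly as the paper does with $\mathsf{B}_{W_2}^c(\varepsilon)$ in \cref{lem:ERM_logit_conv}); but then the objective loses the pure positive-part/margin structure of Steps 1--5, and those reductions must be redone rather than reused ``essentially verbatim.'' In addition, your claim that the limiting restricted variational value is strictly positive needs a genuine stability argument for \cref{eq:SVM_variation} (weak $\cL^2$-compactness of the $\xi$-ball, lower semicontinuity, and strict convexity), since uniqueness of $(\rho^*,\beta_0^*,\kappa^*,\xi^*)$ alone does not yield a uniform gap over the $\epsilon$-deviating set; this is where the real work lies and it is only gestured at in your sketch.
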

\begin{proof}
Our proof primarily follows the setup in \cite[Section 4.1]{montanari2022overparametrizedlineardimensionalityreductions} and techniques in \cite[Section 4.3]{montanari2022overparametrizedlineardimensionalityreductions}. Recall that we can rewrite $\xx_i = y_i \bmu + \zz_i$, where $\zz_i \iidsim \normal(\bzero, \bI_d)$ and $y_i \indep \zz_i$. Using notation from \cite{montanari2022overparametrizedlineardimensionalityreductions}, $\P(y_i = 1 \,|\, \zz_i) = \varphi(\vmu_0^\top \zz_i)$, where $\vmu_0 = \vmu/\norm{\vmu}_2$ and $\varphi(x) \equiv \pi$ is a constant function. Recall that we reparametrize $\hat\rho = \vmu_0^\top \hat\vbeta$. Now, define random variables with joint distribution
\begin{equation*}
    Y \indep G \indep Z, 
    \quad \P(Y = +1 \,|\, G) = 1 - \P(Y = -1 \,|\, G) = \varphi(G) \equiv \pi, 
    \quad G, Z \sim \normal(0, 1).
\end{equation*}
Let $(Y, G, Z) \indep \hat\vbeta$. According to the definition in \cite[Lemma 4.2]{montanari2022overparametrizedlineardimensionalityreductions}, we have
\begin{equation*}
    \Law\left( Y, \vmu_0^\top \hat\vbeta \cdot G + \sqrt{1 - (\vmu_0^\top \hat\vbeta)^2} \cdot Z \right)
    = \Law\left( Y, \hat\rho G + \sqrt{1 - \hat\rho^2} Z \right)
    = \Law(Y, Z).
\end{equation*}
Therefore, by using \cite[Theorem 4.3]{montanari2022overparametrizedlineardimensionalityreductions}, for any $\varepsilon, \eta > 0$, with high probability we have
\begin{equation*}
    W_2^{(\eta)} \biggl( 
        \frac1n \sum_{i=1}^n \delta_{(y_i, \< \zz_i, \hat\vbeta \>)} , 
        \Law(Y, Z)        
     \biggr) 
     \le \frac{\sqrt{1 - \hat\rho^2}}{\sqrt{\delta}} + \varepsilon,
\end{equation*}
where $W_2^{(\eta)}$ is the $\eta$-constrained $W_2$ distance \cite[Definition 4.1]{montanari2022overparametrizedlineardimensionalityreductions}. Formally, for any $\eta > 0$, the $\eta$-constrained $W_2$ distance between any two probability measures $P$ and $Q$ in $\R^d$ is defined by
\begin{equation*}
    W_2^{(\eta)}(P, Q) := \left( \inf_{\gamma \in \Gamma^{(\eta)}(P, Q) }
    \int_{\R^d \times \R^d} \norm{\xx - \yy}_2^2 \gamma(\d\xx \times \d\yy)
    \right)^{1/2},
\end{equation*}
where $\Gamma^{(\eta)}(P, Q)$ denotes the set of all couplings $\gamma$ of $P$ and $Q$ which satisfy
\begin{equation}
    \label{eq:W2eta}
    \left(
        \int_{\R^d \times \R^d} |\< \be_1, \xx - \yy \>|^2 \gamma(\d\xx \times \d\yy) 
    \right)^{1/2}    
    \le \eta,
\end{equation}
where $\be_1 = (1, 0, \dots, 0)^\top$.

The following proof is analogous to the proof of \cite[Theorem 4.6]{montanari2022overparametrizedlineardimensionalityreductions}. We show the convergence of logit margins $W_2( \hat\cL_n, \cL_* ) \conp 0$ first, where
\begin{equation}
    \label{eq:margin_logit_dist}
    \hat \cL_{n} := \frac1n \sum_{i=1}^n \delta_{y_i ( \< \xx_i, \hat\vbeta \> + \hat\beta_{0} ) },
    \qquad 
    \cL_* := \Law\left( 
    \max\bigl\{ \kappa^*,  \rho^*\norm{\vmu}_2 + G + \beta_0^* Y    \bigr\}
    \right).
\end{equation}
Throughout this subsection, all the expectations (including the one in $H_\kappa$) are conditional on $\{ (y_i, \zz_i) \}_{i=1}^n$, which will be denoted as $\E_{\cdot | n}[\cdot]$. Now, let
\begin{equation*}
    \frac1n \sum_{i=1}^n \delta_{(y_i, \< \zz_i, \hat\vbeta \>)} =: \Law(Y', Z'),
\end{equation*}
then by definition in \cref{eq:W2eta} and the same arguments in the proof of \cite[Theorem 4.6]{montanari2022overparametrizedlineardimensionalityreductions}, there exists a coupling $(Y, Z, Y', Z')$ and a sufficiently small $\eta$ ($\eta < \varepsilon^2/4$), such that
\begin{equation}
    \label{eq:pursuit_pre}
    \left( \E_{\cdot | n}\bigl[(Y - Y')^2\bigr] \right)^{1/2} \le \eta,
    \qquad
    \left( \E_{\cdot | n}\bigl[(YZ - Y'Z')^2\bigr] \right)^{1/2} \le \frac{\sqrt{1 - \hat\rho^2}}{\sqrt{\delta}} + 2\varepsilon
\end{equation}
holds with high probability. 
We can express the empirical distribution of logit margins \cref{eq:margin_logit_dist} as
\begin{equation}
    \label{eq:emp_ell}
    \hat \cL_{n}
    = \frac1n \sum_{i=1}^n \delta_{y_i \< \zz_i, \hat\vbeta \> + \hat\rho \norm{\vmu}_2 + y_i \hat\beta_0 ) }
    = \Law \Bigl( \underbrace{Y'Z' + \hat\rho \norm{\vmu}_2 + \hat\beta_0 Y' }_{=: V} \Bigr).
\end{equation}
For convenience, denote $\hat V := YZ + \hat\rho \norm{\vmu}_2 + \hat\beta_0 Y$, then with high probability we have
\begin{align}
        \bigl( \E_{\cdot | n}\bigl[(V - \hat V)^2\bigr] \bigr)^{1/2}
        & \overset{\mathmakebox[0pt][c]{\text{(i)}}}{\le} \left( \E_{\cdot | n}\bigl[(YZ - Y'Z')^2\bigr] \right)^{1/2} 
        + \left( \E_{\cdot | n}\bigl[(Y - Y')^2\bigr] \right)^{1/2} |\hat\beta_0|  \notag \\
        & \overset{\mathmakebox[0pt][c]{\text{(ii)}}}{\le} \frac{\sqrt{1 - \hat\rho^2}}{\sqrt{\delta}} + 2\varepsilon + \eta B \notag \\
        & \overset{\mathmakebox[0pt][c]{\text{(iii)}}}{\le} \frac{\sqrt{1 - \hat\rho^2}}{\sqrt{\delta}} + 3\varepsilon,
        \label{eq:V_diff0}
\end{align}
where (i) follows from Minkowski inequality, (ii) uses \cref{eq:pursuit_pre} and $|\hat\beta_0| \le B$ from \cref{lem:over_beta0}, by recalling that $\delta < \delta^*(0)$ and the data is linearly separable with high probability, while in (iii) we choose $\eta < \min\{ \varepsilon^2/4, \varepsilon/B \}$. According to $\hat\kappa_n \conp \kappa^*$ from \cref{lem:over_mar_conp}, we know that
\begin{equation*}
    \lim_{n \to \infty} \P\left( y_i (\< \hat\vbeta , \xx_i \> + \hat\beta_0 ) \ge \kappa^* - \varepsilon,
    \forall\, i \in [n] \right) = 1.
\end{equation*}
Then by definition of $V$ in \cref{eq:emp_ell}, with high probability we have 
\begin{equation}
    \label{eq:V_kappa_as}
    V \ge \kappa^* - \varepsilon,
    \qquad \text{almost surely}.
\end{equation}
Now, recall $\delta = \delta^*(\kappa^*) = H_{\kappa^*}(\rho^*, \beta_0^*)$ by \cref{eq:kappa_star}, where $(\rho^*, \beta_0^*) = \argmin_{\rho \in [-1, 1], \beta_0 \in \R} H_{\kappa^*}(\rho, \beta_0)$. Therefore,
\begin{equation}
    \label{eq:V_diff1}
        \bigl( \E_{\cdot | n}\bigl[(V - \hat V)^2\bigr] \bigr)^{1/2}
        \le \frac{\sqrt{1 - \hat\rho^2}}{\sqrt{\delta}} + 3\varepsilon
        = \frac{\sqrt{1 - \hat\rho^2}}{\sqrt{H_{\kappa^*}(\rho^*, \beta_0^*)}} + 3\varepsilon
\end{equation}
holds with high probability. For $\rho \in [-1, 1], \beta_0 \in \R$, let us define
\begin{equation*}
    h_{\kappa^*}^*(\rho, \beta_0) := \frac{1}{\sqrt{H_{\kappa^*}(\rho, \beta_0)}}
    - \frac{1}{\sqrt{H_{\kappa^*}(\rho^*, \beta_0^*)}}.
\end{equation*}
Note that $h_{\kappa^*}^*(\rho, \beta_0) \ge 0$. Hence, \cref{eq:V_diff1} implies that (reminding $\tau = 1$) with high probability
\begin{align*}
        \bigl( \E_{\cdot | n}\bigl[(V - \hat V)^2\bigr] \bigr)^{1/2}
        & \le \sqrt{1 - \hat\rho^2} \biggl( \sqrt{ \frac{1}{H_{\kappa^*}(\hat\rho, \hat\beta_0)} } - h_{\kappa^*}^*(\hat\rho, \hat\beta_0) \biggr) + 3\varepsilon \\
        & = \! \left( \E_{\cdot | n}\! \left[ \bigl(  \kappa^* - \hat\rho \norm{\bmu}_2 + G - \hat\beta_0 Y \bigr)_+^2 \right] \right)^{1/2}
        - \sqrt{1 - \hat\rho^2} \cdot h_{\kappa^*}^*(\hat\rho, \hat\beta_0) 
        + 3\varepsilon \\
        & \overset{\mathmakebox[0pt][c]{\text{(i)}}}{=} \bigl( \E_{\cdot | n}\bigl[(\kappa^* - \hat V)_+^2 \bigr] \bigr)^{1/2}
        - \sqrt{1 - \hat\rho^2} \cdot h_{\kappa^*}^*(\hat\rho, \hat\beta_0) 
        + 3\varepsilon,
\end{align*}
which can be further written as (with high probability)
\begin{align}
        \bigl( \E_{\cdot | n}\bigl[(V - \hat V)^2\bigr] \bigr)^{1/2} + \sqrt{1 - \hat\rho^2} \cdot h_{\kappa^*}^*(\hat\rho, \hat\beta_0) 
        & \le
        \bigl( \E_{\cdot | n}\bigl[(\kappa^* - \hat V)_+^2\bigr] \bigr)^{1/2}
        + 3\varepsilon \notag  \\
        & \overset{\mathmakebox[0pt][c]{\text{(ii)}}}{\le} 
        \bigl( \E_{\cdot | n}\bigl[(\kappa^* - \varepsilon - \hat V)_+^2\bigr] \bigr)^{1/2}
        + 4\varepsilon.
        \label{eq:V_diff2}
\end{align}
In the derivation above, equation (i) follows from $\hat V = YZ + \hat\rho \norm{\vmu}_2 + \hat\beta_0 Y 
\overset{\mathmakebox[0pt][c]{\mathrm{d}}}{=} -G + \hat\rho \norm{\vmu}_2 + \hat\beta_0 Y$ when conditioning on $\{(y_i, \zz_i)\}_{i = 1}^n$, and (ii) follows from the fact that
\begin{equation*}
    \begin{aligned}
        \frac{\d}{\d \kappa}  \bigl( \E_{\cdot | n}\bigl[(\kappa - \hat V)_+^2\bigr] \bigr)^{1/2}
        & = \frac{
            \E_{\cdot | n}\bigl[(\kappa - \hat V)_+^2\bigr]
        }{
            \bigl( \E_{\cdot | n}\bigl[(\kappa - \hat V)_+^2\bigr] \bigr)^{1/2}
        }
        \le 1.
    \end{aligned}
\end{equation*}
Besides, by using \cref{eq:V_kappa_as} and exactly the same arguments in the proof of \cite[Theorem 4.6]{montanari2022overparametrizedlineardimensionalityreductions}, we can show that with high probability,
\begin{equation}
    \label{eq:V_max}
    \E_{\cdot | n} \!\left[ \bigl(V - \max\{ \kappa^* - \varepsilon, \hat V \} \bigr)^2 \right]
    \le \E_{\cdot | n}\bigl[(V - \hat V)^2\bigr] - 
    \E_{\cdot | n}\bigl[(\kappa^* - \varepsilon - \hat V)_+^2\bigr].
\end{equation}
Combining \cref{eq:V_max} with \eqref{eq:V_diff2} gives the following implications:
\begin{itemize}
    \item \cref{eq:V_max} implies 
    \begin{equation*}
        \E_{\cdot | n}\bigl[(\kappa^* - \varepsilon - \hat V)_+^2\bigr]
        \le
        \E_{\cdot | n} \bigl[(V - \hat V)^2\bigr].
    \end{equation*}
    Plugging this into \cref{eq:V_diff2} yields that with high probability,
    \begin{equation*}
        \sqrt{1 - \hat\rho^2} \cdot h_{\kappa^*}^*(\hat\rho, \hat\beta_0) \le 4 \varepsilon,
    \end{equation*}
    i.e., $\sqrt{1 - \hat\rho^2} \cdot h_{\kappa^*}^*(\hat\rho, \hat\beta_0) \conp 0$. Note that if $\abs{\rho} \to 1$ (i.e., $\sqrt{1 - \rho^2} = o_\varepsilon(1)$), the quantity
    \begin{equation*}
            \sqrt{1 - \rho^2} \cdot h_{\kappa^*}^*(\rho, \beta_0)
        = \left( \E \left[ \bigl(  \kappa^* - \rho \norm{\bmu}_2 + G - \beta_0 Y \bigr)_+^2 \right] \right)^{1/2}
        - \frac{\sqrt{1 - \rho^2}}{\sqrt{H_{\kappa^*}(\rho^*, \beta_0^*)}}
    \end{equation*}
    is bounded away from 0, for any $\beta_0 \in \R \cup\{ \pm \infty\}$. Therefore, we must have $h_{\kappa^*}^*(\hat\rho, \hat\beta_0) \conp 0$. By \cref{lem:H_kappa_1} (proof is deferred to the end of this subsection), we know $h_{\kappa^*}^*(\rho, \beta_0) \ge 0$ for all $\rho \in [-1, 1], \beta_0 \in \R$, and $(\rho, \beta_0) \to (\rho^*, \beta_0^*)$ if and only if $h_{\kappa^*}^*(\rho, \beta_0) \to 0$. Hence, we conclude
    \begin{equation*}
        (\hat\rho, \hat\beta_0) \conp (\rho^*, \beta_0^*),
    \end{equation*}
    which gives parameter convergence.

    \item Let
    \begin{equation*}
        I := \bigl( \E_{\cdot | n}\bigl[(V - \hat V)^2\bigr] \bigr)^{1/2} 
        , \qquad
        I\!I := \bigl( \E_{\cdot | n}\bigl[(\kappa^* - \varepsilon - \hat V)_+^2\bigr] \bigr)^{1/2}.
    \end{equation*}
    Then \cref{eq:V_diff2} implies $I - I\!I \le 4\varepsilon$, and we also have (for $\varepsilon > 0$ small enough)
    \begin{equation*}
        \begin{aligned}
            I\!I & \le \abs{\kappa^* - \varepsilon} + \bigl( \E_{\cdot | n}\bigl[\hat V^2\bigr] \bigr)^{1/2} 
            \le \kappa^* + \bigl( \E_{\cdot | n}\bigl[ (
                G + \hat\rho \norm{\vmu}_2 + \hat\beta_0 Y
            )^2\bigr] \bigr)^{1/2}  \\
            & \le \kappa^* + \left(\E[G^2]\right)^{1/2} + |\hat\rho| \norm{\vmu}_2 + |\hat\beta_0| \\
            & \le \kappa^* + 1 + \norm{\vmu}_2 + B,
        \end{aligned}
    \end{equation*}
    by using Minkowski inequality and $|\hat\beta_0| \le B$ (with high probability) from \cref{lem:over_beta0}. Based on these results and \cref{eq:V_max}, with high probability, we have
    \begin{equation*}
        \begin{aligned}
            \E_{\cdot | n} \!\left[ \bigl(V - \max\{ \kappa^* - \varepsilon, \hat V \} \bigr)^2 \right] 
            & \le I^2 - I\!I^2 = (I - I\!I)(I - I\!I + 2I\!I) \\
            & \le 4\varepsilon \bigl(4\varepsilon + 2 (\kappa^* + 1 + \norm{\vmu}_2 + B) \bigr) \\
            & \le C \varepsilon,
        \end{aligned}
    \end{equation*}
    where $C \in (0, \infty)$ is some constant depending on $(\pi, \norm{\bmu}_2, \delta)$ (through $\kappa^*, B$). Therefore, by recalling $\hat V \overset{\mathmakebox[0pt][c]{\mathrm{d}}}{=} G + \hat\rho \norm{\vmu}_2 + \hat\beta_0 Y$, we obtain that with high probability,
    \begin{equation}
        \label{eq:W2_conv}
        W_2\left( \hat \cL_{n}, \Law \bigl(\max\{ \kappa^* - \varepsilon, G + \hat\rho \norm{\vmu}_2 + \hat\beta_0 Y \} \bigr) \right) \le \sqrt{C \varepsilon}.
    \end{equation}
\end{itemize}    
As a consequence, 
the following holds with high probability:
    \begin{align*}
            W_2\bigl( \hat \cL_{n}, \cL_* \bigr)
            & = W_2 \left( 
                \hat \cL_{n},
                \Law\bigl( \max\{ \kappa^*, G + \rho^*\norm{\vmu}_2 + \beta_0^* Y \} \bigr)
             \right) \\
            & \le
            W_2\left( \hat \cL_{n}, \Law \bigl(\max\{ \kappa^* - \varepsilon, G + \hat\rho \norm{\vmu}_2 + \hat\beta_0 Y \} \bigr) \right) \\
            & \phantom{\le} \  + W_2\left( \Law \bigl(\max\{ \kappa^* - \varepsilon, G + \hat\rho \norm{\vmu}_2 + \hat\beta_0 Y \} \bigr),
            \Law \bigl(\max\{ \kappa^* , G + \hat\rho \norm{\vmu}_2 + \hat\beta_0 Y \} \bigr) \right)
            \\
            & \phantom{\le} \  + W_2\left( \Law \bigl(\max\{ \kappa^* , G + \hat\rho \norm{\vmu}_2 + \hat\beta_0 Y 
            \} \bigr) 
            ,
            \Law\bigl( \max\{ \kappa^*, G + \rho^*\norm{\vmu}_2 + \beta_0^* Y \} \bigr)
            \right) \\
            & \le \sqrt{C \varepsilon} + \varepsilon + o_\varepsilon(1)
            = o_\varepsilon(1),
    \end{align*}
    where in the last inequality, we use that (i) the result from \cref{eq:W2_conv}, (ii) the fact that the mapping $\kappa \mapsto \max\{ \kappa, G + \hat\rho \norm{\vmu}_2 + \hat\beta_0 Y \}$ is 1-Lipschitz, and (iii) the consequence of $(\hat\rho, \hat\beta_0) \conp (\rho^*, \beta_0^*)$ and $|\hat\rho| \le 1$, $|\hat\beta_0| \le B$ (with high probability). 
    
    \vspace{0.5\baselineskip}
	\noindent
    Now we prove the convergence of ELD. Denote $\hat \cL_{n} =: \Law(L')$, $\cL_* =: \Law(L)$, where $(L, L')$ is a coupling such that
    \begin{equation}\label{eq:L_coupling}
        \left( \E_{\cdot | n}\bigl[(L - L')^2\bigr] \right)^{1/2} = o_\varepsilon(1).
    \end{equation}
    Therefore, for some constants $C_1, C_2 > 0$, with high probability, we have
    \begin{align*}
        W_2\bigl( \hat \nu_{n}, \nu_* \bigr)
        & \le
        \left( \E_{\cdot | n}\bigl[(Y - Y')^2\bigr] \right)^{1/2} 
        +
        \left( \E_{\cdot | n}\bigl[(YL - Y'L')^2\bigr] \right)^{1/2} \\
        & \overset{\mathmakebox[0pt][c]{\text{(i)}}}{\le} 
        \eta + \left( \E_{\cdot | n}\bigl[(YL - Y'L)^2\bigr] \right)^{1/2}
        + \left( \E_{\cdot | n}\bigl[(Y'L - Y'L')^2\bigr] \right)^{1/2} \\
        & \overset{\mathmakebox[0pt][c]{\text{(ii)}}}{\le}
        \eta + C_1 \left( \E_{\cdot | n}\bigl[(Y - Y')^2\bigr] \right)^{1/4} \left(\E[L^4]\right)^{1/4}
        + \left( \E_{\cdot | n}\bigl[(L - L')^2\bigr] \right)^{1/2} \\
        & \overset{\mathmakebox[0pt][c]{\text{(iii)}}}{\le}
        \eta + C_2 \sqrt{\eta} + o_\varepsilon(1)
        \overset{\mathmakebox[0pt][c]{\text{(iv)}}}{\le}  o_\varepsilon(1),
    \end{align*}
    where in (i) we use \cref{eq:pursuit_pre} and Minkowski inequality, in (ii) use Cauchy--Schwarz inequality and $Y, Y' \in \{ \pm 1 \}$, in (iii) use \cref{eq:pursuit_pre} and \eqref{eq:L_coupling}, while in (iv) recall that $\eta < \min\{ \varepsilon^2/4, \varepsilon/B \} = o_{\varepsilon}(1)$. By taking $\varepsilon \to 0$, we can show that $W_2\bigl( \hat \nu_{n}, \nu_* \bigr) \conp 0$ for $\tau = 1$. This completes the proof.
\end{proof}

Finally, we prove the following technical lemma.
\begin{lem}
    \label{lem:H_kappa_1}
    For any fixed $\kappa \in \R$ and $\tau > 0$, the function $H_\kappa(\rho, \beta_0)$ in \cref{eq:sep_functions} admits a unique maximizer $(\rho^*(\kappa), \beta_0^*(\kappa)) \in [0, 1) \times \R$.
\end{lem}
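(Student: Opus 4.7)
The plan is to prove existence, then the sign constraint $\rho^* \geq 0$, and finally uniqueness via a strict quasi-convexity argument. I would write the maximization as the minimization problem
\begin{equation*}
    \min_{(\rho, \beta_0) \in (-1, 1) \times \R} \frac{\mathcal{F}(\rho, \beta_0)}{1 - \rho^2},
    \qquad
    \mathcal{F}(\rho, \beta_0) := \E\bigl[ \bigl( s(Y) \kappa - \rho \norm{\vmu}_2 + G - \beta_0 Y \bigr)_+^2 \bigr],
\end{equation*}
which is equivalent to maximizing $H_\kappa$ on $(-1, 1) \times \R$ (the boundary $\rho = \pm 1$ gives $H_\kappa = 0$ so can be excluded). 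For existence, I would observe that $\mathcal{F}(\rho, \beta_0) \to \infty$ as $|\beta_0| \to \infty$ (since at least one of the arguments $\tau\kappa - \rho\|\vmu\|_2 - \beta_0$ or $\kappa - \rho\|\vmu\|_2 + \beta_0$ tends to $+\infty$ and $\psi(x) := \E[(x+G)_+^2]$ is strictly increasing with $\psi(x) \to \infty$), while $1 - \rho^2 \leq 1$ is bounded, so the objective $\mathcal{F}/(1-\rho^2)$ is coercive on $(-1+\varepsilon, 1-\varepsilon) \times \R$ for any $\varepsilon > 0$; standard continuity and compactness then yield a minimizer.

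For the sign constraint, I would use a direct symmetry comparison: for any $(\rho, \beta_0)$ with $\rho < 0$, decompose
\begin{equation*}
    \mathcal{F}(\rho, \beta_0) = \pi \, \psi(\tau\kappa - \rho\norm{\vmu}_2 - \beta_0) + (1-\pi)\, \psi(\kappa - \rho\norm{\vmu}_2 + \beta_0),
\end{equation*}
and compare with $\mathcal{F}(-\rho, \beta_0)$, where both $\psi$-arguments decrease by $2|\rho|\norm{\vmu}_2 > 0$. Since $\psi'(x) = 2\E[(x+G)_+] > 0$, $\psi$ is strictly increasing, so $\mathcal{F}(-\rho, \beta_0) < \mathcal{F}(\rho, \beta_0)$ while $1 - (-\rho)^2 = 1 - \rho^2$, yielding $H_\kappa(-\rho, \beta_0) > H_\kappa(\rho, \beta_0)$. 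Hence any maximizer must have $\rho^* \geq 0$.

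The main substantive step is uniqueness. The key observation will be that $\mathcal{F}$ is \emph{strictly} convex in $(\rho, \beta_0)$: the inner function $(u, v) \mapsto \pi\psi(u) + (1-\pi)\psi(v)$ has Hessian $\diag(\pi\psi''(u), (1-\pi)\psi''(v))$ with $\psi''(x) = 2\Phi(x) > 0$, so it is strictly convex; and the affine map $(\rho, \beta_0) \mapsto (\tau\kappa - \rho\norm{\vmu}_2 - \beta_0, \, \kappa - \rho\norm{\vmu}_2 + \beta_0)$ is injective iff $\norm{\vmu}_2 > 0$, which is the non-degenerate case. Then, given two distinct minimizers $(\rho_1, \beta_{0,1}), (\rho_2, \beta_{0,2})$ of $\mathcal{F}/(1-\rho^2)$ with common value $c^* > 0$, strict convexity of $\mathcal{F}$ combined with (weak) concavity of $\rho \mapsto 1 - \rho^2$ yields for any convex combination $(\rho_t, \beta_{0,t})$, $t \in (0,1)$:
\begin{equation*}
    \mathcal{F}(\rho_t, \beta_{0,t}) < t\mathcal{F}(\rho_1, \beta_{0,1}) + (1-t)\mathcal{F}(\rho_2, \beta_{0,2}) = c^*\bigl(t(1-\rho_1^2) + (1-t)(1-\rho_2^2)\bigr) \leq c^*(1 - \rho_t^2),
\end{equation*}
contradicting minimality. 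The degenerate case $\norm{\vmu}_2 = 0$ can be handled separately (the problem reduces to an optimization only in $\beta_0$, which is strictly convex, and $\rho^* = 0$ is automatically in $[0, 1)$). The main obstacle I anticipate is carefully handling the edge cases where $\kappa$ could be negative or zero, which could affect the coercivity argument at infinity --- but since $\psi(x) \to \infty$ as $x \to +\infty$ and only one direction of $\beta_0$ is needed, this should go through uniformly in $\kappa \in \R$.
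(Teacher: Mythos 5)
Your proposal is correct, and the uniqueness step takes a genuinely different route from the paper. The paper proves uniqueness by working with the square-root functional $G_\kappa(\rho,\beta_0) := \left( \E\left[ \bigl(  s(Y) \kappa - \rho \norm{\bmu}_2 + G - \beta_0 Y \bigr)_+^2 \right] \right)^{1/2}$: it invokes strict convexity of $G_\kappa$ (cited from an external lemma of Montanari et al., not proved in the paper) and pairs it with concavity of $\rho \mapsto \sqrt{1-\rho^2}$ to show that the midpoint of two putative maximizers strictly improves $H_\kappa$. You instead reformulate the problem as minimizing $\mathcal{F}/(1-\rho^2)$ and pair strict convexity of $\mathcal{F}$ itself --- which you verify elementarily via $\psi''(x)=2\Phi(x)>0$ and injectivity of the affine map (determinant $-2\norm{\bmu}_2 \neq 0$) --- with concavity of $\rho \mapsto 1-\rho^2$. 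Both midpoint arguments are valid; yours buys a self-contained, more elementary proof that avoids the nontrivial fact that the $\cL^2$-norm-type functional $G_\kappa$ is strictly convex, and you additionally treat the degenerate case $\norm{\bmu}_2=0$, which the paper implicitly excludes. Your existence and sign arguments ($H_\kappa \to 0$ as $|\beta_0|\to\infty$, $H_\kappa(\pm 1,\beta_0)=0$, and $H_\kappa(-\rho,\beta_0)>H_\kappa(\rho,\beta_0)$ for $\rho<0$) coincide with the paper's. One small polish: your coercivity phrasing on strips $(-1+\varepsilon,1-\varepsilon)\times\R$ should be upgraded to the observation that along any minimizing sequence $\beta_0$ must stay bounded (else $\mathcal{F}\to\infty$ while $1-\rho^2\le 1$) and then $\rho$ must stay away from $\pm 1$ (else the denominator vanishes while $\mathcal{F}$ is bounded below by a positive constant), so the sequence lives in a compact subset of $(-1,1)\times\R$; this is a presentational fix, not a gap.
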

\begin{proof}
    For simplicity, write $\rho^* := \rho^*(\kappa)$, $\beta_0^* := \beta_0^*(\kappa)$. First, note that
    \begin{equation*}
        H_\kappa(\rho, \beta_0) = \frac{1 - \rho^2}{\E\left[ \bigl(  s(Y) \kappa - \rho \norm{\bmu}_2 + G - \beta_0 Y \bigr)_+^2 \right]} \le \, \frac{1}{\E\left[ \bigl(  s(Y) \kappa - \norm{\bmu}_2 + G - \beta_0 Y \bigr)_+^2 \right]},
    \end{equation*}
    which converges to $0$ as $\beta_0 \to \pm\infty$. Moreover, $H_\kappa(-\rho, \beta_0) < H_\kappa(\rho, \beta_0)$ for any $\rho \in (0, 1]$. Therefore, $H_\kappa(\rho, \beta_0)$ must have a maximizer $(\rho^*, \beta_0^*) \in [0, 1] \times \R$. Further, $\rho^* \in [0, 1)$ since $H_\kappa(1, \beta_0) \equiv 0$. We prove the uniqueness of $(\rho^*, \beta_0^*)$ by contradiction. For future convenience, we denote $H_{\max} := H_{\kappa} (\rho^*, \beta_0^*)$. Assume that there exist $(\rho_1, \beta_{0, 1})$ and $(\rho_2, \beta_{0, 2})$ such that $(\rho_1, \beta_{0, 1}) \neq (\rho_2, \beta_{0, 2})$, and
    \begin{equation*}
        H_{\kappa} (\rho_1, \beta_{0, 1}) = H_{\kappa} (\rho_2, \beta_{0, 2}) = H_{\max},
    \end{equation*}
    which implies
    \begin{equation*}
        G_{\kappa} (\rho_1, \beta_{0, 1}) = \frac{\sqrt{1 - \rho_1^2}}{\sqrt{H_{\max}}}, 
        \qquad 
        G_{\kappa} (\rho_2, \beta_{0, 2}) = \frac{\sqrt{1 - \rho_2^2}}{\sqrt{H_{\max}}},
    \end{equation*}
    where we define
    \begin{equation*}
        G_\kappa(\rho, \beta_0) := \left( \E\left[ \bigl(  s(Y) \kappa - \rho \norm{\bmu}_2 + G - \beta_0 Y \bigr)_+^2 \right] \right)^{1/2}.
    \end{equation*}
    Similar to \cite[Lemma 6.3]{montanari2023generalizationerrormaxmarginlinear}, we can show that $G_{\kappa}$ is strictly convex. Hence,
    \begin{align*}
        G_{\kappa} \left( \frac{\rho_1 + \rho_2}{2}, \frac{\beta_{0, 1} + \beta_{0, 2}}{2} \right) 
        & <  \frac{1}{2} \bigl( G_{\kappa} (\rho_1, \beta_{0, 1}) + G_{\kappa} (\rho_2, \beta_{0, 2}) \bigr) \\
        & =  \frac{1}{\sqrt{H_{\max}}} \frac{1}{2} \Bigl( 
        \sqrt{1 - \smash[b]{\rho_1^2}} + \sqrt{1 - \smash[b]{\rho_2^2}} \Bigr) \\
        & \le \frac{1}{\sqrt{H_{\max}}} \sqrt{1 - \left( \frac{\rho_1 + \rho_2}{2} \right)^2},
    \end{align*}
    where in the last line we use the concavity of the mapping $x \mapsto \sqrt{1 - x^2}$. It finally follows that
    \begin{equation*}
        H_{\kappa} \left( \frac{\rho_1 + \rho_2}{2}, \frac{\beta_{0, 1} + \beta_{0, 2}}{2} \right) > H_{\max},
    \end{equation*}
    a contradiction. This concludes the proof.
\end{proof}

\subsubsection{Completing the proof of \cref{thm:SVM_main}}
\begin{proof}[\textbf{Proof of \cref{thm:SVM_main}}]
\noindent
\textbf{\ref{thm:SVM_main_trans}} is established by \cref{lem:over_phase_trans}.

\begin{proof}[\textbf{\emph{\ref{thm:SVM_main_var}:}}]
Notice the definition of $(\rho^*, \beta_0^*, \kappa^*)$ we used in our proof (\cref{subsubsec:over_phase}, \ref{subsubsec:over_logit_conv}) is based on \cref{eq:SVM_asymp_simple}. It suffices to show the equivalence of two optimization problems \cref{eq:SVM_variation} and \eqref{eq:SVM_asymp_simple}. Now we fix $\rho$, $\beta_0$ in \cref{eq:SVM_variation} and $X := \rho \| \bmu\|_2 + G + Y \beta_0$. Then \cref{eq:SVM_variation} can be written as
\begin{equation}\label{eq:SVM_var2}
    \begin{aligned}
        \begin{array}{cl}
            \underset{ \kappa > 0 , \, \xi \in \cL^2  }{ \mathrm{maximize} } & \kappa, \\
            \underset{ \phantom{\smash{\bm\beta \in \R^d, \beta_0 \in \R, \kappa \in \R} } }{\text{subject to}} &  
            X + \sqrt{1 - \rho^2} \xi \ge s(Y) \kappa,  
            \qquad \E[\xi^2]  \le  1/\delta .
        \end{array}
    \end{aligned}
\end{equation}
Note that it can be written as a convex optimization problem, and it is infeasible if $\rho = \pm 1$ (since $X$ has support $\R$). Take $\rho \in (-1, 1)$. According to the Karush--Kuhn--Tucker (KKT) and Slater's conditions for variational problems \cite[Theorem 2.9.2]{zalinescu2002convex}, $(\kappa, \xi)$ is the solution to \cref{eq:SVM_var2} if and only if it satisfies the following for some $\Lambda \in \cL^1, \Lambda \ge 0$ (a.s.) and $\nu \ge 0$:
\begin{equation*}
    \begin{aligned}
        -1 + \E[s(Y)\Lambda] = 0, \qquad
        -\sqrt{1 - \rho^2} \Lambda + 2 \nu \xi = 0 \ \  \text{(a.s.)},  \\
        \nu\left( \E[\xi^2] - \delta^{-1} \right) = 0,
        \qquad
        \Lambda \bigl( s(Y)\kappa - X - \sqrt{1 - \rho^2} \xi \bigr) = 0 \ \  \text{(a.s.)}.
    \end{aligned}
\end{equation*}
Clearly $\nu > 0$ (otherwise, $\Lambda = 0$ a.s., a contradiction). Consider the following two cases:
\begin{itemize}
    \item On the event $\{ s(Y(\omega))\kappa - X(\omega) < 0\}$, we obtain $s(Y(\omega))\kappa - X(\omega) - \sqrt{1 - \rho^2} \xi(\omega) < 0$, which implies $\Lambda(\omega) = 0$. Therefore, $\xi(\omega) = 0$.
    \item On the event $\{ s(Y(\omega))\kappa - X(\omega) > 0\}$, we obtain $\sqrt{1 - \rho^2} \xi(\omega) \ge s(Y(\omega))\kappa - X(\omega) > 0$, which implies $\xi(\omega) > 0$. Therefore, $\Lambda(\omega) > 0$, and thus $s(Y(\omega))\kappa - X(\omega) - \sqrt{1 - \rho^2} \xi(\omega) = 0$.
\end{itemize}
(Note $\P(s(Y)\kappa - X = 0) = 0$.) By combining these, we get $\sqrt{1 - \rho^2}\xi = (s(Y)\kappa - X)_+$. This proves \cref{eq:SVM_main_xi_star}. Plug in it into \cref{eq:SVM_variation} gives \cref{eq:SVM_asymp_simple}. The proof of $\rho^* \in (0, 1)$ and its independence of $\tau$ is given by \cref{lem:gordon_eq} in \cref{subsec:over_asymp}. 

This concludes the proof of part \ref{thm:SVM_main_var}.
\end{proof}

\begin{proof}[\textbf{\emph{\ref{thm:SVM_main_mar}, $\delta < \delta^*(0)$:}}]
We show that $\hat\kappa_n \conp \kappa^*$ in \cref{lem:over_mar_conp} can be strengthened to $\hat\kappa_n \conL{2} \kappa^*$. To this end, we show that $\hat\kappa_n^2$ is uniformly integrable (u.i.). Recall that $\kappa(\hat\vbeta_n, \hat\beta_{0,n}) \ge 0$ and
\begin{align*}
        \kappa(\hat\vbeta_n, \hat\beta_{0,n}) 
        & = \min_{i \in [n]} \wt y_i \bigl( \< \xx_i, \hat\vbeta_n \> + \hat\beta_{0,n} \bigr)
        = \min_{i \in [n]} \wt y_i \bigl(  y_i \< \vmu, \hat\vbeta_n \> + \< \zz_i, \hat\vbeta_n \> + \hat\beta_{0,n} \bigr) \\
        & = \min \left\{
            \min_{i: y_i = +1}  \tau^{-1}\bigl( \< \vmu, \hat\vbeta_n \> + \< \zz_i, \hat\vbeta_n \> + \hat\beta_{0,n} \bigr) ,
            \min_{i: y_i = -1}  \bigl( \< \vmu, \hat\vbeta_n \> - \< \zz_i, \hat\vbeta_n \> - \hat\beta_{0,n}\bigr)
        \right\}.
\end{align*}
Hence, on the event $\mathcal{D}_n^c$ (non-degenerate case), we have $\hat\kappa_n = \kappa(\hat\vbeta_n, \hat\beta_{0,n})$ and it can be bounded by the average from each class:
\begin{equation*}
    \begin{aligned}
        \kappa(\hat\vbeta_n, \hat\beta_{0,n}) & \le \tau^{-1}\bigl( \< \vmu, \hat\vbeta_n \> + \< \bar\zz^+_n, \hat\vbeta_n \> + \hat\beta_{0,n} \bigr) := \bar \kappa_n^+ , \\
        \kappa(\hat\vbeta_n, \hat\beta_{0,n}) & \le \phantom{\tau^{-1}\bigl(} \< \vmu, \hat\vbeta_n \> - \< \bar\zz^-_n, \hat\vbeta_n \> - \hat\beta_{0,n} \phantom{\bigr)} := \bar \kappa_n^-  ,
    \end{aligned}
\end{equation*}
where
\begin{equation*}
    \bar\zz^+_n := \frac{1}{n_+} \sum_{i: y_i = +1} \zz_i,
    \qquad
    \bar\zz^-_n := \frac{1}{n_-} \sum_{i: y_i = -1} \zz_i.
\end{equation*}
Combine these two bounds and apply Cauchy--Schwarz inequality, we obtain
\begin{equation*}
        \kappa(\hat\vbeta_n, \hat\beta_{0,n})
        \le \frac{\tau \bar \kappa_n^+  +  \bar \kappa_n^-}{\tau + 1}
        = \frac{2}{\tau + 1} 
        \!
        \left(
        \< \vmu, \hat\vbeta_n \> 
        + \left\< \frac{\bar\zz^+_n - \bar\zz^-_n}{2}, \hat\vbeta_n \right\>
        \right)
        \le 
        \frac{2}{\tau + 1} \left( \norm{\vmu}_2 + \norm{ \wt \zz_n }_2 \right),
\end{equation*}
where
\begin{equation*}
    \wt \zz_n := \frac{\bar\zz^+_n - \bar\zz^-_n}{2},
    \quad
    \wt \zz_n \,|\, \yy \sim \normal\left( \bzero, \frac14\Bigl( \frac{1}{n_+} + \frac{1}{n_-} \Bigr) \bI_d \right).
\end{equation*}
Therefore,
\begin{equation*}
    0   \le   \hat\kappa_n  =  \kappa(\hat\vbeta_n, \hat\beta_{0,n}) \ind_{1 \le n_+ \le n-1} 
    \le \frac{2}{\tau + 1} \left( 
        \norm{\vmu}_2 + \norm{ \wt \zz_n }_2 \ind_{1 \le n_+ \le n-1}
     \right).
\end{equation*}
In order to prove $\hat\kappa_n^2$ is u.i., it suffices to show that $\norm{ \wt \zz_n }_2^2 \ind_{1 \le n_+ \le n-1}$ is u.i.. Next, we prove this by establishing that $\norm{ \wt \zz_n }_2^2 \ind_{1 \le n_+ \le n-1}$ converges in $\cL^1$. It requires two steps (by Scheffé's Lemma):
\begin{subequations}
\begin{align}
    \E \left[ \norm{ \wt \zz_n }_2^2 \ind_{1 \le n_+ \le n-1} \right] \to \frac{1}{4\delta} \left( \frac{1}{\pi} + \frac{1}{1-\pi} \right),
    \label{eq:svm_ui_a}
    \\
    \norm{ \wt \zz_n }_2^2 \ind_{1 \le n_+ \le n-1} \conp \frac{1}{4\delta} \left( \frac{1}{\pi} + \frac{1}{1-\pi} \right).
    \label{eq:svm_ui_b}
\end{align}
\end{subequations}
For \cref{eq:svm_ui_a}, Observe
\begin{equation*}
    \begin{aligned}
      & \E \left[ \norm{ \wt \zz_n }_2^2 \ind_{1 \le n_+ \le n-1} \right]
    = \E \left[ \E\bigl[ \norm{ \wt \zz_n }_2^2 \ind_{1 \le n_+ \le n-1} \,|\, \yy \bigr] \right] \\
    = {} & \E \left[ \frac{d}{4} \Bigl( \frac{1}{n_+} + \frac{1}{n_-} \Bigr) \ind_{1 \le n_+ \le n-1} \right]
    = \frac{d}{4n} \E \left[ \Bigl( \frac{n}{n_+} + \frac{n}{n_-} \Bigr) \ind_{1 \le n_+ \le n-1} \right].
    \end{aligned}
\end{equation*}
To evaluate the expected value, note that (by law of large numbers)
\begin{equation*}
    \frac{n}{n_+} \ind_{1 \le n_+ \le n-1} \le \frac{2n}{n_+ + 1},
    \qquad
    \frac{n}{n_+}\ind_{1 \le n_+ \le n-1} \conp \frac{1}{\pi},
    \qquad
    \frac{2n}{n_+ + 1} \conp \frac{2}{\pi}.
\end{equation*}
A classical result \cite{chao1972negative} gives
\begin{equation*}
    \lim_{n \to \infty} \E\left[ \frac{2n}{n_+ + 1} \right]
    = \lim_{n \to \infty} \frac{2n\left( 1 - (1-\pi)^{n+1} \right)}{(n+1)\pi} = \frac{2}{\pi}.
\end{equation*}
So $\frac{2n}{n_+ + 1} \conL{1} \frac{2}{\pi}$, which implies $\frac{2n}{n_+ + 1}$ is u.i., and so is $\frac{n}{n_+} \ind_{1 \le n_+ \le n-1}$. Therefore, by Vitali convergence theorem, we have $\frac{n}{n_+} \ind_{1 \le n_+ \le n-1} \conL{1} \frac{1}{\pi}$. Similar arguments give $\frac{n}{n_-} \ind_{1 \le n_+ \le n-1} \conL{1} \frac{1}{1-\pi}$. Hence
\begin{equation*}
    \lim_{n \to \infty} \E \left[ \norm{ \wt \zz_n }_2^2 \ind_{1 \le n_+ \le n-1} \right]
    = \lim_{n \to \infty} \frac{d}{4n} \cdot \lim_{n \to \infty} \E \left[ \Bigl( \frac{n}{n_+} + \frac{n}{n_-} \Bigr) \ind_{1 \le n_+ \le n-1} \right]
    = \frac{1}{4\delta} \left( \frac{1}{\pi} + \frac{1}{1-\pi} \right).
\end{equation*}
For \cref{eq:svm_ui_b}, notice that $\norm{ \wt \zz_n }_2^2 \,|\, \yy \sim a_n \chi_d^2$, where $a_n = \frac{1}{4} ( \frac{1}{n_+} + \frac{1}{n_-})$. By concentration inequality (e.g., \cref{lem:subG_concentrate}\ref{lem:subG-Hanson-Wright-I}), we have
\begin{equation*}
    \P\left( \abs{ \norm{ \wt \zz_n }_2^2 - d a_n } \ge \varepsilon \,\big|\, \yy \right) 
    \le 2\exp\left( -c \min\left\{ \frac{\varepsilon^2}{d a_n^2}, \frac{\varepsilon}{a_n} \right\} \right)
    = o_\P(1),
\end{equation*}
where $c > 0$ is a constant, $a_n = o_\P(1)$, $d a_n \conp \frac{1}{4\delta} ( \frac{1}{\pi} + \frac{1}{1-\pi} )$. By taking expectation on both sides and using bounded convergence theorem, we have $\norm{ \wt \zz_n }_2^2 - d a_n = o_\P(1)$. Then we get \cref{eq:svm_ui_b}.

Finally, \cref{eq:svm_ui_a} and \eqref{eq:svm_ui_b} imply that $\norm{ \wt \zz_n }_2^2 \ind_{1 \le n_+ \le n-1}$ converges in $\cL^1$, and thus is u.i.. So $\hat\kappa_n^2$ is also u.i.. By Vitali convergence theorem, convergence in probability of $\hat\kappa_n$ can be strengthen to $\cL^2$ convergence. 

This concludes the proof of part \ref{thm:SVM_main_mar} for $\delta < \delta^*(0)$.
\end{proof}

\begin{proof}[\textbf{\emph{\ref{thm:SVM_main_mar}, $\delta > \delta^*(0)$:}}]
For non-separable regime, we cannot work with $\xi_{n, \kappa}$ in \cref{eq:xi_n_kappa} to show a negative margin, since $\hat\kappa_n \ge 0$ always holds (by taking $\vbeta = 0$, $\beta_0 = 0$). To this end, we define
\begin{equation*}
    \Xi_{n, \kappa} := \min_{ \substack{ \norm{\vbeta}_2 = 1 \\ \beta_0 \in \R} } \frac{1}{\sqrt{d}} \norm{ \left( \kappa \bs_\yy - \yy \odot \XX \vbeta  - \beta_0 \yy \right)_+ }_2,
\end{equation*}
which replace the constraint $\norm{\vbeta}_2 \le 1$ in $\xi_{n, \kappa}$ by $\norm{\vbeta}_2 = 1$. Here we define the margin as
\begin{equation}\label{eq:wt_kappa_def}
    \wt\kappa_n := \sup\{ \kappa \in \R:  \Xi_{n,\kappa} = 0 \}.
\end{equation}
Note that $\wt\kappa_n = \hat\kappa_n$ on separable data, but $\wt\kappa_n$ is allowed to be negative. Then our goal is to show
\begin{equation}\label{eq:neg_kappa}
    \wt\kappa_n \le -\overline{\kappa}
\end{equation}
holds for some $\overline{\kappa} > 0$ with high probability. Then followed by the proof outline at the beginning of \cref{append_subsec:sep}, we can also define a series of random variables in a similar way:
\begin{align*}
    \Xi'_{n, \kappa, B} & := \min_{ \substack{ \norm{\vbeta}_2 = 1 \\ \abs{\beta_0} \le B } } \max_{ \substack{ \norm{\blambda}_2 \le 1 \\ \blambda \odot \yy \ge 0} } \frac{1}{\sqrt{d}} \blambda^\top \left( \kappa \bs_\yy \odot \yy - \XX \vbeta  - \beta_0 \bone \right),
    \\
    \Xi_{n, \kappa, B}'^{(1)} & := \min_{ \substack{ \rho^2 + \norm{\vtheta}_2^2 = 1 \\ \abs{\beta_0} \le B } } \max_{ \substack{ \norm{\blambda}_2 \le 1 \\ \blambda \odot \yy \ge 0} } \frac{1}{\sqrt{d}}  \left(
    \norm{\blambda}_2 \vg^\top \btheta + \norm{\btheta}_2 \vh^\top \blambda + \blambda^\top \bigl( 
        \kappa \bs_\yy \odot \yy - \rho\norm{\bmu}_2 \yy + \rho \vu - \beta_0 \bone
     \bigr)
     \right),
    \\
    \bar\Xi'^{(2)}_{\kappa, B} & :=  \min_{ \substack{ \rho^2 + r^2 = 1, r \ge 0 \\  \abs{\beta_0} \le B } }
    -r + \sqrt{\delta} \left( \E\left[ \bigl(  s(Y) \kappa - \rho \norm{\bmu}_2 + \rho G_1 + rG_2 - \beta_0 Y \bigr)_+^2 \right] \right)^{1/2},
\end{align*}
where the constraints $\norm{\vbeta}_2 \le 1$, $\rho^2 + \norm{\vtheta}_2^2 \le 1$, and $\rho^2 + r^2 \le 1$ in $\xi'_{n, \kappa, B}$, $\xi_{n, \kappa, B}'^{(1)}$, and $\bar\xi'^{(2)}_{\kappa, B}$ all become equality constraints. Then we follow the same arguments in Step 1---5 (\cref{lem:over_beta0}---\ref{lem:over_mar_conp}).
\begin{itemize}
    \item Analogous to the proof of \cref{lem:over_beta0}, we have $| \P\bigl( \Xi_{n, \kappa} = 0 \bigr) - \P\bigl(\Xi'_{n, \kappa, B} = 0 \bigr) | \to 0$.
    \item Analogous to the proof of \cref{lem:over_CGMT}, we can apply CGMT \cref{lem:CGMT} to connect $\Xi'_{n, \kappa, B}$ with $\Xi_{n, \kappa, B}'^{(1)}$. 
\begin{equation*}
    \P\left( \Xi'_{n, \kappa, B} \le t \vphantom{\Xi'^{(1)}_{n, \kappa, B}} \right) 
    \le 2\, \P\left( \Xi'^{(1)}_{n, \kappa, B} \le t \right).
\end{equation*}
    Here we only get a one-sided inequality since $\{(\rho, \vtheta): \rho^2 + \norm{\vtheta}_2^2 = 1 \}$ is non-convex.
    \item Analogous to the proof of \cref{lem:over_ULLN}, we have $\Xi'^{(1)}_{n, \kappa, B}  \conp ( \bar\Xi'^{(2)}_{\kappa, B} )_+$.
    \item Notice that the optimal $r$ in $\bar\Xi'^{(2)}_{\kappa, B}$ must be nonnegative. Hence, by substituting $r = \sqrt{1 - \rho^2}$, we have $\bar\Xi'^{(2)}_{\kappa, B} = \bar\xi_{\kappa}^{(3)}$ (\cref{eq:xi3_kappa}) for some $B > 0$ large enough.
\end{itemize}
Recall that in the proof of \cref{lem:over_phase_trans} and \ref{lem:over_mar_conp}, if $\delta > \delta^*(0)$, then there exists a $\kappa_0 < 0$, such that $\bar\xi_{\kappa_0}^{(3)} = 0$. According to \cref{eq:phase_equiv}, for any $\varepsilon > 0$ small enough, by using above relations, we have 
\begin{align*}
\bar\xi_{\kappa_0 + \varepsilon}^{(3)} = \bar\Xi'^{(2)}_{\kappa_0 + \varepsilon, B} > 0
\quad & \Longrightarrow \quad
\Xi'^{(1)}_{n, \kappa_0 + \varepsilon, B}  \conp \bigl( \bar\xi^{(3)}_{\kappa_0 + \varepsilon} \bigr)_+ > 0
\\
\quad \Longrightarrow \quad
\Xi'_{n, \kappa_0 + \varepsilon, B} > 0 ~ \text{w.h.p.}
\quad & \Longrightarrow \quad
\Xi_{n, \kappa_0 + \varepsilon} > 0 ~ \text{w.h.p.}.
\end{align*}
By \cref{eq:wt_kappa_def}, $\wt\kappa_n < \kappa_0 + \varepsilon < 0$ holds with high probability (by taking $\varepsilon$ to be sufficiently small), which proves \cref{eq:neg_kappa}.

This concludes the proof of part \ref{thm:SVM_main_mar} for $\delta > \delta^*(0)$.
\end{proof}

\begin{proof}[\textbf{\emph{\ref{thm:SVM_main_param}, \ref{thm:SVM_main_logit}:}}]
We have shown parameter and ELD convergence for the case $\tau = 1$ in \cref{lem:over_logit_conv}. Now for any $\tau \ge 1$, denote $\hat\vbeta_n(\tau), \hat\beta_{0, n}(\tau), \hat\kappa_n(\tau)$ as the max-margin solution to \cref{eq:over_max-margin}, and define
\begin{equation*}
     \hat\rho_n(\tau) :=  \left\< \hat\vbeta_n(\tau), \frac{\vmu}{\norm{\vmu}_2 } \right\>.
\end{equation*}
Similarly, denote $\rho^*(\tau), \beta_0^*(\tau), \kappa^*(\tau)$ as the optimal solution to \cref{eq:SVM_asymp_simple}. By \cref{prop:SVM_tau_relation},
\begin{itemize}
    \item We have
    \begin{equation}\label{eq:param_hat_tau}
        \hat\rho_n(\tau) = \hat\rho_n(1),
        \qquad
        \hat\beta_{0, n}(\tau) = \hat\beta_{0, n}(1) + \frac{\tau - 1}{\tau + 1} \hat\kappa_n(1). 
    \end{equation}

    \item We can write
    \begin{equation}\label{eq:Ln_hat_tau}
        \begin{aligned}
            \hat \nu_{n} & = \frac1n \sum_{i=1}^n \delta_{ \left(y_i, \< \xx_i, \hat\vbeta_n \> + \hat\beta_{0,n}(\tau) \right)  \ind\{\cD_n^c\}  }
        =: \Law\left( Y' \ind_{\cD_n^c}, \bigl( \< \xx', \hat\vbeta_n \> + \hat\beta_{0,n}(\tau) \bigr) \ind_{\cD_n^c} \right) \\
        & = \Law \left( Y' \ind_{\cD_n^c},  \bigl( \< \xx', \hat\vbeta_n \> + \hat\beta_{0,n}(1) \bigr) \ind_{\cD_n^c}
        + \frac{\tau - 1}{\tau + 1} \hat\kappa_n(1)
        \right).
        \end{aligned}
    \end{equation}
\end{itemize}
Besides, according to \cref{cor:asymp_tau_relation},
\begin{itemize}
    \item We have
    \begin{equation}\label{eq:param_star_tau}
        \rho^*(\tau) = \rho^*(1),
        \qquad
        \beta_0^*(\tau) = \beta_0^*(1) + \frac{\tau - 1}{\tau + 1} \kappa^*(1).
    \end{equation}

    \item We can also write
    \begin{equation}\label{eq:Ln_star_tau}
    \begin{aligned}
        \nu_*
        & = \Law \, \Bigl( Y, Y \max \bigl\{ s(Y)\kappa^*(\tau) , G + \rho^*\norm{\vmu}_2 + \beta_0^*(\tau) Y \bigr\} \Bigr) \\
        & = \Law \left( Y, Y \max \bigl\{ \kappa^*(1) , G + \rho^*\norm{\vmu}_2 + \beta_0^*(1) Y \bigr\} 
        + \frac{\tau - 1}{\tau + 1} \kappa^*(1)
        \right).
    \end{aligned}
    \end{equation}
\end{itemize}
We have shown $\hat\kappa_n(1) \conL{2} \kappa^*(1)$ and $\hat\beta_{0,n}(1) \conp \beta_0^*(1)$ in \cref{lem:over_logit_conv}. Then by continuous mapping theorem, comparing \cref{eq:param_hat_tau} and \eqref{eq:param_star_tau}, it follows that $\hat\beta_{0, n}(\tau) \conp \beta_0^*(\tau)$ for any $\tau > 0$. 

In \cref{lem:over_logit_conv}, we have shown that $W_2\bigl( \hat \nu_{n}, \nu_* \bigr) = o_\varepsilon(1)$ for $\tau = 1$ with high probability, i.e.,
\begin{equation}
\label{eq:logit_conv_1}
    W_2 \Bigl(
        \Law \Bigl(
        Y'\ind_{\cD_n^c} ,
        \underbrace{    
        \bigl( \< \xx', \hat\vbeta_n \> + \hat\beta_{0,n}(1) \bigr) \ind_{\cD_n^c}
        }_{ =: U_n} \Bigr)
        ,
        \Law \Bigl( 
        Y,
            \underbrace{    
        Y\max \bigl\{ \kappa^*(1) , G + \rho^*\norm{\vmu}_2 + \beta_0^*(1) Y \bigr\} 
        }_{=: U^*} \Bigr)
        \Bigr)
    \! = \!
    o_\varepsilon(1).
\end{equation}
Then there exists a coupling $(Y', Y, U_n, U^*)$ such that, with high probability, 
\begin{align*}
    W_2( \hat \nu_{n},  \nu_* )
    & \le   
        \left( \E_{\cdot | n} \bigl[ (
           Y'\ind_{\cD_n^c} - Y 
        )^2 \bigr] \right)^{\frac12} 
        + \left( \E_{\cdot | n} \biggl[ \Bigl(
           U_n - U^*  + \frac{\tau - 1}{\tau + 1} \hat\kappa_n(1) - \frac{\tau - 1}{\tau + 1} \kappa^*(1)
        \Bigr)^2 \biggr] \right)^{\frac12} 
        \\
    & \overset{\mathmakebox[0pt][c]{\text{(i)}}}{\le}
        \left( \E_{\cdot | n} \bigl[ (
           Y'\ind_{\cD_n^c} - Y 
        )^2 \bigr] \right)^{\frac12} 
        +
        \left( \E_{\cdot | n} \bigl[ (
            U_n - U^* 
        )^2 \bigr] \right)^{\frac12} 
        +
        \frac{\tau - 1}{\tau + 1}
         \left( \E_{\cdot | n} \bigl[ (
            \hat\kappa_n(1) - \kappa^*(1) 
        )^2 \bigr] \right)^{\frac12}
        \\
    & \overset{\mathmakebox[0pt][c]{\text{(ii)}}}{=}  o_\varepsilon(1),
\end{align*}
where in (i) we use Minkowski inequality, while in (ii) we use $\hat\kappa_n(1) \conL{2} \kappa^*(1)$ in \ref{thm:SVM_main_mar} and \cref{eq:logit_conv_1}. By taking $\varepsilon \to 0$, we can show that $W_2\bigl( \hat \nu_{n}, \nu_* \bigr) \conp 0$ holds for any $\tau > 0$.

For TLD convergence, we give a proof of $\hat\nu_{n}^\mathrm{test} \conw \nu_*^\mathrm{test}$. Write $\xx_\mathrm{new} = y_\mathrm{new} \bmu + \zz_\mathrm{new}$, $\zz_\mathrm{new} \sim \normal(\bzero, \bI_d)$, and recall $\hat\nu_{n}^\mathrm{test} = \Law \bigl(y^\mathrm{new}, \< \xx^\mathrm{new}, \hat\vbeta_n \> + \hat\beta_{0,n} \bigr)$. Let $G \sim \normal(0, 1)$ and $G \indep y^\mathrm{new}$, then
\begin{align*}
   \< \xx^\mathrm{new}, \hat\vbeta_n \> + \hat\beta_{0,n}
    & = \< y^\mathrm{new}\bmu + \zz^\mathrm{new}, \hat\vbeta_n \> + \hat\beta_{0,n} \\
    & = y^\mathrm{new} \hat\rho_n \norm{\bmu}_2 + \< \zz^\mathrm{new}, \hat\vbeta_n \>
    + \hat\beta_{0,n} \\
    & \cond y^\mathrm{new} ( \rho^* \norm{\bmu}_2 + G + y^\mathrm{new} \beta_{0}^*),
\end{align*}
where in the last line we use Slutsky's theorem and $y^\mathrm{new} \indep (y^\mathrm{new}\zz^\mathrm{new}, \hat\vbeta_n, \hat\beta_{0,n})$.

This concludes the proof of part \ref{thm:SVM_main_param} and \ref{thm:SVM_main_logit}.
\end{proof}

\begin{proof}[\textbf{\emph{\ref{thm:SVM_main_err}:}}]
In \ref{thm:SVM_main_logit} above, we showed that
\begin{equation*}
        \hat f(\xx_\mathrm{new}) = \< \xx_\mathrm{new}, \hat\vbeta_n \> + \hat\beta_{0,n}
        \cond y_\mathrm{new} \rho^* \norm{\bmu}_2 + G + \beta_{0}^*.
\end{equation*}
Therefore, by bounded convergence theorem, the errors have their limits
\begin{align*}
        \lim_{n \to \infty} \Err_{+,n} & = \P\left( + \rho^* \norm{\bmu}_2 + G + \beta_{0}^* \le 0 \right)
        = \Phi \left(- \rho^* \norm{\bmu}_2  - \beta_0^* \right), \\
        \lim_{n \to \infty} \Err_{-,n} & = \P\left( - \rho^* \norm{\bmu}_2 + G + \beta_{0}^* >  0 \right)
        = \Phi \left(- \rho^* \norm{\bmu}_2  + \beta_0^* \right).
\end{align*}
This concludes the proof of part \ref{thm:SVM_main_err}.
\end{proof}
Finally, we complete the proof of \cref{thm:SVM_main}.
\end{proof}

\subsection{Analysis of the asymptotic optimization problem: Proof of \cref{lem:gordon_eq}}
\label{subsec:over_asymp}

We provide an analysis of the low dimensional asymptotic optimization problem \cref{eq:SVM_asymp_simple} in this subsection. The conclusion below has been used in the proofs of \cref{thm:SVM_main}\ref{thm:SVM_main_var}, \ref{thm:SVM_main_param} and \ref{thm:SVM_main_logit}. It will be also used in \cref{append_sec:mar_reb} to obtain monotonicity results.

For $G \sim \normal(0, 1)$ and $t \in \R$, we define two auxiliary functions
\begin{equation}
\label{eq:fun_g1_g2}
	g_1 (t) := \E \left[ (G + t)_+ \right], \qquad g_2 (t) := \E \left[ (G + t)_+^2 \right].
\end{equation}
Clearly both $g_1$ and $g_2$ are strictly increasing mappings from $\R$ to $\R_{>0}$. Then $g := g_2 \circ g_1^{-1}$ is also strictly increasing. The following lemma shows that the limiting parameters $(\rho^*, \beta_0^*, \kappa^*)$ defined in \cref{thm:SVM_main} can be characterized by the following system of equations, involving $g$ and $g_1^{-1}$.

\begin{lem}[Analysis of the asymptotic problem]
    \label{lem:gordon_eq}
    In the separable regime $\delta < \delta^*(0)$, $(\rho^*, \beta_0^*, \kappa^*)$ is the unique solution to the system of equations
    \begin{subequations}
    \begin{align}
    \label{eq:SVM_sys_eq_rho}
        \pi \delta \cdot g \left( \frac{\rho}{2 \pi \norm{\bmu}_2 \delta} \right) & + (1 - \pi) \delta \cdot g \left( \frac{\rho}{2(1 - \pi) \norm{\bmu}_2 \delta} \right) = 1 - \rho^2, \\
    \label{eq:SVM_sys_eq_bk1}
    - \beta_0 + \kappa \tau & = \rho \norm{\bmu}_2 + g_1^{-1} \left( \frac{\rho}{2 \pi \norm{\bmu}_2 \delta} \right), \\
    \label{eq:SVM_sys_eq_bk2}
	\beta_0 + \kappa & = \rho \norm{\bmu}_2 + g_1^{-1} \left( \frac{\rho}{2 (1 - \pi) \norm{\bmu}_2 \delta} \right),
    \end{align}
    \end{subequations}
    where $\rho^* \in (0, 1)$ does not depend on $\tau$ and $\kappa^* > 0$.
\end{lem}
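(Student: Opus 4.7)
The plan is to analyze problem \eqref{eq:SVM_asymp_simple} as a constrained maximization in the scalar parameters $(\rho,\beta_0,\kappa)$, derive the KKT conditions, and then rewrite them in the claimed form using the auxiliary functions $g_1,g_2$ defined in \eqref{eq:fun_g1_g2}. First I would use the symmetry $G \dequal -G$ and condition on $Y$ to rewrite the denominator of $H_\kappa$ as
\begin{equation*}
\E\bigl[(s(Y)\kappa-\rho\|\bmu\|+G-\beta_0 Y)_+^2\bigr]=\pi\, g_2(\tau\kappa-\beta_0-\rho\|\bmu\|)+(1-\pi)\,g_2(\kappa+\beta_0-\rho\|\bmu\|).
\end{equation*}
Writing the constraint as $F(\rho,\beta_0,\kappa)\le 0$ with
\begin{equation*}
F(\rho,\beta_0,\kappa):=\pi\delta\, g_2(\tau\kappa-\beta_0-\rho\|\bmu\|)+(1-\pi)\delta\, g_2(\kappa+\beta_0-\rho\|\bmu\|)-(1-\rho^2),
\end{equation*}
the problem becomes: maximize $\kappa$ subject to $F\le 0$ and $\rho\in[-1,1]$.

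Next I would justify that the constraint is active at the optimum and Slater's condition holds (since $\delta<\delta^*(0)$ ensures $\kappa^*>0$ by \eqref{eq:kappa_star} and feasibility is clear at $\rho=0$, $\beta_0=0$, $\kappa=0$), so KKT is necessary and sufficient. Using the identity $g_2'=2g_1$, the stationarity in $\beta_0$ gives
\begin{equation*}
\pi\, g_1(\tau\kappa-\beta_0-\rho\|\bmu\|)=(1-\pi)\,g_1(\kappa+\beta_0-\rho\|\bmu\|),
\end{equation*}
and stationarity in $\rho$ (after using the above) yields
\begin{equation*}
\rho=2\pi\delta\|\bmu\|\, g_1(\tau\kappa-\beta_0-\rho\|\bmu\|)=2(1-\pi)\delta\|\bmu\|\, g_1(\kappa+\beta_0-\rho\|\bmu\|).
\end{equation*}
Since $g_1:\mathbb R\to\mathbb R_{>0}$ is strictly increasing (hence invertible onto its range), inverting gives \eqref{eq:SVM_sys_eq_bk1}--\eqref{eq:SVM_sys_eq_bk2}; substituting back into the active constraint $F=0$ and using $g=g_2\circ g_1^{-1}$ yields \eqref{eq:SVM_sys_eq_rho}.

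For the qualitative properties of the solution, I would argue as follows. Uniqueness of $(\rho^*,\beta_0^*)$ for a fixed $\kappa^*$ is exactly \cref{lem:H_kappa_1}, and uniqueness of $\kappa^*$ follows from the strict monotonicity of $\kappa\mapsto\delta^*(\kappa)$ established in the proof of \cref{lem:over_phase_trans} (see \eqref{eq:kappa_star}). The bound $\rho^*<1$ is part of \cref{lem:H_kappa_1}, while $\rho^*>0$ can be obtained by a direct computation showing $\partial_\rho H_{\kappa^*}(0,\beta_0)=2\|\bmu\|\E[(s(Y)\kappa^*+G-\beta_0 Y)_+]/D(0,\beta_0)^2>0$, so $\rho=0$ cannot be a maximizer. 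Finally, the $\tau$-independence of $\rho^*$ is immediate from two observations: equation \eqref{eq:SVM_sys_eq_rho} determines $\rho^*$ without involving $\tau$, and this matches the pre-asymptotic fact that $\hat\vbeta_n$ (hence $\hat\rho_n$) does not depend on $\tau$ (\cref{prop:SVM_tau_relation}\ref{lem:indep_tau(a)}).

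The main obstacle I anticipate is purely bookkeeping: the KKT calculation must reconcile the joint optimization in $(\rho,\beta_0,\kappa)$ with the earlier two-step characterization $(\rho^*,\beta_0^*)=\argmax H_{\kappa^*}$ plus $\delta^*(\kappa^*)=\delta$, and one must verify that the Lagrange multiplier associated with $F\le 0$ is strictly positive so that both stationarity conditions actually bite. This is handled by noting $\partial_\kappa F=2\pi\tau\delta\, g_1(\tau\kappa-\beta_0-\rho\|\bmu\|)+2(1-\pi)\delta\, g_1(\kappa+\beta_0-\rho\|\bmu\|)>0$, which also confirms that an improvement in $\kappa$ is always available unless the constraint is tight—closing the argument.
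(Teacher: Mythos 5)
Your proposal is correct and follows essentially the same route as the paper's own proof: rewrite the constraint as $F\le 0$, use $g_2'=2g_1$ to convert the stationarity conditions in $(\rho,\beta_0)$ into \eqref{eq:SVM_sys_eq_bk1}--\eqref{eq:SVM_sys_eq_bk2}, plug back into the active constraint $F=0$ to get \eqref{eq:SVM_sys_eq_rho}, and read off the $\tau$-independence of $\rho^*$ from the fact that \eqref{eq:SVM_sys_eq_rho} involves neither $\tau$ nor $\kappa$.

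One small wrinkle in your Slater's-condition justification: the point $(\rho,\beta_0,\kappa)=(0,0,0)$ is \emph{not} in general feasible, since $F(0,0,0)=\delta g_2(0)-1=\delta/2-1$, which is nonnegative whenever $\delta\ge 2$, a situation that can certainly occur in the separable regime (indeed $\delta^*(0)$ can be arbitrarily large when $\|\bmu\|_2$ is large). Strict feasibility is available, but the right witness is either $(\rho^*,\beta_0^*,0)$ (using $\kappa^*>0$ and $\partial_\kappa F>0$, which you compute later) or any $(\rho,\beta_0)$ achieving $H_0(\rho,\beta_0)>\delta$, which exists by definition of $\delta^*(0)>\delta$. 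Alternatively, you can sidestep Slater entirely: the paper's earlier analysis (via \cref{lem:H_kappa_1} and the characterization $\delta^*(\kappa^*)=\delta$) already identifies $(\rho^*,\beta_0^*)$ as the unconstrained maximizer of $H_{\kappa^*}$, and a short calculation shows that $\nabla_{\rho,\beta_0}H_{\kappa^*}=0$ on the surface $\{H_{\kappa^*}=\delta\}$ is equivalent to $\nabla_{\rho,\beta_0}F=0$, so the stationarity conditions follow without a duality argument. The rest of your argument---including the derivation via $g_2'=2g_1$, the observation that $\partial_\rho H_{\kappa^*}(0,\beta_0)>0$ rules out $\rho^*=0$, and the use of \cref{lem:H_kappa_1} and the strict monotonicity of $\delta^*(\cdot)$ for uniqueness---is sound and matches the paper's reasoning; the paper obtains $\rho^*>0$ slightly differently (by intersecting the monotone L.H.S. and R.H.S. of \eqref{eq:SVM_sys_eq_rho}), but both arguments are valid.
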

\begin{proof}
    Recall that in the proof of \cref{lem:over_phase_trans} and \ref{lem:over_logit_conv}, we established that $(\rho^*, \beta_0^*, \kappa^*)$ is the unqiue solution to
    \begin{equation*}
        \begin{array}{rl}
        \maximize\limits_{\rho \in [0, 1], \beta_0 \in \R, \kappa \in \R} & \kappa, \\
        \text{\emph{subject to}} & H_\kappa(\rho, \beta_0) \ge \delta.
        \end{array}
    \end{equation*}
    Let $F(\rho, \beta_0, \kappa) := F_\kappa(\rho, \beta_0)$, where $F_\kappa$ is defined in \cref{eq:T_F_}. Then the above optimization problem is equivalent to
    \begin{equation*}
        \begin{array}{rl}
        \maximize\limits_{\rho \in [0, 1], \beta_0 \in \R, \kappa \in \R} & \kappa, \\
        \text{\emph{subject to}} & F(\rho, \beta_0, \kappa) \le 0,
        \end{array}
    \end{equation*}
    Note $F$ is convex (since $x \mapsto (x)_+^2$ is a convex map, and expectation preserves convexity). Setting $\partial_{\rho} F = 0$ and $\partial_{\beta_0} F = 0$, we obtain the first-order conditions satisfied by $(\rho^*, \beta_0^*)$:
    \begin{equation}
        \label{eq:SVM_foc1}
    \begin{aligned}
    & \E \left[ \big( G - \rho \norm{\bmu}_2 - \beta_0 + \kappa \tau \big)_+ \right] = \frac{\rho}{2 \pi \norm{\bmu}_2 \delta}, \\
    & \E \left[ \big( G - \rho \norm{\bmu}_2 + \beta_0 + \kappa \big)_+ \right] = \frac{\rho}{2 (1 - \pi) \norm{\bmu}_2 \delta}.
    \end{aligned}
    \end{equation}
Moreover, we have $\delta^*(\kappa^*) = \delta$ and thus $F(\rho, \kappa, \beta_0) = 0$ at $(\rho^*, \kappa^*, \beta_0^*)$, which leads to
\begin{equation}\label{eq:SVM_foc2}
    \pi \delta \E \left[ \big( G - \rho \norm{\bmu}_2 - \beta_0 + \kappa \tau \big)_+^2 \right]  + (1-\pi) \delta \E \left[ \big( G - \rho \norm{\bmu}_2 + \beta_0 + \kappa \big)_+^2 \right] = 1 - \rho^2.
\end{equation}
Using $g_1$, $g_2$ defined in \cref{eq:fun_g1_g2}, the first-order conditions \cref{eq:SVM_foc1} can be rewritten as
\begin{equation}
\label{eq:SVM_foc1_ref}
\begin{aligned}
	& g_1 \left( - \rho \norm{\bmu}_2 - \beta_0 + \kappa \tau \right) = \frac{\rho}{2 \pi \norm{\bmu}_2 \delta}, \\
	& g_1 \left( - \rho \norm{\bmu}_2 + \beta_0 + \kappa \right) = \frac{\rho}{2 (1 - \pi) \norm{\bmu}_2 \delta}.
\end{aligned}
\end{equation}
Similarly, we recast \cref{eq:SVM_foc2} into
\begin{equation}
\label{eq:SVM_foc2_ref}
    \pi \delta g_2 \left( - \rho \norm{\bmu}_2 - \beta_0 + \kappa \tau \right) 
    + (1-\pi) \delta g_2 \left( - \rho \norm{\bmu}_2 + \beta_0 + \kappa \right) = 1 - \rho^2.
\end{equation}
By combining \cref{eq:SVM_foc1_ref} and \eqref{eq:SVM_foc2_ref}, we get \cref{eq:SVM_sys_eq_rho}. \cref{eq:SVM_sys_eq_bk1} and \eqref{eq:SVM_sys_eq_bk2} directly come from \cref{eq:SVM_foc1_ref}.

Note that function $g: \R_{> 0} \to \R_{ > 0}$ satisfies $g(0^+) = 0$. As $\rho$ varies from $0$ to $1$, the L.H.S. of \cref{eq:SVM_sys_eq_rho} increases from $0$ to a positive number while the R.H.S. decays to $0$, which guarantees the existence and uniqueness of $\rho^* > 0$. Since \cref{eq:SVM_sys_eq_rho} does not depend on $\tau$ and $\kappa^*$, we know that $\rho^*$ does not depend on $\tau$ and $\kappa^*$. This concludes the proof.
\end{proof}

In parallel to \cref{prop:SVM_tau_relation} for the original non-asymptotic problem, we provide the following similar result on the asymptotic problem \cref{eq:SVM_asymp_simple}.

\begin{cor}\label{cor:asymp_tau_relation}
        In the separable regime $\delta < \delta^*(0)$, let $(\rho^*(\tau), \beta_0^*(\tau), \kappa^*(\tau))$ be the optimal solution to \cref{eq:SVM_asymp_simple} under hyperparameter $\tau$. Then
        \begin{equation}\label{eq:asymp_tau_relation}
		    \rho^*(\tau) = \rho^*(1),
        \qquad
        \beta_0^*(\tau) = \beta_0^*(1) + \frac{\tau - 1}{\tau + 1} \kappa^*(1),
        \qquad
        \kappa^*(\tau) = \frac{2}{\tau + 1} \kappa^*(1).
	\end{equation}
\end{cor}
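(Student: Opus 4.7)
The plan is to deduce this corollary directly from the system of equations in \cref{lem:gordon_eq}, in close analogy with the deterministic argument for \cref{prop:SVM_tau_relation}. The key observation is that the asymptotic system consists of three equations \cref{eq:SVM_sys_eq_rho,eq:SVM_sys_eq_bk1,eq:SVM_sys_eq_bk2}, of which only the first determines $\rho^*$ in isolation, while the latter two form a linear system in $(\beta_0^*, \kappa^*)$ once $\rho^*$ is known. Crucially, \cref{eq:SVM_sys_eq_rho} does not involve $\tau$ at all, so the $\tau$-dependence is confined to the pair \cref{eq:SVM_sys_eq_bk1,eq:SVM_sys_eq_bk2}.

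First, I would invoke \cref{lem:gordon_eq} to conclude that $\rho^*(\tau) = \rho^*(1)$ for every admissible $\tau$, since both values are the unique solution of the same equation \cref{eq:SVM_sys_eq_rho}. Writing $\rho^* := \rho^*(1)$, I then define the two $\tau$-free constants
\begin{equation*}
A := \rho^* \norm{\vmu}_2 + g_1^{-1}\!\left( \frac{\rho^*}{2\pi \norm{\vmu}_2 \delta} \right),
\qquad
B := \rho^* \norm{\vmu}_2 + g_1^{-1}\!\left( \frac{\rho^*}{2(1-\pi) \norm{\vmu}_2 \delta} \right),
\end{equation*}
so that \cref{eq:SVM_sys_eq_bk1,eq:SVM_sys_eq_bk2} become the linear system $-\beta_0^*(\tau) + \tau \kappa^*(\tau) = A$ and $\beta_0^*(\tau) + \kappa^*(\tau) = B$.

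Adding these two equations eliminates $\beta_0^*(\tau)$ and yields $(\tau+1)\kappa^*(\tau) = A+B$, hence $\kappa^*(\tau) = (A+B)/(\tau+1)$. Applying this to $\tau = 1$ gives $A + B = 2\kappa^*(1)$, so $\kappa^*(\tau) = \tfrac{2}{\tau+1}\kappa^*(1)$. Substituting back into $\beta_0^*(\tau) = B - \kappa^*(\tau)$ and using $B = \beta_0^*(1) + \kappa^*(1)$ (again from $\tau=1$) produces
\begin{equation*}
\beta_0^*(\tau) = \beta_0^*(1) + \kappa^*(1) - \frac{2}{\tau+1}\kappa^*(1) = \beta_0^*(1) + \frac{\tau-1}{\tau+1}\kappa^*(1),
\end{equation*}
which is exactly \cref{eq:asymp_tau_relation}.

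There is no real obstacle here: once \cref{lem:gordon_eq} is in hand, the corollary is a two-line elimination in a $2 \times 2$ linear system. The only mild subtlety is confirming well-posedness, namely that the solution to \cref{eq:SVM_asymp_simple} is unique for every $\tau > 0$ (so that $(\rho^*(\tau), \beta_0^*(\tau), \kappa^*(\tau))$ is unambiguously defined), but this is precisely guaranteed by \cref{lem:gordon_eq} in the separable regime $\delta < \delta^*(0)$.
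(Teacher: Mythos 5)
Your proposal is correct and follows essentially the same route as the paper: both use \cref{lem:gordon_eq} to fix $\rho^*$ via the $\tau$-free equation \cref{eq:SVM_sys_eq_rho}, observe that the right-hand sides of \cref{eq:SVM_sys_eq_bk1} and \eqref{eq:SVM_sys_eq_bk2} are then constants independent of $\tau$, and solve the resulting $2\times 2$ linear system in $(\beta_0^*,\kappa^*)$. The only cosmetic difference is that you introduce the constants $A,B$ explicitly, whereas the paper equates the $\tau$ and $\tau=1$ systems directly; the algebra and conclusion are identical.
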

\begin{proof}
    Conclusion for $\rho^*$ is already shown in \cref{lem:gordon_eq}. For $\beta_0^*$ and $\kappa^*$, note that the R.H.S. of \cref{eq:SVM_sys_eq_bk1} and \eqref{eq:SVM_sys_eq_bk2} are constants under $\rho = \rho^*$ (depending on $\pi$, $\norm{\bmu}_2$ and $\delta$). Then we have
    \begin{align*}
        - \beta_0^*(\tau) + \kappa^*(\tau) \tau & = - \beta_0^*(1) + \kappa^*(1), \\
	\beta_0^*(\tau) + \kappa^*(\tau) & = \beta_0^*(1) + \kappa^*(1).
    \end{align*}
    Combining these two equations gives the expression of $\beta_0^*(\tau), \kappa^*(\tau)$ in terms of $\beta_0^*(1), \kappa^*(1)$ as in \cref{eq:asymp_tau_relation}, completing the proof.
\end{proof}

\subsection{Proof of \cref{prop:opt_transport}}

\begin{proof}[\textbf{Proof of \cref{prop:opt_transport}}]
We can prove a more general result by replacing $\cL_*^\mathrm{test}$ with $\mu$ and $\cL_*$ with $\nu := \Law(\max\{ \kappa^*, X \})$, where $X \sim \mu$ and $\mu$ is any probability measure with atomless (continuous) CDF $F_\mu$. As a special case, in \cref{prop:opt_transport} we consider $\mu$ as a mixture of two Gaussian distributions, and the cost function $c(x, y) = (x - y)^2$. 

We now prove the general statement. Note the CDF of $\nu$ has the form
\begin{equation*}
    F_\nu(t) :=  
    \begin{cases} 
    F_\mu(t) , & \ \text{if} \ t < \kappa^*, \\
    1, & \ \text{if} \ t \ge \kappa^*.
    \end{cases}
\end{equation*}
According to the optimal transport theory \cite[Theorem 2.5]{santambrogio2015optimal}, the unique (also monotone) optimal transport map from $\mu$ to $\nu$ is given by $\mathtt{T}^* := F_{\nu}^{-} \circ F_{\mu}$, where $F_{\nu}^{-}$ is the quantile function of $\nu$:
\begin{equation*}
    F_{\nu}^{-}(x) = \inf\left\{ t \in \R: F_{\nu}(t) \ge x \right\}
    = 
     \begin{cases} 
    F_\mu^{-1}(x) , & \ \text{if} \ x < F_\mu(\kappa^*), \\
    \kappa^*, & \ \text{if} \ x \ge F_\mu(\kappa^*).
    \end{cases}
\end{equation*}
Then we have $\mathtt{T}^*(x) := F_{\nu}^{-} \bigl( F_{\mu}(x) \bigr) = \max\{ \kappa^*, x \}$, which concludes the proof.
\end{proof}

\section{Logit distribution for non-separable data: Proofs for \cref{sec:logit_logistic}}
\label{append_sec:nonsep}

\subsection{Proof of \cref{thm:logistic_main}}

Throughout this section, we assume the loss function $\ell: \R \to \R_{\ge 0}$ is non-increasing, strictly convex, and twice differentiable. Based on these assumptions, we establish the following properties of $\ell$.
\begin{lem}\label{lem:ell}
    Let $\ell \in C^1(\R)$ be a nonnegative, non-increasing, and strictly convex function. Then
    \begin{enumerate}[label=(\alph*)]
        \item $\ell$ is strictly decreasing.
        \item $\ell(-\infty) = +\infty$ and $\ell(+\infty) = \underline{\ell}$ for some $\underline{\ell} \in [0, +\infty)$. 
    \end{enumerate}
\end{lem}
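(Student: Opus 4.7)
The plan is to handle the two parts separately, with (a) being an immediate consequence of strict convexity combined with non-increasingness, and (b) splitting into two sub-claims about the limits at $+\infty$ and $-\infty$.

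For (a), I would argue by contradiction: suppose $\ell(a) = \ell(b)$ for some $a < b$. Take the midpoint $c = (a+b)/2$. By strict convexity,
\begin{equation*}
\ell(c) < \tfrac12 \ell(a) + \tfrac12 \ell(b) = \ell(a),
\end{equation*}
but since $c < b$ and $\ell$ is non-increasing, $\ell(c) \ge \ell(b) = \ell(a)$, a contradiction. Hence $\ell$ is strictly decreasing. Note that strict convexity combined with $C^1$ also gives that $\ell'$ is strictly increasing, which I will use in part (b).

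For (b), the existence of $\underline{\ell} \in [0, \infty)$ is immediate: $\ell$ is now strictly decreasing by (a) and bounded below by $0$ (nonnegativity), so $\ell(+\infty) := \lim_{x \to +\infty} \ell(x)$ exists and lies in $[0, +\infty)$. The main content is $\ell(-\infty) = +\infty$. Since $\ell$ is non-increasing, $\ell'(x) \le 0$ for all $x$. Since $\ell$ is strictly convex, it cannot be constant on any interval, so $\ell' \not\equiv 0$; pick $x_0 \in \R$ with $\ell'(x_0) =: -\alpha < 0$. Because $\ell'$ is strictly increasing (as $\ell$ is $C^1$ and strictly convex), $\ell'(x) \le -\alpha$ for every $x \le x_0$. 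Integrating from $x < x_0$ to $x_0$,
\begin{equation*}
\ell(x) = \ell(x_0) - \int_x^{x_0} \ell'(t)\, \d t \ge \ell(x_0) + \alpha (x_0 - x) \longrightarrow +\infty \quad \text{as } x \to -\infty.
\end{equation*}

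I do not anticipate a genuine obstacle here; all the work is in choosing the cleanest formulation. The only subtle point is making sure the existence of a point $x_0$ with $\ell'(x_0) < 0$ is properly justified from strict convexity alone (since $\ell' \equiv 0$ would contradict strict convexity via the same midpoint-argument used in (a)). Everything else is standard monotone convergence for bounded monotone functions.
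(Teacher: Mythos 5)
Your proof is correct, but it takes a different route from the paper's on both parts. For (a), the paper argues through the derivative: $\ell' \le 0$ by monotonicity and $\ell'$ is strictly increasing by strict convexity, hence $\ell'(u) < 0$ for every $u$, which forces strict decrease; your midpoint contradiction reaches the same conclusion without ever invoking differentiability, so it is slightly more elementary (and would survive dropping the $C^1$ hypothesis). For (b), the divergence at $-\infty$ is where the approaches really differ. The paper assumes $\ell(-\infty)=\overline{\ell}<\infty$ and uses the single convexity inequality $\ell(u) \le \tfrac12\bigl(\ell(2u)+\ell(0)\bigr)$; letting $u \to -\infty$ gives $\overline{\ell} \le \tfrac12(\overline{\ell}+\ell(0))$, i.e.\ $\overline{\ell} \le \ell(0)$, contradicting strict decrease. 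This argument uses no derivative at all. You instead locate a point $x_0$ with $\ell'(x_0) = -\alpha < 0$ (correctly justified: $\ell' \equiv 0$ would contradict strict convexity), use strict monotonicity of $\ell'$ to get $\ell'(x) \le -\alpha$ for $x \le x_0$, and integrate to obtain the linear lower bound $\ell(x) \ge \ell(x_0) + \alpha(x_0 - x) \to +\infty$. Your version is quantitatively stronger (it shows at least linear growth at $-\infty$) but leans on the $C^1$ and strict-convexity-implies-$\ell'$-strictly-increasing facts, whereas the paper's scaling trick is shorter and works for merely continuous convex $\ell$. Both are complete and correct; the treatment of $\ell(+\infty)$ (bounded monotone limit) is identical.
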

\begin{proof}
    Notice that $\ell'(u) \le 0$ (by non-increasing) and $\ell'(u)$ is strictly increasing (by strict convexity), which implies that $\ell'(u) < 0$ for all $u \in \R$ and hence deduces part (a). For part (b), the limits $\lim_{u \to \pm \infty} \ell(u)$ are well-defined, and $\ell(+\infty) = \underline{\ell}$ for some $\underline{\ell} \in [0, +\infty)$ since $\ell$ is monotone and bounded from below. It remains to show $\ell(-\infty) = +\infty$.

    Assume $\ell(-\infty) = \overline{\ell} < \infty$ by contradiction. By convexity, we have $\ell(u) \le \frac12( \ell(2u) + \ell(0) )$ for any $u \in \R$. Taking $u \to -\infty$ on both sides yields $\overline{\ell} \le \frac{1}{2}(\overline{\ell} + \ell(0))$, hence $\overline{\ell} \le \ell(0)$, which contradicts the fact that $\ell$ is strictly decreasing. Therefore, we must have $\ell(-\infty) = +\infty$.
\end{proof}
Without loss of generality, assume $\underline{\ell} := \ell(+\infty) = 0$. Otherwise, we can just consider $\ell - \underline{\ell}$ instead of $\ell$. In addition, we also assume $\ell$ is pseudo-Lipschitz, i.e., there exists a constant $L > 0$ such that, for all $x, y \in \R$,
\begin{equation*}
    \abs{ \ell(x) - \ell(y) } \le L \left( 1 + \abs{ x } + \abs{ y } \right) \abs{ x - y }.
\end{equation*}
For ease of exposition, we assume $\tau = 1$, as it is not fundamentally different from the case of arbitrary $\tau > 0$. In \cref{subsubsec:under_final}, we will discuss how to extend our proof to general $\tau > 0$.

Recall the original unconstrained empirical risk minimization (ERM) problem \cref{eq:logistic}:
\begin{equation}
\label{eq:ERM-0}
    M_n := 
    \min_{\bbeta \in \R^d, \, \beta_0 \in \R} \hat R_n(\bbeta, \beta_0)
    :=
    \min_{\bbeta \in \R^d, \, \beta_0 \in \R}  \frac1n \sum_{i=1}^n \ell\bigl( 
        y_i(\< \xx_i, \bbeta \> +  \beta_0 )
     \bigr).
\end{equation}

\noindent
We first provide an outline for the proof of \cref{thm:logistic_main}, which involves several intermediate steps of simplifying the random optimization problem $M_n$.
\begin{equation*}
\begin{aligned}
    M_{n}
    \, \xRightarrow[\text{\cref{lem:ERM_bound_beta}}]{\textbf{Step 1}} \,
    M_n(\bTheta_{\vbeta}, \bXi_{\bu})
    \, \xRightarrow[\text{\cref{lem:ERM_CGMT}}]{\textbf{Step 2}} \, 
    M_n^{(1)}(\bTheta_{\vbeta}, \bXi_{\bu})
    \, \Rightarrow \,
    M_n^{(2)}(\bTheta_{c}, \bXi_{\bu}) 
    \\
    \, \xRightarrow[\text{\cref{lem:M2-3}}]{\textbf{Step 3}} \, 
    M_n^{(3)}(\bTheta_{c}, \bXi_{\bu})
    \, \Rightarrow \,
    M_n^{(3)}(\bTheta_{c}) 
    \, \xRightarrow[\text{\cref{lem:M3-star}}]{\textbf{Step 4}} \, 
    M^*(\bTheta_{c})
    \, \Rightarrow \,
    M^*.
\end{aligned}
\left.
\vphantom{\begin{matrix} \dfrac12 \\ \dfrac12 \end{matrix}}
\right\} \text{\scriptsize{\cref{thm:ERM_conv}}}
\end{equation*}

\paragraph{Step 1: Boundedness of $\vbeta$ and $\beta_0$ (from $M_{n}$ to $M_n(\bTheta_{\vbeta}, \bXi_{\bu})$)}
Notice that by introducing the auxiliary variable $\bu = (u_1, \ldots, u_n)^\top \in \R^n$ and Lagrangian multiplier $\bv = (v_1, \ldots, v_n)^\top \in \R^n$, we can rewrite \cref{eq:ERM-0} as a minimax problem
\begin{align*}
        M_n & = \min_{ \substack{ \bbeta \in \R^d, \, \beta_0 \in \R \\  \bu \in \R^n } }
        \max_{ \bv \in \R^n }
        \left\{
        \frac1n \sum_{i=1}^n \ell( u_i )
         + \frac{1}{n} \sum_{i=1}^n v_i \bigl(  y_i(\< \xx_i, \bbeta \> +  \beta_0 ) - u_i\bigr)
         \right\} 
         \\
         & = \min_{ \substack{ \bbeta \in \R^d, \, \beta_0 \in \R \\  \bu \in \R^n } }
         \max_{ \bv \in \R^n }
         \left\{
         \frac1n \sum_{i=1}^n \ell( u_i )
          + \frac{1}{n} \sum_{i=1}^n v_i (  \< \bmu, \vbeta \> +  \< \zz_i, \vbeta \> + y_i \beta_0 - u_i )
          \right\},
\end{align*}
where in the second line, we reformulate $\xx_i = y_i(\vmu + \zz_i)$, $\zz_i \sim \normal(\bzero, \bI_d)$, $y_i \indep \zz_i$. For any closed subsets $\bTheta_{\vbeta} \subset \R^{d} \times \R$, $\bXi_{\bu} \subset \R^{n}$, we also define the quantity $M_n(\bTheta_{\vbeta}, \bXi_{\bu})$, which can be viewed as the constrained version of ERM problem $M_n$.
\begin{equation}
    \label{eq:Mn}
    \begin{aligned}
        M_n(\bTheta_{\vbeta}, \bXi_{\bu})
        :\! & = \min_{ \substack{ (\bbeta , \beta_0) \in \bTheta_{\vbeta} \\  \bu \in \bXi_{\bu} } }
    \max_{ \bv \in \R^n }
    \left\{
    \frac1n \sum_{i=1}^n \ell( u_i )
     + \frac{1}{n} \sum_{i=1}^n v_i (  \< \bmu, \vbeta \> +  \< \zz_i, \vbeta \> + y_i \beta_0 - u_i )
     \right\} \\
     & = \min_{ \substack{ (\bbeta , \beta_0) \in \bTheta_{\vbeta} \\  \bu \in \bXi_{\bu} } }
     \max_{ \bv \in \R^n }
     \left\{
     \frac1n \sum_{i=1}^n \ell( u_i )
      + \frac1n \bv^\top \bone \< \bmu, \vbeta \>
      + \frac1n \bv^\top \ZZ \vbeta + \frac1n \beta_0 \bv^\top \yy - \frac1n \bv^\top \bu
      \right\},
    \end{aligned}
\end{equation}
where $\ZZ = (\zz_1, \ldots, \zz_n)^\top \in \R^{n \times d}$. Let $(\hat\vbeta_n, \hat\beta_{0, n})$ be the unique minimizer of \cref{eq:ERM-0}. The following lemma implies that $\hat\vbeta_n$ and $\hat\beta_{0, n}$ are bounded with high probability, which enables us to work with $M_n(\bTheta_{\vbeta}, \bXi_{\bu})$ instead of $M_n$ for some compact sets $\bTheta_{\vbeta}$ and $\bXi_{\bu}$.

\begin{lem}[Boundedness of $\bbeta$ and $\beta_0$] \label{lem:ERM_bound_beta}
    In the non-separable regime $\delta > \delta^*(0)$, there exists some constants $C_{\bbeta}, C_{\beta_0}, C_{\bu} \in (0, \infty)$, such that $M_n = M_n(\bTheta_{\vbeta}, \bXi_{\bu})$ with high probability, where
    \begin{equation*}
    \bTheta_{\vbeta} = \{ (\vbeta, \beta_0) \in \R^d \times \R:  \norm{\vbeta}_2 \le C_{\vbeta},  \abs{\beta_0} \le C_{\beta_0}  \},
    \qquad
    \bXi_{\bu} = \{ \bu \in \R^n : \norm{\bu}_2 \le C_{\bu} \sqrt{n} \}.
    \end{equation*}
\end{lem}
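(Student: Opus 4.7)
The plan is to exploit the quantitative non-separability established in \cref{thm:SVM_main}\ref{thm:SVM_main_mar}: when $\delta > \delta^*(0)$, with high probability there exists $\overline{\kappa} > 0$ such that for every unit $\vbeta \in \R^d$ and every $\beta_0 \in \R$,
\[
    \min_{i \in [n]} y_i \bigl(\<\xx_i, \vbeta\> + \beta_0\bigr) \le -\overline{\kappa}.
\]
Since $\ell$ is strictly decreasing with $\ell(-\infty) = +\infty$ by \cref{lem:ell}, this obstruction should force the unconstrained minimizer $(\hat\vbeta_n, \hat\beta_{0,n})$ to be bounded: otherwise some loss term would blow up and contradict the a priori bound $M_n \le \hat R_n(\bzero, 0) = \ell(0)$. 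Heuristically, the ultimate constants $C_\vbeta, C_{\beta_0}$ are dictated by the low-dimensional variational problem \cref{eq:logistic_variation}, whose minimizer $(R^*, \beta_0^*)$ is finite by \cref{thm:logistic_main}\ref{thm:logistic_main(a)}.

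A direct single-point use of the above display is too weak on its own, since it yields only $\ell(-\overline{\kappa}\|\hat\vbeta_n\|_2) \le n\,\ell(0)$, which allows $\|\hat\vbeta_n\|_2$ to drift slowly to infinity with $n$. I therefore plan to strengthen it to a \emph{constant-fraction} non-separability: with high probability there exist $c_0, \overline{\kappa}' > 0$ such that, uniformly over unit $\vbeta$ and $\beta_0$ in any fixed compact set, at least $c_0 n$ indices $i$ satisfy $y_i(\<\xx_i, \vbeta\> + \beta_0) \le -\overline{\kappa}'$. This can be deduced from the strict positivity $\bar\xi^{(2)}_0 > 0$ shown in Step~3 of the phase-transition proof (\cref{lem:over_phase_trans}), which delivers a uniform $L^2$ lower bound on the vector of negative margins; combining it with the deterministic $L^\infty$ bound $\max_i |y_i(\<\xx_i,\vbeta\>+\beta_0)| \le \|\XX\|_\op\|\vbeta\|_2 + |\beta_0|$ and the Gaussian matrix estimate $\|\XX\|_\op \le C\sqrt{n}$ w.h.p., a pigeonhole-type conversion from $L^2$ mass to cardinality yields the constant fraction.

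Writing $R = \|\hat\vbeta_n\|_2$ and $\tilde\vbeta = \hat\vbeta_n/R$, the constant-fraction event then produces
\[
    \ell(0) \ \ge \ M_n \ = \ \hat R_n(\hat\vbeta_n, \hat\beta_{0,n}) \ \ge \ c_0\,\ell(-R\overline{\kappa}'),
\]
so $R$ must stay below a fixed $C_\vbeta$ uniformly in $n$, provided the ratio $\hat\beta_{0,n}/R$ is itself bounded. To bootstrap the bound on $\hat\beta_{0,n}$, I would argue separately that since $n_\pm \asymp n$ w.h.p., a diverging $|\hat\beta_{0,n}|$ with bounded $\|\hat\vbeta_n\|_2$ would force a constant fraction of points from the class with $y_i = -\sign(\hat\beta_{0,n})$ to incur loss $\gtrsim \ell(-|\hat\beta_{0,n}|/2)$, again contradicting $M_n \le \ell(0)$. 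This yields $|\hat\beta_{0,n}| \le C_{\beta_0}$, which fed back into the previous step closes the argument for $\vbeta$.

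Finally, strong duality in \cref{eq:Mn} forces $u_i^* = y_i(\<\xx_i, \hat\vbeta_n\> + \hat\beta_{0,n})$ at the saddle, whence
\[
    \|\bu^*\|_2 \ \le \ \|\XX\|_\op \|\hat\vbeta_n\|_2 + |\hat\beta_{0,n}|\sqrt{n} \ \le \ C_\bu \sqrt{n}
\]
with high probability. The principal obstacle I anticipate is the uniform constant-fraction step: a naive $\varepsilon$-net on the $(d+1)$-sphere has metric entropy linear in $d$ and cannot be absorbed by pointwise Hoeffding tails in the proportional regime $n \asymp d$. The CGMT-based quantitative non-separability described above is the right instrument, since it is essentially dimension-free and directly supplies the uniform $L^2$ bound that the pigeonhole conversion needs.
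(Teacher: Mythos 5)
Your overall architecture (negative-margin obstruction plus $\ell(-\infty)=+\infty$, the a priori bound $M_n \le \hat R_n(\bzero,0)=\ell(0)$, and reading $\bu$ off the saddle point via $u_i = y_i(\<\xx_i,\hat\vbeta_n\>+\hat\beta_{0,n})$ and $\norm{\XX}_{\mathrm{op}}\lesssim\sqrt n$) is the same as the paper's, and your $\beta_0$ and $\bXi_{\bu}$ steps essentially coincide with its proof. The paper, however, stops at the single worst index delivered by \cref{thm:SVM_main}\ref{thm:SVM_main_mar} and concludes from $\ell(0)\ge \frac1n\ell(-\overline\kappa\,\|\hat\vbeta_n\|_2)$ --- precisely the ``single-point'' step you dismiss as too weak --- so your plan is an attempted strengthening, not a shortcut.

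The genuine gap is in the step that is supposed to produce the constant fraction of violated margins, which is the load-bearing ingredient of your argument. From the CGMT positivity you get, uniformly over unit $\vbeta$ and $\beta_0$ in a compact set, $\sum_{i=1}^n a_i^2 \gtrsim d \asymp n$ with $a_i := \bigl(-y_i(\<\xx_i,\vbeta\>+\beta_0)\bigr)_+$, while your only entrywise control is $\max_i a_i \le \norm{\XX}_{\mathrm{op}}\norm{\vbeta}_2+\abs{\beta_0}\lesssim\sqrt n$. The pigeonhole then gives only
\begin{equation*}
\#\{i: a_i\ge\epsilon\} \ \ge\ \frac{(c-\epsilon^2)\,n}{\max_i a_i^2} \ \asymp\ 1,
\end{equation*}
i.e.\ a bounded number of violating indices, not $c_0 n$: nothing rules out the $L^2$ mass being carried by $O(1)$ coordinates of size $\asymp\sqrt n$, and the $\sqrt n$ bound on $\max_i\abs{\<\xx_i,\vbeta\>}$ is actually attained uniformly (take $\vbeta$ proportional to $\xx_1$), so it cannot be sharpened. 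With only $O(1)$ violating indices you are back to $\ell(-\overline\kappa R)\lesssim n\ell(0)$, hence $R=O(n)$ for a loss of linear growth such as the logistic loss, which is exactly what you set out to avoid. To make your route work you would need a uniform lower bound on the \emph{count} of negative margins (equivalently, on a truncated functional such as $\frac1{\sqrt n}\norm{\min\{(\,\cdot\,)_+,M\}}_2$ for a constant truncation level $M$), and this does not follow from the paper's $\xi_{n,\kappa}$-type quantities: truncation destroys the dual representation $\norm{(\ba)_+}_2=\max_{\norm{\blambda}_2\le1,\blambda\ge\bzero}\blambda^\top\ba$ and the convex structure used in the CGMT reduction, so it is a new argument rather than a corollary of \cref{lem:over_phase_trans}. (Two smaller points: your constant-fraction claim is stated only for $\beta_0$ in a compact set, while $\hat\beta_{0,n}/\norm{\hat\vbeta_n}_2$ is not a priori bounded --- repairable by the same second-moment/Markov argument you use in the $\beta_0$ bootstrap --- and the $\beta_0$ and $\bu$ parts of your plan are fine as written.)
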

\noindent
See \cref{subsubsec:under_step1} for the proof.

\paragraph{Step 2: Reduction via Gaussian comparison (from $M_n(\bTheta_{\vbeta}, \bXi_{\bu})$ to $M_n^{(1)}(\bTheta_{\vbeta}, \bXi_{\bu})$)}
The objective function of $M_n(\bTheta_{\vbeta}, \bXi_{\bu})$ in \cref{eq:Mn} is a bilinear form of the Gaussian random matrix $\ZZ$. To simplify the bilinear term, we will use the convex Gaussian minimax theorem (CGMT), i.e., Gordon's comparison inequality \cite{gordon1985some, thrampoulidis2015regularized}. To do so, we introduce another quantity:
\begin{equation*}
    \begin{aligned}
        M_n^{(1)}(\bTheta_{\vbeta}, \bXi_{\bu})
        : = \smash {\min_{ \substack{ (\bbeta , \beta_0) \in \bTheta_{\vbeta} \\  \bu \in \bXi_{\bu} } } }
        \max_{ \bv \in \R^n }
        \, \Biggl\{
        \frac1n \sum_{i=1}^n \ell( u_i )
         + \frac1n \bv^\top \bone \< \bmu, \vbeta \>
         & + \frac1n \norm{\bv}_2 \hh^\top \vbeta + \frac1n \norm{\vbeta}_2 \vg^\top \bv 
         \\
         & + \frac1n \beta_0 \bv^\top \yy - \frac1n \bv^\top \bu
         \Biggr\},
    \end{aligned}
\end{equation*}
where $\hh \sim \normal(\bzero, \bI_{d})$, $\vg \sim \normal(\bzero, \bI_{n})$ are independent Gaussian vectors. However, the classical CGMT cannot be directly applied to $M_n^{(1)}(\bTheta_{\vbeta}, \bXi_{\bu})$ since $\bv$ is maximized over an unbounded set. To this end, we proved the following version of CGMT, which connects $M_n^{(1)}(\bTheta_{\vbeta}, \bXi_{\bu})$ with $M_n(\bTheta_{\vbeta}, \bXi_{\bu})$.

\begin{lem}[CGMT, unbounded for maximum] \label{lem:ERM_CGMT}
For any compact sets $\bTheta_{\vbeta}$ and $\bXi_{\bu}$ (not necessarily convex) and $t \in \R$, we have
\begin{equation}\label{eq:unbounded_CGMT_1}
    \P \, \Bigl( M_n(\bTheta_{\vbeta}, \bXi_{\bu}) \le t \Bigr) \le 2 \, \P \,\Bigl( M_n^{(1)}(\bTheta_{\vbeta}, \bXi_{\bu}) \le t \Bigr).
\end{equation}
Additionally, if $\bTheta_{\vbeta}$ and $\bXi_{\bu}$ are convex, then
\begin{equation}\label{eq:unbounded_CGMT_2}
    \P \, \Bigl( M_n(\bTheta_{\vbeta}, \bXi_{\bu}) \ge t \Bigr) \le 2 \, \P \,\Bigl( M_n^{(1)}(\bTheta_{\vbeta}, \bXi_{\bu}) \ge t \Bigr).
\end{equation}
\end{lem}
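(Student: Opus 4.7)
The obstacle is that $\bv$ is maximized over the full space $\R^n$, whereas the standard Gordon/CGMT comparison inequalities \cite{gordon1985some,thrampoulidis2015regularized} require both the inner and the outer optimization sets to be compact. The plan is to introduce a truncation $B_R := \{\bv \in \R^n : \norm{\bv}_2 \le R\}$, apply the bounded CGMT on the resulting problem, and then pass to the limit $R \to \infty$ by exploiting the monotone structure of the truncation.

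Denote by $M_n^R$ and $M_n^{(1), R}$ the problems obtained from $M_n$ and $M_n^{(1)}$ by replacing $\max_{\bv \in \R^n}$ with $\max_{\bv \in B_R}$. The entire $\ZZ$-dependence in the primary problem enters through the bilinear term $n^{-1}\bv^\top \ZZ \vbeta$, and the remaining deterministic part $\psi(\vbeta, \beta_0, \bu, \bv) = n^{-1}\sum_i \ell(u_i) + n^{-1}\bv^\top \bone \<\bmu, \vbeta\> + n^{-1}\beta_0\bv^\top \yy - n^{-1}\bv^\top \bu$ is convex in $(\vbeta, \beta_0, \bu)$ (thanks to convexity of $\ell$, with the remaining pieces linear in $(\vbeta, \beta_0, \bu)$ for each fixed $\bv$) and linear (hence concave) in $\bv$. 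Since $B_R$ is compact and convex, the standard Gordon inequality \cite{gordon1985some} and the convex Gaussian minimax theorem \cite[Thm.~3]{thrampoulidis2015regularized} apply directly to the truncated problem, giving for every $t \in \R$ and $R > 0$ that $\P(M_n^R \le t) \le 2\P(M_n^{(1), R} \le t)$ whenever $\bTheta_\vbeta, \bXi_\bu$ are compact, and additionally $\P(M_n^R \ge t) \le 2\P(M_n^{(1), R} \ge t)$ when they are also convex.

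Both $R \mapsto M_n^R$ and $R \mapsto M_n^{(1), R}$ are non-decreasing, and a compactness argument (outlined below) shows they converge monotonically upward to $M_n$ and $M_n^{(1)}$ respectively. For \cref{eq:unbounded_CGMT_1}, the identity $\{M_n \le t\} = \bigcap_{R>0} \{M_n^R \le t\}$ exhibits a decreasing intersection, so continuity of probability yields $\P(M_n \le t) = \lim_R \P(M_n^R \le t)$, which combined with the truncated Gordon bound closes the first claim. For \cref{eq:unbounded_CGMT_2}, the dual identity $\{M_n > s\} = \bigcup_{R>0} \{M_n^R > s\}$ gives $\P(M_n > s) = \lim_R \P(M_n^R > s)$ for every $s \in \R$; one then writes $\P(M_n \ge t) \le \P(M_n > t - \epsilon) \le 2\P(M_n^{(1)} \ge t - \epsilon)$ for arbitrary $\epsilon > 0$ and sends $\epsilon \downarrow 0$, using that $\{M_n^{(1)} \ge t - \epsilon\} \downarrow \{M_n^{(1)} \ge t\}$.

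The main technical obstacle will be the monotone convergence $M_n^R \uparrow M_n$ (and its auxiliary counterpart). Although the inner maxima converge pointwise to the unrestricted maxima, the interchange with the outer minimization is not automatic. I will exploit the Lagrangian interpretation: $M_n^R$ equals $\min\{n^{-1}\sum_i \ell(u_i) + (R/n)\norm{\br}_2\}$, where $\br = \bone\<\bmu, \vbeta\> + \ZZ\vbeta + \beta_0 \yy - \bu$ is the primal residual, so any near-minimizer must satisfy $\norm{\br}_2 = O(R^{-1})$. Combined with compactness of $\bTheta_\vbeta \times \bXi_\bu$ supplied by Lemma~\ref{lem:ERM_bound_beta} and continuity of $\ell$, every cluster point of a sequence of near-minimizers is feasible for the unconstrained limit problem, which forces equality in the limit. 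A parallel argument applies to $M_n^{(1)}$ after replacing the Gaussian bilinear term by the $\norm{\vbeta}_2 \vg^\top \bv + \norm{\bv}_2 \hh^\top \vbeta$ decomposition and analyzing the analogous penalized objective.
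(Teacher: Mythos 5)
Your proposal is correct and takes essentially the same route as the paper: the paper likewise truncates the maximization over $\bv$ to balls $K_m=\{\bv:\norm{\bv}_2\le m\}$, applies the bounded comparison result (\cref{lem:CGMT}) to the truncated problems, proves $M_n(\bTheta_{\vbeta},\bXi_{\bu};K_m)\to M_n(\bTheta_{\vbeta},\bXi_{\bu})$ (and its auxiliary counterpart) via exactly the penalized-residual observation you describe, and then passes to the limit in the probability inequalities. The only cosmetic differences are that the paper extracts a quantitative rate from the pseudo-Lipschitzness of $\ell$ rather than your cluster-point/continuity argument, and that the compactness you attribute to \cref{lem:ERM_bound_beta} is simply a hypothesis of the lemma itself.
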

\noindent
See \cref{subsubsec:under_step2} for the proof.

\paragraph{Reparametrization in low dimensions (from $M_n^{(1)}(\bTheta_{\vbeta}, \bXi_{\bu})$ to $M_n^{(2)}(\bTheta_{c}, \bXi_{\bu})$)}
To simplify $M^{(1)}_n(\bTheta_{\vbeta}, \bXi_{\bu})$, we consider the following change of variables
\begin{equation}\label{eq:change_of_var}
    \rho := \cos(\vmu, \vbeta) := 
    \begin{cases} 
    \, \displaystyle
    \left\< \frac{\vmu}{\norm{\vmu}_2}, \frac{\vbeta}{\norm{\vbeta}_2} \right\>, 
                  & \ \text{if} \ \vbeta \not= \bzero, \\
    \  0,         & \ \text{if} \ \vbeta = \bzero, 
    \end{cases}
    \qquad
    R := \norm{\vbeta}_2.
\end{equation}
Now, for any closed subset $\bTheta_{c} \subset [-1, 1] \times \R_{\ge 0} \times \R$, we define the quantity $M_n^{(2)}(\bTheta_{c}, \bXi_{\bu})$ by 
\begin{equation*}
    \begin{aligned}
        M_n^{(2)}(\bTheta_{\vbeta}, \bXi_{\bu})
        : = \smash{ \min_{ \substack{ (\bbeta , \beta_0) \in \R^d \times \R: 
        \\ (\cos(\vmu, \vbeta), \norm{\vbeta}_2, \beta_0) \in \bTheta_{c} \\  \bu \in \bXi_{\bu} } }
        }
        \max_{ \bv \in \R^n }
        \, \Biggl\{
        \frac1n \sum_{i=1}^n \ell( u_i )
         & + \frac1n \bv^\top \bone \< \bmu, \vbeta \>
         + \frac1n \norm{\bv}_2 \hh^\top \vbeta  
         \\
         & + \frac1n \norm{\vbeta}_2 \vg^\top \bv + \frac1n \beta_0 \bv^\top \yy - \frac1n \bv^\top \bu
         \Biggr\}.
    \end{aligned}
\end{equation*}
Therefore, $M_n^{(2)}(\bTheta_{c}, \bXi_{\bu})$ can be viewed as reparametrization of $M_n^{(1)}(\bTheta_{\vbeta}, \bXi_{\bu})$ when $\bTheta_{\vbeta} \subset \R^{d} \times \R$ takes the form
\begin{equation*}
        \bTheta_{\vbeta} = \left\{ 
        (\vbeta, \beta_0) \in \R^{d} \times \R:
        \bigl( \cos(\vmu, \vbeta), \norm{\vbeta}_2, \beta_0 \bigr) \in \bTheta_{c}
        \right\}.
\end{equation*}
Then we can simplify $M_n^{(2)}(\bTheta_{\vbeta}, \bXi_{\bu})$ as follows:
\begin{align}
        & M_n^{(2)}(\bTheta_{c}, \bXi_{\bu}) 
        \notag
        \\
        \overset{\mathmakebox[0pt][c]{\text{(i)}}}{=} {} & 
        \min_{ \substack{ (\rho, R, \beta_0) \in \bTheta_{c} \\ \bu \in \bXi_{\bu} } } \min_{ \substack{ \norm{\vbeta}_2 = R \\ \cos(\vmu, \vbeta) = \rho } }
        \max_{\gamma \ge 0} \max_{\norm{\bv_0}_2 = 1}
        \left\{ \frac1n \sum_{i=1}^n \ell( u_i ) +
        \frac{\gamma}{n} \bv_0^\top (\rho\norm{\vmu}_2 R \bone + R\vg + \beta_0 \yy  - \bu )
        + \frac{\gamma}{n} \hh^\top\vbeta 
        \right\} 
        \notag
        \\
        \overset{\mathmakebox[0pt][c]{\text{(ii)}}}{=} {} &  
        \min_{ \substack{ (\rho, R, \beta_0) \in \bTheta_{c} \\ \bu \in \bXi_{\bu} } } \min_{ \substack{ \norm{\vbeta}_2 = R \\ \cos(\vmu, \vbeta) = \rho } }
        \max_{\gamma \ge 0} 
        \left\{ \frac1n \sum_{i=1}^n \ell( u_i ) +
        \frac{\gamma}{n} \bigl\| \rho\norm{\vmu}_2 R \bone + R\vg + \beta_0 \yy  - \bu \bigr\|_2
        + \frac{\gamma}{n} \hh^\top\vbeta 
        \right\} 
        \notag
        \\
        \overset{\mathmakebox[0pt][c]{\text{(iii)}}}{=} {} & 
        \min_{ \substack{ (\rho, R, \beta_0) \in \bTheta_{c} \\ \bu \in \bXi_{\bu} } } 
        \max_{\gamma \ge 0} 
        \, \Biggl\{ 
            \frac1n \sum_{i=1}^n \ell( u_i )
        + \frac{\gamma}{n} \bigl\| \rho\norm{\vmu}_2 R \bone + R\vg + \beta_0 \yy  - \bu \bigr\|_2
        + \frac{\gamma}{n} \min_{ \substack{ \norm{\vbeta}_2 = R \\ \cos(\vmu, \vbeta) = \rho } } \hh^\top\vbeta  
        \Biggr\} 
        \notag
        \\
        \overset{\mathmakebox[0pt][c]{\text{(iv)}}}{=} {} & 
        \min_{ \substack{ (\rho, R, \beta_0) \in \bTheta_{c} \\ \bu \in \bXi_{\bu} } } 
        \max_{\gamma \ge 0} 
        \, \Biggl\{ 
            \frac1n \sum_{i=1}^n \ell( u_i )
        + \frac{\gamma}{n} \bigl\| \rho\norm{\vmu}_2 R \bone + R\vg + \beta_0 \yy  - \bu \bigr\|_2
        \notag
        \\
        & 
        \phantom{
        \min_{ \substack{ (\rho, R, \beta_0) \in \bTheta_{c} \\ \bu \in \bXi_{\bu} } } 
        \max_{\gamma \ge 0} 
        \, \Biggl\{ 
            \frac1n \sum_{i=1}^n \ell( u_i )
        }
        \, + \frac{\gamma}{n} R\biggl( \rho\frac{\hh^\top \vmu}{\norm{\vmu}_2} - \sqrt{1 - \rho^2} \| \bP_{\bmu}^\perp \hh \|_2 \biggr)  
        \Biggr\},
        \label{eq:Mn(2)}
\end{align}
where in (i) we apply the change of variables \cref{eq:change_of_var} and optimize $\bv$ by its length $\gamma$ and direction $\bv_0$ separately, (ii) follows from Cauchy--Schwarz inequality, (iii) is from the linearity of objective function in $\gamma$, and (iv) is based on direct calculation by decomposing $\vbeta$:
\begin{equation*}
    \min_{ \substack{ \vbeta \in \R^d: \norm{\vbeta}_2 = 1 \\ \cos(\vmu, \vbeta) = \rho } } \hh^\top\vbeta
    = \min_{ \vtheta \in \R^d: \substack{ \norm{\vtheta}_2 = 1 \\ \< \bmu, \vtheta \> = 0 } } \hh^\top 
    \biggl( \rho \frac{\bmu}{\norm{\vmu}_2} + \sqrt{1 - \rho^2} \vtheta \biggr)
    =  
    \rho \frac{\hh^\top \bmu}{\norm{\vmu}_2} - \sqrt{1 - \rho^2} \| \bP_{\bmu}^\perp \hh \|_2,
\end{equation*}
where $\bP_{\vmu}^\perp := \bI_d - \vmu \vmu^\top / \norm{\vmu}_2^2$.

\paragraph{Step 3: Convergence in variational forms (from $M_n^{(2)}(\bTheta_{c}, \bXi_{\bu})$ to $M_n^{(3)}(\bTheta_{c}, \bXi_{\bu})$)}
To proceed from \cref{eq:Mn(2)}, we adopt the following trick from \cite{montanari2023generalizationerrormaxmarginlinear}, where $\bu$ could be viewed as a functional of the empirical measure given by $\vg = (g_1, \ldots, g_n)^\top$ and $\yy = (y_1, \ldots, y_n)^\top$. Formally, let $\Q_n$ be the empirical distribution of the coordinates of $(\vg, \yy)$, i.e., the probability measure on $\R^2$ defined by
\begin{equation*}
    \Q_n := \frac{1}{n}\sum_{i=1}^n \delta_{(g_i, y_i)}.
\end{equation*}
Let $\cL^2(\Q_n) := \cL^2(\Q_n, \R^2)$ be the space of functions $U: \R^2 \to \R$, $(g, y) \mapsto U(g, y)$ that are square integrable with respect to $\Q_n$. Notice that the $n$ points that form $\Q_n$ are almost surely distinct, and therefore we can identify this space with the space of vectors $\bu \in \R^n$. We also define the two random variables in the same space by $G(g, y) = g$, $Y(g, y) = y$. Denote $\E_{\Q_n}$, $\norm{\,\cdot\,}_{\Q_n}$ the integral and norm with respect to $\Q_n$ in $\cL^2(\Q_n)$, i.e.,
\begin{equation*}
    \E_{\Q_n}[U] := \int_{\R^2} U(g,y) \, \d \Q_n(g, y)
    = \frac{1}{n} \sum_{i=1}^n U(g_i, y_i)
    ,
    \qquad
    \norm{U}_{\Q_n} := (\E_{\Q_n}[U^2])^{1/2}.
\end{equation*}
Let $\Xi_{u} \subseteq \cL^2(\Q_n)$ be the corresponding subset identified by $\bXi_{\bu} \subseteq \R^n$, that is,
\begin{equation*}
    \Xi_{u} := \left\{ U \in \cL^2(\Q_n) : \bu := \left( U(g_1, y_1), \ldots, U(g_n, y_n) \right)^\top \in \bXi_{\bu}  \right\}. 
\end{equation*}
Then with these definitions, we can rewrite the expression of $M_n^{(2)}(\bTheta_{c}, \bXi_{\bu})$ as
\begin{align*}
        M_n^{(2)}(\bTheta_{c}, \bXi_{\bu}) 
        & = \min_{ \substack{ (\rho, R, \beta_0) \in \bTheta_{c} \\  U \in \Xi_{u} } } 
        \max_{\gamma \ge 0}
        \, \Biggl\{ 
                \E_{\Q_n}[\ell(U)]
        + \frac{\gamma}{\sqrt{n}} \bigl\| \rho\norm{\vmu}_2 R + R G + \beta_0 Y - U \bigr\|_{\Q_n}
            \\
        & \phantom{.} \phantom{ 
            = \min_{ \substack{ (\rho, R, \beta_0) \in \bTheta_{c} \\ U \in \Xi_{u} } } 
        \max_{\gamma \ge 0}
        \, \Biggl\{ 
        }
        + \frac{\gamma}{n} R\biggl( \rho\frac{\hh^\top \vmu}{\norm{\vmu}_2} - \sqrt{1 - \rho^2} \| \bP_{\bmu}^\perp \hh \|_2 \biggr)  
            \Biggr\}
        \\
        &   
        = \min_{ (\rho, R, \beta_0) \in \bTheta_{c} }
        \min_{ U \in \Xi_{u}  \cap   \mathcal{N}_n }
        \E_{\Q_n}[\ell(U)],
\end{align*}
where we define the (stochastic) subset $\mathcal{N}_n = \mathcal{N}_n(\rho, R, \beta_0)$ by
\begin{equation}
\label{eq:set_N_n}
    \mathcal{N}_n := \left\{
        U \in \cL^2(\Q_n): 
        \bigl\| \rho\norm{\vmu}_2 R + R G + \beta_0 Y - U \bigr\|_{\Q_n}
        \le \frac{R}{\sqrt{n}} \biggl( \sqrt{1 - \rho^2}  \| \bP_{\bmu}^\perp \hh \|_2  -  \rho\frac{\hh^\top \vmu}{\norm{\vmu}_2}  \biggr)
     \right\}.
\end{equation}
It can be shown that as $n, d \to \infty$,
\begin{equation*}
    \frac{R}{\sqrt{n}} \biggl( \sqrt{1 - \rho^2}  \| \bP_{\bmu}^\perp \hh \|_2  -  \rho\frac{\hh^\top \vmu}{\norm{\vmu}_2}  \biggr)
    \conp \frac{R \sqrt{1 - \rho^2}}{\sqrt{\delta}}.
\end{equation*}
This convergence then motivates us to define another quantity
\begin{equation}
    \label{eq:Mn(3)}
        M_n^{(3)}(\bTheta_{c}, \bXi_{\bu}) 
        :=  
        \min_{ (\rho, R, \beta_0) \in \bTheta_{c} } 
        \min_{ U \in \Xi_{u}  \cap   \mathcal{N}^\delta_n }
        \E_{\Q_n}[\ell(U)],
\end{equation}
where the subset $\mathcal{N}^\delta_n = \mathcal{N}^\delta_n(\rho, R, \beta_0)$ is given by
\begin{equation}
\label{eq:set_N_n_delta}
    \mathcal{N}_n^\delta := \left\{
        U \in \cL^2(\Q_n): 
        \bigl\| \rho\norm{\vmu}_2 R + R G + \beta_0 Y - U \bigr\|_{\Q_n}
        \le \frac{R \sqrt{1 - \rho^2}}{\sqrt{\delta}}
     \right\}.
\end{equation}
The following lemma shows that $M_n^{(2)}$ and $M_n^{(3)}$ are close to each other:
\begin{lem}\label{lem:M2-3}
    For any compact sets $\bTheta_{c} \subset [-1, 1] \times \R_{\ge 0} \times \R$ and $\bXi_{\bu} \subset \R^{n}$ (not necessarily convex), as $n \to \infty$, we have
    \begin{equation*}
        \abs{ M_n^{(2)}(\bTheta_{c}, \bXi_{\bu}) - M_n^{(3)}(\bTheta_{c}, \bXi_{\bu}) } \conp 0.
    \end{equation*}
\end{lem}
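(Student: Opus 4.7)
The two quantities $M_n^{(2)}(\bTheta_c, \bXi_\bu)$ and $M_n^{(3)}(\bTheta_c, \bXi_\bu)$ differ only through the radii of the $\|\cdot\|_{\Q_n}$-balls defining $\mathcal{N}_n$ and $\mathcal{N}_n^\delta$, both centered at the common point $V := \rho\|\vmu\|_2 R + RG + \beta_0 Y$. Writing
\begin{equation*}
r_n(\rho, R) := \frac{R}{\sqrt{n}}\Biggl( \sqrt{1-\rho^2}\,\| \bP_\vmu^\perp \hh \|_2 - \rho \frac{\hh^\top \vmu}{\|\vmu\|_2} \Biggr), \qquad r^*(\rho, R) := \frac{R\sqrt{1-\rho^2}}{\sqrt{\delta}},
\end{equation*}
the overall strategy is to first show $r_n \to r^*$ uniformly in $(\rho, R, \beta_0) \in \bTheta_c$ in probability, and then transfer between the two inner minimizations by radially scaling feasible points toward $V$ and exploiting the pseudo-Lipschitz property of $\ell$.

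For the uniform convergence, we use that $\|\bP_\vmu^\perp \hh\|_2^2 \sim \chi^2_{d-1}$ concentrates around $d-1$, while $\hh^\top\vmu/\|\vmu\|_2 \sim \normal(0,1)$ so the second summand in $r_n$ is $O_\P(n^{-1/2})$; combined with $n/d \to \delta$ this yields pointwise convergence. Since the maps $(\rho, R) \mapsto r_n, r^*$ are jointly Lipschitz with constants bounded in probability, compactness of $\bTheta_c$ upgrades pointwise to uniform convergence.

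For the transfer step, fix $(\rho, R, \beta_0) \in \bTheta_c$. Given $U \in \Xi_u \cap \mathcal{N}_n^\delta$ (the argument is symmetric in the other direction), define the radially scaled perturbation $U' := V + (r_n/r^*)(U - V)$ with the ratio interpreted appropriately when $r^*$ is small. Then $\|V - U'\|_{\Q_n} \le r_n$ so $U' \in \mathcal{N}_n$, and $\|U - U'\|_{\Q_n} = |1 - r_n/r^*| \, \|U - V\|_{\Q_n} \le |r_n - r^*|$. Boundedness of $\bXi_\bu$ yields $\|U\|_{\Q_n} \le C_\bu$, while $\|V\|_{\Q_n}$ is bounded on $\bTheta_c$ with high probability (using $\E_{\Q_n}[G^2] \conp 1$ and $\E_{\Q_n}[Y^2] = 1$), so the pseudo-Lipschitz property gives
\begin{equation*}
|\E_{\Q_n}[\ell(U)] - \E_{\Q_n}[\ell(U')]| \le L \bigl(1 + \|U\|_{\Q_n} + \|U'\|_{\Q_n}\bigr) \|U - U'\|_{\Q_n} \lesssim |r_n - r^*|,
\end{equation*}
uniformly over $\bTheta_c$ with high probability. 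Combining with the symmetric construction and the uniform bound $\sup_{\bTheta_c} |r_n - r^*| \conp 0$ closes the argument.

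The main obstacle is maintaining feasibility of the scaled point $U'$ inside $\Xi_u$, since radial shrinkage toward $V$ need not preserve the ball $\bXi_\bu$. I expect to address this by enlarging $\bXi_\bu$ to a slightly dilated compact set $\bXi_\bu^{(\eta)} := (1+\eta)\bXi_\bu$, which will contain $U'$ whenever $\eta$ dominates the order of $|r_n - r^*|$, and then showing that $|M_n^{(j)}(\bTheta_c, \bXi_\bu^{(\eta)}) - M_n^{(j)}(\bTheta_c, \bXi_\bu)| = O(\eta)$ by a second application of the pseudo-Lipschitz bound (comparing $U$ with $(1+\eta)^{-1}U$). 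Sending $\eta \to 0$ after $n \to \infty$ then completes the proof; this two-scale limit is the only non-routine ingredient.
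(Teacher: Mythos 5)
Your proposal is correct and follows essentially the same route as the paper: the paper likewise reduces the comparison to the uniform (over $\bTheta_{c}$) closeness of the two radii $r_n$ and $r^*$ (its $\veps_n(\rho,R,\beta_0)$), controlled via concentration of $\|\bP_{\vmu}^\perp\hh\|_2$ and $\hh^\top\vmu$ together with $n/d\to\delta$, and then converts this into closeness of the two minima using the pseudo-Lipschitz property of $\ell$ plus a uniform bound on $\|U\|_{\Q_n}$ over the constraint set. The only divergence is your final dilation device for keeping the rescaled point inside $\Xi_{u}$: the paper sidesteps this by bounding the difference of the constrained minima directly by the oscillation of $\E_{\Q_n}[\ell(\cdot)]$ over $\veps_n$-close pairs in $\Xi_{u}\cap\mathcal{N}_n\cap\mathcal{N}_n^\delta$ (and in every application $\bXi_{\bu}$ is taken large enough that this membership issue is immaterial), so the extra two-scale limit — which as sketched is also delicate, since dilation about the origin need not absorb a neighborhood of an arbitrary compact set and $(1+\eta)^{-1}U$ may exit $\mathcal{N}_n$ — is not needed.
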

\noindent
See \cref{subsubsec:under_step3} for the proof.

\paragraph{Step 4: Asymptotic characterization (from $M_n^{(3)}(\bTheta_{c}, \bXi_{\bu})$, $M_n^{(3)}(\bTheta_{c})$ to $M^*(\bTheta_{c})$, $M^*$)}
For any closed subsets $\bTheta_{c} \subset [-1, 1] \times \R_{\ge 0} \times \R$, we define the quantity $M_n^{(3)}(\bTheta_{c})$ by 
\begin{equation*}
    M_n^{(3)}(\bTheta_{c}) 
        :=  
        \min_{ (\rho, R, \beta_0) \in \bTheta_{c} } 
        \min_{ U \in \mathcal{N}^\delta_n }
        \E_{\Q_n}[\ell(U)].
\end{equation*}
Compared with \cref{eq:Mn(3)}, clearly $M_n^{(3)}(\bTheta_{c}, \bXi_{\bu}) = M_n^{(3)}(\bTheta_{c})$ when $\bXi_{\bu}$ is large enough. To analyze $M_n^{(3)}(\bTheta_{c})$, we consider the change of variable\footnote{
We will show in \cref{lem:M_star_var} later that the minimizer of $M_n^{(3)}(\bTheta_{c})$ must satisfy $R \sqrt{1 - \rho^2} > 0$, hence the change of variable $\xi$ can be well-defined.
}
\begin{equation*}
    \xi := - \frac{\rho\norm{\vmu}_2 R + RG + \beta_0 Y - U}{R\sqrt{1 - \rho^2}},
\end{equation*}
Then we have
\begin{equation*}
        M_n^{(3)}(\bTheta_{c})
    = \min_{ \substack{ (\rho, R, \beta_0) \in \bTheta_{c} \\ \xi \in \cL^2(\Q_n), \norm{\xi}_{\Q_n} \le 1/\sqrt{\delta} } } 
    \E_{\Q_n} \left[ \ell \bigl( \rho\norm{\vmu}_2 R + RG + \beta_0 Y + R\sqrt{1 - \rho^2} \xi \bigr) \right].
\end{equation*}
Denote $\Q_\infty := \P$ the population measure of $(G, Y)$ (so that $(G, Y) \sim \normal(0, 1) \times P_y$ under $\Q = \Q_\infty$, and we have $\E_{\Q_\infty} := \E$, $\norm{U}_{\Q_\infty} := (\E[U^2])^{1/2}$). Then we also define the asymptotic counterpart of $M_n^{(3)}(\bTheta_{c})$ by replacing $\Q_n$ with $\Q_\infty$:
\begin{equation*}
    M^{*}(\bTheta_{c})
    := \min_{ \substack{ (\rho, R, \beta_0) \in \bTheta_{c} \\ \xi \in \cL^2(\Q_\infty), \norm{\xi}_{\Q_\infty} \le 1/\sqrt{\delta} } } 
    \E \left[ \ell \bigl( \rho\norm{\vmu}_2 R + RG + \beta_0 Y + R\sqrt{1 - \rho^2} \xi \bigr) \right].
\end{equation*}
The following lemma shows that $M_n^{(3)}(\bTheta_{c})$ converges to the deterministic quantity $M^{*}(\bTheta_{c})$:

\begin{lem}\label{lem:M3-star}
    For any compact subset $\bTheta_{c} \subset [-1, 1] \times \R_{\ge 0} \times \R$, as $n \to \infty$, we have
    \begin{equation*}
        M_n^{(3)}(\bTheta_{c}) \conp M^{*}(\bTheta_{c}).
    \end{equation*}
\end{lem}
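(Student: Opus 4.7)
The plan is to use Lagrangian duality to collapse the inner minimization over $\xi$ into the expectation of a Moreau envelope, compactify the resulting Lagrange multiplier, and then pass from the empirical measure $\Q_n$ to the population measure $\Q_\infty$ via a uniform law of large numbers.

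Write $a(\rho, R, \beta_0; G, Y) := \rho\|\vmu\|_2 R + RG + \beta_0 Y$ and $b(\rho, R) := R\sqrt{1 - \rho^2}$. For any $(\rho, R, \beta_0) \in \bTheta_c$ with $b > 0$, the inner problem is strictly convex in $\xi$ and admits the Slater point $\xi = 0$, so strong duality together with pointwise minimization in $\xi$ yields, for both $\Q = \Q_n$ and $\Q = \Q_\infty$,
\begin{equation*}
    \min_{\|\xi\|_{\Q} \le 1/\sqrt{\delta}} \E_{\Q}\bigl[\ell(a + b\xi)\bigr]
    \;=\; \max_{\lambda \ge 0} \Bigl\{ \E_{\Q}\bigl[\envelope_\ell\bigl(a;\, b^2/(2\lambda)\bigr)\bigr] - \lambda/\delta \Bigr\}
    \;=:\; \max_{\lambda \ge 0} F_{\Q}(\rho, R, \beta_0, \lambda).
\end{equation*}
Both $M_n^{(3)}(\bTheta_c)$ and $M^*(\bTheta_c)$ thus become minimax problems over $(\rho, R, \beta_0) \in \bTheta_c$ and $\lambda \ge 0$.

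The next step is to compactify the Lagrange multiplier. Because $\ell(+\infty) = 0$ and $\ell$ is strictly convex, non-increasing, and pseudo-Lipschitz, the map $\lambda \mapsto F_{\Q}(\rho, R, \beta_0, \lambda)$ is concave, vanishes as $\lambda \downarrow 0$ (since the unconstrained infimum of $\ell$ is $0$), and tends to $-\infty$ as $\lambda \uparrow \infty$. The KKT stationarity condition $\E_{\Q}[\xi^*(\lambda)^2] = 1/\delta$, combined with the proximal representation of $\xi^*$ through $\ell'$ and $\prox_{\lambda\ell/\cdot}(\cdot)$, uniquely determines the dual optimizer $\lambda^* \in (0, \infty)$. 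A continuity plus compactness argument, together with the a.s.\ convergence of the first and second moments of $\Q_n$ to those of $\Q_\infty$, produces constants $0 < \lambda_{\min} \le \lambda_{\max} < \infty$ depending only on $\bTheta_c$ and $\|\vmu\|_2$, such that the optimal $\lambda^*$ for $\Q_\infty$ and (with high probability) for $\Q_n$ lies in $[\lambda_{\min}, \lambda_{\max}]$, uniformly over the slice $\{b > 0\} \cap \bTheta_c$. The degenerate slice $\{b = 0\}$ (i.e.\ $R = 0$ or $|\rho| = 1$) carries no $\xi$-dependence, and the corresponding reduction of $M_n^{(3)}$ to an empirical average of $\ell(\beta_0 Y)$ converges to its population counterpart by the ordinary LLN; these boundary values can be absorbed into the outer minimum afterward without affecting the limit.

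Set $K := \bTheta_c \times [\lambda_{\min}, \lambda_{\max}]$. The Moreau envelope $\envelope_\ell(\cdot; b^2/(2\lambda))$ inherits convexity, joint continuity in $(x, \lambda)$, and pseudo-Lipschitz growth in $x$ from $\ell$, with constants that are continuous in the smoothing parameter. Hence the integrand $(g, y) \mapsto \envelope_\ell\bigl(a(\rho, R, \beta_0; g, y);\, b^2/(2\lambda)\bigr)$ is jointly continuous in $(\rho, R, \beta_0, \lambda) \in K$ and is dominated by a fixed quadratic envelope in $(g, y)$ uniformly over $K$. Since $(g_i, y_i)_{i \le n}$ are i.i.d.\ from $\Q_\infty$, the standard uniform law of large numbers for pseudo-Lipschitz function classes (Newey--McFadden, \cite[Lemma 2.4]{newey1994large}) gives
\begin{equation*}
    \sup_{(\rho, R, \beta_0, \lambda) \in K} \bigl| F_{\Q_n}(\rho, R, \beta_0, \lambda) - F_{\Q_\infty}(\rho, R, \beta_0, \lambda) \bigr| \conp 0.
\end{equation*}
Because the $\min\max$ over a fixed compact set is a continuous functional in the supremum topology on $C(K)$, this uniform convergence transfers to $M_n^{(3)}(\bTheta_c) \conp M^*(\bTheta_c)$, completing the argument. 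The main obstacle is the uniform two-sided bound on $\lambda^*$: one must rule out $\lambda^* \to 0$ (which could happen if $\E_{\Q_n}[\ell(a)]$ degenerates) and $\lambda^* \to \infty$ (which could happen if the $\cL^2$ constraint is slack) simultaneously across $\bTheta_c$ and across both measures $\Q_n$ and $\Q_\infty$, then carefully stitch in the $b = 0$ boundary slice.
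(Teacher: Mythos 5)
Your dualization is the same as the paper's (Lemma~\ref{lem:var_fixed}\ref{lem:var_fixed(b)} writes the inner problem as a sup over a dual variable of an expected Moreau envelope minus a linear penalty), but the way you then pass to the limit contains a genuine gap: the claimed uniform two-sided bound $\lambda^* \in [\lambda_{\min}, \lambda_{\max}]$ over $\{b > 0\} \cap \bTheta_{c}$ is false in your normalization. The KKT condition forces $\E_{\Q}[\xi^{*2}] = 1/\delta$ with $\xi^* = -\tfrac{\lambda}{b}\,\ell'(\prox_{\lambda\ell}(a))$ (in the paper's normalization), i.e.\ $\tfrac{\lambda^2}{b^2}\,\E_{\Q}\bigl[\bigl(\ell'(\prox_{\lambda\ell}(a))\bigr)^2\bigr] = 1/\delta$; since $\E_{\Q}[(\ell'(\prox_{\lambda\ell}(a)))^2]$ stays bounded away from $0$ and $\infty$ on $\bTheta_{c}$, the optimal multiplier scales like $b = R\sqrt{1-\rho^2}$, and your $\lambda^*$ (which equals $b^2$ divided by twice the paper's $\lambda$) likewise tends to $0$ as $(\rho, R, \beta_0)$ approaches the slice $b = 0$. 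That slice is not exceptional here: the lemma is stated for arbitrary compact $\bTheta_{c} \subset [-1,1]\times\R_{\ge 0}\times\R$, and the sets actually used later (e.g.\ $[-1,1]\times[0,C_R]\times[-C_{\beta_0},C_{\beta_0}]$, or this set minus a ball) contain points with $R$ small or $|\rho|$ near $1$. Treating only the exact slice $b = 0$ by an ordinary LLN does not repair this: for $b$ small but positive and $\lambda \ge \lambda_{\min}$ fixed, the envelope parameter $b^2/(2\lambda)$ is tiny, so the restricted dual value is about $\E_{\Q}[\ell(a)] - \lambda_{\min}/\delta$ while the true value is within $O(b)$ of $\E_{\Q}[\ell(a)]$; hence the compactified minimax differs from $M_n^{(3)}(\bTheta_{c})$ and $M^*(\bTheta_{c})$ by a fixed amount $\approx \lambda_{\min}/\delta$ in a neighborhood of the degenerate slice, and your ULLN over $K = \bTheta_{c}\times[\lambda_{\min},\lambda_{\max}]$ is then applied to the wrong functional there.

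The argument is repairable --- e.g.\ split $\bTheta_{c}$ into $\{b \ge \epsilon\}$ (where a multiplier bound depending on $\epsilon$ holds) and $\{b < \epsilon\}$ (where both empirical and population inner values are within $C\epsilon$ of $\E_{\Q}[\ell(a)]$ by pseudo-Lipschitzness plus Cauchy--Schwarz), then let $\epsilon \to 0$; or reparametrize the dual variable as the paper's $\nu = b/\lambda$, which stays bounded near $b = 0$. The paper itself avoids the issue entirely and more cheaply: it fixes $(A, B, \beta_0) = (R\rho, R\sqrt{1-\rho^2}, \beta_0)$, gets pointwise convergence $\mathscr{R}_{\nu,\Q_n} \conp \mathscr{R}_{\nu,\Q_\infty}$ by the ordinary LLN, handles the unbounded supremum over $\nu$ via concavity in $\nu$ together with \cite[Lemma 10]{thrampoulidis2018precise}, and then upgrades pointwise to uniform convergence over the compact image $f(\bTheta_{c})$ using convexity of $(A,B,\beta_0) \mapsto \sup_{\nu>0}\mathscr{R}_{\nu,\Q}$ and \cite[Lemma 7.75]{liese2008statistical} --- no multiplier compactification, no ULLN over a function class, and no boundary stitching are needed.
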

\noindent
See \cref{subsubsec:under_step4} for the proof.

\vspace{0.5\baselineskip}
Finally, combining \cref{lem:ERM_CGMT}---\ref{lem:M3-star}, we obtain the following theorem.
\begin{thm}
\label{thm:ERM_conv}
    Consider any compact sets $\bTheta_{\vbeta}$ and $\bXi_{\bu}$ such that $\bTheta_{\vbeta}$ has the form of
    \begin{equation}\label{eq:Theta_link}
        \bTheta_{\vbeta} = \left\{ 
        (\vbeta, \beta_0) \in \R^{d} \times \R:
        \bigl( \cos(\vmu, \vbeta), \norm{\vbeta}_2, \beta_0 \bigr) \in \bTheta_{c}
        \right\}
    \end{equation}
    for some compact domain $\bTheta_{c} \subset [-1, 1] \times \R_{\ge 0} \times \R$ of $(\rho, R, \beta_0)$. Assume $\bXi_{\bu}$ is large enough. Then, for any $\veps > 0$, as $n \to \infty$ , we have
    \begin{equation*}
        \P \, \bigl( M_n(\bTheta_{\vbeta}, \bXi_{\bu}) \le M^*(\bTheta_{c}) - \veps \bigr) \to 0.
    \end{equation*}
    Further, if both $\bTheta_{\vbeta}$ and $\bXi_{\bu}$ are convex, then
    \begin{equation*}
        M_n(\bTheta_{\vbeta}, \bXi_{\bu}) \conp M^*(\bTheta_{c}).
    \end{equation*}
\end{thm}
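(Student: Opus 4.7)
The plan is to chain together the four reductions $M_n \leadsto M_n^{(1)} \leadsto M_n^{(2)} \leadsto M_n^{(3)} \leadsto M^*$ established in Lemmas~\ref{lem:ERM_CGMT}--\ref{lem:M3-star}. For the lower bound, I will start from the one-sided Gordon inequality~\eqref{eq:unbounded_CGMT_1}
\begin{equation*}
\P\bigl(M_n(\bTheta_{\vbeta}, \bXi_{\bu}) \le M^*(\bTheta_c) - \veps\bigr) \le 2\P\bigl(M_n^{(1)}(\bTheta_{\vbeta}, \bXi_{\bu}) \le M^*(\bTheta_c) - \veps\bigr),
\end{equation*}
then use the reparametrization $(\vbeta, \beta_0) \mapsto (\cos(\vmu,\vbeta), \|\vbeta\|_2, \beta_0)$, which together with the structural assumption~\eqref{eq:Theta_link} on $\bTheta_{\vbeta}$ gives the exact identity $M_n^{(1)}(\bTheta_{\vbeta}, \bXi_{\bu}) = M_n^{(2)}(\bTheta_c, \bXi_{\bu})$. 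Combining \cref{lem:M2-3} with the identification $M_n^{(3)}(\bTheta_c, \bXi_{\bu}) = M_n^{(3)}(\bTheta_c)$ (justified below) and \cref{lem:M3-star}, one obtains $M_n^{(1)}(\bTheta_{\vbeta}, \bXi_{\bu}) \conp M^*(\bTheta_c)$, and hence the first claim.

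\textbf{Matching upper bound in the convex case.} Under the additional convexity of $\bTheta_{\vbeta}$ and $\bXi_{\bu}$, I would invoke the two-sided CGMT~\eqref{eq:unbounded_CGMT_2} to likewise bound $\P(M_n \ge M^*(\bTheta_c) + \veps)$ by $2\P(M_n^{(1)} \ge M^*(\bTheta_c) + \veps)$, and then repeat the same chain of reductions. Since all convergences in Lemmas~\ref{lem:M2-3} and~\ref{lem:M3-star} are two-sided, this delivers $\P(M_n \ge M^* + \veps) \to 0$. Combining the two one-sided statements yields $M_n(\bTheta_{\vbeta}, \bXi_{\bu}) \conp M^*(\bTheta_c)$.

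\textbf{Justifying ``$\bXi_{\bu}$ large enough''.} The main nonroutine part is showing $M_n^{(3)}(\bTheta_c, \bXi_{\bu}) = M_n^{(3)}(\bTheta_c)$ with high probability once $\bXi_{\bu} \supset \{\bu \in \R^n : \|\bu\|_2 \le C_{\bu}\sqrt{n}\}$ for a suitable constant $C_{\bu}$. The idea is that any candidate $U \in \mathcal{N}_n^\delta(\rho, R, \beta_0)$ admits the parametrization $U = \rho\|\vmu\|_2 R + R G + \beta_0 Y + R\sqrt{1-\rho^2}\,\xi$ with $\|\xi\|_{\Q_n} \le 1/\sqrt{\delta}$, so by the triangle inequality
\begin{equation*}
\|U\|_{\Q_n} \le |\rho|\|\vmu\|_2 R + R\|G\|_{\Q_n} + |\beta_0| + R\sqrt{1-\rho^2}/\sqrt{\delta}.
\end{equation*}
Compactness of $\bTheta_c$ together with $\|G\|_{\Q_n}^2 \conp 1$ (law of large numbers) yields a deterministic upper bound on $\|U\|_{\Q_n} = \|\bu\|_2/\sqrt{n}$, uniformly in $(\rho, R, \beta_0) \in \bTheta_c$, that holds with probability tending to one. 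Picking $C_{\bu}$ above this bound removes the $\bXi_{\bu}$ constraint without affecting the minimum.

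\textbf{Anticipated obstacle.} The bookkeeping around the CGMT is routine, but one subtlety to watch is that \cref{lem:ERM_CGMT} requires $\bTheta_{\vbeta}$ to be compact, not convex, for~\eqref{eq:unbounded_CGMT_1}, while~\eqref{eq:unbounded_CGMT_2} demands convexity. I must therefore apply the two inequalities under different regimes and be careful that the reparametrized set $\{(\rho, R, \beta_0) \in \bTheta_c\}$ still defines a convex $\bTheta_{\vbeta}$ when needed for the upper bound; the proof of \cref{lem:M2-3} and \cref{lem:M3-star} does not require convexity, so the bottleneck is really only the application of CGMT itself. Modulo this, the proof is essentially a composition of already-established convergences.
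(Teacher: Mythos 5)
Your proposal is correct and follows essentially the same route as the paper: the one-sided CGMT bound plus the identity $M_n^{(1)}(\bTheta_{\vbeta},\bXi_{\bu})=M_n^{(2)}(\bTheta_{c},\bXi_{\bu})$ under \cref{eq:Theta_link}, then \cref{lem:M2-3} and \cref{lem:M3-star}, with the two-sided CGMT handling the convex case. Your explicit justification that $\mathcal{N}_n^\delta\subseteq\Xi_u$ holds with high probability (via the triangle inequality and $\|G\|_{\Q_n}\conp 1$) is a welcome spelling-out of the ``$\bXi_{\bu}$ large enough'' condition that the paper only asserts in passing.
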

\begin{proof}
According to \cref{lem:M2-3} and \ref{lem:M3-star}, we have $M_n^{(2)}(\bTheta_{c}, \bXi_{\bu}) \conp M^*(\bTheta_{c})$ for any compact sets $\bTheta_{c} \subset [-1, 1] \times \R_{\ge 0} \times \R$ and $\bXi_{\bu} \subset \R^{n}$ large enough such that $\Xi_{u} \subset \mathcal{N}^\delta_n$. When $\bTheta_{\vbeta}$ takes the form \cref{eq:Theta_link}, by CGMT \cref{lem:ERM_CGMT}, for any $\veps > 0$ we have
\begin{align*}
    \P \, \Bigl( M_n(\bTheta_{\vbeta}, \bXi_{\bu}) \le M^*(\bTheta_{c}) - \veps \Bigr) 
    & \le 2 \, \P \,\Bigl( M_n^{(1)}(\bTheta_{\vbeta}, \bXi_{\bu}) \le M^*(\bTheta_{c}) - \veps \Bigr) \\
    & = 2 \, \P \,\Bigl( M_n^{(2)}( \mathmakebox[\widthof{$\bTheta_{\vbeta}$}][l]{ \bTheta_{c} }, \bXi_{\bu}) \le M^*(\bTheta_{c}) - \veps \Bigr) \xrightarrow{n \to \infty} 0.
\end{align*}
If both $\bTheta_{\vbeta}$ and $\bXi_{\bu}$ are also convex, then we can similarly show that
\begin{equation*}
    \P \, \Bigl( M_n(\bTheta_{\vbeta}, \bXi_{\bu}) \ge M^*(\bTheta_{c}) + \veps \Bigr) 
    \le 2 \, \P \,\Bigl( M_n^{(2)}( \mathmakebox[\widthof{$\bTheta_{\vbeta}$}][l]{ \bTheta_{c} }, \bXi_{\bu}) \ge M^*(\bTheta_{c}) + \veps \Bigr) \xrightarrow{n \to \infty} 0.
\end{equation*}
Combining these implies $M_n(\bTheta_{\vbeta}, \bXi_{\bu}) \conp M^*(\bTheta_{c})$, which concludes the proof.
\end{proof}

\paragraph{Parameter convergence}

Next, we define $M^* := M^*( [-1, 1] \times \R_{\ge 0} \times \R )$ to be the unconstrained optimization problem \cref{eq:logistic_variation}, i.e.,
\begin{equation*}
    M^{*}
    = \min_{ \substack{ \rho \in [-1, 1], R \ge 0, \beta_0 \in \R \\ \xi \in \cL^2(\Q_\infty), \norm{\xi}_{\Q_\infty} \le 1/\sqrt{\delta} } } 
    \E \left[ \ell \bigl( \rho\norm{\vmu}_2 R + RG + \beta_0 Y + R\sqrt{1 - \rho^2} \xi \bigr) \right].
\end{equation*}
An analysis of the Karush--Kuhn--Tucker (KKT) conditions shows that $M^*$ has the unique solution $(\rho^*, R^*, \beta_0^*, \xi^*)$, with $\rho^* \in (0, 1)$, $R^* \in (0, \infty)$, and $\beta_0^* \in (-\infty, \infty)$. Combined with \cref{thm:ERM_conv}, it implies $M_n \conp M^*$, which leads to the convergence of parameters:
\begin{lem}[Parameter convergence] 
\label{lem:ERM_param_conv}
As $n, d \to \infty$, we have $M_n \conp M^*$, which implies
    \begin{equation*}
            \| \hat\vbeta_n \|_2 \conp R^*,
            \qquad
            \hat\rho_n = \biggl\< \frac{\hat \vbeta_n}{\| \hat \vbeta_n \|_2}, \frac{\vmu}{\| \vmu \|_2} \biggr\> \conp \rho^*,
            \qquad
            \hat\beta_{0,n} \conp \beta_0^*.
    \end{equation*}
\end{lem}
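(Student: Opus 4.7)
The plan is to first establish convergence of the minimum values $M_n \conp M^*$ via Theorem~\ref{thm:ERM_conv}, and then upgrade this to convergence of the minimizers by a restriction argument that exploits the uniqueness of the solution of the variational problem $M^*$ asserted in Theorem~\ref{thm:logistic_main}\ref{thm:logistic_main(a)}.

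For the first step, Lemma~\ref{lem:ERM_bound_beta} gives constants $C_\vbeta, C_{\beta_0}, C_\bu$ such that, with probability tending to one, $M_n = M_n(\bTheta_\vbeta, \bXi_\bu)$ over the convex compact balls $\bTheta_\vbeta = \{(\vbeta, \beta_0) : \|\vbeta\|_2 \le C_\vbeta,\ |\beta_0| \le C_{\beta_0}\}$ and $\bXi_\bu = \{\bu : \|\bu\|_2 \le C_\bu \sqrt{n}\}$. Since that lemma remains valid after further enlarging these constants, we assume them large enough that the unique finite minimizer $(\rho^*, R^*, \beta_0^*)$ of $M^*$ lies in the interior of the induced domain $\bTheta_c$, which ensures $M^*(\bTheta_c) = M^*$. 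Both $\bTheta_\vbeta$ and $\bXi_\bu$ being convex, the convex case of Theorem~\ref{thm:ERM_conv} yields $M_n(\bTheta_\vbeta, \bXi_\bu) \conp M^*(\bTheta_c) = M^*$, and combining with the high-probability equality gives $M_n \conp M^*$.

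For the parameter convergence, we fix $\epsilon > 0$, let $U_\epsilon$ denote the open Euclidean $\epsilon$-ball around $(\rho^*, R^*, \beta_0^*)$, and consider the possibly non-convex compact restriction $\bTheta_c^\epsilon := \bTheta_c \setminus U_\epsilon$ together with the induced $\bTheta_\vbeta^\epsilon$ via \eqref{eq:Theta_link}. Uniqueness of the minimizer of $M^*$ together with compactness of $\bTheta_c^\epsilon$ yields a uniform gap $\eta = \eta(\epsilon) > 0$ with
\begin{equation*}
    M^*(\bTheta_c^\epsilon) \ge M^* + \eta.
\end{equation*}
The one-sided bound in Theorem~\ref{thm:ERM_conv} (non-convex case) then gives $M_n(\bTheta_\vbeta^\epsilon, \bXi_\bu) \ge M^* + \eta/2$ with probability tending to one. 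On the event $\{(\hat\rho_n, \|\hat\vbeta_n\|_2, \hat\beta_{0,n}) \in \bTheta_c^\epsilon\}$ intersected with the Lemma~\ref{lem:ERM_bound_beta} event, the unrestricted minimizer is itself a feasible point of the restricted problem, so $M_n = M_n(\bTheta_\vbeta^\epsilon, \bXi_\bu) \ge M^* + \eta/2$, which is incompatible with $M_n \conp M^*$. Hence $\P((\hat\rho_n, \|\hat\vbeta_n\|_2, \hat\beta_{0,n}) \notin U_\epsilon) \to 0$ for every $\epsilon > 0$, giving the claimed joint convergence in probability to $(\rho^*, R^*, \beta_0^*)$.

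The main obstacle is establishing the uniform strict separation $M^*(\bTheta_c^\epsilon) \ge M^* + \eta$. Pointwise strict inequality reduces to uniqueness of the solution of $M^*$ from Theorem~\ref{thm:logistic_main}\ref{thm:logistic_main(a)}; that uniqueness is naturally established via the reparametrization $\vartheta_1 := \rho R$, $\vartheta_2 := R\sqrt{1-\rho^2}\,\xi$, under which $M^*$ becomes a strictly convex minimization in $(\vartheta_1, R, \beta_0, \vartheta_2)$ thanks to the strict convexity of $\ell$, and a standard compactness argument on $\bTheta_c^\epsilon$ then promotes pointwise strict inequality to a uniform gap. A minor technical point is that $\hat\rho_n$ is not defined when $\hat\vbeta_n = \bzero$, but the separately-established convergence $\|\hat\vbeta_n\|_2 \conp R^* > 0$ makes this case negligible.
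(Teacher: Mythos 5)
Your argument is correct and essentially the same as the paper's: use Lemma~\ref{lem:ERM_bound_beta} and the convex case of Theorem~\ref{thm:ERM_conv} to obtain $M_n \conp M^*$, then restrict to the compact complement $\bTheta_c \setminus U_\varepsilon$, apply the one-sided (non-convex) bound in Theorem~\ref{thm:ERM_conv} together with the strict gap $M^*(\bTheta_c \setminus U_\varepsilon) > M^*$ (from uniqueness of the minimizer in Lemma~\ref{lem:M_star_var}\ref{lem:M_star_var(a)}), and conclude by contradiction, with $R^* > 0$ handling the well-definedness of $\hat\rho_n$. One small caveat in your side remark about how uniqueness is established: the coordinates $(\vartheta_1, R, \beta_0, \vartheta_2)$ with $\vartheta_1 = \rho R$ and $\vartheta_2 = R\sqrt{1-\rho^2}\,\xi$ do not render the feasible set convex, since the constraint $\|\vartheta_2\|^2 \le (R^2 - \vartheta_1^2)/\delta$ is not jointly convex in $(\vartheta_1, R)$; the paper instead uses $(A, B, \beta_0, \xi_B)$ with $B := R\sqrt{1-\rho^2}$ as an independent variable, for which $\|\xi_B\| \le B/\sqrt{\delta}$ is a convex cone constraint. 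This does not affect your main argument, which only uses uniqueness as a cited fact, but the justification you sketch for it would need that substitution.
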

\noindent
See \cref{append_subsubsec:ERM_param} for the proof.

\paragraph{ELD convergence} Finally, to establish the ELD convergence, we use a proof strategy similar to that in \cref{lem:over_logit_conv} by first defining the following measures
\begin{align*}
    \hat \cL_{n} := \frac1n \sum_{i=1}^n \delta_{y_i ( \< \xx_i, \hat\vbeta \> + \hat\beta_{0} ) }, 
    \quad
    \cL_* := \Law \, (U^*)
    = \Law \, \bigl( \rho^*\norm{\vmu}_2 R^* + R^* G + \beta_0^* Y + R^* \sqrt{1 - \rho^*{}^2} \xi^* \bigr).
\end{align*}
Let $\mathsf{B}_{W_2}(\varepsilon)$ ($\varepsilon > 0$) be the $\varepsilon$-$W_2$ ball at $\cL_*$, i.e.,
\begin{equation*}
    \mathsf{B}_{W_2}(\varepsilon) := \left\{ \bu \in \R^n:   W_2 \biggl( 
        \frac{1}{n}\sum_{i=1}^n \delta_{u_i}, \cL_*
     \biggr)  < \varepsilon \right\}.
\end{equation*}
Then by showing that
\begin{equation*}
    \lim_{n \to \infty} \P \left( M_n(\R^{d + 1}, \mathsf{B}_{W_2}^c(\varepsilon) ) > M_n \right) = 1,
\end{equation*}
we can prove the convergence of logit margins $W_2( \hat \cL_{n}, \cL_* ) \conp 0$, and hence the ELD convergence. The result in summarized in the following lemma.

\begin{lem}[ELD convergence]
\label{lem:ERM_logit_conv}
    As $n, d \to \infty$, we have $W_2( \hat \cL_{n}, \cL_* ) \conp 0$ and $W_2 ( \hat \nu_{n}, \nu_* ) \conp 0$.
\end{lem}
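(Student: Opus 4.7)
The plan is to follow the hint: first prove the convergence of the empirical distribution of the logit margins, $W_2(\hat\cL_n, \cL_*) \conp 0$, and then extend this to the joint empirical distribution $\hat\nu_n$. By \cref{lem:ERM_bound_beta}, with high probability the optimizer $(\hat\vbeta_n, \hat\beta_{0,n})$ lies in a compact convex set $\bTheta_\vbeta$ of the form \cref{eq:Theta_link} for some compact convex $\bTheta_c$ containing $(\rho^*, R^*, \beta_0^*)$ in its interior, and the auxiliary vector $\bu^* = (y_i(\langle \xx_i, \hat\vbeta_n\rangle + \hat\beta_{0,n}))_{i\in[n]}$ lies in the convex ball $\bXi_\bu = \{\bu : \|\bu\|_2 \le C_\bu\sqrt{n}\}$. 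On this event, the empirical distribution of $\bu^*$ is exactly $\hat\cL_n$, and the value of the ERM is $M_n = M_n(\bTheta_\vbeta, \bXi_\bu)$.

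\textbf{Key intermediate step.} Following the hint, the heart of the argument is to show that for every $\varepsilon > 0$,
\begin{equation*}
\lim_{n\to\infty} \P\bigl( M_n(\bTheta_\vbeta, \bXi_\bu \cap \mathsf{B}_{W_2}^c(\varepsilon)) > M_n \bigr) = 1.
\end{equation*}
The set $\mathsf{B}_{W_2}^c(\varepsilon)$ is not convex, so \cref{thm:ERM_conv} yields only the one-sided lower bound
\begin{equation*}
\P\bigl( M_n(\bTheta_\vbeta, \bXi_\bu \cap \mathsf{B}_{W_2}^c(\varepsilon)) \le M^*_\varepsilon - \eta \bigr) \to 0,
\end{equation*}
where $M^*_\varepsilon$ is the asymptotic counterpart of the constrained problem, namely the infimum of $\E[\ell(\rho\|\vmu\|R + RG + \beta_0 Y + R\sqrt{1-\rho^2}\xi)]$ over $(\rho,R,\beta_0)\in\bTheta_c$ and $\xi\in\cL^2(\Q_\infty)$ with $\|\xi\|_{\Q_\infty}\le 1/\sqrt{\delta}$, subject to the extra constraint $W_2(\Law(U),\cL_*) \ge \varepsilon$ where $U := \rho\|\vmu\|R + RG + \beta_0 Y + R\sqrt{1-\rho^2}\xi$. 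Combined with $M_n \conp M^*$ from \cref{lem:ERM_param_conv}, the result will follow once we establish $M^*_\varepsilon > M^*$.

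\textbf{Strict gap via uniqueness and stability.} To argue $M^*_\varepsilon > M^*$, suppose for contradiction that a minimizing sequence $(\rho_k, R_k, \beta_{0,k}, \xi_k)$ for $M^*_\varepsilon$ drives $\E[\ell(U_k)]$ down to $M^*$. Compactness of $\bTheta_c$ and the $\cL^2$ bound on $\xi_k$ give a weakly convergent subsequence with limit $(\rho_\infty, R_\infty, \beta_{0,\infty}, \xi_\infty)$; pseudo-Lipschitzness of $\ell$ together with weak lower semicontinuity of the objective show that this limit attains $M^*$. By the uniqueness of the minimizing distribution $\cL_*$ of the unconstrained variational problem (as established in the counterpart of \cref{lem:M_star_var} for $M^* = M^*([-1,1]\times\R_{\ge 0}\times\R)$), one has $\Law(U_\infty) = \cL_*$, and continuity of the pushforward yields $W_2(\Law(U_k), \cL_*) \to 0$, contradicting $W_2(\Law(U_k), \cL_*) \ge \varepsilon$. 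Hence $M^*_\varepsilon \ge M^* + 2\eta$ for some $\eta = \eta(\varepsilon) > 0$, and combining with $M_n \conp M^*$ and the one-sided CGMT bound gives the displayed inequality above with probability tending to one. Since $\hat\cL_n$ is the empirical distribution of the optimal $\bu^*$, this forces $\hat\cL_n \in \mathsf{B}_{W_2}(\varepsilon)$ with high probability, proving $W_2(\hat\cL_n, \cL_*) \conp 0$.

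\textbf{From logit margins to ELD and main obstacle.} To promote the logit-margin convergence to the joint convergence $W_2(\hat\nu_n, \nu_*) \conp 0$, I would mimic the coupling argument at the end of the proof of \cref{lem:over_logit_conv}: construct a coupling realizing the $W_2$ convergence of $\hat\cL_n$ to $\cL_*$, and then use Minkowski together with Cauchy--Schwarz on the labels $y_i \in \{\pm 1\}$ (noting $Y'Z - YZ = (Y'-Y)Z$ with $|Y'-Y|\le 2$ and $\E[Z^4]$ finite under the pseudo-Lipschitz bound on $\ell$) to deduce the joint convergence. The main obstacle is the stability/compactness argument behind $M^*_\varepsilon > M^*$: the constraint is on the \emph{distribution} of $U$, so one must work on the push-forward level, and the infinite-dimensional variable $\xi \in \cL^2(\Q_\infty)$ prevents a direct compactness argument. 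The cleanest route may well be to prove a quantitative stability estimate, showing that if $\E[\ell(U)]\le M^* + \eta$ within the feasible set then $W_2(\Law(U),\cL_*) \le \omega(\eta)$ for an explicit modulus $\omega$ (leveraging strict convexity of $\ell$ and the explicit KKT characterization $\sqrt{1-\rho^{*2}}R^*\xi^* = -\lambda^*\ell'(U^*)$), which immediately yields $M^*_\varepsilon \ge M^* + \omega^{-1}(\varepsilon)$.
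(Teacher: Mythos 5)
Your high-level plan matches the paper's: reduce to the logit-margin distribution $\hat\cL_n$, show that the constrained ERM value $M_n(\cdot,\bXi_\bu\cap \mathsf{B}^c_{W_2}(\varepsilon))$ exceeds $M_n$ with high probability, and then promote $W_2(\hat\cL_n,\cL_*)\conp 0$ to joint ELD convergence by a coupling argument in the style of \cref{lem:over_logit_conv}. However, there are two genuine gaps in how you propose to establish the strict gap.

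First, you invoke \cref{thm:ERM_conv} to get a one-sided bound $\P(M_n(\bTheta_\vbeta,\bXi_\bu\cap\mathsf{B}^c_{W_2}(\varepsilon)) \le M^*_\varepsilon - \eta)\to 0$, where $M^*_\varepsilon$ is the $\Q_\infty$-analogue of the constrained problem. But \cref{thm:ERM_conv} (and the underlying \cref{lem:M3-star}) is proved only in the case where $\bXi_\bu$ is large enough that $M_n^{(3)}(\bTheta_c,\bXi_\bu)=M_n^{(3)}(\bTheta_c)$, i.e., the $\bu$-constraint is void; the CGMT chain with the non-convex constraint $\mathsf{B}^c_{W_2}(\varepsilon)$ reduces the problem to controlling $M_n^{(3)}(\bTheta_c,\mathsf{B}^c_{W_2}(\varepsilon))$, and there is no result in the paper saying this converges (or is lower-bounded in probability) by your $M^*_\varepsilon$. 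Passing a $W_2$-constraint on the empirical marginal from $\Q_n$ to $\Q_\infty$ is precisely the hard step, and it is not obtained for free.

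Second, even granting $M^*_\varepsilon$, the compactness/uniqueness argument fails at the last line. You extract a subsequence $\xi_k\rightharpoonup\xi_\infty$ weakly in $\cL^2(\Q_\infty)$, show the limit attains $M^*$, invoke uniqueness to get $\Law(U_\infty)=\cL_*$, and then conclude $W_2(\Law(U_k),\cL_*)\to 0$. But weak $\cL^2$ convergence of $\xi_k$ does not imply convergence in distribution of $U_k$ (consider i.i.d.\ $\normal(0,1)$ random variables, which converge weakly to $0$ in $\cL^2$ while their laws stay fixed). The Mazur/Fatou trick used in \cref{lem:var_fixed} for existence of minimizers preserves objective values but destroys the pushforward, so it cannot rescue the distributional constraint here. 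Consequently the contradiction with $W_2(\Law(U_k),\cL_*)\ge\varepsilon$ does not arise. The paper sidesteps both issues by splitting $\wt M_n^{\varepsilon(3)}$ according to whether $(\rho,R,\beta_0)$ is within an $\eta$-ball of $(\rho^*,R^*,\beta_0^*)$. Far from the optimum, it drops the $W_2$-constraint altogether and uses uniqueness of the parameter minimizer. Near the optimum, it uses the finite-$n$ proximal characterization: because $\hat U_{\rho,R,\beta_0}$ saturates the constraint $\|\xi\|^2_{\Q_n}=1/\delta$ and is $\Q_n$-close to $U^*_{\rho,R,\beta_0}$, the constraint $W_2(\Law_{\Q_n}(U),\cL_*)\ge\varepsilon$ forces $U$ into a strictly smaller $\cL^2$-ball $\|\xi\|^2_{\Q_n}\le 1/\delta'_\varepsilon$ with $\delta'_\varepsilon>\delta$. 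This converts the hard distributional constraint into a tighter convex $\cL^2$ constraint, to which \cref{lem:M3-star} and strict complementary slackness ($\|\xi^*\|^2_{\Q_\infty}=1/\delta$) apply directly, yielding the strict gap. Your proposed quantitative stability alternative points in the right direction but would need exactly this kind of $\cL^2$-level reformulation to succeed.
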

\noindent
See \cref{append_subsubsec:ERM_logit} for the proof.

\subsubsection{Step 1 --- Boundedness of $\vbeta$ and $\beta_0$: Proof of \cref{lem:ERM_bound_beta}}
\label{subsubsec:under_step1}

\begin{proof}[\textbf{Proof of \cref{lem:ERM_bound_beta}}]
    We first assume $\hat\vbeta \not= \bzero$. By \cref{thm:SVM_main}\ref{thm:SVM_main_mar}, if $\delta > \delta^*(0)$, there exists $k \in [n]$ and constant $\overline{\kappa} > 0$, such that
    \begin{equation}
        \label{eq:neg_k_logit}
        y_k \biggl(  \biggl\< \xx_k, \frac{\hat\bbeta}{\|\hat\bbeta\|_2} \biggr\> + \frac{\hat\beta_0}{\|\hat\bbeta\|_2} \biggr) \le - \overline{\kappa}
    \end{equation}
    holds with high probability. Therefore, we have
    \begin{equation*}
            \ell(0) 
               \overset{\mathmakebox[0pt][c]{\smash{\text{(i)}}}}{\ge}
                \frac1n \sum_{i=1}^n \ell\bigl( 
                y_i(\< \xx_i, \hat\bbeta \> +  \hat\beta_0 ) \bigr)
               \overset{\mathmakebox[0pt][c]{\smash{\text{(ii)}}}}{\ge}
                \frac1n \ell\bigl( 
                y_k(\< \xx_k, \hat\bbeta \> +  \hat\beta_0 ) \bigr)
               \overset{\mathmakebox[0pt][c]{\smash{\text{(iii)}}}}{\ge}
                \frac1n \ell( - \overline{\kappa} \|\hat\bbeta\|_2 ),
    \end{equation*}
    where in (i) we note that $\hat R_n(\bzero, 0) \ge \hat R_n(\hat\bbeta, \hat\beta_0) = M_n$, in (ii) we use $\ell \ge 0$, and in (iii) we use \eqref{eq:neg_k_logit}. Clearly the above inequalities also hold for $\hat\vbeta = \bzero$. Notice that $\frac1n \ell( - \overline{\kappa} \|\hat\bbeta\|_2 ) \to +\infty$ as $\|\hat\bbeta\|_2 \to \infty$, which contradicts $\ell(0) < +\infty$. Hence, it implies $\|\hat\bbeta\|_2$ is bounded with high probability.

    Meanwhile, let $j, k \in [n]$ be any two indices $y_j = +1$, $y_k = -1$. Then as $\hat\beta_0 \to \pm\infty$, we have
    \begin{equation*}
        \ell(0) \ge \frac1n \sum_{i=1}^n \ell\bigl( 
            y_i(\< \xx_i, \hat\bbeta \> +  \hat\beta_0 ) \bigr)
        \ge
        \frac1n \ell\bigl( \< \xx_j, \hat\bbeta \> +  \hat\beta_0 \bigr)
        +
        \frac1n \ell\bigl( - \< \xx_k, \hat\bbeta \> - \hat\beta_0 \bigr)
        \to +\infty,
    \end{equation*}
    which leads to a contradiction. So $|\hat\beta_0|$ is also bounded with high probability.

    Finally, in the minimax representation of $M_n$, the optimal $\bu$ must satisfy $u_i = y_i(\< \xx_i, \hat\bbeta \> +  \hat\beta_0 )$ for all $i \in [n]$. Therefore, according to the tail bound of Gaussian matrices \cite[Corollary 7.3.3]{vershynin2018high},
    \begin{align*}
            \norm{\bu}_2 & = \| \yy \odot (\XX \hat\bbeta + \hat\beta_0 \bone_n ) \|_2
            = \| \< \bmu, \hat\bbeta \> \bone_n + \ZZ \hat\bbeta + \hat\beta_0 \yy \|_2 \\
            & \le \sqrt{n} \| \bmu \|_2 \| \hat\bbeta \|_2 +
            \| \ZZ \|_{\mathrm{op}} \| \hat\bbeta \|_2 + \sqrt{n} | \hat\beta_0 | \\
            & \le \sqrt{n} \| \bmu \|_2 \| C_{\bbeta} + \bigl( \sqrt{n}(1 + o(1)) + \sqrt{d} \bigr) C_{\bbeta} 
            + \sqrt{n} C_{\beta_0} \\
            & \le \sqrt{n} C_{\bu}
    \end{align*}
    with high probability, where $C_{\bu} > 0$ is some constant. This completes the proof.
\end{proof}

\subsubsection{Step 2 --- Reduction via Gaussian comparison: Proof of \cref{lem:ERM_CGMT}}
\label{subsubsec:under_step2}

\begin{proof}[\textbf{Proof of \cref{lem:ERM_CGMT}}]
    For $m \in \mathbb{N}_+$, denote $K_m = \{ \bv \in \R^n: \norm{\bv}_2 \le m \}$, and define
    \begin{align*}
        M_n (\bTheta_{\vbeta}, \bXi_{\bu}; K_m)
        & :=  \!\min_{ \substack{ (\bbeta , \beta_0) \in \bTheta_{\vbeta} \\  \bu \in \bXi_{\bu} } } \max_{ \bv \in K_m } \left\{
        \frac1n \sum_{i=1}^n \ell( u_i )
      + \frac1n \bv^\top \bone \< \bmu, \vbeta \>
      + \frac1n \bv^\top \ZZ \vbeta + \frac1n \beta_0 \bv^\top \yy - \frac1n \bv^\top \bu \right\} \! , \!\! 
      \\
        M_n^{(1)} (\bTheta_{\vbeta}, \bXi_{\bu}; K_m) 
        & :=  \!\min_{ \substack{ (\bbeta , \beta_0) \in \bTheta_{\vbeta} \\  \bu \in \bXi_{\bu} } }
        \max_{ \bv \in K_m }
        \, \Biggl\{
        \frac1n \sum_{i=1}^n \ell( u_i )
        + \frac1n \bv^\top \bone \< \bmu, \vbeta \>
         + \frac1n \norm{\bv}_2 \hh^\top \vbeta + \frac1n \norm{\vbeta}_2 \vg^\top \bv 
         \\
         &
         \phantom{:=  \!\min_{ \substack{ (\bbeta , \beta_0) \in \bTheta_{\vbeta} \\  \bu \in \bXi_{\bu} } }
        \max_{ \bv \in K_m }
        \, \Biggl\{}
         + \frac1n \beta_0 \bv^\top \yy - \frac1n \bv^\top \bu
         \Biggr\}.
    \end{align*}
    We first show that
    \begin{equation*}
        \lim_{m \to \infty} M_n (\bTheta_{\vbeta}, \bXi_{\bu}; K_m) = M_n (\bTheta_{\vbeta}, \bXi_{\bu}).
    \end{equation*}
    To this end, note that for any fixed $(\bbeta , \beta_0, \bu)$, by Cauchy--Schwarz inequality we have
    \begin{align}
        & \max_{ \bv \in K_m } \left\{
        \frac1n \sum_{i=1}^n \ell( u_i )
        + \frac1n \bv^\top \bone \< \bmu, \vbeta \>
        + \frac1n \bv^\top \ZZ \vbeta + \frac1n \beta_0 \bv^\top \yy - \frac1n \bv^\top \bu \right\} 
        \notag \\
        = {} & \frac1n \sum_{i=1}^n \ell( u_i ) + \frac{m}{n} \bigl\|\bu - \< \bmu, \vbeta \> \bone - \ZZ \vbeta - \beta_0 \yy \bigr\|_2.
        \label{eq:ell_Mn_ineq}
    \end{align}
    Let $(\bbeta_*^{(m)}, \beta_{0, *}^{(m)}, \bu_*^{(m)})$ be the minimizer of $M_n (\bTheta_{\vbeta}, \bXi_{\bu}; K_m)$. Since $\ell \ge 0$, we know that
    \begin{align*}
        & \frac{m}{n} \norm{\bu_*^{(m)} - \< \bmu, \vbeta_*^{(m)} \> \bone - \ZZ \vbeta_*^{(m)} - \beta_{0, *}^{(m)} \yy}_2 \le \, M_n (\bTheta_{\vbeta}, \bXi_{\bu}; K_m) \le M_n (\bTheta_{\vbeta}, \bXi_{\bu}) \\
        \implies \  & \frac{ \mathmakebox[\widthof{$m$}][c]{1} }{n} \norm{\bu_*^{(m)} - \< \bmu, \vbeta_*^{(m)} \> \bone - \ZZ \vbeta_*^{(m)} - \beta_{0, *}^{(m)} \yy}_2 \le \, \frac{1}{m} M_n (\bTheta_{\vbeta}, \bXi_{\bu}).
    \end{align*}
    Let $\bu' := \< \bmu, \vbeta_*^{(m)} \> \bone + \ZZ \vbeta_*^{(m)} + \beta_{0, *}^{(m)} \yy$, then we have
    \begin{equation}\label{eq:u_diff_Mn}
        \frac{1}{n} \, \bigl\| \bu_*^{(m)} - \bu' \bigr\|_2 \le \, \frac{1}{m} M_n (\bTheta_{\vbeta}, \bXi_{\bu}),
    \end{equation}
    which implies that ($\bu_*^{(m)} = (u_{*, 1}^{(m)}, \ldots, u_{*, n}^{(m)})^\top$, $\bu' = (u'_1, \ldots, u'_n)^\top$)
    \begin{align*}
        M_n (\bTheta_{\vbeta}, \bXi_{\bu}) 
        & = \min_{ \substack{ (\bbeta , \beta_0) \in \bTheta_{\vbeta} \\  \bu \in \bXi_{\bu} } }
        \left\{
        \frac1n \sum_{i=1}^n \ell( u_i )  \,\bigg|\,
         \< \bmu, \vbeta \> +  \< \zz_i, \vbeta \> + y_i \beta_0 - u_i = 0, \forall\, i \in [n]
         \right\}
        \\
        & \le
        \frac1n \sum_{i=1}^n \ell( u'_i ) \le \frac1n \sum_{i=1}^n \ell( u_{*, i}^{(m)} ) + \frac{1}{n} \sum_{i=1}^{n} \left\vert \ell( u_{*, i}^{(m)} ) - \ell( u'_i ) \right\vert 
        \\
        & \overset{\mathmakebox[0pt][c]{\text{(i)}}}{\le} 
        \frac1n \sum_{i=1}^n \ell( u_{*, i}^{(m)} ) + \frac{C_L}{n} \bigl\| \bu_*^{(m)} - \bu' \bigr\|_1
        \\
        & \overset{\mathmakebox[0pt][c]{\text{(ii)}}}{\le} 
        \frac1n \sum_{i=1}^n \ell( u_{*, i}^{(m)} ) + O_m \left( \frac{1}{m} \right) 
        \overset{\mathmakebox[0pt][c]{\text{(iii)}}}{\le}
        M_n (\bTheta_{\vbeta}, \bXi_{\bu}; K_m) + O_m \left( \frac{1}{m} \right),
    \end{align*}
    where (i) follows from the pseudo-Lipschitzness of $\ell$, the compactness of $\bXi_{\bu}$, and $C_L > 0$ is some constant, (ii) follows from \cref{eq:u_diff_Mn}, while (iii) follows from \cref{eq:ell_Mn_ineq}.
    This proves that
    \begin{equation*}
        \lim_{m \to \infty} M_n (\bTheta_{\vbeta}, \bXi_{\bu}; K_m) = M_n (\bTheta_{\vbeta}, \bXi_{\bu}).
    \end{equation*}
    Similarly, one can show that
    \begin{equation*}
        \lim_{m \to \infty} M_n^{(1)} (\bTheta_{\vbeta}, \bXi_{\bu}; K_m) = M_n^{(1)} (\bTheta_{\vbeta}, \bXi_{\bu}).
    \end{equation*}
    Now for any fixed $m$, applying \cref{lem:CGMT}\ref{lem:CGMT(a)} yields that $\forall\, t \in \R$:
    \begin{equation*}
        \P \, \Bigl( M_n(\bTheta_{\vbeta}, \bXi_{\bu}; K_m) \le t \Bigr) 
        \le 2 \, \P \, \Bigl( M_n^{(1)}(\bTheta_{\vbeta}, \bXi_{\bu}; K_m) \le t \Bigr),
    \end{equation*}
    thus leading to \cref{eq:unbounded_CGMT_1} (by continuity and using the two limits above)
    \begin{align*}
        & \P \left( M_n(\bTheta_{\vbeta}, \bXi_{\bu}) \le t \right) =  
        \lim_{m \to \infty} \P \left( M_n(\bTheta_{\vbeta}, \bXi_{\bu}; K_m) \le t \right) \\
        \le {} & 2 \lim_{m \to \infty} \P \left( M_n^{(1)}(\bTheta_{\vbeta}, \bXi_{\bu}; K_m) \le t \right)
        =  2 \, \P \left( M_n^{(1)}(\bTheta_{\vbeta}, \bXi_{\bu}) \le t \right).
    \end{align*}
    Further, if $\bTheta_{\vbeta}$ and $\bXi_{\bu}$ are convex, \Cref{lem:CGMT}(b) implies that
    \begin{equation*}
        \P \, \Bigl( M_n(\bTheta_{\vbeta}, \bXi_{\bu}; K_m) \ge t \Bigr) 
        \le 2 \, \P \, \Bigl( M_n^{(1)}(\bTheta_{\vbeta}, \bXi_{\bu}; K_m) \ge t \Bigr).
    \end{equation*}
    Sending $m \to \infty$ similarly proves the other inequality \cref{eq:unbounded_CGMT_2}.
\end{proof}

\subsubsection{Step 3 --- Convergence in variational forms: Proof of \cref{lem:M2-3}}
\label{subsubsec:under_step3}

\begin{proof}[\textbf{Proof of \cref{lem:M2-3}}]
    First, by definition of $M_n^{(2)}$ and $M_n^{(3)}$:
    \begin{equation*}
        \abs{ M_n^{(2)}(\bTheta_{c}, \bXi_{\bu}) - M_n^{(3)}(\bTheta_{c}, \bXi_{\bu}) } \le \, \sup_{(\rho, R, \beta_0) \in \bTheta_{c}}
            \abs{ 
            \min_{  
                U \in \Xi_{u}  \cap   \mathcal{N}_n  }
                \E_{\Q_n}[\ell(U)]    
        -
        \min_{  
        U \in \Xi_{u}  \cap   \mathcal{N}^\delta_n  }
        \E_{\Q_n}[\ell(U)]
        }.
    \end{equation*}
    For any fixed $(\rho, R, \beta_0) \in \bTheta_{c}$, by definition of $\mathcal{N}_n$ in \cref{eq:set_N_n} and $\mathcal{N}^\delta_n$ in \cref{eq:set_N_n_delta}, we have
    \begin{equation*}
        \left\vert \min_{U \in \Xi_{u} \cap \mathcal{N}_n} \E_{\Q_n}[\ell(U)] - \min_{U \in \Xi_{u} \cap \mathcal{N}^\delta_n}
        \E_{\Q_n}[\ell(U)] \right\vert
        \le  
        \max_{\substack{ U, U' \in \Xi_u \cap \mathcal{N}_n \cap \mathcal{N}^\delta_n \\ 
        \| U - U' \|_{\Q_n} \le \veps_n (\rho, R, \beta_0)}} \left\vert \E_{\Q_n} [\ell(U)] - \E_{\Q_n} [\ell(U')] \right\vert,
    \end{equation*}
    where
    \begin{equation*}
        \veps_n (\rho, R, \beta_0) := \left\vert \frac{R}{\sqrt{n}} \biggl( \sqrt{1 - \rho^2}  \| \bP_{\bmu}^\perp \hh \|_2  -  \rho\frac{\hh^\top \vmu}{\norm{\vmu}_2}  \biggr) - R \frac{\sqrt{1 - \rho^2}}{\sqrt{\delta}} \right\vert.
    \end{equation*}
    By our assumption that $\ell$ is pseudo-Lipschitz, the following estimate holds:
    \begin{align*}
        \left\vert \E_{\Q_n} [\ell(U)] - \E_{\Q_n} [\ell(U')] \right\vert 
        & \le 
        \frac{1}{n} \sum_{i=1}^{n} \vert \ell(u_i) - \ell(u'_i) \vert 
        \le \frac{L}{n} \sum_{i=1}^{n} (1 + \vert u_i \vert + \vert u'_i \vert ) \vert u_i - u'_i \vert \\
        & \stackrel{\text{(i)}}{\le}  L \left( 1 + \|U\|_{\Q_n} + \|U'\|_{\Q_n} \right) \|U - U'\|_{\Q_n} 
        \stackrel{\text{(ii)}}{\le}
        C ( 1 + o_{\P}(1) ) \, \veps_n (\rho, R, \beta_0) ,
    \end{align*}
    where (i) follows from Cauchy--Schwarz inequality, (ii) follows from the compactness of $\mathcal{N}_n^\delta$ and $\bTheta_{c}$, and the upper bound below:
    \begin{align*}
        \norm{U}_{\Q_n} & \le  \sup_{(\rho, R, \beta_0) \in \bTheta_{c}} \bigl\| \rho\norm{\vmu}_2 R + R G + \beta_0 Y \bigr\|_{\Q_n}
        + \frac{R \sqrt{1 - \rho^2}}{\sqrt{\delta}} 
        \\
        & \le \rho\norm{\vmu}_2 R_{\max} + R_{\max} \norm{G}_{\Q_n} + B_{0, \max} \norm{Y}_{\Q_n} + \frac{R_{\max}}{\sqrt{\delta}} \\
        & \overset{\mathmakebox[0pt][c]{\smash{\text{($*$)}}}}{=} 
        \rho\norm{\vmu}_2 R_{\max} + R_{\max} \bigl( 1 + o_{\P}(1) \bigr) + B_{0, \max} + \frac{R_{\max}}{\sqrt{\delta}},
    \end{align*}
    by denoting $R_{\max} := \max_{(\rho, R, \beta_0) \in \bTheta_{c}} R$, $B_{0, \max} := \max_{(\rho, R, \beta_0) \in \bTheta_{c}} \abs{\beta_0}$, and $C > 0$ is some constant. Here, ($*$) is from the law of large numbers: $\norm{G}_{\Q_n} \conp \norm{G}_{\Q_\infty} = (\E[G^2])^{1/2} = 1$. Combining these estimates, we finally deduce that
    \begin{align*}
        & \abs{M_n^{(2)}(\bTheta_{c}, \bXi_{\bu}) - M_n^{(3)}(\bTheta_{c}, \bXi_{\bu})} 
        \le  C ( 1 + o_{\P}(1) ) \max_{(\rho, R, \beta_0) \in \bTheta_{c}} \veps_n (\rho, R, \beta_0) \\
        = {} & 
        C ( 1 + o_{\P}(1) ) \max_{(\rho, R, \beta_0) \in \bTheta_{c}} \left\vert \frac{R}{\sqrt{n}} \biggl( \sqrt{1 - \rho^2}  \| \bP_{\bmu}^\perp \hh \|_2  -  \rho\frac{\hh^\top \vmu}{\norm{\vmu}_2}  \biggr) - R \frac{\sqrt{1 - \rho^2}}{\sqrt{\delta}} \right\vert \\
        \le {} & 
        C ( 1 + o_{\P}(1) ) \cdot R_{\max} \left( \left\vert \frac{1}{\sqrt{n}} \| \bP_{\bmu}^\perp \hh \|_2 - \frac{1}{\sqrt{\delta}} \right\vert + \frac{1}{\sqrt{n}} \frac{\vert \hh^\top \vmu \vert}{\norm{\vmu}_2} \right) \conp 0.
    \end{align*}
    The convergence in the last line follows from
\begin{equation*}
    \frac{\| \bP_{\bmu}^\perp \hh \|_2}{\sqrt{n}} = \frac{\| \bP_{\bmu}^\perp \hh \|_2}{\| \bP_{\bmu}^\perp \|_\mathrm{F}}
    \cdot \frac{\sqrt{d-1}}{\sqrt{n}}
    \conp \frac{1}{\sqrt{\delta}},
    \qquad
    \frac{ \hh^\top \vmu }{ \sqrt{n} \norm{\vmu}_2 } \conp 0,
\end{equation*}
according to \cref{lem:subG_concentrate}\ref{lem:subG-Hanson-Wright-II}\ref{lem:subG-Hoeffding} and $\|\bP_{\bmu}^\perp \|_\mathrm{op} = 1$, $\| \bP_{\bmu}^\perp \|_\mathrm{F} = \sqrt{d - 1}$. This completes the proof.
\end{proof}

\subsubsection{Step 4 --- Asymptotic characterization: Proofs of \cref{lem:M3-star}, \ref{lem:var_fixed}}
\label{subsubsec:under_step4}

We need the following auxiliary result, which studies a general variational problem for both $\Q = \Q_n$ and $\Q = \Q_\infty$ with parameters $(\rho, R, \beta_0)$ fixed. In particular, we are able to express the random variable $\xi$ by $(\rho, R, \beta_0)$, $(G, Y)$, and an additional scalar (Lagrange multiplier). Then, we can rewrite $M_n^{(3)}(\bTheta_{c})$, $M^{*}(\bTheta_{c})$ as low-dimensional convex-concave minimax problems.

\begin{lem}\label{lem:var_fixed}
    For any fixed parameters $\rho \in (-1, 1)$, $R > 0$, $\beta_0 \in \R$, and the probability measure $\Q = \Q_n$ or $\Q = \Q_\infty$, consider the following variational problem
    \begin{equation}
    \label{eq:ERM_var_fix}
        \zeta_{\rho, R, \beta_0}(\Q) :=
        \min_{\xi \in \cL^2(\Q), \norm{\xi}_{\Q}^2 \le 1/\delta} \mathscr{R}_{\Q}(\xi),
        \quad
        \mathscr{R}_{\Q}(\xi) := \E_{\Q} \left[ \ell \bigl( \rho\norm{\vmu}_2 R + RG + \beta_0 Y + R\sqrt{1 - \rho^2} \xi \bigr) \right] \! .
    \end{equation}
    \begin{enumerate}[label=(\alph*)]
        \item \label{lem:var_fixed(a)}
        $\mathscr{R}_{\Q}(\xi)$ has a unique minimizer $\xi^* := \xi^*_{\Q}(\rho, R, \beta_0)$, which must satisfy
        \begin{equation}\label{eq:xi_star}
            \xi^*_{\Q}(\rho, R, \beta_0) = - \frac{\lambda^*}{R \sqrt{1 - \rho^2}}\ell'\bigl(\prox_{ \lambda^* \ell}( \rho\norm{\vmu}_2 R + RG + \beta_0 Y )\bigr),
        \end{equation}
        where $\lambda^*$ is the unique solution such that $\norm{\xi^*}_\Q^2 = 1/\delta$.
        As a consequence, we have
        \begin{equation*}
            \zeta_{\rho, R, \beta_0}(\Q)
                = 
            \E_{\Q} \left[ \ell \bigl( \prox_{ \lambda^* \ell}( \rho \norm{\vmu}_2 R + R G + \beta_0 Y )
                \bigr) \right],
        \end{equation*}
        where $\prox_{\lambda^*\ell}$ and $\envelope_{\lambda^*\ell}$ are the proximal operator and Moreau envelope of $\ell$ defined in \cref{append_subsec_Moreau}. Moreover, $\lambda^*$ is a decreasing function of $\delta$.
        \item \label{lem:var_fixed(b)}
        With change of variables $A := R \rho$, $B := R \sqrt{1 - \rho^2}$, the variational problem \cref{eq:ERM_var_fix} can be recast as $\zeta_{\rho, R, \beta_0}(\Q) = \sup_{\nu > 0} \mathscr{R}_{\nu, \Q}(A, B, \beta_0)$, where
        \begin{equation*}
            \mathscr{R}_{\nu, \Q}(A, B, \beta_0) 
            :=   - \frac{B \nu}{2 \delta }
            +
            \E_{\Q} \left[ \envelope_{\ell} \Bigl( A \norm{\vmu}_2 + A G_1 + B G_2 + \beta_0 Y ; \frac{B}{\nu} \Bigr) \right], 
        \end{equation*}
        and $(Y, G_1, G_2) \sim P_y \times \normal(0, 1) \times \normal(0, 1)$ under $\Q = \Q_\infty$.\footnote{According to the change of variables, we have relation $A G_1 + B G_2 \overset{\mathrm{d}}{=} R G$ under $\Q = \Q_\infty$. We can also construct the realizations $\{G_1(g_i, y_i), G_2(g_i, y_i)\}_{i=1}^n$ such that $A G_1 + B G_2 = R G$, $\Q_n$-a.s., that is, $A G_1(g_i, y_i) + B G_2(g_i, y_i) = R G(g_i, y_i)$, for all $i \in [n]$.} Moreover, $\mathscr{R}_{\nu, \Q}(A, B, \beta_0)$ is convex in $(A, B, \beta_0)$ over $\R_{>0} \times \R_{>0} \times \R$ and concave in $\nu$.
        
    \end{enumerate}
\end{lem}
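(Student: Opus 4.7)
For part~(a), I will establish existence and uniqueness of the minimizer via the strict convexity of $\mathscr{R}_\Q$. Because $\rho \in (-1, 1)$ and $R > 0$, the slope $R\sqrt{1-\rho^2}$ is strictly positive, so pointwise $\xi \mapsto \ell(\rho\norm{\vmu}_2 R + RG + \beta_0 Y + R\sqrt{1-\rho^2}\xi)$ is strictly convex; integration preserves this, making $\mathscr{R}_\Q$ strictly convex on $\cL^2(\Q)$. The constraint ball $\{\|\xi\|_\Q \le 1/\sqrt{\delta}\}$ is weakly compact in the Hilbert space $\cL^2(\Q)$ and $\mathscr{R}_\Q$ is continuous (hence weakly lower semicontinuous as a convex continuous functional) by the pseudo-Lipschitzness of $\ell$, so a unique minimizer $\xi^*$ exists. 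Since $\ell$ is strictly decreasing with $\inf\ell = 0$ unattained (Lemma~\ref{lem:ell}), the unconstrained infimum of $\mathscr{R}_\Q$ equals $0$; hence the constraint must be active at $\xi^*$, i.e., $\|\xi^*\|_\Q^2 = 1/\delta$.

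The KKT first-order condition then yields a Lagrange multiplier $\tilde\nu > 0$ with $R\sqrt{1-\rho^2}\ell'(U) + \tilde\nu \xi^* = 0$ pointwise, where $U := \rho\norm{\vmu}_2 R + RG + \beta_0 Y + R\sqrt{1-\rho^2}\xi^*$. Substituting $R\sqrt{1-\rho^2}\xi^* = U - (\rho\norm{\vmu}_2 R + RG + \beta_0 Y)$ into this identity and setting $\lambda^* := R^2(1-\rho^2)/\tilde\nu$ gives the fixed-point relation $U + \lambda^* \ell'(U) = \rho\norm{\vmu}_2 R + RG + \beta_0 Y$, which is precisely the defining equation for $U = \prox_{\lambda^*\ell}(\rho\norm{\vmu}_2 R + RG + \beta_0 Y)$. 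This yields formula~\eqref{eq:xi_star} and, substituting back, the claimed representation of $\zeta_{\rho, R, \beta_0}(\Q)$. The monotonicity of $\lambda^*$ in $\delta$ follows from the dual viewpoint of part~(b): the optimal $\tilde\nu^*$ satisfies $J'(\tilde\nu^*) = 1/(2\delta)$ with $J$ concave, so $\tilde\nu^*$ is increasing in $\delta$ and $\lambda^* = R^2(1-\rho^2)/\tilde\nu^*$ is decreasing.

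For part~(b), I will use Lagrangian duality with $L(\xi, \tilde\nu) = \mathscr{R}_\Q(\xi) + (\tilde\nu/2)(\|\xi\|_\Q^2 - 1/\delta)$. Slater's condition holds (take $\xi = 0$), giving strong duality $\zeta_{\rho, R, \beta_0}(\Q) = \sup_{\tilde\nu \ge 0} \inf_\xi L(\xi, \tilde\nu)$. For fixed $\tilde\nu > 0$ the inner infimum decouples pointwise: at each atom $(g, y)$, the change of variable $s = \rho\norm{\vmu}_2 R + Rg + \beta_0 y + R\sqrt{1-\rho^2}t$ converts the one-dimensional minimization into $\envelope_\ell(\rho\norm{\vmu}_2 R + Rg + \beta_0 y; R^2(1-\rho^2)/\tilde\nu)$. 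Reparametrizing $\tilde\nu = B\nu$ (so that $R^2(1-\rho^2)/\tilde\nu = B/\nu$) together with $A = R\rho$, $B = R\sqrt{1-\rho^2}$, and the identity $RG = AG_1 + BG_2$ (with $G_1, G_2$ constructed as in the footnote) then delivers $\zeta_{\rho, R, \beta_0}(\Q) = \sup_{\nu > 0} \mathscr{R}_{\nu, \Q}(A, B, \beta_0)$.

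Concavity of $\mathscr{R}_{\nu, \Q}$ in $\nu$ is then immediate: $\tilde\nu \mapsto \inf_\xi L(\xi, \tilde\nu)$ is a pointwise infimum over $\xi$ of functions affine in $\tilde\nu$, hence concave, and $\nu = \tilde\nu/B$ is a linear change of variable. For convexity in $(A, B, \beta_0) \in \R_{>0} \times \R_{>0} \times \R$ at fixed $\nu > 0$, I will invoke the joint convexity of $(x, \lambda) \mapsto \envelope_\ell(x; \lambda)$ on $\R \times \R_{>0}$. This holds because $(t, x, \lambda) \mapsto \ell(t) + (t-x)^2/(2\lambda)$ is jointly convex---the quadratic term is the perspective of $u \mapsto u^2/2$, jointly convex in $(t-x, \lambda)$ for $\lambda > 0$---and partial minimization over $t$ preserves convexity. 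Composing with the affine map $(A, B, \beta_0) \mapsto (A\norm{\vmu}_2 + AG_1 + BG_2 + \beta_0 Y, B/\nu)$, taking expectation under $\Q$, and adding the linear term $-B\nu/(2\delta)$ all preserve convexity. The main technical obstacle I anticipate is rigorously justifying the strong-duality exchange in the infinite-dimensional $\Q_\infty$ setting, but this can be handled by standard Fenchel--Rockafellar arguments given Slater's condition; the $\Q_n$ case reduces to finite-dimensional convex duality and is immediate.
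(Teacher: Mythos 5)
Your proposal is correct and follows essentially the same route as the paper's proof: strict convexity of $\ell$ for uniqueness, compactness of the constraint ball in $\cL^2(\Q)$ for existence, KKT stationarity recast as a proximal fixed point, and Lagrangian duality with the Moreau envelope for part~(b). Two of your intermediate steps differ in form but not in substance. For why the constraint is tight at the optimum, you argue that the unconstrained infimum of $\mathscr{R}_\Q$ is $0$ and unattained (since $\ell > 0$ decreases to $0$ at $+\infty$), so a minimizer with strictly positive value cannot lie in the interior of the ball; the paper instead proves $\xi^* \ge 0$ pointwise (replacing $\xi^*$ by $|\xi^*|$ cannot increase the objective because $\ell$ is decreasing and $R\sqrt{1-\rho^2} > 0$), deduces $\tilde\nu > 0$ and $\xi^* > 0$ from stationarity, and applies complementary slackness. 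Your version is cleaner; the paper's also delivers $\xi^* > 0$ almost surely, a fact it reuses later. For the monotonicity of $\lambda^*$ in $\delta$, the paper directly shows $\xi^*(\nu)$ is \emph{strictly} decreasing in the dual variable, whereas you invoke the first-order condition $J'(\tilde\nu^*) = 1/(2\delta)$ with $J$ concave. Concavity alone gives only that $J'$ is non-increasing; the claimed strict monotonicity of $\tilde\nu^*$ in $\delta$ requires strict concavity of $J$, which follows from the envelope-theorem identity $J'(\tilde\nu) = \tfrac{1}{2}\norm{\xi^*(\tilde\nu)}_\Q^2$ together with the strict decrease of $\xi^*(\tilde\nu)$ in $\tilde\nu$---exactly the fact the paper supplies---so your step needs one more line but the idea is sound. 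Part~(b) matches the paper exactly, including the perspective-function justification for joint convexity of the Moreau envelope, which the paper cites from \cref{lem:prox}\ref{lem:prox(a)} rather than re-deriving.
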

\begin{proof}
    For \textbf{\ref{lem:var_fixed(a)}}, we first show the existence of a minimizer. The proof is a standard application of direct method in calculus of variations. Since $\ell$ is lower bounded, we know that
        \begin{equation*}
            \inf_{\xi \in \cL^2(\Q), \norm{\xi}_{\Q}^2 \le 1/\delta} \mathscr{R}_{\Q}(\xi) > - \infty.
        \end{equation*}
        Let $\{ \xi_m \}_{m \in \mathbb{N}} \in \cL^2 (\Q)$ be a minimizing sequence such that $\norm{\xi_m}_{\Q}^2 \le 1/\delta$, and
        \begin{equation*}
            \lim_{m \to \infty} \mathscr{R}_{\Q}(\xi_m) = \inf_{\xi \in \cL^2(\Q), \norm{\xi}_{\Q}^2 \le 1/\delta} \mathscr{R}_{\Q}(\xi).
        \end{equation*}
        Since $\cL^2 (\Q)$ is a Hilbert space (and hence self-reflexive), Banach-Alaoglu theorem implies that $\{ \xi_m \}$ has a weak-* convergent (and hence weak convergent) subsequence, which we still denote as $\{ \xi_m \}$. Let $\xi^*$ denote the weak limit of $\{ \xi_m \}$. By using Mazur's lemma, we know that there exists another sequence $\{ \xi'_m \}_{m \in \mathbb{N}}$, such that each $\xi'_m$ is a finite convex combination of $\{ \xi_k \}_{m \le k \le m + N(m)}$ ($N(m) \ge 0$ depends on $m$), and that $\xi'_m$ strongly converges to $\xi^*$. Now since $\mathscr{R}_{\Q}$ is convex (this follows from convexity of $\ell$ and the fact that integration $\E_{\Q}$ preserves convexity), we have
        \begin{equation*}
            \liminf_{m \to \infty} \mathscr{R}_{\Q} (\xi'_m) \le \liminf_{m \to \infty} \mathscr{R}_{\Q} (\xi_m) = \inf_{\xi \in \cL^2(\Q), \norm{\xi}_{\Q}^2 \le 1/\delta} \mathscr{R}_{\Q}(\xi).
        \end{equation*}
        On the other hand, Fatou's lemma implies that
        \begin{equation*}
            \mathscr{R}_{\Q}(\xi^*) \le \liminf_{m \to \infty} \mathscr{R}_{\Q}(\xi'_m).
        \end{equation*}
        This immediately leads to
        \begin{equation*}
            \mathscr{R}_{\Q}(\xi^*) = \inf_{\xi \in \cL^2(\Q), \norm{\xi}_{\Q}^2 \le 1/\delta} \mathscr{R}_{\Q}(\xi),
        \end{equation*}
        i.e., $\xi^*$ is a minimizer of $\mathscr{R}_{\Q}$.
        In order to prove uniqueness of the minimizer, we will show that $\mathscr{R}_{\Q} : \cL^2(\Q) \to \R_{> 0}$ is strictly convex. For any $\alpha \in (0, 1)$ and $\xi_1, \xi_2 \in \cL^2(\Q)$, with a shorthand $V := \rho\norm{\vmu}_2 R + RG + \beta_0 Y$, we notice that
    \begin{align*}
            & \mathscr{R}_{\Q}( \alpha \xi_1 + (1 - \alpha) \xi_2 )
            \\
            = {} & \E_{\Q} \left[  \ell \Bigl( \alpha \bigl(  V + R\sqrt{1 - \rho^2} \xi_1 \bigr) +  
            (1 - \alpha)\bigl( V + R\sqrt{1 - \rho^2} \xi_2 \bigr)
            \Bigr) \right] \\
            \le {} & \E_{\Q} \left[ \alpha \ell \bigl(  V + R\sqrt{1 - \rho^2} \xi_1 \bigr) +  
            (1 - \alpha) \ell\bigl( V + R\sqrt{1 - \rho^2} \xi_2 \bigr)
             \right] 
            = 
            \alpha \mathscr{R}_{\Q}( \xi_1 ) + (1 - \alpha) \mathscr{R}_{\Q}( \xi_2 ),
    \end{align*}
    where the inequality follows from strong convexity of $\ell$, and it becomes equality if and only if $\Q(\xi_1 \not= \xi_2) = 0$. Hence we conclude $\mathscr{R}_{\Q}$ is strictly convex. Since $\{ \xi: \norm{\xi}_{\Q}^2 \le 1/\delta \}$ is a convex set, it implies the uniqueness ($\Q$-a.s.) of the minimizer $\xi^*$.

    As a consequence, the unique minimizer is determined by the Karush--Kuhn--Tucker (KKT) and Slater's conditions for variational problems \cite[Theorem 2.9.2]{zalinescu2002convex}. $\xi$ is the minimizer if and only if, for some scalar $\nu$ (dual variable), the followings hold:
    \begin{equation}\label{eq:KKT_xi}
        \begin{aligned}
            U = \rho\norm{\vmu}_2 R + RG + \beta_0 Y + R\sqrt{1 - \rho^2} \xi,
            \qquad
            \ell' ( U ) + \nu \xi = 0, \\
            \norm{\xi}_{\Q}^2 - \delta^{-1} \le 0,
            \qquad
            \nu \ge 0,
            \qquad
            \nu(\norm{\xi}_{\Q}^2 - \delta^{-1}) = 0.
        \end{aligned}
    \end{equation}
    We claim that the KKT conditions imply that any minimizer $\xi$ and its associated dual variable $\nu$ must satisfy
    \begin{equation*}
        0 < \nu < \infty,  \qquad  \xi > 0 \ \  (\text{$\Q$-a.s.}), \qquad \norm{\xi}_{\Q}^2 = \delta^{-1}.
    \end{equation*}
    To show this, we notice that $R\sqrt{1 - \rho^2} > 0$ and $\ell$ is decreasing. Therefore, for any $\xi \in \cL^2(\Q)$, $\mathscr{R}_{\Q}(\xi) \ge \mathscr{R}_{\Q}(\abs{\xi})$. It implies that $\xi \ge 0$ if $\xi$ is the minimizer. Hence, by stationarity in \cref{eq:KKT_xi}: $\nu\xi = - \ell' ( U ) > 0$, which implies the positivity of $\nu$, $\xi$. Then $\norm{\xi}_{\Q}^2 = \delta^{-1}$ comes from complementary slackness in \cref{eq:KKT_xi}. To show $\nu$ must be finite, notice that $\nu \to +\infty$ implies $\ell'(U) \to -\infty$. Then $U \to -\infty$ since $\ell'$ is strictly increasing, while it contradicts $\xi > 0$ and $\norm{\xi}_{\Q}^2 = \delta^{-1}$.

    By change of variable $\lambda := R \sqrt{1 - \rho^2}/\nu$, now we can rewrite KKT conditions \cref{eq:KKT_xi} as
    \begin{equation}\label{eq:KKT_xi2}
            U + \lambda \ell' ( U ) = \rho\norm{\vmu}_2 R + RG + \beta_0 Y,
            \qquad
            0 < \lambda < \infty,
            \qquad
            \norm{\xi}_{\Q}^2 = \delta^{-1},
    \end{equation}
    where $\xi$ and $U$ are related by
    \begin{equation}\label{eq:xi-U}
        \xi = -\frac{\lambda}{R\sqrt{1 - \rho^2}} \ell'(U).
    \end{equation}
    Notice that \cref{eq:KKT_xi2} has a unique solution for $U$, since $x \mapsto x + \lambda \ell'(x)$ is a strictly increasing continuous function from $\R$ to $\R$, for any $\lambda \in (0, \infty)$. Then, according to \cref{lem:prox}, $U$ can be expressed by the proximal operator of $\ell$,
    \begin{equation}
    \label{eq:U_prox}
        U = \prox_{ \lambda \ell}( \rho\norm{\vmu}_2 R + RG + \beta_0 Y ).
    \end{equation}
    Combine it with \cref{eq:xi-U} gives the expression of $\xi^*$ in \cref{eq:xi_star}. To establish the uniqueness of $\lambda$, we show that $\nu$ satisfying \cref{eq:KKT_xi} must be unique. Note that $\xi = \xi (\nu)$ is determined by
    \begin{equation*}
        \nu \xi(\nu) + \ell' \left( \rho\norm{\vmu}_2 R + RG + \beta_0 Y + R\sqrt{1 - \rho^2} \xi (\nu) \right) = \, 0.
    \end{equation*}
    Since $\nu, \xi(\nu) > 0$ and $\ell'$ is strictly increasing (by strong convexity), we know that $\xi(\nu)$ is strictly decreasing in $\nu$. The uniqueness of $\nu$ immediately follows from the condition $\norm{\xi(\nu)}_{\Q}^2 = \delta^{-1}$. This also implies that $\xi(\nu) > 0$ is decreasing in $\delta$. Then we conclude $\nu$ is increasing in $\delta$, or equivalently $\lambda$ is decreasing in $\delta$. This completes the proof of part \ref{lem:var_fixed(a)}.

    \vspace{0.5\baselineskip}
    \noindent
    For \textbf{\ref{lem:var_fixed(b)}}, as a consequence we have
    \begin{align*}
    \zeta_{\rho, R, \beta_0}(\Q) & = \min_{\xi \in \cL^2(\Q), \norm{\xi}^2_{\Q} \le 1/\delta} \mathscr{R}_{\Q}(\xi) 
    \\
    & = \min_{\xi \in \cL^2(\Q), \norm{\xi}^2_{\Q} \le 1/\delta} \E_{\Q} \left[ \ell \bigl( \rho\norm{\vmu}_2 R + RG + \beta_0 Y + R\sqrt{1 - \rho^2} \xi \bigr) \right] \\
    & = \min_{\xi \in \cL^2(\Q)} \sup_{\nu \ge 0} \E_{\Q} \left[ \ell \bigl( \rho\norm{\vmu}_2 R + RG + \beta_0 Y + 
    R\sqrt{1 - \rho^2}\xi \bigr) + R\sqrt{1 - \rho^2} \cdot \frac{\nu}{2} \left( \xi^2 - \frac{1}{\delta} \right) \right] \\
    & \stackrel{\mathmakebox[0pt][c]{\smash{\text{(i)}}}}{=} 
    \sup_{\nu \ge 0} \min_{\xi \in \cL^2(\Q)} \E_{\Q} \left[ \ell \bigl( \rho\norm{\vmu}_2 R + RG + \beta_0 Y + 
    R\sqrt{1 - \rho^2}\xi \bigr) + R\sqrt{1 - \rho^2} \cdot \frac{\nu}{2} \left( \xi^2 - \frac{1}{\delta} \right) \right] \\
    & \stackrel{\mathmakebox[0pt][c]{\smash{\text{(ii)}}}}{=} 
    \sup_{\lambda > 0} \min_{U \in \cL^2(\Q)} \E_{\Q} \left[ \ell ( U ) + \frac{1}{2 \lambda} \left( U - \rho\norm{\vmu}_2 R - RG - \beta_0 Y \right)^2 - \frac{R^2 (1 - \rho^2)}{2 \lambda \delta} \right] \\
    & \stackrel{\mathmakebox[0pt][c]{\smash{\text{(iii)}}}}{=}
    \sup_{\lambda > 0} \left\{ \E_{\Q} \left[ \envelope_{\ell} \left( \rho\norm{\vmu}_2 R + RG + \beta_0 Y; \lambda \right) \right] - \frac{R^2 (1 - \rho^2)}{2 \lambda \delta} \right\},
\end{align*}
where (i) comes from strong duality in part \ref{lem:var_fixed(a)}, (ii) is by change of variable $U := \rho\norm{\vmu}_2 R + RG + \beta_0 Y + R\sqrt{1 - \rho^2} \xi$ and $\lambda = R\sqrt{1 - \rho^2}/\nu$, (iii) is from the definition of Moreau envelope \cref{eq:envelope}. Now, consider change of variable
\begin{equation*}
    A = R \rho, \qquad
    B = R \sqrt{1 - \rho^2}, \qquad
    \nu = R \sqrt{1 - \rho^2}/\lambda.
\end{equation*}
Note that $0 < \nu < \infty$ by part \ref{lem:var_fixed(a)}, then $\zeta_{\rho, R, \beta_0}(\Q)$ can be expressed as
\begin{equation*}
    \zeta_{\rho, R, \beta_0}(\Q) = 
    \min_{\xi \in \cL^2(\Q), \norm{\xi}^2_{\Q} \le 1/\delta} \mathscr{R}_{\Q}(\xi)  = 
        \sup_{\nu > 0} \,
        \mathscr{R}_{\nu, \Q}(A, B, \beta_0), 
\end{equation*}
where
\begin{equation*} 
    \mathscr{R}_{\nu, \Q}(A, B, \beta_0) 
    =   - \frac{B \nu}{2 \delta }
        +
        \E_{\Q} \left[ \envelope_{\ell} \Bigl( A \norm{\vmu}_2 + A G_1 + B G_2 + \beta_0 Y ; \frac{B}{\nu} \Bigr) \right].
\end{equation*}
Finally, we complete the proof by the following arguments:
\begin{itemize}
    \item $\mathscr{R}_{\nu, \Q}(A, B, \beta_0)$ is convex in $(A, B, \beta_0)$. It comes from \cref{lem:prox}\ref{lem:prox(a)} that $(x, \lambda) \mapsto \envelope_{\ell}(x; \lambda)$ is convex, and the fact that integration $\E_{\Q}$ preserves convexity.
    \item $\mathscr{R}_{\nu, \Q}(A, B, \beta_0)$ is concave in $\nu$. This comes from \cref{eq:envelope} that
    \begin{equation*}
        \envelope_{\ell} \Bigl( A \norm{\vmu}_2 + A G_1 + B G_2 + \beta_0 Y ; \frac{B}{\nu} \Bigr)
        = \min_{t \in \R} \left\{ 
        \ell(t) + \frac{\nu}{2B} (A \norm{\vmu}_2 + A G_1 + B G_2 - t)^2
        \right\},
    \end{equation*}
    with the fact that pointwise minimum and integration $\E_{\Q}$ preserves concavity. 
\end{itemize}
This concludes the proof of part \ref{lem:var_fixed(b)}.
\end{proof}

Then we can use \cref{lem:var_fixed} to show convergence $M_n^{(3)}(\bTheta_{c}) \conp M^{*}(\bTheta_{c})$ in \cref{lem:M3-star}.
\begin{proof}[\textbf{Proof of \cref{lem:M3-star}}]
    Recall the change of variables $A = R \rho$ and $B = R \sqrt{1 - \rho^2}$ defined in \cref{lem:var_fixed}\ref{lem:var_fixed(b)}.
    Note that $f: (\rho, R, \beta_0) \mapsto (R \rho, R \sqrt{1 - \rho^2}, \beta_0)$ is a continuous map. Then $f(\bTheta_{c}) \subset \R_{\ge 0} \times \R_{\ge 0} \times \R$ is still compact. Hence, by \cref{lem:var_fixed} we have
    \begin{equation*}
        M_n^{(3)}(\bTheta_{c}) =
        \min_{ (A, B, \beta_0) \in f(\bTheta_{c}) } \sup_{\nu > 0} \,
        \mathscr{R}_{\nu, \Q_n}(A, B, \beta_0),
        \quad
        M^*(\bTheta_{c}) =
        \min_{ (A, B, \beta_0) \in f(\bTheta_{c}) } \sup_{\nu > 0} \,
        \mathscr{R}_{\nu, \Q_\infty}(A, B, \beta_0).
    \end{equation*}
    For any fixed $A, B \ge 0$, $\beta_0 \in \R$, $\nu > 0$, by law of large numbers,
    \begin{align*}
        \mathscr{R}_{\nu, \Q_n}(A, B, \beta_0) 
        & = - \frac{B \nu}{2 \delta }
        +
        \E_{\Q_n} \left[ \envelope_{\ell} \Bigl( A \norm{\vmu}_2 + A G_1 + B G_2 + \beta_0 Y ; \frac{B}{\nu} \Bigr) \right] \\
        \conp \ \mathscr{R}_{\nu, \Q_\infty}(A, B, \beta_0)
        & = - \frac{B \nu}{2 \delta }
        +
        \E_{\phantom{\Q_n}} \left[ \envelope_{\ell} \Bigl( A \norm{\vmu}_2 + A G_1 + B G_2 + \beta_0 Y ; \frac{B}{\nu} \Bigr) \right]. 
    \end{align*}
    Recall $\mathscr{R}_{\nu, \Q_n}(A, B, \beta_0)$ is concave in $\nu$. Also, note that $\mathscr{R}_{\nu, \Q_\infty}(A, B, \beta_0) \to -\infty$ as $\nu \to \infty$, since by \cref{lem:prox}\ref{lem:prox(a)}, we have
    \begin{equation*}
        \lim_{\nu \to \infty} \E \left[ \envelope_{\ell} \Bigl( A \norm{\vmu}_2 + A G_1 + B G_2 + \beta_0 Y ; \frac{B}{\nu} \Bigr) \right]
        = \E \left[ \ell ( A \norm{\vmu}_2 + A G_1 + B G_2 + \beta_0 Y ) \right] < \infty.
    \end{equation*}
    This implies there exits $\overline{\nu} \in \R_{> 0}$, such that $\sup_{\nu \ge \overline{\nu}} \mathscr{R}_{\nu, \Q_\infty}(A, B, \beta_0) < \sup_{\nu > 0} \mathscr{R}_{\nu, \Q_\infty}(A, B, \beta_0)$. So, we can apply \cite[Lemma 10]{thrampoulidis2018precise} and conclude the uniform convergence
    \begin{equation*}
        \sup_{\nu > 0} \,
        \mathscr{R}_{\nu, \Q_n}(A, B, \beta_0)
        \ \conp \
        \sup_{\nu > 0} \,
        \mathscr{R}_{\nu, \Q_\infty}(A, B, \beta_0).
    \end{equation*}
    Recall that both $\sup_{\nu > 0} \mathscr{R}_{\nu, \Q_n}(A, B, \beta_0)$ and $\sup_{\nu > 0} \mathscr{R}_{\nu, \Q_\infty}(A, B, \beta_0)$ are convex in $(A, B, \beta_0)$ (since pointwise supremum preserves convexity). Then we could obtain uniform convergence on compact set $f(\bTheta_{c})$ by convexity \cite[Lemma 7.75]{liese2008statistical}:
    \begin{equation*}
    \abs{M_n^{(3)}(\bTheta_{c}) - M^*(\bTheta_{c})}
    \le
        \sup_{ (A, B, \beta_0) \in f(\bTheta_{c}) } \abs{
        \, \sup_{\nu > 0} \, \mathscr{R}_{\nu, \Q_n}(A, B, \beta_0)
        - \sup_{\nu > 0} \, \mathscr{R}_{\nu, \Q_\infty}(A, B, \beta_0)
        } \conp 0.
    \end{equation*}
    This completes the proof.
\end{proof}

\subsubsection{Parameter convergence and optimality analysis: Proofs of \cref{lem:boundedness_parameter}---\ref{lem:ERM_param_conv}}
\label{append_subsubsec:ERM_param}

Recall that
\begin{equation}
    \label{eq:M_star}
    M^{*}
    = \min_{ \substack{ \rho \in [-1, 1], R \ge 0, \beta_0 \in \R \\ \xi \in \cL^2(\Q_\infty), \norm{\xi}_{\Q_\infty} \le 1/\sqrt{\delta} } } 
    \E \left[ \ell \bigl( \rho\norm{\vmu}_2 R + RG + \beta_0 Y + R\sqrt{1 - \rho^2} \xi \bigr) \right],
\end{equation}
where $R$, $\beta_0$ are optimized over unbounded sets. The following lemma shows that any minimizer $R^*$, $\beta_0^*$ of \cref{eq:M_star} must be bounded.

\begin{lem}[Boundedness of $R^*$ and $\beta_0^*$]\label{lem:boundedness_parameter}
    Let $(\rho^*, R^*, \beta_0^*, \xi^*)$ be any minimizer of \cref{eq:M_star}. Then in the non-separable regime ($\delta > \delta^*(0)$), we have $R^* < \infty$ and $\abs{\beta_0^*} < \infty$.
\end{lem}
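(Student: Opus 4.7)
The plan is to establish coerciveness of the objective
\begin{equation*}
    F(\rho, R, \beta_0, \xi) := \E\bigl[\ell(V)\bigr], \qquad V := \rho R \|\bmu\|_2 + RG + \beta_0 Y + R\sqrt{1-\rho^2}\,\xi,
\end{equation*}
in the variables $(R, \beta_0)$, uniformly over $\rho \in [-1, 1]$ and $\xi \in \cL^2(\Q_\infty)$ with $\|\xi\|_{\Q_\infty}^2 \le 1/\delta$. Since the trivial parameter choice gives $M^* \le \ell(0) < \infty$, coerciveness forces every minimizer to satisfy $R^* < \infty$ and $|\beta_0^*| < \infty$. The starting ingredient is a one-sided linear lower bound on $\ell$: by \cref{lem:ell} and strict convexity, $\ell'(-1) < 0$, so the tangent-line inequality at $x = -1$ gives $\ell(x) \ge c_1 (-x)_+ - c_2$ for some constants $c_1 > 0$, $c_2 \ge 0$. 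Taking expectations yields $\E[\ell(V)] \ge c_1 \E[(-V)_+] - c_2$, reducing the problem to showing $\E[(-V_k)_+] \to \infty$ along any sequence with $T_k := \sqrt{R_k^2 + \beta_{0,k}^2} \to \infty$.

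To analyze such a sequence I rescale by $T_k$: setting $\tilde R_k := R_k/T_k$, $\tilde\beta_{0,k} := \beta_{0,k}/T_k$ (so $\tilde R_k^2 + \tilde\beta_{0,k}^2 = 1$) gives
\begin{equation*}
    V_k/T_k = \tilde R_k\bigl(\rho_k\|\bmu\|_2 + G + \sqrt{1-\rho_k^2}\,\xi_k\bigr) + \tilde\beta_{0,k}\,Y.
\end{equation*}
Passing to subsequences, $\tilde R_k \to \tilde R$, $\tilde\beta_{0,k} \to \tilde\beta_0$ with $\tilde R^2 + \tilde\beta_0^2 = 1$, $\rho_k \to \rho^* \in [-1, 1]$, and by Banach--Alaoglu applied to the bounded ball in the Hilbert space $\cL^2(\Q_\infty)$, $\xi_k \rightharpoonup \xi^*$ weakly, with $\|\xi^*\|_{\Q_\infty}^2 \le 1/\delta$. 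Consequently $V_k/T_k \rightharpoonup W^*$ weakly in $\cL^2(\Q_\infty)$, where $W^* := \tilde R(\rho^*\|\bmu\|_2 + G + \sqrt{1-\rho^{*2}}\,\xi^*) + \tilde\beta_0 Y$. The functional $V \mapsto \E[(-V)_+]$ is convex and $1$-Lipschitz on $\cL^2(\Q_\infty)$ (since $|(-x)_+ - (-y)_+| \le |x-y|$), hence weakly lower semicontinuous, so $\liminf_k \E[(-V_k/T_k)_+] \ge \E[(-W^*)_+]$. Because $\E[(-V_k)_+] = T_k \E[(-V_k/T_k)_+]$ and $T_k \to \infty$, the remaining task is to prove $\E[(-W^*)_+] > 0$.

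This last step is where non-separability enters. Assume for contradiction $W^* \ge 0$ almost surely. If $\tilde R = 0$, then $|\tilde\beta_0| = 1$ and $\tilde\beta_0 Y \ge 0$ a.s.\ forces $Y$ to be constant, contradicting $\pi \in (0, 1)$. If $\tilde R > 0$ and $|\rho^*| = 1$, then $\sqrt{1-\rho^{*2}} = 0$ and $W^*$ is a Gaussian shift with full support on $\R$, again incompatible with $W^* \ge 0$ a.s. In the remaining case $\tilde R > 0$ and $|\rho^*| < 1$, set $c := \tilde\beta_0/\tilde R$; rearranging $W^*/\tilde R \ge 0$ yields $\sqrt{1-\rho^{*2}}\,\xi^* \ge -(\rho^*\|\bmu\|_2 + G + cY)$ a.s., whence
\begin{equation*}
    \|\xi^*\|_{\Q_\infty}^2 \ge \frac{\E\bigl[(\rho^*\|\bmu\|_2 + G + cY)_-^2\bigr]}{1-\rho^{*2}} = \frac{1}{H_0(\rho^*, c)},
\end{equation*}
using $-G \overset{d}{=} G$ and the definition of $H_\kappa$ in \cref{eq:sep_functions} with $\tau = 1$. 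Combined with $\|\xi^*\|_{\Q_\infty}^2 \le 1/\delta$, this gives $H_0(\rho^*, c) \ge \delta$, contradicting $\sup_{\rho,\beta_0} H_0(\rho, \beta_0) = \delta^*(0) < \delta$ in the non-separable regime. Therefore $\E[(-W^*)_+] > 0$, so $\E[\ell(V_k)] \to \infty$, and no minimizing sequence can escape to infinity in $(R, \beta_0)$.

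The hard part, I expect, is combining the weak lower semicontinuity argument with the quantitative identification of the minimal-norm $\xi^*$ saturating the inequality $\sqrt{1-\rho^{*2}}\,\xi^* \ge -(\rho^*\|\bmu\|_2 + G + cY)$. One must verify that the weak limit $\xi^*$ inherits $L^2$-boundedness correctly and that the bound $\|\xi^*\|^2 \ge 1/H_0(\rho^*, c)$ is attained by the canonical projection, so that the threshold $\delta^*(0)$ from \cref{eq:sep_functions} indeed governs the argument. The boundary cases $\tilde R = 0$ and $|\rho^*| = 1$ are routine but require explicit handling since the reparametrization $c = \tilde\beta_0/\tilde R$ degenerates; they are closed off by $\pi \in (0,1)$ and the full support of $G$ respectively.
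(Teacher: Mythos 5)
Your proof is correct and reaches the same conclusion, but the coerciveness step is organized differently from the paper's. The paper first establishes a uniform tail estimate (its Claim~\cref{eq:V_claim}): there exists $\veps>0$ such that $\P\bigl(a\rho\|\bmu\|_2 + aG + bY + a\sqrt{1-\rho^2}\,\xi \le -\veps\bigr)\ge\veps$ for all admissible $(a,b,\rho,\xi)$ with $a>0$, $a^2+b^2=1$, $\|\xi\|^2\le1/\delta$, and then concludes directly that $\E[\ell(V)]\ge\veps\,\ell(-\veps\sqrt{R^2+\beta_0^2})\to\infty$, using only $\ell(-\infty)=+\infty$. You instead replace $\ell$ with the tangent-line lower bound $\ell(x)\ge c_1(-x)_+ - c_2$ (using $\ell'(-1)<0$) and show $\E[(-V)_+]$ is coercive via weak lower semicontinuity of the convex $1$-Lipschitz functional $V\mapsto\E[(-V)_+]$. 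Both routes funnel into the same contradiction: a subsequential weak limit would force $\rho\|\bmu\|_2 + G + cY + \sqrt{1-\rho^2}\,\xi\ge 0$ a.s.\ with $\|\xi\|^2\le 1/\delta$, which is exactly the SVM feasibility constraint from \cref{eq:SVM_variation} and hence gives $\delta\le\delta^*(0)$, contradicting non-separability. Your approach requires the extra weak-l.s.c.\ fact but avoids proving a \emph{uniform} $\veps$ in advance; the paper's uniform claim is slightly stronger than what is needed but makes the final inequality cleaner. The boundary cases $\tilde R=0$ and $|\rho^*|=1$, which you handle explicitly, are handled in the paper implicitly through the feasibility-constraint reformulation (if $a=0$ the a.s.\ inequality $bY\ge0$ with $|b|=1$ already fails since $\pi\in(0,1)$).

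One small point: the worries in your last paragraph are overstated. You do not need any ``canonical projection'' or saturation of the inequality. The pointwise bound $\sqrt{1-\rho^{*2}}\,\xi^* \ge -(\rho^*\|\bmu\|_2 + G + cY)$ directly gives $\xi^{*2}\ge (\rho^*\|\bmu\|_2 + G + cY)_-^2/(1-\rho^{*2})$ on the event where the right-hand side is positive (and $\xi^{*2}\ge0$ elsewhere), so $\|\xi^*\|^2\ge\E[(\rho^*\|\bmu\|_2 + G + cY)_-^2]/(1-\rho^{*2})$ without further ado; the $\cL^2$-boundedness of $\xi^*$ is just weak lower semicontinuity of the norm under $\xi_k\rightharpoonup\xi^*$.
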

\begin{proof}
    We first prove the following claim: There exists an $\veps > 0$, such that for any $(a, b) \in \R_{> 0} \times \R$ satisfying $a^2 + b^2 = 1$, any $\rho \in [-1, 1]$, and any $\xi \in \cL^2 (\Q_\infty)$, $\norm{\xi}_{\Q_\infty} \le 1 / \sqrt{\delta}$:
    \begin{equation}
    \label{eq:V_claim}
        \P \left( a \rho \norm{\bmu}_2 + a G + b Y + a \sqrt{1 - \rho^2} \xi \le - \veps \right) \ge \veps.
    \end{equation}
    We prove this claim by contradiction. Assume it is not true, then for any $m \in \mathbb{N}$, there exists the corresponding $(a_m, b_m, \rho_m, \xi_m)$ such that $(a_m, b_m) \in \R_{> 0} \times \R$, with $a_m^2 + b_m^2 = 1$, $\rho_m \in [-1, 1]$, and $\xi_m \in \cL^2 (\Q_\infty)$, $\norm{\xi_m}_{\Q_\infty} \le 1 / \sqrt{\delta}$, which satisfy
    \begin{equation}
    \label{eq:Vm_bound}
        \P \left( a_m \rho_m \norm{\bmu}_2 + a_m G + b_m Y + a_m \sqrt{1 - \rho_m^2} \xi_m \le - \frac{1}{m} \right) < \frac{1}{m}.
    \end{equation}
    We can always assume that $(a_m, b_m, \rho_m) \to (a, b, \rho)$ and $\xi_m \to \xi$ weakly in $\cL^2 (\Q_\infty)$ when $m \to \infty$. Otherwise, such a convergent subsequence always exists according to Heine--Borel Theorem and Banach--Alaoglu Theorem. Therefore, $a_m \rho_m \norm{\bmu}_2 + a_m G + b_m Y + a_m \sqrt{1 - \rho_m^2} \xi_m$ weakly converges to $a \rho \norm{\bmu}_2 + a G + b Y + a \sqrt{1 - \rho^2} \xi$ in $\cL^2 (\Q_\infty)$. For any nonnegative $Z \in \cL^2 (\Q_\infty)$, one has
    \begin{align*}
        & \E \left[ \bigl( a \rho \norm{\bmu}_2 + a G + b Y + a \sqrt{1 - \rho^2} \xi \bigr) Z \right] \\
        = {} & \lim_{m \to \infty} \E \left[ \bigl( a_m \rho_m \norm{\bmu}_2 + a_m G + b_m Y + a_m \sqrt{1 - \rho_m^2} \xi_m \bigr) Z \right].
    \end{align*}
    Denote $U_m := a_m \rho_m \norm{\bmu}_2 + a_m G + b_m Y + a_m \sqrt{1 - \rho_m^2} \xi_m$, then we obtain the following estimate:
    \begin{align*}
        \E [U_m Z] & =  \E \left[ U_m \ind_{U_m > - 1/m} Z \right] + \E \left[ U_m \ind_{U_m \le - 1/m} Z \right] 
        \\ 
        & \ge - \frac{1}{m} \E [Z] - \left( \E[ U_m^2 ] \right)^{1/2}  \left(\E[ Z^2 \ind_{U_m \le - 1/m}] \right)^{1/2},
    \end{align*}
    where the last line follows from Cauchy--Schwarz inequality. By definition of $U_m$, we know that $\E [U_m^2]$ is uniformly bounded for any $m \in \mathbb{N}$. Further, since $Z \in \cL^2 (\Q_\infty)$ and $\P (U_m \le -1/m) \le 1/m \to 0$ as $m \to \infty$ by \cref{eq:Vm_bound}, we know that $\E [Z^2 \ind_{U_m \le - 1/m}] \to 0$. It finally follows that
    \begin{equation*}
        \E \left[ \bigl( a \rho \norm{\bmu}_2 + a G + b Y + a \sqrt{1 - \rho^2} \xi \bigr) Z \right] 
        = \lim_{m \to \infty} \E [U_m Z] \ge 0.
    \end{equation*}
    Since this is true for any nonnegative $Z \in \cL^2 (\Q_\infty)$, we know that
    \begin{equation*}
        a \rho \norm{\bmu}_2 + a G + b Y + a \sqrt{1 - \rho^2} \xi \ge 0, \quad \text{almost surely},
    \end{equation*}
    or equivalently, there exists $(\rho, R, \beta_0) \in [-1, 1] \times \R_{>0} \times \R$ and $\xi \in \cL^2 (\Q_\infty)$, $\E[\xi^2] \le 1/\delta$ satisfying
    \begin{equation*}
        R \rho \norm{\bmu}_2 + R G + \beta_0 Y + R \sqrt{1 - \rho^2} \xi \ge 0, \quad \text{almost surely}.
    \end{equation*}
    It implies the constraint of the variational problem for the separable regime (SVM) \cref{eq:SVM_variation}, i.e., $\rho \norm{\bmu}_2 + G + \beta_0' Y + \sqrt{1 - \rho^2} \xi \ge \kappa$ holds for some $\kappa \ge 0$ (with change of variable $\beta_0' := \beta_0 / R$). According to \cref{thm:SVM_main}\ref{thm:SVM_main_var}, we obtain $\kappa^* \ge 0$, or equivalently $\delta \le \delta^*(0)$, which contradicts the non-separable regime $\delta > \delta^* (0)$. Our claim \cref{eq:V_claim} is thus proved. 
    
    Now for any $(\rho, R, \beta_0, \xi)$ such that $R > 0$, denote
    \begin{equation*}
        V(\rho, R, \beta_0, \xi) := \frac{1}{\sqrt{R^2 + \beta_0^2}} \bigl( \rho \norm{\vmu}_2 R + R G + \beta_0 Y + R\sqrt{1 - \rho^2} \xi \bigr). 
    \end{equation*}
    We know that $\P (V(\rho, R, \beta_0, \xi) \le - \veps) \ge \veps$ by \cref{eq:V_claim}. Therefore,
    \begin{align*}
        & \E \left[ \ell \bigl( \rho \norm{\vmu}_2 R + R G + \beta_0 Y + R\sqrt{1 - \rho^2} \xi \bigr) \right]
        \\ 
          = {} & \E \left[ \ell \Bigl( \sqrt{ R^2 + \smash{\beta_0^2} } \, V(\rho, R, \beta_0, \xi) \Bigr) \right] \\
        \ge {} & \E \left[ \ell \Bigl( \sqrt{ R^2 + \smash{\beta_0^2} } \, V(\rho, R, \beta_0, \xi) \Bigr) \ind_{V(\rho, R, \beta_0, \xi) \le - \veps} \right] \\
        \ge {} & \veps \ell \Bigl( - \veps \sqrt{ R^2 + \smash{\beta_0^2} } \Bigr),
    \end{align*}
    which diverges to infinity as $R^2 + \beta_0^2 \to \infty$. This completes the proof.
\end{proof}

A direct consequence of \cref{lem:boundedness_parameter} is that $M^* = M^*(\bTheta_{c})$ for $\bTheta_{c}$ large enough. The following result shows that $M^*$ in \cref{eq:M_star} has a unique minimizer.

\begin{lem}
\label{lem:M_star_var}
    Consider the variational problem $M^*$ defined in \cref{eq:M_star}.
    \begin{enumerate}[label=(\alph*)]
        \item \label{lem:M_star_var(a)}
        $M^*$ has a unique minimizer $(\rho^*, R^*, \beta_0^*, \xi^*)$, which must satisfy
        \begin{equation*}
            \xi^* = - \frac{\lambda^*}{R^* \sqrt{1 - \rho^*{}^2}} \ell'\bigl(\prox_{ \lambda^* \ell}( \rho^*\norm{\vmu}_2 R^* + R^* G + \beta_0^* Y )\bigr),
        \end{equation*}
        where $\lambda^*$ is the unique solution such that $\E[\xi^{* 2}] = 1/\delta$. As a consequence, we have
        \begin{equation*}
           M^* = \E \left[ \ell \bigl( \prox_{ \lambda^* \ell}( \rho^*\norm{\vmu}_2 R^* + R^* G + \beta_0^* Y )
                \bigr) \right].
        \end{equation*}

        \item \label{lem:M_star_var(b)}
        $(\rho^*, R^*, \beta_0^*, \lambda^*)$ is also the unique solution to the system of equations
        \begin{equation}\label{eq:sys_eq_Q}
            \begin{aligned}
                - \frac{R \rho}{\lambda \delta \norm{\vmu}_2}
                & = 
                \E \left[ \ell'\bigl( \prox_{ \lambda \ell}( \rho\norm{\vmu}_2 R + RG + \beta_0 Y ) \bigr) \right],
                \\
                \frac{R}{\lambda \delta }
                & = 
                \E \left[ \ell'\bigl( \prox_{ \lambda \ell}( \rho\norm{\vmu}_2 R + RG + \beta_0 Y ) \bigr) G \right],
                \\
                0
                & = 
                \E \left[\ell'\bigl( \prox_{ \lambda \ell}( \rho\norm{\vmu}_2 R + RG + \beta_0 Y ) \bigr) Y \right],
                \\
                \frac{R^2 (1 - \rho^2)}{\lambda^2 \delta}
                & = 
                \E \left[ \left(\ell'\bigl( \prox_{ \lambda \ell}( \rho\norm{\vmu}_2 R + RG + \beta_0 Y ) \bigr) \right)^2 \right],
            \end{aligned}
        \end{equation}
        where $(\rho^*, R^*, \beta_0^*, \lambda^*) \in (0, 1) \times \R_{>0} \times \R \times \R_{> 0}$.

        \item \label{lem:M_star_var(c)}

        With change of variables $A := R \rho$, $B := R \sqrt{1 - \rho^2}$, the original variational problem \cref{eq:M_star} can be reduced to the following minimax problem
        \[ 
        M^* = 
        \min_{\substack{ A \ge 0, B \ge 0 \\ \beta_0 \in \R} }
        \sup_{\nu > 0} 
        \,
         \biggl\{ 
        - \frac{B \nu}{2 \delta }
        +
        \E \left[ \envelope_{\ell} \Bigl( A \norm{\vmu}_2 + A G_1 + B G_2 + \beta_0 Y ; \frac{B}{\nu} \Bigr) \right]
        \biggr\},
        \]
        where $(Y, G_1, G_2) \sim P_y \times \normal(0, 1) \times \normal(0, 1)$, and the objective function is convex-concave.
    \end{enumerate}
\end{lem}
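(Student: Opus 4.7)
\textbf{Proof proposal for Lemma \ref{lem:M_star_var}.} My plan is to establish part (c) first, since the finite-dimensional convex--concave reformulation is the natural framework for the uniqueness claim in (a) and the stationarity system in (b). By Lemma \ref{lem:boundedness_parameter}, any minimizer of \cref{eq:M_star} lies in a compact subset of $[-1,1] \times \R_{\ge 0} \times \R$, so it suffices to work with bounded $(\rho,R,\beta_0)$. For interior points with $R > 0$ and $\rho \in (-1,1)$, Lemma \ref{lem:var_fixed}(b) yields $\zeta_{\rho,R,\beta_0}(\Q_\infty) = \sup_{\nu > 0} \mathscr{R}_{\nu,\Q_\infty}(A,B,\beta_0)$ under the change of variables $A = R\rho$, $B = R\sqrt{1-\rho^2}$. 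Boundary cases ($R = 0$ or $|\rho|=1$) are handled by continuity of $\zeta_{\rho,R,\beta_0}$ using $\lim_{\lambda \to 0^+} \envelope_\ell(x;\lambda) = \ell(x)$ from Lemma \ref{lem:prox}. To restrict the domain to $A \ge 0$, I would argue via a symmetrization step that $\rho^* \ge 0$ at any minimizer, deferring this until the first-order analysis in (b).

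For part (a), existence follows from the direct method, exactly as in the proof of Lemma \ref{lem:var_fixed}(a): a minimizing sequence $\{(\rho_m,R_m,\beta_{0,m},\xi_m)\}$ with bounded $(R_m,\beta_{0,m})$ admits a subsequence along which $(\rho_m,R_m,\beta_{0,m}) \to (\rho^*,R^*,\beta_0^*)$ and $\xi_m \rightharpoonup \xi^*$ weakly in $\cL^2(\Q_\infty)$, and Mazur's lemma plus Fatou's lemma give lower semicontinuity of the objective. Uniqueness follows from the representation in (c): $\Psi(A,B,\beta_0) := \sup_{\nu > 0} \mathscr{R}_{\nu,\Q_\infty}(A,B,\beta_0)$ is convex as a pointwise supremum of functions convex in $(A,B,\beta_0)$; the crucial step is upgrading this to \emph{strict} convexity on $\{(A,B,\beta_0) : B > 0\}$, which I would obtain from the strict convexity of $\ell$ combined with the fact that the argument $A\norm{\vmu}_2 + A G_1 + B G_2 + \beta_0 Y$ has a continuous, full-support distribution whenever $B > 0$. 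The explicit formula $\xi^* = -\frac{\lambda^*}{R^*\sqrt{1-\rho^{*2}}}\ell'(\prox_{\lambda^*\ell}(\cdots))$ is then read off directly from Lemma \ref{lem:var_fixed}(a) applied at the minimizer, with $\lambda^*$ the unique dual variable enforcing the active constraint $\norm{\xi^*}_{\Q_\infty}^2 = 1/\delta$.

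For part (b), I differentiate the minimax formulation in (c) and translate back to $(\rho,R,\beta_0,\lambda)$ using $A = R\rho$, $B = R\sqrt{1-\rho^2}$, $\lambda = B/\nu$, together with the identities $\nabla_x \envelope_{\lambda\ell}(x) = \frac{1}{\lambda}(x - \prox_{\lambda\ell}(x)) = -\ell'(\prox_{\lambda\ell}(x))$ and $\partial_\lambda \envelope_{\lambda\ell}(x) = -\frac{1}{2}\ell'(\prox_{\lambda\ell}(x))^2$ from Lemma \ref{lem:prox}. The partial derivatives with respect to $A$ (using $AG_1 + BG_2 \stackrel{d}{=} RG$ after integration by parts against $G$), $\beta_0$, and $\nu$ then yield the first, third, and fourth equations in \eqref{eq:sys_eq_Q}; Stein's lemma applied to $\E[\ell'(\cdots)G]$ produces the second equation. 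The sign conditions require separate arguments: (i) $R^* > 0$ follows because at $R = 0$ the objective collapses to $\min_{\beta_0}\E[\ell(\beta_0 Y)]$, which is strictly dominated by a small perturbation in the direction $\vbeta \propto \vmu$ since $\vmu \ne 0$; (ii) $\rho^* > 0$ because $\ell' < 0$ (Lemma \ref{lem:ell}), making the RHS of the first equation strictly negative, which forces $\rho > 0$; (iii) $\rho^* < 1$ because $\rho^* = 1$ forces $B^* = 0$ so that the distribution of the argument collapses, contradicting the active constraint; (iv) $\lambda^* > 0$ follows from the assumption $\delta > \delta^*(0)$, which by Lemma \ref{lem:var_fixed}(a) guarantees that the $\norm{\xi}_{\Q_\infty}^2 \le 1/\delta$ constraint binds.

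The principal obstacle will be the strict convexity of $\Psi$ underpinning uniqueness in (a), since $\Psi$ is defined as a supremum and its strict convexity is not automatic from that of $\mathscr{R}_{\nu,\Q_\infty}$. A clean route is to verify strict convexity directly at a minimizer by computing the Hessian of $\E[\envelope_\ell(A\norm{\vmu}_2 + AG_1 + BG_2 + \beta_0 Y;B/\nu^*)]$ using the identity $\envelope_\ell''(x;\lambda) = \frac{1}{\lambda+1/\ell''(\prox_{\lambda\ell}(x))}$, which is strictly positive since $\ell$ is strictly convex with finite $\ell''$. A secondary technical point is ensuring the supremum over $\nu > 0$ is attained at a finite, strictly positive $\nu^*$, which is needed to justify interchanging differentiation and supremum when deriving the system; this follows from concavity of $\nu \mapsto \mathscr{R}_{\nu,\Q_\infty}$ combined with $\mathscr{R}_{\nu,\Q_\infty} \to -\infty$ as $\nu \to \infty$, as already exploited in the proof of Lemma \ref{lem:M3-star}.
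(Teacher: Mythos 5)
Your proposal is essentially correct and reaches the same conclusions, but it reverses the logical order and uses a different mechanism for uniqueness. The paper's proof works the other way around: it first applies the change of variables $(A, B, \xi_B) := (R\rho, R\sqrt{1-\rho^2}, B\xi)$ to turn the \emph{infinite-dimensional} problem \eqref{eq:M_star} into a jointly convex program in $(A, B, \beta_0, \xi_B) \in \R^3 \times \cL^2(\Q_\infty)$, derives existence and uniqueness by the same direct-method-plus-strict-convexity argument used in Lemma~\ref{lem:var_fixed}(a), extracts the KKT system directly in these variables (obtaining $\nu_B > 0$, $\xi_B > 0$ a.s., $B > 0$ from complementary slackness, $A > 0$ from $\ell' < 0$, hence $R > 0$, $\rho \in (0,1)$, and then translating to $(\rho,R,\beta_0,\lambda)$ via Stein's identity), and finally obtains part~(c) as an immediate corollary of Lemma~\ref{lem:var_fixed}(b). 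You instead propose to prove (c) first and then argue uniqueness via strict convexity of the \emph{reduced} finite-dimensional value function $\Psi(A,B,\beta_0) := \sup_{\nu>0}\mathscr{R}_{\nu,\Q_\infty}$, which as you correctly flag is not automatic from the strict convexity of each $\mathscr{R}_\nu$. Your proposed fix — the Hessian identity $\partial_x^2\envelope_\ell(x;\lambda) = 1/(\lambda + 1/\ell''(\prox_{\lambda\ell}(x)))$ together with a marginal-function second-order computation $\Psi'' = \mathscr{R}_{xx} - \mathscr{R}_{x\nu}^2/\mathscr{R}_{\nu\nu} \ge \mathscr{R}_{xx}$ (using $\mathscr{R}_{\nu\nu} < 0$) — does close the gap and is a legitimate alternative, but it is genuinely more work than the paper's route, which sidesteps the issue by keeping $\xi_B$ in the problem and using strict convexity of $\ell$ in the argument of the loss. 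One further deviation: you establish $R^* > 0$ by a first-order perturbation in the direction $\vbeta \propto \vmu$ at $R = 0$ (which is correct, since $\frac{d}{d\epsilon}\E[\ell(\epsilon\norm{\vmu}_2 + \epsilon G + \beta_0 Y)]\big|_{\epsilon=0} = \norm{\vmu}_2\,\E[\ell'(\beta_0 Y)] < 0$), whereas the paper gets $B > 0$ — hence $R > 0$ — from the active dual constraint $\E[\xi_B^2] = B^2/\delta$ with $\xi_B > 0$ a.s. Your arguments for $\rho^* > 0$, $\rho^* < 1$, and $\lambda^* > 0$ match the paper's in substance. The derivation of the system \eqref{eq:sys_eq_Q} via the envelope derivatives from Lemma~\ref{lem:prox} plus Stein's lemma is the same as the paper's.
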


\begin{proof}
    We first show the optimization problem \cref{eq:M_star} has a unique minimizer. Since its original formulation is non-convex, we make the following change of variables:
    \begin{equation}\label{eq:change_var_AB}
        A := R \rho, \qquad B := R \sqrt{1 - \rho^2}, \qquad \xi_B := B \xi.
    \end{equation}
    Then, the optimization problem is recast as
    \begin{equation}\label{eq:var_optim_Q_recast}
        \min_{
        \substack{ A , B \ge 0, \beta_0 \in \R \\ \xi_B \in \cL^2(\Q_\infty)}
        } \ \E \left[ \ell \bigl( A \norm{\vmu}_2 + A G_1 + B G_2 + \beta_0 Y + \xi_B \bigr) \right], \quad \text{subject to} \ \norm{\xi_B}_{\Q} \le \frac{B}{\sqrt{\delta}},
    \end{equation}
    which is convex, where $(Y, G_1, G_2) \sim P_y \times \normal(0, 1) \times \normal(0, 1)$ (recall that $A G_1 + B G_2 \overset{\smash{\mathrm{d}}}{=} R G$). Now we show that the above optimization problem has a unique minimizer. Note that \cref{lem:boundedness_parameter} also implies that any minimizer of this optimization problem is finite. Therefore, a similar argument as in the proof of \cref{lem:var_fixed}\ref{lem:var_fixed(a)} shows that \cref{eq:var_optim_Q_recast} has a unique minimizer. Since the mapping $(\rho, R, \xi) \mapsto (A, B, \xi_B)$ is one-to-one, this also proves the original optimization problem \cref{eq:M_star} has a unique minimizer.
    
    As a consequence, the unique minimizer is determined by the KKT and Slater's conditions for variational problems \cite[Theorem 2.9.2]{zalinescu2002convex}. $(A, B, \beta_0, \xi_B)$ is the minimizer of \cref{eq:var_optim_Q_recast} if and only if, for some scalar $\nu_B$ (Lagrange multiplier), the followings hold:
    \begin{equation}
    \label{eq:M_star_KKT}
    \begin{aligned}
        A \norm{\vmu}_2 + A G_1 + B G_2 + \beta_0 Y + \xi_B & = U, \\
        \E \left[ \ell'(U) ( \norm{\vmu}_2 + G_1 ) \right] & = 0, \\
        \E \left[ \ell'(U) G_2 \right] - \nu_B \frac{B}{\delta} & = 0, \\
        \E \left[ \ell'(U) Y   \right] & = 0, \\
        \ell'(U) + \nu_B \xi_B & = 0, \\
        \delta  \, \E[\xi_B^2] \le B^2, \quad
        \nu_B \ge 0, \quad
        \nu_B \bigl( \delta \, \E[\xi_B^2] - B^2 \bigr) & = 0.
    \end{aligned}
    \end{equation}
    Using a similar argument as in the proof of \cref{lem:var_fixed}\ref{lem:var_fixed(a)}, we can also show that
    \begin{equation*}
        0 < \nu_B < \infty,  \qquad  \xi_B > 0 \ \  (\text{a.s.}), \qquad \E[\xi_B^2] = B^2/\delta,
    \end{equation*}
    which implies $B > 0$. Plugging this into \cref{eq:M_star_KKT} solves two conditions
    \begin{equation}
    \label{eq:M_star_KKT-1}
        \E \, \bigl[ \bigl( \ell'(U) \bigr)^2 \bigr] = \nu_B^2 \frac{B^2}{\delta},
        \qquad
        \E\left[ \ell'(U) Y \right] = 0.
    \end{equation}
    By Stein's identity, we also have relation
    \begin{equation*}
        \E\left[ \ell'(U) G_1 \right] = A \, \E\left[ \ell''(U) \right],
        \qquad
        \E\left[ \ell'(U) G_2 \right] = B \, \E\left[ \ell''(U) \right].
    \end{equation*}
    Combine the above with \cref{eq:M_star_KKT}, we obtain
    \begin{align*}
        \E \left[ \ell'(U) \right] = -\nu_B \frac{A}{\delta\norm{\bmu}_2},
    \qquad
        \E \left[ \ell'(U) G_1 \right] = \nu_B \frac{A}{\delta},
    \qquad
        \E \left[ \ell'(U) G_2 \right] = \nu_B \frac{B}{\delta},
    \end{align*}
    which is equivalent to (recall that $A G_1 + B G_2 \overset{\smash{\mathrm{d}}}{=} R G$)
    \begin{equation}
    \label{eq:M_star_KKT-2}
        \E \left[ \ell'(U) \right] = -\nu_B \frac{A}{\delta\norm{\bmu}_2},
        \qquad
        \E \left[ \ell'(U) G \right] = \nu_B \frac{R}{\delta}.
    \end{equation}
    The above implies $A > 0$ since $\ell' < 0$ by \cref{lem:ell}. Since both $A, B > 0$, by \cref{eq:change_var_AB} we have $\rho \in (-1, 1) \setminus \{ 0 \}$ and $R > 0$. Moreover, notice that for any $\rho > 0$,
    \begin{equation*}
        \E \left[ \ell \bigl( -\rho\norm{\vmu}_2 R + RG + \beta_0 Y + R\sqrt{1 - \rho^2} \xi \bigr) \right]
        >
        \E \left[ \ell \bigl( \rho\norm{\vmu}_2 R + RG + \beta_0 Y + R\sqrt{1 - \rho^2} \xi \bigr) \right].
    \end{equation*}
    Therefore, we must have $\rho \in (0, 1)$. Then we prove $(\rho^*, R^*, \beta_0^*, \lambda^*) \in (0, 1) \times \R_{>0} \times \R \times \R_{> 0}$. Lastly, by combining \cref{eq:M_star_KKT-1} and \eqref{eq:M_star_KKT-2} with change of variable $\lambda := 1/\nu_B$, and recalling \cref{eq:U_prox} in the proof of \cref{lem:var_fixed}, we obtain the KKT conditions \cref{eq:sys_eq_Q} expressed in $(\rho, R, \beta_0, \lambda)$. Then we complete the proof of part \ref{lem:M_star_var(b)}. Finally, part \ref{lem:M_star_var(c)} directly follows from \cref{lem:var_fixed}.
\end{proof}

We are now in position to establish the convergence of parameters.
\begin{proof}[\textbf{Proof of \cref{lem:ERM_param_conv}}]
Consider any $\varepsilon \ge 0$ and $C_R, C_{\beta_0} \in (0, \infty)$, let
\begin{equation*}
    \bTheta_{c}^*(\varepsilon) := \left\{ (\rho, R, \beta_0) \in [-1, 1] \times [0, C_R] \times [-C_{\beta_0}, C_{\beta_0}] : 
    \norm{ (\rho, R, \beta_0) - (\rho^*, R^*, \beta_0^*) }_2 \ge \varepsilon
    \right\}
\end{equation*}
and let $\bTheta^*_{\vbeta}(\varepsilon)$ defined as \cref{eq:Theta_link}. By \cref{lem:ERM_bound_beta} and \ref{lem:M_star_var}, we can choose some $C_R, C_{\beta_0} > 0$ and compact convex set $\bXi_{\bu} \subset \R^{n}$ large enough, such that as $n, d \to \infty$,
\begin{equation*}
    M_n =
    M_n(\bTheta^*_{\vbeta}(0), \bXi_{\bu}) \ \ (\text{w.h.p.}) ,
    \qquad
    M^* =
    M^*(\bTheta^*_{c}(0)).
\end{equation*}
Then according to \cref{thm:ERM_conv}, we have global convergence
\begin{equation*}
    M_n \conp M^*.
\end{equation*}
However, for any $\varepsilon > 0$ and $\zeta > 0$, by \cref{thm:ERM_conv} we have
\begin{equation*}
    M_n(\bTheta^*_{\vbeta}(\varepsilon), \R^n) =
    M_n(\bTheta^*_{\vbeta}(\varepsilon), \bXi_{\bu}) \ \ (\text{w.h.p.}) ,
    \quad
    \P \left( M_n(\bTheta^*_{\vbeta}(\varepsilon), \bXi_{\bu}) \le M^*(\bTheta_{c}^*(\varepsilon)) - \zeta \right) \to 0.
\end{equation*}
This implies
\begin{equation*}
    \pliminf_{n \to \infty} M_n(\bTheta^*_{\vbeta}(\varepsilon), \R^n) \ge
    M^*(\bTheta_{c}^*(\varepsilon))
    >
    M^*,
\end{equation*}
where the strict inequality comes from the uniqueness of minimizer $(\rho^*, R^*, \beta_0^*, \xi^*)$, established in \cref{lem:M_star_var}\ref{lem:M_star_var(a)}. Since $\varepsilon > 0$ can be arbitrarily small, this proves $(\hat\rho_n, \| \hat\vbeta_n \|_2, \hat\beta_{0,n}) \conp (\rho^*, R^*, \beta_0^*)$. Moreover, we know that $R^* > 0$ by \cref{lem:M_star_var}\ref{lem:M_star_var(b)}. So $\hat\vbeta_n \not= \bzero$ and therefore $\hat\rho_n$ is well-defined with high probability. This concludes the proof of \cref{lem:ERM_param_conv}.
\end{proof}

\subsubsection{ELD convergence: Proof of \cref{lem:ERM_logit_conv}}
\label{append_subsubsec:ERM_logit}

\begin{proof}[\textbf{Proof of \cref{lem:ERM_logit_conv}}]
We first establish the convergence of logit margins. Recall that
\begin{align*}
    \hat \cL_{n} = \  & \frac1n \sum_{i=1}^n \delta_{y_i ( \< \xx_i, \hat\vbeta \> + \hat\beta_{0} ) },
    \\ 
    \cL_* = \  & \Law \, (U^*)
    := \Law \, \bigl( \rho^*\norm{\vmu}_2 R^* + R^* G + \beta_0^* Y + R^* \sqrt{1 - \rho^*{}^2} \xi^* \bigr) 
    \\
    = \ & \Law \, \bigl( \prox_{ \lambda^* \ell}( \rho^* \norm{\vmu}_2 R^* + R^* G + \beta_0^* Y ) \bigr).
\end{align*}
For any $\varepsilon > 0$ small enough, we have defined the $\varepsilon$-$W_2$ open ball by
\begin{equation*}
    \mathsf{B}_{W_2}(\varepsilon) = \left\{ \bu \in \R^n:   W_2 \biggl( 
        \frac{1}{n}\sum_{i=1}^n \delta_{u_i}, \cL_*
     \biggr)  < \varepsilon \right\}.
\end{equation*}
For $C_R, C_{\beta_0} \in (0, \infty)$, let $\bTheta_{c} = [-1, 1] \times [0, C_R] \times [-C_{\beta_0}, C_{\beta_0}]$ and let $\bTheta_{\vbeta}$ be defined as \cref{eq:Theta_link}. When $C_R, C_{\beta_0} > 0$ and compact set $\bXi_{\bu} \subset \R^{n}$ are large enough, by \cref{lem:ERM_bound_beta} we have
\begin{align*}
    \wt M_n^\varepsilon & := M_n(\R^{d + 1}, \mathsf{B}_{W_2}^c(\varepsilon) ) 
    = M_n(\bTheta_{\vbeta}, \bXi_{\bu} \setminus \mathsf{B}_{W_2}(\varepsilon) ) \ \ (\text{w.h.p.}) ,
    \\
    \wt M_n^{\varepsilon(3)} 
    & :=
    M_n^{(3)}(\bTheta_{c}, \mathsf{B}_{W_2}^c(\varepsilon) )
    = M_n^{(3)}(\bTheta_{c}, \bXi_{\bu} \setminus \mathsf{B}_{W_2}(\varepsilon) ).
\end{align*}
Combining these with \cref{lem:ERM_CGMT} and \ref{lem:M2-3} obtains that for any $\zeta > 0$,
\begin{equation}
    \label{eq:Mn-3-eps}
    \lim_{n \to \infty} \P \, \Bigl( \wt M_n^\varepsilon \le \wt M_n^{\varepsilon(3)} - \zeta \Bigr) = 0.
\end{equation}
In order to show $W_2( \hat \cL_{n}, \cL_* ) \conp 0$, our goal is to show that
\begin{equation*}
    \lim_{n \to \infty} \P \, \Bigl( \wt M_n^{\varepsilon} > M_n \Bigr) = 1.
\end{equation*}
Then according to \cref{eq:Mn-3-eps} and \cref{lem:ERM_param_conv}, it suffices to show that
\begin{equation}
    \label{eq:ERM_logit_goal}
    \pliminf_{n \to \infty} \wt M_n^{\varepsilon(3)}  > \plim_{n \to \infty} M_n = M^*.
\end{equation}
By \cref{eq:Mn(3)} and \eqref{eq:set_N_n_delta}, recall that
\begin{equation*}
    \wt M_n^{\varepsilon(3)} = \min_{ (\rho, R, \beta_0) \in \bTheta_{c} }
    \min_{\bu \in 
    \mathsf{N}^\delta_n(\rho, R, \beta_0) \setminus \mathsf{B}_{W_2}(\varepsilon) } 
    \frac1n \sum_{i=1}^n \ell(u_i),
\end{equation*}
where we temporarily define
\begin{equation*}
    \mathsf{N}^\delta_n(\rho, R, \beta_0) := \left\{
        \bu \in \R^n: 
        \frac{1}{\sqrt{n}} \bigl\| \rho\norm{\vmu}_2 R \bone_n + R \vg + \beta_0 \yy - \bu \bigr\|_2
        \le \frac{R \sqrt{1 - \rho^2}}{\sqrt{\delta}}
     \right\}.
\end{equation*}
Now we split $\wt M_n^{\varepsilon(3)}$ into two parts by
\begin{equation*}
    \wt M_n^{\varepsilon(3)} 
    = \min\left\{ I, I\!I \right\}
    := \min\left\{ 
        \min_{ \substack{ (\rho, R, \beta_0) \in \bTheta_{c} \setminus \mathsf{B}_{2,c^*}(\eta)
     \\ \bu \in 
    \mathsf{N}^\delta_n(\rho, R, \beta_0) \setminus \mathsf{B}_{W_2}(\varepsilon)  } } 
    \frac1n \sum_{i=1}^n \ell(u_i)
    ,
        \min_{ \substack{ (\rho, R, \beta_0) \in \bTheta_{c} \cap \mathsf{B}_{2,c^*}(\eta)
     \\ \bu \in 
    \mathsf{N}^\delta_n(\rho, R, \beta_0) \setminus \mathsf{B}_{W_2}(\varepsilon)  } } 
    \frac1n \sum_{i=1}^n \ell(u_i)
     \right\},
\end{equation*}
where $\eta > 0$ and
\begin{equation*}
\mathsf{B}_{2,c^*}(\eta) = \left\{ (\rho, R, \beta_0) \in \R^3 : \norm{(\rho, R, \beta_0) - (\rho^*, R^*, \beta^*_0)}_2 < \eta \right\}
\end{equation*}
is a $\eta$-$\cL^2$ open ball around the global minimizer $(\rho^*, R^*, \beta^*_0)$.

For the first term, with $\bTheta_{c}$ large enough such that $(\rho^*, R^*, \beta^*_0) \in \bTheta_{c}$, by \cref{lem:M3-star} we have
\begin{align*}
        I 
        & \ge    
        \min_{(\rho, R, \beta_0) \in \bTheta_{c} \setminus \mathsf{B}_{2,c^*}(\eta) }
        \min_{\bu \in 
    \mathsf{N}^\delta_n(\rho, R, \beta_0)  }
    \frac1n \sum_{i=1}^n \ell(u_i) 
       \\
       & =  \min_{ \substack{ (\rho, R, \beta_0) \in \bTheta_{c} \setminus \mathsf{B}_{2,c^*}(\eta) 
       \\ \xi \in \cL^2(\Q_n), \norm{\xi}^2_{\Q_n} \le 1/\delta } } 
       \E_{\Q_n} \left[ \ell \bigl( \rho\norm{\vmu}_2 R + RG + \beta_0 Y + R\sqrt{1 - \rho^2} \xi \bigr) \right] \\
       & =  M_n^{(3)}\bigl( \bTheta_{c} \setminus \mathsf{B}_{2,c^*}(\eta) \bigr)
       \conp  
       M_n^*\bigl( \bTheta_{c} \setminus \mathsf{B}_{2,c^*}(\eta) \bigr) >  M^*( \bTheta_{c} ) = M^*,
\end{align*}
where the strict inequality follows from the uniqueness of $(\rho^*, R^*, \beta^*_0)$ according to \cref{lem:M_star_var}\ref{lem:M_star_var(a)}.

For the second term, we can take $\eta > 0$ small enough, such that $(\rho, R, \beta_0) \in \mathsf{B}_{2,c^*}(\eta)$ implies
\begin{align*}
    &
    W_2 \, \Bigl( 
        \Law \left( U^*_{\rho, R, \beta_0} \right), \cL_*
     \Bigr) \\
    = {} &  
    W_2 \, \Bigl(
        \Law \left( U^*_{\rho, R, \beta_0} \right), \Law \, \bigl( U^*_{\rho^*, R^*, \beta_0^*} \bigr)
     \Bigr)
     \le \frac{\varepsilon}{2},
     \qquad
     \forall\, (\rho, R, \beta_0) \in \bTheta_{c} \cap \mathsf{B}_{2,c^*}(\eta),
\end{align*}
where $U^*_{\rho, R, \beta_0} := \rho\norm{\vmu}_2 R + RG + \beta_0 Y + R\sqrt{1 - \rho^2} \xi^*_{\Q_\infty}(\rho, R, \beta_0)$, and $\xi^*_{\Q_\infty}(\rho, R, \beta_0)$ is the unique minimizer of $\mathscr{R}_{\Q}(\xi)$ defined in \cref{eq:ERM_var_fix}, with an expression given by \cref{eq:xi_star}. The existence of such $\eta > 0$ is guaranteed by continuity of $W_2$ distance and $(\rho, R, \beta_0) \mapsto U^*_{\rho, R, \beta_0}$ by \cref{lem:var_fixed}. Then $\bu \notin \mathsf{B}_{W_2}(\varepsilon)$ implies (by triangle inequality)
\begin{equation*}
    W_2 \biggl( 
        \frac{1}{n}\sum_{i=1}^n \delta_{u_i}, \Law \left( U^*_{\rho, R, \beta_0} \right)
     \biggr)
     \ge \frac{\varepsilon}{2} 
     ,
     \qquad
     \forall\, (\rho, R, \beta_0) \in \bTheta_{c} \cap \mathsf{B}_{2,c^*}(\eta). 
\end{equation*} 
Thus we have
\begin{equation}
\label{eq:ERM_II0}
    I\!I = 
     \min_{ \substack{ (\rho, R, \beta_0) \in \bTheta_{c} \cap \mathsf{B}_{2,c^*}(\eta)
     \\ \bu \in 
    \mathsf{N}^\delta_n(\rho, R, \beta_0) \setminus \mathsf{B}_{W_2}(\varepsilon)  } } 
    \frac1n \sum_{i=1}^n \ell(u_i)
    \ge 
     \min_{ \substack{ (\rho, R, \beta_0) \in \bTheta_{c}
     \\ U \in 
    \mathcal{N}^\delta_n(\rho, R, \beta_0) \cap \mathcal{C}_n^\varepsilon(\rho, R, \beta_0)  } } 
    \E_{\Q_n}[\ell(U)],
\end{equation}
where denote
\begin{equation}
\label{eq:ERM_logit1}
    \mathcal{C}_n^\varepsilon(\rho, R, \beta_0) := \left\{ U \in \cL^2(\Q_\infty): 
    \norm{ U - U^*_{\rho, R, \beta_0} }_{\Q_n} \ge \frac{\varepsilon}{2}
    \right\}
\end{equation}
and recall \cref{eq:set_N_n_delta} that 
\begin{equation}
\label{eq:ERM_logit2}
    \mathcal{N}_n^\delta(\rho, R, \beta_0) = \left\{ 
        U \in \cL^2(\Q_n):  \bigl\| \rho\norm{\vmu}_2 R + RG + \beta_0 Y - U \bigr\|_{\Q_n}
        \le  \frac{ R\sqrt{1 - \rho^2} }{ \sqrt{\delta} }
     \right\}.
\end{equation}
Now, denote $\hat U_{\rho, R, \beta_0} := \rho\norm{\vmu}_2 R + RG + \beta_0 Y + R\sqrt{1 - \rho^2} \xi^*_{\Q_n}(\rho, R, \beta_0)$. According to \cref{lem:var_fixed}, we know that $\| \xi^*_{\Q_n}(\rho, R, \beta_0) \|_{\Q_n}^2 = 1/\delta$, that is,
\begin{equation}
\label{eq:ERM_logit3}
    \bigl\| \rho\norm{\vmu}_2 R + RG + \beta_0 Y - \hat U_{\rho, R, \beta_0} \bigr\|_{\Q_n}
        = \frac{ R\sqrt{1 - \rho^2} }{ \sqrt{\delta} }.
\end{equation}
We claim $\bigl\| U^*_{\rho, R, \beta_0} - \hat U_{\rho, R, \beta_0} \bigr\|_{\Q_n} \conp 0$. Otherwise, there exits a convergent sequence $\{ \hat\lambda_m \}_{m \in \mathbb{N}}$ such that $\plim_{m \to \infty} \hat\lambda_m \neq \lambda^*$, where $\hat\lambda_m$ satisfies the conditions in \cref{lem:var_fixed}\ref{lem:var_fixed(a)} under $\Q = \Q_m$, and $\lambda^*$ satisfies the conditions in \cref{lem:var_fixed}\ref{lem:var_fixed(a)} under $\Q = \Q_\infty$. This contradicts the convergence $\argmax_{\nu > 0} \mathscr{R}_{\nu, \Q_m}(A, B, \beta_0) \conp \argmax_{\nu > 0} \mathscr{R}_{\nu, \Q_\infty}(A, B, \beta_0)$ by an argmax theorem for the concave process \cite[Theorem 7.77]{liese2008statistical} according to \cref{lem:var_fixed}\ref{lem:var_fixed(b)}, and change of variable $\nu = R\sqrt{1 - \rho^2}/\lambda$. Hence, for all $n$ large enough, we have
\begin{equation*}
    \bigl\| U^*_{\rho, R, \beta_0} - \hat U_{\rho, R, \beta_0} \bigr\|_{\Q_n} \le \frac{\varepsilon}{2}.
\end{equation*}
Combining this with \cref{eq:ERM_logit1}---\eqref{eq:ERM_logit3} together, by triangle inequality, we obtain
\begin{equation}
\label{eq:ERM_logit_sets}
    \mathcal{N}^\delta_n(\rho, R, \beta_0) \cap \mathcal{C}_n^\varepsilon(\rho, R, \beta_0)
    \subseteq \wt{\mathcal{N}}^{\delta,\varepsilon}_n(\rho, R, \beta_0)
\end{equation}
where
\begin{equation*}
    \wt{\mathcal{N}}^{\delta,\varepsilon}_n(\rho, R, \beta_0)
    := \left\{ 
        U \in \cL^2(\Q_n):  \bigl\| \rho\norm{\vmu}_2 R + RG + \beta_0 Y - U \bigr\|_{\Q_n}
        \le \frac{ R\sqrt{1 - \rho^2} }{ \sqrt{\delta} } - \varepsilon
     \right\}.
\end{equation*}
Recall that $C_R = \max_{(\rho, R, \beta_0) \in \bTheta_{c}} R$. Denote $\delta'_\varepsilon > \delta$ as a constant such that
\begin{equation}
\label{eq:delta_eps}
    \frac{1}{\sqrt{\delta'_\varepsilon}} := \frac{1}{\sqrt{\delta}} - \frac{\varepsilon}{C_R}.
\end{equation}
Then following \cref{eq:ERM_II0}, we have
\begin{align*}
    I\!I 
        \ge & \ \min_{ (\rho, R, \beta_0) \in \bTheta_{c} }
    \min_{ U \in 
    \mathcal{N}^\delta_n(\rho, R, \beta_0) \cap \mathcal{C}_n^\varepsilon(\rho, R, \beta_0)  }
    \E_{\Q_n}[\ell(U)]
    \\
    \stackrel{\mathmakebox[0pt][c]{\smash{\text{(i)}}}}{\ge} & \ \min_{ (\rho, R, \beta_0) \in \bTheta_{c} }
    \min_{ U \in 
    \wt{\mathcal{N}}^{\delta,\varepsilon}_n(\rho, R, \beta_0)  }
    \E_{\Q_n}[\ell(U)]
    \\
    = & \ \min_{ (\rho, R, \beta_0) \in \bTheta_{c} }
       \min_{\xi \in \cL^2(\Q_n), \norm{\xi}_{\Q_n} \le \frac{1}{\sqrt{\delta}} - \frac{\varepsilon}{R\sqrt{1 - \rho^2}}} \E_{\Q_n} \left[ \ell \bigl( \rho\norm{\vmu}_2 R + RG + \beta_0 Y + R\sqrt{1 - \rho^2} \xi \bigr) \right]
    \\
    \stackrel{\mathmakebox[0pt][c]{\smash{\text{(ii)}}}}{\ge}  & \
    \min_{ (\rho, R, \beta_0) \in \bTheta_{c} }
       \min_{\xi \in \cL^2(\Q_n), \norm{\xi}^2_{\Q_n} \le 1/\delta'_\varepsilon } \E_{\Q_n} \left[ \ell \bigl( \rho\norm{\vmu}_2 R + RG + \beta_0 Y + R\sqrt{1 - \rho^2} \xi \bigr) \right]
    \\
    \conp & \ 
    \min_{ (\rho, R, \beta_0) \in \bTheta_{c} }
       \min_{\xi \in \cL^2(\Q_\infty), \norm{\xi}^2_{\Q_\infty} \le 1/\delta'_\varepsilon } \E \left[ \ell \bigl( \rho\norm{\vmu}_2 R + RG + \beta_0 Y + R\sqrt{1 - \rho^2} \xi \bigr) \right]
    \\
    \stackrel{\mathmakebox[0pt][c]{\smash{\text{(iii)}}}}{>} & \ \min_{ (\rho, R, \beta_0) \in \bTheta_{c} }
       \min_{\xi \in \cL^2(\Q_\infty), \norm{\xi}^2_{\Q_\infty} \le 1/\delta } \E \left[ \ell \bigl( \rho\norm{\vmu}_2 R + RG + \beta_0 Y + R\sqrt{1 - \rho^2} \xi \bigr) \right]
    \\
    = & \ M^*( \bTheta_{c} ) = M^*,
\end{align*}
where (i) follows from \cref{eq:ERM_logit_sets}, (ii) follows from \cref{eq:delta_eps} and the fact that 
\begin{equation*}
    \frac{1}{\sqrt{\delta}} - \frac{\varepsilon}{R\sqrt{1 - \rho^2}} \le \frac{1}{\sqrt{\delta'_\varepsilon}},
    \qquad \forall\, (\rho, R, \beta_0) \in \bTheta_{c},
\end{equation*}
the convergence follows from \cref{lem:M3-star}, and (iii) follows from the uniqueness of $(\rho^*, R^*, \beta_0^*)$ and KKT conditions $\norm{\xi^*}^2_{\Q_\infty} = 1/\delta$ in \cref{lem:M_star_var}. 

Finally, combining everthing together, we have
\begin{equation*}
    \pliminf_{n \to \infty} \wt M_n^{\varepsilon(3)}
    \ge \min\left\{ \pliminf_{n \to \infty} I, \ \pliminf_{n \to \infty} I\!I \right\}
    > M^*.
\end{equation*}
This shows \cref{eq:ERM_logit_goal}, and hence completes the proof.
\end{proof}

Using an argument similar to the one at the end of the proof of \cref{lem:over_logit_conv}, we can show the convergence of empirical logit distribution $W_2 ( \hat \nu_{n}, \nu_* ) \conp 0$ from $W_2( \hat \cL_{n}, \cL_* ) \conp 0$ given by \cref{lem:ERM_logit_conv}.

\subsubsection{Completing the proof of \cref{thm:logistic_main}}
\label{subsubsec:under_final}
\begin{proof}[\textbf{Proof of \cref{thm:logistic_main}}]
    Consider the ERM problem \cref{eq:logistic_reg} with arbitrary $\tau > 0$. Recall that $\wt y_i = y_i/s(y_i)$ where $s: \{ \pm 1 \} \to \{ 1 \} \cup \{ \tau \}$ is defined as per \cref{eq:s_fun}. $M_n$ is redefined as \cref{eq:logistic_reg}
    \begin{equation*}
        M_n := \min_{\bbeta \in \R^d, \, \beta_0 \in \R}  \frac1n \sum_{i=1}^n \ell\bigl( 
        \wt y_i(\< \xx_i, \bbeta \> +  \beta_0 )
        \bigr).
    \end{equation*}
    Under this modification, $M_n(\bTheta_{\vbeta}, \bXi_{\bu})$ can be redefined and expressed as
    \begin{align*}
        M_n(\bTheta_{\vbeta}, \bXi_{\bu})
        :\! & = \min_{ \substack{ (\bbeta , \beta_0) \in \bTheta_{\vbeta} \\  \bu \in \bXi_{\bu} } }
        \max_{ \bv \in \R^n } \left\{
        \frac1n \sum_{i=1}^n \ell \biggl( \frac{u_i}{s(y_i)} \biggr)
         + \frac{1}{n} \sum_{i=1}^n v_i \left( y_i(\< \xx_i, \bbeta \> +  \beta_0 ) - u_i \right)
         \right\} \\
        & = \min_{ \substack{ (\bbeta , \beta_0) \in \bTheta_{\vbeta} \\  \bu \in \bXi_{\bu} } }
     \max_{ \bv \in \R^n }
     \left\{
     \frac1n \sum_{i=1}^n \ell \biggl( \frac{u_i}{s(y_i)} \biggr)
      + \frac1n \bv^\top \bone \< \bmu, \vbeta \>
      + \frac1n \bv^\top \ZZ \vbeta + \frac1n \beta_0 \bv^\top \yy - \frac1n \bv^\top \bu
      \right\}.
    \end{align*}
Consequently, quantities $M_n^{(k)}$, $k= 1,2,3$ and $M^*$ used in the proof can be similarly redefined as
    \begin{align*}
        M_n^{(1)}(\bTheta_{\vbeta}, \bXi_{\bu})
        & : = \smash {\min_{ \substack{ (\bbeta , \beta_0) \in \bTheta_{\vbeta} \\  \bu \in \bXi_{\bu} } } }
        \max_{ \bv \in \R^n }
        \, \Biggl\{
        \frac1n \sum_{i=1}^n \ell \biggl( \frac{u_i}{s(y_i)} \biggr)
         + \frac1n \bv^\top \bone \< \bmu, \vbeta \>
         + \frac1n \norm{\bv}_2 \hh^\top \vbeta 
         \\
        & \phantom{.} \phantom{ 
            : = \smash {\min_{ \substack{ (\bbeta , \beta_0) \in \bTheta_{\vbeta} \\  \bu \in \bXi_{\bu} } } }
        \max_{ \bv \in \R^n }
        \, \Biggl\{
        }
         + \frac1n \norm{\vbeta}_2 \vg^\top \bv 
         + \frac1n \beta_0 \bv^\top \yy - \frac1n \bv^\top \bu
         \Biggr\},
        \\
        M_n^{(2)}(\bTheta_{c}, \bXi_{\bu})
        & := \min_{ (\rho, R, \beta_0) \in \bTheta_{c} }
        \min_{ U \in \Xi_{u}  \cap   \mathcal{N}_n }
        \E_{\Q_n}\left[\ell \bigl( U/s(Y) \bigr)\right],
        \\
        M_n^{(3)}(\bTheta_{c}, \bXi_{\bu}) 
        & :=  
        \min_{ (\rho, R, \beta_0) \in \bTheta_{c} } 
        \min_{ U \in \Xi_{u}  \cap   \mathcal{N}^\delta_n }
        \E_{\Q_n}\left[\ell \bigl( U/s(Y) \bigr)\right],
        \\
        M_n^{(3)}(\bTheta_{c}) 
        & :=  
        \min_{ (\rho, R, \beta_0) \in \bTheta_{c} } 
        \min_{ U \in \mathcal{N}^\delta_n }
        \E_{\Q_n}\left[\ell \bigl( U/s(Y) \bigr)\right],
        \\
        & \phantom{:}= \min_{ \substack{ (\rho, R, \beta_0) \in \bTheta_{c} \\ \xi \in \cL^2(\Q_n), \norm{\xi}_{\Q_n} \le 1/\sqrt{\delta} } } 
    \E_{\Q_n} \left[ \ell \biggl( \frac{ \rho\norm{\vmu}_2 R + RG + \beta_0 Y + R\sqrt{1 - \rho^2} \xi }{s(Y)} \biggr) \right],
        \\
         M^*(\bTheta_{c})
     & := \min_{ \substack{ (\rho, R, \beta_0) \in \bTheta_{c} \\ \xi \in \cL^2(\Q_\infty), \norm{\xi}_{\Q_\infty} \le 1/\sqrt{\delta} } } 
    \E \left[ \ell \biggl( \frac{ \rho\norm{\vmu}_2 R + RG + \beta_0 Y + R\sqrt{1 - \rho^2} \xi }{s(Y)} \biggr) \right],
        \\
        M^* & := M^*([-1, 1] \times \R_{\ge 0} \times \R),
    \end{align*}
    where $\mathcal{N}_n$, $\mathcal{N}^\delta_n$ are still defined as \cref{eq:set_N_n}, \eqref{eq:set_N_n_delta}, and we still apply the change of variable
    \begin{equation*}
    U = \rho\norm{\vmu}_2 R + RG + \beta_0 Y + R\sqrt{1 - \rho^2} \xi.
    \end{equation*}
    One can use exactly similar arguments to conclude \cref{lem:ERM_bound_beta}---\ref{lem:M3-star} and \cref{thm:ERM_conv} with definitions above. For \cref{lem:var_fixed}, we can also get similar results, but the KKT condition in \cref{eq:KKT_xi2} now becomes
    \begin{equation*}
        U + \lambda \ell' ( U/s(Y) ) = \rho\norm{\vmu}_2 R + RG + \beta_0 Y,
    \end{equation*}
    which implies
    \begin{equation}
    \label{eq:ERM_U_new}
        \frac{U}{s(Y)} = \prox_{\ell}\left( \frac{\rho\norm{\vmu}_2 R + RG + \beta_0 Y}{s(Y)} ; \frac{\lambda}{s(Y)} \right),
    \end{equation}
    as a substitute of \cref{eq:U_prox},
    and
    \begin{equation*}
        \xi^*_{\Q}(\rho, R, \beta_0) = -\frac{\lambda}{R\sqrt{1 - \rho^2}} \ell'\left( \prox_{\ell}\left( \frac{\rho\norm{\vmu}_2 R + RG + \beta_0 Y}{s(Y)} ; \frac{\lambda}{s(Y)} \right) \right),
    \end{equation*}
    as a substitute of \cref{eq:xi_star}.

\vspace{0.5\baselineskip}
\noindent
\textbf{\ref{thm:logistic_main(a)}:} According to the definition above, the KKT conditions \cref{lem:M_star_var} will become
\begin{align}
                - \frac{R \rho}{\lambda \delta \norm{\vmu}_2}
                & = 
                \E \left[ \wt\ell'_Y(U) \right],
                \label{eq:KKT_new1}
                \\
                \frac{R}{\lambda \delta }
                & = 
                \E \left[ \wt\ell'_Y(U) G \right],
                \label{eq:KKT_new2}
                \\
                0
                & = 
                \E \left[ \wt\ell'_Y(U) Y \right],
                \label{eq:KKT_new3}
                \\
                \frac{R^2 (1 - \rho^2)}{\lambda^2 \delta}
                & = 
                \E \left[ \bigl( \wt\ell'_Y(U) \bigr)^2 \right],
                \notag
\end{align}
where
\begin{equation*}
    \wt\ell'_Y(U) := \frac{1}{s(Y)} \ell'\left( \frac{U}{s(Y)} \right)
    = \frac{1}{s(Y)} \ell'\left( \prox_{\ell}\left( \frac{\rho\norm{\vmu}_2 R + RG + \beta_0 Y}{s(Y)} ; \frac{\lambda}{s(Y)} \right) \right)
    ,
\end{equation*}
and $U$ follows the relation \cref{eq:ERM_U_new}. By Stein's identity, \cref{eq:KKT_new2} can be expressed as
\begin{align*}
    \frac{R}{\lambda \delta } = \E \left[ \wt\ell'_Y(U) G \right]
    & = \E \left[ \frac{1}{s(Y)} \ell''\left( \frac{U}{s(Y)} \right)
    \cdot \frac{\d (U/s(Y))}{\d G} \right] \\
    & = \E \left[ \frac{1}{s(Y)} \ell''\left( \frac{U}{s(Y)} \right)
    \cdot \frac{1}{1 + \dfrac{\lambda}{s(Y)} \ell''\left( \dfrac{U}{s(Y)} \right) } \cdot \dfrac{R}{s(Y)}  \right],
\end{align*}
which gives the third KKT condition in \ref{thm:logistic_main(a)}. Besides, \cref{eq:KKT_new1} and \ref{eq:KKT_new3} can be rewritten as
\begin{align*}
    - \frac{R \rho}{\lambda \delta \norm{\vmu}_2}
    & = \pi \E\left[ \frac{1}{\tau} \ell'\left( \prox_{\ell}\left( \frac{\rho\norm{\vmu}_2 R + RG + \beta_0}{\tau} ; \frac{\lambda}{\tau} \right) \right) \right] \\
    & \phantom{=.} 
    + (1-\pi) \E\left[ \ell'\, \bigl( \prox_{\ell}\left( \rho\norm{\vmu}_2 R + RG - \beta_0 ; \lambda \right) \bigr)  \right],
    \\
    0 & = \pi \E\left[ \frac{1}{\tau} \ell'\left( \prox_{\ell}\left( \frac{\rho\norm{\vmu}_2 R + RG + \beta_0}{\tau} ; \frac{\lambda}{\tau} \right) \right) \right] \\
    & \phantom{=.}
    - (1-\pi) \E\left[ \ell'\, \bigl( \prox_{\ell}\left( \rho\norm{\vmu}_2 R + RG - \beta_0 ; \lambda \right) \bigr)  \right],
\end{align*}
which solves the first two KKT conditions in \ref{thm:logistic_main(a)}. This concludes the proof of part \ref{thm:logistic_main(a)}.

\vspace{0.5\baselineskip}
\noindent
\textbf{\ref{thm:logistic_main(b)}:} \cref{lem:ERM_param_conv} still remains valid under arbitrary $\tau > 0$, which concludes the proof.

\vspace{0.5\baselineskip}
\noindent
\textbf{\ref{thm:logistic_main(c)}:} Similar to the proof of \cref{thm:SVM_main}\ref{thm:SVM_main_err}, we can show that for any test point $(\xx_\mathrm{new}, y_\mathrm{new})$,
\begin{equation*}
        \hat f(\xx_\mathrm{new}) = \< \xx_\mathrm{new}, \hat\vbeta_n \> + \hat\beta_{0,n}
        \cond y_\mathrm{new} R^* \rho^* \norm{\bmu}_2 + R^* G + \beta_{0}^*,
\end{equation*}
where $(y^\mathrm{new}, G) \sim P_y \times \normal(0, 1)$. Therefore, by bounded convergence theorem, the errors have limits
\begin{align*}
        \lim_{n \to \infty} \Err_{+,n} & = \P\left( + R^*\rho^* \norm{\bmu}_2 + R^*G + \beta_{0}^* \le 0 \right)
        = \Phi \left(- \rho^* \norm{\bmu}_2  - \frac{\beta_0^*}{R^*} \right), \\
        \lim_{n \to \infty} \Err_{-,n} & = \P\left( - R^*\rho^* \norm{\bmu}_2 + R^*G + \beta_{0}^* >  0 \right)
        = \Phi \left(- \rho^* \norm{\bmu}_2  + \frac{\beta_0^*}{R^*} \right).
\end{align*}
This concludes the proof of part \ref{thm:logistic_main(c)}.

\vspace{0.5\baselineskip}
\noindent
\textbf{\ref{thm:logistic_main(d)}:} Based on \cref{eq:ERM_U_new}, we redefine $\cL_*$ in \cref{append_subsubsec:ERM_logit} by
\begin{equation*}
    \cL_* := \Law \, (U^*)
    =
    \Law \left( 
    s(Y) \, \prox_{\ell}\left( \frac{\rho^*\norm{\vmu}_2 R^* + R^*G + \beta_0^* Y}{s(Y)}; \frac{\lambda^*}{s(Y)} \right)
    \right).
\end{equation*}
Then \cref{lem:ERM_logit_conv} and the corresponding convergence of ELD still hold. The convergence of TLD directly comes from the proof of part \ref{thm:logistic_main(c)}. This concludes the proof of part \ref{thm:logistic_main(d)}.

Finally, we complete the proof of \cref{thm:logistic_main}.
\end{proof}

\section{Margin rebalancing in proportional regime: Proofs for \cref{subsec:rebal_prop}}
\label{append_sec:mar_reb}

\subsection{Proofs of \cref{prop:Err-_mono} and \ref{prop:tau_mono}}


We show the monotonicity of $\Err_+^*$ for $\tau = 1$ in this subsection by first analyzing the monotonicity of asymptotic parameters $\rho^*, \beta_0^*$, which are the solution to the system of equations in \cref{lem:gordon_eq}. We restate these equations here.
\begin{subequations}
\begin{align}
    \label{eq:reb_sys_eq_rho}
        \pi \delta \cdot g \left( \frac{\rho}{2 \pi \norm{\bmu}_2 \delta} \right) & + (1 - \pi) \delta \cdot g \left( \frac{\rho}{2(1 - \pi) \norm{\bmu}_2 \delta} \right) = 1 - \rho^2, \\
    \label{eq:reb_sys_eq_bk1}
    - \beta_0 + \kappa \tau & = \rho \norm{\bmu}_2 + g_1^{-1} \left( \frac{\rho}{2 \pi \norm{\bmu}_2 \delta} \right), \\
    \label{eq:reb_sys_eq_bk2}
	\beta_0 + \kappa & = \rho \norm{\bmu}_2 + g_1^{-1} \left( \frac{\rho}{2 (1 - \pi) \norm{\bmu}_2 \delta} \right).
\end{align}
\end{subequations}
The properties of functions $g_1, g_2, g$ therein are summarized below.
\begin{lem}\label{lem:g1_g2_g} 
Recall $g_1 (x) = \E \left[ (G + x)_+ \right]$, $g_2 (x) = \E \left[ (G + x)_+^2 \right]$, and $g = g_2 \circ g_1^{-1}$.
\begin{enumerate}[label=(\alph*)]
    \item $g_1$, $g_2$ are increasing maps from $\R$ to $\R_{> 0}$, and $g: \R_{> 0} \to \R_{> 0}$ is increasing with $g(0^+) = 0$.  
    \item $g_1$, $g_2$ have explicit expressions
    \begin{equation*}
        g_1(x) = x \Phi(x) + \phi(x), \qquad g_2(x) = (x^2+1)\Phi(x) + x\phi(x). 
    \end{equation*}
    \item \label{lem:g1_g2_g_asymp} 
    $g_1(x) \sim x$, $g_2(x) \sim x^2$, and $g(x) \sim x^2$, as $x \to +\infty$.
\end{enumerate}
\end{lem}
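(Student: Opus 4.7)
The plan is to proceed in the order (b), (a), (c), since the explicit formulas immediately make monotonicity and asymptotics transparent.

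First I would establish (b) by direct computation. Writing $g_1(x) = \int_{-x}^{\infty} (g+x)\phi(g)\,\d g$, I split the integrand and use $\int_{-x}^{\infty} g\phi(g)\,\d g = \phi(x)$ (since $\phi' = -g\phi$) together with $\int_{-x}^{\infty} \phi(g)\,\d g = \Phi(x)$ to get $g_1(x) = x\Phi(x) + \phi(x)$. For $g_2$, integration by parts with $u = g$, $\d v = g\phi(g)\,\d g$ gives $\int_{-x}^{\infty} g^2\phi(g)\,\d g = \Phi(x) - x\phi(x)$; expanding $(g+x)^2 = g^2 + 2xg + x^2$ and combining with the identities above yields $g_2(x) = (1+x^2)\Phi(x) + x\phi(x)$.

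For (a), the cleanest route is to differentiate under the integral sign: $g_1'(x) = \P(G > -x) = \Phi(x) > 0$ and $g_2'(x) = 2\,\E[(G+x)_+] = 2 g_1(x) > 0$, giving strict monotonicity on all of $\R$. Nonnegativity is immediate from the definitions, and positivity is clear since $(G+x)_+ > 0$ with positive probability for every $x \in \R$. For $g = g_2 \circ g_1^{-1}$: since $g_1: \R \to \R_{>0}$ is a strictly increasing bijection (the limits $g_1(-\infty) = 0$ and $g_1(+\infty) = +\infty$ follow from monotone/dominated convergence applied to $(G+x)_+$), $g_1^{-1}: \R_{>0} \to \R$ is well-defined and increasing, and composing two increasing maps yields $g$ increasing on $\R_{>0}$. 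For $g(0^+)$, as $y \downarrow 0$ we have $g_1^{-1}(y) \to -\infty$, and $g_2(x) \to 0$ as $x \to -\infty$ by the same dominated convergence argument.

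Part (c) follows from the formulas in (b). As $x \to +\infty$, Mills-type estimates give $\Phi(x) = 1 - O(\phi(x)/x)$ and $\phi(x)$ decays super-polynomially, so
\begin{equation*}
g_1(x) = x + o(1), \qquad g_2(x) = x^2 + 1 + o(1),
\end{equation*}
which yield $g_1(x) \sim x$ and $g_2(x) \sim x^2$. For $g$, the first asymptotic inverts to $g_1^{-1}(y) = y + o(1)$ as $y \to +\infty$, so $g(y) = g_2(g_1^{-1}(y)) \sim (g_1^{-1}(y))^2 \sim y^2$.

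None of the steps is an obstacle; the only mild care needed is justifying the two limits $g_1(\pm\infty)$ and inverting $g_1(x) \sim x$ to obtain $g_1^{-1}(y) \sim y$, both of which are one-line applications of monotone convergence and elementary real analysis.
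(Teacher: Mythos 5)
Your proof is correct: the explicit formulas in (b) follow from the standard Gaussian identities exactly as you compute, (a) follows from $g_1'=\Phi>0$, $g_2'=2g_1>0$ and the limits $g_1(\pm\infty)$, and the asymptotics in (c), including the inversion $g_1^{-1}(y)=y+o(1)$, are justified as you indicate. The paper states this lemma without proof (treating it as an elementary fact, asserted earlier in the appendix as "clearly" increasing), so your direct-computation argument is precisely the intended route and fills the omitted details correctly.
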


The following preliminary result gives the monotonicity of $\rho^*$. By \cref{thm:SVM_main}\ref{thm:SVM_main_var}, $\rho^* \in (0, 1)$ is invariant with respect to $\tau$. Hence $\rho^*$ can be viewed as a function of model parameters $(\pi, \norm{\bmu}_2, \delta)$ determined by \cref{eq:reb_sys_eq_rho}.
\begin{lem}[Monotonicity of $\rho^*$]\label{lem:rho_mono}
    $\rho^*$ is an increasing function of $\pi \in (0, \frac12)$, $\norm{\bmu}_2$, and $\delta$.
\end{lem}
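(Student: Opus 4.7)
The approach is to apply the implicit function theorem to \cref{eq:reb_sys_eq_rho}, viewed as $F(\rho;\pi,\norm{\bmu}_2,\delta)=0$ with
\[
F(\rho;\pi,\norm{\bmu}_2,\delta) \;:=\; \pi\delta\, g\!\left(\tfrac{\rho}{2\pi\norm{\bmu}_2\delta}\right) + (1-\pi)\delta\, g\!\left(\tfrac{\rho}{2(1-\pi)\norm{\bmu}_2\delta}\right) - 1 + \rho^2.
\]
Since $g$ is increasing on $(0,\infty)$ by \cref{lem:g1_g2_g} and $\rho^2$ is increasing in $\rho>0$, one sees immediately that $\partial_\rho F>0$, so IFT yields $\partial\rho^*/\partial\theta=-\partial_\theta F/\partial_\rho F$, which carries the opposite sign of $\partial_\theta F$ for each parameter $\theta\in\{\pi,\norm{\bmu}_2,\delta\}$. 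The problem thus reduces to signing three scalar derivatives.

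Direct differentiation, using the shorthand $u:=\rho/(2\pi\norm{\bmu}_2\delta)$, $v:=\rho/(2(1-\pi)\norm{\bmu}_2\delta)$, and $h(x):=g(x)-xg'(x)$, gives
\[
\partial_{\norm{\bmu}_2}F \;=\; -\norm{\bmu}_2^{-1}\bigl[\pi\delta\, u\, g'(u)+(1-\pi)\delta\, v\, g'(v)\bigr], \qquad \partial_\delta F \;=\; \pi\, h(u)+(1-\pi)\, h(v),
\]
\[
\partial_\pi F \;=\; \delta\bigl[h(u)-h(v)\bigr].
\]
The first is manifestly negative as $g'>0$, yielding monotonicity in $\norm{\bmu}_2$. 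The second reduces monotonicity in $\delta$ to the claim $h(x)<0$ on $(0,\infty)$, i.e., $g(x)/x$ is strictly increasing. The third, noting that $u>v$ whenever $\pi<1/2$, reduces monotonicity in $\pi$ to $h$ being strictly decreasing on $(0,\infty)$. Both claims follow at once from $g$ being strictly convex with $g(0^+)=0$: the slope inequality for convex functions vanishing at the origin gives $g(x)<xg'(x)$, and $h'(x)=-xg''(x)<0$ gives the second.

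The only step requiring real calculation---and the main technical obstacle---is establishing strict convexity of $g=g_2\circ g_1^{-1}$. Writing $y=g_1^{-1}(x)$ and using $g_1'(y)=\Phi(y)$ together with $g_2'(y)=2g_1(y)$ (a one-line consequence of differentiating under the integral), the chain rule yields
\[
g''(x) \;=\; \frac{2\bigl[\Phi(y)^2-g_1(y)\phi(y)\bigr]}{\Phi(y)^3},
\]
so it suffices to show $\Psi(y):=\Phi(y)^2-g_1(y)\phi(y)>0$ for all $y\in\R$. I would prove this by differentiating $\Psi$: using $g_1'=\Phi$, $\phi'(y)=-y\phi(y)$, and the explicit formula $g_1(y)=y\Phi(y)+\phi(y)$ from \cref{lem:g1_g2_g}, a short algebraic manipulation yields the clean identity
\[
\Psi'(y) \;=\; \phi(y)\bigl[(1+y^2)\Phi(y)+y\phi(y)\bigr] \;=\; \phi(y)\, g_2(y),
\]
which is strictly positive since $g_2(y)=\E[(G+y)_+^2]>0$. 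Combined with $\lim_{y\to-\infty}\Psi(y)=0$, strict monotonicity of $\Psi$ then gives $\Psi(y)>0$ for every $y\in\R$, completing the argument.
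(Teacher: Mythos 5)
Your proof is correct, and it is a genuinely cleaner route than the paper's. You organize all three monotonicity claims under a single umbrella: treat the fixed‑point equation \eqref{eq:reb_sys_eq_rho} as $F(\rho;\pi,\norm{\bmu}_2,\delta)=0$, verify $\partial_\rho F>0$, and then sign $\partial_\theta F$ for each parameter $\theta$. Your reduction uncovers that \emph{all} the nontrivial monotonicity content is captured by the two facts $g(x)<xg'(x)$ and $h'(x)=-xg''(x)<0$, i.e.\ by strict convexity of $g$ together with $g(0^+)=0$; this is structurally illuminating. The paper, by contrast, handles the three parameters separately: $\norm{\bmu}_2$ is waved off as obvious, $\delta$ is routed through a lemma showing $x\mapsto x\,g(1/x)$ is decreasing, and $\pi$ is handled via a constrained‑optimization argument in the transformed variables $x_1=g_1^{-1}(\cdot)$, $x_2=g_1^{-1}(\cdot)$ relying on monotonicity of $\tilde h(y)=\bigl(g_2'g_1-g_2g_1'\bigr)/g_1'$.

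It is worth observing that the two routes are ultimately equivalent: your $h(x)=g(x)-xg'(x)$ satisfies $h(x)=-\tilde h\bigl(g_1^{-1}(x)\bigr)$ after a small computation with $g'=g_2'/g_1'$, so your claim ``$h$ strictly decreasing'' is exactly the paper's ``$\tilde h$ strictly increasing,'' and both ultimately rest on $\Psi(y):=\Phi(y)^2-g_1(y)\phi(y)>0$ (the paper's auxiliary function $r(x)=\Phi^2/\phi-g_1$ equals $\Psi/\phi$). Your derivation of $\Psi'(y)=\phi(y)g_2(y)$ is a particularly clean way to establish this inequality. What your version buys is a unified argument where the reader sees at a glance that one convexity fact drives everything; what the paper's version buys is not having to invoke IFT and being able to reuse the already‑stated auxiliary lemmas (\cref{lem:g_monotone}, \cref{lem:g_prime_monotone}) verbatim. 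Either way the substance is the same, and your write‑up is correct.
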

\begin{proof}
Recall that $\rho^* \in (0, 1)$ as stated in \cref{thm:SVM_main}\ref{thm:SVM_main_var}.

\vspace{0.5\baselineskip}
\noindent
\textbf{(a) $\boldsymbol{\uparrow}$ in $\norm{\vmu}_2$:}
This point is obvious from \cref{eq:reb_sys_eq_rho} and Lemma~\ref{lem:g1_g2_g}(a). 

\vspace{0.5\baselineskip}
\noindent
\textbf{(b) $\boldsymbol{\uparrow}$ in $\delta$:}
Notice that \cref{lem:g_monotone} implies $x \mapsto x \cdot g(1 / x)$ is decreasing in $x$. As a consequence, if we fix $\rho$ and increase $\delta$ on the L.H.S. of \cref{eq:reb_sys_eq_rho}, then the L.H.S. will decrease, and $\rho^*$ have to increase to match the R.H.S.. Therefore, $\rho^*$ is an increasing function of $\delta$. 

\vspace{0.5\baselineskip}
\noindent
\textbf{(c) $\boldsymbol{\uparrow}$ in $\pi \in (0, \frac12)$:}
We prove this using a similar strategy. Define
	\begin{equation*}
		x_1 = x_1 (\pi) := g_1^{-1} \left( \frac{\rho}{2 \pi \norm{\bmu}_2 \delta} \right), 
            \quad
            x_2 = x_2 (\pi) := g_1^{-1} \left( \frac{\rho}{2(1 - \pi) \norm{\bmu}_2 \delta} \right),
	\end{equation*}
	then we know that the L.H.S. of \cref{eq:reb_sys_eq_rho} (for fixed $\delta$ and $\norm{\bmu}_2$) is proportional to
	\begin{equation}\label{eq:rho_LHS_prop}
        \rho \cdot \left(
		\frac{g_2 (x_1(\pi))}{g_1 (x_1(\pi))} + \frac{g_2 (x_2(\pi))}{g_1 (x_2(\pi))}
        \right),
	\end{equation}
	with the only constraint on $x_1$ and $x_2$ being
	\begin{equation*}
		\frac{1}{g_1 (x_1(\pi))} + \frac{1}{g_1 (x_2(\pi))} = C := \frac{2 \norm{\bmu}_2 \delta}{\rho}.
	\end{equation*}
	Taking derivative with respect to $\pi$, it follows that
	\begin{equation*}
		- \frac{g_1' (x_1)}{g_1^2 (x_1)} \cdot x_1'(\pi) - \frac{g_1' (x_2)}{g_1^2 (x_2)} \cdot x_2' (\pi) = 0, 
        \quad
        \Longrightarrow 
        \quad x_1' (\pi) = - \frac{g_1^2 (x_1)}{g_1' (x_1)} \cdot \frac{g_1' (x_2)}{g_1^2 (x_2)}  \cdot x_2'(\pi),
	\end{equation*}
	thus leading to
	\begin{align*}
		& \frac{\d}{\d \pi} \left( \frac{g_2 (x_1(\pi))}{g_1 (x_1(\pi))} + \frac{g_2 (x_2(\pi))}{g_1 (x_2(\pi))} \right) \\
		= {} & \frac{g_2'(x_1) g_1(x_1) - g_2(x_1) g_1'(x_1)}{g_1^2(x_1)} \cdot x_1'(\pi) + \frac{g_2'(x_2) g_1(x_2) - g_2(x_2) g_1'(x_2)}{g_1^2(x_2)} \cdot x_2'(\pi) \\
		= {} & - \frac{g_1' (x_2)}{g_1^2(x_2)} \cdot x_2'(\pi) \cdot \left( \frac{g_2'(x_1) g_1(x_1) - g_2(x_1) g_1'(x_1)}{g_1' (x_1)} - \frac{g_2'(x_2) g_1(x_2) - g_2(x_2) g_1'(x_2)}{g_1' (x_2)} \right) \\
            = {} & - \frac{g_1' (x_2)}{g_1^2(x_2)} \cdot x_2'(\pi) \cdot \left( h(x_1) - h(x_2) \right),
	\end{align*}
        where 
        \begin{equation*}
		h(x) := \frac{g_2'(x) g_1(x) - g_2(x) g_1'(x)}{g_1'(x)}, \quad \forall\, x \in \R
        \end{equation*}
        is a monotone increasing function according to the proof of \cref{lem:g_monotone}. Therefore, $h(x_1) > h(x_2)$ (since $\pi < 1/2 \implies x_1 > x_2$). By definitions of $x_2$ and $g_1$, we know that $x_2'(\pi) > 0$ and $g_1'(x_2) > 0$. As a consequence,
	\begin{equation*}
		\frac{\d}{\d \pi} \left( \frac{g_2 (x_1(\pi))}{g_1 (x_1(\pi))} + \frac{g_2 (x_2(\pi))}{g_1 (x_2(\pi))} \right) < 0.
	\end{equation*}
	Similar to points (a) and (b), by combining \cref{eq:reb_sys_eq_rho} and \eqref{eq:rho_LHS_prop}, we conclude that $\rho^*$ is an increasing function of $\pi \in (0, \frac12)$. This completes the proof.
\end{proof}

As long as $\tau \not= 0$, the linear system \cref{eq:reb_sys_eq_bk1} and \eqref{eq:reb_sys_eq_bk2} for $(\beta_0, \tau)$ is non-singular, so one can solve for $\beta_0$ and $\kappa$:
\begin{subequations}
\begin{align}
	\label{eq:beta0_tau}
	\beta_0 = \, & \frac{1}{1 + \tau} \left( \tau g_1^{-1} \left( \frac{\rho}{2 (1 - \pi) \norm{\bmu}_2 \delta} \right) - g_1^{-1} \left( \frac{\rho}{2 \pi \norm{\bmu}_2 \delta} \right) + (\tau - 1) \rho \norm{\bmu}_2 \right), \\
	\label{eq:kappa_tau}
	\kappa = \, & \frac{1}{1 + \tau} \left( g_1^{-1} \left( \frac{\rho}{2 (1 - \pi) \norm{\bmu}_2 \delta} \right) + g_1^{-1} \left( \frac{\rho}{2 \pi \norm{\bmu}_2 \delta} \right) + 2 \rho \norm{\bmu}_2 \right).
\end{align}
\end{subequations}
The following lemma establishes the monotonicity of $\beta_0^*$ when $\tau = 1$. 
\begin{lem}[Monotonicity of $\beta_0^*$]\label{lem:beta0_mono}
    $\beta_0^*$ is an increasing function of $\pi \in (0, \frac12)$, $\norm{\bmu}_2$, and $\delta$, when $\tau = 1$ (without margin rebalancing). Moreover, $\beta_0^* < 0$.
\end{lem}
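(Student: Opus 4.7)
The plan is to leverage the explicit expression for $\beta_0^*$ obtained by subtracting \eqref{eq:reb_sys_eq_bk1} from \eqref{eq:reb_sys_eq_bk2} at $\tau=1$:
\[
\beta_0^* \;=\; \tfrac{1}{2}\bigl[g_1^{-1}(a_-) - g_1^{-1}(a_+)\bigr], \qquad a_{\pm} \;:=\; \frac{\rho^*}{2\pi_{\pm}\norm{\bmu}_2\delta}, \quad \pi_+ := \pi,\ \pi_- := 1-\pi.
\]
Since $\pi<1/2$ gives $a_+>a_->0$ and $g_1^{-1}$ is strictly increasing by \cref{lem:g1_g2_g}, this immediately yields $\beta_0^*<0$. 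All three monotonicity claims will then follow from this formula together with a single analytic input: the strict log-concavity of $g_1$ on $\R$.

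The log-concavity input will be obtained from $g_1(y)=\int_{-\infty}^y \Phi(t)\,\d t$ combined with the Bagnoli--Bergstrom (Pr\'ekopa) theorem that the antiderivative of a log-concave function is log-concave, together with the standard fact that $\Phi$ is strictly log-concave. Equivalently, $\phi(y)g_1(y)-\Phi(y)^2<0$ for all $y\in\R$. Two consequences will be used: (i) the ratio $(\log g_1)'(y)=\Phi(y)/g_1(y)$ is strictly decreasing in $y$; and (ii) writing $g=g_2\circ g_1^{-1}$ and using the identity $g_2'(x)=2g_1(x)$ (a direct consequence of \cref{lem:g1_g2_g}), a short calculation gives
\[
g''(y) \;=\; \frac{2\bigl[\Phi(z)^2 - z\Phi(z)\phi(z) - \phi(z)^2\bigr]}{\Phi(z)^3}, \qquad z := g_1^{-1}(y),
\]
which is strictly positive, so $g$ is strictly convex on $(0,\infty)$; in particular $h(x):=g(x)-xg'(x)$ satisfies $h'(x)=-xg''(x)<0$.

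For monotonicity in $\norm{\bmu}_2$ and $\delta$, I will set $s^*:=\rho^*/(2\norm{\bmu}_2\delta)$ so that $\beta_0^*=\tfrac{1}{2}f_\pi(s^*)$ with $f_\pi(s):=g_1^{-1}(s/(1-\pi))-g_1^{-1}(s/\pi)$. Writing $y_{\pm}:=g_1^{-1}(s/\pi_{\pm})$ and using the identity $\pi g_1(y_+)=(1-\pi)g_1(y_-)=s$, differentiation gives
\[
f_\pi'(s) \;=\; \frac{1}{(1-\pi)\Phi(y_-)} - \frac{1}{\pi\Phi(y_+)} \;\propto\; \frac{\Phi(y_+)}{g_1(y_+)} - \frac{\Phi(y_-)}{g_1(y_-)} \;<\; 0
\]
by consequence (i), so $f_\pi$ is strictly decreasing on $(0,\infty)$. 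Rewriting \eqref{eq:reb_sys_eq_rho} as $\delta[\pi g(s/\pi)+(1-\pi)g(s/(1-\pi))]=1-4\norm{\bmu}_2^2\delta^2 s^2$, the right-hand side is strictly decreasing in $\norm{\bmu}_2$ and in $\delta$ at fixed $s$, while the left-hand side is strictly increasing in $s$ (as $g$ is increasing); hence $s^*$ is strictly decreasing in each of $\norm{\bmu}_2$ and $\delta$, and composing with $f_\pi$ gives $\beta_0^*$ strictly increasing in both.

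The main obstacle will be the monotonicity in $\pi$, where $\pi$ enters both through $f_\pi$ and through $s^*$ with competing signs: naive differentiation of $\tfrac{1}{2}f_\pi(s^*(\pi))$ produces a sum of two terms of opposite sign. I will instead work with the parametrization $v_{\pm}:=g_1^{-1}(\sigma/\pi_{\pm})$ and $\sigma:=\rho^*/(2\norm{\bmu}_2\delta)$, so that $\beta_0^*=(v_--v_+)/2$, and prove separately that $v_+$ is strictly decreasing and $v_-$ is strictly increasing in $\pi$; since $g_1^{-1}$ is increasing, these reduce to $\sigma/\pi$ decreasing and $\sigma/(1-\pi)$ increasing. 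Implicit differentiation of \eqref{eq:reb_sys_eq_rho} in $\pi$ (with $\norm{\bmu}_2,\delta$ fixed), writing $a_{\pm}:=\sigma/\pi_{\pm}$ and $\sigma':=\d\sigma/\d\pi$, yields
\[
\sigma' \cdot \Bigl[g'(a_+) + g'(a_-) + 8\norm{\bmu}_2^2\delta\,\sigma\Bigr] \;=\; h(a_-) - h(a_+).
\]
The bracket is positive, and $h(a_-)>h(a_+)$ by $h'<0$, so $\sigma'>0$; this already gives $\sigma/(1-\pi)$ strictly increasing. The remaining, nontrivial inequality $\sigma'<\sigma/\pi=a_+$ becomes, after substituting the definition of $h$ and cancelling the $a_+g'(a_+)$ terms, the estimate
\[
(a_+ + a_-)\,g'(a_-) \;+\; 8\norm{\bmu}_2^2\delta\,\sigma\,a_+ \;+\; \bigl(g(a_+) - g(a_-)\bigr) \;>\; 0,
\]
which is manifest termwise since $g$ is strictly increasing with $g'>0$ on $(0,\infty)$. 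This termwise-positive rearrangement is the crux that turns the a-priori-indeterminate $\pi$-derivative of $\beta_0^*$ into a clean proof of strict monotonicity.
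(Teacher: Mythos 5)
Your proposal is correct. It starts from the same explicit formula for $\beta_0^*$ as the paper and relies on the same analytic core — the strict log-concavity of $g_1$, which in the paper appears as the positivity of $r(x) := \Phi(x)^2/\phi(x) - g_1(x)$ and underlies both \cref{lem:g_monotone} and \cref{lem:g_prime_monotone} — but it organizes the three monotonicity claims differently. For $\norm{\bmu}_2$ and $\delta$, the paper quotes \cref{lem:rho_mono} to conclude that $\rho^*$ increases, then argues that $\rho^*/\norm{\bmu}_2$ and $\rho^*/\delta$ decrease, and applies \cref{lem:g_prime_monotone}. You instead introduce $s^* := \rho^*/(2\norm{\bmu}_2\delta)$, show $\beta_0^* = \tfrac12 f_\pi(s^*)$ with $f_\pi$ strictly decreasing (via $(\Phi/g_1)$ decreasing, the same log-concavity estimate), and show $s^*$ decreases by reading off the rewritten defining equation; this unifies the two cases and avoids quoting \cref{lem:rho_mono}. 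For $\pi$, the paper argues cleverly at the level of ratios: $1-\pi\downarrow$ and $\rho^*\uparrow$ (from \cref{lem:rho_mono}) make $a_-$ increase for free, then the equation forces $g(a_+)/a_+$ to decrease, and the monotonicity of $g(x)/x$ gives $a_+$ decreasing. You do implicit differentiation directly, deriving $\sigma' D = h(a_-)-h(a_+)$ with $h(x)=g(x)-xg'(x)$ and $h'(x)=-xg''(x)<0$, which gives $\sigma'>0$ (an independent re-derivation of \cref{lem:rho_mono}(c)), and then proving the sharper estimate $\sigma'<a_+$ by a termwise-positive rearrangement $(a_++a_-)g'(a_-)+8\norm{\bmu}_2^2\delta\sigma a_+ + (g(a_+)-g(a_-))>0$. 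This is genuinely different from the paper's ratio argument, slightly heavier in computation but more self-contained — you don't invoke \cref{lem:rho_mono} as a black box. I verified your identity $g_2'=2g_1$, the formula for $g''$, and the required sign facts; they all check out. One small caveat: an appeal to Bagnoli--Bergstrom/Pr\'ekopa yields log-concavity of $g_1$ but not its \emph{strict} form without an extra remark; the direct proof that $\phi(x)g_1(x) - \Phi(x)^2 < 0$ (the paper's $r(x)>0$) does give strictness, so you should cite or reprove that estimate rather than relying only on the general theorem.
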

\begin{proof}
When $\tau = 1$, the above equations reduce to
\begin{align}
	\beta_0 = \, & \frac{1}{2} \left( g_1^{-1} \left( \frac{\rho}{2 (1 - \pi) \norm{\bmu}_2 \delta} \right) - g_1^{-1} \left( \frac{\rho}{2 \pi \norm{\bmu}_2 \delta} \right) \right), 
    \label{eq:beta0_tau=1}
    \\
	\kappa = \, & \frac{1}{2} \left( g_1^{-1} \left( \frac{\rho}{2 (1 - \pi) \norm{\bmu}_2 \delta} \right) + g_1^{-1} \left( \frac{\rho}{2 \pi \norm{\bmu}_2 \delta} \right) + 2 \rho \norm{\bmu}_2 \right).
    \notag
\end{align}
Clearly $\beta_0^* < 0$, since $g_1^{-1}$ is an increasing function and $\pi < \frac12$.

\vspace{0.5\baselineskip}
\noindent
\textbf{(a) $\boldsymbol{\uparrow}$ in $\norm{\mu}_2$:}
Fixing $\pi$ and $\delta$, taking derivative with respect to $\norm{\bmu}_2$ in \cref{eq:beta0_tau=1}, we have
	\begin{equation*}
		\frac{\d \beta_0}{\d \norm{\bmu}_2} = \frac{1}{2} \left( \frac{1}{2 (1 - \pi) \delta} \cdot (g_1^{-1})' \left( \frac{\rho}{2 (1 - \pi) \norm{\bmu}_2 \delta} \right) - \frac{1}{2 \pi \delta} \cdot (g_1^{-1})' \left( \frac{\rho}{2 \pi \norm{\bmu}_2 \delta} \right) \right) \cdot \frac{\d}{\d \norm{\bmu}_2} \left( \frac{\rho}{\norm{\bmu}_2} \right).
	\end{equation*}
	Since $\pi < \frac12$, from \cref{lem:g_prime_monotone} we know that
	\begin{equation*}
		\frac{1}{2 (1 - \pi) \delta} \cdot (g_1^{-1})' \left( \frac{\rho}{2 (1 - \pi) \norm{\bmu}_2 \delta} \right) - \frac{1}{2 \pi \delta} \cdot (g_1^{-1})' \left( \frac{\rho}{2 \pi \norm{\bmu}_2 \delta} \right) < 0.
	\end{equation*}
	According to \cref{lem:rho_mono}, if we increase $\norm{\bmu}_2$, then $\rho$ will increase, and \cref{eq:reb_sys_eq_rho} implies that $\rho / \norm{\bmu}_2$ will decrease. Hence,
	\begin{equation*}
		\frac{\d}{\d \norm{\bmu}_2} \left( \frac{\rho}{\norm{\bmu}_2} \right) < 0.
	\end{equation*}
	Combining the above inequalities, we know that $\d \beta_0 / \d \norm{\bmu}_2 > 0$.

\vspace{0.5\baselineskip}
\noindent
\textbf{(b) $\boldsymbol{\uparrow}$ in $\delta$:}
Similarly, according to \cref{eq:reb_sys_eq_rho} and \cref{lem:rho_mono}, for fixed $\pi$ and $\norm{\bmu}_2$, we can show that $\rho / \delta$ will decrease if $\delta$ increases. By same approach as (a), we can conclude $\d \beta_0 / \d \delta > 0$.

\vspace{0.5\baselineskip}
\noindent
\textbf{(c) $\boldsymbol{\uparrow}$ in $\pi \in (0, \frac12)$:}
Lastly, we note that if $\pi \in (0, \frac12)$ increases, then $1 - \pi$ will decrease and $\rho$ will increase. According to \cref{lem:g_monotone}, we know that
\begin{equation*}
    \frac{(1 - \pi) \delta}{\rho} \cdot g \left( \frac{\rho}{2(1 - \pi) \norm{\bmu}_2 \delta} \right)
\end{equation*}
will increase. Since $(1 - \rho^2)/\rho$ will decrease, combining with \cref{eq:reb_sys_eq_rho}, we can show that
\begin{equation*}
    \frac{\pi \delta}{\rho} \cdot g \left( \frac{\rho}{2 \pi \norm{\bmu}_2 \delta} \right)
\end{equation*}
will decrease. By \cref{lem:g_monotone} again, we conclude that $\rho / (1 - \pi)$ will increase and $\rho / \pi$ will decrease, which implies that $\beta_0$ \cref{eq:beta0_tau=1} will increase. This completes the proof.
\end{proof}

The monotonicity of minority error is a direct consequence of the two lemmas above.
\begin{proof}[\textbf{Proof of \cref{prop:Err-_mono}}]
When $\tau = 1$, according to \cref{lem:rho_mono} and \ref{lem:beta0_mono}, both $\rho^*$ and $\beta_0^*$ are increasing in $\pi \in (0, \frac12)$, $\norm{\bmu}_2$, and $\delta$. We complete the proof by $\Err_{+}^* = \Phi \left(- \rho^* \norm{\bmu}_2 - \beta_0^* \right)$.
\end{proof}

Now we fix model parameters $\pi \in (0, \frac12)$, $\delta$, $\norm{\bmu}_2$, and consider test errors as functions of $\tau$. In order to prove \cref{prop:tau_mono}, we need the following result on the monotonicity of $\rho^*, \beta_0^*$ on $\tau$.
\begin{lem}[Dependence of $\tau$]\label{lem:tau_mono}
    Fix $\pi \in (0, \frac12)$, $\norm{\bmu}_2$, and $\delta$, then we have
    \begin{enumerate}[label=(\alph*)]
        \item \label{lem:tau_mono_rho} $\rho_0^*$ does not depend on $\tau$.
        \item \label{lem:tau_mono_beta0} $\beta_0^*$ is an increasing function of $\tau \in (0, \infty)$.
	\item \label{lem:tau_mono_kappa} $\kappa^*$ is a decreasing function of $\tau \in (0, \infty)$.
    \end{enumerate}
    As a consequence, $\Err^*_+$ is decreasing in $\tau \in (1, \infty)$, and $\Err^*_-$ is increasing in $\tau \in (1, \infty)$.
\end{lem}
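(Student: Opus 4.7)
The proof will proceed by directly exploiting the structural results already established in Lemma~\ref{lem:gordon_eq} and Corollary~\ref{cor:asymp_tau_relation}, together with the positivity of $\kappa^*$ in the separable regime.

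\smallskip

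\noindent\textbf{Part (a).} Inspecting the system \eqref{eq:reb_sys_eq_rho}--\eqref{eq:reb_sys_eq_bk2}, one observes that \eqref{eq:reb_sys_eq_rho} alone determines $\rho^*$ and involves only $(\pi, \norm{\vmu}_2, \delta)$ --- the parameter $\tau$ appears neither in this equation nor in its unique solution. This is already the content of Lemma~\ref{lem:gordon_eq}, so part~(a) follows immediately.

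\smallskip

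\noindent\textbf{Parts (b) and (c).} Corollary~\ref{cor:asymp_tau_relation} gives the explicit formulas
\[
\beta_0^*(\tau) \;=\; \beta_0^*(1) + \frac{\tau-1}{\tau+1}\,\kappa^*(1), \qquad \kappa^*(\tau) \;=\; \frac{2}{\tau+1}\,\kappa^*(1).
\]
Since we are in the separable regime $\delta < \delta^*(0)$, Theorem~\ref{thm:SVM_main}\ref{thm:SVM_main_var} guarantees $\kappa^*(1) > 0$. Differentiating,
\[
\frac{\d \beta_0^*(\tau)}{\d \tau} \;=\; \frac{2\kappa^*(1)}{(\tau+1)^2} \;>\; 0, \qquad \frac{\d \kappa^*(\tau)}{\d \tau} \;=\; -\,\frac{2\kappa^*(1)}{(\tau+1)^2} \;<\; 0,
\]
on all of $(0,\infty)$. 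This proves (b) and (c).

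\smallskip

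\noindent\textbf{Consequence.} By Theorem~\ref{thm:SVM_main}\ref{thm:SVM_main_err},
\[
\Err_+^* = \Phi\bigl(-\rho^*\norm{\vmu}_2 - \beta_0^*\bigr), \qquad \Err_-^* = \Phi\bigl(-\rho^*\norm{\vmu}_2 + \beta_0^*\bigr).
\]
Since $\rho^*$ is constant in $\tau$ by (a), monotonicity of $\Phi$ combined with (b) shows that the argument of $\Err_+^*$ is strictly decreasing in $\tau$ and the argument of $\Err_-^*$ is strictly increasing, yielding the stated monotonicity on $(0,\infty)$ (and in particular on $(1,\infty)$).

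\smallskip

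There is no substantive obstacle here: the entire statement is a straightforward corollary of the post-hoc adjustment relations in Proposition~\ref{prop:SVM_tau_relation} transferred to the asymptotic problem via Corollary~\ref{cor:asymp_tau_relation}, together with $\kappa^*(1) > 0$. The only minor point to verify carefully is that $\kappa^*(1)$ is strictly positive (not merely nonnegative) in the separable regime, which is built into the definition $\kappa^* = \sup\{\kappa \in \R : \delta^*(\kappa) \ge \delta\}$ in Theorem~\ref{thm:SVM_main}\ref{thm:SVM_main_var} together with $\delta < \delta^*(0)$ and the strict monotonicity of $\kappa \mapsto \delta^*(\kappa)$ established in Step~5 of Section~\ref{append_subsec:sep}.
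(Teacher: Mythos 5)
Your proposal is correct and follows essentially the same route as the paper: the paper proves (a) via Lemma~\ref{lem:gordon_eq}/Theorem~\ref{thm:SVM_main}\ref{thm:SVM_main_var}, gets (c) from $\kappa^* \propto (1+\tau)^{-1}$ in \eqref{eq:kappa_tau}, and gets (b) from the $\tau$-independence of $\beta_0^* + \kappa^*$ in \eqref{eq:reb_sys_eq_bk2}, which is exactly the content of Corollary~\ref{cor:asymp_tau_relation} that you differentiate explicitly (using $\kappa^*(1)>0$, which the paper also uses implicitly). The error-monotonicity consequence is handled identically via Theorem~\ref{thm:SVM_main}\ref{thm:SVM_main_err} and the monotonicity of $\Phi$.
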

\begin{proof}
    \ref{lem:tau_mono_rho} is already proved in \cref{thm:SVM_main}\ref{thm:SVM_main_var}. For \ref{lem:tau_mono_kappa}, the conclusion is followed by \cref{eq:kappa_tau}, since $\kappa^* \propto (1 + \tau)^{-1}$. For \ref{lem:tau_mono_beta0}, note that $\beta_0^* + \kappa^*$ is a fixed value according to \cref{eq:reb_sys_eq_bk2}. Then by using \ref{lem:tau_mono_kappa}, we conclude $\beta_0^*$ is increasing in $\tau$. This concludes the proof.
\end{proof}

These are consistent with the non-asymptotic monotonicity between $(\hat\rho, \hat\beta_0, \hat\kappa)$ and $\tau$ in \cref{prop:SVM_tau_relation}. Then the monotonicity of test errors is a direct consequence of \cref{lem:tau_mono}.

\begin{proof}[\textbf{Proof of \cref{prop:tau_mono}}]
According to \cref{lem:tau_mono}\ref{lem:tau_mono_rho}\ref{lem:tau_mono_beta0}, we know that $-\rho^* \norm{\bmu}_2 + \beta_0^*$ is increasing in $\tau$ and $-\rho^* \norm{\bmu}_2 - \beta_0^*$ is decreasing in $\tau$. This completes the proof.
\end{proof}

\subsection{Proofs of \cref{prop:tau_optimal} and \ref{prop:Err_monotone}}
\label{subsec:tau_optimal}

\begin{proof}[\textbf{Proof of \cref{prop:tau_optimal}}]
    Recall that
    \begin{equation*}
        \Err_\mathrm{b}^* = \frac12 \Bigl( 
        \Phi\left(- \rho^* \norm{\bmu}_2 - \beta_0^* \right) + \Phi\left(- \rho^* \norm{\bmu}_2  + \beta_0^* \right)
        \Bigr).
    \end{equation*}
    Notice that $\rho^*$ does not depend on $\tau$, and $\rho^*\norm{\bmu}_2 > 0$. We first show that $\tau = \tau^\mathrm{opt}$ if and only if $\beta_0^* = 0$. Then is suffices to show that for any fixed $a > 0$, function
    \begin{equation*}
        f(x) := \Phi(-a + x) + \Phi(-a - x), \qquad x \in \R
    \end{equation*}
    has unique minimizer $x = 0$. This is true by observing $f'(x) = \phi(-a + x) - \phi(-a - x) < 0$ for all $x < 0$, and $f'(x) > 0$ for all $x > 0$. Hence we conclude $\beta_0^* = 0$ and $\Err_+^* = \Err_-^* = \Err_\mathrm{b}^*$.

    Setting $\beta_0 = 0$ in \cref{eq:beta0_tau} and solving for $\tau$, we get \cref{eq:tau_opt}. This completes the proof.
\end{proof}

As stated in \cref{rem:tau_pi}, when $\norm{\bmu}_2$, $\delta$ are fixed and $\pi$ is small, the numerator of $\tau^\mathrm{opt}$ scales as $\sqrt{1/\pi}$. We formally prove this in the following lemma.

\begin{lem}
    When $\pi = o(1)$, we have
    \begin{equation*}
        g_1^{-1} \left( \dfrac{\rho^*}{2 \pi \norm{\bmu}_2 \delta} \right) + \rho^* \norm{\bmu}_2  \sim  \frac{1}{\sqrt{\pi \delta}}.
    \end{equation*}
\end{lem}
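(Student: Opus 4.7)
The plan is to use the defining equation \eqref{eq:reb_sys_eq_rho} for $\rho^*$ together with the asymptotics of $g_1, g_2$ collected in \Cref{lem:g1_g2_g}. Write $a_1 := \rho^*/(2\pi\norm{\bmu}_2\delta)$ and $a_2 := \rho^*/(2(1-\pi)\norm{\bmu}_2\delta)$, so that \eqref{eq:reb_sys_eq_rho} reads $\pi\delta\,g(a_1) + (1-\pi)\delta\,g(a_2) = 1-\rho^{*2}$. The strategy is first to locate the order of $\rho^*$, then to read off the leading asymptotics of $y := g_1^{-1}(a_1)$, and finally to check that the $\rho^*\norm{\bmu}_2$ term is negligible.

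\textbf{Step 1 (size of $\rho^*$).} I will show $\rho^* = O(\sqrt{\pi})$, and in particular $\rho^* \to 0$. Dropping the nonnegative second term in \eqref{eq:reb_sys_eq_rho} gives $\pi\delta\,g(a_1) \le 1$. If $\rho^*$ were bounded away from $0$ along a subsequence, then $a_1 \to \infty$, and $g(a_1) \sim a_1^2$ (\Cref{lem:g1_g2_g}\ref{lem:g1_g2_g_asymp}) would force $\pi\delta\, a_1^2 = \rho^{*2}/(4\pi\norm{\bmu}_2^2\delta) \to \infty$, contradiction. The same inequality, sharpened, gives $a_1 \le C/\sqrt{\pi\delta}$ and hence $\rho^* \le 2C\norm{\bmu}_2\sqrt{\pi\delta}$.

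\textbf{Step 2 (exact asymptotic of $\rho^*$ and $y$).} Since $\rho^*\to 0$, we have $a_2 \to 0$ and therefore $g(a_2)\to 0$ by $g(0^+)=0$ (\Cref{lem:g1_g2_g}(a)); so $(1-\pi)\delta\,g(a_2) = o(1)$. Combined with $\rho^{*2}=O(\pi)=o(1)$, equation \eqref{eq:reb_sys_eq_rho} collapses to $\pi\delta\,g(a_1) = 1 + o(1)$. The relation $g(x)\sim x^2$ as $x\to\infty$ (which in turn forces $a_1\to\infty$, as otherwise $g(a_1)$ would remain bounded while $\pi\delta\to 0$) then yields $a_1 \sim 1/\sqrt{\pi\delta}$. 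Consequently $y = g_1^{-1}(a_1) \sim a_1 \sim 1/\sqrt{\pi\delta}$, using $g_1(t)\sim t$ as $t\to\infty$; and $\rho^* = 2\pi\norm{\bmu}_2\delta\cdot a_1 \sim 2\norm{\bmu}_2\sqrt{\pi\delta}$.

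\textbf{Step 3 (conclusion).} Multiplying by $\sqrt{\pi\delta}$, Step 2 gives $y\sqrt{\pi\delta} \to 1$, while $\rho^*\norm{\bmu}_2\sqrt{\pi\delta} \sim 2\norm{\bmu}_2^2\,\pi\delta \to 0$. Adding the two,
\[
\Bigl(g_1^{-1}\!\!\left(\tfrac{\rho^*}{2\pi\norm{\bmu}_2\delta}\right) + \rho^*\norm{\bmu}_2\Bigr)\sqrt{\pi\delta} \;\longrightarrow\; 1,
\]
which is exactly the claimed equivalence. The only subtle point is to make the approximation $g(a_1)\sim a_1^2$ effective enough to derive $a_1\sim 1/\sqrt{\pi\delta}$ (not merely $a_1 \asymp 1/\sqrt{\pi\delta}$); this follows from the two-sided asymptotic $g(x)/x^2 \to 1$ supplied by \Cref{lem:g1_g2_g}\ref{lem:g1_g2_g_asymp} together with the fact that $\pi\delta\,g(a_1) \to 1$ pins the ratio. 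There is no serious obstacle beyond carefully tracking these leading-order constants.
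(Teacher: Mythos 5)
Your proof is correct and follows essentially the same route as the paper's: both establish $\rho^*\to 0$ from \eqref{eq:reb_sys_eq_rho}, reduce that equation to $\pi\delta\,g(a_1)\to 1$, invert via $g(x)\sim x^2$ to get $a_1\sim 1/\sqrt{\pi\delta}$, and then use $g_1^{-1}(a)\sim a$. You are slightly more explicit than the paper in two places — the quantitative bound $\rho^*=O(\sqrt{\pi})$ and the final check that $\rho^*\norm{\bmu}_2$ is negligible compared with $1/\sqrt{\pi\delta}$ — but these are small refinements of the same argument rather than a different approach.
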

\begin{proof}
    By \cref{lem:rho_mono}, $\rho^*$ is monotone increasing in $\pi \in (0, \frac12)$. It can be easily shown that $\rho^* \to 0$ as $\pi \to 0$. Otherwise, suppose $\rho^* \to \underline{\rho} > 0$ as $\pi \to 0$, then by \cref{lem:g1_g2_g}\ref{lem:g1_g2_g_asymp} 
\[ 
    \pi \delta \cdot g \left( \frac{\rho^*}{2 \pi \norm{\bmu}_2 \delta} \right)
    \sim  \pi \delta \cdot \left( \frac{\underline{\rho}}{2 \pi \norm{\bmu}_2 \delta} \right)^2 \propto \frac{1}{\pi} \to \infty,
\]
while the other terms in \cref{eq:reb_sys_eq_rho} are all finite, which is a contradiction. Substitute $\rho^* \to 0$ into \cref{eq:reb_sys_eq_rho},
\[ 
g \left( \frac{\rho^*}{2 \pi \norm{\bmu}_2 \delta} \right) \sim \frac{1}{\pi\delta} \to \infty
\qquad \Longrightarrow \qquad
\frac{\rho^*}{2 \pi \norm{\bmu}_2 \delta} \sim \frac{1}{\sqrt{\pi\delta}}.
\]
The proof is complete by using \cref{lem:g1_g2_g}\ref{lem:g1_g2_g_asymp} again.
\end{proof}
\begin{rem}
    We notice that when $\pi$ is very small or $\norm{\bmu}_2$, $\delta$ are very large, then $\rho^*$ is close to $0$ and the denominator of $\tau^\mathrm{opt}$ can be zero or negative, leading $\tau^\mathrm{opt}$ infinity of negative. According to \cref{fig:SVM_cartoon}, this happens when the optimal decision boundary (the red solid line) falls on or under the margin of majority class (the black dashed line below with negative support vectors). In such cases, we have $\tau < -1$ and the training error for majority class is nonzero.

    Actually, our theory remains valid when $\tau < -1$. When $\tau < -1$, one can modify the objective of \cref{eq:SVM-m-reb} to minimizing $\kappa$ (since $\kappa < 0$ and $\tau \kappa > 0$), then the relation \cref{eq:margin-balance} in \cref{prop:SVM_tau_relation} still holds. For the asymptotic problem, one can similarly modify the variational problem \cref{eq:SVM_variation}. Then one may extend \cref{thm:SVM_main} to negative $\tau$ by relating \cref{eq:margin-balance} to \eqref{eq:asymp_tau_relation}, where \cref{eq:asymp_tau_relation} is derived from \cref{eq:reb_sys_eq_rho}---\eqref{eq:reb_sys_eq_bk2}, which also admits a unique solution when $\tau < -1$.
\end{rem}

Finally, prove the monotonicity of test errors after margin rebalancing.

\begin{proof}[\textbf{Proof of \cref{prop:Err_monotone}}]
    According to \cref{prop:tau_optimal}, $\Err_+^* = \Err_-^* = \Err_\mathrm{b}^* = \Phi(- \rho^* \norm{\bmu}_2 )$. Since $\rho^*$ is increasing in $\pi \in (0, \frac12)$, $\norm{\bmu}_2$, and $\delta$ by \cref{lem:rho_mono}, the proof is complete.
\end{proof}

\subsection{Technical lemmas}

Some technical results used in the proof are summarized below.

\begin{lem}\label{lem:g_monotone}
The function $g_2(x) / g_1(x)$ is increasing in $x$. This implies $g(x) / x$ is increasing in $x$, and $x \cdot g(1/x)$ is decreasing in $x$.
\end{lem}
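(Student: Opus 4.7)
My plan is to first compute $(g_2/g_1)'$ in closed form, simplify it via a Stein-type identity, and then reduce to a classical moment inequality for log-concave distributions. From $\phi'(t)=-t\phi(t)$, direct differentiation (or integration by parts) gives $g_1'(x)=\Phi(x)$ and $g_2'(x)=2g_1(x)$, so
\[
(g_2/g_1)'(x) \;=\; \frac{2g_1(x)^2 - g_2(x)\,\Phi(x)}{g_1(x)^2},
\]
and the claim reduces to $2g_1^2 \ge g_2\Phi$.

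Next I will exploit the identity $g_2(x)=xg_1(x)+\Phi(x)$, which follows because $\tfrac{d}{dx}(g_2-xg_1)=2g_1-g_1-x\Phi=\phi$ and both sides vanish at $x=-\infty$. Substituting, the bound becomes $2g_1^2-xg_1\Phi-\Phi^2 \ge 0$. Dividing by $\Phi^2$ and writing $\lambda:=\phi/\Phi$ and $\mu:=g_1/\Phi=x+\lambda$, this is equivalent to the Mills-ratio inequality
\[
\mu(\mu+\lambda) \;=\; (x+\lambda)(x+2\lambda) \;\ge\; 1.
\]

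To establish the latter I will use a probabilistic interpretation. Let $T\sim\normal(x,1)$; conditionally on $T>0$, the variable has mean $\mu$ and variance $\sigma^2 = 1-\lambda\mu$, so the inequality is exactly $\sigma^2 \le \mu^2$. The conditional density is proportional to $e^{-(t-x)^2/2}\ind_{\{t>0\}}$, which is strictly log-concave on $(0,\infty)$, hence the distribution has increasing failure rate (IFR). By a classical reliability-theory bound (Barlow--Proschan), every IFR variable $S\ge 0$ satisfies $\E[S^2]\le 2(\E S)^2$, which is precisely $\sigma^2\le\mu^2$.

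Finally, the two stated corollaries are immediate. Writing $g(x)/x=(g_2/g_1)\circ g_1^{-1}(x)$, this is a composition of two increasing functions and hence increasing. For the last statement, the substitution $u=1/x$ yields $x\,g(1/x)=g(u)/u$, which is increasing in $u>0$ and therefore decreasing in $x>0$. The main obstacle is the tight Mills-ratio inequality $(x+\lambda)(x+2\lambda)\ge 1$: as $x\to-\infty$ both sides tend to $1$ with a gap of order $|x|^{-6}$, so any purely analytic proof would require delicate asymptotic tracking of Mills-ratio expansions; the log-concavity/IFR route sidesteps this difficulty.
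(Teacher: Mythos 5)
Your proof is correct, but it establishes the key inequality by a genuinely different route from the paper. Both arguments begin identically: from $g_1'=\Phi$ and $g_2'=2g_1$, monotonicity of $g_2/g_1$ reduces to $2g_1(x)^2-\Phi(x)g_2(x)\ge 0$. The paper proves this (strictly) by a short, self-contained analytic argument: the ratio $h(x)=2g_1^2/\Phi-g_2$ vanishes as $x\to-\infty$ and has positive derivative, the latter being reduced to a second function $r(x)=\Phi(x)^2/\phi(x)-g_1(x)$ which also vanishes at $-\infty$ and has derivative $\Phi(x)\bigl(1+x\Phi(x)/\phi(x)\bigr)>0$, positivity here being nothing more than $g_1>0$. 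You instead use the identity $g_2=xg_1+\Phi$ to recast the inequality as $\mu(\mu+\lambda)\ge 1$, i.e.\ as the statement that the $\normal(x,1)$ law truncated to $(0,\infty)$ has coefficient of variation at most one, and you deduce this from log-concavity $\Rightarrow$ IFR $\Rightarrow$ NBUE $\Rightarrow$ $\E[S^2]\le 2(\E S)^2$; your truncated-normal mean/variance identities and the algebraic reduction all check out, and the handling of the two corollaries (composition with $g_1^{-1}$, substitution $u=1/x$) coincides with the paper's. What each route buys: yours is more conceptual and explains the inequality via a known structural property of log-concave laws, but it outsources the crucial step to a classical reliability-theory fact; the paper's proof is elementary, self-contained, and delivers the strict inequality $2g_1^2-\Phi g_2>0$, which is actually reused later (the strict comparison $h(x_1)>h(x_2)$ in the proof of monotonicity of $\rho^*$ in $\pi$). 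Two small corrections: your bound as written is non-strict, so to get strict increase you should add that equality in the NBUE second-moment bound characterizes the exponential law, which the positive-truncated Gaussian is not; and your closing remark misjudges the situation --- the gap in $(x+\lambda)(x+2\lambda)\ge 1$ as $x\to-\infty$ is of order $|x|^{-4}$, not $|x|^{-6}$, and no delicate Mills-ratio asymptotics are needed for an analytic proof, as the paper's argument only uses boundary values at $-\infty$ and sign of derivatives.
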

\begin{proof}
	By direct calculation, we have
	\begin{align*}
		g_2'(x) g_1(x) - g_2(x) g_1'(x) = 2 \left( \E[(G + x)_+] \right)^2 - \Phi(x) \E[(G+x)_+^2].
	\end{align*}
	It suffices to show that
	\begin{equation*}
		h(x) := \frac{2 \left( \E[(G + x)_+] \right)^2}{\Phi(x)} - \E[(G+x)_+^2] > 0, \qquad  \forall\, x \in \R.
	\end{equation*}
	To this end, note that $\lim_{x \to - \infty} h(x) = 0$, and that
	\begin{equation*}
		h'(x) = 2 \E [(G + x)_+] \left( 1 - \frac{\E [(G + x)_+] \phi(x)}{\Phi(x)^2} \right).
	\end{equation*}
	Hence, one only need to show that $h'(x) > 0$, $\forall \, x \in \R$, namely
	\begin{equation*}
		r(x) := \frac{\Phi(x)^2}{\phi(x)} - \E [(G + x)_+] > 0.
	\end{equation*}
	Notice again that $\lim_{x \to -\infty} r(x) = 0$, and
	\begin{equation*}
		r'(x) = \Phi(x) \left( 1 + \frac{x \Phi(x)}{\phi(x)} \right) > 0
	\end{equation*}
	by Mill's ratio, thus we finally conclude that $r (x) > 0$ for any $x \in \R$. Consequently, $g_2 (x) / g_1 (x)$ is increasing in $x$.

    By change of variable $y = g_1(x)$, we show that $g_2 (x)/g_1 (x) = g(y)/y$ is increasing in $y$.
\end{proof}

\begin{lem}\label{lem:g_prime_monotone}
	The function $x \mapsto x \cdot (g_1^{-1})' (x)$ is monotone increasing.
\end{lem}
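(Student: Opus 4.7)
The plan is to reparametrize via $y = g_1(x)$ and reduce the claim to a standard inequality about the inverse Mills ratio. Since $g_1'(x) = \Phi(x) > 0$, the inverse function theorem gives $(g_1^{-1})'(y) = 1/\Phi(g_1^{-1}(y))$, so
\[
y \cdot (g_1^{-1})'(y) \;=\; \frac{g_1(x)}{\Phi(x)} \;=\; \frac{x\Phi(x) + \phi(x)}{\Phi(x)} \;=\; x + \frac{\phi(x)}{\Phi(x)} \;=:\; f(x),
\]
where $x = g_1^{-1}(y)$. Because $y \mapsto x = g_1^{-1}(y)$ is strictly increasing, it suffices to show that $f$ is increasing in $x$ on $\R$.

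Next I would compute $f'(x)$. Using $\phi'(x) = -x\phi(x)$,
\[
f'(x) \;=\; 1 + \frac{-x\phi(x)\Phi(x) - \phi(x)^2}{\Phi(x)^2} \;=\; \frac{\Phi(x)^2 - x\phi(x)\Phi(x) - \phi(x)^2}{\Phi(x)^2},
\]
so the problem reduces to showing $h(x) := \Phi(x)^2 - x\phi(x)\Phi(x) - \phi(x)^2 \ge 0$ for all $x \in \R$.

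I would then verify this by a monotonicity-and-limit argument. A direct differentiation, again using $\phi'(x) = -x\phi(x)$, gives
\[
h'(x) \;=\; \phi(x)\bigl[(1 + x^2)\Phi(x) + x\phi(x)\bigr] \;=\; \phi(x)\, g_2(x),
\]
which is strictly positive since $g_2 = \E[(G+x)_+^2] > 0$ and $\phi > 0$. Combined with the boundary computation $h(-\infty) = 0$ (which follows from the Mills-ratio asymptotic $\Phi(x) \sim \phi(x)/|x|$ as $x \to -\infty$, making each of the three terms decay to $0$), this yields $h(x) > 0$ for all $x \in \R$, hence $f'(x) > 0$, completing the proof.

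The only mildly delicate point is the limit $h(-\infty) = 0$, since the three terms in $h$ each individually tend to $0$ but the sign of $h$ a priori could be negative; the Mills-ratio asymptotic resolves this cleanly, and combined with the exact identity $h' = \phi \cdot g_2$ (which is the main algebraic content of the proof), the conclusion is forced. Everything else is a short computation.
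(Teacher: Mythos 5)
Your proof is correct and follows essentially the same route as the paper: reparametrize by $y=g_1^{-1}(x)$, reduce to showing $g_1/g_1' = x + \phi(x)/\Phi(x)$ is increasing, and establish positivity of the resulting numerator by a boundary-limit-plus-positive-derivative argument (your $h(x)$ equals $\phi(x)\,r(x)$, where $r$ is the auxiliary function whose positivity the paper cites from the proof of the preceding lemma). The only difference is cosmetic: the paper reuses $r>0$ (proved there via the Mills ratio, i.e.\ $g_1>0$), while you reprove it self-containedly via the identity $h'=\phi\, g_2$ and $h(-\infty)=0$, both of which check out.
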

\begin{proof}
	Let $x = g_1 (y)$, then we know that
	\begin{equation*}
		x \cdot (g_1^{-1})' (x) = \frac{g_1 (y)}{g_1' (y)}.
	\end{equation*}
	Since $y$ is increasing in $x$, it suffices to show that $g_1 (y) / g_1' (y)$ is increasing in $y$. Note that
	\begin{equation*}
		\frac{\d}{\d y} \left( \frac{g_1 (y)}{g_1' (y)} \right) = \frac{g_1' (y)^2 - g_1 (y) g_1'' (y)}{g_1' (y)^2} = \frac{\phi(y) r(y)}{g_1' (y)^2},
	\end{equation*}
	where the function $r(y)$ is defined in the proof of \cref{lem:g_monotone}, and we know that $r (y) > 0$ for all $y \in \R$. Therefore, $g_1 (y) / g_1' (y)$ is increasing. This completes the proof. 
\end{proof}
\section{Margin rebalancing in high imbalance regime: Proof of \cref{thm:main_high-imbal}}
\label{append_sec:high_imb}

Without loss of generality, we may consider the following case as a substitute of \cref{setup-high-imbalance}:
\begin{equation*}
\pi = d^{-a}, \qquad \norm{\vmu}_2^2 = d^b, \qquad n = d^{c+1}.
\end{equation*}

\noindent
Consider a linear classifier based on $f(\xx) = \< \xx, \vbeta \> + \beta_0$ with $\norm{\vbeta}_2 = 1$. Denote projection matrices
\begin{equation*}
    \bP_{\vmu} := \frac{1}{\norm{\vmu}_2^2} \vmu \vmu^\top,
    \qquad 
    \bP_{\vmu}^\perp := \bI_d - \frac{1}{\norm{\vmu}_2^2} \vmu \vmu^\top,
\end{equation*}
where $\bP_{\vmu}$ is the orthogonal projection onto $\spann\{ \vmu \}$ and $\bP_{\vmu}^\perp$ is the orthogonal projection onto the orthogonal complement of $\spann\{ \vmu \}$. Then we define auxiliary parameters
\begin{equation}\label{eq:def-rho-theta}
    \rho := \left\< \vbeta, \frac{\vmu}{\norm{\vmu}_2} \right\>,
    \qquad
    \vtheta := 
    \begin{cases} 
        \, \dfrac{\bP_{\vmu}^\perp \vbeta}{\|\bP_{\vmu}^\perp \vbeta\|_2}
        = \dfrac{\bP_{\vmu}^\perp \vbeta}{\sqrt{1 - \rho^2}} , & \ \text{if} \ \abs{\rho} < 1, \\
        \, \vmu_\perp,         & \ \text{if} \ \abs{\rho} = 1, 
    \end{cases}
\end{equation}
where $\vmu_\perp \in \S^{d-1}$ is some deterministic vector such that $\vmu_\perp \perp \vmu$.
Therefore, we have the following decomposition:
\begin{equation*}
    \vbeta = \bP_{\vmu} \vbeta + \bP_{\vmu}^\perp \vbeta 
    = \rho\frac{\vmu}{\norm{\vmu}_2} + \sqrt{1 - \rho^2} \vtheta.
\end{equation*}
Note that $\norm{\vtheta}_2 = 1$, $\vtheta \perp \vmu$, and there exists a one-to-one correspondence\footnote{
    In fact, this one-to-one mapping $\vbeta \mapsto (\rho, \vtheta)$ is restricted to $\S^{d-1} \to \Theta_{\rho,\vtheta}$, where the range is $\Theta_{\rho,\vtheta} :=  \{(\rho, \vtheta): \rho \in (-1, 1), \norm{\vtheta}_2 = 1, \vtheta \perp \vmu \} \cup \{ (\rho, \vtheta): \rho = \pm1, \vtheta = \vmu_\perp \} $. However, for simplicity, we can expand the parameter space of $(\rho, \vtheta)$ into $\{(\rho, \vtheta): \rho \in [-1, 1], \norm{\vtheta}_2 = 1, \vtheta \perp \vmu \}$. This is because if $\rho = \pm 1$, we have $\bP_{\vmu}^\perp \vbeta = \bzero$, and $\sqrt{1 - \rho^2} \vtheta = \bzero$ for any $\vtheta$. We will see that $\vtheta$ always appears in the form of $\sqrt{1 - \rho^2} \vtheta$ (for example, in the decomposition of $\vbeta$, and the expression of $\kappa_i$ and $\kappa$). That also explains why we can take $\vmu_\perp$ arbitrarily in \cref{eq:def-rho-theta}.
} between $\vbeta$ and $(\rho, \vtheta)$. Therefore, the logit margin of $f(\xx)$ for the $i$-th data point $(\xx_i, y_i)$ can be reparametrized as
\begin{align}
        \kappa_i  & =  \kappa_i(\vbeta, \beta_0) 
        : = \wt y_i ( \< \xx_i, \vbeta \> + \beta_0 )  \nonumber \\
        & = s_i y_i \left( \Bigl\< y_i \bmu + \zz_i, \rho\frac{\vmu}{\norm{\vmu}_2} + \sqrt{1 - \rho^2}\vtheta  \Bigr\>  + \beta_0 \right) \nonumber \\
        & = s_i \left( \rho \norm{\vmu}_2 + y_i\beta_0 + \rho y_i g_i + \sqrt{1 - \rho^2} y_i \< \zz_i, \vtheta \> \right) 
        = : \kappa_i(\rho, \vtheta, \beta_0),
        \label{eq:logits}
\end{align}
where $\zz_i \sim \subGind(\bzero, \bI_n; K)$ according to \cref{def:subgauss}, $K > 0$ is some absolute constant, and
\begin{equation*}
    s_i := \begin{cases} 
    \, \tau^{-1}, & \ \text{if} \ y_i = +1, \\
    \, 1,         & \ \text{if} \ y_i = -1, \end{cases}
\qquad
    g_i := \left\langle \zz_i, \frac{\vmu}{\norm{\vmu}_2} \right\rangle,
\end{equation*}
where $\vg := (g_1, \dots, g_n)^\top \sim \subGind(\bzero, \bI_n; K)$ by \cref{lem:subG}\ref{lem:subG-b}. Therefore, the margin (in \cref{eq:margin}) of $f(\xx)$ can be viewed as function $(\vbeta, \beta_0) \mapsto \kappa$ or $(\rho, \vtheta, \beta_0) \mapsto \kappa$ based on different parametrization:
\begin{equation}\label{eq:margin_reparam}
    \begin{aligned}
        \kappa & = \mathmakebox[\widthof{$\kappa(\rho, \vtheta, \beta_0)$}][l]{\kappa(\vbeta, \beta_0)} = \min_{i \in [n]} \kappa_i(\vbeta, \beta_0) \\
        & = \kappa(\rho, \vtheta, \beta_0) = \min_{i \in [n]} \kappa_i(\rho, \vtheta, \beta_0).
    \end{aligned}
\end{equation}
As a consequence, the max-margin optimization problem \cref{eq:SVM} or \eqref{eq:SVM-m-reb} can be expressed as
\begin{equation}
	\label{eq:SVM-rho_theta}
    \begin{array}{rl}
    \maximize\limits_{ \rho, \beta_0 \in \R, \vtheta \in \R^{d} } & 
    \kappa(\rho, \vtheta, \beta_0), \\
    \text{subject to} 
	& \rho \in [-1, 1],  
    \vphantom{\maximize\limits_{ \rho }}  \\ 
        & \norm{\vtheta}_2 = 1,  \ \  
    \vtheta \perp \vmu,
    \end{array}
\end{equation}
where
\begin{equation*}
    \kappa(\rho, \vtheta, \beta_0) = 
    \min\limits_{i \in [n]} s_i \left( \rho \norm{\vmu}_2 + y_i\beta_0 + \rho y_i g_i + \sqrt{1 - \rho^2} y_i \< \zz_i, \vtheta \> \right).
\end{equation*}
Recall that $(\hat\vbeta$, $\hat\beta_0)$ is the max-margin solution to \cref{eq:SVM}, and the maximum margin is given by
\begin{equation}\label{eq:max-margin}
    \hat\kappa = \kappa(\hat\vbeta, \hat\beta_0) = \min_{i \in [n]} \kappa_i(\hat\vbeta, \hat\beta_0).
\end{equation}
Similarly, we can also reparametrize $\hat\vbeta$ as in \cref{eq:def-rho-theta}:
\begin{equation}\label{eq:def-rho-theta_hat}
    \hat\rho := \left\< \hat\vbeta, \frac{\vmu}{\norm{\vmu}_2} \right\>,
    \qquad
    \hat\vtheta := 
    \begin{cases} 
        \, \dfrac{\bP_{\vmu}^\perp \hat\vbeta}{\|\bP_{\vmu}^\perp \hat\vbeta\|_2}
        = \dfrac{\bP_{\vmu}^\perp \hat\vbeta}{\sqrt{1 - \hat\rho^2}} , & \ \text{if} \ |\hat\rho| < 1, \\
        \, \vmu_\perp ,         & \ \text{if} \ |\hat\rho| = 1.
    \end{cases}
\end{equation}
Then, $(\hat\rho, \hat\vtheta, \hat\beta_0)$ is the optimal solution to \cref{eq:SVM-rho_theta}\footnote{
    According to \cref{eq:def-rho-theta_hat} and \cref{prop:SVM_tau_relation}, for linearly separable data, $(\hat\rho, \hat\beta_0)$ is the unique solution to \cref{eq:SVM-rho_theta}. If $|\hat\rho| < 1$, then $\hat\vtheta$ is also the unique solution to \cref{eq:SVM-rho_theta}. Otherwise, if $\hat\rho = \pm 1$, then $\sqrt{1 - \hat\rho^2} y_i \< \zz_i, \vtheta \> \equiv 0$ and thus any feasible $\vtheta$ could solve \cref{eq:SVM-rho_theta}.
}. Combining \cref{eq:margin_reparam} and \eqref{eq:max-margin}, the maximum margin can be rewritten as
\begin{equation}\label{eq:max-margin1}
    \hat\kappa = \kappa(\hat\rho, \hat\vtheta, \hat\beta_0) = \min_{i \in [n]} \kappa_i(\hat\rho, \hat\vtheta, \hat\beta_0),
\end{equation}
which is also the optimal objective value of \cref{eq:SVM-rho_theta}. Finally, we define a few quantities:
\begin{gather*}
    \bar g_{+} :=  \frac{1}{n_+}\sum_{i \in \mathcal{I}_+} g_i, 
    \qquad
    \bar g_{-} :=  \frac{1}{n_-}\sum_{i \in \mathcal{I}_-} g_i, 
    \qquad
    \wt g := \frac{\bar g_{+} - \bar g_{-}}{2},
    \\
    \bar \zz_{+} := \frac{1}{n_+}\sum_{i \in \mathcal{I}_+} \zz_i,     
    \qquad
    \bar \zz_{-} := \frac{1}{n_-}\sum_{i \in \mathcal{I}_-} \zz_i,
    \qquad
    \wt\zz  := \frac{\bar\zz_{+} - \bar\zz_{-}}{2}.
\end{gather*}
The proof structure of \cref{thm:main_high-imbal} is as follows:
\begin{enumerate}
    \item In \cref{subsec:highimb_upper}, we provide a (stochastic) tight upper bound for the maximum margin $\hat\kappa$, and a constructed solution $(\wt\rho, \wt\vtheta, \wt\beta_0)$ which approximates $(\hat\rho, \hat\vtheta, \hat\beta_0)$ well.
    \item In \cref{subsec:highimb_asymp}, we derive the asymptotic orders of $(\hat\rho, \hat\vtheta, \hat\beta_0)$ by using $(\wt\rho, \wt\vtheta, \wt\beta_0)$.
    \item In \cref{subsec:highimb_err}, we use these asymptotics to analyze test errors and conclude \cref{thm:main_high-imbal}.
\end{enumerate}

\subsection{A tight upper bound on maximum margin: Proof of \cref{lem:upper_bound}}
\label{subsec:highimb_upper}

The following Lemma provides a data-dependent upper bound on the margin $\kappa(\vbeta, \beta_0)$ which holds for all linear classifiers with $\norm{\vbeta}_2 = 1$. The bound is tight in sense that it can be (almost) achieved by a constructed solution. Therefore, such tightness ensures the optimal margin $\hat\kappa$ should have the same asymptotics given by its upper bound, which also deduces the data is linearly separable with probability tending to one (as $d \to \infty$). 
Notably, \cref{prop:SVM_tau_relation} implies that $\tau$ has no effect on $\hat\vbeta$, and $\hat\kappa \propto (1 + \tau)^{-1}$ in a fixed dataset. Hence, $\tau$ simply scales the magnitude of $\hat\kappa$, and it suffices to consider $\tau = 1$ in the following lemma.
\begin{lem} \label{lem:upper_bound}
    Fix $\tau = 1$. Denote
\begin{equation} \label{eq:kappa_upper}
    \bar\kappa := \sqrt{ ( \norm{\vmu}_2 + \wt g )^2 + \| \bP_{\vmu}^\perp \wt\zz \|_2^2 }.
\end{equation}
    \begin{enumerate}[label=(\alph*)]
        \item 
        \label{lem:upper_bound_a}
        (Upper bound) $\kappa(\rho, \vtheta, \beta_0) \le \bar\kappa$, for any $\rho \in [-1, 1]$, $\vtheta \in \S^{d-1}$, $\vtheta \perp \vmu$, $\beta_0 \in \R$. Moreover,
        \begin{equation*}
            \bar\kappa = \bigl(1 + o_\P(1)\bigr) \sqrt{  d^{b} +  \frac{1}{4} d^{a-c} },
        \end{equation*}
        as $d \to \infty$.
        \item
        \label{lem:upper_bound_b}
        (Tightness) $\kappa(\wt\rho, \wt\vtheta, \wt\beta_0) \ge \bar\kappa - \wt O_{\P}(1)$, where
        \begin{equation} \label{eq:param_star}
            \begin{gathered}
                \wt\rho := \dfrac{ \norm{\vmu}_2 + \wt g }{\sqrt{ ( \norm{\vmu}_2 + \wt g )^2 + \| \bP_{\vmu}^\perp \wt\zz \|_2^2 } },
                \qquad
                \wt\vtheta := \frac{\bP_{\vmu}^\perp \wt\zz}{\| \bP_{\vmu}^\perp \wt\zz \|_2},
                \\
                \wt\beta_0 := - \wt\rho \cdot \frac{\bar g_+ + \bar g_-}{2} - \sqrt{1 - \wt\rho^2} \cdot \left\< \frac{\bar\zz_{+} + \bar\zz_{-}}{2}, \wt\vtheta \right\>
            \end{gathered}
        \end{equation}
        is a feasible solution to \cref{eq:SVM-rho_theta}.
        \item
        \label{lem:upper_bound_c}
        (Asymptotics of $\hat\kappa$) As a consequence, the data is linearly separable with high probability, and the maximum margin satisfies
        $\bar\kappa - \wt O_{\P}(1) \le \hat\kappa \le \bar\kappa$.
    \end{enumerate}
\end{lem}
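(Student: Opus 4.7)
The plan is to prove the three parts in sequence, with part \ref{lem:upper_bound_b} carrying the main technical weight. First, for part \ref{lem:upper_bound_a}, I would fix any feasible triple $(\rho, \vtheta, \beta_0)$ and average the logit margins $\kappa_i(\rho, \vtheta, \beta_0)$ from \cref{eq:logits} (with $s_i \equiv 1$) separately over the two classes:
\begin{align*}
    \tfrac{1}{n_+} \sum_{i \in \mathcal{I}_+} \kappa_i & = \rho \norm{\vmu}_2 + \beta_0 + \rho \bar g_+ + \sqrt{1-\rho^2} \langle \bar\zz_+, \vtheta\rangle, \\
    \tfrac{1}{n_-} \sum_{i \in \mathcal{I}_-} \kappa_i & = \rho \norm{\vmu}_2 - \beta_0 - \rho \bar g_- - \sqrt{1-\rho^2} \langle \bar\zz_-, \vtheta\rangle.
\end{align*}
Since $\kappa(\rho, \vtheta, \beta_0) = \min_i \kappa_i$ is bounded above by both averages, it is bounded by their half-sum, in which the $\beta_0$ terms cancel and only $\wt g, \wt\zz$ appear:
\[
    \kappa(\rho, \vtheta, \beta_0) \le \rho (\norm{\vmu}_2 + \wt g) + \sqrt{1 - \rho^2}\, \langle \wt\zz, \vtheta\rangle.
\]
Maximizing the right-hand side over $\vtheta \perp \vmu$, $\norm{\vtheta}_2 = 1$ (Cauchy--Schwarz, giving $\norm{\bP_{\vmu}^\perp \wt\zz}_2$) and then over $\rho \in [-1, 1]$ yields exactly $\bar\kappa$. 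The asymptotic order follows from subgaussian concentration: $\var(\wt g) = \tfrac14(n_+^{-1} + n_-^{-1}) \asymp d^{a-c-1}$, while Hanson--Wright (\cref{lem:subG_concentrate}\ref{lem:subG-Hanson-Wright-I}) yields $\norm{\bP_{\vmu}^\perp \wt\zz}_2^2 = \tfrac{d-1}{4}(n_+^{-1} + n_-^{-1})(1+o_\P(1)) = (1+o_\P(1))\, d^{a-c}/4$, so $\bar\kappa = (1+o_\P(1))\sqrt{d^b + d^{a-c}/4}$.

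For part \ref{lem:upper_bound_b}, direct substitution of the defining formulas for $\wt\rho, \wt\vtheta, \wt\beta_0$ (using $\wt\rho \bar\kappa = \norm{\vmu}_2 + \wt g$, $\sqrt{1-\wt\rho^2}\bar\kappa = \norm{\bP_{\vmu}^\perp \wt\zz}_2$, and $\langle \wt\zz, \wt\vtheta\rangle = \norm{\bP_{\vmu}^\perp \wt\zz}_2$) shows that $\tfrac{1}{n_\pm} \sum_{i \in \mathcal{I}_\pm} \kappa_i(\wt\rho, \wt\vtheta, \wt\beta_0) = \bar\kappa$ exactly. Hence it remains to show $\max_i \abs{\kappa_i - \bar\kappa} = \wt O_\P(1)$, where for $i \in \mathcal{I}_+$,
\[
    \kappa_i - \bar\kappa = \tfrac{1}{\bar\kappa}\bigl[(\norm{\vmu}_2 + \wt g)(g_i - \bar g_+) + \langle \bP_{\vmu}^\perp(\zz_i - \bar\zz_+), \wt\zz\rangle\bigr]
\]
(and symmetrically on $\mathcal{I}_-$). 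The first summand is $\wt O_\P(1)$ uniformly, since $\abs{\norm{\vmu}_2 + \wt g}/\bar\kappa \le 1$ by definition of $\bar\kappa$ and $\max_i \abs{g_i - \bar g_+} = \wt O_\P(1)$ by a subgaussian maximum bound. The second summand calls for a leave-one-out decomposition: writing $\bar\zz_+ = \tfrac{n_+-1}{n_+} \bar\zz_+^{(-i)} + \tfrac{\zz_i}{n_+}$ and $\wt\zz = \wt\zz^{(-i)} + \tfrac{\zz_i}{2 n_+}$, where $\bar\zz_+^{(-i)}$ and $\wt\zz^{(-i)}$ are independent of $\zz_i$, the expansion produces four terms. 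Two of them, $\tfrac{n_+-1}{2 n_+^2} \norm{\bP_{\vmu}^\perp \zz_i}_2^2$ and $\tfrac{(n_+-1)^2}{2 n_+^2} \norm{\bP_{\vmu}^\perp \bar\zz_+^{(-i)}}_2^2$, each concentrate near $(d-1)/(2n_+) \asymp d^{a-c}$ and enter with opposite signs, so their leading-order parts cancel exactly; what remains are their Hanson--Wright fluctuations of size $\wt O_\P(\sqrt d/n_+)$. The other two terms are independent-subgaussian inner products of the form $\langle \bP_{\vmu}^\perp \zz_i, \wt\zz^{(-i)}\rangle$ and $\langle \bP_{\vmu}^\perp \bar\zz_+^{(-i)}, \zz_i\rangle/n_+$, each of magnitude $\wt O_\P(\sqrt{d^{a-c}})$ uniformly in $i$ after a union bound over $i \in [n]$. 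Combining yields $\max_i \abs{\langle \bP_{\vmu}^\perp(\zz_i - \bar\zz_+), \wt\zz\rangle} = \wt O_\P(\sqrt{d^{a-c}})$, and dividing by $\bar\kappa \ge \norm{\bP_{\vmu}^\perp \wt\zz}_2 = (1-o_\P(1))\sqrt{d^{a-c}}/2$ gives the required $\wt O_\P(1)$ bound.

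Part \ref{lem:upper_bound_c} is then immediate: $(\wt\rho, \wt\vtheta, \wt\beta_0)$ is feasible for \cref{eq:SVM-rho_theta} by construction ($\wt\rho \in [-1,1]$, $\wt\vtheta \perp \vmu$ with $\norm{\wt\vtheta}_2 = 1$), so combining parts (a) and (b) gives $\bar\kappa - \wt O_\P(1) \le \kappa(\wt\rho, \wt\vtheta, \wt\beta_0) \le \hat\kappa \le \bar\kappa$; since $\bar\kappa \to \infty$ in probability, this forces $\hat\kappa > 0$ w.h.p., i.e.~linear separability. The main obstacle lies squarely in part (b): a naive term-by-term estimate treating the two $\norm{\cdot}_2^2$ contributions as separately being of order $d^{a-c}$ would produce a deviation of order $d^{a-c}/\bar\kappa$, which is much larger than $\wt O_\P(1)$ in the moderate-signal regime $b < a - c$. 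Recognizing that the deterministic leading parts of these two terms must cancel exactly --- a fact consistent with the identity $\sum_{i \in \mathcal{I}_\pm}(\kappa_i - \bar\kappa) = 0$ forced by the class-average computation --- is what collapses the estimate to the polylogarithmic scale.
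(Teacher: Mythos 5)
Your proposal is correct, and parts \ref{lem:upper_bound_a} and \ref{lem:upper_bound_c} follow the paper's argument essentially verbatim (class-wise averaging of the logit margins, Cauchy--Schwarz in $\vtheta$ and then in $\rho$, Hanson--Wright/Hoeffding concentration for the order of $\bar\kappa$, and the sandwich $\bar\kappa - \wt O_\P(1) \le \kappa(\wt\rho,\wt\vtheta,\wt\beta_0) \le \hat\kappa \le \bar\kappa$ plus divergence of $\bar\kappa$ for separability). For part \ref{lem:upper_bound_b} you share the paper's skeleton---the class averages of $\kappa_i(\wt\rho,\wt\vtheta,\wt\beta_0)$ equal $\bar\kappa$ exactly, so it suffices to bound $\max_i \abs{\kappa_i - \bar\kappa}$, which splits into a $g$-part (trivially $\wt O_\P(1)$) and the delicate $\zz$-part---but you handle the $\zz$-part by a genuinely different device. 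The paper writes $\zz_i - \bar\zz_\pm = \tfrac{1}{n_\pm}\sum_j (\zz_i - \zz_j)$, observes that each coordinate product $(\zz_i - \zz_j)_k (\wt\zz)_k$ has mean zero (the $\tfrac{1}{2n_+}(\E[\zz_i\odot\zz_i] - \E[\zz_j\odot\zz_j])$ self-terms cancel), deduces a sub-exponential norm of order $K^2\sqrt{d/\pi n}$ for $\<\zz_i-\zz_j, \bP_\vmu^\perp\wt\zz\>$, and finishes with a sub-exponential maximal inequality over the $n_\pm^2$ pairs before dividing by $\|\bP_\vmu^\perp\wt\zz\|_2$. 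You instead use a leave-one-out decomposition of $\bar\zz_+$ and $\wt\zz$, cancel the expectations of the two quadratic self-terms exactly (indeed both equal $\tfrac{(n_+-1)(d-1)}{2n_+^2}$), control their Hanson--Wright fluctuations, and union-bound the remaining independent sub-gaussian inner products over $i\in[n]$, dividing by $\bar\kappa \ge \|\bP_\vmu^\perp\wt\zz\|_2$. Both routes hinge on the same structural cancellation, which you correctly identify as the crux (a naive triangle-inequality bound would lose a factor $d^{(a-c)/2}$ in the moderate-signal regime); the paper's pairwise trick yields a single clean sub-exponential bound per pair at the cost of a union bound over pairs, while your version keeps everything indexed by $i$ and makes the cancellation of means explicit. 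One bookkeeping point for a full write-up: your expansion as stated produces a fifth term, $\tfrac{n_+-1}{2n_+}\<\bP_\vmu^\perp\bar\zz_+^{(-i)}, \bar\zz_-\>$, if you isolate $\tfrac{(n_+-1)^2}{2n_+^2}\|\bP_\vmu^\perp\bar\zz_+^{(-i)}\|_2^2$ from $\<\bP_\vmu^\perp\bar\zz_+^{(-i)}, \wt\zz^{(-i)}\>$; it is independent-sub-gaussian of order $\wt O_\P(\sqrt{d/(n_+ n)})$ uniformly in $i$, hence negligible after dividing by $\bar\kappa$ (alternatively, keep $\<\bP_\vmu^\perp\bar\zz_+^{(-i)}, \wt\zz^{(-i)}\>$ intact, whose mean already cancels the $\|\bP_\vmu^\perp\zz_i\|_2^2$ term), so this is a small omission rather than a gap.
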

\begin{proof}
    \textbf{\ref{lem:upper_bound_a}:}
    We reparametrize $\kappa(\vbeta, \beta_0) = \kappa(\rho, \vtheta, \beta_0)$ by using \cref{eq:def-rho-theta} and \eqref{eq:logits}.
    Then, the upper bound is established by calculating the \emph{average logit margin} for each class. Let
    \begin{equation}
            \label{eq:kappa_pm}
            \begin{aligned}
                \bar\kappa_{+}(\rho, \vtheta, \beta_0) & := \frac{1}{n_+}\sum_{i \in \mathcal{I}_+} \kappa_i(\rho, \vtheta, \beta_0)  =
               \rho \norm{\vmu}_2 + \beta_0 + \rho \bar g_{+} + \sqrt{1 - \rho^2} \< \bar\zz_{+}, \vtheta \>,
                \\
                \bar\kappa_{-}(\rho, \vtheta, \beta_0) & := \frac{1}{n_-}\sum_{i \in \mathcal{I}_-} \kappa_i(\rho, \vtheta, \beta_0)  =
                \rho \norm{\vmu}_2 - \beta_0 - \rho \bar g_{-} - \sqrt{1 - \rho^2} \< \bar\zz_{-}, \vtheta \>.
            \end{aligned}
    \end{equation}
    Clearly, $\kappa(\rho, \vtheta, \beta_0) \le \bar\kappa_+(\rho, \vtheta, \beta_0)$ and $\kappa(\rho, \vtheta, \beta_0) \le \bar\kappa_-(\rho, \vtheta, \beta_0)$. By averaging these two bounds,
    \begin{align}
        \kappa(\rho, \vtheta, \beta_0) & \le \frac{\bar\kappa_{+}(\rho, \vtheta, \beta_0) + \bar\kappa_{-}(\rho, \vtheta, \beta_0)}{2} \nonumber \\
        & = 
        \rho \norm{\vmu}_2 + \rho \cdot \frac{\bar g_+ - \bar g_-}{2} + \sqrt{1 - \rho^2} \cdot \left\< \frac{\bar\zz_{+} - \bar\zz_{-}}{2}, \vtheta \right\>
        \nonumber \\
        & = \rho \left( \norm{\vmu}_2 + \wt g \right) + \sqrt{1 - \rho^2} \left< \wt\zz, \vtheta \right\> 
        \nonumber \\
        & \overset{\mathmakebox[0pt][c]{\text{(i)}}}{\le}  \rho \left( \norm{\vmu}_2 +  \wt g \right) + \sqrt{1 - \rho^2}  \| \bP_{\vmu}^\perp \wt\zz \|_2 
        \label{eq:F_AB} \\
        & \overset{\mathmakebox[0pt][c]{\text{(ii)}}}{\le} \sqrt{ ( \norm{\vmu}_2 + \wt g )^2 + \| \bP_{\vmu}^\perp \wt\zz \|_2^2 }
        = \bar\kappa,
        \nonumber
\end{align}
which leads to $\bar\kappa$ defined in \cref{eq:kappa_upper}. Here, (i) is based on the fact that
\begin{equation}\label{eq:theta_min}
    \argmax_{ \vtheta \in \R^d :  \substack{ \| \vtheta \|_2 = 1 \\ \langle \vmu, \vtheta  \rangle = 0 } } \ \langle \wt\zz, \vtheta \rangle = \frac{\bP_{\vmu}^\perp \wt\zz}{\| \bP_{\vmu}^\perp \wt\zz \|_2},
    \qquad
    \max_{ \vtheta \in \R^d :  \substack{ \| \vtheta \|_2 = 1 \\ \langle \vmu, \vtheta  \rangle = 0 } }   \langle \wt\zz, \vtheta \rangle = \| \bP_{\vmu}^\perp \wt\zz \|_2,
\end{equation}
and recall that the optimal $\vtheta$ equals $\wt\vtheta$ defined in \cref{eq:param_star}. Moreover, (ii) is a consequence of Cauchy--Schwarz inequality ($A \in \R$, $B > 0$)
\begin{equation} \label{eq:F_AB_optim}
    \begin{aligned}
        \max_{\rho \in [-1, 1]} \left\{  \rho A + \sqrt{1 - \rho^2} B  \right\}
    & = \max_{\rho \in [-1, 1]} \left\< \begin{pmatrix}
        \rho \\ \sqrt{1-\rho^2}
    \end{pmatrix}, 
    \begin{pmatrix}
        A \\ B
    \end{pmatrix} \right\> = \sqrt{A^2 + B^2}, \\
    \argmax_{\rho \in [-1, 1]} \left\{  \rho A + \sqrt{1 - \rho^2} B \right\}
    & = \frac{A}{\sqrt{A^2 + B^2}},
    \end{aligned}
\end{equation}
and also note that the optimal $\rho$ in (ii) equals $\wt\rho$ defined in \cref{eq:param_star}.

~\\
\noindent
To study the asymptotics of $\bar\kappa$, recall that $\pi = n_+/n = o(1)$, $n_- = n - n_+ = n(1 - o(1))$. Then
\begin{equation*}
    \frac{1}{n_+} + \frac{1}{n_-} = \frac{1}{\pi n } + \frac{1}{n(1 - o(1))} = \frac{1}{\pi n}\bigl(1 + o(1)\bigr).
\end{equation*}
Denote
\begin{equation}\label{eq:alpha_d}
    \alpha_d := \frac12 \sqrt{ \frac{1}{n_+} + \frac{1}{n_-} }
    = \frac{1}{2\sqrt{\pi n}}\bigl(1 + o(1)\bigr).
\end{equation}
\cref{lem:subG}\ref{lem:subG-b} implies $\wt\zz/\alpha_d \sim \subGind(\bzero, \bI_d; K)$. Then according to \cref{lem:subG_concentrate}\ref{lem:subG-Hanson-Wright-II},
\begin{equation*}
    \P\biggl( \biggl| \frac{\| \bP_{\vmu}^\perp \wt\zz \|_2}{ \alpha_d \| \bP_{\vmu}^\perp \|_\mathrm{F}} - 1 \biggr| > t \biggr)
    \le 2 \exp\biggl( -\frac{ct^2}{K^4}\frac{\| \bP_{\vmu}^\perp \|_\mathrm{F}^2}{\| \bP_{\vmu}^\perp \|_{\mathrm{op}}^2} \biggr)
    = 2 \exp\biggl( -\frac{ct^2(d-1)}{K^4} \biggr) ,
\end{equation*}
where $\| \bP_{\vmu}^\perp \|_\mathrm{F} = \sqrt{d - 1}$, $\| \bP_{\vmu}^\perp \|_{\mathrm{op}} = 1$, and $c$ is an absolute constant. Therefore,
\begin{equation} \label{eq:P_wtz}
    \| \bP_{\vmu}^\perp \wt\zz \|_2 = \alpha_d \| \bP_{\vmu}^\perp \|_\mathrm{F} \bigl(1 + o_\P(1)\bigr)
    = \frac{1}{2\sqrt{\pi n}}\big(1 + o(1) \big) \cdot \sqrt{d - 1}\bigl(1 + o_\P(1)\bigr) 
    = \frac12 \sqrt{\frac{d}{\pi n}}\big(1 + o_\P(1) \big).
\end{equation}
In addition, by \cref{lem:subG_concentrate}\ref{lem:subG-Hoeffding},
\begin{equation*}
    \wt g = O_{\P}(\alpha_d) = O_\P\biggl( \frac{1}{\sqrt{\pi n}} \biggr).
\end{equation*}
Recall that $a - c - 1 < 0$. Finally, we have
\begin{align}
        \bar\kappa 
    & = \sqrt{ ( \norm{\vmu}_2 + \wt g )^2 + \| \bP_{\vmu}^\perp \wt\zz \|_2^2 } \nonumber \\
    & = \sqrt{ \left( \norm{\vmu}_2 + O_\P(1/\sqrt{\pi n}) \right)^2 + \frac{d}{4\pi n}\bigl(1 + o_\P(1)\bigr) }
    \nonumber \\
    & =  \sqrt{ \left( d^{b/2} + O_\P\bigl(d^{(a-c-1)/2}\bigr) \right)^2 + \frac{1}{4} d^{a - c} \bigl(1 + o_\P(1)\bigr) }
    \nonumber \\
    & = \sqrt{ d^{b} +  \frac{1}{4} d^{a-c} } \bigl(1 + o_\P(1)\bigr).
    \label{eq:margin_upper}
\end{align}
This concludes the proof of part \ref{lem:upper_bound_a}.

\vspace{0.5\baselineskip}
\noindent
\textbf{\ref{lem:upper_bound_b}:}
Next we show that the upper bound $\bar\kappa$ is nearly attainable, by a constructed solution $(\wt\rho, \wt\vtheta, \wt\beta_0)$ defined in \cref{eq:param_star}. Clearly, $(\wt\rho, \wt\vtheta, \wt\beta_0)$ satisfies the constraints in \cref{eq:SVM-rho_theta}. This candidate solution is motivated by the optimal $(\rho, \vtheta)$ that makes (i) and (ii) equal in \cref{eq:F_AB}, i.e.,
\begin{equation*}
    \bar\kappa = \wt\rho \left(\norm{\vmu}_2 +  \wt g \right) + \sqrt{1 - \wt\rho^2} \< \wt\zz, \wt\vtheta \>, 
\end{equation*}
and $\beta_0$ that balances the magnitude of average logit margins from the two classes, i.e., we choose $\beta_0$ such that $\bar\kappa_+ = \bar\kappa_-$ in \cref{eq:kappa_pm}. Substituting $(\wt\rho, \wt\vtheta, \wt\beta_0)$ back into \cref{eq:logits}, we obtain
\begin{equation*}
        \begin{aligned}
        \kappa_i(\wt\rho, \wt\vtheta, \wt\beta_0) & = \wt\rho \norm{\vmu}_2 + y_i\wt\beta_0 + \wt\rho y_i g_i + \sqrt{1 - \wt\rho^2} y_i \< \zz_i, \wt\vtheta \> \\
        & = 
        \wt\rho \left( \norm{\vmu}_2 + y_i g_i - y_i\frac{\bar g_{+} + \bar g_{-}}{2} \right) + \sqrt{1 - \wt\rho^2} \left\< y_i \zz_i - y_i \frac{\bar\zz_{+} + \bar\zz_{-}}{2}, \wt\vtheta \right\>.
    \end{aligned}
\end{equation*}
Therefore, the difference between each logit margin and the upper bound can be expressed as
\begin{align}
        \bar\kappa - \kappa_i(\wt\rho, \wt\vtheta, \wt\beta_0)
        & = \wt\rho \left( \wt g + y_i\frac{\bar g_{+} + \bar g_{-}}{2} - y_i g_i \right) + \sqrt{1 - \wt\rho^2} \left\< \wt\zz + y_i \frac{\bar\zz_{+} + \bar\zz_{-}}{2} - y_i \zz_i , \wt\vtheta \right\>  \nonumber  \\
        & = 
        \begin{cases} 
            \,  \wt\rho (\bar g_+ - g_i) +  \sqrt{1 - \wt\rho^2} \< \bar\zz_+ - \zz_i, \wt\vtheta \> , & \ \text{if} \ y_i = +1, \\
            \,  \wt\rho (g_i - \bar g_-) +  \sqrt{1 - \wt\rho^2} \< \zz_i - \bar\zz_-, \wt\vtheta \> , & \ \text{if} \ y_i = -1, \end{cases}        \label{eq:logit_diff}
\end{align}
where the leading terms $\rho\norm{\vmu}_2$, $\< \bar\zz_-, \wt\vtheta\>$ (for $i = +1$), $\< \bar\zz_+, \wt\vtheta\>$ (for $i = -1$) are all cancelled out. 
Our goal is to bound the maximum difference over all data points. Note that
\begin{equation}\label{eq:max_diff}
    \begin{aligned}
        & \max_{i \in [n]} \abs{\bar\kappa - \kappa_i(\wt\rho, \wt\vtheta, \wt\beta_0)}
        = 
        \max_{i \in \mathcal{I}_+} \abs{\bar\kappa - \kappa_i(\wt\rho, \wt\vtheta, \wt\beta_0)}
         \vee
        \max_{i \in \mathcal{I}_-} \abs{\bar\kappa - \kappa_i(\wt\rho, \wt\vtheta, \wt\beta_0)}  \\
        \le {} & 
        \max_{i \in \mathcal{I}_+} \left\{ \bigl|g_i - \bar g_+\bigr| + \bigl|\< \zz_i - \bar\zz_+, \wt\vtheta \>\bigr| \right\}
        \vee
        \max_{i \in \mathcal{I}_-} \left\{ \bigl|g_i - \bar g_-\bigr| + \bigl|\< \zz_i - \bar\zz_-, \wt\vtheta \>\bigr| 
        \right\} 
        \\
        \le {} & \left\{ 
            \max_{i \in \mathcal{I}_+} \bigl|g_i - \bar g_+\bigr|
            + \max_{i \in \mathcal{I}_+} \bigl|\< \zz_i - \bar\zz_+, \wt\vtheta \>\bigr|
        \right\}
        \vee
        \left\{ 
            \max_{i \in \mathcal{I}_-} \bigl|g_i - \bar g_-\bigr|
            + \max_{i \in \mathcal{I}_-} \bigl|\< \zz_i - \bar\zz_-, \wt\vtheta \>\bigr|
        \right\}.
    \end{aligned}
\end{equation}
For the first term involving $g_i$'s, recall that $\max_{i \in [n]}\norm{g_i}_{\psi_2} \lesssim K$. Therefore, as per \cref{lem:subG}\ref{lem:subG-c} and \cref{lem:subG_concentrate}\ref{lem:subG-Hoeffding}, $g_i, \bar g_\pm$ are sub-gaussian, and
\begin{equation}\label{eq:max_g}
    \begin{aligned}
        \max_{i \in \mathcal{I}_+} \bigl|g_i - \bar g_+\bigr|
        & \le \max_{i \in \mathcal{I}_+} \abs{g_i} + \abs{\bar g_+} 
        = O_\P(\sqrt{\log n_+}) + O_\P\biggl( \frac{1}{\sqrt{n_+}} \biggr)
        = O_\P(\sqrt{\log d}), \\
        \max_{i \in \mathcal{I}_-} \bigl|g_i - \bar g_-\bigr|
        & \le \max_{i \in \mathcal{I}_-} \abs{g_i} + \abs{\bar g_-} 
        = O_\P(\sqrt{\log n_-}) + O_\P\biggl( \frac{1}{\sqrt{n_-}} \biggr)
        = O_\P(\sqrt{\log d}).
    \end{aligned}
\end{equation}
For the second term involving $\zz_i$'s, note that
\begin{equation}\label{eq:maxmax_z}
    \begin{aligned}
        \max_{i \in \mathcal{I}_+} \bigl| \< \zz_i - \bar\zz_+, \wt\vtheta \> \bigr|
        & \le \max_{i \in \mathcal{I}_+} \frac{1}{n_+} \sum_{j \in \mathcal{I}_+} \bigl| \< \zz_i - \zz_j, \wt\vtheta \> \bigr|
        \le \frac{1}{\| \bP_{\vmu}^\perp \wt\zz \|_2} \max_{i,j \in \mathcal{I}_+} \bigl| \< \zz_i - \zz_j, \bP_{\vmu}^\perp \wt\zz \>  \bigr|,  \\
        \max_{i \in \mathcal{I}_-} \bigl| \< \zz_i - \bar\zz_-, \wt\vtheta \> \bigr|
        & \le \max_{i \in \mathcal{I}_-} \frac{1}{n_-} \sum_{j \in \mathcal{I}_-} \bigl| \< \zz_i - \zz_j, \wt\vtheta \> \bigr|
        \le \frac{1}{\| \bP_{\vmu}^\perp \wt\zz \|_2} \max_{i, j \in \mathcal{I}_-} \bigl| \< \zz_i - \zz_j, \bP_{\vmu}^\perp \wt\zz \>  \bigr|.
    \end{aligned}
\end{equation}
So it remains to bound $\< \zz_i - \zz_j, \bP_{\vmu}^\perp \wt\zz \>$ uniformly. We decompose it as
\begin{equation*}
    \< \zz_i - \zz_j, \bP_{\vmu}^\perp \wt\zz \>
    = 
    \< \zz_i - \zz_j, \wt\zz \> - \left\< \zz_i - \zz_j, \frac{\vmu}{\norm{\vmu}_2} \right\>
            \left\< \wt\zz, \frac{\vmu}{\norm{\vmu}_2} \right\>
    := I -  I\!I.
\end{equation*}
We will show that both $I$ and $I\!I$ are sub-exponential. To bound $\norm{I}_{\psi_1}$ via \cref{lem:subExp}\ref{lem:subExp-b}, we claim the inner product
\begin{equation*}
    I = \< \zz_i - \zz_j, \wt\zz \> = \sum_{k=1}^d (\zz_i - \zz_j)_k (\wt\zz)_k
\end{equation*}
is the sum of $d$ mean-zero random variables, i.e., $\E[(\zz_i - \zz_j)_k (\wt\zz)_k] = 0$, $\forall\, k \in [d]$, where we write $(\ba)_k$ as the $k$-th entry of vector $\ba$. To see this, we decompose $\wt\zz$ into terms that are independent or dependent of $(\zz_i, \zz_j)$.
\begin{itemize}
    \item If $y_i = y_j = +1$ and $i \not= j$, then
    \begin{align*}
        \E[ (\zz_i - \zz_j) \odot \wt\zz ]
        & =  \E\biggl[(\zz_i - \zz_j) \odot 
        \biggl( \underbrace{  \frac1{2 n_+} \!\!\! \sum_{k: \substack{k \not= i, j \\ y_k = +1}} \zz_k + \frac{\bar\zz_-}2  }_{ =: \wt\zz_{-ij}^+ }
        \biggr)
        \biggr] 
        + \frac{1}{2 n_+} \E[ (\zz_i - \zz_j) \odot (\zz_i + \zz_j) ] \\
        & = \E[\zz_i - \zz_j] \odot \E[ \wt\zz_{-ij}^+ ]
        + \frac{1}{2 n_+} \left( \E[\zz_i \odot \zz_i] - \E[\zz_j \odot \zz_j] \right) 
        \qquad (\wt\zz_{-ij}^+  \indep  \zz_i, \zz_j) \\
        & = \bzero \odot \bzero + \frac{1}{2 n_+} ( \bone - \bone ) = \bzero.
    \end{align*}
    \item If $y_i = y_j = -1$ and $i \not= j$, similarly
    \begin{align*}
        \E[ (\zz_i - \zz_j) \odot \wt\zz ]
        & =  \E\biggl[(\zz_i - \zz_j) \odot 
        \biggl( \underbrace{  \frac1{2 n_-} \!\!\! \sum_{k: \substack{k \not= i, j \\ y_k = -1}} \zz_k + \frac{\bar\zz_+}2  }_{ =: \wt\zz_{-ij}^- }
        \biggr)
        \biggr] 
        + \frac{1}{2 n_-} \E[ (\zz_i - \zz_j) \odot (\zz_i + \zz_j) ] \\
        & = \E[\zz_i - \zz_j] \odot \E[ \wt\zz_{-ij}^- ]
        + \frac{1}{2 n_-} \left( \E[\zz_i \odot \zz_i] - \E[\zz_j \odot \zz_j] \right) 
        \qquad (\wt\zz_{-ij}^-  \indep  \zz_i, \zz_j) 
        \\
        & = \bzero.
    \end{align*}
\end{itemize}
Therefore, when $d$ is large enough, we have
\begin{align*}
    \norm{I}_{\psi_1} & = \norm{\< \zz_i - \zz_j, \wt\zz \>}_{\psi_1}
    = \norm{ \sum_{k=1}^d (\zz_i - \zz_j)_k (\wt\zz)_k }_{\psi_1} \\
    & \overset{\mathmakebox[0pt][c]{\smash{\text{(i)}}}}{\lesssim} \sqrt{d} \max_{1 \le k \le d} \norm{ (\zz_i - \zz_j)_k (\wt\zz)_k }_{\psi_1} \\
    & \overset{\mathmakebox[0pt][c]{\smash{\text{(ii)}}}}{\le} \sqrt{d} \max_{1 \le k \le d} \norm{ (\zz_i - \zz_j)_k }_{\psi_2}
    \max_{1 \le k \le d} \norm{ (\wt\zz)_k }_{\psi_2} \\
    & \overset{\mathmakebox[0pt][c]{\smash{\text{(iii)}}}}{\lesssim} \sqrt{d} K \cdot \alpha_{d} K 
    \lesssim \sqrt{\frac{d}{\pi n}} K^2,
\end{align*}
where (i) results from coordinate independence and \cref{lem:subExp}\ref{lem:subExp-b}, (ii) is from \cref{lem:subExp}\ref{lem:subExp-d}, and (iii) is based on $\wt\zz/\alpha_{d} , (\zz_i - \zz_j)/\sqrt{2} \sim \subGind(\bzero, \bI_d; K)$. For the term $I\!I$, we have
\begin{align*}
    \norm{I\!I}_{\psi_2} & = \norm{\left\< \zz_i - \zz_j, \frac{\vmu}{\norm{\vmu}_2} \right\>
    \left\< \wt\zz, \frac{\vmu}{\norm{\vmu}_2} \right\>}_{\psi_1} \\
    & \overset{\mathmakebox[0pt][c]{\smash{\text{(i)}}}}{\le} \norm{\left\< \zz_i - \zz_j, \frac{\vmu}{\norm{\vmu}_2} \right\>}_{\psi_2}
    \norm{\left\< \wt\zz, \frac{\vmu}{\norm{\vmu}_2} \right\>}_{\psi_2} \\
    & \overset{\mathmakebox[0pt][c]{\smash{\text{(ii)}}}}{\lesssim}  \max_{1 \le k \le d} \norm{ (\zz_i - \zz_j)_k }_{\psi_2}
    \max_{1 \le k \le d} \norm{ (\wt\zz)_k }_{\psi_2} \\
    & \lesssim K \cdot \alpha_{d} K 
    \lesssim \frac{1}{\sqrt{\pi n}} K^2,
\end{align*}
where (i) is from \cref{lem:subExp}\ref{lem:subExp-d}, and (ii) is from \cref{lem:subG}\ref{lem:subG-b}. Hence,
\begin{equation*}
    \norm{\frac{\< \zz_i - \zz_j, \bP_{\vmu}^\perp \wt\zz \>}{\sqrt{d/\pi n}}}_{\psi_1}
    \le \sqrt{\frac{\pi n}{d}} \bigl( \norm{I}_{\psi_1} + \norm{I\!I}_{\psi_1} \bigr)
    \lesssim K^2.
\end{equation*}
Substituting this back into \cref{eq:maxmax_z}, referring to \cref{eq:P_wtz} and \cref{lem:subExp}\ref{lem:subExp-c}, we obtain
\begin{align}
    \!\!\! \max_{i \in \mathcal{I}_+} \bigl| \< \zz_i - \bar\zz_+, \wt\vtheta \> \bigr|
        & \le \frac{1}{\| \bP_{\vmu}^\perp \wt\zz \|_2} \max_{\substack{i \in \mathcal{I}_+ \\ j \in \mathcal{I}_+}} \bigl| \< \zz_i - \zz_j, \bP_{\vmu}^\perp \wt\zz \>  \bigr|
    = \bigl(1 + o_\P(1)\bigr) \max_{\substack{i \in \mathcal{I}_+ \\ j \in \mathcal{I}_+}} \abs{\frac{\< \zz_i - \zz_j, \bP_{\vmu}^\perp \wt\zz \>}{\sqrt{d/\pi n}}}
    \notag \\
    & =  O_{\P}(\log n_+^2) = O_{\P}(\log d), \notag \\
    \!\!\! \max_{i \in \mathcal{I}_-} \bigl| \< \zz_i - \bar\zz_-, \wt\vtheta \> \bigr|
        & \le \frac{1}{\| \bP_{\vmu}^\perp \wt\zz \|_2} \max_{\substack{i \in \mathcal{I}_- \\ j \in \mathcal{I}_-}} \bigl| \< \zz_i - \zz_j, \bP_{\vmu}^\perp \wt\zz \>  \bigr|
    = \bigl(1 + o_\P(1)\bigr) \max_{\substack{i \in \mathcal{I}_- \\ j \in \mathcal{I}_-}} \abs{\frac{\< \zz_i - \zz_j, \bP_{\vmu}^\perp \wt\zz \>}{\sqrt{d/\pi n}}} \notag \\
    & =  O_{\P}(\log n_-^2) = O_{\P}(\log d).
    \label{eq:max_z}
\end{align}
Finally, incorporating \cref{eq:max_g} and \cref{eq:max_z} into \cref{eq:max_diff}, we have
\begin{align*}
        & \max_{i \in [n]} \abs{\bar\kappa - \kappa_i(\wt\rho, \wt\vtheta, \wt\beta_0)}
        \\
        \le {} & \left\{ 
            \max_{i \in \mathcal{I}_+} \bigl|g_i - \bar g_+\bigr|
            + \max_{i \in \mathcal{I}_+} \bigl|\< \zz_i - \bar\zz_+, \wt\vtheta \>\bigr|
        \right\}
        \vee
        \left\{ 
            \max_{i \in \mathcal{I}_-} \bigl|g_i - \bar g_-\bigr|
            + \max_{i \in \mathcal{I}_-} \bigl|\< \zz_i - \bar\zz_-, \wt\vtheta \>\bigr|
        \right\} \\
        \le {} & \left\{ O_\P(\sqrt{\log d}) + O_\P(\log d) \right\} \vee \left\{ O_\P(\sqrt{\log d}) + O_\P(\log d) \right\} 
        = O_\P(\log d).
\end{align*}
Therefore, the difference between the margin of classifier characterized by $(\wt\rho, \wt\vtheta, \wt\beta_0)$ and its upper bound $\bar\kappa$ is bounded by
\begin{equation*}
    \bar\kappa - \kappa(\wt\rho, \wt\vtheta, \wt\beta_0) 
    = 
     \bar\kappa - \min_{i \in [n]} \kappa_i(\wt\rho, \wt\vtheta, \wt\beta_0) 
    = \max_{i \in [n]} \abs{\bar\kappa - \kappa_i(\wt\rho, \wt\vtheta, \wt\beta_0)} = O_\P(\log d) = \wt O_\P(1).
\end{equation*}
This concludes the proof of part \ref{lem:upper_bound_b}.

\vspace{0.5\baselineskip}
\noindent
\textbf{\ref{lem:upper_bound_c}:}
According to max-margin optimization problem \cref{eq:SVM-rho_theta}, note that
\begin{equation*}
    \hat\kappa = 
    \max_{ \substack{ \rho \in [-1, 1],  \beta_0 \in \R
    \\
    \vtheta \in \S^{d-1}, \vtheta \perp \vmu } } \kappa(\rho, \vtheta, \beta_0)
    \ge \kappa(\wt\rho, \wt\vtheta, \wt\beta_0),
\end{equation*}
hence the asymptotics of $\hat\kappa$ is followed by (a) and (b). As $d \to \infty$, note that
\begin{equation*}
    \hat\kappa \ge \bar\kappa - \wt O_\P(1)
    = \bigl(1 + o_\P(1)\bigr) \sqrt{  d^{b} +  \frac{1}{4} d^{a-c} },
    \qquad
    \sqrt{  d^{b} +  \frac{1}{4} d^{a-c} } \ge d^{b/2} \to + \infty,
\end{equation*}
which implies $\hat\kappa$ diverges with high probability, i.e., $\lim_{d \to \infty} \P(\hat\kappa > C) = 1$, $\forall\, C \in \R$. As the result, $\P\{\text{linearly separable}\} = \P(\hat\kappa > 0) \to 1$ as $d \to \infty$, deducing $\|\hat\vbeta\|_2 = 1$ with high probability. This concludes the proof of part \ref{lem:upper_bound_c}.
\end{proof}

\subsection{Asymptotics of optimal parameters: Proofs of \cref{lem:rho_hat}, \ref{lem:theta_hat_z}, \ref{lem:beta0_asymp}}
\label{subsec:highimb_asymp}

Followed by tightness of the upper bound $\bar\kappa$, we show that the optimal parameters $\hat\rho, \hat\vtheta$ should be very ``close'' to the constructed solution $\wt\rho, \wt\vtheta$ defined in \cref{eq:param_star} in some sense. On the event that the data is linearly separable, we have showed that $\hat\vbeta$, and therefore both $\hat\rho$ and $\hat\vtheta$, do not depend on $\tau$ in \cref{prop:SVM_tau_relation}. Hence, it still suffices to consider $\tau = 1$ in our proof.

\subsubsection{Asymptotic order of $\hat\rho$: Proofs of \cref{lem:rho_hat}}

The following technical Lemma is important for deriving the asymptotics of $\hat\rho$, which introduces a function of $\rho$ used implicitly in \cref{eq:F_AB} and \eqref{eq:F_AB_optim} for optimization.

\begin{lem} \label{lem:F_AB}
    Define $F_{A, B}(\rho) = \rho A + \sqrt{1 - \rho^2} B$, $\rho \in [-1, 1]$, with $A \in \R$, $B > 0$. Then
\begin{equation*}
    F_{A, B}'(\rho) = A - \frac{\rho}{\sqrt{1 - \rho^2}} B,
    \qquad
    F_{A, B}''(\rho) = -\frac{1}{(1 - \rho^2)^{3/2}} B,
\end{equation*}
which implies $F_{A,B}$ is $B$-strongly concave, that is, for all $\rho_1, \rho_2 \in [-1, 1]$,
\begin{equation*}
    F_{A,B}(\rho_2) \le F_{A,B}(\rho_1) + F'_{A,B}(\rho_1)(\rho_2 - \rho_1) - \frac12 B (\rho_2 - \rho_1)^2.
\end{equation*} 
Moreover,
\begin{equation*}
    \argmax_{\rho \in [-1, 1]} F_{A,B}(\rho) = \frac{A}{\sqrt{A^2 + B^2}},  \qquad \max_{\rho \in [-1, 1]} F_{A,B}(\rho) = \sqrt{A^2 + B^2}.
\end{equation*}
\end{lem}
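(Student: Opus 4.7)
The plan is to verify the three assertions (derivatives, strong concavity, maximizer/maximum) by direct computation, with the only subtle point being the handling of the endpoints $\rho = \pm 1$ where $F_{A,B}''$ blows up.

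First, I would compute the two derivatives by elementary calculus: differentiating $F_{A,B}(\rho) = \rho A + \sqrt{1-\rho^2} B$ once gives $F_{A,B}'(\rho) = A - \rho B/\sqrt{1-\rho^2}$ on $(-1,1)$, and differentiating again, using the quotient rule on $\rho/\sqrt{1-\rho^2}$, yields $F_{A,B}''(\rho) = -B/(1-\rho^2)^{3/2}$. These formulas hold on the open interval $(-1,1)$.

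Next, for the $B$-strong concavity, the key observation is that $(1-\rho^2)^{3/2} \le 1$ on $[-1,1]$, so $F_{A,B}''(\rho) \le -B$ for every $\rho \in (-1,1)$. To transfer this pointwise bound on the open interval into the claimed finite-difference inequality on the closed interval $[-1,1]$, I would fix $\rho_1, \rho_2 \in [-1,1]$, apply the standard Taylor expansion with integral remainder on the open interval, and then pass to the boundary by continuity of $F_{A,B}$ on $[-1,1]$ (the integral $\int (1-t)F_{A,B}''(\rho_1 + t(\rho_2 - \rho_1))\,dt$ is finite since the singularity at $\rho = \pm 1$ is integrable, being of order $(1-s)^{-3/2}$ only at a single endpoint of the integration path). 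This yields the strong-concavity inequality $F_{A,B}(\rho_2) \le F_{A,B}(\rho_1) + F_{A,B}'(\rho_1)(\rho_2-\rho_1) - \tfrac12 B(\rho_2-\rho_1)^2$.

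Finally, for the maximizer and maximum, I would argue that $F_{A,B}$ attains its maximum on the compact interval $[-1,1]$, and since strong concavity implies strict concavity the maximizer is unique. Checking the boundary gives $F_{A,B}(\pm 1) = \pm A < \sqrt{A^2 + B^2}$ (since $B > 0$), so the maximizer lies in the interior and satisfies $F_{A,B}'(\rho) = 0$, i.e.\ $A\sqrt{1-\rho^2} = \rho B$. Squaring and solving gives $\rho^2 = A^2/(A^2+B^2)$, and the sign condition $\mathrm{sgn}(\rho) = \mathrm{sgn}(A)$ forces $\rho^* = A/\sqrt{A^2+B^2}$ (valid for any $A \in \R$ including $A=0$). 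Substituting back gives $F_{A,B}(\rho^*) = A^2/\sqrt{A^2+B^2} + B^2/\sqrt{A^2+B^2} = \sqrt{A^2+B^2}$. The only mildly delicate step in the whole argument is the endpoint justification of strong concavity, which is handled by the integrability of the singularity of $F_{A,B}''$.
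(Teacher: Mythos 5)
Your proposal is correct, and for the derivative computation and the strong-concavity step it follows essentially the paper's route: direct calculus plus the pointwise bound $F_{A,B}''(\rho) \le -B$ on $(-1,1)$ (the paper simply cites $\sup_{\rho \in [-1,1]} F''_{A,B}(\rho) = -B$). The one genuine divergence is the optimization part: the paper does not re-derive the maximizer here but invokes the Cauchy--Schwarz identity already recorded in \cref{eq:F_AB_optim}, writing $\rho A + \sqrt{1-\rho^2}\,B = \langle (\rho, \sqrt{1-\rho^2}), (A,B) \rangle \le \sqrt{A^2+B^2}$ with equality when the unit vector aligns with $(A,B)$ (compatible with $\sqrt{1-\rho^2} \ge 0$ since $B>0$), whereas you solve the first-order condition with a boundary check, squaring, and a sign analysis; both are valid, the Cauchy--Schwarz route being a one-liner that avoids the sign bookkeeping, your route making the interiority and uniqueness explicit. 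Two small remarks on the endpoint step you flag as delicate: the justification ``the singularity \ldots is integrable, being of order $(1-s)^{-3/2}$'' is not right as phrased, since $F''$ itself is not integrable near $\pm 1$; it is the weight $(1-t)$ in the integral remainder that makes the integrand $O((1-t)^{-1/2})$, so the weighted integral you wrote is indeed finite and your conclusion stands after an $\varepsilon$-approximation of the endpoint. A cleaner alternative is to note that $G(\rho) := F_{A,B}(\rho) + \tfrac{B}{2}\rho^2$ is continuous on $[-1,1]$ with $G'' \le 0$ on $(-1,1)$, hence concave on $[-1,1]$, and the concavity inequality $G(\rho_2) \le G(\rho_1) + G'(\rho_1)(\rho_2 - \rho_1)$ at an interior base point $\rho_1$ is exactly the claim; the case $\rho_1 = \pm 1$, where $F'$ is not finite, is vacuous under the natural convention and is not used in the paper's applications (there $\rho_1$ is always the interior stationary point).
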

\begin{proof}
    Strongly concavity is given by direct calculation and the fact that
    \begin{equation*}
        \sup_{\rho \in [-1, 1]} F''_{A,B}(\rho) = -B.
    \end{equation*}
    The optimality condition is already derived in \cref{eq:F_AB_optim}. This concludes the proof.
\end{proof}

\noindent
In the rest of this section, the (stochastic) parameters $A, B$ are defined as
\begin{equation} \label{eq:AB_def}
    A := \norm{\vmu}_2 + \wt g
    = d^{b/2} \bigl(1 + o_{\P}(1)\bigr),
    \qquad
    B := \| \bP_{\vmu}^\perp \wt\zz \|_2
    = \frac12 d^{(a-c)/2} \bigl(1 + o_{\P}(1)\bigr).
\end{equation}
Then followed by \cref{lem:F_AB}, we have $\wt\rho = \argmax_{\rho \in [-1, 1]} F_{A,B}(\rho)$ and $F'_{A,B}(\wt\rho) = 0$, where $\wt\rho$ is defined in \cref{eq:param_star}. The following Lemma describes the asymptotics of $\hat\rho$ with respect to $\wt\rho$.

\begin{lem}[Asymptotics of $\hat\rho$ and $\wt\rho$] \label{lem:rho_hat}
    Suppose that $a < c + 1$.
    \begin{enumerate}[label=(\alph*)]
        \item \label{lem:rho_hat(a)} If $a < b + c$, then $\wt\rho = 1 - o_\P(1)$, $\hat\rho = 1 - o_\P(1)$, and
        \begin{equation*}
            \sqrt{1 - \wt\rho^2} = \frac12 d^{(a-b-c)/2}\bigl( 1 + o_\P(1) \bigr).
        \end{equation*}
        Moreover, we further assume:
        \begin{itemize}
            \item[i.] If $a > \frac{b}{2} + c$, then $\sqrt{1 - \hat\rho^2} = \sqrt{1 - \wt\rho^2} \bigl(1 + o_\P(1)\bigr)$.
            \item[ii.] If $a \le \frac{b}{2} + c$, then $\sqrt{1 - \hat\rho^2} = \wt O_\P(d^{-b/4})$ and thus $\sqrt{1 - \hat\rho^2}\sqrt{d/\pi n} = \wt O_\P(1)$.
        \end{itemize}
        \item \label{lem:rho_hat(b)} If $a > b + c$, then $\wt\rho = o_\P(1)$, $\hat\rho = o_\P(1)$, and
        \begin{equation*}
            \wt\rho = 2d^{(b-a+c)/2}\bigl(1 + o_\P(1)\bigr).
        \end{equation*}
        Moreover, we further assume:
        \begin{itemize}
            \item[i.] If $a < 2b + c$, then $\hat\rho = \wt\rho \bigl(1 + o_\P(1)\bigr)$.
            \item[ii.] If $a > 2b + c$, then $\hat \rho = \wt O_\P(d^{-(a-c)/4})$ and thus $\hat \rho \norm{\vmu}_2 = o_\P(1)$.
        \end{itemize}
    \end{enumerate}
\end{lem}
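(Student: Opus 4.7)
The plan is to work in the angular parametrization $\hat\rho = \cos\hat\theta$, $\wt\rho = \cos\wt\theta$ with $\hat\theta, \wt\theta \in [0,\pi/2]$. By the definition of $\wt\rho$ in \eqref{eq:param_star}, $\cos\wt\theta = A/\sqrt{A^2+B^2}$ and $\sin\wt\theta = B/\sqrt{A^2+B^2}$, so \cref{lem:F_AB} rewrites $F_{A,B}(\rho) = \sqrt{A^2+B^2}\cos(\theta - \wt\theta)$. Combining the pointwise inequality $\kappa(\rho,\vtheta,\beta_0) \le F_{A,B}(\rho)$ derived inside the proof of \cref{lem:upper_bound}\ref{lem:upper_bound_a} with the estimate $\hat\kappa \ge \bar\kappa - \wt O_{\P}(1) = \sqrt{A^2+B^2} - \wt O_{\P}(1)$ from \cref{lem:upper_bound}\ref{lem:upper_bound_c} yields
\begin{equation*}
\cos(\hat\theta - \wt\theta) \ge 1 - \frac{\wt O_{\P}(1)}{\sqrt{A^2+B^2}},
\end{equation*}
and the elementary inequality $1 - \cos x \ge 2x^2/\pi^2$ on $[-\pi/2,\pi/2]$ produces the master distance estimate $|\hat\theta - \wt\theta| \le \wt O_{\P}\bigl((A^2+B^2)^{-1/4}\bigr)$.

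Next I read off the sharp asymptotics of $\wt\theta$ from \eqref{eq:AB_def} using $A = d^{b/2}(1+o_{\P}(1))$ and $B = \tfrac{1}{2}d^{(a-c)/2}(1+o_{\P}(1))$. In case (a), where $a < b+c$, we have $B/A \to 0$, so $\sqrt{A^2+B^2} = A(1+o_{\P}(1)) \asymp d^{b/2}$, the distance estimate specializes to $|\hat\theta - \wt\theta| \le \wt O_{\P}(d^{-b/4})$, and $\wt\theta = \arcsin(B/\sqrt{A^2+B^2}) = \tfrac{1}{2}d^{(a-b-c)/2}(1+o_{\P}(1))$, which also matches $\sqrt{1-\wt\rho^2} = \sin\wt\theta$. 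In case (b), where $a > b+c$, the roles of $A$ and $B$ swap: $\sqrt{A^2+B^2} = B(1+o_{\P}(1)) \asymp d^{(a-c)/2}$, $|\hat\theta - \wt\theta| \le \wt O_{\P}(d^{-(a-c)/4})$, and $\wt\rho = \cos\wt\theta = 2d^{(b-a+c)/2}(1+o_{\P}(1))$, with $\pi/2 - \wt\theta \sim \wt\rho$.

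The four sub-case conclusions then follow by comparing $|\hat\theta - \wt\theta|$ with $\wt\theta$ in case (a) or with $\pi/2 - \wt\theta$ in case (b). In (a)i, $a-c > b/2$ gives $\wt\theta \asymp d^{(a-c-b)/2} \gg d^{-b/4}$, so $\hat\theta = \wt\theta(1+o_{\P}(1))$ and hence $\sqrt{1-\hat\rho^2} = \sin\hat\theta = \sin\wt\theta(1+o_{\P}(1)) = \sqrt{1-\wt\rho^2}(1+o_{\P}(1))$. In (a)ii, $a-c \le b/2$ forces $\wt\theta \lesssim d^{-b/4}$, so $\hat\theta \le \wt\theta + |\hat\theta - \wt\theta| = \wt O_{\P}(d^{-b/4})$ and $\sqrt{1-\hat\rho^2} \le \hat\theta$; combined with $\sqrt{d/(\pi n)} = d^{(a-c)/2}$, this delivers the product bound $\sqrt{1-\hat\rho^2}\sqrt{d/(\pi n)} = \wt O_{\P}(1)$. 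In (b)i, $a-c < 2b$ gives $\pi/2 - \wt\theta \gg d^{-(a-c)/4}$, so $\pi/2 - \hat\theta = (\pi/2 - \wt\theta)(1+o_{\P}(1))$, and Taylor expanding $\cos$ near $\pi/2$ yields $\hat\rho = \wt\rho(1+o_{\P}(1))$. In (b)ii, $a-c > 2b$ reverses the comparison, so $\pi/2 - \hat\theta \le \wt O_{\P}(d^{-(a-c)/4})$, giving $\hat\rho \le \wt O_{\P}(d^{-(a-c)/4})$ and $\hat\rho \norm{\vmu}_2 \le \wt O_{\P}(d^{b/2 - (a-c)/4}) = o_{\P}(1)$.

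The main subtlety is sharpness: a direct strong-concavity argument on $F_{A,B}$ around $\wt\rho$ only delivers $(\hat\rho - \wt\rho)^2 \le \wt O_{\P}(1/B)$, which suffices for the degenerate sub-cases (a)ii and (b)ii but is too crude to pin down leading-order asymptotics in the matching sub-cases (a)i and (b)i. Routing the argument through the angle parametrization replaces the $1/B$ scale by $1/\sqrt{A^2+B^2}$, and this is precisely the scale that balances correctly against both $\wt\theta$ and $\pi/2-\wt\theta$ and handles all four sub-cases uniformly.
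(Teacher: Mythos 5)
Your proposal is correct and proves the same result, but by a genuinely different route than the paper. The paper applies two separate strong-concavity arguments tailored to the two regimes: in case (a) it changes variables to $r = \sqrt{1-\rho^2}$ and uses that $F_{B,A}(r) = rB + \sqrt{1-r^2}A$ is $A$-strongly concave (on the event $\{A>0, \wt\rho > 0, \hat\rho > 0\}$), while in case (b) it uses $B$-strong concavity of $F_{A,B}(\rho)$ directly. You replace both by a single angular distance estimate $|\hat\theta - \wt\theta|^2 \lesssim (1-\cos(\hat\theta - \wt\theta)) \le \wt O_\P\bigl(1/\sqrt{A^2+B^2}\bigr)$, which automatically picks out the correct modulus in each regime because $\sqrt{A^2+B^2}\asymp A$ when $a<b+c$ and $\asymp B$ when $a>b+c$, and because $\sin$ and $\cos$ are $1$-Lipschitz. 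This is tidier: one estimate, four sub-cases read off by comparing $|\hat\theta - \wt\theta|$ against $\wt\theta$ or $\pi/2-\wt\theta$. Your diagnosis of why naive strong concavity in $\rho$ is too crude in case (a)i is also right.

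One small imprecision you should fix: you assert $\hat\theta, \wt\theta \in [0,\pi/2]$, but that needs $\hat\rho \ge 0$, which is not known a priori in case (b) where the only information going in is $\hat\rho = o_\P(1)$ (it could be slightly negative). The cleaner statement is that $\hat\theta, \wt\theta \in [0,\pi]$ always, so $|\hat\theta - \wt\theta| \le \pi$, and you should invoke Jordan's inequality in the form $1-\cos x = 2\sin^2(x/2) \ge 2x^2/\pi^2$ valid on the whole of $[-\pi,\pi]$, not just $[-\pi/2,\pi/2]$. With that correction the master estimate is unconditionally valid; then in case (a) and (b)i the estimate itself shows a posteriori that $\hat\theta < \pi/2$ with high probability, and in (b)ii you only bound $|\hat\rho| = |\cos\hat\theta| \le |\pi/2 - \hat\theta|$ anyway, so the sign of $\hat\rho$ never matters. (The paper's version of this bookkeeping is its event $\mathcal{E}$ in case (a); your argument actually avoids needing it.)
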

\begin{proof}
According to \cref{eq:param_star} and \eqref{eq:AB_def}, an explicit expression of $\wt\rho$ is given by
\begin{equation}\label{eq:rho_star}
    \wt\rho = 
    \frac{A}{\sqrt{A^2 + B^2}}
    =
    \dfrac{ \norm{\vmu}_2 + \wt g }{\sqrt{ ( \norm{\vmu}_2 + \wt g )^2 + \| \bP_{\vmu}^\perp \wt\zz \|_2^2 } }
    = \frac{d^{b/2}}{\sqrt{ d^{b} + \frac14 d^{a-c} }}  \bigl(1 + o_{\P}(1)\bigr).
\end{equation}
In order to connect $\hat\rho$ with $\wt\rho$, recall \cref{eq:F_AB} that
\begin{equation*}
    \kappa(\rho, \vtheta, \beta_0) \le F_{A,B}(\rho) \le F_{A,B}(\wt\rho) = \bar\kappa,
    \qquad \forall\, \rho \in [-1, 1], \  \vtheta \in \S^{d-1}, \vtheta \perp \vmu, \  \beta_0 \in \R.
\end{equation*}
Apply this to $\hat\kappa = \kappa(\hat\rho, \hat\vtheta, \hat\beta_0)$ and use \cref{lem:upper_bound}, we have
\begin{equation}\label{eq:F_AB_diff}
    0 \le F_{A,B}(\wt\rho) - F_{A,B}(\hat\rho) \le \wt O_\P(1).
\end{equation}
Since $\wt O_\P(1)/F_{A,B}(\wt\rho) = \wt O_\P(1)/\sqrt{A^2 + B^2} \le \wt O_\P(d^{-b/2}) = o_\P(1)$, it implies
\begin{equation*}
    1 - o_\P(1)  =  \frac{F_{A,B}(\hat\rho)}{F_{A,B}(\wt\rho)} 
    = \frac{\hat\rho A }{\sqrt{A^2 + B^2}} + \frac{ \sqrt{1 - \hat\rho^2} B}{\sqrt{A^2 + B^2}}
    = \hat\rho \wt\rho + \sqrt{1 - \hat\rho^2}\sqrt{1 - \wt\rho^2}.
\end{equation*}
Therefore,
\begin{equation}\label{eq:rho_star_to_hat}
    \begin{aligned}
        \wt\rho = 1 - o_\P(1) & \ \Longrightarrow & \hat\rho = 1 - o_\P(1)
        \\
        \wt\rho = o_\P(1)     & \ \Longrightarrow & \hat\rho = o_\P(1)
    \end{aligned}
\end{equation}

\vspace{0.5\baselineskip}
\noindent
\textbf{\ref{lem:rho_hat(a)}:}
If $a - c < b$, then $d^{b/2} \gg d^{(a-c)/2}$ and by \cref{eq:rho_star} and \eqref{eq:rho_star_to_hat} we have $\wt\rho, \hat\rho = 1 - o_\P(1)$. Also,
\begin{equation*}
    \sqrt{1 - \wt\rho^2} = \frac{B}{\sqrt{A^2 + B^2}}
    = \frac{\frac12 d^{(a-b-c)/2}}{\sqrt{ 1 + \frac14 d^{a-b-c} }}\bigl(1 + o_{\P}(1)\bigr)
    = \frac12 d^{(a-b-c)/2}\bigl(1 + o_{\P}(1)\bigr).
\end{equation*}
To derive the precise order of $\sqrt{1 - \hat\rho^2}$, we define $r := \sqrt{1 - \rho^2}$ and $F_{B,A}(r) := rB + \sqrt{1 - r^2}A$. Then $F_{A,B}(\rho) = F_{B,A}(r)$ for any $\rho \in [0, 1]$. We similarly define $\hat r := \sqrt{1 - \hat\rho^2}$ and $\wt r := \sqrt{1 - \wt\rho^2}$. On the event $\mathcal{E} = \{ A > 0, \wt\rho >0, \hat\rho > 0 \}$, by \cref{lem:F_AB}, we have
\begin{equation*}
    F_{A,B}(\hat\rho) - F_{A,B}(\wt\rho) 
    = F_{B,A}(\hat r) - F_{B,A}(\wt r) 
    \le -\frac12 A (\hat r - \wt r)^2.
\end{equation*}
Combined with \cref{eq:F_AB_diff}, it implies
\begin{equation*}
    (\hat r - \wt r)^2 \le  \frac{2}{A}\bigl( F_{A,B}(\wt\rho) - F_{A,B}(\hat\rho) \bigr) \le \wt O_\P(d^{-b/2}),
\end{equation*}
so $|\hat r - \wt r| = \wt O_\P(d^{-b/4})$. Now consider different scenarios. Recall that $\wt r = \frac12 d^{(a-b-c)/2}\bigl(1 + o_{\P}(1)\bigr)$.
\begin{itemize}
    \item If $a - c > b/2$, then $|\hat r - \wt r|/\wt r = \wt O_\P(d^{(-2a + b + 2c)/4}) = o_\P(1)$, deduces $\hat r = \wt r \bigl(1 + o_{\P}(1)\bigr)$.
    \item If $a - c \le b/2$, then we only get $\hat r = \wt O_\P(d^{-b/4})$, and $\hat r \sqrt{d/\pi n} = \wt O_\P(d^{(2a - b-2c)/4})  \le \wt O_\P(1)$.
\end{itemize}
Recall that these hold on event $\mathcal{E}$. Since $\P(\mathcal{E}) \to 1$ as $d \to \infty$, these asymptotic results involving $o_\P(\,\cdot\,)$ and $\wt O_\P(\,\cdot\,)$ also hold on the whole sample space $\Omega$. This concludes the proof of part \ref{lem:rho_hat(a)}.

\vspace{0.5\baselineskip}
\noindent
\textbf{\ref{lem:rho_hat(b)}:}
If $a - c > b$, then $d^{b/2} \ll d^{(a-c)/2}$ and by \cref{eq:rho_star} and \eqref{eq:rho_star_to_hat} we have $\wt\rho, \hat\rho = o_\P(1)$. Also,
\begin{equation*}
    \wt\rho = 
    \frac{A}{\sqrt{A^2 + B^2}}
    = \frac{d^{(b-a+c)/2}}{\sqrt{ d^{b-a+c} + \frac14 }}  \bigl(1 + o_{\P}(1)\bigr)
    = 2d^{(b-a+c)/2} \bigl(1 + o_{\P}(1)\bigr).
\end{equation*}
Again, by \cref{lem:F_AB},
\begin{equation*}
    F_{A,B}(\hat\rho) - F_{A,B}(\wt\rho) \le -\frac12 B(\hat\rho - \wt\rho)^2.
\end{equation*}
Combined with \cref{eq:F_AB_diff}, it implies
\begin{equation*}
    (\hat\rho - \wt\rho)^2 \le  \frac{2}{B}\bigl( F_{A,B}(\wt\rho) - F_{A,B}(\hat\rho) \bigr) \le \wt O_\P(d^{-(a-c)/2}),
\end{equation*}
so $|\hat\rho - \wt\rho| = \wt O_\P(d^{-(a-c)/4})$. Now consider different scenarios.
\begin{itemize}
    \item If $a - c < 2b$, then $|\hat \rho - \wt\rho|/\wt\rho = \wt O_\P(d^{(a - 2b -c)/4}) = o_\P(1)$, deduces $\hat \rho = \wt\rho \bigl(1 + o_{\P}(1)\bigr)$.
    \item If $a - c > 2b$, then we only get $\hat \rho = \wt O_\P(d^{-(a-c)/4})$, and $\hat \rho \norm{\vmu}_2 = \wt O_\P(d^{(2b-a+c)/4}) = o_\P(1)$.
\end{itemize}
This concludes the proof of part \ref{lem:rho_hat(b)}.
\end{proof}
\begin{rem}
    In each part i. of \cref{lem:rho_hat}\ref{lem:rho_hat(a)} and \ref{lem:rho_hat(b)}, we can derive the precise asymptotic of $\hat\rho$, which is same as $\wt\rho$. It is difficult to do so in part ii. of \ref{lem:rho_hat(a)} and \ref{lem:rho_hat(b)}. However, as we will show in \cref{lem:theta_hat_z} and \ref{lem:beta0_asymp}, in case ii. the corresponding term ($\sqrt{1 - \hat\rho}$ or $\hat\rho$) is negligible, which won't affect the asymptotics of test errors.
\end{rem}

\subsubsection{Asymptotic order of $\<\zz_i, \hat\vtheta \>$'s on the margin: Proof of \cref{lem:theta_hat_z}}

Next, we discuss the asymptotics of $\hat\vtheta$ and $\wt\vtheta$. In fact, it suffices to consider the magnitude of their projection on some ``important'' $\zz_i$, which is related to the \emph{support vectors}, defined in \cref{eq:SV_def}. As we mentioned, $\mathcal{SV}_+(\vbeta), \mathcal{SV}_-(\vbeta)$ only depend on $\vbeta$ and $(\XX, \yy)$, not $\beta_0$ or $\tau$. If we fix $\rho = \hat\rho$, then the dependency of $\mathcal{SV}_\pm$ on $\vbeta$ only comes from $\vtheta$. So, recalling \cref{eq:logits}, 
\[ \kappa_i(\rho, \vtheta, \beta_0) = s_i \left( \rho \norm{\vmu}_2 + y_i\beta_0 + \rho y_i g_i + \sqrt{1 - \rho^2} y_i \< \zz_i, \vtheta \> \right), \]
we can rewrite \cref{eq:SV_def} in terms of $\vtheta$:
\begin{equation}\label{eq:SV_def_theta}
	\begin{aligned}
		\mathcal{SV}_+ = \mathcal{SV}_+(\vtheta) & :=  \argmin_{i \in \mathcal{I}_+} \kappa_i(\hat\rho, \vtheta, \beta_0)
	= \argmin_{i \in \mathcal{I}_+} \left\{ \phantom{-} \hat\rho g_i + \sqrt{1 - \hat\rho^2} \< \zz_i, \vtheta \> \right\} , \\
		\mathcal{SV}_- = \mathcal{SV}_-(\vtheta) & :=  \argmin_{i \in \mathcal{I}_-} \kappa_i(\hat\rho, \vtheta, \beta_0)
	= \argmin_{i \in \mathcal{I}_-} \left\{  - \hat\rho g_i - \sqrt{1 - \hat\rho^2} \< \zz_i, \vtheta \> \right\} . \\
	\end{aligned}
\end{equation}
As before, let $\mathsf{sv}_+(\vtheta), \mathsf{sv}_-(\vtheta)$ be (the indices of) any positive and negative support vectors, i.e.,
\begin{equation*}
	\mathsf{sv}_+(\vtheta) \in \mathcal{SV}_+(\vtheta),
	\qquad
	\mathsf{sv}_-(\vtheta) \in \mathcal{SV}_-(\vtheta).
\end{equation*} 
Now, recall that whenever a slope parameter $\vbeta$ is given, the optimal intercept $\check\beta_0 := \check\beta_0(\vbeta)$ (defined in \cref{eq:beta0_optim}) must satisfy the \emph{margin-balancing} condition \cref{eq:margin-bal}, according to \cref{lem:indep_tau}. Hence, fixing $\rho = \hat\rho$ and considering arbitrary $\vtheta$, we can rewrite \cref{eq:margin_pm} and \eqref{eq:margin-bal} as
\begin{equation}\label{eq:svm_sv_bal}
    \begin{aligned}
       \kappa(\hat\rho, \vtheta, \check\beta_0) & = \kappa_{\mathsf{sv}_+(\vtheta)}(\hat\rho, \vtheta, \check\beta_0) = 
       \hat\rho \norm{\vmu}_2 + \check\beta_0 + \hat\rho g_{\mathsf{sv}_+(\vtheta)} + \sqrt{1 - \hat\rho^2} \< \zz_{\mathsf{sv}_+(\vtheta)}, \vtheta \> \\
       & = \kappa_{\mathsf{sv}_-(\vtheta)}(\hat\rho, \vtheta, \check\beta_0) = 
       \hat\rho \norm{\vmu}_2 - \check\beta_0 - \hat\rho g_{\mathsf{sv}_-(\vtheta)} - \sqrt{1 - \hat\rho^2} \< \zz_{\mathsf{sv}_-(\vtheta)}, \vtheta \>.
    \end{aligned}
\end{equation}
In particular, if $\vtheta = \hat\vtheta$, we denote $\mathsf{sv}_+(\hat\vtheta) \in \mathcal{SV}_+(\hat\vtheta)$, $\mathsf{sv}_-(\hat\vtheta) \in \mathcal{SV}_-(\hat\vtheta)$ as the support vectors of max-margin classifier. The Lemma below describes the magnitude of $\< \zz_{\mathsf{sv}_+(\hat\vtheta)}, \hat\vtheta \>$ and $\< \zz_{\mathsf{sv}_-(\hat\vtheta)}, \hat\vtheta \>$.

\begin{lem}[Asymptotics of $\< \zz_{i}, \hat\vtheta \>$'s for support vectors] \label{lem:theta_hat_z}
    Suppose that $a < c + 1$.
    \begin{enumerate}[label=(\alph*)]
        \item \label{lem:theta_hat_z(a)}
        If $a < b + c$, then 
        \begin{equation*}
            \sqrt{1 - \hat\rho^2} \< \zz_{\mathsf{sv}_+(\hat\vtheta)}, \hat\vtheta \>  =  \wt O_\P(d^{a-\frac{b}{2}-c} \vee 1),
            \qquad
            \sqrt{1 - \hat\rho^2} \< \zz_{\mathsf{sv}_-(\hat\vtheta)}, \hat\vtheta \>  =  \wt O_\P(1).
        \end{equation*}
        \item \label{lem:theta_hat_z(b)}
        If $a > b + c$, then 
        \begin{equation*}
            \< \zz_{\mathsf{sv}_+(\hat\vtheta)}, \hat\vtheta \> = \sqrt{\frac{d}{\pi n}}\bigl(1 + o_{\P}(1)\bigr),
            \qquad
            \< \zz_{\mathsf{sv}_-(\hat\vtheta)}, \hat\vtheta \> = \wt O_\P(1).
        \end{equation*}
    \end{enumerate}
\end{lem}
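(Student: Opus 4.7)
The proof combines the margin-balancing identities at the support vectors with the minimality property that defines them. Evaluating \cref{eq:svm_sv_bal} at $(\hat\rho,\hat\vtheta,\hat\beta_0)$ gives the two equations
\[
\hat\kappa = \hat\rho\norm{\vmu}_2 + \hat\beta_0 + \hat\rho g_{\mathsf{sv}_+(\hat\vtheta)} + \sqrt{1-\hat\rho^2}\<\zz_{\mathsf{sv}_+(\hat\vtheta)},\hat\vtheta\>
\]
together with the analogous equation with signs flipped on $\hat\beta_0$, $\hat\rho g$, and $\sqrt{1-\hat\rho^2}\<\cdot,\hat\vtheta\>$ for $\mathsf{sv}_-(\hat\vtheta)$. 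Subtracting eliminates $\hat\beta_0$ and yields the \emph{difference identity}
\[
\sqrt{1-\hat\rho^2}\bigl(\<\zz_{\mathsf{sv}_+},\hat\vtheta\> - \<\zz_{\mathsf{sv}_-},\hat\vtheta\>\bigr) = 2(\hat\kappa - \hat\rho\norm{\vmu}_2) - \hat\rho(g_{\mathsf{sv}_+} - g_{\mathsf{sv}_-}),
\]
and the extra term $\hat\rho(g_{\mathsf{sv}_+}-g_{\mathsf{sv}_-})$ is $\wt O_\P(1)$ by a standard sub-Gaussian maximal inequality applied to $\{g_i\}$.

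I will then obtain \emph{one-sided} bounds on each of $\sqrt{1-\hat\rho^2}\<\zz_{\mathsf{sv}_\pm},\hat\vtheta\>$ using the support-vector minimality. By the definition of $\mathsf{sv}_+(\hat\vtheta)$ we have $\hat\rho g_{\mathsf{sv}_+} + \sqrt{1-\hat\rho^2}\<\zz_{\mathsf{sv}_+},\hat\vtheta\> \le \hat\rho\bar g_+ + \sqrt{1-\hat\rho^2}\<\bar\zz_+,\hat\vtheta\>$, and since $\hat\vtheta\perp\vmu$ and $\norm{\hat\vtheta}_2=1$, the last inner product is at most $\norm{\bP_\vmu^\perp\bar\zz_+}_2 = (1+o_\P(1))\sqrt{(d-1)/n_+}$ by the Hanson--Wright-style concentration used in the proof of \cref{lem:upper_bound}. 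An analogous argument with signs flipped bounds $-\sqrt{1-\hat\rho^2}\<\zz_{\mathsf{sv}_-},\hat\vtheta\>$ above by $\sqrt{1-\hat\rho^2}\,(1+o_\P(1))\sqrt{(d-1)/n_-} + \wt O_\P(1)$. In case (a), where $\sqrt{1-\hat\rho^2}\asymp d^{(a-b-c)/2}$ by \cref{lem:rho_hat}\ref{lem:rho_hat(a)}, these specialize to $\sqrt{1-\hat\rho^2}\<\zz_{\mathsf{sv}_+},\hat\vtheta\> \le \wt O_\P(d^{a-b/2-c}\vee 1)$ and $-\sqrt{1-\hat\rho^2}\<\zz_{\mathsf{sv}_-},\hat\vtheta\>\le \wt O_\P(1)$ (the $\sqrt{d/n_-}$ contribution is negligible because $a<b+c$ and $c>0$). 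In case (b), where $\sqrt{1-\hat\rho^2}=1-o_\P(1)$, the bounds become $\<\zz_{\mathsf{sv}_+},\hat\vtheta\>\le \sqrt{d/(\pi n)}(1+o_\P(1))$ and $-\<\zz_{\mathsf{sv}_-},\hat\vtheta\>\le o_\P(1)$.

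The matching bounds in the opposite directions come from plugging into the difference identity the asymptotics of $\hat\kappa-\hat\rho\norm{\vmu}_2$ computed from $\hat\kappa = \sqrt{A^2+B^2}+\wt O_\P(1)$ (\cref{lem:upper_bound}\ref{lem:upper_bound_c}) together with the asymptotics of $\hat\rho$ from \cref{lem:rho_hat}, where $A=\norm{\vmu}_2+\wt g$ and $B=\norm{\bP_\vmu^\perp\wt\zz}_2$. A direct algebraic expansion gives
\[
\hat\kappa-\hat\rho\norm{\vmu}_2 = \frac{A\wt g + B^2}{\sqrt{A^2+B^2}} + (\wt\rho-\hat\rho)\norm{\vmu}_2 + \wt O_\P(1),
\]
which evaluates to $\wt O_\P(d^{a-b/2-c}\vee 1)$ in case (a) and to $\tfrac12\sqrt{d/(\pi n)}(1+o_\P(1))$ in case (b). The crucial structural observation allowing the sharp $\wt O_\P(1)$ bound on the negative support vector is a near-exact cancellation: the one-sided bound $\sqrt{1-\hat\rho^2}\norm{\bP_\vmu^\perp\bar\zz_+}_2$ on $A_+:=\sqrt{1-\hat\rho^2}\<\zz_{\mathsf{sv}_+},\hat\vtheta\>$ asymptotically matches $2(\hat\kappa-\hat\rho\norm{\vmu}_2)$, because $\norm{\bP_\vmu^\perp\bar\zz_+}_2 \approx 2\norm{\bP_\vmu^\perp\wt\zz}_2 = 2B$ (the $\bar\zz_-$-contribution to $\wt\zz$ is negligible due to $n_-\gg n_+$), so $A_+$ and $2(\hat\kappa-\hat\rho\norm{\vmu}_2)$ differ only by lower-order terms, leaving $A_-:=A_+-(A_+-A_-)$ at the required order.

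The main obstacle will be making this cancellation rigorous at the $\wt O_\P(1)$ level rather than at the $o_\P$ level that a naive expansion yields. I expect to close this gap by two complementary routes: (i) tracking the Hanson--Wright and covariance concentration to explicit $\wt O_\P(1)$ error terms so that $\sqrt{1-\hat\rho^2}\norm{\bP_\vmu^\perp\bar\zz_+}_2 - 2(\hat\kappa-\hat\rho\norm{\vmu}_2)$ is indeed $\wt O_\P(1)$; and (ii) a direct sub-Gaussian maximal argument for the negative support vector, exploiting that under case (b) it is essentially $\argmax_{i\in\mathcal I_-}\<\zz_i, \hat\vtheta\>$, and via a leave-one-out decomposition of $\wt\zz$ (hence of $\hat\vtheta$) the statistics $\<\zz_i,\hat\vtheta\>$ for $i\in\mathcal I_-$ behave like unit-variance sub-Gaussians whose maximum over $n_-$ indices is $\wt O_\P(1)$.
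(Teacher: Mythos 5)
Your difference identity from the margin-balancing equations and your class-average upper bounds are correct, and they do recover the paper's upper bounds; combined, they suffice for part (a)'s bound on the positive support vector and would give the exact rate in part (b) \emph{once} the $\wt O_\P(1)$ bound on the negative support vector is available. That bound is precisely the step you leave open, and neither of your proposed repairs closes it. Writing $S_\pm:=\sqrt{1-\hat\rho^2}\<\zz_{\mathsf{sv}_\pm(\hat\vtheta)},\hat\vtheta\>$, your argument needs $\sqrt{1-\hat\rho^2}\,\|\bP_{\vmu}^\perp\bar\zz_+\|_2-2(\hat\kappa-\hat\rho\norm{\vmu}_2)\le\wt O_\P(1)$ to conclude $S_-\le\wt O_\P(1)$, but your expansion $\hat\kappa-\hat\rho\norm{\vmu}_2=\tfrac{A\wt g+B^2}{\sqrt{A^2+B^2}}+(\wt\rho-\hat\rho)\norm{\vmu}_2+\wt O_\P(1)$ founders on the middle term: the only control of $\hat\rho-\wt\rho$ comes from the strong-concavity argument behind \cref{lem:rho_hat}, namely $|\hat\rho-\wt\rho|=\wt O_\P(d^{-(a-c)/4})$ in case (b) (and the analogous $d^{-b/4}$ bound on $|\sqrt{1-\hat\rho^2}-\sqrt{1-\wt\rho^2}|$ in case (a)), so $(\wt\rho-\hat\rho)\norm{\vmu}_2$ is only known to be $\wt O_\P(d^{b/2-(a-c)/4})$, resp.\ $\wt O_\P(d^{(a-c)/2-b/4})$ --- polynomially large exactly in the regimes that matter (e.g.\ $b<a-c<2b$ in case (b), $a>b/2+c$ in case (a)). Hence route (i) cannot succeed by ``tracking Hanson--Wright errors'': the bottleneck is the precision of $\hat\rho$, not the concentration of the Gaussian quantities. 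Route (ii) is unsound as stated: $\hat\vtheta$ is the max-margin optimizer, not the explicit direction $\wt\vtheta=\bP_{\vmu}^\perp\wt\zz/\|\bP_{\vmu}^\perp\wt\zz\|_2$; a leave-one-out decomposition of $\wt\zz$ controls $\<\zz_i,\wt\vtheta\>$ but says nothing about $\<\zz_i,\hat\vtheta\>$, since $\hat\vtheta$ depends on every $\zz_i$ (in particular on the negative-class points) and no $O(d^{-1/2})$-scale bound on $\|\hat\vtheta-\wt\vtheta\|_2$ is available to transfer the estimate. Ruling out that $\hat\vtheta$ partially aligns with some $\zz_i$, $i\in\mathcal I_-$, is the content of the lemma; it cannot be imported through an independence-type maximal inequality.

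For comparison, the paper closes exactly this gap by a different mechanism: it fixes $\rho=\hat\rho$, uses the optimality of $\hat\vtheta$ against the explicit $\wt\vtheta$ through the sum-of-support-vector-margins representation, and then controls the $\wt\vtheta$-quantities to $\wt O_\P(1)$ precision via the uniform fluctuation bound $\max_{i,j\in\mathcal I_\pm}|\<\zz_i-\zz_j,\bP_{\vmu}^\perp\wt\zz\>|=\wt O_\P(\|\bP_{\vmu}^\perp\wt\zz\|_2)$ (already established in the proof of \cref{lem:upper_bound}\ref{lem:upper_bound_b}) together with Bernstein bounds for cross terms such as $\<\bar\zz_+,\bP_{\vmu}^\perp\bar\zz_-\>$. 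If you want to stay inside your framework, there is a repair that avoids fine knowledge of $\hat\rho$: from your chain, $S_-\le\sqrt{1-\hat\rho^2}\|\bP_{\vmu}^\perp\bar\zz_+\|_2+2\hat\rho\norm{\vmu}_2-2\hat\kappa+\wt O_\P(1)$; bound the first two terms uniformly in $\rho$ by $\bigl(\|\bP_{\vmu}^\perp\bar\zz_+\|_2^2+4\norm{\vmu}_2^2\bigr)^{1/2}$ as in \cref{lem:F_AB}, use $\hat\kappa\ge\bar\kappa-\wt O_\P(1)$ with $2\bar\kappa=\sqrt{4A^2+4B^2}$, and verify that $\bigl|\|\bP_{\vmu}^\perp\bar\zz_+\|_2^2+4\norm{\vmu}_2^2-4A^2-4B^2\bigr|$ is $O_\P(\norm{\vmu}_2|\wt g|)+O_\P(\sqrt{d/(\pi n^2)})+O_\P(d/n)$, which after dividing by $2\bar\kappa\gtrsim d^{b/2}\vee d^{(a-c)/2}$ is $o_\P(1)$ because $a<c+1$. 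That yields $S_-\le\wt O_\P(1)$, and your difference identity then delivers the exact rate for the positive support vector in case (b). As written, however, the proposal contains neither this argument nor an equivalent one, so the lemma is not yet proved.
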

\begin{proof}
    $\mathcal{SV}_\pm(\vtheta)$ may not be tractable, since it involves a nuisance term $\hat\rho g_i$ as defined in \cref{eq:SV_def_theta}. Therefore, we introduce a proxy of support vectors, which is easier to work with. Formally, let
    \begin{equation}\label{eq:V_def_theta}
        \begin{aligned}
            \mathcal{V}_+ = \mathcal{V}_+(\vtheta) & :=  
        \argmin_{i \in \mathcal{I}_+}    +  \< \zz_i, \vtheta \>  , \\
            \mathcal{V}_- = \mathcal{V}_-(\vtheta) & :=  
        \argmin_{i \in \mathcal{I}_-}    -  \< \zz_i, \vtheta \>  , \\
        \end{aligned}
    \end{equation}
    where $\mathcal{V}_+, \mathcal{V}_-$ are sets of (the indices of) the smallest $y_i \< \zz_{i}, \vtheta \>$ from each class. Similarly, let
    \begin{equation*}
        \mathsf{v}_+(\vtheta) \in  \mathcal{V}_+(\vtheta),
        \qquad
        \mathsf{v}_-(\vtheta) \in  \mathcal{V}_-(\vtheta),
    \end{equation*}
    which are arbitrary elements in $\mathcal{V}_+(\vtheta)$ and $\mathcal{V}_-(\vtheta)$. Note that $\mathcal{V}_\pm$ is simply $\mathcal{SV}_\pm$ but ignoring term $\hat\rho g_i$. Indeed, as we will show later, the impact of $\hat\rho g_i = O_\P(1)$ is almost negligible.

    We are going to prove \cref{lem:theta_hat_z} by deriving tight upper bounds and lower bounds for both $\pm \sqrt{1 - \hat\rho^2} \< \zz_{\mathsf{sv}_\pm(\hat\vtheta)}, \hat\vtheta \>$. Then we conclude the precise asymptotics by verifying the upper and lower bounds are matched.

    \paragraph{Upper bounds}
    Applying the same idea as \cref{eq:kappa_pm}, we can bound $\pm \< \zz_{\mathsf{v}_\pm(\vtheta)}, \vtheta \>$ via averaging:
    \begin{equation}
        \label{eq:zV_upper}
            \< \zz_{\mathsf{v}_+(\vtheta)}, \vtheta \> \le \< \bar\zz_{+}, \vtheta \> \le  \| \bP_{\vmu}^\perp \bar\zz_+ \|_2,
            \qquad
            - \< \zz_{\mathsf{v}_-(\vtheta)}, \vtheta \> \le - \< \bar\zz_{-}, \vtheta \> \le  \| \bP_{\vmu}^\perp \bar\zz_- \|_2,
    \end{equation}
    where the second inequality for each comes from \cref{eq:theta_min}. 
    To create a connection between $\mathsf{sv}_\pm(\hat\vtheta)$ and $\mathsf{v}_\pm(\hat\vtheta)$, note that by definition \cref{eq:SV_def_theta}
\begin{equation*}
    \begin{aligned}
        \phantom{-} \hat\rho g_{\mathsf{sv}_+(\hat\vtheta)} + \sqrt{1 - \hat\rho^2} \< \zz_{\mathsf{sv}_+(\hat\vtheta)}, \hat\vtheta \> 
        & \le \phantom{-} \hat\rho g_{\mathsf{v}_+(\hat\vtheta)} + \sqrt{1 - \hat\rho^2} \< \zz_{\mathsf{v}_+(\hat\vtheta)}, \hat\vtheta \>,
        \\
        - \hat\rho g_{\mathsf{sv}_-(\hat\vtheta)} - \sqrt{1 - \hat\rho^2} \< \zz_{\mathsf{sv}_-(\hat\vtheta)}, \hat\vtheta \> 
        & \le - \hat\rho g_{\mathsf{v}_-(\hat\vtheta)} - \sqrt{1 - \hat\rho^2} \< \zz_{\mathsf{v}_-(\hat\vtheta)}, \hat\vtheta \>.
    \end{aligned}
\end{equation*}
Using \cref{eq:zV_upper}, therefore we obtain the following non-asymptotic upper bounds on $\< \zz_{\mathsf{sv}_\pm(\hat\vtheta)}, \hat\vtheta \>$:
\begin{equation}
    \label{eq:zSV_upper}
    \begin{aligned}
        \phantom{-} \sqrt{1 - \hat\rho^2} \< \zz_{\mathsf{sv}_+(\hat\vtheta)}, \hat\vtheta \> \ \ 
        & \le   \phantom{-} \sqrt{1 - \hat\rho^2} \< \zz_{\mathsf{v}_+(\hat\vtheta)}, \hat\vtheta \> + \hat\rho \bigl(g_{\mathsf{v}_+(\hat\vtheta)} - g_{\mathsf{sv}_+(\hat\vtheta)} \bigr)  \\
        & \le  \phantom{-} \sqrt{1 - \hat\rho^2} \| \bP_{\vmu}^\perp \bar\zz_+ \|_2 + 
        2 \hat\rho \max_{i \in [n]} \abs{g_i} , \\
        - \sqrt{1 - \hat\rho^2} \< \zz_{\mathsf{sv}_-(\hat\vtheta)}, \hat\vtheta \> \ \ 
        & \le  - \sqrt{1 - \hat\rho^2} \< \zz_{\mathsf{v}_-(\hat\vtheta)}, \hat\vtheta \> - \hat\rho \bigl(g_{\mathsf{v}_-(\hat\vtheta)} - g_{\mathsf{sv}_-(\hat\vtheta)} \bigr) \\
        & \le  \phantom{-} \sqrt{1 - \hat\rho^2} \| \bP_{\vmu}^\perp \bar\zz_- \|_2 + 
        2 \hat\rho \max_{i \in [n]} \abs{g_i} .
    \end{aligned}
\end{equation}
To compute its asymptotics, recall that $\sqrt{n_+} \cdot \bar\zz_+ \sim \subGind(\bzero, \bI_d; K)$, $\sqrt{n_-} \cdot \bar\zz_- \sim \subGind(\bzero, \bI_d; K)$, and $\| \bP_{\vmu}^\perp \|_\mathrm{F} = \sqrt{d - 1}$. Then by \cref{lem:subG_concentrate}\ref{lem:subG-Hanson-Wright-II},
\begin{align}
        \| \bP_{\vmu}^\perp \bar\zz_+ \|_2 & = \frac{1}{\sqrt{n_+}} \| \bP_{\vmu}^\perp \|_\mathrm{F} \bigl(1 + o_\P(1)\bigr)
    = \sqrt{\frac{d}{\pi n}}\big(1 + o_\P(1) \big), \notag \\
        \| \bP_{\vmu}^\perp \bar\zz_- \|_2 & = \frac{1}{\sqrt{n_-}} \| \bP_{\vmu}^\perp \|_\mathrm{F} \bigl(1 + o_\P(1)\bigr)
    = \mathmakebox[\widthof{$\displaystyle \sqrt{\frac{d}{\pi n}}\big(1 + o_\P(1) \big)$}][r]{\sqrt{\frac{d}{n}}\big(1 + o_\P(1) \big)} = o_\P(1).
    \label{eq:Pzpm_asymp}
\end{align}
While, by maximal inequality \cref{lem:subG}\ref{lem:subG-c} or \cref{eq:max_g}, we have
\begin{equation}
    \label{eq:g_asymp}
    \max_{i \in [n]} \abs{g_i} = O_\P(\log n) = \wt O_\P(1),
\end{equation}
Plugging \cref{eq:Pzpm_asymp} and \eqref{eq:g_asymp} into \cref{eq:zSV_upper} gives the asymptotic upper bounds (involving $\hat\rho$):
\begin{equation}
    \label{eq:zSV_upper_asymp}
    \begin{aligned}
        \phantom{-} \sqrt{1 - \hat\rho^2} \< \zz_{\mathsf{sv}_+(\hat\vtheta)}, \hat\vtheta \>  
        & \le  \sqrt{1 - \hat\rho^2} \sqrt{\frac{d}{\pi n}}\big(1 + o_\P(1) \big) + 
        \hat\rho \cdot \wt O_\P(1) , \\
        - \sqrt{1 - \hat\rho^2} \< \zz_{\mathsf{sv}_-(\hat\vtheta)}, \hat\vtheta \> 
        & \le  \sqrt{1 - \hat\rho^2} \cdot o_\P(1) + 
        \hat\rho \cdot \wt O_\P(1) .
    \end{aligned}
\end{equation}

\paragraph{Lower bounds} Similar as the proof of \cref{lem:upper_bound}, a lower bound can be obtained by plugging our constructed solution $\wt\vtheta = \bP_{\vmu}^\perp \wt\zz/\| \bP_{\vmu}^\perp \wt\zz \|_2$, which can be a good ``proxy'' of $\vtheta$. Again, by margin-balancing condition \cref{eq:svm_sv_bal}, we can express the optimal $\vtheta$ as\footnote{
    Notice that if $|\hat\rho| < 1$, then $\argmax_{ \vtheta \in \S^{d-1}, \vtheta \perp \vmu } \kappa(\hat\rho, \vtheta, \hat\beta_0)$ is unique (on the event of $\{ \hat\kappa > 0\}$), and we could write $\hat\vtheta = \argmax_{ \vtheta \in \S^{d-1}, \vtheta \perp \vmu } \kappa(\hat\rho, \vtheta, \hat\beta_0)$. However, if $|\hat\rho| = 1$, then according to our construction \cref{eq:def-rho-theta}, the arguments of the maxima can be any $\vtheta \in \S^{d-1}$ such that $\vtheta \perp \vmu$, while $\hat\vtheta = \vmu_\perp$ as defined in \cref{eq:def-rho-theta_hat}.
}
\begin{equation*}
    \begin{aligned}
        \hat\vtheta & \in \argmax_{ \vtheta \in \S^{d-1}, \vtheta \perp \vmu } \kappa(\hat\rho, \vtheta, \hat\beta_0)
    = \argmax_{ \vtheta \in \S^{d-1}, \vtheta \perp \vmu } \frac{\kappa_{\mathsf{sv}_+(\vtheta)}(\hat\rho, \vtheta, \hat\beta_0) + \kappa_{\mathsf{sv}_-(\vtheta)}(\hat\rho, \vtheta, \hat\beta_0)}{2} \\
    & = \argmax_{ \vtheta \in \S^{d-1}, \vtheta \perp \vmu } 
    \left\{ \hat\rho \norm{\vmu}_2 + \hat\rho \frac{g_{\mathsf{sv}_+(\vtheta)} - g_{\mathsf{sv}_-(\vtheta)} }{2} + \sqrt{1 - \hat\rho^2}  \frac{ \< \zz_{\mathsf{sv}_+(\vtheta)}, \vtheta \>  -  \< \zz_{\mathsf{sv}_-(\vtheta)}, \vtheta \>  }{2} \right\}   \\
    & = \argmax_{ \vtheta \in \S^{d-1}, \vtheta \perp \vmu } \left\{  \hat\rho \left(  g_{\mathsf{sv}_+(\vtheta)} - g_{\mathsf{sv}_-(\vtheta)} \right) +  \sqrt{1 - \hat\rho^2} \left( \< \zz_{\mathsf{sv}_+(\vtheta)}, \vtheta \>  +  \< \zz_{\mathsf{sv}_-(\vtheta)}, \vtheta \> \right)  \right\}.
    \end{aligned}
\end{equation*}
Therefore, recalling \cref{eq:V_def_theta}, we have
\begin{align*}
        & \sqrt{1 - \hat\rho^2}  \bigl( \< \zz_{\mathsf{sv}_+(\hat\vtheta)}, \hat\vtheta \>  -  \< \zz_{\mathsf{sv}_-(\hat\vtheta)}, \hat\vtheta \>  \bigr) \\
        \ge {} & \sqrt{1 - \hat\rho^2}  \bigl( \< \zz_{\mathsf{sv}_+(\wt\vtheta)}, \wt\vtheta \>  -  \< \zz_{\mathsf{sv}_-(\wt\vtheta)}, \wt\vtheta \>  \bigr) + \hat\rho\bigl( g_{\mathsf{sv}_+(\wt\vtheta)} - g_{\mathsf{sv}_-(\wt\vtheta)} -  g_{\mathsf{sv}_+(\hat\vtheta)} + g_{\mathsf{sv}_-(\hat\vtheta)} \bigr) \\
        \ge {} & \sqrt{1 - \hat\rho^2}   \bigl( \< \zz_{\mathsf{v}_+(\wt\vtheta)}, \wt\vtheta \>  -  \< \zz_{\mathsf{v}_-(\wt\vtheta)}, \wt\vtheta \>  \bigr) 
        - 4 \hat\rho \max_{i \in [n]} \abs{g_i}.
\end{align*}
Combining it with \cref{eq:zSV_upper}, we can obtain a lower bound for each term using $\wt\vtheta$:
\begin{equation}\label{eq:zSV_lower}
    \begin{aligned}
        \phantom{-} \sqrt{1 - \hat\rho^2} \< \zz_{\mathsf{sv}_+(\hat\vtheta)}, \hat\vtheta \>
        & \ge \sqrt{1 - \hat\rho^2}   \bigl( \< \zz_{\mathsf{v}_+(\wt\vtheta)}, \wt\vtheta \>  -  \< \zz_{\mathsf{v}_-(\wt\vtheta)}, \wt\vtheta \>  \bigr) 
        - 4 \hat\rho \max_{i \in [n]} \abs{g_i}
        + \sqrt{1 - \hat\rho^2} \< \zz_{\mathsf{sv}_-(\hat\vtheta)}, \hat\vtheta \> \\
        & \ge \sqrt{1 - \hat\rho^2} \bigl( 
        - \< \zz_{\mathsf{v}_-(\wt\vtheta)}, \wt\vtheta \> - \| \bP_{\vmu}^\perp \bar\zz_- \|_2 \bigr)
        - 6 \hat\rho \max_{i \in [n]} \abs{g_i}
        + \sqrt{1 - \hat\rho^2} \< \zz_{\mathsf{v}_+(\wt\vtheta)}, \wt\vtheta \>, 
        \\
        - \sqrt{1 - \hat\rho^2} \< \zz_{\mathsf{sv}_-(\hat\vtheta)}, \hat\vtheta \>
        & \ge \sqrt{1 - \hat\rho^2}   \bigl( \< \zz_{\mathsf{v}_+(\wt\vtheta)}, \wt\vtheta \>  -  \< \zz_{\mathsf{v}_-(\wt\vtheta)}, \wt\vtheta \>  \bigr) 
        - 4 \hat\rho \max_{i \in [n]} \abs{g_i}
        - \sqrt{1 - \hat\rho^2} \< \zz_{\mathsf{sv}_+(\hat\vtheta)}, \hat\vtheta \> \\
        & \ge \sqrt{1 - \hat\rho^2} \bigl( + \< \zz_{\mathsf{v}_+(\wt\vtheta)}, \wt\vtheta \>
        - \| \bP_{\vmu}^\perp \bar\zz_+ \|_2  \bigr)
        - 6 \hat\rho \max_{i \in [n]} \abs{g_i}
        - \sqrt{1 - \hat\rho^2} \< \zz_{\mathsf{v}_-(\wt\vtheta)}, \wt\vtheta \>.
    \end{aligned}
\end{equation}
To derive its asymptotic order, we first define two statistics that are closely related to $\wt\vtheta$: 
\begin{equation}\label{eq:wt_theta_pm}
    \wt\vtheta_+ := \frac{\bP_{\vmu}^\perp \bar\zz_+}{\| \bP_{\vmu}^\perp \bar\zz_+ \|_2},
    \qquad
    \wt\vtheta_- := \frac{-\bP_{\vmu}^\perp \bar\zz_-}{\| \bP_{\vmu}^\perp \bar\zz_- \|_2}.
\end{equation}
Then, the difference terms inside the parentheses in \cref{eq:zSV_lower} can be expressed as
\begin{equation}\label{eq:zV_diff}
    \begin{aligned}
        \phantom{+} \< \zz_{\mathsf{v}_+(\wt\vtheta)}, \wt\vtheta \> - \| \bP_{\vmu}^\perp \bar\zz_+ \|_2
        & = \min_{i \in \mathcal{I}_+} \< + \zz_i, \wt\vtheta \> - \< \bar\zz_+, \wt\vtheta_+ \>
        = \min_{i \in \mathcal{I}_+} \< \zz_i - \bar\zz_+, \wt\vtheta \> + \< \bar\zz_+, \wt\vtheta - \wt\vtheta_+ \>,
        \\
        - \< \zz_{\mathsf{v}_-(\wt\vtheta)}, \wt\vtheta \> - \| \bP_{\vmu}^\perp \bar\zz_- \|_2
        & = \min_{i \in \mathcal{I}_-} \< - \zz_i, \wt\vtheta \> + \< \bar\zz_-, \wt\vtheta_- \>
        = \min_{i \in \mathcal{I}_-} \< \bar\zz_- - \zz_i, \wt\vtheta \> - \< \bar\zz_-, \wt\vtheta - \wt\vtheta_- \>.
    \end{aligned}
\end{equation}
Now we study the two terms on the R.H.S. of \cref{eq:zV_diff}. For the first term, based on \cref{eq:max_z},
\begin{equation}\label{eq:zV_diff_1}
    \begin{aligned}
        \min_{i \in \mathcal{I}_+} \< \zz_i - \bar\zz_+, \wt\vtheta \>
        \ge -\max_{i \in \mathcal{I}_+} \bigl| \< \zz_i - \bar\zz_+, \wt\vtheta \> \bigr|
        & = \wt O_\P(1), \\
        \min_{i \in \mathcal{I}_-} \< \bar\zz_- - \zz_i, \wt\vtheta \>
        \ge -\max_{i \in \mathcal{I}_-} \bigl| \< \zz_i - \bar\zz_-, \wt\vtheta \> \bigr|
        & = \wt O_\P(1).
    \end{aligned}
\end{equation}
For the second term,
\begin{equation}\label{eq:zV_diff_2+}
    \begin{aligned}
        \< \bar\zz_+, \wt\vtheta - \wt\vtheta_+ \>
        & =
        \frac{1}{\| \bP_{\vmu}^\perp \wt\zz \|_2} \< \bar\zz_+, \bP_{\vmu}^\perp \wt\zz \> 
        - \frac{1}{\| \bP_{\vmu}^\perp \bar\zz_+ \|_2} \< \bar\zz_+, \bP_{\vmu}^\perp \bar\zz_+ \> \\
        & = 
        \frac{1}{\| \bP_{\vmu}^\perp \wt\zz \|_2} 
        \left\{  \< \bar\zz_+, \bP_{\vmu}^\perp \wt\zz \>  -  \< \bar\zz_+, \bP_{\vmu}^\perp \bar\zz_+ \> \cdot \frac{\| \bP_{\vmu}^\perp \wt\zz \|_2}{\| \bP_{\vmu}^\perp \bar\zz_+ \|_2} \right\}
        \\
        & \overset{\mathmakebox[0pt][c]{\text{(i)}}}{\ge}
        \frac{1}{\| \bP_{\vmu}^\perp \wt\zz \|_2} 
        \left\{  \< \bar\zz_+, \bP_{\vmu}^\perp \wt\zz \> 
        - \< \bar\zz_+, \bP_{\vmu}^\perp \bar\zz_+ \> \cdot 
        \frac12 \biggl( 1 +  \frac{\| \bP_{\vmu}^\perp \bar\zz_- \|_2}{\| \bP_{\vmu}^\perp \bar\zz_+ \|_2} \biggr) 
        \right\} \\
        & \overset{\mathmakebox[0pt][c]{\text{(ii)}}}{=} - \frac{1}{2\| \bP_{\vmu}^\perp \wt\zz \|_2} \left( 
            \< \bar\zz_+, \bP_{\vmu}^\perp \bar\zz_-  \> + \| \bP_{\vmu}^\perp \bar\zz_+ \|_2 \| \bP_{\vmu}^\perp \bar\zz_- \|_2
         \right) \\
        & \overset{\mathmakebox[0pt][c]{\text{(iii)}}}{=}
         - \sqrt{\frac{\pi n}{d}}\big(1 + o_\P(1) \big) \left\{ 
            O_\P\biggl(  \sqrt{\frac{d}{\pi n^2}} \biggr) + \sqrt{\frac{d}{\pi n}}\sqrt{\frac{d}{n}} \big(1 + o_\P(1) \big)
         \right\} \\
        & = - \sqrt{\frac{d}{n}}\big(1 + o_\P(1) \big)
        = o_\P(1),
    \end{aligned}
\end{equation}
where (i) is from triangular inequality $2\| \bP_{\vmu}^\perp \wt\zz \|_2 \le \| \bP_{\vmu}^\perp \bar\zz_+ \|_2 + \| \bP_{\vmu}^\perp \bar\zz_- \|_2$, (ii) uses $2 \wt\zz - \bar z_+ = -\bar z_-$, and (iii) applies the asymptotic results \cref{eq:P_wtz}, \eqref{eq:Pzpm_asymp}, and the fact that $\bar\zz_+ \indep \bar\zz_-$, 
\begin{equation*}
    \< \bar\zz_{+}, \bP_{\vmu}^\perp \bar\zz_{-} \> = \frac{1}{\sqrt{n_+ n_-}} O_{\P}(\| \bP_{\vmu}^\perp \|_\mathrm{F}) = O_\P\biggl( \sqrt{\frac{d}{\pi n^2}}  \biggr),
\end{equation*}
by \cref{lem:subG_concentrate}\ref{lem:subG-Bernstein}. Similarly, we also have
\begin{equation}\label{eq:zV_diff_2-}
    \begin{aligned}
        - \< \bar\zz_-, \wt\vtheta - \wt\vtheta_- \>
        & =
        - \frac{1}{\| \bP_{\vmu}^\perp \wt\zz \|_2} \< \bar\zz_-, \bP_{\vmu}^\perp \wt\zz \> 
        - \frac{1}{\| \bP_{\vmu}^\perp \bar\zz_- \|_2} \< \bar\zz_-, \bP_{\vmu}^\perp \bar\zz_- \> \\
        & \ge - \frac{1}{2\| \bP_{\vmu}^\perp \wt\zz \|_2} \left( 
            \< \bar\zz_+, \bP_{\vmu}^\perp \bar\zz_-  \> + \| \bP_{\vmu}^\perp \bar\zz_+ \|_2 \| \bP_{\vmu}^\perp \bar\zz_- \|_2
         \right) \\
        & = o_\P(1).
    \end{aligned}
\end{equation}
Substituting \cref{eq:zV_diff_1}, \eqref{eq:zV_diff_2+}, and \eqref{eq:zV_diff_2-} into \cref{eq:zV_diff}, we get
\begin{equation}
    \label{eq:zv_diff_asymp}
    \< \zz_{\mathsf{v}_+(\wt\vtheta)}, \wt\vtheta \> - \| \bP_{\vmu}^\perp \bar\zz_+ \|_2
    \ge \wt O_\P(1),
    \qquad
    - \< \zz_{\mathsf{v}_-(\wt\vtheta)}, \wt\vtheta \> - \| \bP_{\vmu}^\perp \bar\zz_- \|_2
    \ge \wt O_\P(1).
\end{equation}
And combining this with \cref{eq:Pzpm_asymp}, we have
\begin{equation}
    \label{eq:zv_wt_asymp}
    \< \zz_{\mathsf{v}_+(\wt\vtheta)}, \wt\vtheta \> 
    \ge \sqrt{\frac{d}{\pi n}}\big(1 + o_\P(1) \big) + \wt O_\P(1),
    \qquad
    - \< \zz_{\mathsf{v}_-(\wt\vtheta)}, \wt\vtheta \> 
    \ge \wt O_\P(1).
\end{equation}
Plugging \cref{eq:zv_diff_asymp}, \eqref{eq:zv_wt_asymp}, and \eqref{eq:g_asymp} into \cref{eq:zSV_lower} gives the asymptotic lower bounds (involving $\hat\rho$):
\begin{equation}
    \label{eq:zSV_lower_asymp}
    \begin{aligned}
        \phantom{-} \sqrt{1 - \hat\rho^2} \< \zz_{\mathsf{sv}_+(\hat\vtheta)}, \hat\vtheta \>  
        & \ge  \sqrt{1 - \hat\rho^2} \biggl( \sqrt{\frac{d}{\pi n}}\big(1 + o_\P(1) \big) 
        + \wt O_\P(1)
        \biggr)
        + 
        \hat\rho \cdot \wt O_\P(1) , \\
        - \sqrt{1 - \hat\rho^2} \< \zz_{\mathsf{sv}_-(\hat\vtheta)}, \hat\vtheta \> 
        & \ge  \sqrt{1 - \hat\rho^2} \cdot \wt O_\P(1) + 
        \hat\rho \cdot \wt O_\P(1) .
    \end{aligned}
\end{equation}

~\\
\noindent
Finally, combining upper bounds \cref{eq:zSV_upper_asymp} and lower bounds \cref{eq:zSV_lower_asymp}, we obtain the exact order
\begin{equation*}
    \begin{aligned}
        \phantom{-} \sqrt{1 - \hat\rho^2} \< \zz_{\mathsf{sv}_+(\hat\vtheta)}, \hat\vtheta \>  
        & =  \sqrt{1 - \hat\rho^2} \biggl( \sqrt{\frac{d}{\pi n}}\big(1 + o_\P(1) \big) 
        + \wt O_\P(1)
        \biggr)
        + 
        \hat\rho \cdot \wt O_\P(1) \\
        & = \sqrt{1 - \hat\rho^2} \sqrt{\frac{d}{\pi n}}\big(1 + o_\P(1) \big) 
        + \wt O_\P(1),
        \\
        - \sqrt{1 - \hat\rho^2} \< \zz_{\mathsf{sv}_-(\hat\vtheta)}, \hat\vtheta \> 
        & =  \sqrt{1 - \hat\rho^2} \cdot \wt O_\P(1) + 
        \hat\rho \cdot \wt O_\P(1) \\
        & = \wt O_\P(1).
    \end{aligned}
\end{equation*}

\paragraph{\ref{lem:theta_hat_z(a)}:}
If $a < b + c$, according to \cref{lem:rho_hat}\ref{lem:rho_hat(a)}, $\hat\rho = 1 - o_\P(1)$. It is clear that \cref{lem:theta_hat_z} holds for $\hat\rho = \pm 1$. Now, restrict on the event $\{ |\hat\rho| < 1 \}$.
\begin{itemize}
    \item If $a > \frac{b}{2} + c$, then $\sqrt{1 - \hat\rho^2} = \frac12 d^{(a-b-c)/2}\bigl( 1 + o_\P(1) \bigr)$, hence
    \begin{equation*}
        \sqrt{1 - \hat\rho^2} \< \zz_{\mathsf{sv}_+(\hat\vtheta)}, \hat\vtheta \>
        = \frac12 d^{a - \frac{b}{2} - c}\big(1 + o_\P(1) \big).
    \end{equation*}
    \item If $a \le \frac{b}{2} + c$, then $\sqrt{1 - \hat\rho^2}\sqrt{d/\pi n} = \wt O_\P(1)$, hence
    \begin{equation*}
        \sqrt{1 - \hat\rho^2} \< \zz_{\mathsf{sv}_+(\hat\vtheta)}, \hat\vtheta \>
        = \wt O_\P(1).
    \end{equation*}
\end{itemize}
\paragraph{\ref{lem:theta_hat_z(b)}:}
If $a > b + c$, according to \cref{lem:rho_hat}\ref{lem:rho_hat(b)}, $\hat\rho = o_\P(1)$. Hence, on the event $\{ |\hat\rho| < 1 \}$,
\begin{equation*}
    \< \zz_{\mathsf{sv}_+(\hat\vtheta)}, \hat\vtheta \>
    = \sqrt{\frac{d}{\pi n}}\big(1 + o_\P(1) \big).
\end{equation*}
This also holds regardless of $\hat\rho$, since $\P( |\hat\rho| < 1 ) \to 1$ as $d \to \infty$. Then we complete the proof.
\end{proof}

\subsubsection{Asymptotic expression of $\hat\beta_0$: Proof of \cref{lem:beta0_asymp}}

Finally, we consider arbitrary $\tau \ge 1$ and give an explicit expression for $\hat\beta_0$ with its asymptotics. Be aware that $\tau = \tau_d$ may depend on $d$.
\begin{lem}[Asymptotics of $\hat\beta_0$] \label{lem:beta0_asymp}
    Suppose that $a < c + 1$ and $\tau \ge 1$. Then we have
    \begin{equation*}
        \begin{aligned}
            \hat\beta_0 & = \left(1 - \frac{2}{\tau + 1}\right) \hat\rho\norm{\vmu}_2 
            - \hat\rho \frac{\tau g_{\mathsf{sv}_-(\hat\vtheta)} + g_{\mathsf{sv}_+(\hat\vtheta)} }{\tau + 1}
            - \sqrt{1 - \hat\rho^2} \frac{ \tau \< \zz_{\mathsf{sv}_-(\hat\vtheta)}, \hat\vtheta \> + \< \zz_{\mathsf{sv}_+(\hat\vtheta)}, \hat\vtheta \> }{\tau + 1} \\
            & = \left(1 - \frac{2}{\tau + 1}\right) \hat\rho\norm{\vmu}_2 
            - \frac{1}{\tau + 1} \sqrt{1 - \hat\rho^2}\< \zz_{\mathsf{sv}_+(\hat\vtheta)}, \hat\vtheta \> + \wt O_\P(1).
        \end{aligned}
    \end{equation*}
    \begin{enumerate}[label=(\alph*)]
        \item \label{lem:beta0_asymp(a)}
        If $a < b + c$, then
        \begin{equation*}
            \begin{aligned}
                \hat\beta_0 & = \left(1 - \frac{2}{\tau + 1}\right) \hat\rho\norm{\vmu}_2 
            - \frac{1}{\tau + 1} \wt O_\P(d^{a-\frac{b}{2}-c} \vee 1) 
            + \wt O_\P(1) \\
            & = \left(1 - \frac{2}{\tau + 1}\right) d^{b/2}\bigl(1 + o_{\P}(1)\bigr) 
            - \frac{1}{\tau + 1} \wt O_\P(d^{a-\frac{b}{2}-c} \vee 1)
            + \wt O_\P(1).
            \end{aligned}
    \end{equation*}
        \item \label{lem:beta0_asymp(b)}
        If $a > b + c$, then
        \begin{equation*}
            \begin{aligned}
            & \hat\beta_0  = \left(1 - \frac{2}{\tau + 1}\right) \hat\rho\norm{\vmu}_2 
            - \frac{1}{\tau + 1}\sqrt{\frac{d}{\pi n}}\bigl(1 + o_{\P}(1)\bigr) 
            + \wt O_\P(1) \\
            = {} & 
            \begin{cases} 
                \,  \displaystyle \left(1 - \frac{2}{\tau + 1}\right) 2d^{(2b-a+c)/2} \bigl(1 + o_{\P}(1)\bigr) 
                - \frac{1}{\tau + 1} d^{(a-c)/2} \bigl(1 + o_{\P}(1)\bigr) 
                + \wt O_\P(1) , & \ \text{if} \ a < 2b + c, \\
                \,  \displaystyle \phantom{\left(1 - \frac{2}{\tau + 1}\right) 2d^{(2b-a+c)/2} \bigl(1 + o_{\P}(1)\bigr) 
                }
                - \frac{1}{\tau + 1} d^{(a-c)/2} \bigl(1 + o_{\P}(1)\bigr) 
                + \wt O_\P(1) , & \ \text{if} \ a > 2b + c. \end{cases}
        \end{aligned}
    \end{equation*}
    \end{enumerate}
\end{lem}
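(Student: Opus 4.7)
The plan is to derive the first displayed equality directly from the non-asymptotic identity \cref{eq:beta0_hat} of \cref{prop:SVM_tau_relation} (or equivalently \cref{lem:indep_tau}\ref{lem:indep_tau(b)}), and then use \cref{lem:rho_hat} and \cref{lem:theta_hat_z} together with the sub-gaussian maximal bound \cref{eq:g_asymp} to estimate each term. Specifically, writing $\xx_i = y_i\vmu+\zz_i$ and decomposing $\hat\vbeta = \hat\rho\,\vmu/\|\vmu\|_2 + \sqrt{1-\hat\rho^2}\,\hat\vtheta$ as in \cref{eq:def-rho-theta_hat}, a direct expansion gives, for $i\in\mathcal{I}_\pm$,
\[
\<\xx_i,\hat\vbeta\> \;=\; \pm\hat\rho\|\vmu\|_2 + \hat\rho g_i + \sqrt{1-\hat\rho^2}\,\<\zz_i,\hat\vtheta\>.
\]
Substituting this into $\hat\beta_0 = -(\tau\<\xx_{\mathsf{sv}_-(\hat\vbeta)},\hat\vbeta\>+\<\xx_{\mathsf{sv}_+(\hat\vbeta)},\hat\vbeta\>)/(\tau+1)$ and collecting terms, using $\frac{\tau-1}{\tau+1}=1-\frac{2}{\tau+1}$, yields the first claimed identity.

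For the second line, I would bound the two ``crossed'' error terms. Since $|\hat\rho|\le 1$ and the weights $\tau/(\tau+1),\,1/(\tau+1)$ are convex and sum to $1$, the $g$-contribution satisfies
\[
\left|\hat\rho\,\frac{\tau g_{\mathsf{sv}_-(\hat\vtheta)}+g_{\mathsf{sv}_+(\hat\vtheta)}}{\tau+1}\right| \;\le\; \max_{i\in[n]}|g_i| \;=\; \wt O_\P(1),
\]
by \cref{eq:g_asymp}. Similarly, because $\tau/(\tau+1)\le 1$ uniformly in $d$, \cref{lem:theta_hat_z} (whose negative-class estimate is $\wt O_\P(1)$ in both regimes $a<b+c$ and $a>b+c$) gives
\[
\left|\frac{\tau}{\tau+1}\sqrt{1-\hat\rho^2}\,\<\zz_{\mathsf{sv}_-(\hat\vtheta)},\hat\vtheta\>\right| \;\le\; \bigl|\sqrt{1-\hat\rho^2}\,\<\zz_{\mathsf{sv}_-(\hat\vtheta)},\hat\vtheta\>\bigr| \;=\; \wt O_\P(1).
\]
Absorbing both contributions into a $\wt O_\P(1)$ remainder yields the second displayed equality.

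Finally, to obtain the asymptotic expansions \ref{lem:beta0_asymp(a)} and \ref{lem:beta0_asymp(b)}, I would simply substitute the orders established earlier: in case \ref{lem:beta0_asymp(a)} (with $a<b+c$), \cref{lem:rho_hat}\ref{lem:rho_hat(a)} gives $\hat\rho\,\|\vmu\|_2 = d^{b/2}(1+o_\P(1))$ while \cref{lem:theta_hat_z}\ref{lem:theta_hat_z(a)} gives $\sqrt{1-\hat\rho^2}\<\zz_{\mathsf{sv}_+(\hat\vtheta)},\hat\vtheta\>=\wt O_\P(d^{a-b/2-c}\vee 1)$; in case \ref{lem:beta0_asymp(b)} (with $a>b+c$), \cref{lem:theta_hat_z}\ref{lem:theta_hat_z(b)} combined with $\sqrt{1-\hat\rho^2}=1-o_\P(1)$ (from $\hat\rho=o_\P(1)$) gives $\sqrt{1-\hat\rho^2}\<\zz_{\mathsf{sv}_+(\hat\vtheta)},\hat\vtheta\>=\sqrt{d/\pi n}\,(1+o_\P(1))=d^{(a-c)/2}(1+o_\P(1))$, and the two sub-cases $a<2b+c$ vs.\ $a>2b+c$ in \cref{lem:rho_hat}\ref{lem:rho_hat(b)} control the first term $\hat\rho\,\|\vmu\|_2$.

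Conceptually there is no real obstacle here — the lemma is essentially a bookkeeping consequence of (i) the exact expression for $\hat\beta_0$ from margin balancing and (ii) the order estimates already proved in \cref{lem:rho_hat} and \cref{lem:theta_hat_z}. The only mild point of care is that $\tau=\tau_d$ is allowed to grow with $d$, so one must use the fact that $\tau/(\tau+1)$ and $1/(\tau+1)$ are bounded in $[0,1]$ uniformly in $d$ to convert bounds on the positive-class and negative-class quantities into a single $\wt O_\P(1)$ remainder; once this is observed the argument is straightforward.
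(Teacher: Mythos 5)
Your proposal is correct and follows essentially the same route as the paper: the paper also obtains the first identity from the margin-balancing condition (which it explicitly notes is equivalent to \cref{eq:beta0_hat} under the parametrization of \cref{eq:def-rho-theta_hat}), then controls the $g$-term via \cref{eq:g_asymp} and the negative-class $\hat\vtheta$-term via \cref{lem:theta_hat_z}, and finally substitutes the order estimates from \cref{lem:rho_hat} and \cref{lem:theta_hat_z} exactly as you do. Your observation that $\tau/(\tau+1)$ and $1/(\tau+1)$ are bounded in $[0,1]$ uniformly in $d$ is indeed the point that makes the remainder absorption valid for growing $\tau_d$, and it is implicit in the paper's inequality chain.
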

\begin{proof}
    We rewrite the \emph{margin-balancing} condition \cref{eq:margin_pm}, \eqref{eq:margin-bal} in terms of $\hat\rho, \hat\vtheta, \hat\beta_0$, which generalizes \cref{eq:svm_sv_bal} to arbitrary $\tau \ge 1$:
    \begin{equation*}
        \begin{aligned}
           \kappa(\hat\rho, \hat\vtheta, \hat\beta_0) & = \kappa_{\mathsf{sv}_+(\hat\vtheta)}(\hat\rho, \hat\vtheta, \hat\beta_0) = 
           \tau^{-1} \Bigl(
           \hat\rho \norm{\vmu}_2 + \hat\beta_0 + \hat\rho g_{\mathsf{sv}_+(\hat\vtheta)} + \sqrt{1 - \hat\rho^2} \< \zz_{\mathsf{sv}_+(\hat\vtheta)}, \hat\vtheta \> 
           \Bigr)
           \\
           & = \kappa_{\mathsf{sv}_-(\hat\vtheta)}(\hat\rho, \hat\vtheta, \hat\beta_0) = 
           \phantom{ \tau^{-1} \Bigl( }
           \hat\rho \norm{\vmu}_2 - \hat\beta_0 - \hat\rho g_{\mathsf{sv}_-(\hat\vtheta)} - \sqrt{1 - \hat\rho^2} \< \zz_{\mathsf{sv}_-(\hat\vtheta)}, \hat\vtheta \>
           .
        \end{aligned}
    \end{equation*}
    Then we can solve the expression for $\hat\beta_0$ (this equals \cref{eq:beta0_hat} in \cref{lem:indep_tau} with parametrization \cref{eq:def-rho-theta_hat}). Its asymptotic simplification is followed by \cref{eq:g_asymp}:
    \begin{equation*}
        \abs{\hat\rho \frac{\tau g_{\mathsf{sv}_-(\hat\vtheta)} + g_{\mathsf{sv}_+(\hat\vtheta)} }{\tau + 1} } 
        \le \abs{\hat\rho} \frac{\tau |g_{\mathsf{sv}_-(\hat\vtheta)}| + |g_{\mathsf{sv}_+(\hat\vtheta)}| }{\tau + 1}
        \le \max_{i \in [n]} \abs{g_i} = \wt O_\P(1),
    \end{equation*}
    and \cref{lem:theta_hat_z}:
    \begin{equation*}
        \abs{
        \frac{\tau}{\tau + 1} \sqrt{1 - \hat\rho^2}\< \zz_{\mathsf{sv}_-(\hat\vtheta)}, \hat\vtheta \>
        } = \wt O_\P(1).
    \end{equation*}
    For \textbf{\ref{lem:beta0_asymp(a)}}, plugging $\hat\rho = 1 - o_\P(1)$ by \cref{lem:rho_hat}\ref{lem:rho_hat(a)} and asymptotics of $\< \zz_{\mathsf{sv}_+(\hat\vtheta)}, \hat\vtheta \>$ by \cref{lem:theta_hat_z}\ref{lem:theta_hat_z(a)}. For \textbf{\ref{lem:beta0_asymp(b)}}, plugging $\hat\rho = 2d^{(b-a+c)/2}\bigl(1 + o_\P(1)\bigr)$ by \cref{lem:rho_hat}\ref{lem:rho_hat(b)} from i., while $\hat\rho\norm{\bmu}_2 = o_\P(1)$ from ii., and asymptotics of $\< \zz_{\mathsf{sv}_+(\hat\vtheta)}, \hat\vtheta \>$ by \cref{lem:theta_hat_z}\ref{lem:theta_hat_z(b)}. This completes the proof.
\end{proof}

\subsection{Classification error: Completing the proof of \cref{thm:main_high-imbal}}
\label{subsec:highimb_err}

\begin{proof}[\textbf{Proof of \cref{thm:main_high-imbal}}]
Let $(\xx_\mathrm{new}, y_\mathrm{new})$ be a test data point independent of the training set $\{(\xx_i, y_i)\}_{i=1}^n$, such that $\xx_\mathrm{new} = y_\mathrm{new} \bmu + \zz_\mathrm{new}$, and $\zz_\mathrm{new} \sim \subGind(\bzero, \bI_d; K)$. Recall $\hat f(\xx) = \< \xx, \hat\vbeta \> + \hat\beta_0$. Following the same decomposition as \cref{eq:logits},
\begin{equation*}
    \begin{aligned}
        y_\mathrm{new} \hat f(\xx_\mathrm{new}) & = y_\mathrm{new} (\< \xx_\mathrm{new}, \hat\vbeta \> + \hat\beta_0) \\
        & = \hat\rho \norm{\vmu}_2 + y_\mathrm{new} \hat\beta_0 + 
        y_\mathrm{new} \bigl(  \hat\rho g_\mathrm{new} + \sqrt{1 - \hat\rho^2} \< \zz_\mathrm{new}, \hat\vtheta \> \bigr) \\
        & = \hat\rho \norm{\vmu}_2 + y_\mathrm{new} \hat\beta_0 + y_\mathrm{new} G_d,
    \end{aligned}
\end{equation*}
where 
\begin{equation*}
g_\mathrm{new} := \left\< \zz_\mathrm{new}, \frac{\vmu}{\norm{\vmu}_2} \right\>,
\qquad 
G_d := \hat\rho g_\mathrm{new} + \sqrt{1 - \hat\rho^2} \< \zz_\mathrm{new}, \hat\vtheta \>.
\end{equation*}
Therefore, the minority and majority test errors are
\begin{equation*}
    \begin{aligned}
        \Err_+ & = \P\left( \hat f(\xx_\mathrm{new}) \le 0 \,\big|\, y_\mathrm{new} = +1 \right)
        = \P\left( \hat\rho \norm{\vmu}_2 + \hat\beta_0 + G_d \le 0 \right), \\
        \Err_- & = \P\left( \hat f(\xx_\mathrm{new}) > 0 \,\big|\, y_\mathrm{new} = -1 \right)
        = \P\left( \hat\rho \norm{\vmu}_2 - \hat\beta_0 - G_d < 0 \right). \\
    \end{aligned}
\end{equation*}
By \cref{lem:subG_concentrate}\ref{lem:subG-Hoeffding}, we have $\norm{g_\mathrm{new}}_{\psi_2}, \| \< \zz_\mathrm{new}, \hat\vtheta \> \|_{\psi_2} \lesssim K$, since $\zz_\mathrm{new} \indep (\hat\rho, \hat\vtheta)$ and then $\forall\, t > 0$,
\begin{equation*}
     \P\left( \bigl| \< \zz_\mathrm{new}, \hat\vtheta \> \bigr| > t \right)
     =  \E\left[ \P\left( \bigl| \< \zz_\mathrm{new}, \hat\vtheta \> \bigr| > t \,\big|\, \hat\vtheta \right) \right]
     \le 2 e^{-ct^2/K^2}, \qquad \text{for some} ~ c > 0.
\end{equation*}
Then by \cref{lem:subG}\ref{lem:subG-a},
\begin{equation*}
     \norm{G_d}_{\psi_2} \le 
     \norm{\hat\rho g_\mathrm{new}}_{\psi_2} + \| \sqrt{1 - \hat\rho^2} \< \zz_\mathrm{new}, \hat\vtheta \>\|_{\psi_2}
     \le \norm{g_\mathrm{new}}_{\psi_2} + \| \< \zz_\mathrm{new}, \hat\vtheta \> \|_{\psi_2} \lesssim K,
\end{equation*}
which implies $G_d = O_\P(1)$.

\vspace{0.5\baselineskip}
\noindent
\textbf{\ref{thm:high-imb_high}. High signal:}
If $a < b + c$, then we have $\hat\rho = 1 - o_\P(1)$ by \cref{lem:rho_hat}\ref{lem:rho_hat(a)}. Therefore, according to \cref{lem:beta0_asymp}\ref{lem:beta0_asymp(a)}, for all $\tau_d \ge 1$, we have
\begin{align*}
        \hat\rho \norm{\vmu}_2 + \hat\beta_0
        & = \left(2 - \frac{2}{\tau_d + 1}\right) \hat\rho\norm{\vmu}_2 
        - \frac{1}{\tau_d + 1} \wt O_\P(d^{a-\frac{b}{2}-c} \vee 1) 
        + \wt O_\P(1) \\
        & \ge  d^{b/2}\bigl(1 + o_{\P}(1)\bigr)
        - \wt O_\P(d^{a-\frac{b}{2}-c} \vee 1)  \\
        & \overset{\mathmakebox[0pt][c]{\text{(i)}}}{=} d^{b/2}\bigl(1 + o_{\P}(1)\bigr),
        \qquad  \lim_{d \to \infty} d^{b/2} = +\infty,
\end{align*}
where (i) is because $d^{b/2} \gg d^{a-\frac{b}{2}-c}$, as $d \to \infty$. If $1 \le \tau_d \ll d^{b/2}$, we also have
\begin{align*}
    \hat\rho \norm{\vmu}_2 - \hat\beta_0
    & = \frac{2}{\tau_d + 1} \hat\rho\norm{\vmu}_2 
    + \frac{1}{\tau_d + 1} \wt O_\P(d^{a-\frac{b}{2}-c} \vee 1) + \wt O_\P(1) \\
    & = \frac{2}{\tau_d + 1} d^{b/2} 
    + \frac{1}{\tau_d + 1} \wt O_\P(d^{a-\frac{b}{2}-c} \vee 1) + \wt O_\P(1) \\
    & \overset{\mathmakebox[0pt][c]{\text{(ii)}}}{=} \frac{2}{\tau_d + 1} d^{b/2} \bigl(1 + o_{\P}(1)\bigr) + \wt O_\P(1) \\
    & \ge \tau_d^{-1} d^{b/2} \bigl(1 + o_{\P}(1)\bigr) + \wt O_\P(1),
    \qquad
    \lim_{d \to \infty} \tau_d^{-1} d^{b/2} = +\infty,
\end{align*}
where (ii) is because $(\tau_d + 1)^{-1}d^{b/2} \gg (\tau_d + 1)^{-1}d^{a-\frac{b}{2}-c}$ and $(\tau_d + 1)^{-1}d^{b/2} \gg (\log d)^k$, $\forall\, k \ge 0$, as $d \to \infty$. Under these conditions, both $\hat\rho \norm{\vmu}_2 \pm \hat\beta_0$ diverges to $+\infty$ with high probability, i.e.,
\begin{equation*}
    \lim_{d \to \infty} \P\left(\hat\rho \norm{\vmu}_2 + \hat\beta_0 + G_d > C\right) 
    =
    \lim_{d \to \infty} \P\left(\hat\rho \norm{\vmu}_2 - \hat\beta_0 - G_d > C\right) 
    = 1,
    \qquad
    \forall\, C \in \R.
\end{equation*}
Hence
\begin{equation*}
    \Err_+ = o(1), \qquad \Err_- = o(1).
\end{equation*}
This concludes the proof for high signal regime.

\vspace{0.5\baselineskip}
\noindent
\textbf{\ref{thm:high-imb_moderate}. Moderate signal:}
If $b + c < a < 2b + c$, then $\hat\rho = 2d^{(b-a+c)/2}\bigl(1 + o_\P(1)\bigr)$ by \cref{lem:rho_hat}\ref{lem:rho_hat(b)}. Therefore, according to \cref{lem:beta0_asymp}\ref{lem:beta0_asymp(b)}, if $\tau_d \gg d^{a-b-c}$, then
\begin{align*}
        \hat\rho \norm{\vmu}_2 + \hat\beta_0 
        & = \left(2 - \frac{2}{\tau_d + 1}\right) \hat\rho\norm{\vmu}_2 
        - \frac{1}{\tau_d + 1}\sqrt{\frac{d}{\pi n}}\bigl(1 + o_{\P}(1)\bigr) 
        + \wt O_\P(1)  \\
        & = 4 d^{(2b-a+c)/2}  \bigl(1 + o_{\P}(1)\bigr) - \tau_d^{-1} d^{(a-c)/2} \bigl(1 + o_{\P}(1)\bigr) + \wt O_\P(1) \\
        & \overset{\mathmakebox[0pt][c]{\text{(iii)}}}{=} 4 d^{(2b-a+c)/2}  \bigl(1 + o_{\P}(1)\bigr),
        \qquad
    \lim_{d \to \infty} d^{(2b-a+c)/2} = +\infty,
\end{align*}
where (iii) is because $d^{(2b-a+c)/2} \gg \tau_d^{-1} d^{(a-c)/2}$ and $d^{(2b-a+c)/2} \gg (\log d)^k$, $\forall\, k \ge 0$, as $d \to \infty$. If $1 \le \tau_d \ll d^{(a-c)/2}$, we also have
\begin{align*}
        \hat\rho \norm{\vmu}_2 - \hat\beta_0 
        & = \frac{2}{\tau_d + 1} \hat\rho\norm{\vmu}_2 
        + \frac{1}{\tau_d + 1}\sqrt{\frac{d}{\pi n}}\bigl(1 + o_{\P}(1)\bigr) 
        + \wt O_\P(1)  \\
        & = \frac{4}{\tau_d + 1} d^{(2b-a+c)/2} \bigl(1 + o_{\P}(1)\bigr) + \frac{1}{\tau_d + 1} d^{(a-c)/2} \bigl(1 + o_{\P}(1)\bigr) + \wt O_\P(1) \\
        & \overset{\mathmakebox[0pt][c]{\text{(iv)}}}{=} \frac{1}{\tau_d + 1} d^{(a-c)/2} \bigl(1 + o_{\P}(1)\bigr) + \wt O_\P(1) \\
        & \ge \frac12 \tau_d^{-1} d^{(a-c)/2} \bigl(1 + o_{\P}(1)\bigr) + \wt O_\P(1),
        \qquad 
        \lim_{d \to \infty} \tau_d^{-1} d^{(a-c)/2} = +\infty,
\end{align*}
where (iv) is from $(\tau_d + 1)^{-1} d^{(2b-a+c)/2} \ll (\tau_d + 1)^{-1} d^{(a-c)/2}$. Under these conditions on $\tau_d$, both $\hat\rho \norm{\vmu}_2 \pm \hat\beta_0 $ diverges to $+\infty$ with high probability. Using the same approach, we can show that
\begin{equation*}
    \Err_+ = o(1), \qquad \Err_- = o(1).
\end{equation*}

\vspace{0.5\baselineskip}
\noindent
Now suppose $\tau_d \asymp 1$, then again $\hat\rho \norm{\vmu}_2 - \hat\beta_0 \to + \infty$ and hence $\Err_- = o_\P(1)$ still holds. However,
\begin{align*}
        \hat\rho \norm{\vmu}_2 + \hat\beta_0 
        & = \left(2 - \frac{2}{\tau_d + 1}\right) \hat\rho\norm{\vmu}_2 
        - \frac{1}{\tau_d + 1}\sqrt{\frac{d}{\pi n}}\bigl(1 + o_{\P}(1)\bigr) 
        + \wt O_\P(1)  \\
        & \le 2 d^{(2b-a+c)/2} \bigl(1 + o_{\P}(1)\bigr) - C d^{(a-c)/2} \bigl(1 + o_{\P}(1)\bigr) + \wt O_\P(1), \\
        & \overset{\mathmakebox[0pt][c]{\text{(v)}}}{=} - C d^{(a-c)/2} \bigl(1 + o_{\P}(1)\bigr),
        \qquad 
        \lim_{d \to \infty} -d^{(a-c)/2} = -\infty,
\end{align*}
where (v) is because $d^{(2b-a+c)/2} \ll d^{(a-c)/2}$, and $C \in (0, \infty)$ is an absolute constant. As the result, $- \hat\rho \norm{\vmu}_2 - \hat\beta_0 $ diverges to $+\infty$ with high probability. Using the same approach, we have
\begin{equation*}
    \Err_+ = 1 - o(1).
\end{equation*}
This concludes the proof for moderate signal regime.

\vspace{0.5\baselineskip}
\noindent
\textbf{\ref{thm:high-imb_low}. Low signal:}
If $a > 2b + c$, then $\hat\rho \norm{\vmu}_2 = o_\P(1) > 0$ by \cref{lem:rho_hat}\ref{lem:rho_hat(b)}. Therefore,
\begin{align*}
        \Err_+ + \Err_- & 
        = \P\left( \hat\rho \norm{\vmu}_2 + \hat\beta_0 + G_d \le 0 \right) 
        + \P\left( \hat\rho \norm{\vmu}_2 - \hat\beta_0 - G_d < 0 \right) \\
        & = 
        1 - \P\left( - \hat\rho \norm{\vmu}_2 \le \hat\beta_0 + G_d < \hat\rho \norm{\vmu}_2 \right)
        \\
        & = 1 - o(1).
\end{align*}
Hence, we have $\Err_\mathrm{b} \ge \frac12 - o(1)$. This concludes the proof for low signal regime.

Finally, we complete the proof of \cref{thm:main_high-imbal}.
\end{proof}

\section{Confidence estimation and calibration: Proofs for \cref{sec:calibration}}
\label{append_sec:calib}

\subsection{Proof of \cref{prop:conf}}

The following preliminary result summarizes the precise asymptotics of three quantities: $\hat p(\xx)$ (max-margin confidence), $p^*(\xx)$ (Bayes optimal probability), and $\hat p_0(\xx)$ (true posterior probability).

\begin{lem}\label{lem:conf_limit}
    Consider 2-GMM and proportional settings in \cref{sec:logit_SVM} on separable dataset ($\delta < \delta^*(0)$). Let $(\rho^*, \beta_0^*)$ be defined as per \cref{thm:SVM_main}, and $(Y, G, H) \sim P_y \times \normal(0,1) \times \normal(0,1)$. Let $G' := \rho^* G + \sqrt{1 - \rho^{*2}} H$. Then for any test point $(\xx, y) \sim P_{\xx, y}$ independent of $\hat p$, as $n \to \infty$,
    \begin{equation}
    \label{eq:p(x)_asymp}
        \begin{pmatrix}
        y 
        \vphantom{\left( \log\frac{\pi}{1 - \pi} \right)} 
        \\
        \hat p(\xx)
        \vphantom{\left( \log\frac{\pi}{1 - \pi} \right)}
        \\
        p^*(\xx) 
        \vphantom{\left( \log\frac{\pi}{1 - \pi} \right)}
        \\
        \hat p_0(\xx)
        \vphantom{\left( \log\frac{\pi}{1 - \pi} \right)}
        \end{pmatrix}
        \cond
        \begin{pmatrix}
            Y 
            \vphantom{\left( \log\frac{\pi}{1 - \pi} \right)} 
            \\
            \sigma\left( \rho^*\|\vmu\|_2 Y + G + \beta_0^* \right) 
            \vphantom{\left( \log\frac{\pi}{1 - \pi} \right)} 
            \\
            \sigma \left( 2 \|\bmu\|_2 (\|\bmu\|_2 Y + G') + \log\frac{\pi}{1 - \pi} \right) \\
            \sigma \left( 2 \rho^* \|\bmu\|_2 (\rho^* \|\bmu\|_2 Y + G) + \log\frac{\pi}{1 - \pi} \right)
        \end{pmatrix}.
    \end{equation}
\end{lem}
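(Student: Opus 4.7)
My plan is to write the test feature as $\vx = y\vmu + \vz$ with $\vz \sim \normal(\mathbf{0}, \mathbf{I}_d)$ independent of $y$ and of the training sample, and then to decompose $\vz$'s projections so that two scalar standard Gaussians carry all the relevant information. Concretely, on the event $\{|\hat\rho|<1\}$ (which has probability tending to one, since \cref{thm:SVM_main}\ref{thm:SVM_main_param} gives $\hat\rho \conp \rho^* \in (0,1)$), I would set $\vmu_\perp := (\hat\vbeta - \hat\rho\, \vmu/\|\vmu\|_2)/\sqrt{1-\hat\rho^2}$, a unit vector orthogonal to $\vmu$, and define the primitives
\begin{equation*}
G'' := \bigl\langle \vz,\, \vmu/\|\vmu\|_2 \bigr\rangle, \qquad H' := \langle \vz,\, \vmu_\perp \rangle.
\end{equation*}
Because $\vz$ is isotropic, $(G'',H')$ is i.i.d.\ $\normal(0,1)$ and independent of $y$ and of $(\hat\vbeta,\hat\beta_0)$.

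With these primitives the first two targets become exact closed forms. Direct substitution gives
\begin{equation*}
\hat f(\vx) = \hat\rho\|\vmu\|_2\, y + \hat\rho\, G'' + \sqrt{1-\hat\rho^2}\, H' + \hat\beta_0,
\qquad
p^*(\vx) = \sigma\!\left(2\|\vmu\|_2(\|\vmu\|_2\, y + G'') + \log\tfrac{\pi}{1-\pi}\right),
\end{equation*}
where the formula for $p^*$ is the standard Bayes-rule calculation on the 2-GMM. For $\hat p_0(\vx) = \P(y=1 \mid \hat p(\vx))$, I would observe that, conditional on the training data, $\hat f(\vx_\mathrm{test}) \mid y_\mathrm{test} \sim \normal(y_\mathrm{test}\hat\rho\|\vmu\|_2 + \hat\beta_0,\,1)$, so a one-line Bayes-rule argument for a two-component Gaussian mixture yields $\hat p_0(\vx) = \sigma(2\hat\rho\|\vmu\|_2(\hat f(\vx) - \hat\beta_0) + \log\tfrac{\pi}{1-\pi})$ and hence
\begin{equation*}
\hat p_0(\vx) = \sigma\!\left(2\hat\rho\|\vmu\|_2\bigl(\hat\rho\|\vmu\|_2\, y + \hat\rho\, G'' + \sqrt{1-\hat\rho^2}\, H'\bigr) + \log\tfrac{\pi}{1-\pi}\right).
\end{equation*}

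At this point all four coordinates are continuous functions of the tuple $(y, G'', H', \hat\rho, \hat\beta_0)$, so the joint convergence will follow from parameter convergence $(\hat\rho,\hat\beta_0)\conp(\rho^*,\beta_0^*)$ in \cref{thm:SVM_main}\ref{thm:SVM_main_param}, combined with Slutsky's theorem and the continuous mapping theorem. The final step is purely algebraic: setting
\begin{equation*}
G := \rho^* G'' + \sqrt{1-\rho^{*2}}\, H', \qquad H := \sqrt{1-\rho^{*2}}\, G'' - \rho^* H',
\end{equation*}
one checks that $(G,H)$ is an i.i.d.\ $\normal(0,1)$ pair (an orthogonal rotation of $(G'',H')$) and that $G'' = \rho^* G + \sqrt{1-\rho^{*2}}\, H$, which is exactly the lemma's $G'$. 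Substituting $G''=G'$ and $\hat\rho G''+\sqrt{1-\hat\rho^2}\,H' \rightsquigarrow G$ into the three displays above reads off the four target limits. The only subtlety I anticipate is ensuring the closed-form Bayes identity for $\hat p_0$ is genuinely valid: it requires $\hat\rho>0$ so that the two Gaussian mixture components of $\hat f$ remain distinct, but $\rho^* \in (0,1)$ makes this hold with probability tending to one and imposes no further work.
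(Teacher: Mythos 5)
Your proposal is correct and follows essentially the same route as the paper: write $\vx = y\vmu + \vz$, derive the same three closed forms for $\hat p$, $p^*$, $\hat p_0$ via Bayes' rule, and conclude by $(\hat\rho,\hat\beta_0)\conp(\rho^*,\beta_0^*)$ together with Slutsky and continuous mapping; your only deviation is decomposing $\vz$ exactly in the $(\vmu,\vmu_\perp)$ frame to get i.i.d.\ Gaussian primitives and then rotating, which is a slightly cleaner substitute for the paper's argument that $\bigl(\<\vz,\hat\vbeta\>,\<\vz,\vmu/\|\vmu\|_2\>\bigr)$ converges to a bivariate Gaussian with correlation $\rho^*$. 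Just note that your definition of $\vmu_\perp$ and the unit-variance claim for $\hat f(\vx)\mid y$ implicitly use $\|\hat\vbeta\|_2=1$, which (as in the paper) holds with probability tending to one in the separable regime and should be folded into your high-probability event alongside $\{|\hat\rho|<1\}$.
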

\begin{proof}
    Rewrite $\xx = y\bmu + \zz$ where $\zz \sim \normal(\bzero, \bI_d)$. By direct calculation, the three quantities $\hat p(\xx)$, $p^*(\xx)$, and $\hat p_0(\xx)$ can be expressed by
    \begin{align}
        \hat p(\xx) = \sigma\bigl( \< \xx, \hat\vbeta \> + \hat\beta_0 \bigr) 
        & = \sigma\left( \hat \rho \norm{\bmu}_2 y + \< \zz, \hat\vbeta \> +\beta_0 \right),
        \label{eq:p_hat_exp}
        \\
        p^*(\xx) = \P( y = 1 \,|\, \xx )
        & = \frac{\pi e^{-\frac12 \| \xx - \bmu \|_2^2} }{\pi e^{-\frac12 \| \xx - \bmu \|_2^2} + (1 - \pi) e^{-\frac12 \| \xx + \bmu \|_2^2}} 
        \label{eq:p_star_bayes}
        \\
        & = \sigma \left( 2\< \xx, \bmu \> + \log\frac{\pi}{1 - \pi} \right)  \notag \\
        & = \sigma \left( 2 \|\bmu\|_2 \Bigl( \norm{\bmu}_2 y + \< \zz, \bmu/\|\bmu\|_2 \> \Bigr)  + \log\frac{\pi}{1 - \pi} \right),
        \label{eq:p_star_exp}
        \\
        \hat p_0(\xx) = \P \bigl( y = 1 \,|\, \hat p(\xx) \bigr)
        & = \frac{\pi e^{-\frac12 (\hat f(\xx) - \hat\rho\|\bmu\|_2 - \hat\beta_0)^2} }{\pi e^{-\frac12 (\hat f(\xx) - \hat\rho\|\bmu\|_2 - \hat\beta_0)^2} + (1 - \pi) e^{-\frac12 (\hat f(\xx) + \hat\rho\|\bmu\|_2 - \hat\beta_0)^2} } 
        \label{eq:p0_bayes}
        \\
        & = \sigma \left( 2 \, \hat\rho \, \|\bmu\|_2 \< \xx, \hat\bbeta \> + \log\frac{\pi}{1 - \pi} \right) \notag \\
        & = \sigma \left( 2 \, \hat\rho \, \|\bmu\|_2 \left(\hat \rho \norm{\bmu}_2 y + \< \zz, \hat\vbeta \> \right) + \log\frac{\pi}{1 - \pi} \right),
        \label{eq:p0_exp}
    \end{align}
    where the Bayes' law is applied in \cref{eq:p_star_bayes} and \eqref{eq:p0_bayes}.

    Next, it suffices to obtain the joint asymptotics of $\< \zz, \hat\vbeta \>$ and $\< \zz, \bmu/\|\bmu\|_2 \>$, which appear in the expressions of \cref{eq:p_hat_exp}, \eqref{eq:p_star_exp}, \eqref{eq:p0_exp}. Note that $\< \zz, \hat\vbeta \> \cond \normal(0, 1)$ (since $\zz \indep \hat\vbeta$, $\P(\| \hat\vbeta \|_2 = 1) \to 1$), $\< \zz, \bmu/\|\bmu\|_2 \> \sim \normal(0, 1)$. Moreover, $\E[\< \zz, \hat\vbeta \> \< \zz, \bmu/\|\bmu\|_2 \> ] = \E[ \hat\rho ] \to \rho^*$ by \cref{thm:SVM_main} and bounded convergence. These implies
    \begin{equation*}
        \begin{pmatrix}
            \< \zz, \hat\vbeta \> \\
            \< \zz, \bmu/\|\bmu\|_2 \>
        \end{pmatrix}
        \cond
        \normal\left(
        \begin{pmatrix}
            0 \\
            0
        \end{pmatrix},
        \begin{pmatrix}
            1 & \rho^* \\
            \rho^* & 1
        \end{pmatrix}
        \right) \overset{\mathrm{d}}{=} 
        \begin{pmatrix}
            G \\
            G'
        \end{pmatrix}.
    \end{equation*}
    Since $y \indep (\zz, \hat\vbeta)$ and $(\hat\rho, \hat\beta_0) \conp (\rho^*, \beta_0^*)$, we conclude \cref{eq:p(x)_asymp} by \cref{eq:p_hat_exp}, \eqref{eq:p_star_exp}, \eqref{eq:p0_exp} and then using the Slutsky's theorem. This completes the proof.
\end{proof}

The proof of \cref{prop:conf} is primarily based on asymptotics in \cref{lem:conf_limit}.

\begin{proof}[\textbf{Proof of \cref{prop:conf}}]
\textbf{\ref{prop:conf_asymp}:} For $\mathrm{MSE}$, by directly using the asymptotics in \cref{lem:conf_limit} and bounded convergence theorem, we have
\begin{align*}
    \lim_{n \to \infty} \mathrm{MSE}(\hat p) & = \lim_{n \to \infty} \E\left[ \bigl( \mathbbm{1}\{ y = 1 \} - \hat p(\xx) \bigr)^2 \right] \\
    & = \E\left[ \bigl( \mathbbm{1}\{ Y = 1 \} -  \sigma\left( \rho^*\|\vmu\|_2 Y + G + \beta_0^* \right)  \bigr)^2 \right] \\
    & = \E \left[ \sigma \bigl( -\rho^* \norm{\bmu}_2 - \beta_0^* Y + G \bigr)^2 \right] = \mathrm{MSE}^*, 
    \\
    \lim_{n \to \infty} \mathrm{mMSE}(\hat p) & = \mathrm{MSE}^*
    - \var\, \bigl[\mathbbm{1}\{ y = 1 \} \bigr] = \mathrm{MSE}^* - \pi(1 - \pi).
\end{align*}
For $\mathrm{CalErr}$, we similarly get
\begin{align*}
    & \lim_{n \to \infty} \mathrm{CalErr}(\hat p)
    = \lim_{n \to \infty} \E\left[ \bigl( \hat p(\xx) - \hat p_0(\xx) \bigr)^2 \right] \\
    = {} & \E\left[ \left(  \sigma\bigl( \rho^* \norm{\bmu}_2 Y + G + \beta_0^* \bigr)
    - \sigma\Bigl( 2\rho^* \norm{\bmu}_2 (\rho^* \norm{\bmu}_2 Y + G) + \log\frac{\pi}{1-\pi} \Bigr)
     \right)^2 \right]
     = \mathrm{CalErr}^*.
\end{align*}
For $\mathrm{ConfErr}$, we can first obtain
\begin{align*}
    \lim_{n \to \infty} \E \left[ p^*(\xx) \bigl( 1 - p^*(\xx) \bigr) \right]
    & = \lim_{n \to \infty} \E \, \Bigl[ \var\, \bigl[\mathbbm{1}\{ y = 1 \} \,|\, \xx \bigr] \Bigr] \\
    & = \lim_{n \to \infty} \E\left[ \bigl( \mathbbm{1}\{ y = 1 \} - p^*(\xx) \bigr)^2 \right] \\
    & = \E\left[ \left( \mathbbm{1}\{ Y = 1 \} - \sigma \Bigl( 2 \|\bmu\|_2 (\|\bmu\|_2 Y + G) + \log\frac{\pi}{1 - \pi} \Bigr) \right)^2 \right] \\
    & = \E\left[ \sigma\Bigl( -2\norm{\bmu}_2( \norm{\bmu}_2 + G ) - \log\frac{\pi}{1-\pi} Y \Bigr)^2 \right]
    = V_{y|\xx}^*,
\end{align*}
and then by relation between $\mathrm{ConfErr}$ and $\mathrm{MSE}$ \cref{eq:MSE_vs_ConfErr}
\begin{equation*}
    \lim_{n \to \infty} \mathrm{ConfErr}(\hat p)
    = \lim_{n \to \infty} \mathrm{MSE}(\hat p) - \lim_{n \to \infty} \E \left[ p^*(\xx) \bigl( 1 - p^*(\xx) \bigr) \right]
    = \mathrm{MSE}^* - V_{y|\xx}^*.
\end{equation*}
This concludes the proof of part \ref{prop:conf_asymp}.

\vspace{0.5\baselineskip}
\noindent
\textbf{\ref{prop:conf_mono}:}
When $\tau = \tau^\mathrm{opt}$, by \cref{prop:tau_opt} $\beta_0^* = 0$. Then we can simplify
    \begin{equation*}
            \mathrm{MSE}^* =  \E\left[ \bigl(  1 + \exp( \rho^* \norm{\bmu}_2 + G) \bigr)^{-2} \right] .
    \end{equation*}
    According to \cref{lem:rho_mono}, we know that $\rho^* \norm{\bmu}_2$ is increasing in $\pi \in (0, \frac12)$, $\norm{\bmu}_2$, and $\delta$. It suffices to show that $\mathrm{MSE}^*$ is decreasing in $\rho^* \norm{\bmu}_2$, which is obvious by noticing $t \mapsto ( 1 + \exp(t) )^{-2}$ is a strictly decreasing function.

    For $\mathrm{mMSE}^*$, note that $\pi(1 - \pi)$ is a increasing function of $\pi \in (0, \frac12)$, and it does not depend on $\norm{\bmu}_2$, $\delta$. These shows the monotonicity of $\mathrm{mMSE}^* = \mathrm{MSE}^* - \pi(1 - \pi)$.

    For $\mathrm{ConfErr}^*$, note that $V_{y|\xx}^*$ does not depend on $\delta$. This implies that $\mathrm{ConfErr}^*$ has the same monotonicity in $\delta$ as $\mathrm{MSE}^*$, which concludes the proof of part \ref{prop:conf_mono}.
\end{proof}

\subsection{Verification of \cref{claim:conf}}

The analytical dependence of $\mathrm{CalErr}^*$ and $\mathrm{ConfErr}^*$ on model parameters is more complicated. We provide a numerical verification of \cref{claim:conf}.

\begin{proof}[\textbf{Verification of \cref{claim:conf}}]
    For $\mathrm{CalErr}^*$, denote
            \begin{align*}
                h_1(t) & := \E\left[ \Bigl( \sigma\bigl( 2 t (G+t) + c \bigr) - \sigma( G+t ) \Bigr)^2 \right]
                \\
                h_2(t) & := \E\left[ \Bigl( \sigma\bigl( 2 t (G-t) + c \bigr) - \sigma( G-t ) \Bigr)^2 \right]
            \end{align*}
            where $c < 0$ is a constant. When $\tau = \tau^\mathrm{opt}$, we have $\beta_0^* = 0$ and
        \begin{equation*}
            \mathrm{CalErr}^*
            = \pi h_1( \rho^* \norm{\bmu}_2 ) + (1 - \pi) h_2( \rho^* \norm{\bmu}_2 ),
            \qquad \text{where} ~ c = \log\frac{\pi}{1 - \pi}.
    \end{equation*}
    According to \cref{fig:mono_fun}, we can numerically show that $h(t) :=  \pi h_1(t) + (1 - \pi) h_2(t)$ is a decreasing function when $\pi \le \overline{\pi} \approx 0.25$ is fixed. Under this condition, $\mathrm{CalErr}^*$ is decreasing in $\rho^* \norm{\bmu}_2$. Then by using \cref{lem:rho_mono} and similar arguments in the proof of \cref{prop:conf}\ref{prop:conf_mono}, we can conclude the monotonicity of $\mathrm{CalErr}^*$ in $\norm{\bmu}_2$ and $\delta$.

    \begin{figure}[h!]
    \centering
    \includegraphics[width=0.32\textwidth]{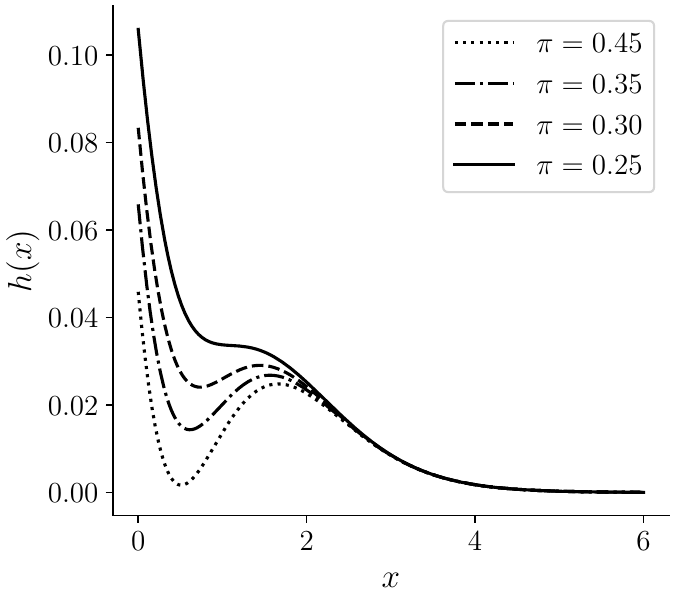}
    \includegraphics[width=0.32\textwidth]{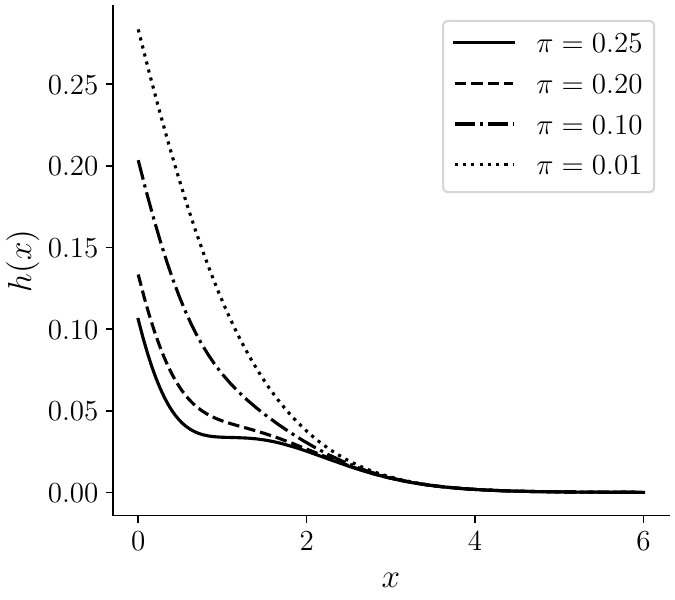}
    \includegraphics[width=0.32\textwidth]{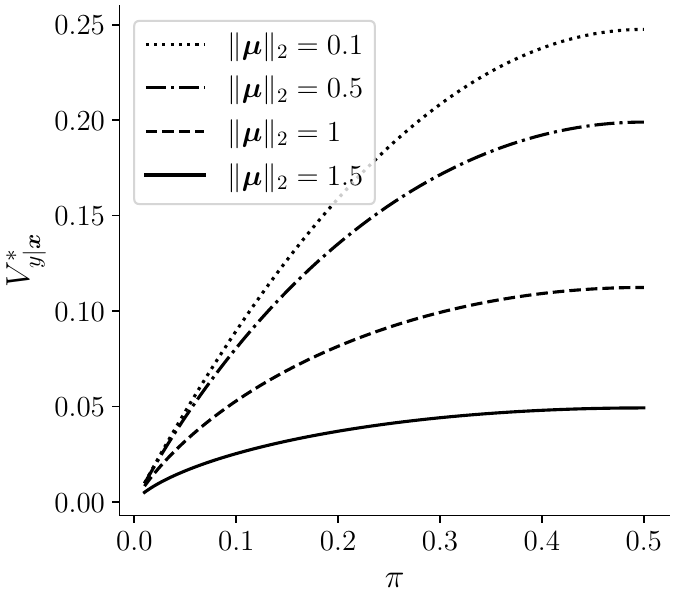}
    \caption{
    \textbf{Monotonicity of $x \mapsto h(x)$ and $\pi \mapsto V_{y|\xx}^*$}. \textbf{Left:} $h$ is not monotone when $\pi > \overline{\pi} \approx 0.25$. \textbf{Middle:} $h$ is monotone decreasing when $\pi \le \overline{\pi} \approx 0.25$. \textbf{Right:} $V_{y|\xx}^*$ is monotone increasing in $\pi$ for different values of $\norm{\bmu}_2$.
    }
    \label{fig:mono_fun}
\end{figure}

For $\mathrm{ConfErr}^*$, in \cref{fig:mono_fun} we numerically show that $V_{y|\xx}^*$ is increasing in $\pi$ when $\norm{\bmu}_2$ is fixed. Since $\mathrm{ConfErr}^* = \mathrm{MSE}^* - V_{y|\xx}^*$ and we have shown in \cref{prop:conf}\ref{prop:conf_mono} that $\mathrm{MSE}^*$ is decreasing in $\pi$, we conclude $\mathrm{ConfErr}^*$ is also decreasing in $\pi$.
\end{proof}

\section{Technical Lemmas}
\label{append_sec:tech}

\subsection{Properties of Gaussian random variables}

We need the following variant of Gordon's comparison theorem for Gaussian processes.
\begin{lem}[CGMT]
    \label{lem:CGMT}
    Let $D_{\bu} \subset \R^{n_1 + n_2}$, $D_{\bv} \subset \R^{m_1 + m_2}$ be compact sets and let $Q: D_{\bu} \times D_{\bv} \to \R$ be a continuous function. Let $\GG = (G_{i,j}) \iidsim \normal(0, 1)$, $\vg \sim \normal(\bzero, \bone_{n_1})$, $\hh \sim \normal(\bzero, \bone_{m_1})$ be independent standard Gaussian vectors. For any $\bu \in \R^{n_1 + n_2}$ and $\bv \in \R^{m_1 + m_2}$ we define $\wt\bu = (u_1, \dots, u_{n_1})$ and $\wt\bv = (v_1, \dots, v_{m_1})$. Define
    \begin{equation*}
        \begin{aligned}
            C^*(\GG)      & = \min_{\bu \in D_{\bu}} \max_{\bv \in D_{\bv}}  \wt\bv^\top \GG \wt\bu + Q(\bu, \bv), \\
            L^*(\vg, \hh) & = \min_{\bu \in D_{\bu}} \max_{\bv \in D_{\bv}}  \| \wt\bv \|_2 \vg^\top \wt\bu
            + \| \wt\bu \|_2 \hh^\top \wt\bv + Q(\bu, \bv).
        \end{aligned}
    \end{equation*}
    Then we have:
    \begin{enumerate}[label=(\alph*)]
        \item \label{lem:CGMT(a)}
        For all $t \in \R$,
        \begin{equation*}
            \P\left( C^*(\GG) \le t \right) \le 2 \, \P\left( L^*(\vg, \hh) \le t \right).
        \end{equation*}
        \item \label{lem:CGMT(b)}
        If $D_{\bu}$ and $D_{\bv}$ are convex and if $Q$ is convex concave, then for all $t \in \R$,
        \begin{equation*}
            \P\left( C^*(\GG) \ge t \right) \le 2 \, \P\left( L^*(\vg, \hh) \ge t \right).
        \end{equation*}
    \end{enumerate}
\end{lem}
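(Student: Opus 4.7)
This lemma is the Convex Gaussian Min-max Theorem (CGMT) of Thrampoulidis--Oymak--Hassibi, a two-sided consequence of Gordon's Gaussian comparison inequality. The ``dummy'' coordinates $(u_{n_1+1},\dots,u_{n_1+n_2})$ and $(v_{m_1+1},\dots,v_{m_1+m_2})$ that appear only through $Q$ can be absorbed into $Q$, so the plan is to prove the standard core case; the extension is automatic.

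The central computation is to compare the two centered Gaussian processes on $D_\bu \times D_\bv$ given by
\begin{equation*}
X_{\bu,\bv} := \wt\bv^\top \GG \wt\bu, \qquad Y_{\bu,\bv} := \|\wt\bv\|_2 \vg^\top \wt\bu + \|\wt\bu\|_2 \hh^\top \wt\bv.
\end{equation*}
Setting $A := \wt\bu^\top \wt\bu'$, $a := \|\wt\bu\|_2 \|\wt\bu'\|_2$, $B := \wt\bv^\top \wt\bv'$, $b := \|\wt\bv\|_2 \|\wt\bv'\|_2$, a direct calculation gives $\cov(X,X') = AB$ and $\cov(Y,Y') = aB + bA$, so the variances disagree by a factor of two ($\var(Y) = 2\,\var(X)$). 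I will correct this by introducing an independent $\gamma \sim \normal(0,1)$ and defining $\tilde X_{\bu,\bv} := X_{\bu,\bv} + \|\wt\bu\|_2 \|\wt\bv\|_2 \gamma$, so that $\var(\tilde X) = \var(Y)$ and
\begin{equation*}
\cov(\tilde X_{\bu,\bv}, \tilde X_{\bu',\bv'}) - \cov(Y_{\bu,\bv}, Y_{\bu',\bv'}) = AB + ab - aB - bA = (a - A)(b - B) \ge 0
\end{equation*}
by Cauchy--Schwarz, with equality whenever $\bu = \bu'$ (so that $a = A$).

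For part (a), Gordon's inequality applied with $Y$ as the ``lower'' process and $\tilde X$ as the ``upper'' one (matching variances, matching same-row covariances, and $\cov(\tilde X) \ge \cov(Y)$ for $\bu \ne \bu'$) gives, with the deterministic $Q$-shift included,
\begin{equation*}
\P\bigl(\min_\bu \max_\bv (\tilde X_{\bu,\bv} + Q(\bu,\bv)) \le t\bigr) \le \P\bigl(\min_\bu \max_\bv (Y_{\bu,\bv} + Q(\bu,\bv)) \le t\bigr) = \P(L^* \le t).
\end{equation*}
On $\{\gamma < 0\}$ we have $\tilde X \le X$ pointwise, hence $\min_\bu \max_\bv (\tilde X + Q) \le C^*(\GG)$, so $\{C^* \le t\} \cap \{\gamma < 0\} \subseteq \{\min_\bu \max_\bv (\tilde X + Q) \le t\}$. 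Since $\gamma$ is independent of $(\GG, \vg, \hh)$ with $\P(\gamma < 0) = 1/2$, chaining the two inclusions yields the factor of two:
\begin{equation*}
\tfrac{1}{2}\, \P(C^* \le t) = \P(C^* \le t,\, \gamma < 0) \le \P(\min_\bu \max_\bv (\tilde X + Q) \le t) \le \P(L^* \le t).
\end{equation*}

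For part (b), convexity of $D_\bu, D_\bv$ and convex-concavity of $Q$ allow Sion's minimax theorem to swap $\min_\bu \max_\bv$ to $\max_\bv \min_\bu$ in $C^*$. Writing $-C^* = \min_\bv \max_\bu (-\bv^\top \GG \bu - Q)$ and using the distributional equality $-\GG \stackrel{d}{=} \GG$ (together with the corresponding symmetries of $\vg, \hh$), this becomes a primary problem of the same form as in part (a) but with the roles of $\bu$ and $\bv$ exchanged; since the AO $L^*$ is invariant under this exchange, applying part (a) to the mirrored problem yields $\P(-C^* \le -t) \le 2\,\P(-L^* \le -t)$, i.e., $\P(C^* \ge t) \le 2\,\P(L^* \ge t)$. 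The main obstacle throughout is the factor-two variance mismatch between $X$ and $Y$, which prevents a direct application of Gordon's lemma and forces the $\gamma$-augmentation and sign-conditioning trick; this is precisely what produces the characteristic factor of two in the final probability bounds.
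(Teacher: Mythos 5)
The paper does not prove this lemma; it simply cites Corollary G.1 of Miolane--Montanari, so your proposal is a reconstruction of the standard CGMT argument rather than a comparison against the paper's proof. Your proof of part (a) is correct and is essentially the classical Gordon-plus-$\gamma$-augmentation argument: introducing $\gamma$ to repair the factor-of-two variance mismatch, checking that $\cov(\tilde X) - \cov(Y) = (a-A)(b-B) \ge 0$ with equality along the same row, and using $\P(\gamma<0)=1/2$ together with $\tilde X \le X$ on $\{\gamma<0\}$ to recover the factor of two.

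Part (b), however, rests on a false claim. You write that ``the AO $L^*$ is invariant under this exchange,'' but the auxiliary objective $\|\wt\bv\|_2\vg^\top\wt\bu+\|\wt\bu\|_2\hh^\top\wt\bv+Q$ is \emph{not} convex-concave (the term $\|\wt\bv\|_2\vg^\top\wt\bu$ changes convexity type in $\bv$ depending on $\sign(\vg^\top\wt\bu)$), so Sion's theorem does not apply and the mirrored AO is generally not equal to $-L^*$. Writing $F(\bu,\bv):=\|\wt\bv\|_2\vg^\top\wt\bu+\|\wt\bu\|_2\hh^\top\wt\bv-Q(\bu,\bv)$ and using $(\vg,\hh)\stackrel{\mathrm{d}}{=}-(\vg,\hh)$, $\vg'\stackrel{\mathrm{d}}{=}\hh$, $\hh'\stackrel{\mathrm{d}}{=}\vg$, one finds the mirrored AO has the law of $\min_\bv\max_\bu F$, whereas $-L^*\stackrel{\mathrm{d}}{=}\max_\bu\min_\bv F$. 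These are linked by \emph{weak duality} $\max_\bu\min_\bv F\le\min_\bv\max_\bu F$ pointwise in $(\vg,\hh)$, not by equality. The repair is standard and close in spirit to what you wrote: apply part (a) to the mirrored PO to get $\P(-C^*\le -t)\le 2\,\P(\tilde\phi\le -t)$ where $\tilde\phi$ is the mirrored AO, and then couple the Gaussians so that $\tilde\phi$ and $-L^*$ are both $\min_\bv\max_\bu F$ and $\max_\bu\min_\bv F$ on the same probability space; weak duality then gives $\{\tilde\phi\le -t\}\subseteq\{-L^*\le -t\}$, hence $\P(\tilde\phi\le -t)\le\P(-L^*\le -t)$, which closes the argument. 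Without this replacement of ``invariance'' by the one-sided comparison from weak duality, the chain $\P(-C^*\le-t)\le 2\P(-L^*\le -t)$ is unjustified.
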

\begin{proof}
    See \cite[Corollary G.1]{miolane2018distributionlassouniformcontrol}.
\end{proof}

\subsection{Properties of sub-gaussian and sub-exponential random variables}

\begin{defn}[Sub-gaussianity]
    \label{def:subgauss}
    The sub-gaussian norm of random variable $X$ is defined as
    \begin{equation*}
        \norm{X}_{\psi_2} := \inf\left\{ K > 0: \E[\exp(X^2/K^2)] \le 2 \right\}.
    \end{equation*}
    \begin{itemize}
        \item A random variable $X \in \R$ is called sub-gaussian if $\norm{X}_{\psi_2} < \infty$.
        \item A random vector $\xx = (X_1, \dots, X_d)^\top \in \R^d$ is called sub-gaussian if $\sup_{\bv \in \S^{d-1}}\norm{\langle \xx, \bv \rangle}_{\psi_2} < \infty$. Specifically, write $\xx \sim \subGind(\bzero, \bI_d; K)$ if
        $X_1, \dots, X_d$ are independent random variables with $\E[X_i] = 0$, $\var(X_i) = 1$,
        and $\max_{1 \le i \le d} \norm{X_i}_{\psi_2} \lesssim K$. 
    \end{itemize}   
\end{defn}

\noindent
\cref{lem:subG} and \ref{lem:subG_concentrate} summarize some basic facts and concentration inequalities about sub-gaussian random variables and vectors.

\begin{lem}\label{lem:subG}
    Some facts about sub-gaussian 
    random variables.
    \begin{enumerate}[label=(\alph*)]
        \item \label{lem:subG-a} $\norm{\, \cdot \,}_{\psi_2}$ is a norm on the space of sub-gaussian random variables.
        \item \label{lem:subG-b} Let $X_1, \ldots, X_N$ be independent mean-zero sub-gaussian random variables. Then $\sum_{i=1}^N X_i$ is also a sub-gaussian random variable, and
        \[ \norm{\sum_{i=1}^N X_i}_{\psi_2}^2\le C \sum_{i=1}^N \norm{X_i}^2_{\psi_2}, \]
        where $C$ is an absolute constant.
        \item \label{lem:subG-c} (Maximum) Let $X_1, \ldots, X_N$ be sub-gaussian random variables (not necessarily independent) with $K := \max_{1 \le i \le N} \norm{X_i}_{\psi_2}$. Then
        \[ \E\left[ \max_{1 \le i \le N} \abs{X_i} \right] \le  C K \sqrt{\log N},
        \qquad (N \ge 2), \]
        where $C$ is an absolute constant.
    \end{enumerate}
\end{lem}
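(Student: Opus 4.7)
The plan is to reduce each claim to the standard equivalent characterization of sub-gaussianity via tail bounds and moment generating function (MGF) bounds: namely, $\norm{X}_{\psi_2} \asymp \sup_{p\ge 1} p^{-1/2} (\E|X|^p)^{1/p}$, and for any mean-zero sub-gaussian $X$ one has $\E[\exp(\lambda X)] \le \exp(C\lambda^2 \norm{X}_{\psi_2}^2)$ for all $\lambda\in\R$, with a converse statement up to absolute constants. These equivalences (see, e.g., Vershynin's book) are the workhorse for the three parts.

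For (a), non-negativity and positive homogeneity are immediate from the definition (just rescale $K$). The triangle inequality $\norm{X+Y}_{\psi_2}\le \norm{X}_{\psi_2}+\norm{Y}_{\psi_2}$ follows from the elementary inequality $(x+y)^2 \le 2x^2+2y^2$ combined with Hölder's inequality on $\E[\exp((X+Y)^2/K^2)]$, choosing $K = \norm{X}_{\psi_2}+\norm{Y}_{\psi_2}$ and splitting the exponent according to the weights $\norm{X}_{\psi_2}/K$ and $\norm{Y}_{\psi_2}/K$; the standard calculation yields the $2$-bound needed in the definition. The fact that $\norm{X}_{\psi_2}=0$ forces $X=0$ a.s.\ follows from $\E[\exp(X^2/\varepsilon^2)]\le 2$ for all $\varepsilon>0$ combined with Fatou's lemma.

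For (b), I pass to MGFs. Using the equivalence, for each $i$ we have $\E[\exp(\lambda X_i)]\le \exp(C_0 \lambda^2 \norm{X_i}^2_{\psi_2})$ for all $\lambda\in\R$, with $C_0$ an absolute constant. By independence, $\E[\exp(\lambda \sum_i X_i)] \le \exp(C_0 \lambda^2 \sum_i \norm{X_i}^2_{\psi_2})$, so $\sum_i X_i$ is sub-gaussian with proxy variance $C_0\sum_i\norm{X_i}^2_{\psi_2}$. Converting this MGF bound back to a $\psi_2$-norm bound (again via the equivalence) produces the desired inequality with a universal constant $C$.

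For (c), the standard union-bound argument suffices. The sub-gaussian definition yields the tail bound $\P(|X_i|>t)\le 2\exp(-c t^2/\norm{X_i}_{\psi_2}^2)\le 2\exp(-c t^2/K^2)$ for some absolute $c>0$. Then
\[
\E\Bigl[\max_{1\le i\le N}|X_i|\Bigr] \le \int_0^\infty \P\Bigl(\max_{1\le i\le N}|X_i|>t\Bigr)\,dt \le t_0 + 2N\int_{t_0}^\infty e^{-ct^2/K^2}\,dt,
\]
and choosing $t_0 = K\sqrt{(\log N)/c}$ yields the bound $C K\sqrt{\log N}$ after absorbing constants. The only mildly delicate point across the three parts is tracking the absolute constants cleanly through the MGF$\leftrightarrow$$\psi_2$ equivalence in part (b); none of this is genuinely hard, and the result can alternatively be cited directly from Proposition~2.6.1 and Exercise~2.5.10 of Vershynin's \emph{High-Dimensional Probability}.
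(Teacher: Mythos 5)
Your proposal is correct, and it matches the paper's (implicit) approach: the paper gives no proof, simply citing Exercise~2.5.7, Proposition~2.6.1, and Exercise~2.5.10 of Vershynin's \emph{High-Dimensional Probability}. Your sketches for (b) (pass to MGFs via the sub-gaussian equivalences, multiply by independence, convert back) and for (c) (tail bound, union bound, integrate the tail from a threshold $t_0\asymp K\sqrt{\log N}$) are precisely the arguments presented there.

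One imprecision in your sketch of (a): the triangle inequality for $\norm{\cdot}_{\psi_2}$ does \emph{not} follow from $(x+y)^2\le 2x^2+2y^2$ --- that bound is too lossy and, pushed through the definition, only yields a quasi-triangle inequality with constant larger than~$1$. The step that actually closes the argument is convexity of $t\mapsto t^2$: with $a=\norm{X}_{\psi_2}$, $b=\norm{Y}_{\psi_2}$, $\theta=a/(a+b)$, one has
\[
\frac{(X+Y)^2}{(a+b)^2} \le \Bigl(\theta\,\frac{|X|}{a}+(1-\theta)\,\frac{|Y|}{b}\Bigr)^{2} \le \theta\,\frac{X^2}{a^2}+(1-\theta)\,\frac{Y^2}{b^2},
\]
and then H\"older with exponents $1/\theta$ and $1/(1-\theta)$ gives $\E\bigl[\exp\bigl((X+Y)^2/(a+b)^2\bigr)\bigr]\le 2^{\theta}\,2^{1-\theta}=2$, hence $\norm{X+Y}_{\psi_2}\le a+b$. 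The rest of your outline --- rescaling for homogeneity, Fatou for definiteness, the MGF argument for (b), and the union-bound integration for (c) --- is sound, and as you note, citing Vershynin directly is also acceptable and is exactly what the paper does.
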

\begin{proof}
    See \cite[Exercise 2.5.7, Proposition 2.6.1, Exercise 2.5.10]{vershynin2018high}.
\end{proof}

\begin{lem}[Concentration]\label{lem:subG_concentrate}
    Suppose $\xx, \yy \sim \subGind(\bzero, \bI_d; K)$ and $\xx \indep \yy$.
    \begin{enumerate}[label=(\alph*)]
        \item \label{lem:subG-Hanson-Wright-I} (Hanson-Wright inequality I)  \  Let $\bA \in \R^{d \times d}$ be a matrix. Then, for every $t \ge 0$,
        \begin{equation*}
           \P\left( \bigl| \xx^\top \bA \xx - \E[\xx^\top \bA \xx] \bigr| \ge t \right)
           \le 2 \exp\biggl( -c \min \biggl\{ \frac{t^2}{K^4 \norm{\bA}_\mathrm{F}^2} , \frac{ t }{ K^2 \| \bA \|_{\mathrm{op}} } \biggr\} \biggr),
        \end{equation*}
        where $c$ is an absolute constant.
        \item \label{lem:subG-Hanson-Wright-II} (Hanson-Wright inequality II) \  Let $\bB \in \R^{d' \times d}$ be a matrix. Then, for every $t \ge 0$,
        \begin{equation*}
            \P\biggl( \biggl| \frac{\| \bB \xx \|_2}{ \| \bB \|_\mathrm{F}} - 1 \biggr| > t \biggr)
            \le 2 \exp\biggl( -\frac{ct^2 \| \bB \|_\mathrm{F}^2 }{K^4 \| \bB \|_{\mathrm{op}}^2 } \biggr),
        \end{equation*}
        where $c$ is an absolute constant. In particular, when $\bB = \bI_d$,
        \begin{equation*}
            \P\biggl( \biggl| \frac{\| \xx \|_2}{ \sqrt{d} } - 1 \biggr| > t \biggr)
            \le 2 \exp\biggl( -\frac{ct^2 d}{K^4} \biggr).
        \end{equation*}

        \item \label{lem:subG-Hoeffding} (Hoeffding's inequality) \ Let $\ba \in \R^{d}$ be a vector. Then, for every $t \ge 0$,
        \begin{equation*}
            \P\biggl( \frac{ \abs{\< \xx, \ba \>} }{ \norm{\ba}_2 }  > t \biggr)
            \le 2 \exp\biggl( -\frac{ct^2}{K^2} \biggr),
        \end{equation*}
        where $c$ is an absolute constant.

        \item \label{lem:subG-Bernstein} (Bernstein's inequality) \ Let $\bB \in \R^{d \times d}$ be a matrix. Then, for every $t \ge 0$,
        \begin{equation*}
            \P\biggl( \frac{ | \xx^\top \bB \yy | }{ \| \bB \|_\mathrm{F} }  > t \biggr)
            \le 2 \exp\biggl( -c \min \biggl\{ \frac{t^2}{K^4} , \frac{ t \| \bB \|_\mathrm{F}}{ K^2 \| \bB \|_{\mathrm{op}} } \biggr\} \biggr),
        \end{equation*}
        where $c$ is an absolute constant. In particular, when $\bB = \bI_d$,
        \begin{equation*}
            \P\biggl( \frac{ \abs{\< \xx, \yy \>} }{ \sqrt{d} }  > t \biggr)
            \le 2 \exp\biggl( -c \min \biggl\{ \frac{t^2}{K^4} , \frac{t\sqrt{d}}{K^2} \biggr\} \biggr).
        \end{equation*}
    \end{enumerate}
\end{lem}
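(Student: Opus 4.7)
The plan is to recognize that all four items are classical concentration inequalities from high-dimensional probability, so I would cite the standard references (e.g., Chapters 2 and 6 of Vershynin's \emph{High-Dimensional Probability}) and sketch the short reductions where one result is deduced from another, rather than redo the nontrivial moment generating function computations.

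For part (a), the Hanson--Wright inequality for independent sub-gaussian coordinates, I would invoke the standard result (Theorem~6.2.1 of Vershynin). The usual proof splits $\xx^\top \bA \xx - \E[\xx^\top\bA\xx]$ into a diagonal part $\sum_i A_{ii}(X_i^2-1)$, which is a sum of independent centered sub-exponentials controlled by Bernstein's inequality, and an off-diagonal part which, after a decoupling step and conditioning, becomes a sub-gaussian chaos bounded via Khintchine-type inequalities; combining the two tails yields the mixed sub-gaussian/sub-exponential form. Part (b) follows from (a) by a one-line algebraic reduction: set $\bA = \bB^\top\bB$, so that $\|\bB\xx\|_2^2 - \|\bB\|_{\mathrm F}^2 = \xx^\top\bA\xx - \E[\xx^\top\bA\xx]$, with $\|\bA\|_{\mathrm F} \le \|\bB\|_{\op}\|\bB\|_{\mathrm F}$ and $\|\bA\|_{\op} = \|\bB\|_{\op}^2$. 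The event $\{|\|\bB\xx\|_2/\|\bB\|_{\mathrm F}-1| > t\}$ implies $|\|\bB\xx\|_2^2 - \|\bB\|_{\mathrm F}^2| \ge \max(t,t^2)\|\bB\|_{\mathrm F}^2$ (from the identity $|Z-a|(Z+a) \ge ta^2$ when $Z\ge 0$ and $|Z/a-1|>t$), and applying (a) with this threshold and the above operator/Frobenius bounds collapses both tail exponents to the single factor $t^2\|\bB\|_{\mathrm F}^2/(K^4\|\bB\|_{\op}^2)$ stated in the lemma.

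For part (c), Hoeffding's inequality, I would write $\langle\xx,\ba\rangle = \sum_{i=1}^d a_i X_i$ as a sum of independent mean-zero sub-gaussians with $\|a_iX_i\|_{\psi_2} \lesssim |a_i|K$, apply \cref{lem:subG}\ref{lem:subG-b} to conclude $\|\langle\xx,\ba\rangle\|_{\psi_2}^2 \lesssim K^2\|\ba\|_2^2$, and invoke the standard sub-gaussian tail bound $\P(|Z|>t) \le 2\exp(-ct^2/\|Z\|_{\psi_2}^2)$. For part (d), I would condition on $\yy$: given $\yy$, the quantity $\xx^\top\bB\yy = \langle \xx, \bB\yy\rangle$ is a linear combination of the independent sub-gaussian coordinates of $\xx$, so by (c) its conditional tail is $2\exp(-ct^2/(K^2\|\bB\yy\|_2^2))$. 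I would then decompose the unconditional probability into the event $\mathcal{E} = \{\|\bB\yy\|_2 \le 2\|\bB\|_{\mathrm F}\}$ and its complement: on $\mathcal{E}$ we get the sub-gaussian regime $\exp(-ct^2/(K^4\|\bB\|_{\mathrm F}^2))$, while on $\mathcal{E}^c$ the Hanson--Wright bound (b) applied to $\yy$ gives a sub-exponential factor $\exp(-ct\|\bB\|_{\mathrm F}/(K^2\|\bB\|_{\op}))$, and taking the minimum reproduces the stated exponent.

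The only step requiring genuine care is the tail-splitting argument in (d), where the threshold between the two regimes must be calibrated so that the two contributions match the precise exponents claimed; everything else is either a direct citation or an elementary algebraic manipulation, so I do not anticipate any substantive obstacle.
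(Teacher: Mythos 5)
Your treatment of parts (a), (b), and (c) is fine. Part (a) is cited exactly as the paper does (Vershynin, Theorem~6.2.1). For part (b) the paper simply cites Vershynin, Theorem~6.3.2, while you rederive it from (a) via $\bA = \bB^\top\bB$ and the implication $\{|Z-1|>t\} \subset \{|Z^2-1|>\max(t,t^2)\}$; this is the standard derivation (indeed it is the proof of Theorem~6.3.2 in the reference) and is correct. For part (c), reducing to $\|\langle\xx,\ba\rangle\|_{\psi_2} \lesssim K\|\ba\|_2$ via \cref{lem:subG}\ref{lem:subG-b} and then invoking the sub-gaussian tail bound is correct and is the content of the cited Theorem~2.6.3.

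Part (d), however, has a genuine gap, and it is exactly where you diverge from the paper. The paper stacks $\bar\xx = (\xx;\yy)$, forms $\bar\bA = \frac12\begin{psmallmatrix}\bzero & \bB \\ \bB^\top & \bzero\end{psmallmatrix}$ so that $\bar\xx^\top\bar\bA\bar\xx = \xx^\top\bB\yy$, notes $\|\bar\bA\|_{\mathrm F} = \|\bB\|_{\mathrm F}/\sqrt2$ and $\|\bar\bA\|_{\op} = \|\bB\|_{\op}/2$, and applies (a) directly; this gives the stated exponent with no further work. Your route — conditioning on $\yy$, applying (c), and splitting on $\mathcal E = \{\|\bB\yy\|_2 \le 2\|\bB\|_{\mathrm F}\}$ — does not recover the claimed bound as written. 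The problem is that with a \emph{fixed} threshold, $\P(\mathcal E^c)$ is a constant that does not depend on $t$: the bound (b) applied to $\yy$ gives $\P(\mathcal E^c) \lesssim \exp(-c\|\bB\|_{\mathrm F}^2/(K^4\|\bB\|_{\op}^2))$, which cannot be the sub-exponential factor $\exp(-ct\|\bB\|_{\mathrm F}/(K^2\|\bB\|_{\op}))$ you claim; for $t$ large the total bound does not even decay. To salvage the approach you must let the threshold $M = M(t)$ grow, and balancing the two contributions then forces $M \asymp (tK\|\bB\|_{\op}/\|\bB\|_{\mathrm F})^{1/2}\|\bB\|_{\mathrm F}$, which yields a sub-exponential exponent $\asymp t\|\bB\|_{\mathrm F}/(K^3\|\bB\|_{\op})$ — weaker by a factor of $K$ than the stated $t\|\bB\|_{\mathrm F}/(K^2\|\bB\|_{\op})$. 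This is harmless for the paper's downstream use (where $K$ is an absolute constant) but is not a proof of the lemma as stated. The symmetrization/stacking reduction to (a) is both shorter and gives the correct power of $K$; you should use it.
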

\begin{proof}
    For \ref{lem:subG-Hanson-Wright-I}, \ref{lem:subG-Hanson-Wright-II} and \ref{lem:subG-Hoeffding}, see \cite[Theorem 6.2.1, Theorem 6.3.2, Theorem 2.6.3]{vershynin2018high}. For \ref{lem:subG-Bernstein},
    let $\bar\xx = \begin{pmatrix}
        \xx \\ \yy
    \end{pmatrix}$, $\bar\bA = \dfrac12\begin{pmatrix}
        \bzero & \bB \\
        \bB & \bzero
    \end{pmatrix}$, then apply $\bar\xx^\top \bar\bA \bar\xx = \xx^\top \bB \yy$ to \ref{lem:subG-Hanson-Wright-I} and simplify.
\end{proof}

\begin{defn}[Sub-exponentiality]
    The sub-exponential norm of random variable $X$ is defined as
    \begin{equation*}
        \norm{X}_{\psi_1} = \inf\left\{ K > 0: \E[\exp(\abs{X}/K)] \le 2 \right\}.
    \end{equation*}
    \begin{itemize}
        \item A random variable $X \in \R$ is called sub-exponential if $\norm{X}_{\psi_1} < \infty$.
    \end{itemize}   
\end{defn}

\noindent
\cref{lem:subExp} summarizes some basic facts about sub-exponential random variables.
\begin{lem}\label{lem:subExp}
    Some facts about sub-exponential random variables.
    \begin{enumerate}[label=(\alph*)]
        \item \label{lem:subExp-a} $\norm{\, \cdot \,}_{\psi_1}$ is a norm on the space of sub-exponential random variables.
        \item \label{lem:subExp-b} Let $X_1, \ldots, X_N$ be independent mean-zero sub-exponential random variables. Then $\sum_{i=1}^N X_i$ is also a sub-exponential random variable. If $K := \max_{1 \le i \le N} \norm{X_i}_{\psi_1}$ and $N \ge C$, then
        \[ 
            \norm{\sum_{i=1}^N X_i}_{\psi_1} \le C' K \sqrt{N},   
        \]
        where $C, C'$ are absolute constants.
        \item \label{lem:subExp-c} (Maximum) Let $X_1, \ldots, X_N$ be sub-exponential random variables (not necessarily independent) with $K := \max_{1 \le i \le N} \norm{X_i}_{\psi_1}$. Then
        \[ \E\left[ \max_{1 \le i \le N} \abs{X_i} \right] \le  C K \log N,
        \qquad (N \ge 2), \]
        where $C$ is an absolute constant.
        \item \label{lem:subExp-d} Let $X$ and $Y$ be sub-gaussian random variables. Then $XY$ is sub-exponential. Moreover,
        \[ \norm{XY}_{\psi_1} \le \norm{X}_{\psi_2} \norm{Y}_{\psi_2}. \]
        In particular, $X^2$ is sub-exponential, and
        \[ \| X^2 \|_{\psi_1} \le \norm{X}_{\psi_2}^2. \] 
    \end{enumerate}
\end{lem}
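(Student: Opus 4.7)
All four parts are classical facts about the Orlicz $\psi_1$--norm, with the standard references being \cite[Ch.~2]{vershynin2018high}. I would handle them in the order (a), (d), (b), (c), since (d) gives us $X^2$ is sub-exponential (needed implicitly in the MGF estimates) and (b) is used in the maximal bound of (c).

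For (a), non-negativity, positive definiteness and absolute homogeneity are immediate from the defining infimum $\|X\|_{\psi_1}=\inf\{K>0:\E[\exp(|X|/K)]\le 2\}$. For the triangle inequality, set $K_1=\|X\|_{\psi_1}$, $K_2=\|Y\|_{\psi_1}$ and use the convexity inequality
\[
\exp\!\Bigl(\tfrac{|X|+|Y|}{K_1+K_2}\Bigr)\;\le\;\tfrac{K_1}{K_1+K_2}\exp(|X|/K_1)+\tfrac{K_2}{K_1+K_2}\exp(|Y|/K_2),
\]
take expectations, and conclude $\E[\exp((|X|+|Y|)/(K_1+K_2))]\le 2$, hence $\|X+Y\|_{\psi_1}\le K_1+K_2$.

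For (d), set $K=\|X\|_{\psi_2}\|Y\|_{\psi_2}$ and apply AM--GM: $|XY|/K\le\tfrac12(X^2/\|X\|_{\psi_2}^2+Y^2/\|Y\|_{\psi_2}^2)$. Convexity of $\exp$ then yields
\[
\E\bigl[\exp(|XY|/K)\bigr]\;\le\;\tfrac12\E[\exp(X^2/\|X\|_{\psi_2}^2)]+\tfrac12\E[\exp(Y^2/\|Y\|_{\psi_2}^2)]\;\le\;2,
\]
giving $\|XY\|_{\psi_1}\le\|X\|_{\psi_2}\|Y\|_{\psi_2}$; specializing $Y=X$ gives $\|X^2\|_{\psi_1}\le\|X\|_{\psi_2}^2$.

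For (b), the main step is the standard MGF bound: if $\E X_i=0$ and $\|X_i\|_{\psi_1}\le K$, then $\E[\exp(\lambda X_i)]\le\exp(CK^2\lambda^2)$ for $|\lambda|\le c/K$. By independence $\E[\exp(\lambda\sum_i X_i)]\le\exp(CNK^2\lambda^2)$, and a Chernoff argument at $\lambda\asymp t/(NK^2)$ yields the Bernstein-type tail $\P(|\sum_i X_i|\ge t)\le 2\exp(-c\min\{t^2/(NK^2),t/K\})$. Integrating the tail against $\exp(|\cdot|/(C'K\sqrt{N}))$ for $N$ large enough (so the Gaussian tail dominates in the relevant range) gives $\|\sum_i X_i\|_{\psi_1}\le C'K\sqrt{N}$. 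For (c), the tail bound $\P(|X_i|>t)\le 2\exp(-t/K)$ combined with the union bound gives $\P(\max_i|X_i|>t)\le 2N\exp(-t/K)$; integrating yields $\E[\max_i|X_i|]\le CK\log N$ for $N\ge2$.

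The only nonroutine step is the $\sqrt{N}$ dependence and the threshold $N\ge C$ in (b); the constant $C$ arises because the Bernstein bound has both a sub-gaussian and a sub-exponential regime, and one needs $N$ large enough so that the sub-gaussian regime controls the $\psi_1$-norm integral. Since all four claims are verbatim from \cite[Exercises~2.7.10, 2.7.11, Lemma~2.7.7, Proposition~2.7.1]{vershynin2018high}, the cleanest presentation is simply to cite the textbook, as is done for \cref{lem:subG,lem:subG_concentrate}.
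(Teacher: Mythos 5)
Your proposal is correct and takes essentially the same route as the paper: cite Vershynin for (a) and (d), establish (b) via the MGF bound and independence, and establish (c) via the tail bound plus integration. The one genuine routing difference is in (b): after the MGF bound $\E[\exp(\lambda\sum_iX_i)]\le\exp(CNK^2\lambda^2)$ for $|\lambda|\le 1/K$, the paper does not pass through a Chernoff tail inequality and then re-integrate. Instead it observes that, once $N\ge 1/C$, the range $|\lambda|\le 1/(\sqrt{CN}K)$ is contained in $|\lambda|\le 1/K$, so the MGF bound takes the normalized form $\exp(\lambda^2\sigma^2)$ on $|\lambda|\le 1/\sigma$ with $\sigma=\sqrt{CN}K$, which is precisely the sub-exponential MGF characterization (\cite[Prop.~2.7.1(v)]{vershynin2018high}); this gives $\|\sum_iX_i\|_{\psi_1}\lesssim K\sqrt{N}$ in one step and cleanly exhibits where $N\ge C$ enters. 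For (c), the paper proves the slightly stronger statement $\E[\max_{i\ge1}|X_i|/(1+\log i)]\lesssim K$ and peels off the $1+\log N$ factor, whereas your finite union bound $\P(\max_i|X_i|>t)\le 2Ne^{-ct/K}$ followed by tail integration gives the same finite-$N$ conclusion more quickly; both are fine here.
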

\begin{proof}
    For \ref{lem:subExp-a} and \ref{lem:subExp-d}, see \cite[Exercise 2.7.11, Lemma 2.7.6, Lemma 2.7.7]{vershynin2018high}. For (b), the proof is analogous to \cite[Proposition 2.6.1]{vershynin2018high}. For any $\abs{\lambda} \le 1/K$, we have
    \begin{equation*}
            \E\biggl[ \exp\biggl(\lambda \sum_{i=1}^N X_i \biggr) \biggr]
             = \prod_{i=1}^{N} \E[\exp(\lambda X_i)]
            \le \prod_{i=1}^{N} \exp\bigl( C \lambda^2 \norm{X_i}_{\psi_1}^2 \bigr)
             \le \exp\bigl( C \lambda^2 N K^2 \bigr),
    \end{equation*}
    where sub-exponential properties \cite[Proposition 2.7.1 (iv)(v)]{vershynin2018high} are used, and $C$ is an absolute constant. If $N \ge 1/C$, then $1/\sqrt{CNK^2} \le 1/K $ and therefore
    \[ \E\biggl[ \exp\biggl(\lambda \sum_{i=1}^N X_i \biggr) \biggr] \le \exp\bigl( \lambda^2 C N K^2 \bigr),
    \quad \text{for all $\lambda$ such that }  \abs{\lambda} \le \frac{1}{\sqrt{CN} K}.  \]
    Then the proof is completed by using \cite[Proposition 2.7.1 (iv)(v)]{vershynin2018high} again.

    For (d), the proof is analogous to \cite[Exercise 2.5.10]{vershynin2018high}. By \cite[Proposition 2.7.1 (i)(iv)]{vershynin2018high}, $\P(\abs{X_i} \ge t) \le 2\exp(-ct/\norm{X}_{\psi_1}) \le 2\exp(-ct/K)$, $\forall\, t \ge 0$, where $c$ is an absolute constant. Denote $t_0 := 2K/c$, then
	\begin{align*}
	& \E\left[ \max_{i \ge 1} \frac{\abs{X_i}}{1 + \log i} \right]
	\le t_0 + \int_{t_0}^\infty \P\biggl( \max_{i \ge 1} \frac{\abs{X_i}}{1 + \log i} > t \biggr) \d t 
	\le \frac{2K}{c} + \int_{t_0}^\infty  \sum_{i=1}^\infty  \P\biggl( \frac{\abs{X_i}}{1 + \log i} > t \biggr) \d t
	\\
	= {} & \frac{2K}{c} + \sum_{i=1}^\infty \int_{t_0}^\infty  \P\bigl( \abs{X_i} > t (1 + \log i) \bigr) \d t
	   \le \frac{2K}{c} + \sum_{i=1}^\infty \int_{t_0}^\infty 2\exp\bigl(-ct(1 + \log i)/K\bigr) \d t\\
	\le {} & 
    \frac{2K}{c} + \sum_{i=1}^\infty \int_{t_0}^\infty  \exp\bigl(- (\log i) ct_0/K \bigr) \cdot  2\exp(-ct/K) \d t
        \le 
    \frac{2K}{c} + \sum_{i=1}^\infty i^{-2} \int_{0}^\infty 2\exp(-ct/K) \d t \\
	= {} & \frac{2K}{c} + C_0 \cdot \frac{2K}{c} 
        \le C K,
	\end{align*}
    where $C_0, C$ are absolute constants. Hence, for any $N \ge 2$,
	\[ \E\left[\max_{1 \le i \le N} \abs{X_i}\right] 
	\le (1+\log N) \cdot \E\left[ \max_{1 \le i \le N} \frac{\abs{X_i}}{1 + \log i} \right] \lesssim K \log N.
	\]
    This concludes the proof.
\end{proof}

\subsection{Properties of the Moreau envelope and proximal operator}
\label{append_subsec_Moreau}

Let $\ell: \R \to \R_{\ge 0}$ be a continuous convex function. For any $x \in \R$ and $\lambda > 0$, the Moreau envelope of $\ell$ is defined as
\begin{equation}\label{eq:envelope}
    \envelope_\ell(x; \lambda) = \envelope_{\lambda\ell}(x)
    := \min_{t \in \R} \left\{  \ell(t) +  \frac1{2\lambda} (t - x)^2 \right\},
\end{equation}
and the proximal operator of $\ell$ is defined as
\begin{equation*}
    \prox_{\ell}(x; \lambda) =
    \prox_{\lambda \ell}(x) := \argmin_{t \in \R} \left\{ \ell(t) +  \frac1{2\lambda} (t - x)^2 \right\}.
\end{equation*}
\begin{lem} \label{lem:prox}
For any $x \in \R, \lambda > 0$, $\prox_{\ell}(x; \lambda)$ is uniquely determined by stationarity condition
    \begin{equation*}
        \prox_{\ell}(x; \lambda) + \lambda \ell'\bigl( \prox_{\ell}(x; \lambda) \bigr) - x = 0.
    \end{equation*}
    \begin{enumerate}[label=(\alph*)]
        \item \label{lem:prox(a)}
        $\envelope_\ell(x; \lambda)$ is continuous and convex in $(x, \lambda)$. If $\ell$ is differentiable, then $\envelope_\ell(x; \lambda)$ is also differentiable in its domain, with partial derivatives
        \begin{equation*}
            \begin{aligned}
                \frac{\partial \envelope_\ell(x; \lambda)}{\partial x}
            & = 
            \mathmakebox[\widthof{$\displaystyle -\frac{1}{2\lambda^2} \big( x - \prox_{\ell}(x; \lambda) \big)^2$}][r]{\frac{1}{\lambda} \big( x - \prox_{\ell}(x; \lambda) \big)\phantom{^2}}
            = 
            \mathmakebox[\widthof{$\displaystyle -\frac12 \bigl(\ell'(z) \bigr)^2 \big|_{z = \prox_{\ell}(x; \lambda) }$}][r]{
            \ell'(z) \phantom{^2} \big|_{z = \prox_{\ell}(x; \lambda) }
            },
            \\
                \frac{\partial \envelope_\ell(x; \lambda)}{\partial \lambda}
            & = -\frac{1}{2\lambda^2} \big( x - \prox_{\ell}(x; \lambda) \big)^2
            = -\frac12 \bigl(\ell'(z) \bigr)^2 \big|_{z = \prox_{\ell}(x; \lambda) }.
            \end{aligned}
        \end{equation*}
        Moreover, $\envelope_\ell(x; \lambda)$ is non-increasing in $\lambda$ and $\envelope_\ell(x; \lambda) \to \ell(x)$ when $\lambda \to 0^+$.
        \item \label{lem:prox(b)}
        $\prox_{\ell}(x; \lambda)$ is continuous in $(x, \lambda)$. If $\ell$ is twice differentiable, then $\prox_\ell(x; \lambda)$ is also differentiable in its domain, with partial derivatives
        \begin{equation*}
            \frac{\partial \prox_{\ell}(x; \lambda)}{\partial x}
            =  \frac{1}{1 + \lambda \ell''(z)} \bigg|_{z = \prox_{\ell}(x; \lambda)}
            \qquad
            \frac{\partial \prox_{\ell}(x; \lambda)}{\partial \lambda}
            =  -\frac{\ell'(z)}{1 + \lambda \ell''(z)} \bigg|_{z = \prox_{\ell}(x; \lambda)} .
        \end{equation*}
        Moreover, $\prox_\ell(x; \lambda) \to x$ when $\lambda \to 0^+$.
    \end{enumerate}
\end{lem}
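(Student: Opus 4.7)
The plan is to proceed in the natural order: uniqueness of $\prox_\ell$, its defining stationarity condition, the envelope properties in (a), and finally the proximal properties in (b). For the uniqueness statement, since $\ell$ is convex and $t \mapsto (t - x)^2/(2\lambda)$ is strictly convex in $t$, the objective in the definition of $\prox_\ell(x;\lambda)$ is strictly convex and coercive, so its minimizer exists and is unique. Writing the first-order optimality condition with $\ell$ differentiable gives the stated stationarity equation; without differentiability one would replace $\ell'$ by the subdifferential, but since part (a) assumes $\ell$ is differentiable, the clean form above is what is needed.

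For part (a), continuity and convexity of $\envelope_\ell$ in $(x,\lambda)$ follow from the general principle that the pointwise minimum of a jointly convex function over one of its variables remains jointly convex in the rest. Concretely, viewing $(x,\lambda,t)\mapsto \ell(t)+(t-x)^2/(2\lambda)$ as jointly convex on $\R\times \R_{>0}\times \R$ (the perspective $(t-x)^2/(2\lambda)$ is jointly convex in $(x,\lambda,t)$), partial minimization over $t$ preserves convexity. Differentiability and the two partial-derivative formulas come from Danskin's theorem applied to $\envelope_\ell$, together with the stationarity condition which gives $\ell'(\prox_\ell(x;\lambda)) = (x - \prox_\ell(x;\lambda))/\lambda$, yielding both expressions for $\partial_x \envelope_\ell$. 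The formula for $\partial_\lambda \envelope_\ell$ comes from differentiating $\ell(t) + (t-x)^2/(2\lambda)$ with respect to $\lambda$ at the optimum and applying the envelope theorem. Monotonicity in $\lambda$ is immediate from $\partial_\lambda \envelope_\ell \le 0$, and the limit $\envelope_\ell(x;\lambda)\to \ell(x)$ as $\lambda \to 0^+$ follows by plugging $t = x$ into the definition (giving the upper bound $\ell(x)$) and using continuity of $\ell$ together with $\prox_\ell(x;\lambda)\to x$ (proved in (b)).

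For part (b), continuity of $\prox_\ell$ in $(x,\lambda)$ can be obtained via Berge's maximum theorem applied to the jointly continuous, strictly-convex-in-$t$ objective, or directly from the stationarity equation. When $\ell$ is twice differentiable, define $F(z,x,\lambda) := z + \lambda \ell'(z) - x$. Then $F(\prox_\ell(x;\lambda), x, \lambda) = 0$ and $\partial_z F = 1 + \lambda \ell''(z) > 0$ since $\ell'' \ge 0$, so the implicit function theorem applies and yields $\partial_x \prox_\ell = -\partial_x F/\partial_z F = 1/(1+\lambda\ell''(z))$ and $\partial_\lambda \prox_\ell = -\partial_\lambda F/\partial_z F = -\ell'(z)/(1+\lambda\ell''(z))$, evaluated at $z=\prox_\ell(x;\lambda)$. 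Finally, taking $\lambda \to 0^+$ in the stationarity condition $\prox_\ell(x;\lambda) = x - \lambda \ell'(\prox_\ell(x;\lambda))$: a standard argument shows $\prox_\ell(x;\lambda)$ stays bounded (otherwise the objective exceeds $\ell(x)$), hence $\lambda\ell'(\prox_\ell(x;\lambda))\to 0$ by boundedness and continuity of $\ell'$ on compact sets, giving $\prox_\ell(x;\lambda)\to x$.

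I do not anticipate any major obstacle; each step is standard convex-analysis machinery. The only item requiring care is the envelope-theorem-style argument for differentiability of $\envelope_\ell$, where one must justify exchanging the derivative with the minimum via Danskin's theorem (justified here by uniqueness of the minimizer and joint continuity), and the $\lambda\to 0^+$ limits, which must be proved in the order prox first, then envelope (via dominated convergence or sandwiching between $\ell(\prox_\ell(x;\lambda))$ and $\ell(x)$).
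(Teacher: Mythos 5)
Your proposal is correct; the only caveat is that the paper does not actually prove \cref{lem:prox} itself --- the proof there is a one-line pointer to standard references in the literature (Thrampoulidis--Abbasi--Hassibi, Donoho--Montanari, Salehi et al.). So your write-up is a self-contained alternative to a citation, and every step you outline is standard and goes through. Strict convexity and coercivity of the inner objective give existence, uniqueness, and the stationarity equation; joint convexity of the perspective $(x,\lambda,t)\mapsto (t-x)^2/(2\lambda)$ plus preservation of convexity under partial minimization gives joint convexity of the envelope; Danskin/envelope theorem with the unique minimizer gives the two partial-derivative formulas, which are then rewritten via the stationarity identity $\ell'(\prox_\ell)=(x-\prox_\ell)/\lambda$; the implicit function theorem applied to $F(z,x,\lambda)=z+\lambda\ell'(z)-x$ with $\partial_z F = 1+\lambda\ell''(z)>0$ yields the derivatives of $\prox_\ell$; and the ordering (prox limit first, envelope limit second) is exactly right.

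One small simplification you could make: for $\prox_\ell(x;\lambda)\to x$ you do not need the detour through boundedness of $\ell'$ on compacts. Since $\ell\ge 0$ and $\envelope_\ell(x;\lambda)\le \ell(x)$ (take $t=x$), one has directly
\[
\frac{1}{2\lambda}\bigl(\prox_\ell(x;\lambda)-x\bigr)^2 \;\le\; \envelope_\ell(x;\lambda) \;\le\; \ell(x),
\]
so $\lvert \prox_\ell(x;\lambda)-x\rvert\le\sqrt{2\lambda\,\ell(x)}\to 0$. This avoids invoking differentiability of $\ell$ for that particular limit and in fact proves the stronger quantitative rate. With that cosmetic change the argument is clean and complete.
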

\begin{proof}
See \cite[Lemma 15]{thrampoulidis2018precise}, \cite[Proposition A.1]{donoho2016high}, \cite[Lemma 2, Lemma 4]{salehi2019impact}, and relevant references therein.
\end{proof}

\section{Miscellaneous}

Let $\hat\kappa$ be the optimal objective value in \cref{eq:SVM}, which is the \emph{maximum margin} for data $(\XX, \yy)$. Moreover, $(\hat\vbeta, \hat\beta_0, \hat\kappa)$ is also the optimal solution to \cref{eq:SVM-m-reb}. Notice $\hat\kappa \ge 0$ always holds (by taking $\vbeta = 0$, $\beta_0 = 0$ in \cref{eq:SVM}), and we can observe the following relation.
\begin{equation*}
	\begin{aligned}
		\text{(linearly separable)} 
		\ \ & \text{$\exists\, \vbeta\not=\bzero$, $\beta_0 \in \R$, such that $y_i ( \< \xx_i, \bbeta \> + \beta_0 ) > 0$, $\forall\, i \in [n]$,} \\
		& \Longleftrightarrow \quad \hat\kappa > 0, \quad \Longrightarrow \quad \|\hat\vbeta\|_2 = 1, \\
		\text{(not linearly separable)} 
		\ \ & \text{$\forall\, \vbeta\not=\bzero$, $\beta_0 \in \R$, such that $y_i ( \< \xx_i, \bbeta \> + \beta_0 ) \overset{\mathmakebox[0pt][c]{\smash{(*)}}}{\le}  0$, $\forall\, i \in [n]$,} \\
		& \Longleftrightarrow \quad \hat\kappa = 0, \quad \Longrightarrow \quad \hat\vbeta = \bzero, \ \hat\beta_0 = 0 \text{ is a solution.\footnotemark}
	\end{aligned}
    \footnotetext{
	If $(*)$ is strict ($<$), then $\hat\vbeta = \bzero$, $\hat\beta_0 = 0$ is the \emph{unique} solution.
}
\end{equation*}
When data is linearly separable, it turns out \cref{eq:SVM-m-reb} also has the following equivalent form:
\begin{equation}
	\label{eq:SVM-1}
    \begin{array}{rl}
    \minimize\limits_{\bw \in \R^d, \, w_0 \in \R} & \norm{\bw}_2^2, \\
    \text{subject to} &  \wt y_i(\< \xx_i, \bw \> + w_0) \ge 1, \quad \forall\, i \in [n].
    \end{array}
\end{equation}
The parameters in \cref{eq:SVM-m-reb} and \eqref{eq:SVM-1} have one-to-one relation $(\kappa, \vbeta, \beta_0) = (1, \bw, w_0)/\|\bw\|_2$. Notably, \cref{eq:SVM-1} is known as the hard-margin SVM \cite{vapnik1998statistical} if $\tau = 1$.

\newpage
\bibliographystyle{unsrt}
\bibliography{refs}
\end{document}